\DeclareMathOperator{\coind}{coind}
\DeclareMathOperator{\colim}{colim}
\DeclareMathOperator{\End}{End}
\DeclareMathOperator{\Ext}{Ext}    
\DeclareMathOperator{\Hom}{Hom}
\DeclareMathOperator{\gl}{gl}
\DeclareMathOperator{\Ho}{Ho}
\DeclareMathOperator{\incl}{incl}
\DeclareMathOperator{\ind}{ind}
\DeclareMathOperator{\Id}{Id}
\DeclareMathOperator{\lv}{lv}
\DeclareMathOperator{\map}{map}
\DeclareMathOperator{\mo}{-mod}
\DeclareMathOperator{\Or}{Or}
\DeclareMathOperator{\op}{op}
\DeclareMathOperator{\proj}{proj}
\DeclareMathOperator{\tr}{tr}
\DeclareMathOperator{\res}{res}
\DeclareMathOperator{\Sp}{Sp}
\DeclareMathOperator{\Sym}{Sym}
\DeclareMathOperator{\st}{st}
\DeclareMathOperator{\sk}{sk}
\DeclareMathOperator{\sh}{sh}
\DeclareMathOperator{\Th}{Th}
\DeclareMathOperator{\un}{un}
\DeclareMathOperator{\Conj}{Conj}
\DeclareMathOperator{\Vect}{Vect}
\DeclareMathOperator{\Wirth}{Wirth}
\DeclareMathOperator{\Ker}{Ker}
\newcommand{\sm}{\wedge} 
\newcommand{\tensor}{\otimes} 
\newcommand{\iso}{\cong} 
\newcommand{\braced}[1]{\lbrace #1\rbrace}
\newcommand{\td}[1]{\langle #1\rangle}
\renewcommand{\to}{\longrightarrow}
\newcommand{\ul}{\underline} 
\newcommand{\bs}{\backslash} 
\newcommand{\upi}{\underline{\pi}}
\newcommand{\GH}{\mathcal{GH}}
\newcommand{\uEG}{\underline{E}G}
\newcommand{\gh}[1]{\llbracket #1\rrbracket}
\newcommand{\mA}{\mathbb A}
\newcommand{\mC}{\mathbb C}
\newcommand{\mF}{\mathbb F}
\newcommand{\mN}{\mathbb N}
\newcommand{\mQ}{\mathbb Q}
\newcommand{\mR}{\mathbb R}
\newcommand{\mS}{\mathbb S}
\newcommand{\mZ}{\mathbb Z}
\newcommand{\bA}{\mathbf A}
\newcommand{\bE}{\mathbf E}
\newcommand{\bL}{\mathbf L}
\newcommand{\bO}{\mathbf O}
\newcommand{\bT}{\mathbf{T}}
\newcommand{\bku}{\mathbf{ku}}
\newcommand{\bKU}{{\mathbf{KU}}}
\newcommand{\cA}{\mathcal A}
\newcommand{\cB}{\mathcal B}
\newcommand{\cC}{\mathcal C}
\newcommand{\cD}{\mathcal D}
\newcommand{\cE}{\mathcal E}
\newcommand{\cF}{\mathcal F}
\newcommand{\cH}{\mathcal H}
\newcommand{\cK}{\mathcal K}
\newcommand{\cM}{\mathcal M}
\newcommand{\cP}{\mathcal P}
\newcommand{\cS}{\mathcal S}
\newcommand{\cT}{\mathcal T}
\newcommand{\cU}{\mathcal U}
\newcommand{\cV}{\mathcal V}
\newcommand{\cX}{\mathcal X}
\newcommand{\cY}{\mathcal Y}
\newcommand{\cZ}{\mathcal Z}
\newcommand{\Fin}{\mathcal Fin}
\newcommand{\Com}{\mathcal Com}
\numberwithin{section}{chapter}
\numberwithin{equation}{section}
\newtheorem{thm}[equation]{Theorem}
\newtheorem{prop}[equation]{Proposition}
\newtheorem{cor}[equation]{Corollary}
\theoremstyle{definition}
\newtheorem{defn}[equation]{Definition}
\newtheorem{rk}[equation]{Remark}
\newtheorem{eg}[equation]{Example}
\newtheorem{con}[equation]{Construction}
\begin{document}

\frontmatter

\title[Proper Equivariant Stable Homotopy Theory]
{Proper Equivariant Stable Homotopy Theory}

\author{Dieter Degrijse}
\address{Keylane, Copenhagen, Denmark}
\email{dieterdegrijse@gmail.com}

\author{Markus Hausmann}
\address{Mathematisches Institut, Universit\"at Bonn, Germany}
\email{hausmann@math.uni-bonn.de}

\author{Wolfgang L{\"u}ck}
\address{Mathematisches Institut, Universit\"at Bonn, Germany}
\email{wolfgang.lueck@him.uni-bonn.de}

\author{Irakli Patchkoria}
\address{Department of Mathematics, University of Aberdeen, UK}
\email{irakli.patchkoria@abdn.ac.uk}

\author{Stefan Schwede}
\address{Mathematisches Institut, Universit\"at Bonn, Germany}
\email{schwede@math.uni-bonn.de}


\thanks{
  All five authors were supported by the
Hausdorff Center for Mathematics at the University of Bonn
(DFG GZ 2047/1, project ID 390685813)
and by the Centre for Symmetry and Deformation at the University of Copenhagen
(CPH-SYM-DNRF92);
we would like to thank these two institutions for their hospitality, support and 
the stimulating atmosphere. Hausmann, Patchkoria and Schwede were partially supported
by the DFG Priority Programme 1786 `Homotopy Theory and Algebraic Geometry'.
Work on this monograph was funded by the ERC Advanced Grant `KL2MG-interactions'
of L{\"u}ck (Grant ID 662400), granted by the European Research Council.
Patchkoria was supported by the Shota Rustaveli National Science Foundation Grant 217-614.
Patchkoria and Schwede would like to thank the Isaac Newton Institute for Mathematical Sciences
for support and hospitality during the programme `Homotopy harnessing higher structures',
when work on this paper was undertaken (EPSRC grant number EP/R014604/1).
We would also like to thank Bob Oliver and S{\o}ren Galatius for helpful conversations
on topics related to this project, and the anonymous referee for his or her careful reading
and the many useful comments.
}

\subjclass[2010]{Primary 55P91}

\keywords{Lie group, equivariant homotopy theory; proper action}

\maketitle

\begin{abstract}
  This monograph introduces a framework for genuine
  proper equivariant stable homotopy theory for Lie groups. 
  The adjective `proper' alludes to the feature that equivalences
  are tested on compact subgroups, and that the objects are built
  from equivariant cells with compact isotropy groups;
  the adjective `genuine' indicates that 
  the theory comes with appropriate transfers and Wirthm{\"u}ller isomorphisms,
  and the resulting equivariant cohomology theories support the analog
  of an $R O(G)$-grading.

  Our model for genuine proper $G$-equivariant stable homotopy theory
  is the category of orthogonal $G$-spectra; the equivalences
  are those morphisms that induce isomorphisms of equivariant stable homotopy
  groups for all compact subgroups of $G$.
  This class of $\pi_*$-isomorphisms is part of a symmetric monoidal stable model structure,
  and the associated tensor triangulated homotopy category is compactly generated.
  Consequently, every orthogonal $G$-spectrum represents
  an equivariant cohomology theory on the category of $G$-spaces.
  These represented cohomology theories are designed to only depend on the
  `proper $G$-homotopy type', tested by fixed points under all compact subgroups.
  
  An important special case of our theory are infinite discrete groups.
  For these, our genuine equivariant theory is related to finiteness properties
  in the sense of geometric group theory;
  for example, the $G$-sphere spectrum is a compact object in our
  triangulated equivariant homotopy category if the universal space
  for proper $G$-actions has a finite $G$-CW-model.
  For discrete groups, the represented equivariant cohomology theories
  on finite proper $G$-CW-complexes admit a more explicit description in
  terms of parameterized equivariant homotopy theory, suitably stabilized by $G$-vector bundles.
  Via this description, we can identify the previously defined $G$-cohomology
  theories of equivariant stable cohomotopy and equivariant K-theory as
  cohomology theories represented by specific orthogonal $G$-spectra.
\end{abstract}

\tableofcontents

\mainmatter

\chapter*{Introduction}

This monograph explores proper equivariant stable homotopy theory for Lie groups. 
The theory generalizes the well established `genuine' equivariant stable
homotopy theory for {\em compact} Lie groups, and the adjective `proper'
indicates that the theory is only sensible to fixed point information
for compact subgroups. In other words, hardwired into
our theory are bootstrap arguments that reduce questions to
equivariant homotopy theory for compact Lie groups. 
Nevertheless, there are many new features that have no direct analog in
the compact case.

\smallskip

Equivariant stable homotopy theory has a long tradition, originally motivated
by geometric questions about symmetries of manifolds.
Certain kinds of features can be captured by `naive' stable equivariant theories,
obtained one way or other by formally inverting suspension on some
category of equivariant spaces. The naive stable theory works in broad generality
for general classes of topological groups,
and it can be modeled by sequential spectra of $G$-spaces,
or functors from a suitable orbit category to spectra
(interpreted either in a strict sense, or in an $\infty$-categorical context).

A refined version of equivariant stable homotopy theory,
usually referred to as `genuine', was traditionally only available
for compact Lie groups of equivariance.
The genuine theory has several features not available in the naive theory,
such as transfer maps, dualizability, stability under ‘twisted suspension’ (i.e., smash product
with linear representation spheres),
an extension of the $\mZ$-graded cohomology
groups to an $R O(G)$-graded theory, and an equivariant refinement of additivity
(the so called Wirthm{\"u}ller isomorphism).
The homotopy theoretic foundations of this theory were laid by
tom Dieck \cite{tomDieck:bordism, tomDieck:orbittypenI, tomDieck:orbittypenII, tomDieck:transformation},
May \cite{lewis-may:steinberger:equivariant_stable, greenlees-may:generalized_Tate, greenlees-may:completion}
and Segal \cite{segal:equivariant_K, segal:ICM, segal:some_equivariant}
and their students and collaborators since the 70’s.
A spectacular recent application was the solution,
by Hill, Hopkins and Ravenel \cite{hill-hopkins-ravenel:kervaire},
to the Kervaire invariant 1 problem.
This monograph extends genuine equivariant stable homotopy theory
to Lie groups that need not be compact;
this includes infinite discrete groups as an important special case.

A major piece of our motivation for studying equivariant homotopy theory
for infinite discrete groups and not necessarily compact Lie groups
comes from the Baum-Connes Conjecture and the Farrell-Jones Conjectures.
The Baum-Connes Conjecture was originally formulated in \cite{baum-connes:geometric_K},
and subsequently considered in the formulation stated in 
\cite[Conjecture 3.15 on page 254]{baum-connes-higson:classifying_spaces}.
The Farrell-Jones Conjecture was formulated in \cite[1.6 on page 257]{farrell-jones:isomorphism_conjectures};
two survey articles about these isomorphism conjectures are 
\cite{luck-reich:BC_and_FJ} and \cite{valette:introduction}.
Roughly speaking, these conjectures identify the theory of interest
-- topological K-groups of reduced group $C^*$-algebras or algebraic K-and L-groups of group rings --
with certain equivariant homology theories, applied to classifying spaces of
certain families of subgroups.
Many applications of these conjectures
to group homology, geometry, or classification results of $C^*$-algebras
are based on computations of the
relevant equivariant homology theories,
and of their cohomological analogues.
Even if one is interested only in non-equivariant (co)homology
of classifying spaces, it is useful to invoke equivariant homotopy theory on
the level of spectra; examples are
\cite{echterhoff-luck-phillips-walters:structure_crossed, langer-luck:group_cohomology, luck:rational_computations}.
Our book provides a systematic framework for such calculations,
capable of capturing extra pieces of structure like induction, transfers, restriction,
multiplication, norms, global equivariant features, and gradings beyond naive $\mZ$-grading.

For example, one might wish to use the Atiyah-Hirzebruch spectral sequence
to compute K-theory or stable cohomotopy of $B G$ for an infinite discrete group $G$; 
this will often be a spectral sequence with differentials of arbitrary length.
However, the completion theorems for equivariant K-theory \cite{luck-oliver:completion}
and for equivariant stable cohomotopy \cite{luck:segal_infinite}
provide an alternative line of attack:
one may instead compute equivariant cohomology of $\underline{E}G$
by the equivariant Atiyah-Hirzebruch spectral sequence
(see Construction \ref{con:equivariant AHSS}),
and then complete the result to obtain the non-equivariant cohomology of $B G$.
Even if $B G$ is infinite dimensional,
$\underline{E}G$ might well be finite-dimensional and relatively small,
in which case the equivariant Atiyah-Hirzebruch spectral sequence is easier to analyze.
For example, for a virtually torsion free group,
this spectral sequence has a vanishing line at the virtual cohomological dimension.

Among other things, our formalism provides a definition of
the equivariant homotopy groups $\pi_*^G$ for infinite groups $G$.
These equivariant homotopy groups have features which
reflect geometric group theoretic properties of $G$.
As the group $G=\mZ$ already illustrates, the sphere spectrum
need not be connective with respect to $\pi_*^G$,
compare Example \ref{eg:rational Z-equivariant}.
If the group $G$ is virtually torsion free,
then the equivariant Atiyah-Hirzebruch spectral sequence
shows that the equivariant homotopy groups $\pi_*^G(\mS)$ vanish below
the negative of the virtual cohomological dimension of~$G$.\\

We conclude this introduction with a summary of the highlights of this monograph.

$\bullet$ 
Our model for proper equivariant stable homotopy theory of a Lie group $G$
is the category of orthogonal $G$-spectra, i.e., orthogonal spectra
equipped with a continuous action of $G$.
This pointset level model is well-established, explicit, 
and has nice formal properties; for example, orthogonal $G$-spectra
are symmetric monoidal under the smash product of orthogonal spectra,
endowed with the diagonal $G$-action.

$\bullet$ 
All the interesting homotopy theory is encoded in the notion
of stable equivalences for orthogonal $G$-spectra.
We use the {\em $\pi_*$-isomorphisms}, defined as those morphisms
of orthogonal $G$-spectra that induce isomorphisms on $\mZ$-graded $H$-equivariant homotopy
groups, for all compact subgroups~$H$ of $G$.
These $H$-equi\-variant homotopy groups are based on a complete universe of
orthogonal $H$-re\-presentations.
In~\cite[Prop.\,6.5]{fausk:prospectra}, Fausk has extended these $\pi_*$-isomorphisms
to a stable model structure on the category of orthogonal $G$-spectra,
via an abstract Bousfield localization procedure.
We develop a different (but Quillen equivalent)
model structure in Theorem \ref{thm:stable}, also with the $\pi_*$-isomorphisms
as weak equivalences, that gives better control over the stable fibrations;
in particular, the stably fibrant objects in our model structure are 
the $G$-$\Omega$-spectra as defined in Definition~\ref{def:stable fibrations}
below.
Our model structure is compatible with the smash product of
orthogonal $G$-spectra, and compatible with restriction to closed subgroups,
see Theorem \ref{thm:stable}.

$\bullet$ 
A direct payoff of the stable model structure is that the homotopy category
$\Ho(\Sp_G)$ comes with a triangulated structure. 
This structure is made so that mapping cone sequences of proper $G$-CW-complexes
become distinguished triangles in $\Ho(\Sp_G)$.
As a consequence, every orthogonal $G$-spectrum represents a $G$-equivariant
cohomology theory on proper $G$-spaces, see Construction \ref{con:cohomology from E}. 
The triangulated homotopy category $\Ho(\Sp_G)$ comes with a
distinguished set of small generators, the suspension spectra of
the homogeneous spaces $G/H$ for all compact subgroups~$H$ of~$G$,
see Corollary \ref{cor:Ho_G compactly generated}.
This again has certain direct payoffs, such as Brown representability
of homology and cohomology theories on~$\Ho(\Sp_G)$, and a non-degenerate t-structure.

$\bullet$
The triangulated categories $\Ho(\Sp_G)$ enjoy a large amount
of functoriality in the group $G$:
every continuous homomorphism $\alpha:K\to G$ gives rise
to a restriction functor $\alpha^*:\Sp_G\to\Sp_K$ that in turn
admits an exact total left derived functor $L\alpha^*:\Ho(\Sp_G)\to\Ho(\Sp_K)$, 
see Theorem \ref{thm:homomorphism adjunctions spectra}.
These derived functors assemble to a contravariant pseudo-functor
from the category of Lie groups and continuous homomorphisms
to the category of triangulated categories and exact functors.
Moreover, conjugate homomorphisms and homotopic homomorphisms induce
isomorphic derived functors.

$\bullet$ 
The proper equivariant stable homotopy theory should be thought of
as a `weak homotopy invariant' of the Lie group $G$. 
More precisely, we show in Theorem \ref{thm:weak equivalence}
that for every continuous homomorphism $\alpha:K\to G$ between
Lie groups that is a weak equivalence of underlying spaces, 
the derived restriction functor $L\alpha^*:\Ho(\Sp_G)\to\Ho(\Sp_K)$
is an equivalence of tensor triangulated categories. 
A special case is the inclusion of a maximal compact subgroup
$M$ of an almost connected Lie group $G$.
In this case, the restriction functor $\res^G_M:\Ho(\Sp_G)\to\Ho(\Sp_M)$
is an equivalence. 
So for almost connected Lie groups, our theory reduces to the classical
case of compact Lie groups.
In this sense, the new mathematics in this memoir
is mostly about infinite versus finite component groups.

$\bullet$ 
When $G$ is discrete, the heart of the preferred t-structure on $\Ho(\Sp_G)$
has a direct and explicit algebraic description: it is equivalent
to the abelian category of $G$-Mackey functors in the sense of
Martinez-P{\'e}rez and Nucinkis \cite{martinezperez-nucinkis:cohomological_dimension},
see Theorem \ref{thm:embed_MG_into_Ho(Sp)}. 
In particular, every $G$-Mackey functor is realized, essentially uniquely,
by an Eilenberg-Mac\,Lane spectrum in~$\Ho(\Sp_G)$.
The cohomology theory represented by the Eilenberg-Mac\,Lane spectrum
is Bredon cohomology, see Example~\ref{eg:HM_represents_Bredon}.

$\bullet$ 
For discrete groups~$G$, the {\em rational} $G$-equivariant
stable homotopy theory is completely algebraic: the rationalization of 
$\Ho(\Sp_G)$ is equivalent to the derived category of rational $G$-Mackey functors,
see Theorem \ref{thm:rational SH}.
When $G$ is infinite, the abelian category of rational $G$-Mackey functors
is usually not semisimple, so in contrast to the well-known case of finite groups,
a rational $G$-spectrum is not generally classified in $\Ho(\Sp_G)$
by its homotopy group Mackey functors alone.

$\bullet$ 
Our theory is the analog, for general Lie groups,
of `genuine' equivariant stable homotopy theory; 
for example, the equivariant cohomology theories arising from orthogonal $G$-spectra
have a feature analogous to an `$R O(G)$-grading' in the compact case.
In the present generality, however, representations should be replaced
by equivariant real vector bundles over the universal space~$\uEG$
for proper $G$-actions, and so the analog of an
$R O(G)$-grading is a grading by the Grothendieck group~$K O_G(\uEG)$
of such equivariant vector bundles, see Remark~\ref{rk:RO(G) grading}.

$\bullet$  
For discrete groups we identify the cohomology theories represented by $G$-spectra 
in more concrete terms via fiberwise equivariant homotopy theory,
see Theorem \ref{thm:represented equals bundle}.
This allows us to compare our approach to equivariant cohomology theories 
that were previously defined by different means.
For example, we show in Example \ref{eg:represent cohomotopy}
that for discrete groups the theory represented by the $G$-sphere spectrum coincides,
for finite proper $G$-CW-complexes, with equivariant cohomotopy
as defined by the third author in \cite{luck:burnside_ring}. 
In Theorem \ref{thm:repr K-theory} we show that if $G$ is a discrete group, 
then the equivariant K-theory based on $G$-vector bundles defined by
the third author and Oliver
in \cite{luck-oliver:completion} is also representable in $\Ho(\Sp_G)$.\medskip

{\bf Conventions.}
Throughout this memoir, a {\em space} is a {\em compactly generated space}\index{space!compactly generated} 
in the sense of~\cite{mccord:classifying_spaces},
i.e., a $k$-space (also called {\em Kelley space})
that satisfies the weak Hausdorff condition.
Two extensive resources with background material about compactly generated spaces
are Section~7.9 of tom Dieck's textbook \cite{tomDieck:algebraic_topology}
and Appendix A of the fifth author's book \cite{schwede:global}.
Two other influential -- but unpublished -- sources about compactly generated
spaces are the Appendix~A of Gaunce Lewis's thesis \cite{lewis:thesis}
and Neil Strickland's preprint \cite{strickland:CGWH}.
We denote the category of compactly generated spaces
and continuous maps by $\bT$.

\chapter{Equivariant spectra}

\section{Orthogonal \texorpdfstring{$G$}{G}-spectra}

In this section we recall the basic objects of our theory, 
orthogonal spectra and orthogonal $G$-spectra, where $G$ is a Lie group.
We start in Proposition \ref{prop:Gmod} 
with a quick review of the $\Com$-model structure 
for $G$-spaces, i.e., the model structure where equivalences and fibrations
are tested on fixed points for compact subgroups of $G$.
The homotopy category of this model structure is equivalent to
the category of proper $G$-CW-complexes and equivariant homotopy classes of $G$-maps.
Proposition \ref{prop:homomorphism adjunctions spaces} 
records how the $\Com$-model structures interact with
restriction along a continuous homomorphism between Lie groups.
We recall orthogonal $G$-spectra in Definition \ref{def:Gspec},
and we end this section with several examples.

\medskip

We let $G$ be a topological group, which we take to mean a group
object in the category $\bT$ of compactly generated spaces.
So a topological group is a compactly generated
space equipped with an associative and unital multiplication
\[ \mu \ : \  G \times G \ \to \ G \]
that is continuous with respect to the 
compactly generated product topology, and such that the shearing map
\[   G \times G \ \to \  G \times G \ ,\quad (g,h)\ \longmapsto \ (g, g h)\]
is a homeomorphism (again for the compactly generated product topology). 
This implies in particular that inverses exist in $G$,
and that the inverse map $g\mapsto g^{-1}$ is continuous.
A {\em $G$-space}\index{G-space@$G$-space} is then a compactly generated space $X$ 
equipped with an associative and unital action 
\[ \alpha\ : \ G  \times X \ \to \ X \]
that is continuous with respect to the compactly generated product topology.
We write $G\bT$ for the category of $G$-spaces and continuous $G$-maps.
The forgetful functor from $G$-spaces to compactly generated spaces 
has both a left and a right adjoint,
and hence limits and colimits of $G$-spaces are created in the underlying
category $\bT$.

\begin{rk}
We mostly care about the case when $G$ is a Lie group. Then the underlying
space of $G$ is locally compact Hausdorff, and for every compactly generated space $X$,
the space $G\times X$ is a $k$-space (and hence compactly generated)
in the usual product topology. So for Lie groups, the potential ambiguity about
continuity of the action disappears.
\end{rk}

For $G$-spaces $X$ and $Y$ we write $\map(X,Y)$\index{mapping space} for the
space of continuous maps with the function space topology internal
to $\bT$ (i.e., the Kelleyfied compact-open topology).
The group $G$ acts continuously on $\map(X,Y)$ by conjugation, and
these constructions are related by adjunctions, i.e., natural homeomorphisms
of $G$-spaces
\[ \map(X,\map(Y,Z))\ \iso \ \map(X\times Y,Z) \ ;\]
in particular, the category of $G$-spaces is cartesian closed.

The category $G\bT$ is enriched, tensored and cotensored over the category 
$\bT$ of spaces, as follows.
The tensor and cotensor of a $G$-space $X$ and a space $K$
are the product $X\times K$ and the function space $\map(K,X)$,
respectively, both with trivial $G$-action on $K$.
The enrichment in $\bT$ is given by the
$G$-fixed point space $\map^G(X,Y)$, i.e., the space of $G$-equivariant continuous maps.

A $G$-space $X$ is {\em proper}\index{G-space@$G$-space!proper} if the map 
\[ G\times X \ \to \ X\times X \ , \quad (g,x)\ \longmapsto \ (g x,x) \]
is proper, i.e., inverse images of compact subsets are compact.
Because the inverse image of a diagonal point $(x,x)$ is homeomorphic to
the isotropy group of $x$, all isotropy groups of proper $G$-spaces are compact.
Any irrational rotation action of the group $\mZ$ on a circle shows that
the converse is not generally true, i.e., a continuous $G$-action with compact stabilizers
need not be proper in general.

A {\em $G$-CW-complex}\index{G-CW-complex@$G$-CW-complex} is a $G$-space $X$ equipped with an
exhaustive increasing filtration
\[ \emptyset \ =\  X_{-1}\ \subseteq\ X_0\ \subseteq\ X_1\ \subseteq\ \dots\
  \subseteq\ X_n\ \subseteq\ \dots \]
by closed $G$-invariant subspaces such that
\begin{itemize}
  \item each $X_n$ is obtained from $X_{n-1}$
by attaching a disjoint union of $G$-spaces $G/H_i\times D^n$ along 
 $G/H_i\times S^{n-1}$, for some set $\{H_i\}_{i\in I}$ of closed subgroups of $G$, and
\item the space $X$ has the colimit topology (weak topology) of the sequence.
\end{itemize}
A $G$-CW-complex is {\em finite-dimensional}\index{G-CW-complex@$G$-CW-complex!finite-dimensional}
if the filtration stabilizes, i.e., $X_n=X$ for some $n\geq 0$.
A $G$-CW-complex is {\em of finite type}
if in each dimension $n$, the number of equivariant cells is finite.
A $G$-CW-complex is {\em finite}\index{G-CW-complex@$G$-CW-complex!finite}
if it is finite-dimensional and of finite type, i.e., if the total number of
equivariant cells is finite.
Equivalently, a $G$-CW-complex $X$ is finite if and only if the action is cocompact,
i.e., the orbit space $X/G$ is compact.
For a $G$-CW-complex, properness of the action is equivalent to
the requirement that all of the isotropy groups are compact,
see for example \cite[Theorem 1.23]{luck:transformation}.\index{G-CW-complex@$G$-CW-complex!proper}

\begin{defn}  Let $G$ be a Lie group.
  \begin{enumerate}[(i)]
  \item 
    A map $f \colon X \to Y$ of $G$-spaces is a $\Com$-\emph{equivalence}
    \index{Com-equivalence@$\Com$-equivalence} 
    if for every compact subgroup~$H$ of~$G$ the map $f^H \colon X^H \to Y^H$ 
    is a weak equivalence of topological spaces.
  \item 
    A map $f \colon X \to Y$ of $G$-spaces
    is a $\Com$-\emph{fibration}\index{Com-fibration@$\Com$-fibration}  
    if for every compact subgroup $H$ of $G$ the map $f^H \colon X^H \to Y^H$ 
    is a Serre fibration of topological spaces.
  \item 
    A map $i \colon A \to B$ of $G$-spaces
    is a $\Com$-\emph{cofibration}\index{Com-cofibration@$\Com$-cofibration} 
    if it has the left lifting property with respect to every map which 
    is a $\Com$-equivalence and $\Com$-fibration.
  \item 
    A map $i \colon A \to B$ of $G$-spaces
    is a $G$-\emph{cofibration}\index{G-cofibration@$G$-equivalence} 
    if it has the left lifting property with respect to every map 
    $f:X\to Y$ such that $f^\Gamma:X^\Gamma\to Y^\Gamma$ is a weak equivalence
    and Serre fibration for every closed subgroup $\Gamma$ of $G$.
  \end{enumerate}
\end{defn}

We alert the reader that our use of the expression `$G$-cofibration' is different from
the usage in some older papers on the subject, where this term refers to
the larger class of $G$-maps with the equivariant homotopy extension property.
In this book, morphisms with the homotopy extension property
will be referred to as {\em h-cofibrations}, see Definition \ref{def:h-cofibration} below.

Clearly, every $\Com$-cofibration of $G$-spaces is a $G$-cofibration.
The following proposition is a special case 
of \cite[Prop.\,2.11]{fausk:prospectra} or \cite[Prop.\,B.7]{schwede:global}.
For the definition of a cofibrantly generated model category see 
for example \cite[Sec.\,2.1]{hovey:model_categories}. 

\begin{prop}\label{prop:Gmod}
  Let $G$ be a Lie group.
  \begin{enumerate}[\em (i)]
  \item The $\Com$-equivalences, $\Com$-fibrations and $\Com$-cofibrations 
    form a proper, topological, cofibrantly generated model structure on the category 
    of $G$-spaces, the $\Com$-model structure.\index{Com-model structure@$\Com$-model structure}
    A morphism $i:A\to B$ is a $\Com$-cofibration if and only if it is a $G$-cofibration
    and the stabilizer group of every point in $B-i(A)$ is compact.
  \item
  The set of maps
  \[ G/H \times i^k \colon G/H \times \partial D^k \to G/H \times D^k \]
  serves as a set of generating cofibrations
  for the $\Com$-model structure, as $H$ ranges over all 
  compact subgroups of~$G$  and $k\geq 0$. The set of maps 
  \[ G/H \times j^k \colon G/H \times D^k\times\{0\}
  \to G/H \times D^k \times [0,1] \]
  serves as a set of generating acyclic cofibrations, as $H$ ranges over all 
  compact subgroups of~$G$  and $k\geq 0$. 
\item For every $G$-cofibration $i:A\to B$ and every $\Com$-cofibration $j:K\to L$ 
  of based $G$-spaces, the pushout product map
  \[ i \boxempty j = (i\times L)\cup(B \times j)
  \ :\  (A \times L) \cup_{A \times K} (B \times K) \ \to \ B \times L \]
  is a $\Com$-cofibration. If moreover one of $i$ or $j$ is a $\Com$-equivalence, 
  then $i \boxempty j$ is also a $\Com$-equivalence.
  \end{enumerate}
\end{prop}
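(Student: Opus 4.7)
The plan is to verify the hypotheses of Kan's recognition theorem for cofibrantly generated model structures applied to the proposed sets $I=\{G/H\times i^k\}$ and $J=\{G/H\times j^k\}$ from part (ii), with $H$ ranging over compact subgroups and $k\geq 0$. The key adjunction
\[ \map^G(G/H\times Z,\, X) \ \iso \ \map(Z,\, X^H) \]
identifies the maps with the right lifting property against $J$ (respectively, against $I$) as precisely the $\Com$-fibrations (respectively, the $\Com$-fibrations that are also $\Com$-equivalences). Smallness of the domains is automatic because $G/H\times\partial D^k$ and $G/H\times D^k\times\{0\}$ are finite-dimensional CW-complexes (using that $G/H$ is a smooth manifold for $H$ compact).

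The nontrivial input is showing that any transfinite composition of pushouts of maps in $J$ is a $\Com$-equivalence. For a fixed compact $H\leq G$, passing to $H$-fixed points commutes with pushouts along closed equivariant embeddings and with transfinite compositions along such; and a pushout of $G/K\times j^k$ has $H$-fixed points given by a pushout of $(G/K)^H\times j^k$, which is a pointwise strong deformation retract. A standard induction then yields the claim, which together with the lifting analysis closes the model-structure axioms.

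For the cofibration characterization in (i), the forward implication is straightforward: each generator $G/H\times i^k$ is a $G$-cofibration with compact point-stabilizers off the image, and these two properties persist under pushouts, transfinite compositions, and retracts. The converse is the main obstacle: starting from a $G$-cofibration $i\colon A\to B$ with compact stabilizers in $B - i(A)$, one must produce a cellular filtration by cells of the form $G/H\times i^k$ with $H$ compact. My approach would be to use the small object argument against $I$ to factor $i$ as a relative $I$-cell complex followed by a $\Com$-fibration and $\Com$-equivalence, and then exploit the fact that $i$ is itself a $G$-cofibration to retract the factorization back onto $i$; here proper-action slice theory (tubular neighborhoods of orbits of the form $G\times_H V$ with $H$ compact) is what guarantees that the attaching data can always be arranged to live on orbits $G/H$ with $H$ compact.

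Part (iii) follows by standard cell-induction, reducing to the case $i=G/H\times i^a$, $j=G/K\times i^b$ (or $j^b$). The double-coset decomposition of $G/H\times G/K$ as a disjoint union of $G$-orbits $G/L_\gamma$, where each $L_\gamma = H\cap \gamma K\gamma^{-1}$ is again compact, reduces the pushout product to a disjoint union of cells $G/L_\gamma\times(i^a\boxempty i^b)$, which are generators (or generating acyclic cofibrations) by the nonequivariant pushout-product property of $i^k$ and $j^k$. The topological enrichment is a special case with $A=\emptyset$. Finally, left properness follows because $\Com$-cofibrations have the property that $H$-fixed points preserve pushouts for every compact $H$, and right properness follows because $\Com$-fibrations restrict to Serre fibrations on $H$-fixed points, so pullbacks preserve weak equivalences there.
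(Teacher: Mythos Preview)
Your approach to parts (i) and (ii) via Kan's recognition theorem is essentially the standard one and would work with sufficient care; the paper simply cites \cite[Prop.\,B.7]{schwede:global} and \cite[Prop.\,2.11]{fausk:prospectra} for these, so your outline is more self-contained. One caution on the converse characterization in (i): your retract argument requires that the $G$-cofibration $i$ lift against a $\Com$-acyclic fibration $p$, but $p$ is only known to be an acyclic Serre fibration on $H$-fixed points for \emph{compact} $H$, whereas a $G$-cofibration demands the lifting property against acyclic fibrations on fixed points for \emph{all} closed subgroups. You need the compact-stabilizer hypothesis on $B-i(A)$ to bridge this gap, and the sentence about slice theory does not make clear how.

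There is, however, a genuine gap in your argument for part (iii). You claim a double-coset decomposition of $G/H\times G/K$ as a disjoint union of orbits $G/L_\gamma$. This is correct set-theoretically, but for a positive-dimensional Lie group $G$ it is generally \emph{not} a disjoint union in the topological sense: the orbit space $H\backslash G/K$ need not be discrete, so $G/H\times G/K$ is not a coproduct of orbit spaces and hence not obviously $\Com$-cofibrant by this route. (Consider $G=SU(2)$ with $H=K$ a maximal torus.) The paper handles this differently: it observes that $G/\Gamma\times G/H$ is $G$-homeomorphic to $G\times_H(G/\Gamma)$, and then invokes Illman's equivariant triangulation theorem \cite[Thm.\,7.1]{illman:triangulation_Lie} to equip $G/\Gamma$ with an $H$-CW-structure (using that $H$ is compact). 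Inducing up gives a genuine $G$-CW-structure on the product with compact isotropy, hence $\Com$-cofibrancy. Illman's theorem is the essential non-formal input you are missing; without it the reduction to generators does not conclude.
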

\begin{proof}
As we already mentioned, parts (i) and (ii) are proved in detail in
\cite[Prop.\,B.7]{schwede:global}.
Since smash product has an adjoint in each variable, 
it preserves colimits in each variable.
So it suffices to check the pushout product properties in (iii)
when the maps $f$ and $g$ are from the sets of generating (acyclic) cofibrations,
compare \cite[Cor.\,4.2.5]{hovey:model_categories}.
The set of inclusions of spheres into discs is closed under pushout product,
in the sense that $i^k\Box i^l$ is homeomorphic to $i^{k+l}$.
Similarly, the pushout product of $i^k$ with $j^l$ is isomorphic to $j^{k+l}$.
So all claims reduce to the fact that for every pair of closed subgroups 
$\Gamma$ and $H$ of $G$ such that $H$ is compact, the $G$-space $G/\Gamma\times G/H$
with the diagonal $G$-action is $\Com$-cofibrant. 
Indeed, this product is $G$-homeomorphic 
to $G \times_H (G/\Gamma)$, with $G$-action only on the left factor of $G$. 
Illman's theorem \cite[Thm.\,7.1]{illman:triangulation_Lie} 
implies that $G/\Gamma$ admits the structure of an $H$-CW complex;
hence $G \times_H (G/\Gamma)$ admits the structure of a $G$-CW-complex,
and its isotropy groups are compact. 
\end{proof}

Every proper $G$-CW-complex is in particular $\Com$-cofibrant.
On the other hand, every
$\Com$-cofibrant $G$-space is $G$-equivariantly homotopy equivalent
to a proper $G$-space that admits the structure of a $G$-CW-complex.
So for all practical purposes, $\Com$-cofibrant $G$-spaces are
as good as proper $G$-CW-complexes.
Since all $G$-spaces are fibrant in the $\Com$-model structure,
the abstract homotopy category $\Ho^{\Com}(G\bT)$,
defined as the localization at the class of $\Com$-equivalences,
is equivalent to the concrete homotopy category 
of proper $G$-CW-complexes and equivariant homotopy classes of $G$-maps.

We denote by $\uEG$ a universal proper $G$-space,\index{universal proper $G$-space}
i.e., a universal $G$-space for the family of compact subgroups of~$G$.
It is characterized up to $G$-homotopy equivalence by the following properties:
\begin{enumerate}[(i)]
\item  $\uEG$ admits the structure of a $G$-CW complex.
\item The $H$-fixed point space $(\uEG)^H$ is contractible 
    if $H$ is compact, and empty otherwise.
\end{enumerate}
The existence of $\uEG$ follows for example from \cite[Thm.\,1.9]{luck:survey}.
We note that for every $G$-space $X$, the projection 
$\uEG\times X \to X$ is a  $\Com$-equivalence. 
Indeed, taking $H$-fixed points for a compact subgroup~$H$ of~$G$ we have
\[ (\uEG\times X)^H \ \cong\ (\uEG)^H\times X^H \]
which maps by a homotopy equivalence to~$X^H$ since  $(\uEG)^H$ is contractible.
The unit of the cartesian product (the one-point $G$-space)
is not $\Com$-cofibrant unless the group $G$ is compact;  
since the unique map $\uEG\to \ast$ is a $\Com$-equivalence,
$\uEG$ is a cofibrant replacement of the one-point $G$-space.

Here are some examples of universal spaces for proper actions.
\begin{itemize}
\item   If the Lie group $G$ is compact, then the one-point $G$-space is a model for $\uEG$.  
\item
  Suppose that $G$ has no non-trivial compact subgroups;
  for example, $G$ could be discrete and torsion free.
  Then any universal $G$-space for free $G$-actions,
  usually denoted by $E G$, is also a universal $G$-space for proper $G$-actions.
\item  If $G$ is discrete and can be exhausted by an ascending sequence of finite subgroups,
  then it has a 1-dimensional model for $\uEG$, see Example \ref{eg:1-dim for locally finite}.
\item Suppose that $G$ is {\em almost connected}, i.e., its group of path components is finite.
  Then $K$ has a maximal compact subgroup, i.e., a compact subgroup $K$ 
  such that every compact subgroup is subconjugate to $K$.
  In this situation, the orbit space $G/K$ is a model for $\uEG$.
  We refer to the discussion immediately preceding Theorem \ref{thm:reduction}
  for more details and references.
\end{itemize}

\begin{eg}[A universal proper $D_\infty$-space]\label{eg:ED_infty}\index{infinite dihedral group}
We offer the case $G=D_\infty=\mZ\ltimes\mZ/2$  of the infinite dihedral group
as an explicit example to keep in mind; the group $D_\infty$ is also isomorphic to
the free product $\mZ/2\ast\mZ/2$.
Besides the trivial subgroup, $D_\infty$ has two other conjugacy classes of finite subgroups,
both of order 2; the subgroup $H_1$ generated by $(0, 1+2\mZ)$
and the subgroup $H_2$ generated by $(1,1+2\mZ)$ are representatives.

The groups $\mZ$ and $\mZ/2$ act on the real line
by translation and reflection at the origin, respectively.
These two actions conspire into an action of
the semidirect product $D_\infty=\mZ\ltimes \mZ/2$ on $\mR$.
The real line has a 1-dimensional $D_\infty$-CW-structure
with 3 equivariant cells, as follows.
The 0-skeleton is the union of the transitive subsets $\mZ$ and $1/2+\mZ$;
so each of these two invariant subspaces is an equivariant 0-cell,
and the respective stabilizers are the conjugacy classes of $H_1$ and $H_2$.
There is a single equivariant 1-cell, with trivial stabilizer.
The fixed point spaces of this $D_\infty$-action on $\mR$
are either empty (for infinite subgroups), they are single points
(for two-element subgroups), or all of $\mR$ (for the trivial subgroup).
So the real line is a universal proper $D_\infty$-space.
\end{eg}

We briefly discuss how change of group functors interact with
the $\Com$-model structures. We let $ \alpha \colon K \to G$ 
be a continuous homomorphism between Lie groups.
Restriction of scalars along $\alpha$ is a functor\index{restriction functor}
\[ \alpha^* \ : \ G \bT \ \to \ K \bT \ ; \]
here $\alpha^*(X)$ has the same underlying space as $X$, with $K$
acting through the homomorphism $\alpha$.
The restriction functor $\alpha^*$ has a left adjoint\index{induction functor}
\[  G \times_{\alpha} - \ :\ K \bT\  \to  \ G\bT\] 
and a right adjoint
\[ \map^{K,\alpha}(G,-) \ :\ K \bT\  \to\  G\bT\ .\] 
For any based $K$-space $X$, the $G$-space $G \times_{\alpha} X$ is the quotient 
of $G\times X$ by the equivalence relation $(g \alpha(k), x) \sim (g, k x)$. 
The $G$-space $\map^{K,\alpha}(G,X)$ is the space of those
continuous maps $f:G\to X$ that satisfy $k\cdot f(g)=f(\alpha(k)\cdot g)$
for all $(k,g)\in K\times G$, with $G$ acting by
\[ (\gamma\cdot f)(g) \ = \ f(g\gamma)\ .\]
An important special case is when $\alpha$ is the inclusion of a closed
subgroup $\Gamma$ of $G$. In that case we write $\res^G_\Gamma$ for the
restriction functor, and we simplify the notation for the left and
right adjoint to $G\times_\Gamma-$ and $\map^\Gamma(G,-)$, respectively.

\begin{prop}\label{prop:H-Gamma-CW}
  Let $H$ and $\Gamma$ be closed subgroups of a Lie group $G$,
  such that moreover $H$ is compact. 
  Then the $(H\times \Gamma)$-action on $G$ given 
  by $(h,\gamma) \cdot g = h g \gamma^{-1}$ 
  underlies an $(H \times \Gamma)$-CW-complex.
\end{prop}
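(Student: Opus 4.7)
The plan is to put the desired cell structure on $G$ by lifting an $H$-equivariant cell structure on the orbit space $G/\Gamma$ along the principal $\Gamma$-bundle $p \colon G \to G/\Gamma$. Since closed subgroups of a Lie group are Lie subgroups, the homogeneous space $G/\Gamma$ is a smooth manifold on which the compact Lie group $H$ acts smoothly by left translation. Illman's theorem \cite[Thm.\,7.1]{illman:triangulation_Lie} therefore endows $G/\Gamma$ with the structure of an $H$-CW-complex.

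Next I would show, by induction on the skeletal filtration of $G/\Gamma$, that $G^{(n)} := p^{-1}((G/\Gamma)^{(n)})$ defines an $(H\times\Gamma)$-CW-filtration of $G$. The inductive step amounts to showing that every equivariant attaching square
\[
\xymatrix{
H/L \times S^{n-1} \ar[r] \ar[d] & (G/\Gamma)^{(n-1)} \ar[d] \\
H/L \times D^n \ar[r]^-{\phi} & (G/\Gamma)^{(n)}
}
\]
pulls back along $p$ to an $(H\times\Gamma)$-equivariant cell attachment of the desired form. Since $p$ is a locally trivial principal $\Gamma$-bundle and pulling back along a principal bundle preserves pushouts along closed inclusions, it suffices to identify the pullback of $p$ along $\phi$ as an $(H\times\Gamma)$-equivariant space of the form $(H\times\Gamma)/K\times D^n$ for a compact subgroup $K$ of $H\times\Gamma$.

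For this identification I would argue as follows. The pullback is an $H$-equivariant principal $\Gamma$-bundle over $H/L\times D^n$. Because $D^n$ is contractible, the retraction $(x,t)\mapsto (x,0)$ is $H$-equivariantly homotopic to the identity, so the bundle is $H$-equivariantly isomorphic to the pullback of its restriction to $H/L\times\{0\}$. Equivariant principal $\Gamma$-bundles over $H/L$ are classified up to $\Gamma$-conjugation by homomorphisms $\rho\colon L\to\Gamma$, and the associated bundle is $(H\times\Gamma)$-equivariantly $(H\times\Gamma)/K$ with $K = \{(l,\rho(l)) : l\in L\}$. The concrete $\rho$ is read off by choosing any lift $\tilde g\in G$ of $\phi(eL,0)\in G/\Gamma$: since $\phi$ is $H$-equivariant, $\tilde g$ satisfies $\tilde g^{-1}L\tilde g\subseteq\Gamma$ and one may take $\rho(l) = \tilde g^{-1}l\tilde g$. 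In particular $K$ is isomorphic to $L$ via the first projection, hence compact.

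The main obstacle is the bundle-lifting step in the third paragraph: a priori the ``classifying homomorphism'' $\rho$ for the pulled-back bundle over $H/L\times D^n$ could vary with the parameter $t\in D^n$, and one needs the contractibility of $D^n$ together with the $H$-equivariant covering homotopy property of the principal $\Gamma$-bundle to trivialize this dependence. Once the cell-by-cell identification is in place, inductively assembling the pushouts along the $(H\times\Gamma)$-invariant filtration $\{G^{(n)}\}$ gives the desired $(H\times\Gamma)$-CW-structure on $G$, with all isotropy groups of the form $\{(l,\tilde g^{-1}l\tilde g) : l\in L\}$ for compact subgroups $L$ of $H$.
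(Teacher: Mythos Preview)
Your approach is correct but takes a genuinely different route from the paper. The paper's proof is a one-liner: it verifies directly that the $(H\times\Gamma)$-action on $G$ is proper (factoring the shearing map as a composite of proper maps, using compactness of $H$) and smooth, and then invokes Illman's triangulation theorem for \emph{proper} smooth actions \cite{illman:triangulation_proper} to conclude.

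Your argument instead applies the ``lighter'' Illman theorem for \emph{compact} group actions \cite{illman:triangulation_Lie} to the smooth $H$-manifold $G/\Gamma$ and then lifts cells along the principal $\Gamma$-bundle $p\colon G\to G/\Gamma$. This trades one deep input for another: you avoid the proper-action version of Illman, but you need the equivariant trivialization step, which in turn rests on the fact that for compact $L$ the path component of $\rho$ in $\hom(L,\Gamma)$ is a single $\Gamma^\circ$-conjugacy orbit and that $\Gamma^\circ\to\Gamma^\circ/C_{\Gamma^\circ}(\rho(L))$ is a locally trivial bundle (so a map $D^n\to\hom(L,\Gamma;\rho)$ lifts to $\Gamma^\circ$). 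This is essentially the argument used later in the paper in the proof of Theorem~\ref{thm:homotopic homomorphisms}. With that input your reasoning goes through: the section of the pulled-back bundle over $D^n$ can be corrected to be $L$-equivariant for a fixed $\rho$, giving the desired $(H\times\Gamma)/K\times D^n$ identification. Your approach has the merit of making the cell types explicit, but the paper's route is considerably shorter.
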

\begin{proof}
We claim that the $(H \times \Gamma)$-action is {\em proper}, i.e., the map
\[  (H\times\Gamma) \times G\ \to \ G \times G \ , \quad
((h,\gamma),g)\ \longmapsto \ ( h g \gamma^{-1}, g) \]
is a proper map in the sense that preimages of compact sets are compact.  
Indeed, we can factor this map as the composite of three proper maps, namely
the inclusion of the closed subspace $H \times \Gamma\times G$
into $H \times G \times G$,
followed by the homeomorphism 
\[ H \times G \times G  \ \xrightarrow{\ \cong\ }\  
H \times G \times G, \quad (h, \gamma, g) \ \longmapsto \ (h, h g \gamma^{-1}, g) \]
and the projection of $H \times G \times G$ to the last two factors.
Since the~$(H\times\Gamma)$-action on~$G$ is also smooth, 
Theorem~I of~\cite{illman:triangulation_proper} provides an $(H\times\Gamma)$-equivariant triangulation
of~$G$, and hence the desired equivariant CW-structure,
by~\cite[Prop.\,11.5]{illman:triangulation_proper}.
\end{proof}

We recall that a {\em Quillen adjunction}\index{Quillen adjunction}
is an adjoint functor pair $(F,G)$ between model categories such that
the left adjoint~$F$ preserves cofibrations and 
the right adjoint~$G$ preserves fibrations.
Equivalent conditions are to require that
the left adjoint~$F$ preserves cofibrations and acyclic cofibrations;
or that the right adjoint~$G$ preserves fibrations and acyclic fibrations.

\begin{prop} \label{prop:homomorphism adjunctions spaces} 
Let $ \alpha \colon K \to G$ be a continuous homomorphism between Lie groups.
\begin{enumerate}[\em (i)]
\item The restriction functor $\alpha^*:G\bT\to K\bT$
takes $\Com$-equivalences of $G$-spaces to
$\Com$-equivalences of $K$-spaces.\index{restriction functor}
\item The adjoint functor pair\index{induction functor}
\[\xymatrix@C=12mm{  
G \times_{\alpha } -\ : \ 
    K \bT\ \ar@<0.5ex>[r]    &
    \ G \bT \ : \ \alpha^*  \ar@<0.5ex>[l]  } \]
is a Quillen adjunction with respect to the $\Com$-model structures. 
\item If the image of $\alpha$ is closed in $G$
and the kernel of $\alpha$ is compact, then the adjoint functor pair
\[\xymatrix@C=12mm{  
\map^{K,\alpha}(G,-)\ : \ K \bT\ \ar@<-0.5ex>[r]  &
\ G \bT  \ : \ \alpha^*   \ar@<-0.5ex>[l]
} \]
is a Quillen adjunction with respect to the $\Com$-model structures. 
\end{enumerate}
\end{prop}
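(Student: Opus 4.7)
The plan is to exploit the explicit generating (acyclic) cofibrations from Proposition \ref{prop:Gmod}(ii) in each of the three parts. Part (i) is purely formal: any compact subgroup $L\leq K$ has compact image $\alpha(L)\leq G$, and by definition $(\alpha^*X)^L = X^{\alpha(L)}$, so $\alpha^*$ preserves $\Com$-equivalences.

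For part (ii), since $G\times_\alpha -$ is a left adjoint it preserves colimits, so by a standard cofibrant-generation argument it suffices to send the generating $\Com$-cofibrations $K/L\times i^k$ and generating acyclic $\Com$-cofibrations $K/L\times j^k$ of $K\bT$, with $L\leq K$ compact, to (acyclic) $\Com$-cofibrations of $G\bT$. A direct identification of the defining coequalizer yields a $G$-homeomorphism
\[ G\times_\alpha (K/L)\ \cong\ G/\alpha(L), \]
and $\alpha(L)$ is a compact subgroup of $G$. Hence the images are again generating (acyclic) $\Com$-cofibrations of $G\bT$.

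For part (iii) we must show that the left adjoint $\alpha^*$ preserves $\Com$-cofibrations and $\Com$-acyclic cofibrations, which as in (ii) reduces to checking the generators $G/H\times i^k$ and $G/H\times j^k$ for compact $H\leq G$. Since $\alpha^*$ commutes with Cartesian product by a trivially-acted space, $\alpha^*(G/H\times i^k)=\alpha^*(G/H)\times i^k$, and the topological structure of the $\Com$-model category on $K\bT$ (Proposition \ref{prop:Gmod}(i)) reduces the question further to showing that $\alpha^*(G/H)$ is a $\Com$-cofibrant $K$-space.

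For the latter, the hypotheses make $\alpha$ a proper map: it factors as the quotient $K\to K/\ker(\alpha)$ (a principal bundle with compact fibre), the induced Lie group isomorphism $K/\ker(\alpha)\cong \alpha(K)$, and the closed embedding $\alpha(K)\hookrightarrow G$, each of which is proper. Combined with compactness of $H$, this makes the smooth $K$-action on $G/H$ proper. Illman's triangulation theorem for smooth proper Lie group actions, applied as in the proof of Proposition \ref{prop:H-Gamma-CW}, then equips $\alpha^*(G/H)$ with a $K$-CW-structure whose stabilizers are automatically compact as isotropy groups of a proper action; so $\alpha^*(G/H)$ is $\Com$-cofibrant by Proposition \ref{prop:Gmod}(i). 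The main technical point, and the reason both hypotheses on $\alpha$ are needed, is precisely to secure this properness: without compact kernel the stabilizer $\alpha^{-1}(gHg^{-1})$ of a point $gH$ can fail to be compact (e.g.\ the trivial map $\mR\to \{e\}$ with $H$ trivial), and without closed image $\alpha$ itself can fail to be proper even with trivial kernel (e.g.\ an irrational-slope inclusion $\mR\to \mR^2/\mZ^2$).
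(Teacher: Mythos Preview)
Your proof is correct. Parts (i) and the overall reduction in (iii) match the paper exactly, but in (ii) and in the core of (iii) you take a different route.

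For (ii), the paper verifies the Quillen adjunction from the right-adjoint side: the identity $(\alpha^* X)^L=X^{\alpha(L)}$ already shows that $\alpha^*$ preserves $\Com$-fibrations as well as $\Com$-equivalences, so (i) and (ii) fall out together in one line. Your argument via $G\times_\alpha(K/L)\cong G/\alpha(L)$ is equally valid and equally short.

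For (iii), both arguments reduce to showing that $\alpha^*(G/H)$ is $\Com$-cofibrant, but the paper does not argue properness of the $K$-action directly. Instead it factors $\alpha$ through its closed image $\Gamma=\alpha(K)$: Proposition~\ref{prop:H-Gamma-CW} gives $G$ a $(\Gamma\times H)$-CW-structure, so $G/H$ inherits a $\Gamma$-CW-structure; then the surjection $\beta:K\to\Gamma$ satisfies $\beta^*(\Gamma/\Delta)\cong K/\beta^{-1}(\Delta)$, so $\beta^*$ carries $\Gamma$-CW-complexes to $K$-CW-complexes; finally compactness of $\ker\alpha$ forces the $K$-stabilizers $\alpha^{-1}(H^g)$ to be compact. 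Your route---observing that the two hypotheses make $\alpha$ itself a proper map, hence the $K$-action on $G/H$ is smooth and proper, hence Illman's theorem applies directly---is arguably cleaner and avoids the two-step factorization, at the cost of redoing the properness check rather than quoting Proposition~\ref{prop:H-Gamma-CW}. The paper's approach has the minor advantage of separating the roles of the two hypotheses more visibly (closed image for the CW-structure, compact kernel for the isotropy), which your final paragraph also makes explicit.
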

\begin{proof}
For every compact subgroup 
$L\leq K$ the image $\alpha(L)$ is a compact subgroup of $G$,
and for every based $G$-space $X$, 
we have the equality $(\alpha^*(X))^L= X^{\alpha(L)}$. 
So the restriction functor takes $\Com$-equivalences of $G$-spaces to
$\Com$-equivalences of $K$-spaces, and
it takes $\Com$-fibrations of $G$-spaces to
$\Com$-fibrations of $K$-spaces. This establishes parts (i) and (ii).

As a left adjoint, the restriction functor $\alpha^*$ preserves colimits.
So for part (iii) we only have to check
that the restriction of each of the generating 
$G$-$\Com$-cofibrations specified in Proposition~\ref{prop:Gmod} (ii)
is a $\Com$-cofibration of $K$-spaces.
This amounts to the claim that for every compact subgroup~$H$ of~$G$ the 
$K$-space $\alpha^*(G/H)$ is $\Com$-cofibrant.
We let $\Gamma=\alpha(K)$ denote the image of $\alpha$, 
which is a closed subgroup of $G$ by hypothesis.
Since~$G$ admits a $(\Gamma\times H)$-CW-structure by 
Proposition \ref{prop:H-Gamma-CW},
the orbit space $G/H$ inherits a $\Gamma$-CW-structure.
We let $\beta:K\to \Gamma$ denote the same homomorphism as
$\alpha$, but now considered as a continuous epimorphism onto its image.
For every closed subgroup $\Delta$ of $\Gamma$ we have
\[ \beta^*(\Gamma/\Delta)\ \iso \ K/\beta^{-1}(\Delta)\ . \]
Since the restriction functor $\beta^*:\Gamma\bT\to K\bT$ 
commutes with colimits and products with spaces, this shows that
it takes $\Gamma$-CW-complexes to $K$-CW-complexes.
In particular, $\alpha^*(G/H)=\beta^*(\res^G_\Gamma(G/H))$
admits the structure of a $K$-CW-complex.
The $K$-stabilizer group of a coset $g H$ is $\alpha^{-1}(H^g)$.
Since $H$ is compact and the kernel of $\alpha$ is compact by
hypothesis, all stabilizer groups of $\alpha^*(G/H)$ are compact.
So $\alpha^*(G/H)$ is $\Com$-cofibrant as a $K$-space.
\end{proof}

\begin{rk}
One should beware that restriction to a closed subgroup does {\em not}
preserve general equivariant cofibrations without an isotropy condition.
This should be contrasted with the fact that h-cofibrations
(i.e., maps with the equivariant homotopy extension property)
are stable under restriction to closed subgroups.
For example, the left translation action makes $\mR/\mZ$ an $\mR$-CW-complex
and a cofibrant $\mR$-space; the $\mR$-space $\mR/\mZ$ is {\em not} $\Com$-cofibrant, however, because
the stabilizer group $\mZ$ is not compact.
On the other hand, if $\Gamma$ is the additive subgroup of $\mR$ generated by an irrational number,
then the underlying $\Gamma$-action on the circle $\mR/\mZ$
is not proper, and $\mR/\mZ$ is neither a $\Gamma$-CW-complex nor cofibrant as a 
$\Gamma$-space.
\end{rk}

In the application to orthogonal $G$-spectra, we will also need the
{\em based} version of the $\Com$-model structure, and the modification
of some of the previous results to the based context.
We write $\bT_*$ for the category of based compactly generated spaces.
A {\em based $G$-space} is a $G$-space equipped with a $G$-fixed basepoint;
we write $G \bT_*$ for the category of based $G$-spaces 
and based continuous $G$-maps.\index{Com-model structure@$\Com$-model structure!based version}

A standard result in model category theory lets us lift the $\Com$-model structure
from unbased to based $G$-spaces.
A morphism in $G\bT_*$ is a $\Com$-equivalence, $\Com$-fibration or 
$\Com$-cofibration if and only if it is so as an unbased $G$-map,
see \cite[Prop.\,1.1.8]{hovey:model_categories}.
We will freely use the based version of Propositions \ref{prop:Gmod} 
and \ref{prop:homomorphism adjunctions spaces} in what follows.

\medskip

After discussing equivariant spaces, we now move on to equivariant spectra.
An {\em inner product space}\index{inner product space}
is a finite-dimensional real vector space equipped with a scalar product.
We denote by $\bL(V,W)$ the space of linear isometric embeddings
between two inner product spaces, topologized as the Stiefel manifold of
$\dim(V)$-frames in $W$.

\begin{con}\label{con:O}
We let $V$ and $W$ be inner product spaces.
The {\em orthogonal complement vector bundle}
over the space $\bL(V,W)$ of linear isometric embeddings has total space
\[ \xi(V,W) \ = \ 
\{\, (w,\varphi) \in W\times\bL(V,W) \ | \ w\perp\varphi(V)\,\} \ .\]
The structure map $\xi(V,W)\to\bL(V,W)$
is the projection to the second factor.
The vector bundle structure of~$\xi(V,W)$ is 
as a vector subbundle of the trivial vector bundle $W\times\bL(V,W)$,
and the fiber over $\varphi:V\to W$ is the orthogonal complement $W-\varphi(V)$ 
of the image of $\varphi$.

We let $\bO(V,W)$ be the Thom space of the orthogonal complement bundle,
i.e., the one-point compactification of the total space of $\xi(V,W)$.
Up to non-canonical homeomorphism, we can describe the space $\bO(V,W)$ 
differently as follows.
If the dimension of $W$ is smaller than the dimension of $V$,
then the space $\bL(V,W)$ is empty and $\bO(V,W)$ consists
of a single point at infinity. 
If $\dim V=m$ and $\dim W=m+n$,
then $\bL(V,W)$ is homeomorphic to the homogeneous space $O(m+n)/O(n)$
and $\bO(V,W)$ is homeomorphic to $O(m+n)\ltimes_{O(n)}S^n$.

The Thom spaces $\bO(V,W)$ are the morphism spaces of a based topological category $\bO$.
Given a third inner product space $U$, the bundle map 
\[ \xi(V,W) \times \xi(U,V) \ \to \ \xi(U,W) \ , \quad
((w,\varphi),\,(v,\psi)) \ \longmapsto \ (w+\varphi(v),\,\varphi\psi)\]
covers the composition map $\bL(V,W)\times\bL(U,V)\to \bL(U,W)$.
Passage to Thom spaces gives a based map
\[ \circ \ : \ \bO(V,W) \sm \bO(U,V) \ \to \ \bO(U,W) \]
which is clearly associative, and is the composition in the category $\bO$. 
The identity of $V$ is $(0,\Id_V)$ in $\bO(V,V)$.
\end{con}

\begin{defn}\label{def:Gspec} 
Let $G$ be a Lie group.\index{orthogonal $G$-spectrum}
An {\em orthogonal $G$-spectrum} is a based continuous functor from $\bO$ 
to the category $G\bT_*$ of based $G$-spaces.
A {\em morphism} of orthogonal spectra is a natural transformation of functors.
We denote the category of orthogonal $G$-spectra by $\Sp_G$. 
\end{defn}  

A continuous functor to based $G$-spaces is the same as a $G$-object
of continuous functors. So orthogonal $G$-spectra could equivalently
be defined as orthogonal spectra equipped with a continuous $G$-action.
Since we will not consider any other kind of spectra in this memoir,
we will often drop the adjective `orthogonal'; in other words,
we use `$G$-spectrum' as a synonym for `orthogonal $G$-spectrum'.
\medskip

If $V$ and $W$ are inner product spaces, we define a distinguished based continuous map
\begin{equation} \label{eq:define_i_V W}
 i_{V,W}\ : \ S^V \ \to \ \bO(W,V\oplus W)\text{\quad by\quad} 
v\ \longmapsto \ ( (v,0), (0,-))\ ,   
\end{equation}
the one-point compactification of the fiber over the embedding
$(0,-):W\to V\oplus W$ as the second summand.
If $X$ is an orthogonal spectrum, we refer to the composite
\[ \sigma_{V,W}\ : \ 
S^V\sm X(W)\ \xrightarrow{i_{V,W}\sm X(W)} \  
\bO(W,V\oplus W)\sm X(W) \ \to\  X(V\oplus W)   \]
as the {\em structure map} of~$X$.\index{structure map!of an orthogonal $G$-spectrum}

Limits and colimits in enriched functor categories are created objectwise.
In particular, all small limits and colimits in $\Sp_G$ exist and are
created `levelwise'. Moreover, limits and colimits of based $G$-spaces
are created on underlying non-equivariant based spaces; hence all limits and
colimits in~$\Sp_G$ are created in the category
of underlying non-equivariant orthogonal spectra.
By \cite[Sec.\,3.8]{kelly:enriched_category_theory} 
we conclude that the category $\Sp_G$ is enriched complete and cocomplete.

\begin{rk}
If $G$ is compact, then the above definition is equivalent to the original 
definition of orthogonal $G$-spectra given by Mandell and May 
in \cite{mandell-may:equivariant_orthogonal}, in the following sense.
In \cite[II.2]{mandell-may:equivariant_orthogonal}, 
Mandell and May define $G$-equivariant orthogonal spectra indexed on a 
$G$-universe $\cU$.\index{G-universe@$G$-universe}
Such a $G$-spectrum is a collection of $G$-spaces indexed on those representations 
that embed into $\cU$, 
together with certain equivariant structure maps. 
It follows from \cite[Thm.\,II.4.3, Thm.\,V.1.5]{mandell-may:equivariant_orthogonal} 
that for any $G$-universe $\cU$, the category of orthogonal $G$-spectra indexed 
on $\cU$ and the category of orthogonal $G$-spectra as in Definition \ref{def:Gspec} 
are equivalent. This shows that universes are not really relevant 
for the pointset level definition of an orthogonal $G$-spectrum. 
However, they become important when one considers the homotopy theory 
of orthogonal $G$-spectra. 
\end{rk}

Here are some basic examples of orthogonal $G$-spectra;
further examples will be discussed along the way.

\begin{eg}[Suspension spectra]
The {\em sphere spectrum} $\mS$ is the orthogonal spectrum\index{suspension spectrum} 
given by $\mS(V)= S^V$.
The orthogonal group $O(V)$ acts on $V$ and hence 
on the one-point compactification $S^V$. 
The structure maps are the canonical homeomorphisms $S^V \wedge S^W \cong S^{V\oplus W}$.
The {\em $G$-sphere spectrum} $\mS_G$ is the orthogonal sphere spectrum\index{suspension spectrum}  
equipped with trivial $G$-action.
We will show in Example \ref{eg:represent cohomotopy}
that for discrete groups,
the sphere spectrum represents $G$-equivariant stable cohomotopy as
defined by the third author in \cite{luck:burnside_ring}. 

More generally we consider a based $G$-space $A$.
The {\em suspension spectrum} $\Sigma^\infty A$ is defined by 
$(\Sigma^\infty A)(V)= S^V\sm A$. The group $G$ acts through the second factor 
and the orthogonal groups act through the first factor. 
The structure maps are given by the canonical homeomorphisms 
$S^V\sm (S^W \wedge A)  \cong  S^{V\oplus W}\sm A$. 
The sphere spectrum $\mS$ is isomorphic to $\Sigma^\infty S^0$.
\end{eg}

\begin{eg}[Trivial $G$-spectra] 
  Every orthogonal spectrum becomes an orthogonal $G$-spectrum
  by letting $G$ act trivially.
For example, the $G$-sphere spectrum $\mS_G$ arises in this way.
This construction derives to an exact functor from global stable homotopy theory
to $G$-equivariant stable homotopy theory,
compare Theorem~\ref{thm:change to groups} below.\index{global spectrum} 
The $G$-equivariant cohomology theories that arise in this way from
global stable homotopy types have additional structure and special properties,
i.e., they form `equivariant cohomology theories' for all Lie groups and 
not just for a particular group and its subgroups.
We return to this class of examples in more detail in Section \ref{sec:global}.
\end{eg}

\begin{con}\label{con:smash with G-space}
We let $G$ and $K$ be Lie groups.
Every continuous based functor $F:G\bT_\ast\to K\bT_\ast$
between the categories of based equivariant spaces gives rise to a continuous functor
\[ F\circ- \ : \ \Sp_G \ \to \ \Sp_K \]
from orthogonal $G$-spectra to orthogonal $K$-spectra by postcomposition: 
it simply takes an orthogonal $G$-spectrum $X$ to the composite
\[ \bO \ \xrightarrow{\ X \ }\ G\bT_*\ \xrightarrow{\ F \ } \ K\bT_\ast\ . \]
If $A$ is a based $G$-space, then smashing with $A$ and taking based maps
out of $A$ are two such functors (for $K=G$). So for every orthogonal $G$-spectrum~$X$,
we can define two new orthogonal $G$-spectra $X\sm A$ and $X^A$
by smashing with $A$ or taking maps from $A$ levelwise.
More explicitly, we have
\[  (X\sm A)(V) =  X(V) \sm A \text{\quad respectively\quad}
(X^A)(V) = X(V)^A = \map_*(A,X(V)) \]
for an inner product space~$V$. 
The structure maps and actions of the orthogonal groups 
do not interact with~$A$.
Just as the functors $-\sm A$ and $\map_*(A,-)$ are adjoint
on the level of based $G$-spaces, the two functors
just introduced are an adjoint pair on the level of orthogonal $G$-spectra.
\end{con}

The previous Construction~\ref{con:smash with G-space}
provides tensors and cotensors for the category of orthogonal $G$-spectra
over the closed symmetric monoidal category (under smash product) of based $G$-spaces.
There is also  enrichment of orthogonal $G$-spectra in based $G$-spaces as follows.
The mapping space~$\map(X,Y)$ between two orthogonal $G$-spectra is
the space of morphisms between the underlying non-equivariant orthogonal
spectra of~$X$ and~$Y$; on this mapping space, the group~$G$ acts by conjugation.

\begin{eg}[Free spectra and evaluation on representations] \label{ex:free spectra}
  Every real inner product space $V$ gives rise 
  to a representable functor $\bO(V,-):\bO \to \bT_*$,
  which we denote by $F_V$. For example, the sphere spectrum $\mS$ is
  isomorphic to the representable functor $\bO(0,-)$.
  We turn $F_V$ into a $G$-orthogonal spectrum by giving it the trivial action. 
  As a consequence of the enriched Yoneda lemma, the functor
  \[ G\bT_*\ \to\  \Sp_G\ , \quad A\ \longmapsto\  F_V\sm A \]
  is left adjoint to the evaluation functor at $V$.\index{orthogonal $G$-spectrum!free}
  
  More generally, we let $H$ be a closed subgroup of $G$ and $V$ an $H$-representation. 
  Then the evaluation~$X(V)$ is an $(H\times H)$-space 
  by the `external' $H$-action on~$X$ and the `internal' $H$-action from the action 
  on~$V$ and the $O(V)$-functoriality of~$X$. 
  We consider $X(V)$ as an $H$-space via the diagonal $H$-action. 
  Via this action, if $W$ is another $H$-representation, the structure map
  \[ S^V\wedge X(W)\to X(V\oplus W) \]
  becomes $H$-equivariant, with $H$ acting diagonally on the domain. 
  Moreover, the resulting evaluation functor
  \[ -(V)\ :\ \Sp_G\ \to \ H\bT_* \]
  also has a left adjoint which sends a based $H$-space $A$ 
  to the orthogonal $G$-spectrum $G\ltimes_H (F_V\wedge A)$. 
  Here $H$ acts on $F_V$ by precomposition with the $H$-action on~$V$.
\end{eg}

\section{The stable model structure}
\label{sec:stable}

In this relatively long section
we establish the stable model structure on the category $\Sp_G$
of orthogonal $G$-spectra and investigate how it interacts with the smash product. 
We begin by recalling the equivariant homotopy groups $\pi_*^H$ 
for {\em compact} Lie groups $H$ in Construction \ref{con:equivariant homotopy groups compact},
which are used to define the $\pi_*$-isomorphisms of orthogonal $G$-spectra,
see Definition \ref{def:pi_*-isomorphism}.
As an application of our theory 
we will later also define equivariant homotopy groups 
for non-compact Lie groups, see Definition~\ref{def:G homotopy groups} below, 
but these do not play a role in the construction of the model structure. 
Theorem \ref{thm:bifree smash preserves} and Corollary \ref{cor:Gamma2G}
show that certain induction constructions preserve $\pi_*$-isomorphisms.
The stable model structure on the category of orthogonal $G$-spectra
in Theorem \ref{thm:stable} is the main result of this section.
This model structure is Quillen equivalent to the one previously obtained by Fausk 
in~\cite[Prop.\,6.5]{fausk:prospectra}; 
our model structure has more cofibrations and we provide an explicit characterization
of the stable fibrations by certain homotopy pullback requirements. 
The proof of the stable model structure proceeds 
by localizing a certain level model structure.

We also review the smash product of orthogonal spectra in Definition \ref{def:smash product}
and establish various homotopical properties of the smash product of orthogonal $G$-spectra,
with diagonal $G$-action.
Theorem \ref{thm:smashing with cofibrants} shows that
smashing with a fairly broad class of orthogonal $G$-spectra
that we call `quasi-flat' preserves $\pi_*$-isomorphisms;
Proposition \ref{prop:smash is monoidal model} (`pushout product property')
and Proposition \ref{prop:monoid axiom} ('monoid axiom')
show that the smash product of orthogonal $G$-spectra interacts well with the
stable model structure.
The final result of this section is Proposition \ref{prop:Thom is invertible},
which shows that the Thom space of a $G$-vector bundle over
the universal $G$-space $\uEG$ for proper actions is smash invertible in $\Ho(\Sp_G)$.
This result is the first indication that the role of the representation ring $R O(G)$
in the realm of compact Lie groups is now taken by $K O_G(\uEG)$,
the Grothendieck group of $G$-vector bundles over $\uEG$.

\begin{defn} 
  Let $H$ be a compact Lie group.\index{G-universe@$G$-universe!complete} 
  A {\em complete $H$-universe} is an orthogonal $H$-representation
  of countably infinite dimension such that 
  every finite-dimensional $H$-representation embeds into it.
\end{defn}

For every compact Lie group $H$, we choose a complete $H$-universe $\cU_H$.
Up to equivariant isometry, such a complete $H$-universe is given by
\[ \cU_H \ \iso \ \bigoplus_{\lambda\in\Lambda} \bigoplus_\mN\lambda \ , \]
where $\Lambda$ is a set of representatives of all irreducible $H$-representations.
We let $s(\cU_H)$ denote the poset, under inclusion,
of finite-dimensional $H$-subrepresentations of $\cU_H$.

\begin{con}[Equivariant homotopy groups]\label{con:equivariant homotopy groups compact}
  Let $k$ be any integer, $X$ an orthogonal $G$-spectrum and $H$ a compact subgroup of $G$;
  we define the $H$-equivariant homotopy group~$\pi_k^H(X)$. 
  We start with the case $k\geq 0$.
  We recall that for an orthogonal $H$-re\-presen\-ta\-tion $V$, 
  we let $H$ act diagonally on~$X(V)$, through the two $H$-actions on~$X$ and on~$V$.
  For every $V\in s(\cU_H)$ we consider the set
  \[  [S^{V\oplus \mR^k}, X(V)]^H \]
  of $H$-equivariant homotopy classes of based $H$-maps
  from $S^{V\oplus\mR^k}$ to $X(V)$.
  We can stabilize by increasing $V\subset W$ along the maps
  \[
    [S^{V\oplus\mR^k}, X(V)]^H \ \to \  [S^{W\oplus\mR^k},X(W)]^H
  \]
  defined as follows.
  We let $V^\perp=W-V$ denote the orthogonal complement of $V$ in~$W$.
  The stabilization sends the homotopy class of $f:S^{V\oplus\mR^k}\to X(V)$
  to the homotopy class of the composite 
  \begin{align*}
    S^{W\oplus\mR^k}\ \iso \ S^{V^\perp} \sm S^{V\oplus \mR^k}
    \ &\xrightarrow{S^{V^\perp}\sm f} \ 
        S^{V^\perp}\sm X(V) \\ 
      &\xrightarrow{\ \sigma_{V^\perp,V}}\  
        X(V^\perp\oplus V) \ \iso \ X(W) \ ,
  \end{align*}
  where the two unnamed homeomorphisms use the preferred linear isometry
  \[
    V^\perp\oplus V  \ \iso \  W\ , \quad (w,v)\ \longmapsto \ (w+v) \ .
  \]
  These stabilization maps define a functor on the poset $s(\cU_H)$.
  The {\em $k$-th equivariant homotopy group} $\pi_k^H(X)$ is then defined as
  \begin{equation}\label{eq:define pi^H}\index{equivariant homotopy groups}
    \pi_k^H(X) \ = \ \colim_{V\in s(\cU_H)}\,    [S^{V\oplus\mR^k}, X(V)]^H \ .
  \end{equation}
  The abelian group structure arises from the pinch addition in the source variable,
  based on a $G$-fixed unit vector in $V$, for large enough $V$.
  For $k<0$, the definition of $\pi_k^H(X)$ is the same,
  but with  $[S^{V\oplus\mR^k}, X(V)]^H$ replaced by $[S^V, X(V\oplus\mR^{-k})]^H$.
\end{con}

\begin{defn}\label{def:pi_*-isomorphism} 
Let $G$ be a Lie group.
A morphism $f \colon X \to Y$ of orthogonal $G$-spectra 
is a {\em $\pi_*$-isomorphism} if for every compact subgroup $H$ of~$G$
and every integer $k$, the induced map\index{pistar-isomorphism@$\pi_*$-isomorphism}
\[ \pi_k^H(f) \ :\ \pi_k^H (X) \ \to\ \pi_k^H(Y)  \]
is an isomorphism. 
\end{defn}

In the case of compact groups, this definition recovers 
the notion of $\pi_*$-isomorphism 
from \cite[Sec.\,III.3]{mandell-may:equivariant_orthogonal}
or \cite[Def.\,3.1.12]{schwede:global}.

\begin{con}[Loop and suspension isomorphism]
  An important special case of Construction~\ref{con:smash with G-space}
  is when $A=S^1$ is a $1$-sphere with trivial action.
  The \emph{suspension}
  $X \sm S^1$ is defined by
  \[ (X \sm S^1)(V) \ = \ X(V) \sm S^1 \ , \]
  the smash product of the $V$-th level of $X$ with the sphere $S^1$.
  The \emph{loop spectrum} $\Omega X=X^{S^1}$ is defined by
  \[ (\Omega X)(V) \ = \ \Omega X(V) \ = \ \map_*(S^1,X(V)) \ , \]
  the based mapping space from  $S^1$ to the $V$-th level of $X$.
  
  We define the {\em loop isomorphism}\index{loop isomorphism}
  \begin{equation}\label{eq:loop iso} 
    \alpha\ : \  \pi^H_k(\Omega X)\ \to\ \pi^H_{k+1} (X)  \ .
  \end{equation}
  For $k\geq 0$, we represent a given class in $\pi_k^H(\Omega X)$ by a based $H$-map 
  $f:S^{V\oplus \mR^k}\to \Omega X(V)$ and let $\tilde f :S^{V\oplus \mR^{k+1}}\to X(V)$
  denote the adjoint of $f$, which represents an element of
  $\pi_{k+1}^H(X)$.
  For $k< 0$, we represent a class in $\pi_k^H(\Omega X)$ by a based $H$-map 
  $f:S^V\to \Omega X(V\oplus\mR^{-k})$ and let
  $\tilde f :S^{V\oplus \mR}\to X(V\oplus\mR^{-k})\iso X( (V\oplus\mR)\oplus \mR^{-(k+1)})$
  denote the adjoint of $f$, which represents an element of
  $\pi_{k+1}^H(X)$. Then we can set
  \[ \alpha[f]\ = \ [\tilde f] \ .\]
  The loop isomorphism is indeed bijective, by straightforward adjointness.

  Next we define the {\em suspension isomorphism}\index{suspension isomorphism!for equivariant homotopy groups}
  \begin{equation}\label{eq:suspension iso}
    -\sm S^1 \ : \ \pi^H_k (X)\ \to \ \pi^H_{k+1}(X\sm S^1) \ .   
  \end{equation}
  For $k\geq 0$ we represent a given class in $\pi_k^H(X)$
  by a based $H$-map $f:S^{V\oplus \mR^k}\to X(V)$;
  then $f\sm S^1 : S^{V\oplus \mR^{k+1}}\to X(V)\sm S^1$
  represents a class in $\pi_{k+1}^H(X \sm S^1)$.
  For $k< 0$ we represent a given class in $\pi_k^H(X)$
  by a based $H$-map $f:S^V\to X(V\oplus\mR^{-k})$;
  then $f\sm S^1 : S^{V\oplus \mR}\to X(V\oplus\mR^{-k})\sm S^1\iso
  X( (V\oplus\mR)\oplus\mR^{-(k+1)})\sm S^1$
  represents a class in $\pi_{k+1}^H(X \sm S^1)$. Then we set
  \[  [f]\sm S^1 \ = \ [f\sm S^1] \ .\]
  The suspension isomorphism is indeed bijective, 
  see for example~\cite[Prop.\,3.1.30]{schwede:global}.
\end{con}

Next we recall the concept of an h-cofibration of orthogonal $G$-spectra;
this notion occurs in the proof of the Theorem \ref{thm:bifree smash preserves},
and we will use it at several later points.
In the case when $G$ is a compact Lie group, the basic properties of 
h-cofibrations are discussed 
in \cite[III Thm.\,3.5]{mandell-may:equivariant_orthogonal}.

\begin{defn}\label{def:h-cofibration}
A morphism $i \colon A \to B$ of orthogonal $G$-spectra is an {\em h-cofibration} 
if it has the homotopy extension property:\index{h-cofibration!of orthogonal $G$-spectra} 
for every orthogonal $G$-spectrum $X$, every morphism 
$\varphi:B\to X$ and every homotopy $H\colon A\sm [0,1]_+ \to X$
starting with $\varphi\circ i$, there exists a homotopy
$\bar H:B\sm [0,1]_+ \to X$ starting with $\varphi$ that satisfies
$\bar H\circ(i\sm[0,1]_+)=H$.
\end{defn} 

There is a universal test case for the homotopy extension property,
and a morphism $i \colon A \to B$ is an h-cofibration 
if and only if the canonical morphism 
$(A\sm [0,1]_+) \cup_{i} B \to B\sm [0,1]_+ $ admits a retraction. 
For every continuous homomorphism $\alpha:K\to G$ between Lie groups,
the restriction functor $\alpha^*:\Sp_G\to\Sp_K$ preserves colimits
and smash products with based spaces; so if $i$ is an h-cofibration of $G$-spectra,
then $\alpha^*(i)$  is an h-cofibration of $K$-spectra.
In particular, restriction to a closed subgroup preserves h-cofibrations.

Similarly, if $i:A\to B$ is an h-cofibration of orthogonal $G$-spectra,
then for every compact subgroup $H$ of $G$ 
and every orthogonal $H$-re\-presen\-tation $V$, 
the $H$-equivariant map $i(V) \colon A(V) \to B(V)$ is an h-cofibration 
of based $H$-spaces. This uses that the evaluation functors also commute 
with colimits and smash products with based spaces. 

\medskip

In the next theorem we consider two Lie groups~$K$ and~$\Gamma$.
We call a $(K\times \Gamma)$-space {\em bifree}
if the underlying $K$-action is free and
the underlying $\Gamma$-action is free.
Equivalently, the stabilizer group of every point 
intersects both of the two subgroups $K\times\{1\}$
and~$\{1\}\times \Gamma$ only in the neutral element. 
The compact subgroups of~$K\times \Gamma$ with this property are precisely the
graphs of all continuous monomorphisms $\alpha:L\to \Gamma$,
defined on compact subgroups of~$K$.
In the following theorem we turn the left $\Gamma$-action on $A$
into a right action by setting $a\cdot \gamma = \gamma^{-1}\cdot a$,
for $(a,\gamma)\in A\times \Gamma$.

\begin{thm}\label{thm:bifree smash preserves}
Let $\Gamma$ and $K$ be Lie groups and $A$ 
a bifree $\Com$-cofibrant $(K\times \Gamma)$-space. Then the functor
\[ A_+\sm_\Gamma - \ : \ \Sp_\Gamma\ \to \ \Sp_K \]
takes $\pi_*$-isomorphisms of orthogonal $\Gamma$-spectra to $\pi_*$-isomorphisms
of orthogonal $K$-spectra.
\end{thm}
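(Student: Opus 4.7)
The plan is a double cellular induction, reducing the claim to a classical Wirthm\"uller-style statement for compact Lie groups.

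As homotopical preparation, the functor $A_+ \sm_\Gamma (-)$ preserves all colimits and commutes with smash products by based spaces in the spectrum variable, so it takes h-cofibrations of orthogonal $\Gamma$-spectra to h-cofibrations of orthogonal $K$-spectra and preserves sequential colimits along them. Combined with the standard facts that the compact-group equivariant homotopy groups $\pi_k^H$ preserve sequential colimits along h-cofibrations and convert pushouts along h-cofibrations into long exact sequences (proved for compact Lie groups in, e.g., \cite[Ch.\,3]{schwede:global} and applicable here since only compact subgroups are tested), this gives the bookkeeping needed for cellular induction on $A$.

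Using the $\Com$-cofibrant cellular filtration $A = \colim_n A_n$, whose attachments are along generating cofibrations $(K \times \Gamma)/H \times (\partial D^n \hookrightarrow D^n)$ for compact $H \leq K \times \Gamma$, the bifree hypothesis forces each such $H$ to be a graph $\Delta(\alpha) := \{(l, \alpha(l)) : l \in L\}$ of a continuous monomorphism $\alpha \colon L \to \Gamma$ from a compact subgroup $L \leq K$. Since smashing with the contractible based space $D^n_+$ preserves $\pi_*$-isos, cellular induction reduces the claim to the single-orbit case $A = (K \times \Gamma)/\Delta(\alpha)$. A direct pointset calculation then yields a natural $K$-equivariant isomorphism
\[ A_+ \sm_\Gamma X \;\cong\; K_+ \sm_L \alpha^*(X), \]
where $\alpha^* \colon \Sp_\Gamma \to \Sp_L$ is restriction along $\alpha$. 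Since $\alpha$ is a continuous monomorphism from a compact group, its image $\alpha(L)$ is a compact subgroup of $\Gamma$ and $\alpha \colon L \to \alpha(L)$ is a topological isomorphism, so $\pi_k^{L'}(\alpha^* Z) \cong \pi_k^{\alpha(L')}(Z)$ for every compact $L' \leq L$, showing that $\alpha^*$ preserves $\pi_*$-isos.

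It remains to show that $K_+ \sm_L (-) \colon \Sp_L \to \Sp_K$ preserves $\pi_*$-isos for $L$ a compact subgroup of $K$. For each compact $H \leq K$, I would compute $\pi_k^H(K_+ \sm_L Y)$ by filtering $K$ via its $(H \times L)$-CW-structure from Proposition \ref{prop:H-Gamma-CW} (the $(H \times L)$-action is proper since both $H$ and $L$ are compact). Each cell has the form $(H \times L)/\Delta(\beta)$ for a continuous monomorphism $\beta \colon L'' \to L$ from a compact subgroup $L'' \leq H$; by the same orbit isomorphism as above, applied with $H$, $L$, $L''$, and $\beta$ in the roles of $K$, $\Gamma$, $L$, and $\alpha$, each orbit-cell contribution is $H_+ \sm_{L''} \beta^*(Y)$, and since $H$ and $L''$ are compact Lie groups this functor preserves $\pi_*$-isos by the classical Wirthm\"uller-type induction-restriction theory for compact Lie groups from \cite[Ch.\,III, V]{mandell-may:equivariant_orthogonal}. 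The hard part will be managing this second cellular induction coherently with the external $H$-action on $K_+ \sm_L Y$ inherited from left $K$-translation on $K$, and in particular verifying that the $(H \times L)$-CW-decomposition of $K$ assembles the orbit-cell contributions into the correct $H$-equivariant homotopy type of $K_+ \sm_L Y$ in a manner compatible with the long exact sequences computing $\pi_k^H$.
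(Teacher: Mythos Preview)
Your proposal is correct and follows essentially the same strategy as the paper: cellular induction on $A$, identification of the compact isotropy groups as graphs $\Delta(\alpha)$ of monomorphisms $\alpha\colon L\to\Gamma$ with $L\leq K$ compact, and the orbit isomorphism $((K\times\Gamma)/\Delta(\alpha))_+\sm_\Gamma X\cong K\ltimes_L\alpha^*(X)$. The only difference is organizational. The paper avoids your second cellular induction by first reducing to the case where $K$ itself is compact: since $\pi_*$-isomorphisms of $K$-spectra are tested on compact subgroups $H\leq K$, and $\res^K_H(A_+\sm_\Gamma X)=A_+\sm_\Gamma X$ as an $H$-spectrum with $A$ now viewed as a bifree $\Com$-cofibrant $(H\times\Gamma)$-space (by Proposition~\ref{prop:homomorphism adjunctions spaces}~(iii)), one may assume $K$ compact from the start. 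Then the step you call ``the hard part'' collapses to a single citation of \cite[Cor.\,3.2.12]{schwede:global}, which says $K\ltimes_L(-)$ preserves $\pi_*$-isomorphisms when $K$ and $L$ are both compact. Your nested induction on an $(H\times L)$-CW-structure of $K$ is effectively reproving that corollary; the coherence worry you flag is not a genuine obstacle, since $(-)_+\sm_L Y$ preserves colimits and takes the $(H\times L)$-CW filtration of $K$ to a filtration of $K_+\sm_L Y$ by h-cofibrations of orthogonal $H$-spectra, exactly as in your first induction.
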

\begin{proof}
We start with the special case when the group $K$ is compact.
Since the functor $A_+\sm_\Gamma-$ commutes with mapping cones,
and since mapping cone sequences give rise to long exact sequences
of equivariant homotopy groups~\cite[Prop.\,3.1.36]{schwede:global}, 
it suffices to show the following special case: 
we let~$X$ be any orthogonal $\Gamma$-spectrum
that is $\Gamma$-$\pi_*$-trivial, i.e., 
all of whose equivariant homotopy groups, for all compact subgroups of~$\Gamma$, vanish.
Then $A_+\sm_\Gamma X$ is $K$-$\pi_*$-trivial.
For this we assume first that $A$ is a finite-dimensional 
proper $(K\times \Gamma)$-CW-complex,
with skeleta $A^n$. We argue by induction over the dimension of~$A$.
The induction starts with~$A^{-1}$, which is empty, and there is nothing to show.
Then we let $n\geq 0$ and assume the claim for~$A^{n-1}$.
By hypothesis there is a pushout square of  $(K\times \Gamma)$-spaces:
\[ \xymatrix{ 
\coprod_{j\in J} \, (K\times \Gamma)/\Delta_j\times \partial D^n
  \ar[r]\ar[d]&
\coprod_{j\in J} \,  (K\times \Gamma)/\Delta_j\times  D^n
 \ar[d]\\
A^{n-1}\ar[r] & A^n } \]
Here $J$ is an indexing set of the $n$-cells of the equivariant CW-structure
and $\Delta_j$ is a compact subgroup of $K\times \Gamma$.
Since the $(K\times \Gamma)$-action on $A$ is bifree, 
each of the subgroups $\Delta_j$
must be the graph of a continuous monomorphism $\alpha_j:L_j\to \Gamma$
defined on a compact subgroup $L_j$ of~$K$.

The inclusion $A^{n-1}\to A^n$
is an h-cofibration of $(K\times\Gamma)$-spaces,
so the morphism $A^{n-1}_+\sm_\Gamma X\to A^n_+\sm_\Gamma X$
is an h-cofibration of orthogonal $K$-spectra.
The long exact homotopy group sequence \cite[Cor.\,3.1.38]{schwede:global}
thus reduces the inductive step
to showing that the $K$-equivariant homotopy groups of the cofiber
$(A^n_+\sm_\Gamma X)/(A^{n-1}_+\sm_\Gamma X)$ vanish.
This cofiber is isomorphic to 
\[ \bigvee_{j\in J} \,  (K\times \Gamma/\Delta_j)_+\sm_\Gamma X \sm S^n  \ .\]
Since equivariant homotopy groups take wedges
to sums~\cite[Cor.\,3.1.37 (i)]{schwede:global}
and reindex upon smashing with $S^n$
(by the suspension isomorphism~\eqref{eq:suspension iso}, 
compare~\cite[Prop.\,3.1.30]{schwede:global}), 
it suffices to
consider an individual wedge summand without any suspension. 
In other words, we may show that the orthogonal $K$-spectrum
\[  (K\times \Gamma)/\Delta_+\sm_\Gamma X \ \cong \ 
K\ltimes_L \alpha^*(X)   \]
is $K$-$\pi_*$-trivial, where $L$ is a closed subgroup of~$K$ and $\Delta$ is
the graph of a continuous monomorphism $\alpha:L\to \Gamma$.
Now $X$ is $\Gamma$-$\pi_*$-trivial by hypothesis, 
so $\alpha^*(X) $ is $L$-$\pi_*$-trivial.
Since $K$ and $L$ are compact,  \cite[Cor.\,3.2.21]{schwede:global}
lets us conclude that $K\ltimes_L\alpha^*(X)$ is $K$-$\pi_*$-trivial. 
This completes the inductive step.

Now we suppose that $A$ is a proper $(K\times\Gamma)$-CW-complex,
possibly infinite dimensional. As already noted above, the morphisms
\[ A^{n-1}_+ \sm_\Gamma X \ \to \  A^n_+ \sm_\Gamma X   \]
induced by the skeleton inclusions 
are h-cofibrations of orthogonal $K$-spectra.
Since $A_+\sm_\Gamma X$ is a colimit of the sequence of spectra $A^n_+\sm_\Gamma X$,
each $A^n_+\sm_\Gamma X$ is $K$-$\pi_*$-trivial, 
and a colimit of $\pi_*$-isomorphisms over a sequence of h-cofibrations
is another $\pi_*$-isomorphism
(compare \cite[III Thm.\,3.5 (v)]{mandell-may:equivariant_orthogonal}
or \cite[Prop.\,3.1.41]{schwede:global}),
we conclude that $A_+\sm_\Gamma X$ is $K$-$\pi_*$-trivial.
A general $\Com$-cofibrant $(K\times \Gamma)$-space is
$(K\times \Gamma)$-homotopy equivalent
to a proper $(K\times \Gamma)$-CW-complex, so this concludes the proof
in the special case where $K$ is compact.

Now we treat the general case. 
We let $L$ be any compact subgroup of $K$.
The underlying $(L\times\Gamma)$-space of $A$ is again bifree,
and it is $\Com$-cofibrant as an $(L\times \Gamma)$-space by
Proposition \ref{prop:homomorphism adjunctions spaces} (iii).
So the composite functor $\res^K_L\circ (A_+\sm_\Gamma-):\Sp_\Gamma\to\Sp_L$
preserves $\pi_*$-isomorphisms by the special case above.
Since $\pi_*$-isomorphisms of $K$-spectra can be tested on all
compact subgroups of $K$, this proves the claim.
\end{proof}

\begin{cor}\label{cor:Gamma2G}
Let $\Gamma$ be a closed subgroup of a Lie group $G$. 
The induction functor
\[ G\ltimes_\Gamma \ : \ \Sp_\Gamma \ \to \ \Sp_G \]
takes $\pi_*$-isomorphisms of orthogonal $\Gamma$-spectra 
to $\pi_*$-isomorphisms of orthogonal $G$-spectra.
\end{cor}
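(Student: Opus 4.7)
The plan is to deduce the corollary from Theorem \ref{thm:bifree smash preserves} by restricting to compact subgroups of $G$. Since being a $\pi_*$-isomorphism of orthogonal $G$-spectra is tested on all compact subgroups $H \leq G$, it suffices to show that for every such $H$, the composite
\[ \res^G_H \circ (G\ltimes_\Gamma -) \ : \ \Sp_\Gamma\ \to\ \Sp_H \]
preserves $\pi_*$-isomorphisms. A natural isomorphism of $H$-spectra identifies this composite with the functor $G_+ \sm_\Gamma -$, where $G$ is viewed as an $(H\times\Gamma)$-space via the biaction $(h,\gamma)\cdot g = h g \gamma^{-1}$. Thus Theorem \ref{thm:bifree smash preserves} applies (with $K$ replaced by the compact group $H$), provided we verify that $G$ equipped with this $(H\times\Gamma)$-action is bifree and $\Com$-cofibrant.

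For bifreeness, I would directly compute the stabilizer of a point $g\in G$ to be the graph
\[ \{(g\gamma g^{-1},\gamma) \ : \ \gamma \in \Gamma\cap g^{-1} H g\} \ . \]
Since this graph meets $H\times\{1\}$ only when $\gamma=1$ (which then forces $h=1$) and meets $\{1\}\times\Gamma$ only when $g\gamma g^{-1}=1$ (which again forces $\gamma=1$), the action is bifree. Moreover this same computation shows that the stabilizer at $g$ is isomorphic to the compact group $\Gamma\cap g^{-1}Hg$, so all point stabilizers are compact.

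For $\Com$-cofibrancy, Proposition \ref{prop:H-Gamma-CW} supplies an $(H\times\Gamma)$-CW-structure on $G$; combined with the compactness of all stabilizers just established, the characterization in Proposition \ref{prop:Gmod}~(i) then identifies $G$ as $\Com$-cofibrant as an $(H\times\Gamma)$-space. Applying Theorem \ref{thm:bifree smash preserves} to $A=G$ and $K=H$ yields that $G_+\sm_\Gamma-$ preserves $\pi_*$-isomorphisms as a functor to $\Sp_H$, which under the identification above is exactly $\res^G_H\circ (G\ltimes_\Gamma-)$. Since this holds for every compact $H\leq G$, the functor $G\ltimes_\Gamma-$ preserves $\pi_*$-isomorphisms of $G$-spectra.

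There is no real obstacle here: the entire argument is the observation that, although the full $(G\times\Gamma)$-action on $G$ is not proper unless $\Gamma$ is compact, the $(H\times\Gamma)$-action is proper as soon as $H$ is, which is precisely what is needed to invoke Theorem \ref{thm:bifree smash preserves}. The only mildly delicate point is the stabilizer computation and matching it against the bifreeness condition, but this is routine.
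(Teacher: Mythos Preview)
Your proof is correct and follows essentially the same approach as the paper: restrict to compact subgroups $H\leq G$, view $G$ as a bifree $\Com$-cofibrant $(H\times\Gamma)$-space via Proposition~\ref{prop:H-Gamma-CW}, and apply Theorem~\ref{thm:bifree smash preserves}. Your version simply makes explicit the stabilizer computation and the identification $\res^G_H\circ(G\ltimes_\Gamma-)\cong G_+\sm_\Gamma-$ that the paper leaves implicit; note also that the properness established in the proof of Proposition~\ref{prop:H-Gamma-CW} already forces all stabilizers to be compact, so your separate verification of this is correct but redundant.
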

\begin{proof}
We let $H$ be a compact subgroup of~$G$.
Then the $(H\times \Gamma)$-action on $G$ given 
by $(h,\gamma) \cdot g = h g \gamma^{-1}$ 
underlies a proper $(H \times \Gamma)$-CW-complex,
by Proposition \ref{prop:H-Gamma-CW}.
In particular, $G$ is $\Com$-cofibrant as an $(H\times \Gamma)$-space.
The action is bifree, so Theorem~\ref{thm:bifree smash preserves}   
applies and shows that the functor $\res^G_H\circ (G\ltimes_\Gamma -)$
takes $\pi_*$-isomorphisms of orthogonal $\Gamma$-spectra
to $\pi_*$-isomorphisms of orthogonal $H$-spectra. 
Since $H$ was an arbitrary compact subgroup of $G$, 
this proves the claim.
\end{proof}

We let $H$ be a compact subgroup of a Lie group $G$, 
and $X$ is an orthogonal $G$-spectrum. 
We recall from Example \ref{ex:free spectra} that if $V$ is an $H$-representation, 
then we equip the evaluation~$X(V)$ with the diagonal $H$-action 
of the two $H$-actions on $X$ and on $V$.

\begin{defn} Let $G$ be a Lie group and $f \colon X \to Y$ a morphism 
  of orthogonal $G$-spectra.
  \begin{enumerate}[(i)]
  \item 
    The morphism $f$ is a \emph{level equivalence}\index{level equivalence!of orthogonal $G$-spectra} 
    if $f(V)^H \colon X(V)^H \to Y(V)^H$ is a weak equivalence for every 
    compact subgroup~$H$ of~$G$ and every orthogonal $H$-representation~$V$.
  \item 
    The morphism $f$ is a \emph{level fibration}\index{level fibration!of orthogonal $G$-spectra}  
    if $f(V)^H \colon X(V)^H \to Y(V)^H$ is a Serre fibration for every 
    compact subgroup~$H$ of~$G$ and every orthogonal $H$-re\-presentation $V$.
  \end{enumerate}
\end{defn}

It follows from the definition that if $f \colon X \to Y$ 
is a level equivalence (level fibration), then $f(V) \colon X(V) \to Y(V)$ 
is an $H$-weak equivalence ($H$-fibration) for every compact subgroup $H$, 
simply because every orthogonal $H$-representation
is also a $K$-representation for every $K \leq H$.

In order to define the cofibrations of orthogonal $G$-spectra,
we recall the {\em skeleton filtration},
a functorial way to write an orthogonal spectrum as a 
sequential colimit of spectra which are made from the information
below a fixed level. The word `filtration' should be used with caution
because the maps from the skeleta to the orthogonal spectrum need not be injective.

\begin{con}[Skeleton filtration of orthogonal spectra]\index{skeleton!of an orthogonal spectrum} 
We let $\bO_{\leq m}$ denote the full topological subcategory 
of $\bO$ whose objects are the inner product spaces of dimension at most $m$.
We write $\Sp_{\leq m}$ for the category of continuous based functors
from $\bO_{\leq m}$ to $\bT_*$.
Restriction to the subcategory $\bO_{\leq m}$ defines a functor
\[ (-)_{\leq m}\ : \ \Sp\ \to \ \Sp_{\leq m} \ . \]
This functor has a left adjoint
\[ l_m\ : \ \Sp_{\leq m}\ \to \ \Sp  \ ,\]
an enriched left Kan extension.
The {\em $m$-skeleton} of an orthogonal spectrum $X$ is
\[ \sk^m X\ = \ l_m(X_{\leq m}) \ ,\]
the extension of the restriction of $X$ to $\bO_{\leq m}$.
The skeleton comes with a natural morphism $i_m:\sk^m X\to X$, the counit of
the adjunction $(l_m,(-)_{\leq m})$. 
Kan extensions along a fully faithful functor do not change the values
on the given subcategory \cite[Prop.\,4.23]{kelly:enriched_category_theory},
so the value
\[ i_m(V)\ :\ (\sk^m X)(V)\ \to \ X(V) \]
is an isomorphism for all inner product spaces $V$ of dimension at most $m$.
The {\em $m$-th latching space} of $X$ is the based $O(m)$-space
\[ L_m X \ = \ (\sk^{m-1} X)(\mR^m) \ ;\]\index{latching space!of an orthogonal spectrum}  
it comes with a natural based $O(m)$-equivariant map
\[  \nu_m=i_{m-1}(\mR^m)\ :\ L_m X\ \to \ X(\mR^m) \ , \]
the {\em $m$-th latching map}. 
We set $\sk^{-1} X=\ast$, the trivial orthogonal spectrum,
and $L_0 X=\ast$, a one-point space.\index{latching map!of an orthogonal spectrum}  

The different skeleta are related by natural morphisms
$j_m:\sk^{m-1} X\to \sk^m X$, for all $m\geq 0$, 
such that $i_m\circ j_m=i_{m-1}$.
The sequence of skeleta stabilizes to $X$ in a strong sense: 
the maps $j_m(V)$ and $i_m(V)$ are isomorphisms as soon as $m >\dim(V)$.
In particular, $X(V)$ is a colimit, with respect to the maps $i_m(V)$, 
of the sequence of maps $j_m(V)$. 
Since colimits in the category of orthogonal spectra are created objectwise,
the orthogonal spectrum $X$ is a colimit, 
with respect to the morphisms $i_m$, of the sequence of morphisms $j_m$. 

Moreover, each skeleton is built from the previous one in a systematic way
controlled by the latching map. We write $G_m$
for the left adjoint to the evaluation functor 
\[ \text{ev}_{\mR^m}\ : \ \Sp \ \to \ O(m)\bT_*\ . \]
Then the commutative square
\begin{equation}  \begin{aligned}\label{eq:skeleton_pushout}
\xymatrix@C=12mm{ G_m L_m X \ar[r]^-{G_m\nu_m} \ar[d] & G_m X(\mR^m) \ar[d]\\
\sk^{m-1} X \ar[r]_-{j_m} & \sk^m X}    
  \end{aligned}\end{equation}
is a pushout of orthogonal spectra, see \cite[Prop.\,C.17]{schwede:global}.
The two vertical morphisms are instances of the adjunction counit.
\end{con}

Since the skeleta and latching objects
are continuous functors in the orthogonal spectrum,
and since the latching morphisms are natural,
actions of groups go along free for the ride.
More precisely, the skeleta of (the underlying orthogonal spectrum of)
an orthogonal $G$-spectrum inherit a continuous $G$-action by functoriality.
In other words, the skeleta and the various morphisms between them
lift to endofunctors and natural transformations on the category of
orthogonal $G$-spectra.
If $X$ is an orthogonal $G$-spectrum, then the $O(m)$-space $L_m X$ 
comes with a commuting action by $G$, again by functoriality of the latching space.
Moreover, the latching morphism $\nu_m:L_m X\to X(\mR^m)$ is
$(G\times O(m))$-equivariant.
Since colimits of orthogonal $G$-spectra are created in the 
underlying category of orthogonal spectra, the square \eqref{eq:skeleton_pushout}
is a pushout square of orthogonal $G$-spectra.

\begin{defn}
  Let $G$ be a Lie group.
  A morphism $i:A\to B$ of orthogonal $G$-spectra is a \emph{cofibration} \index{cofibration!of orthogonal $G$-spectra} 
  if for every $m\geq 0$ the latching map 
  \[ \nu_m i\ = \ \nu_m^B \cup i(\mR^m)  \ :\ L_m B \cup_{L_m A} A(\mR^m) \ \to\  B(\mR^m) \]
  is a $\Com$-cofibration of $(G\times O(m))$-spaces and, moreover, the
  action of $O(m)$ is free away from the image of $\nu_m i$.
\end{defn}

If $X$ is an orthogonal spectrum and $V$ and $W$ are inner product spaces, we write
\[ \tilde\sigma_{V,W}\ : \ X(W)\ \to \ \map_*(S^V,X(V\oplus W))  \]
for the adjoint of the structure map $\sigma_{V,W}:S^V\sm X(W)\to X(V\oplus W)$.
Here $\map_*(-,-)$ denotes the space of {\em based} continuous maps.

\begin{defn}\label{def:stable fibrations}
    A morphism $f:X\to Y$ of orthogonal $G$-spectra is a {\em stable fibration}
    if it is a level fibration,\index{stable fibration!of orthogonal $G$-spectra} 
    and moreover, for every compact subgroup $H$ of~$G$ and all $H$-representations~$V$ and~$W$ 
    the square 
    \begin{equation}  \begin{aligned}\label{eq:stable fibration square}
        \xymatrix@C=15mm{ X(W)^H \ar[d]_{f(V)^H} \ar[r]^-{(\tilde\sigma_{V,W})^H} & 
          \map_*^H(S^V, X(V\oplus W)) \ar[d]^{\map_*^H(S^V,f(V\oplus W))} \\
          Y(W)^H \ar[r]_-{(\tilde\sigma_{V,W})^H} & \map_*^H(S^V, Y(V\oplus W))}
      \end{aligned}\end{equation}
    is homotopy cartesian.  
    An orthogonal $G$-spectrum is a \emph{$G$-$\Omega$-spectrum}\index{G-Omega-spectrum@$G$-$\Omega$-spectrum}
    if for every compact subgroup $H$ of~$G$ and all $H$-representations~$V$ and~$W$ 
    the  map
    \[      (\tilde\sigma_{V,W})^H \ : \  X(W)^H \ \to \ \map_*^H(S^V,X(V\oplus W)) \]
    is a weak equivalence.
\end{defn}

We note that an orthogonal $G$-spectrum is a $G$-$\Omega$-spectrum
precisely when the unique morphism to any trivial spectrum is a stable fibration.
In other words, $G$-$\Omega$-spectra come out as the fibrant
objects in the stable model structure on~$\Sp_G$.

\begin{prop}\label{prop:stable zero is level zero}
  Let $G$ be a Lie group.
  Every $\pi_*$-isomorphism that is also a stable fibration 
  is a level equivalence.
\end{prop}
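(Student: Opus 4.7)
The plan is to reduce to showing that the levelwise fiber $F$ of $f$ is equivariantly weakly contractible at every level, from which the level equivalence of $f$ follows by standard fibration arguments.

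First, I would pass to the levelwise fiber $F$ of $f$. Since $f$ is a level fibration, $F(V)^H$ is the honest fiber of the Serre fibration $f(V)^H \colon X(V)^H \to Y(V)^H$ over the basepoint. The long exact sequence of $H$-equivariant homotopy groups associated to the fibration sequence $F \to X \to Y$, combined with the hypothesis that $f$ is a $\pi_*$-isomorphism, yields $\pi_*^H(F) = 0$ for every compact subgroup $H$ of $G$. Moreover, the stable fibration hypothesis says that for every compact $H$ and every pair of $H$-representations $V, W$, the square in Definition \ref{def:stable fibrations} is homotopy cartesian; passing to the homotopy fibers of the vertical maps (using that $\map_*^H(S^V,-)$ preserves Serre fibrations) yields a weak equivalence
\[
F(W)^H \ \to\ \map_*^H(S^V, F(V \oplus W))\ ,
\]
so $F$ is itself a $G$-$\Omega$-spectrum (on fixed points).

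Next, I would show that $F(V)^H$ is weakly contractible for every compact $H$ and every $H$-representation $V$. For $V = 0$ this is immediate from the $G$-$\Omega$ structure: the identification $\pi_k(F(0)^H) \cong \pi_k^H(F) = 0$ combined with the fact that $F(0)^H$ is an infinite loop space (being weakly equivalent to $\Omega^n F(\mR^n)^H$ for any $n$) forces weak contractibility, since all its path components are then weakly equivalent. For general $V$ I would reduce to the compact Lie group case: restriction along the inclusion $H \hookrightarrow G$ produces an orthogonal $H$-spectrum that is an $H$-$\Omega$-spectrum with vanishing integer-graded equivariant stable homotopy groups for every closed subgroup of $H$, and the analog of the present proposition for compact Lie groups is classical (\cite[III.4.2]{mandell-may:equivariant_orthogonal}). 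Alternatively, one can argue directly using the identification $\pi_n(F(V)^H) \cong \pi_{\mR^n - V}^H(F)$ of the homotopy of the level-$V$ fixed point space with the $RO(H)$-graded equivariant homotopy group at the virtual representation $\mR^n - V$; the vanishing of all integer-graded $\pi_*^K(F)$ for compact $K \leq H$ then implies the vanishing of all $RO(H)$-graded groups $\pi_W^K(F)$ via long exact sequences coming from the $H$-CW structure on representation spheres.

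Finally, since the Serre fibration $f(V)^H$ has weakly contractible fiber over the basepoint and is surjective on path components (deduced from the $\pi_*$-isomorphism hypothesis together with the $G$-$\Omega$ structure of $F$), it is a weak homotopy equivalence. This holds for all compact $H$ and all $H$-representations $V$, so $f$ is a level equivalence. The main obstacle is the second step: establishing weak contractibility of $F(V)^H$ for non-trivial $V$ is not immediate from the $V = 0$ case and requires either the invocation of the compact-Lie-group version of the proposition or a careful $RO(H)$-graded argument.
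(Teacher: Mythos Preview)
Both your approach and the paper's reduce to the compact Lie group case and invoke Mandell--May. The paper does this more directly: for each compact subgroup $H$ of $G$ it observes that $\res^G_H(f)$ is a stable fibration and $\pi_*$-isomorphism of orthogonal $H$-spectra, and then cites \cite[III Prop.\,4.8 and Cor.\,4.11]{mandell-may:equivariant_orthogonal} applied to the \emph{morphism} $\res^G_H(f)$ to conclude that $f(V)^H$ is a weak equivalence for every $H$-representation $V$. No passage to the fiber is needed.

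Your detour through the fiber $F$ introduces a genuine gap at the final step. Knowing that $F(V)^H$ (the fiber over the basepoint) is weakly contractible gives, via the long exact sequence of the fibration, isomorphisms $\pi_n(X(V)^H) \to \pi_n(Y(V)^H)$ for $n \geq 1$ and an injection on $\pi_0$, but it does \emph{not} give surjectivity on $\pi_0$: components of $Y(V)^H$ away from the basepoint are not controlled by the basepoint fiber, and there is no reason every such component is hit. Your parenthetical justification (``deduced from the $\pi_*$-isomorphism hypothesis together with the $G$-$\Omega$ structure of $F$'') does not work as stated, since neither the stable homotopy groups of $X$ and $Y$ nor the $\Omega$-spectrum structure of $F$ constrain the unstable $\pi_0$ of the individual levels $X(V)^H$ and $Y(V)^H$. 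The cleanest fix is to skip the fiber and apply the Mandell--May results to $\res^G_H(f)$ directly---their lifting-property characterization of stable acyclic fibrations handles the $\pi_0$ issue internally---which is exactly the paper's two-line argument.
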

\begin{proof}
  This is a combination of Proposition~4.8 and Corollary~4.11
  of~\cite[Ch.\,III]{mandell-may:equivariant_orthogonal}.
  In more detail, we let $f:X\to Y$ be a $\pi_*$-isomorphism and a stable fibration,
  and we consider a compact subgroup~$H$ of~$G$.
  Since~$f$ is a stable~fibration, \cite[III Prop.\,4.8]{mandell-may:equivariant_orthogonal}
  shows that the morphism $\res^G_H(f)$ of underlying orthogonal $H$-spectra
  has the right lifting property with respect to a certain set~$K$ 
  of morphisms specified in  \cite[III Def.\,4.6]{mandell-may:equivariant_orthogonal}.
  Since $f$ is also a $\pi_*$-isomorphism, \cite[III Cor.\,4.11]{mandell-may:equivariant_orthogonal}
  then shows that for every $H$-representation~$V$ the map
  $f(V)^H:X(V)^H\to Y(V)^H$ is a weak equivalence.
\end{proof}

Now we name explicit sets of generating cofibrations and
generating acyclic cofibrations for the stable model structure on $\Sp_G$.
We fix once and for all a complete set $\cV_H$ 
of representatives of isomorphism classes of finite-dimensional 
orthogonal $H$-representations, for every compact Lie group~$H$.
We let $I^G_{\lv}$ denote the set of morphisms
\[
  ( G \ltimes_H F_V )\wedge  \partial D^k_+\ \to\  (G \ltimes_H F_V )\wedge D^k_+\ ,
\]
for all $k\geq 0$, where $H$ runs through all compact subgroups of~$G$
and $V$ runs through all representations in $\cV_H$. 
Here $F_V$ is the free spectrum in level $V$, see Example \ref{ex:free spectra}. 
Similarly, we let $J^G_{\lv}$ denote the set of morphisms
\begin{equation}\label{eq:gen_level_acyc_cofibration}
( G \ltimes_H F_V )\wedge  (D^k\times\{0\})_+
 \to 
 (G \ltimes_H F_V)\wedge(D^k\times[0,1])_+   \ , 
\end{equation}
with~$(H,V,k)$ running through the same set as for $I^G_{\lv}$.

Every morphism of orthogonal $G$-spectra $j:A\to B$
factors through the mapping cylinder as the composite
\[
  A\ \xrightarrow{c(j)} \  Z(j) = (A\sm [0,1]_+)\cup_j B \ \xrightarrow{r(j)}\ B
\]
where $c(j)$ is the `front' mapping  cylinder inclusion and~$r(j)$ 
is the projection, which is a homotopy equivalence.
In our applications we will assume that both $A$ and $B$ are cofibrant,
and then the morphism $c(j)$ is a cofibration,
compare \cite[Lemma 3.4.10]{hovey-shipley-smith:symmetric_spectra}.
We then define $\cZ(j)$
 as the set of all pushout product maps
\[ c(j)\Box i^k_+ \ : \  A\wedge D^k_+ \cup_{A\wedge \partial D^k_+} Z(j)\wedge\partial D^k_+
\ \to \ Z(j)\wedge D^k_+ \]
for $k\geq 0$, where $i^k:\partial D^k\to D^k$ is the inclusion.

Let~$H$ be a compact subgroup of the Lie group $G$.
For a pair of $H$-representations $V$ and $W$, a morphism of orthogonal $H$-spectra
\begin{equation}\label{def:lambda}
\lambda_{H,V,W} \colon F_{V \oplus W}S^V \ \to\  F_W  
\end{equation}
is defined as the adjoint of the $H$-map \eqref{eq:define_i_V W}
\[ i_{V,W}\ : \ S^V \ \to \ \bO(W,V\oplus W)\ = \ F_W(V\oplus W) \ ,\]
compare \cite[Sec.\,III.4]{mandell-may:equivariant_orthogonal}. 
So $\lambda_{H,V,W}$ represents taking $H$-fixed points 
of the adjoint structure map:
\[ (\tilde\sigma_{V,W})^H \ :\ X(W)^H\ \to\  \map^H(S^V,X(V\oplus W)) \ . \]
The morphism~$\lambda_{H,V,W}$ is a $\pi_*$-isomorphism of orthogonal $H$-spectra 
by \cite[III Lemma 4.5]{mandell-may:equivariant_orthogonal}.

We set
\[ \cK^G \ = \ \bigcup_{H,V,W} \cZ( G\ltimes_H \lambda_{H,V,W} ) \ ,\]
the set of all pushout products of sphere inclusions $\partial D^k\to D^k$
with the mapping cylinder inclusions of the morphisms $G\ltimes_H \lambda_{H,V,W}$.
Here the union is over a set of triples $(H,V,W)$ consisting of
a compact subgroup $H$ of~$G$ and two $H$-representations~$V$ and~$W$
from the set $\cV_H$ of representatives of isomorphism classes of $H$-representations.
We let
\[ J^G_{\st} \ = \ J^G_{\lv} \cup \cK^G \]
stand for the union of $J^G_{\lv}$ and $\cK^G$. 
The sets $I^G_{\lv}$ and $J^G_{\st}$ will serve as sets of generating cofibrations 
and acyclic cofibrations for the stable model structure on $\Sp_G$.

\begin{prop}\label{prop:J_st detects stable fibrations} 
A morphism of orthogonal $G$-spectra 
is a stable fibration if and only if it has the right lifting property
with respect to the set~$J^G_{\st}$.\index{stable fibration!of orthogonal $G$-spectra}
\end{prop}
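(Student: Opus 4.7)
The plan is to handle the two subsets of $J^G_{\st} = J^G_{\lv} \cup \cK^G$ separately: the right lifting property against $J^G_{\lv}$ will detect precisely the level fibrations, and then, given that $f$ is a level fibration, the right lifting property against $\cK^G$ will capture the extra homotopy cartesian square condition in Definition~\ref{def:stable fibrations}.

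For the level-fibration half, combining the induction--restriction adjunction $(G\ltimes_H -)\dashv \res^G_H$ with the free spectrum adjunction $F_V \dashv \text{ev}_V$ from Example~\ref{ex:free spectra} turns a $G$-equivariant lifting problem for $(G\ltimes_H F_V)\sm j^k_+$ against $f:X\to Y$ into a lifting problem for $j^k:D^k\times\{0\}\to D^k\times[0,1]$ against the map of $H$-fixed points $f(V)^H:X(V)^H\to Y(V)^H$ in topological spaces. Since $\{j^k\}_{k\geq 0}$ is the standard generating set of acyclic cofibrations detecting Serre fibrations of topological spaces, having the right lifting property against all of $J^G_{\lv}$ is equivalent to $f(V)^H$ being a Serre fibration for every compact subgroup $H\leq G$ and every $V\in\cV_H$. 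Because $X$ and $Y$ send linear isometric isomorphisms to $H$-equivariant homeomorphisms on evaluations, and $\cV_H$ exhausts the isomorphism classes of $H$-representations, this extends to all $H$-representations and reproduces the definition of a level fibration.

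For the $\cK^G$ half, the enriched pushout-product adjunction implies that for any morphism $j:A\to B$ of orthogonal $G$-spectra with mapping cylinder factorization $A\xrightarrow{c(j)} Z(j)\xrightarrow{r(j)} B$, having the right lifting property against $\cZ(j) = \{c(j)\boxempty i^k_+\}_{k\geq 0}$ is equivalent to the induced matching map
\[ \map^G(Z(j),X)\ \to\ \map^G(A,X)\times_{\map^G(A,Y)}\map^G(Z(j),Y) \]
being an acyclic Serre fibration of topological spaces. Since $r(j)$ is always a $G$-equivariant strong deformation retraction onto~$B$, we may replace $Z(j)$ by $B$ in the matching map up to homotopy equivalence. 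Specialising to $j = G\ltimes_H \lambda_{H,V,W}$ and reapplying the adjunctions identifies $\map^G(G\ltimes_H F_W, X)\iso X(W)^H$ and $\map^G(G\ltimes_H F_{V\oplus W}S^V, X)\iso \map_*^H(S^V, X(V\oplus W))$, with the induced map between them corresponding to $(\tilde\sigma_{V,W})^H$. Once $f$ is already known to be a level fibration, the right-hand vertical $\map_*^H(S^V, f(V\oplus W))$ is automatically a Serre fibration, so the strict pullback coincides with the homotopy pullback, and the matching map is a weak equivalence exactly when the square~\eqref{eq:stable fibration square} is homotopy cartesian. Ranging over all admissible $(H,V,W)$ gives the claim.

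The main obstacle will be the careful bookkeeping of the diagonal $H$-action on the evaluations $X(V\oplus W)$ that arises from combining the external $G$-action restricted to $H$ with the internal action through $O(V\oplus W)$ pulled back along the homomorphism $H\to O(V\oplus W)$ induced by the $H$-structure on $V\oplus W$; under this convention one must verify that the map induced on mapping spaces by $\lambda_{H,V,W}$ really corresponds to $(\tilde\sigma_{V,W})^H$ as in~\eqref{def:lambda} and not to a twisted variant. A secondary point is checking that the matching map is automatically a Serre fibration (not merely a weak equivalence) once $f$ is a level fibration; this is a Quillen SM7-type argument, based on the level model structure established earlier and on $c(G\ltimes_H \lambda_{H,V,W})$ being a cofibration between objects that are cofibrant enough that mapping spaces into a level fibration behave as expected.
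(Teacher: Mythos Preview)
Your proposal is correct and follows essentially the same route as the paper's proof. The paper compresses your matching-map argument for $\cK^G$ into a citation of \cite[Prop.\,1.2.16]{schwede:global}, which packages exactly the pushout-product/SM7 mechanism you spell out; the identification of the mapping spaces $\map^G(G\ltimes_H F_W,X)\iso X(W)^H$ and $\map^G(G\ltimes_H F_{V\oplus W}S^V,X)\iso \map_*^H(S^V,X(V\oplus W))$, and of the induced map with $(\tilde\sigma_{V,W})^H$, is identical in both.
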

\begin{proof}
The right lifting property with respect to~$J^G_{\lv}$ 
is equivalent to being a level~fibration.
By \cite[Prop.\,1.2.16]{schwede:global}, the right lifting property 
of a morphism $f:X\to Y$ 
with respect to $J^G_{\st}=J^G_{\lv}\cup\cK^G$ is then equivalent to the additional
requirement that the square of mapping spaces
\[ 
 \xymatrix@C=30mm{ 
\map^G(G\ltimes_H F_W,X)\ar[r]^-{\map^G(G\ltimes_H \lambda_{H,V,W},X)} 
\ar[d]_{\map^G(G\ltimes_H F_W,f)} &
\map^G(G\ltimes_H F_{V\oplus W}S^V,X) \ar[d]^{\map^G(G\ltimes_H F_{V\oplus W}S^V,f)} \\
\map^G(G\ltimes_H F_W,Y)\ar[r]_-{\map^G(G\ltimes_H \lambda_{H,V,W},Y)} & 
\map^G(G\ltimes_H F_{V\oplus W}S^V,Y) }       
 \]
is homotopy cartesian, where $\map^G(-,-)$ is the space of
morphisms of orthogonal $G$-spectra.
This proves the claim because the orthogonal $G$-spectrum~$G\ltimes_H F_W$ 
represents the functor~$X\mapsto X(W)^H$,
the orthogonal $G$-spectrum $G\ltimes_H (F_{V\oplus W}S^V)$
represents the functor~$X\mapsto \map_*^H(S^V,X(V\oplus W))$,
and the morphism $G\ltimes_H\lambda_{H,V,W}$ represents 
$H$-fixed points of the adjoint structure map
$\tilde\sigma_{V,W}:X(W)\to \map_*(S^V,X(V\oplus W))$.
\end{proof}

\begin{prop}\label{prop:J is acyclic cof} 
Every morphism in $J^G_{\st}$ is a $\pi_*$-isomorphism and a cofibration. 
\end{prop}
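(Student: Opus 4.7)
The set $J^G_{\st}$ decomposes as $J^G_{\lv} \cup \cK^G$, so the plan is to handle the two kinds of generators separately. A generator of $J^G_{\lv}$ has the form $(G \ltimes_H F_V) \wedge j^k_+$ with $j^k\colon D^k \times\{0\}\hookrightarrow D^k \times [0,1]$, a deformation retract of based spaces. Smashing with $G \ltimes_H F_V$ yields a based $G$-homotopy equivalence of orthogonal $G$-spectra, in particular a $\pi_*$-isomorphism. The cofibration property I would establish by a latching-map analysis: the $m$-th latching map of $(G \ltimes_H F_V) \wedge j^k_+$ is a pushout product of the $m$-th latching map of the generating cofibration $(G \ltimes_H F_V) \wedge i^k_+\in I^G_{\lv}$ (a $\Com$-cofibration of $(G\times O(m))$-spaces with the required $O(m)$-freeness) with the based-space cofibration $\{0\}_+\hookrightarrow[0,1]_+$. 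Proposition~\ref{prop:Gmod}(iii) then preserves the $\Com$-cofibration condition, and the freeness-away-from-image condition is preserved by inspection.

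For a generator $c(j)\Box i^k_+$ of $\cK^G$ with $j = G \ltimes_H \lambda_{H,V,W}$, the cited result \cite[III Lemma 4.5]{mandell-may:equivariant_orthogonal} gives that $\lambda_{H,V,W}$ is a $\pi_*$-isomorphism of orthogonal $H$-spectra, and Corollary~\ref{cor:Gamma2G} then yields that $j$ is a $\pi_*$-iso of orthogonal $G$-spectra. The retraction $r(j)\colon Z(j)\to B$ is a based $G$-homotopy equivalence, hence a $\pi_*$-iso, and two-out-of-three applied to $j = r(j)\circ c(j)$ shows that $c(j)$ is a $\pi_*$-iso as well. The source $G \ltimes_H(F_{V\oplus W}\wedge S^V)$ and target $G \ltimes_H F_W$ are cofibrant orthogonal $G$-spectra (free spectra induced from the compact subgroup $H$, possibly smashed with the $\Com$-cofibrant $H$-space $S^V$), so the mapping cylinder inclusion $c(j)$ is a cofibration of orthogonal $G$-spectra by \cite[Lemma 3.4.10]{hovey-shipley-smith:symmetric_spectra}.

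Finally I would argue that $c(j)\Box i^k_+$ is a cofibration by the same latching/pushout-product analysis as for $J^G_{\lv}$; in particular it is an h-cofibration. To check that it is a $\pi_*$-isomorphism I compute its cofiber: by the standard formula for pushout products the cofiber equals $(Z(j)/A)\wedge(D^k_+/\partial D^k_+) \iso Cj\wedge S^k$, where $Cj$ is the reduced mapping cone of $j$. Since $j$ is a $\pi_*$-iso, the long exact sequence of equivariant homotopy groups for the h-cofibration $c(j)$ \cite[Cor.\,3.1.38]{schwede:global} forces $Cj$ to be $\pi_*$-trivial, and the suspension isomorphism~\eqref{eq:suspension iso} gives $\pi_*$-triviality of $Cj\wedge S^k$. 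Applying the long exact sequence once more to the h-cofibration $c(j)\Box i^k_+$ with $\pi_*$-trivial cofiber concludes that $c(j)\Box i^k_+$ is a $\pi_*$-iso. The main obstacle is the cofibration property of the pushout product, which requires a careful latching-map verification of a pushout-product axiom not yet formally developed at this point in the paper; once granted, the rest of the argument is a direct combination of cited facts.
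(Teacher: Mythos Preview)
Your proof is correct. The treatment of $J^G_{\lv}$ and of the cofibration property for $\cK^G$ essentially matches the paper, which is terser: it simply notes that the source and target of $G\ltimes_H\lambda_{H,V,W}$ are cofibrant, so $c(j)$ is a cofibration, and the pushout-product step is handled by Theorem~\ref{thm:stable}(iii), whose proof is a direct latching computation independent of this proposition --- exactly the forward reference you correctly flag.

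Your $\pi_*$-isomorphism argument for $\cK^G$, however, takes a genuinely different route. The paper exploits the isomorphism
\[
c(G\ltimes_H\lambda_{H,V,W})\Box i^k_+ \ \cong\ G\ltimes_H\bigl(c(\lambda_{H,V,W})\Box i^k_+\bigr)
\]
to stay at the compact $H$-level as long as possible: it cites \cite[III Sec.\,4]{mandell-may:equivariant_orthogonal} for $c(\lambda_{H,V,W})\Box i^k_+$ being a $\pi_*$-isomorphism of orthogonal $H$-spectra, and only then applies Corollary~\ref{cor:Gamma2G} to induce up to $G$. You instead induce up immediately via Corollary~\ref{cor:Gamma2G} to get that $j$ and hence $c(j)$ are $\pi_*$-isomorphisms of $G$-spectra, and then run a direct cofiber argument at the $G$-level: the cofiber of $c(j)\Box i^k_+$ is $Cj\wedge S^k$, $\pi_*$-trivial by the long exact sequence and the suspension isomorphism. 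Your approach is more self-contained, avoiding the Mandell--May citation for the pushout-product step; the paper's approach makes the reduction to the compact case more explicit, which is thematically consistent with the rest of the monograph.
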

\begin{proof}
All morphisms in~$J_{\lv}^G$ are cofibrations.
They are also level equivalences,
and hence also $\pi_*$-isomorphisms 
by \cite[III Lemma 3.3]{mandell-may:equivariant_orthogonal}, 
applied to the underlying orthogonal $H$-spectra,
for all compact subgroups~$H$.

Since for any compact subgroup $H$ of $G$, the orthogonal $G$-spectra $G \ltimes_H F_{V\oplus W}S^V$ and $G \ltimes_H F_W$ are cofibrant, 
the morphisms in $\cK^G$ are cofibrations.
The morphism $\lambda_{H,V.W}$ is a $\pi_*$-isomorphism of orthogonal $H$-spectra
by \cite[III Lemma 4.5]{mandell-may:equivariant_orthogonal}. So the mapping cylinder inclusion $c(\lambda_{H,V,W})$
is then also a $\pi_*$-isomorphism,
because it differs from $\lambda_{H,V,W}$ only by a homotopy equivalence of
orthogonal $H$-spectra. The pushout product $c(G\ltimes_H\lambda_{H,V,W})\boxempty i^k_+$
is isomorphic to $G\ltimes_H (c(\lambda_{H,V,W})\boxempty i^k_+)$. 
By \cite[III Sec.\,4]{mandell-may:equivariant_orthogonal}, 
the morphism $c(\lambda_{H,V,W})\boxempty i^k_+$ 
is a $\pi_*$-isomorphism of orthogonal $H$-spectra. 
Now Corollary \ref{cor:Gamma2G} 
implies that $G\ltimes_H (c(\lambda_{H,V,W})\boxempty i^k_+)$ is a $\pi_*$-isomorphism. 
\end{proof}

Now we assemble the ingredients and construct the stable model structure. 
As we already mentioned, the following 
model structure is Quillen equivalent to the one established by Fausk
in~\cite[Prop.\,6.5]{fausk:prospectra}. 
We refrain from comparing the two model structures,
since that is not relevant for our purposes. 

\begin{thm}[Stable model structure]  \label{thm:stable} 
  Let $G$ be a Lie group.\index{stable model structure!for orthogonal $G$-spectra|(}
  \begin{enumerate}[\em (i)]
  \item 
    The $\pi_*$-isomorphisms, stable~fibrations and cofibrations 
    form a model structure on the category of orthogonal $G$-spectra, 
    the {\em stable model structure}.
  \item 
    Every cofibration of orthogonal $G$-spectra is an h-cofibration.\index{h-cofibration!of orthogonal $G$-spectra} 
  \item
    Let $i:A\to B$ be a cofibration of orthogonal $G$-spectra and
    $j:K\to L$ a $G$-cofibration of based $G$-spaces.
    Then the pushout-product morphism
    \[ i\boxempty j \ = \ (i\sm L)\cup(B\sm j)\ : \ 
    A\sm L\cup_{A\sm K}B\sm K \ \to \ B\sm L\]
    is a cofibration of orthogonal $G$-spectra. 
    If moreover $i$ is a $\pi_*$-isomorphism or $j$ is a $\Com$-equivalence,
    then $i \boxempty j$ is a $\pi_*$-isomorphism.
  \item 
    The stable model structure is proper, stable, topological and cofibrantly generated.
  \item
    The fibrant objects in the stable model structure 
    are the $G$-$\Omega$-spectra.\index{G-Omega-spectrum@$G$-$\Omega$-spectrum}
  \end{enumerate}
\end{thm}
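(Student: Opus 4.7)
The plan is to verify Kan's recognition criterion for a cofibrantly generated model category with $I^G_{\lv}$ as generating cofibrations and $J^G_{\st}$ as generating acyclic cofibrations, following the template of \cite[III Sec.\,4]{mandell-may:equivariant_orthogonal} but restricted so that only compact subgroups of $G$ are used for detection. By the adjunction between $G\ltimes_H F_V\sm-$ and $X\mapsto X(V)^H$, a morphism $f$ has the right lifting property against $I^G_{\lv}$ if and only if $f(V)^H$ is an acyclic Serre fibration of spaces for every compact $H\leq G$ and every $V\in\cV_H$, which is equivalent to $f$ being a level equivalence and a level fibration. Proposition \ref{prop:J_st detects stable fibrations} identifies the $J^G_{\st}$-injectives with the stable fibrations, and Proposition \ref{prop:J is acyclic cof} shows that the generators in $J^G_{\st}$ are themselves cofibrations and $\pi_*$-isomorphisms. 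Since the domains of both generating sets are compact, they are small with respect to sequential h-cofibrations, so the small object argument applies.

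The crux of part (i), and the main obstacle, is to show that every relative $J^G_{\st}$-cell complex is itself a $\pi_*$-isomorphism. The argument rests on three ingredients. First, I would establish part (ii) up front from the latching-map definition: the generators of $I^G_{\lv}$ are h-cofibrations of orthogonal $G$-spectra, and h-cofibrations are closed under pushouts, sequential colimits, and retracts, so every cofibration is an h-cofibration. Second, at each compact subgroup $H\leq G$ the equivariant homotopy groups $\pi_k^H$ convert pushouts along h-cofibrations into long exact sequences and sequential colimits along h-cofibrations into colimits, by applying \cite[III Thm.\,3.5]{mandell-may:equivariant_orthogonal} to the underlying orthogonal $H$-spectra. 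Third, Corollary \ref{cor:Gamma2G} ensures that induction $G\ltimes_H-$ sends the $H$-equivariant $\pi_*$-isomorphisms $c(\lambda_{H,V,W})\boxempty i^k_+$ from \cite[III Sec.\,4]{mandell-may:equivariant_orthogonal} to $G$-equivariant $\pi_*$-isomorphisms. Combining these, the standard small-object bookkeeping propagates the $\pi_*$-iso property through transfinite composites of pushouts of generators. With this in hand, Kan's recognition theorem produces the stable model structure; the closing compatibility check, that a morphism which is both a stable fibration and a $\pi_*$-isomorphism is a level acyclic fibration, is exactly Proposition \ref{prop:stable zero is level zero}.

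For part (iii), the pushout-product property reduces by standard arguments to checking on generators: at spectrum level $\mR^m$, the pushout-product of $G\ltimes_H F_V\sm i^k_+$ with $j\colon K\to L$ assembles from the $(G\times O(m))$-space $G\ltimes_H\bO(V,\mR^m)$ paired with $i^k\boxempty j$, where the $O(m)$-action is free away from the image, so Proposition \ref{prop:Gmod}.(iii) applied at each level supplies the cofibration claim. Preservation of $\pi_*$-isomorphisms when $j$ is a $\Com$-equivalence follows from Theorem \ref{thm:bifree smash preserves} applied levelwise with $\Gamma$ trivial, after rewriting the smash product in terms of a bifree induction; preservation when $i$ is a $\pi_*$-iso cofibration follows from the h-cofibration long-exact-sequence arguments used in (i).

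Part (iv) is then routine: left properness is an instance of the preservation of $\pi_*$-isomorphisms under pushout along h-cofibrations established in part (ii); right properness follows from the homotopy-cartesian characterization \eqref{eq:stable fibration square} of stable fibrations together with a standard homotopy-fiber argument at each compact subgroup; stability is immediate from the natural suspension isomorphism \eqref{eq:suspension iso}; and cofibrant generation and the topological enrichment are visible from the explicit generators and the tensoring over based $G$-spaces. Part (v) is the specialization of Definition \ref{def:stable fibrations} to $Y=\ast$, since the requirement that $X\to\ast$ be a stable fibration is precisely the $G$-$\Omega$-spectrum condition.
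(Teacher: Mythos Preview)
Your overall plan via Kan's recognition criterion is sound and parallels the paper's argument closely; the key inputs (Propositions \ref{prop:stable zero is level zero}, \ref{prop:J_st detects stable fibrations}, \ref{prop:J is acyclic cof}, and Corollary \ref{cor:Gamma2G}) are exactly the ones the paper uses. The paper organizes things slightly differently: it first constructs a \emph{level model structure} on $\Sp_G$ via the general machinery of \cite[Prop.\,C.23]{schwede:global}, which in particular identifies the latching-map cofibrations with $I^G_{\lv}$-cof, and then verifies the axioms MC2--MC5 for the stable structure by hand (using the level factorization for one half of MC5 and the small object argument on $J^G_{\st}$ for the other). Your direct appeal to Kan skips this step, and that is where a gap appears: you never argue that $I^G_{\lv}$-cof coincides with the class of morphisms satisfying the latching condition in the definition of cofibration, so Kan's theorem as you state it does not yet produce the model structure asserted in~(i). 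This is not hard to fill (it is precisely what the level model structure provides), but it needs to be said.

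Two smaller points. Your argument for~(ii) is circular as stated: showing that every cofibration is an h-cofibration by closure under cell attachments and retracts presupposes that cofibrations are retracts of $I^G_{\lv}$-cell complexes, which is exactly the identification you have not yet made. The paper's argument is cleaner and avoids this: for any $X$ the evaluation $X^{[0,1]}\to X$ is a level acyclic fibration, so every cofibration has the left lifting property against it, which is the h-cofibration condition. For~(iii), the paper does not reduce to generators but instead computes the latching map of the pushout product directly, obtaining $\nu_m(i\boxempty j)=\nu_m(i)\boxempty j$ and then invoking Proposition~\ref{prop:Gmod}~(iii); this handles arbitrary $G$-cofibrations $j$ in one stroke. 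Your appeal to Theorem~\ref{thm:bifree smash preserves} ``with $\Gamma$ trivial'' for the case when $j$ is a $\Com$-equivalence does not fit: that theorem concerns a balanced smash product $A_+\sm_\Gamma-$ and says nothing about smashing a $G$-spectrum with a $G$-space whose $H$-fixed points are contractible. The paper instead observes that for each compact $H\leq G$ the cofiber $L/K$ is $H$-equivariantly contractible, so $(B/A)\sm(L/K)$ is levelwise $H$-contractible and hence $\pi_*$-trivial.
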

\begin{proof}
(i) 
We start by establishing a level model structure for orthogonal $G$-spectra.
Orthogonal $G$-spectra are continuous based functors from $\bO$
to the category $G\bT_*$, so we can employ the machinery 
from \cite[Prop.\,C.23]{schwede:global}
that produces level model structures on enriched functor categories.
Here the base category is $\cV=G\bT_*$, with smash product as monoidal structure.
The index category is $\cD=\bO$, where the group $G$ acts trivially on
all morphism spaces. The category $\cD^*$ of enriched functors
from $\bO$ to $G\bT_*$ then becomes $\Sp_G$. 
The dimension function on $\bO$ is the vector space
dimension, and then the abstract skeleta of 
\cite[Con.\,C.13]{schwede:global} specialize to the skeleta above.

To apply \cite[Prop.\,C.23]{schwede:global} we need to specify 
a model structure on the category of $O(m)$-objects in $G\bT_*$,
i.e., on the category of based $(G\times O(m))$-spaces.
We use the $\cC(m)$-projective model structure,
in the sense of \cite[Prop.\,B.7]{schwede:global},
where $\cC(m)$ is the family of those closed subgroups of $G\times O(m)$
that are the graph of a continuous homomorphism to $O(m)$
defined on some compact subgroup of $G$.
Equivalently, a closed subgroup $\Delta$ of $G\times O(m)$ belongs to
$\cC(m)$ if and only if it is compact and $\Delta\cap (1\times O(m))$
consists only of the neutral element.
The restriction functor from $(G\times O(m+n))$-spaces to $(G\times O(m))$-spaces
takes $\cC(m+n)$-fibrations to  $\cC(m)$-fibrations,
so its left adjoint
\[ O(m+n)\ltimes_{O(m)}- \ : \ (G\times O(m))\bT_*\ \to \ (G\times O(m+n))\bT_*  \]
preserves acyclic cofibrations.
This establishes the consistency condition of \cite[Def.\,C.22]{schwede:global}. 
With respect to the $\cC(m)$-projective model structures on $(G\times O(m))$-spaces, 
the level equivalences, level fibrations
and cofibrations in the sense of \cite[Prop.\,C.23]{schwede:global} 
are precisely the level equivalences, level fibrations,
and cofibrations of orthogonal $G$-spectra.
So \cite[Prop.\,C.23]{schwede:global} shows that
the level equivalences, level fibrations and cofibrations 
form a model structure on the category $\Sp_G$, the {\em level model structure}.

The right lifting property against the set $I^G_{\lv}$ 
detects the level acyclic fibrations, simply by the adjunction
\[ \Sp_G( (G\ltimes_H F_V)\sm K, X) \ \iso \ \bT_*(K,X(V)^H)\ , \]
where $K$ is a non-equivariant based space.
So the set $I^G_{\lv}$ serves as a set of generating cofibrations. 

Before proceeding with the stable model structure, we prove part (ii).
If $X$ is any orthogonal $G$-spectrum, then evaluation at the point $0\in[0,1]$
is a level equivalence and level fibration $X^{[0,1]}\to X$, by direct inspection.
So every cofibration has the left lifting property with respect to
this evaluation morphism, which means that every cofibration is an h-cofibration.

Now we continue with the stable model structure.
The $\pi_*$-isomorphisms satisfy the 2-out-of-3 property (MC2)
and the classes of $\pi_*$-isomorphisms, stable fibrations and 
cofibrations are closed under retracts (MC3).
The level model structure shows that every morphism of orthogonal $G$-spectra
can be factored as a cofibration followed by a level equivalence that is
also a level fibration.
Level equivalences are in particular $\pi_*$-isomorphisms,
and for them the square \eqref{eq:stable fibration square}
is homotopy cartesian. 
So level acyclic fibrations are also stable fibrations.
Hence the level model structure provides one of the factorizations as required by MC5.

For the other half of the factorization axiom MC5
we exploit that the set $J^G_{\st}$ detects the stable fibrations,
compare Proposition \ref{prop:J_st detects stable fibrations}. 
We apply the small object argument 
(see for example~\cite[7.12]{dwyer-spalinski:model_categories} or~\cite[Thm.\,2.1.14]{hovey:model_categories})
to the set~$J_{\st}^G$.
All morphisms in $J^G_{\st}$ are cofibrations and $\pi_*$-isomorphisms 
by Proposition~\ref{prop:J is acyclic cof}. 
The small object argument provides a functorial factorization
of every morphism $\varphi:X\to Y$ of orthogonal $G$-spectra
as a composite
\[  X \ \xrightarrow{\ i\ }\ W \ \xrightarrow{\ q\ } \ Y \]
where $i$ is a sequential composition of cobase changes of coproducts
of morphisms in $J^G_{\st}$, 
and $q$ has the right lifting property with respect 
to $J^G_{\st}$; in particular, the morphism~$q$ is a stable fibration.
All morphisms in~$J^G_{\st}$ are $\pi_*$-isomorphisms and cofibrations,
hence also h-cofibrations.
The class of h-cofibrations that are simultaneously $\pi_*$-isomorphisms
is closed under coproducts, cobase changes and sequential compositions 
by~\cite[III Thm.\,3.5]{mandell-may:equivariant_orthogonal}.
So the morphism~$i$ is a cofibration and a $\pi_*$-isomorphism.

Now we show the lifting properties MC4. 
By Proposition~\ref{prop:stable zero is level zero}
a morphism that is both a stable fibration and a $\pi_*$-isomorphism
is a level equivalence, and hence an acyclic fibration
in the level model structure. So every morphism that is
simultaneously a stable fibration and a $\pi_*$-isomorphism has the
right lifting property with respect to cofibrations.
Now we let $j:A\to B$ be a cofibration that is also a $\pi_*$-isomorphism and 
we show that it has the left lifting property with respect to stable fibrations.
We factor~$j=q\circ i$, via the small object argument for $J^G_{\st}$,
where $i:A\to W$ is a $J^G_{\st}$-cell complex
and $q:W\to B$ is a stable fibration, 
see Proposition \ref{prop:J_st detects stable fibrations}.
Then $q$ is a $\pi_*$-isomorphism since $j$ and~$i$ are,
so $q$ is an acyclic fibration in the level model structure,
again by Proposition~\ref{prop:stable zero is level zero}.
Since $j$ is a cofibration, a lifting in
\[\xymatrix{
A \ar[r]^-i \ar[d]_j & W \ar[d]^q_(.6)\sim \\
B \ar@{=}[r] \ar@{..>}[ur] & B }\]
exists. Thus $j$ is a retract of the morphism~$i$ that has the left lifting property
with respect to stable fibrations.
But then $j$ itself has this lifting property.
This finishes the verification of the model category axioms
for the stable model structure.

(iii) 
  Pushouts of orthogonal spectra and smash products with based spaces
  are formed levelwise. So 
  \begin{align*}
    L_m(B\sm L)&\cup_{L_m(A\sm L\cup_{A\sm K}B\sm K)} ( A\sm L\cup_{A\sm K}B\sm K)(\mR^m)\\
    &= \  L_m(B)\sm L \cup_{L_m(A)\sm L\cup_{L_m(A)\sm K}L_m(B)\sm K} \\
    &  \hspace{4cm}  ( A(\mR^m)\sm L\cup_{A(\mR^m)\sm K}B(\mR^m)\sm K)\\
    &\iso \
    (L_m(B)\cup_{L_m(A)} A(\mR^m))\sm L \cup_{(L_m(B)\cup_{L_m(A)} A(\mR^m))\sm K} B(\mR^m)\sm K\ .
  \end{align*}
  Moreover, $(B\sm L)(\mR^m)= B(\mR^m)\sm L$.
  Under these identifications, the $m$-th latching map for the morphism $i\boxempty j$
  becomes the pushout product of $\nu_m i:L_m B\cup_{L_m A}A(\mR^m)\to B(\mR^m)$
  with the map $j:K\to L$.
  By hypothesis, $\nu_m(i)$ is a $\Com$-cofibration of $(G\times O(m))$-spaces.
  Since $j$ is a $G$-cofibration, it is also a $(G\times O(m))$-cofibration
  for the trivial $O(m)$-action.
  So 
  \[ \nu_m(i\boxempty j) \ = \ \nu_m(i)\boxempty j \]
  is a $\Com$-cofibration of $(G\times O(m))$-spaces
  by Proposition \ref{prop:Gmod} (iii).
  Also by hypothesis, the group $O(m)$ acts freely away from the image of $\nu_m(i)$;
  hence it also acts freely off the image of $\nu_m(i)\boxempty j$.
  This proves that $i\boxempty j$ is a cofibration of orthogonal $G$-spectra.

  Now we suppose in addition that $i$ is a $\pi_*$-isomorphism or $j$ is 
  a $\Com$-equivalence.
  Since the morphism $i\boxempty j$ is a cofibration by the above,
  it is in particular an h-cofibration. 
  So to show that $i\boxempty j$ is a $\pi_*$-isomorphism, 
  the long exact homotopy group sequence allows us to show that its cofiber 
  $(B/A)\sm (K/L)$ is $\pi_*$-isomorphic to the trivial spectrum.
  Now we let $H$ be a compact subgroup of $G$.
  In the case where $i$ is a $\pi_*$-isomorphism, its long exact homotopy group sequence 
  shows that $B/A$ is $H$-$\pi_*$-trivial; since $K/L$ is a cofibrant based $H$-space,
  the smash product $(B/A)\sm(K/L)$ is $H$-$\pi_*$-trivial,
  by \cite[III Thm.\,3.11]{mandell-may:equivariant_orthogonal} 
  or \cite[Prop.\,3.2.19]{schwede:global}.
  If $j$ is a $\Com$-equivalence, then the underlying based $H$-space of $K/L$ 
  is equivariantly contractible.
  So $(B/A)\sm (K/L)$ is $H$-equivariantly contractible, and hence $\pi_*$-trivial. 

  (iv) Alongside with the proof of the model structure we have also specified sets 
  of generating cofibrations $I^G_{\lv}$
  and generating acyclic cofibrations $J^G_{\st}$.
  Sources and targets of all morphisms in these sets are small with
  respect to sequential colimits of cofibrations. So the 
  model structure is cofibrantly generated.

  Left properness of the stable model structure follows from
  the fact that every cofibration is in particular an h-cofibration
  of orthogonal $G$-spectra. So for every compact subgroup~$H$, the underlying morphism
  of orthogonal $H$-spectra is an h-cofibration,
  and pushout along it preserves $\pi_*$-isomorphisms of orthogonal $H$-spectra,
  by \cite[III Thm.\,3.5 (iii)]{mandell-may:equivariant_orthogonal}
  or \cite[Cor.\,3.1.39]{schwede:global}.
  Right properness follows from the fact that stable fibrations
  are in particular level fibrations, and hence the natural morphism from
  the strict fiber to the homotopy fiber is a level equivalence.
  Source, target and homotopy fiber of any morphism of orthogonal $G$-spectra
  are related by a long exact sequences of equivariant homotopy groups 
  (see for example \cite[Prop.\,3.1.36]{schwede:global}),
  so the five lemma concludes the argument.

The loop functor $\Omega:\Sp_G\to\Sp_G$ 
and the suspension functor $-\sm S^1:\Sp_G\to\Sp_G$ 
preserve $\pi_*$-isomorphisms by the loop isomorphism \eqref{eq:loop iso} 
and the suspension isomorphism \cite[Prop.\,3.1.30]{schwede:global}.
Moreover, the loop functor preserves stable fibrations
by direct inspection. So the adjoint functor pair
\[ \xymatrix@C=15mm{ -\sm S^1 \ : \ \Sp_G \ \ar@<0.5ex>[r] & 
\ \Sp_G \ar@<0.5ex>[l] \ : \ \Omega } \]
is a Quillen adjunction with respect to the stable model structure,
and these functors model the model categorical suspension and loop functors. 
Furthermore, the unit $\eta:X \to \Omega (X\sm S^1)$ 
and counit $( \Omega X)\sm S^1 \to X$ 
of the adjunction are $\pi_*$-isomorphisms \cite[Prop.\,3.1.25]{schwede:global}.
Hence the adjunction is a Quillen equivalence, which proves stability of the stable
model structure.

Every cofibration of non-equivariant spaces is in particular
a $G$-cofibration when given the trivial $G$-action.
So the stable model structure is topological as a special case of part (iii).
Part (v) is clear from the definitions.
\end{proof}

\begin{rk}[Relation to previous stable model structures] 
For compact Lie groups,
the proper equivariant stable homotopy theory reduces to
the `genuine' equivariant stable homotopy theory.
In this special case, several stable model structures 
have already been constructed that complement the
$\pi_*$-isomorphisms by different classes of cofibrations and fibrations.
We explain how our stable model structure relates
to the previous ones for compact Lie groups.

For compact Lie groups $H$, model structures on orthogonal $H$-spectra
with $\pi_*$-isomorphisms as weak equivalences were established by
Mandell and May~\cite{mandell-may:equivariant_orthogonal}, Stolz~\cite{stolz:thesis},
Brun, Dundas and Stolz \cite{brun-dundas-stolz:equivariant_smash},
and Hill, Hopkins and Ravenel~\cite{hill-hopkins-ravenel:kervaire}.

The cofibrations in these model structures
each admit a characterization in terms of the latching maps, 
with different conditions on the allowed isotropy away from the image;
however, these characterizations are not explicitly stated in the other papers.
A morphism of orthogonal $H$-spectra $i:A\to B$ is an `$\mS$-cofibration'
in the sense of Stolz \cite[Def.\,2.3.4]{stolz:thesis}
and Brun-Dundas-Stolz \cite[Def.\,2.9.11]{brun-dundas-stolz:equivariant_smash}
precisely when the latching morphism
\[ \nu_m i\ = \ \nu_m^B \cup i(\mR^m) \ :\ L_m B \cup_{L_m A} A(\mR^m)  \ \to\  B(\mR^m) \]
is a cofibration of $(H\times O(m))$-spaces, with no additional constraint
on the isotropy.
The morphism $i$ is a `q-cofibration'
in the sense of Mandell and May \cite[III Def.\,2.3]{mandell-may:equivariant_orthogonal}
precisely when the latching morphism $\nu_m i$ 
is a cofibration of $(H\times O(m))$-spaces 
and additionally the isotropy group of every point that is not in the image
of $\nu_m i$ is the graph of a continuous homomorphism 
$K\to O(m)$, for some closed subgroup $K$ of $H$, 
that admits an extension to a continuous homomorphism defined on $H$.
In particular, Stolz' $\mS$-model structure has more cofibrations than
our model structure, and we have more cofibrations than Mandell and May.
For finite groups, our cofibrations of orthogonal $H$-spectra
specialize to the `complete cofibrations' in the sense of Hill, Hopkins
and Ravenel, i.e., to the variant of the {\em positive complete cofibrations}
of \cite[B.63]{hill-hopkins-ravenel:kervaire} where the positivity condition 
is dropped.
So for finite groups~$H$, the stable model structure 
specified in Theorem \ref{thm:stable} is `essentially'
the positive complete model structure of
\cite[B.4.1]{hill-hopkins-ravenel:kervaire}.
\end{rk}

\index{stable model structure!for orthogonal $G$-spectra|)}

\begin{eg}[No compact subgroups]
We already emphasized that when $G$ is compact, 
our theory just returns the well-known $G$-equivariant stable homotopy
theory, based on a complete $G$-universe.
There is another extreme where we also recover a well-known homotopy theory.
Indeed, suppose that the only compact subgroup of $G$ 
is the trivial subgroup. For example, $G$ could be discrete and torsion
free, or the additive group of $\mR^n$.
The category of orthogonal $G$-spectra is isomorphic to
the category of module spectra over the spherical group ring~$\mS[G]$ 
-- this is a pointset level statement and holds for all Lie groups~$G$.
But if the trivial group is the only compact subgroup of~$G$,
then a morphism of orthogonal $G$-spectra is a $\pi_*$-isomorphism 
or stable fibration
if and only if the underlying morphism of non-equivariant orthogonal spectra
is a $\pi_*$-isomorphism or stable fibration, respectively.
So not only is the category $\Sp_G$ isomorphic to module spectra over~$\mS[G]$,
also the model structure is the one on modules over 
an orthogonal ring spectrum, lifted along the forgetful functor,
compare~\cite[Thm.\,12.1 (i)]{mandell-may-schwede-shipley:diagram_spectra}. 
So in particular,
\[ \Ho(\Sp_G)\ \cong \ \Ho(\mS[G]\text{-mod}) \]
i.e., the stable $G$-equivariant homotopy category `is'
the homotopy category of module spectra over~$\mS[G]$.
\end{eg}

The category $\Sp$ of orthogonal spectra supports a symmetric monoidal {\em smash product},
which is an example of a convolution product considered
by category theorist Day in \cite{day:closed};
like the tensor product of abelian groups, 
it can be introduced via a universal property or as a specific construction.
The indexing category $\bO$ for orthogonal spectra
was introduced in Construction \ref{con:O}.
A based continuous functor
\[ \oplus \ : \  \bO\sm\bO \ \to \ \bO\]
is defined on objects by orthogonal direct sum, and on morphism spaces by
\begin{align*}
   \bO(V,W)\sm \bO(V',W')\ &\to \ \bO(V\oplus V', W\oplus W')\\
(w,\varphi)\sm (w',\varphi')\quad &\longmapsto \quad ((w,w'),\varphi\oplus\varphi')\ .
\end{align*}
A {\em bimorphism} $b:(X,Y)\to Z$\index{bimorphism!of orthogonal spectra}
from a pair of orthogonal spectra $(X,Y)$
to an orthogonal spectrum $Z$ is a natural transformation 
\[ b \ : \ X\bar\sm Y \ \to \ Z\circ\oplus \]
of continuous functors $\bO\sm\bO\to\bT_*$;
here $X\bar\sm Y$ is the `external smash product'  defined by
$(X\bar\sm Y)(V,W)=X(V)\sm Y(W)$. A bimorphism thus consists of
based continuous maps 
\[ b_{V,W} \ : \ X(V) \sm  Y(W) \ \to \ Z(V\oplus W) \]
for all inner product spaces $V$ and $W$ that form morphisms of orthogonal
spectra in each variable separately.
A smash product of two orthogonal spectra is now a universal example
of a bimorphism from $(X,Y)$.

\begin{defn}\label{def:smash product}
A {\em smash product}\index{smash product!of orthogonal spectra}
of two orthogonal spectra $X$ and $Y$ is a pair $(X\sm Y,i)$ 
consisting of an orthogonal spectrum $X\sm Y$
and a universal bimorphism $i:(X,Y)\to X\sm Y$,
i.e., a bimorphism 
such that for every orthogonal spectrum $Z$ the map
\[ 
 \Sp(X\sm Y,Z) \ \to \ \text{Bimor}((X,Y),Z) \ , \
f\mapsto f i = \{f(V\oplus W)\circ i_{V,W}\}_{V,W} 
 \]
is bijective. 
\end{defn}

A smash product of two orthogonal spectra can be constructed as an enriched Kan extension
of the external smash product $X\bar\sm Y:\bO\sm\bO\to\bT_*$
along the continuous functor $\oplus : \bO\sm\bO \to \bO$.
This boils down to presenting $(X\sm Y)(\mR^n)$
as a quotient space of the wedge, over $0\leq k\leq n$, of the
$O(n)$-spaces 
\[ O(n)\ltimes_{O(k)\times O(n-k)}X(\mR^k)\sm Y(\mR^{n-k})\ .\]
However, we feel that this explicit construction does not give much insight beyond
showing the existence of an object with the desired universal property.
Anyhow, Day's general theory \cite{day:closed} shows
that the smash product $X \wedge Y$ supports preferred natural associativity
isomorphisms $(X\sm Y)\sm Z\iso X\sm(Y\sm Z)$,
symmetry isomorphisms $X\sm Y\iso Y\sm X$ and unit isomorphisms
$X\sm \mS\iso X\iso\mS\sm X$, see also \cite[Con.\,C.9]{schwede:global}.
Moreover, there exists an adjoint internal function orthogonal spectrum.
All this data makes the smash product into a closed symmetric monoidal structure
on the category of orthogonal spectra.

If $X$ and $Y$ are orthogonal $G$-spectra, then the smash product $X \wedge Y$ 
inherits the diagonal $G$-action, and $G$ acts on the internal function spectrum
by conjugation. 
So the category $\Sp_G$ forms a closed symmetric monoidal category
under smash product.
We will now show that the category $\Sp_G$ of orthogonal $G$-spectra,
equipped with the stable model structure and the smash product, 
is a monoidal model category in the sense 
of \cite[Def. 4.2.6]{hovey:model_categories}.
We also show that the stable model structure 
satisfies the monoid axiom \cite[Def. 3.3]{schwede-shipley:algebres_modules}. 
This allows us to automatically lift the stable model structure 
to the categories of module $G$-spectra and ring $G$-spectra.

\begin{defn}
  Let $G$ be a Lie group.
  An orthogonal $G$-spectrum $X$ is {\em quasi-flat}\index{quasi-flat!orthogonal $G$-spectrum}
  if for every compact subgroup $H$ of $G$ and every $m\geq 0$,
  the latching map $\nu_m:L_m X \to X(\mR^m)$
  is an $(H\times O(m))$-cofibration.
\end{defn}

An orthogonal $G$-spectrum $X$ is quasi-flat precisely if
for every compact subgroup $H$ of $G$,
the underlying orthogonal $H$-spectrum is $H$-flat in the sense of
\cite[Def.\,3.5.7]{schwede:global}.
When $G$ is compact, `quasi-flat' is the same as `$G$-flat';
in this case, the $G$-flat orthogonal spectra are the cofibrant objects 
in the $\mS$-model structure of Stolz \cite[Thm.\,2.3.27]{stolz:thesis}
and Brun-Dundas-Stolz  \cite[Def.\,2.9.11]{brun-dundas-stolz:equivariant_smash}.
Every quasi-cofibrant orthogonal $G$-spectrum 
in the sense of Definition \ref{def:quasi-cofibrant} below is in particular quasi-flat.

\begin{thm} \label{thm:smashing with cofibrants} 
Let $G$ be a Lie group. 
For every quasi-flat orthogonal $G$-spectrum $X$, the functor $-\sm X$
preserves $\pi_*$-isomorphisms of orthogonal $G$-spectra. 
\end{thm}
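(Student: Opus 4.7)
\medskip
\noindent\emph{Proof plan.}
The plan is to reduce the claim to its analogue for compact Lie groups, which is already available in the literature. By Definition~\ref{def:pi_*-isomorphism}, a morphism $f\colon A\to B$ of orthogonal $G$-spectra is a $\pi_*$-isomorphism if and only if $\res^G_H(f)$ is a $\pi_*$-isomorphism of orthogonal $H$-spectra for every compact subgroup $H\leq G$. The smash product is defined as an enriched Kan extension from the external smash product $\bO\sm\bO\to\bT_*$ along $\oplus\colon\bO\sm\bO\to\bO$, and the action on a smash product of $G$-spectra is the diagonal action on the underlying non-equivariant smash product; consequently restriction to a closed subgroup commutes with smashing, i.e.\ $\res^G_H(A\sm X)\iso\res^G_H(A)\sm\res^G_H(X)$. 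Moreover, the definition of quasi-flatness says precisely that for each compact $H\leq G$ the latching maps $\nu_m\colon L_m X\to X(\mR^m)$ are $(H\times O(m))$-cofibrations, which is exactly the condition that $\res^G_H(X)$ be $H$-flat in the sense of \cite[Def.\,3.5.7]{schwede:global}. Thus the task reduces to: for every compact Lie group $H$ and every $H$-flat orthogonal $H$-spectrum $Y$, the functor $-\sm Y$ preserves $\pi_*$-isomorphisms of orthogonal $H$-spectra.

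This compact-group statement is \cite[Prop.\,3.5.10]{schwede:global}, so I would invoke it directly. For the record, the strategy there is to write $Y=\colim_m \sk^m Y$ along the skeleton filtration~\eqref{eq:skeleton_pushout}, so that each passage $\sk^{m-1}Y\to\sk^m Y$ is a pushout along the morphism $G_m L_m Y\to G_m Y(\mR^m)$ built from the $m$-th latching map. The $H$-flatness of $Y$ ensures that each skeleton inclusion is an h-cofibration of orthogonal $H$-spectra, and smashing with a fixed spectrum is a left adjoint and hence commutes with these pushouts and with the sequential colimit. Since $\pi_*$-isomorphisms of orthogonal $H$-spectra are preserved by cobase change along h-cofibrations and by sequential composition of h-cofibrations (\cite[III Thm.\,3.5]{mandell-may:equivariant_orthogonal}), an induction on $m$ reduces matters to the base case of smashing with a semifree spectrum $G_m K$ for an $(H\times O(m))$-cofibrant based space $K$. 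That base case admits an explicit description, and the compatibility with $\pi_*$-isomorphisms follows from the suspension and induction isomorphisms for compact Lie group equivariant homotopy groups.

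The hard content is packaged in the compact-group input, which is imported rather than re-proved. The step specific to our setting is just the reduction, and it is clean because quasi-flatness was built to mean exactly that each restriction $\res^G_H(X)$ is $H$-flat in Schwede's sense, and because smash products commute with restriction along closed subgroups on the nose. No additional cofibrancy or commutation conditions need to be verified beyond what is immediate from the definitions.
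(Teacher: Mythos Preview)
Your proposal is correct and follows essentially the same approach as the paper: reduce to compact subgroups $H\leq G$ using that $\pi_*$-isomorphisms are tested on compact subgroups, that restriction commutes with smash product, and that quasi-flat means precisely $H$-flat for every compact $H$, then invoke \cite[Thm.\,3.5.10]{schwede:global}. Your additional sketch of the skeleton-filtration argument behind that cited result is extra detail the paper omits, but the core proof is identical.
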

\begin{proof}
We let $H$ be any compact subgroup of $G$. 
Then the underlying orthogonal $H$-spectrum of $X$ is $H$-flat
in the sense of \cite[Def.\,3.5.7]{schwede:global}.
Now we let $f \colon A \to B$ be a $\pi_*$-isomorphism of orthogonal $G$-spectra. 
Since the $G$-action on a smash product is defined diagonally, 
we have $\res^G_H(A\sm X)=\res^G_H(A)\sm\res^G_H(X)$, and similarly for $B\sm X$.
Since $X$ is $H$-flat, the morphism $\res^G_H(f\wedge X)$ is a $\pi_*$-isomorphism
of orthogonal $H$-spectra by \cite[Thm.\,3.5.10]{schwede:global}.
Since $H$ was an arbitrary compact subgroup of $G$, this proves the claim.
\end{proof}

\begin{prop}\label{prop:smash is monoidal model}
  Let $G$ be a Lie group.\index{pushout product property!for equivariant smash product}
  \begin{enumerate}[\em (i)]
  \item 
    Let $i:A\to B$ and $j:K\to L$ be cofibrations of orthogonal $G$-spectra.
    Then the pushout-product morphism
    \[ i\boxempty j \ = \ (i\sm L)\cup(B\sm j)\ : \ 
    A\sm L\cup_{A\sm K}B\sm K \ \to \ B\sm L\]
    is a cofibration of orthogonal $G$-spectra. 
    If, in addition, $i$ or $j$ is a $\pi_*$-isomorphism, then so is $i \boxempty j$.
  \item   The  category $\Sp_G$ equipped 
    with the stable model structure is a monoidal model category 
    under the smash product of orthogonal $G$-spectra.
\end{enumerate}
\end{prop}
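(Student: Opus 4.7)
The plan is to establish (i) first, and then to derive (ii) from (i) together with Theorem~\ref{thm:smashing with cofibrants}.

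For the cofibration assertion in (i), we reduce to the generating cofibrations. Since the smash product preserves colimits in each variable, and the cofibrations of orthogonal $G$-spectra form the weakly saturated class generated by $I_{\lv}^G$, it suffices to verify the pushout-product property when $i$ and $j$ are generating cofibrations of the form $(G\ltimes_H F_V)\sm (i^k)_+$ and $(G\ltimes_{H'} F_{V'})\sm (i^l)_+$, with $H,H'$ compact subgroups of $G$ and $V\in\cV_H$, $V'\in\cV_{H'}$. Under the standard identification $(i^k)_+\boxempty (i^l)_+\iso (i^{k+l})_+$ for sphere inclusions, the pushout product becomes a morphism of the form $\bigl((G\ltimes_H F_V)\sm(G\ltimes_{H'} F_{V'})\bigr)\sm (i^{k+l})_+$. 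The task is then to verify the latching-map condition for cofibrations; an analysis of the latching maps of the smash product $(G\ltimes_H F_V)\sm(G\ltimes_{H'} F_{V'})$ reveals that they are built from $(G\times O(m))$-cells whose isotropy lies in the image of $O(V)\times O(V')\hookrightarrow O(m)$ for $m=\dim V+\dim V'$, and both the $\Com$-cofibration condition and the freeness of the $O(m)$-action off the image follow by direct inspection.

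For the $\pi_*$-isomorphism assertion in (i), we use that every cofibration is an h-cofibration by Theorem~\ref{thm:stable}(ii). Consequently, $i\boxempty j$ is itself an h-cofibration, and its cokernel is canonically isomorphic to the smash product $(B/A)\sm(L/K)$. Both $B/A$ and $L/K$ are cofibrant orthogonal $G$-spectra, as cofibers of cofibrations, and therefore quasi-flat. Assume $j$ is a $\pi_*$-isomorphism, so that $L/K$ is $\pi_*$-trivial by the long exact homotopy group sequence of the h-cofibration~$j$. Theorem~\ref{thm:smashing with cofibrants} applied to the quasi-flat spectrum $B/A$ then implies that $(B/A)\sm (L/K)$ is $\pi_*$-trivial, and another application of the long exact sequence shows that $i\boxempty j$ is a $\pi_*$-isomorphism. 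The case in which $i$ is a $\pi_*$-isomorphism is handled symmetrically.

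For (ii), the monoidal model category axioms consist of the pushout-product property, which is (i), and the unit axiom. The unit axiom demands that for a cofibrant replacement $q:Q\mS\to\mS$ of the monoidal unit, the map $q\sm X:Q\mS\sm X\to\mS\sm X\iso X$ is a $\pi_*$-isomorphism for every cofibrant orthogonal $G$-spectrum $X$. Every cofibrant orthogonal $G$-spectrum is quasi-flat, so Theorem~\ref{thm:smashing with cofibrants} implies that $-\sm X$ preserves $\pi_*$-isomorphisms, whence $q\sm X$ is one. The main obstacle will be the latching-map bookkeeping in the cofibration part of~(i): one has to track simultaneously the $\Com$-cofibration property on $(G\times O(m))$-spaces and the freeness of the $O(m)$-action off the image, and verify that both are preserved when forming the smash product of two induced free $G$-spectra.
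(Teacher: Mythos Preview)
Your overall strategy for the $\pi_*$-isomorphism part of (i) and for (ii) matches the paper's closely and is correct. The cokernel argument via h-cofibrations and Theorem~\ref{thm:smashing with cofibrants} is exactly what the paper does, and your treatment of the unit axiom (quasi-flatness of cofibrant spectra plus Theorem~\ref{thm:smashing with cofibrants}) is a valid alternative to the paper's more direct argument, which instead observes that $\Sigma^\infty_+\uEG$ is a specific cofibrant replacement of $\mS_G$ and that the projection $X\sm\uEG_+\to X$ is an $H$-equivariant homotopy equivalence for every compact subgroup $H$ (hence a $\pi_*$-isomorphism for \emph{all} $X$, not just cofibrant ones).

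The gap you correctly identify---the latching-map bookkeeping in the cofibration part of (i)---is the one place where the paper takes a genuinely different and cleaner route that avoids the bookkeeping entirely. Rather than analyzing latching maps of $(G\ltimes_H F_V)\sm(G\ltimes_{H'}F_{V'})$ directly, the paper observes the isomorphism
\[
(G\ltimes_H F_V)\sm(G\ltimes_{H'}F_{V'})\ \iso\ \Delta^*\bigl((G\times G)\ltimes_{H\times H'}F_{V\oplus V'}\bigr),
\]
where $\Delta:G\to G\times G$ is the diagonal embedding. The right-hand spectrum inside $\Delta^*$ is manifestly a cofibrant orthogonal $(G\times G)$-spectrum (it is a generating free spectrum), and since $\Delta$ has closed image and trivial kernel, Theorem~\ref{thm:adjunctions spectra pointset}~(ii) shows that $\Delta^*$ preserves cofibrations. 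This immediately gives cofibrancy of the smash without any direct latching-map analysis, and in particular handles both the $\Com$-cofibration condition and the $O(m)$-freeness in one stroke. Your proposed direct inspection could in principle be made to work, but it amounts to reproving a special case of Theorem~\ref{thm:adjunctions spectra pointset}~(ii) by hand; the diagonal trick is the structural reason it succeeds.
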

\begin{proof} 
  (i)
  We start with the claim that only involves cofibrations.
  It suffices to check the statement for the generating cofibrations.
  The pushout product, in the category of spaces, of two sphere inclusions
  is homeomorphic to another sphere inclusion.
  So it suffices to show that for all compact subgroups $H$ and $K$ of $G$,
  all $H$-representations $V$ and all $K$-representations $W$, the $G$-spectrum
  \begin{equation}\label{eq:smash gens} 
    (G \ltimes_H F_V ) \wedge (G \ltimes_K F_W)\ \iso \ 
    \Delta^*( (G\times G)\ltimes_{H\times K} F_{V\oplus W})
  \end{equation}
  is cofibrant, where $\Delta:G\to G\times G$ is the diagonal embedding.
  Since $(G\times G)\ltimes_{H\times K} F_{V\oplus W}$ is a cofibrant $(G\times G)$-spectrum,
  Theorem \ref{thm:adjunctions spectra pointset} (ii) below 
  shows that the orthogonal $G$-spectrum \eqref{eq:smash gens} is cofibrant.

  Now we suppose that in addition the morphism $i$ is a $\pi_*$-isomorphism, 
  the other case being analogous.
  Since $i$ is a cofibration, the long exact homotopy group sequence 
  (see \cite[Cor.\,3.1.38]{schwede:global}) 
  shows that its cofiber $B/A$ is $\pi_*$-trivial. 
  Since $j$ is a cofibration, its cofiber $L/K$ is cofibrant, hence $G$-flat,
  so the smash product $(B/A)\sm (K/L)$ is $\pi_*$-trivial 
  by Theorem \ref{thm:smashing with cofibrants}.
  Since the morphism $i\boxempty j$ is a $G$-cofibration with cofiber isomorphic
  to $(B/A)\sm (K/L)$, its long exact homotopy group sequence 
  shows that $i\boxempty j$ is a $\pi_*$-isomorphism.

  (ii) The pushout product property is established in part (i).\index{universal proper $G$-space}
  The suspension spectrum $\Sigma^\infty_+\uEG$ is a cofibrant replacement
  of the $G$-sphere spectrum $\mS_G$, the monoidal unit object.
  For every compact subgroup $H$ of $G$ the underlying $H$-space of $\uEG$ 
  is $H$-equivariantly contractible.
  So for every orthogonal $G$-spectrum $X$, the projection
  \[ X\sm \uEG_+ \ \iso \ X\sm \Sigma^\infty_+\uEG \ \to \ X \]
  is a homotopy equivalence of underlying orthogonal $H$-spectra, 
  and thus induces an isomorphism on $\pi_*^H$. 
  Since $H$ was any compact subgroup, the projection is a $\pi_*$-isomorphism.
  This establishes the unit axiom of \cite[Def.\,4.2.6]{hovey:model_categories}. 
\end{proof}

\begin{prop}[Monoid axiom]\label{prop:monoid axiom}\index{monoid axiom!for equivariant smash product}
  Let $G$ be a Lie group and $i \colon A \to B$ 
  a cofibration of orthogonal $G$-spectra which is also a $\pi_*$-isomorphism.
  \begin{enumerate}[\em (i)]
  \item For every orthogonal $G$-spectrum $Y$, 
    the morphism $i \wedge Y \colon  A \wedge Y  \to B \wedge Y$ 
    is an h-cofibration and a $\pi_*$-isomorphism.
  \item Let $\cD$ denote the class of maps of the form $i \wedge Y$, 
    where $i$ is a stably acyclic cofibration and $Y$ any orthogonal $G$-spectrum. 
    Then any map in the class $\cD$-cell (maps obtained as transfinite compositions 
    of cobase changes of small coproducts of morphisms in $\cD$) 
    is a $\pi_*$-isomorphism.
  \end{enumerate}
\end{prop}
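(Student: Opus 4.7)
The plan is to first prove part (i) directly, and then deduce part (ii) from (i) by invoking the standard closure properties of h-cofibrations that are $\pi_*$-isomorphisms. Throughout, the cofibrant-implies-quasi-flat implication (already used in the proof of Proposition~\ref{prop:smash is monoidal model}) is essential, together with Theorem~\ref{thm:smashing with cofibrants}.

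For the h-cofibration claim in part (i), I would use the universal characterization of h-cofibrations: $i\colon A\to B$ is an h-cofibration if and only if the canonical morphism $(A\sm[0,1]_+)\cup_i B \to B\sm[0,1]_+$ admits a retraction. Since $i$ is a cofibration, Theorem~\ref{thm:stable}(ii) yields such a retraction $r$. Smashing with $Y$ commutes with colimits in each variable (because $-\sm Y$ has a right adjoint, the internal function spectrum) and with smash products of based spaces, so $r\sm Y$ gives the retraction witnessing that $i\sm Y\colon A\sm Y\to B\sm Y$ is an h-cofibration.

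For the $\pi_*$-isomorphism claim in part (i), the cofiber of $i\sm Y$ is $(B/A)\sm Y$, so by the long exact homotopy group sequence associated to the h-cofibration $i\sm Y$ (see \cite[Cor.\,3.1.38]{schwede:global}), it suffices to prove that $(B/A)\sm Y$ is $\pi_*$-trivial. First, $B/A$ is cofibrant, being the pushout of $i\colon A\to B$ along $A\to\ast$, and hence quasi-flat. The long exact sequence of the h-cofibration $i$, combined with the hypothesis that $i$ is a $\pi_*$-isomorphism, shows that $B/A$ is $\pi_*$-trivial, i.e., that $\ast\to B/A$ is a $\pi_*$-isomorphism. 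Choose a cofibrant replacement $q\colon Y^c\to Y$ in the stable model structure; then $q$ is a $\pi_*$-isomorphism and $Y^c$ is cofibrant, hence quasi-flat. Applying Theorem~\ref{thm:smashing with cofibrants} with the quasi-flat spectrum $Y^c$ to the $\pi_*$-isomorphism $\ast\to B/A$ shows that $\ast\to Y^c\sm (B/A)$ is a $\pi_*$-isomorphism; applying the same theorem with the quasi-flat spectrum $B/A$ to the $\pi_*$-isomorphism $q$ shows that $(B/A)\sm Y^c\to (B/A)\sm Y$ is a $\pi_*$-isomorphism. Composing, $(B/A)\sm Y$ is $\pi_*$-trivial, which completes the argument for (i).

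For part (ii), by part (i) every morphism in $\cD$ is an h-cofibration and a $\pi_*$-isomorphism. It then suffices to invoke the fact that the class of h-cofibrations that are $\pi_*$-isomorphisms is closed under the relevant constructions: small coproducts (wedges convert equivariant homotopy groups into direct sums by~\cite[Cor.\,3.1.37]{schwede:global}, and wedges of h-cofibrations are h-cofibrations); cobase changes (the cobase change of an h-cofibration is an h-cofibration with the same cofiber, which remains $\pi_*$-trivial, so the long exact sequence forces the cobase change to be a $\pi_*$-isomorphism, as in the proof of left properness in Theorem~\ref{thm:stable}); and transfinite compositions along h-cofibrations (by \cite[III Thm.\,3.5(v)]{mandell-may:equivariant_orthogonal} or \cite[Prop.\,3.1.41]{schwede:global}). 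The main obstacle is really just the bookkeeping in part (i), where one must thread the hypotheses of Theorem~\ref{thm:smashing with cofibrants} correctly through both a cofibrant replacement of $Y$ and the cofiber $B/A$; everything else is formal.
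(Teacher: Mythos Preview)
Your proof is correct and follows essentially the same approach as the paper: reduce to showing $(B/A)\sm Y$ is $\pi_*$-trivial via a cofibrant replacement $Y^c\to Y$ and two applications of Theorem~\ref{thm:smashing with cofibrants} (once using that $B/A$ is cofibrant hence quasi-flat, once using that $Y^c$ is), then deduce part (ii) from the closure of h-cofibrations that are $\pi_*$-isomorphisms under coproducts, cobase change, and transfinite composition. You actually give more detail than the paper on why $i\sm Y$ is an h-cofibration---the paper simply asserts this---and your retraction argument is a clean way to fill that gap.
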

\begin{proof}
  The class of h-cofibrations which are also $\pi_*$-isomorphisms is closed
  under transfinite compositions, coproducts and cobase changes
  by \cite[III Thm.\,3.5]{mandell-may:equivariant_orthogonal}.
  Hence part (ii) is a consequence of part (i). 

  The proof of part (i) is very similar to the proof of the corresponding statement 
  in the non-equivariant case, 
  compare \cite[Prop.\,12.5]{mandell-may-schwede-shipley:diagram_spectra}. 
  For the sake of completeness we provide the details here. 
  Since $i \colon A \to B$ is a cofibration, 
  the cofiber $B/A$ is cofibrant. Let $\alpha \colon Y^c \to Y$ 
  be a cofibrant approximation of $Y$. Then $(B/A) \wedge \alpha$ 
  is a $\pi_*$-isomorphism by Theorem \ref{thm:smashing with cofibrants}. 
  Furthermore the cofiber $B/A$ is $\pi_*$-isomorphic to the trivial $G$-spectrum,
  by the long exact sequence of homotopy groups, 
  see \cite[III Thm.\,3.5 (vi)]{mandell-may:equivariant_orthogonal}. 
  Using again Theorem \ref{thm:smashing with cofibrants}, 
  we see that $(B/A) \wedge Y^c$ and hence $(B/A) \wedge Y$ are $\pi_*$-isomorphic 
  to the trivial $G$-spectrum. 
  Now the morphism $i \wedge Y \colon  A \wedge Y  \to B \wedge Y$ 
  is an h-cofibration and its cofiber is isomorphic to $B/A \wedge Y$. 
  Since $(B/A) \wedge Y$ is $\pi_*$-isomorphic to the trivial $G$-spectrum,
  the long exact homotopy group sequence \cite[Cor.\,3.1.28]{schwede:global} 
  shows that the map $i \wedge Y$ is a $\pi_*$-isomorphism.
\end{proof}

The previous proposition almost immediately implies 
that the stable model structure on $\Sp_G$ 
lifts to the category of orthogonal ring $G$-spectra and the category 
of module spectra over an orthogonal ring $G$-spectrum $R$,
by the results of the fifth author and Shipley
\cite[Thm.\,4.1]{schwede-shipley:algebres_modules}.
We will not go into further details here.

\medskip

For compact Lie groups, the spheres of linear representations
become invertible objects in the genuine equivariant stable homotopy category.
In our more general context, the role of linear representations
is taken up by equivariant vector bundles over $\uEG$,
the universal $G$-space for proper actions.
We recall that a {\em $G$-vector bundle}\index{G-vector bundle@$G$-vector bundle}\index{equivariant vector bundle|see{$G$-vector bundle}}  
is a map of $G$-spaces $\xi \colon E \to X$ equipped with the structure of a real vector bundle, 
and such that the map $g\cdot-:\xi_x\to \xi_{g x}$ is $\mR$-linear 
for all $(g,x)\in G\times X$.
By one-point compactifying the fibers we obtain a $G$-equivariant 
fiber bundle $S^\xi \to X$ with fibers the spheres of dimension equal 
to the dimension of $\xi$. 
This bundle has two preferred $G$-equivariant sections
\[ s_0 ,  s_{\infty} \ :\ X \ \to\ S^\xi \]
which send a point in $X$ to the zero element and the point at infinity,
respectively, in the corresponding fiber.

\begin{prop}\label{prop:s_infty is cofibration} 
  Let $G$ be a Lie group and $(X,A)$ a relative proper $G$-CW-pair.
  Then for every $G$-vector bundle~$\xi$ over $X$,
  the $G$-map
  \[ s_0\cup s_{\infty}\cup \incl\ : \ 
    X\times\{0,\infty\}\cup_{A\times\{0,\infty\}} S^{\xi|_A}\ \to \ S^\xi \]
  is a $\Com$-cofibration of~$G$-spaces.
\end{prop}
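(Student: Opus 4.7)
The plan is to induct on the cellular structure of $(X,A)$, reducing to the case of a single equivariant cell where the $G$-vector bundle is equivariantly trivial. Explicitly, I would filter $X$ by the relative skeleta $A = X_0 \subset X_1 \subset X_2 \subset \cdots$ and, writing $M(X',A') := X' \times \{0,\infty\} \cup_{A'\times\{0,\infty\}} S^{\xi|_{A'}}$ for the generic form of the domain, define an increasing sequence of subspaces
\[
  Y_n \ = \ M(X,A) \cup_{M(X_n,A)} S^{\xi|_{X_n}}.
\]
Then $Y_0 = M(X,A)$, $\colim_n Y_n = S^\xi$, and the transfinite composite $Y_0 \to \colim_n Y_n$ is precisely the map under consideration. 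Each transition $Y_n \to Y_{n+1}$ is a pushout of the coproduct, over the $(n+1)$-cells $G/H_j \times D^{n+1}$ being attached, of the single-cell version of the map. Since $\Com$-cofibrations are stable under coproducts, pushouts, and transfinite composition, it suffices to handle a single-cell pair $X = G/H \times D^k$, $A = G/H \times S^{k-1}$, with $H$ a compact subgroup of $G$.

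In this model case I would first trivialize the bundle: restricting $\xi$ to $\{eH\} \times D^k$ gives an $H$-equivariant vector bundle over $D^k$ with trivial $H$-action, and every such bundle is equivariantly trivial, so $\xi|_{\{eH\}\times D^k} \iso V \times D^k$ for some orthogonal $H$-representation $V$. Using the classification of $G$-vector bundles on homogeneous spaces (induction from the $H$-fiber over $eH$) I obtain a $G$-equivariant isomorphism $\xi \iso G \times_H (V \times D^k)$ and hence $S^\xi \iso G \times_H (S^V \times D^k)$. Under this identification, the domain $X \times \{0,\infty\} \cup_{A\times\{0,\infty\}} S^{\xi|_A}$ identifies with $G \times_H (D^k \times \{0,\infty\} \cup_{S^{k-1}\times\{0,\infty\}} S^V \times S^{k-1})$, and the map in the proposition becomes $G \times_H (i \boxempty j)$, where $i \colon S^{k-1} \hookrightarrow D^k$ is the standard sphere inclusion and $j \colon \{0,\infty\} \hookrightarrow S^V$ is the inclusion of the two $H$-fixed poles of the one-point compactification.

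To finish, I would invoke two previously established results. Proposition \ref{prop:Gmod}(iii) applies because $i$ is a cofibration of non-equivariant spaces (hence an $H$-cofibration with trivial $H$-action) and $j$ is a $\Com$-cofibration of $H$-spaces (which, for the compact group $H$, coincides with an ordinary $H$-cofibration, and $\{0,\infty\}$ is an $H$-CW-subcomplex of $S^V$ with respect to any $H$-CW-structure on the unit sphere of $V$); it yields that $i \boxempty j$ is a $\Com$-cofibration of $H$-spaces. Then Proposition \ref{prop:homomorphism adjunctions spaces}(ii), applied to the inclusion $H \hookrightarrow G$, gives that the induction functor $G \times_H -$ is a left Quillen functor for the $\Com$-model structures and so preserves $\Com$-cofibrations. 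The main obstacle is the bundle-triviality step: one needs a clean appeal to classical equivariant bundle theory to identify $\xi$ with $G \times_H (V \times D^k)$ over $G/H \times D^k$; once this is in hand, everything else is a direct application of the pushout-product property and the change-of-group Quillen adjunction developed above.
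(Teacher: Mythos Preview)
Your argument is correct, and it takes a genuinely different organizational route from the paper's proof. Both proceed by induction on the relative skeleta and both ultimately rest on the same external inputs (equivariant trivialization of the bundle over each cell, Illman's triangulation theorem, and the left Quillen property of induction $G\times_H -$), but they are arranged differently.

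The paper does not attack the map in the statement directly. Instead it first equips $\xi$ with a $G$-invariant euclidean metric and proves, by cell induction, that the inclusion of \emph{unit sphere bundles} $S(\xi|_A)\hookrightarrow S(\xi)$ is a $\Com$-cofibration: over each new cell the transition is a pushout along $(G\times_{H_j} S(V_j))\times(\partial D^n\hookrightarrow D^n)$, and Illman's theorem makes $S(V_j)$ an $H_j$-CW-complex. Only after this is done does the paper pass to the fiberwise one-point compactification via a single global pushout square
\[
\xymatrix{
S(\xi)\times\{0,\infty\}\cup_{S(\xi|_A)\times\{0,\infty\}} S(\xi|_A)\times[0,\infty]\ar[r]\ar[d] & X\times\{0,\infty\}\cup_{A\times\{0,\infty\}}S^{\xi|_A}\ar[d]\\
S(\xi)\times[0,\infty]\ar[r]& S^\xi,}
\]
whose left vertical map is a $\Com$-cofibration by the sphere-bundle step. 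You, by contrast, filter $S^\xi$ itself and on each cell identify the transition with $G\times_H(i\boxempty j)$ for $i:S^{k-1}\hookrightarrow D^k$ and $j:\{0,\infty\}\hookrightarrow S^V$, invoking Proposition~\ref{prop:Gmod}(iii) and Proposition~\ref{prop:homomorphism adjunctions spaces}(ii) directly. Your route is more hands-on and avoids the auxiliary sphere bundle; the paper's route cleanly separates the ``relative CW'' content from the ``fiberwise compactification'' step. Two cosmetic points: your skeletal indexing is off by one (if $X_0=A$ then the cells attached to reach $X_{n+1}$ have dimension $n$, not $n+1$), and the phrase ``any $H$-CW-structure on the unit sphere of $V$'' should be sharpened to say that the unreduced suspension of an $H$-CW-structure on $S(V)$ gives $S^V$ an $H$-CW-structure with $\{0,\infty\}$ as its $0$-skeleton.
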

\begin{proof}
  The $G$-vector bundle~$\xi$ admits a $G$-invariant euclidean metric by
  the real analog of
  \cite[Lemma 1.4]{luck-oliver:completion}.
  We choose a relative $G$-CW-structure on $(X,A)$ with skeleta~$X^n$ 
  and such that $X^{-1}=A$.
  We let~$\xi^n:E^n\to X^n$ denote the restriction 
  of the given euclidean vector bundle to~$X^n$.
  In a first step we show that the inclusion
  $S(E^{n-1})\to S(E^n)$ of the total spaces of the sphere bundles 
  is a $\Com$-cofibration of $G$-spaces.
  Lemma 1.1 (iii) of~\cite{luck-oliver:completion} provides a pushout square of $G$-spaces:
  \[ \xymatrix{ 
      \coprod_{j\in J} ( G\times_{H_j} S(V_j))\times \partial D^n \ar[r] \ar[d] & S(E^{n-1}) \ar[d] \\
      \coprod_{j\in J} (G\times_{H_j} S(V_j))\times D^n) \ar[r]  & S(E^n) } \]
  Here $J$ is an indexing set of the equivariant $n$-cells of $X$,
  $H_j$ is the stabilizer group of the cell indexed by~$j$, 
  $V_j$ is an orthogonal representation of~$H_j$, and $S(V_j)$
  is its unit sphere. In particular, each of the groups~$H_j$
  is compact by our hypotheses.
  The pushout arises from choices of characteristic maps for the equivariant $n$-cells
  of $X$ and choices of trivializations of~$\xi$ over each equivariant cell.
  Since $H_j$ is compact, the unit sphere $S(V_j)$ admits an $H_j$-CW-structure
  by Illman's theorem~\cite[Thm.\,7.1]{illman:triangulation_Lie},
  so it is $H$-cofibrant. Hence  $G\times_{H_j} S(V_j)$ is a $\Com$-cofibrant $G$-space, 
  and so the left and right vertical maps
  in the pushout square are $\Com$-cofibrations of $G$-spaces.
  Since $\Com$-cofibrations are closed under sequential colimits,
  this proves that the inclusion
  $S(\xi|_A)=S(E^{-1})\to \colim_n S(E^n)=S(E)$ is a $\Com$-cofibration.
  
  The fiberwise one-point compactification participates in a pushout square
  of $G$-spaces:
  \[ \xymatrix@C=15mm{ 
      S(E)\times \{0,\infty\}\cup_{S(\xi|_A)\times \{0,\infty\}}
      S(\xi|_A)\times [0,\infty] \ar[d] \ar[r] &
      X\times\{0,\infty\}\cup_{A\times\{0,\infty\}} S^{\xi|_A}\ar[d]^{s_0\cup s_\infty\cup\incl} \\
      S(E)\times [0,\infty]\ar[r]& S^\xi }\]
  Here the lower horizontal map crushes $S(E)\times\{0\}$ and
  $S(E)\times\{\infty\}$ to the sections at 0 and $\infty$, respectively.
  Since the inclusion $S(\xi|_A)\to S(E)$ is a $\Com$-cofibration, 
  so is the left vertical map, and hence also the right vertical map.
\end{proof}

We let $\xi:E\to X$ be a $G$-vector bundle over a $G$-space $X$.
By dividing out the image of the section at infinity $s_{\infty}:X\to S^\xi$, 
we get a based $G$-space
\begin{equation}\label{eq:define_Thom_space}
 \Th(\xi) \ = \ S^\xi / s_\infty(X)\ ,   
\end{equation}
the Thom space of $\xi$.\index{Thom space}

\begin{prop}\label{prop:Thom is invertible} 
  Let $G$ be a Lie group and~$\xi$
  a $G$-vector bundle over $\uEG$. 
  \begin{enumerate}[\em (i)]
  \item For every compact subgroup~$H$ of~$G$ and every 
    $H$-fixed point~$x\in(\uEG)^H$, the composite map
    \[ S^{\xi_x}\ \xrightarrow{\incl} \ S^{\xi}\ \xrightarrow{\proj} \ \Th(\xi) \]
    is a based $H$-equivariant homotopy equivalence.
  \item The endofunctors~$-\sm \Th(\xi)$ and~$\map_*(\Th(\xi),-)$
    of the category of orthogonal $G$-spectra 
    preserve and detect $\pi_*$-isomorphisms.
  \item For every orthogonal $G$-spectrum~$X$ the adjunction unit
    \[ \eta_X \ : \ X \ \to \ \map_*(\Th(\xi),X\sm \Th(\xi)) \]
    is a $\pi_*$-isomorphism. 
  \item The adjoint functor pair
    \[\xymatrix@C=12mm{  
   -\sm \Th(\xi)\ : \ \Sp_G\ \ar@<0.5ex>[r]
   & \ \Sp_G \ : \    \map_*(\Th(\xi), -) \ar@<0.5ex>[l]} \]
    is a Quillen equivalence.
    Consequently, the suspension spectrum of the 
    Thom space~$\Th(\xi)$ is an invertible object in~$\Ho(\Sp_G)$.  
  \end{enumerate}
\end{prop}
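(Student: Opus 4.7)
The plan is to reduce the entire statement to the case of compact Lie groups, where smashing with a representation sphere is already known to be a self-equivalence of the genuine equivariant stable homotopy category. Part~(i) is the geometric heart of the proposition; once it is available, parts~(ii)--(iv) will follow by restricting to compact subgroups and invoking classical results.

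To prove~(i), I would first argue that for every compact subgroup $H$ of $G$ and every $H$-fixed point $x \in (\uEG)^H$, the inclusion $\{x\} \hookrightarrow \uEG$ is an $H$-equivariant homotopy equivalence. Indeed, the restriction of a proper $G$-CW-structure yields a proper $H$-CW-structure on $\uEG$, and the $K$-fixed point space $(\uEG)^K$ is contractible for every closed (hence compact) subgroup $K$ of $H$, so equivariant Whitehead applies. Consequently, the identity of $\uEG$ is $H$-equivariantly homotopic to the constant self-map at $x$. Applying $H$-equivariant homotopy invariance of real vector bundles (which holds for compact $H$ and $H$-CW bases), this gives an $H$-equivariant isomorphism $\xi \iso \uEG \times \xi_x$ of $G$-vector bundles restricted to $H$. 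Passing to fiberwise one-point compactifications and collapsing the section at infinity yields an $H$-equivariant based homeomorphism $\Th(\xi) \iso \uEG_+ \sm S^{\xi_x}$, under which the map of~(i) becomes $v \longmapsto x \sm v$. Since $\uEG$ is $H$-equivariantly contractible, this map is an $H$-equivariant homotopy equivalence.

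For parts~(ii) and~(iii), fix a compact subgroup $H$ of $G$ and an $H$-fixed point $x$. Since the smash product of orthogonal $G$-spectra is formed with the diagonal $G$-action, the $H$-equivalence from~(i) gives $\res^G_H \Th(\xi) \simeq S^{\xi_x}$, and hence $\res^G_H(X \sm \Th(\xi)) \simeq \res^G_H(X) \sm S^{\xi_x}$ for every orthogonal $G$-spectrum $X$; an analogous identification holds for $\map_*(\Th(\xi), -)$. The classical fact that representation spheres are invertible in $\Ho(\Sp_H)$ for compact Lie groups~$H$ (see e.g.~\cite[III.4]{mandell-may:equivariant_orthogonal}) then implies that both $-\sm S^{\xi_x}$ and $\map_*(S^{\xi_x},-)$ preserve and detect $H$-equivariant $\pi_*$-isomorphisms. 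Letting $H$ range over all compact subgroups of $G$ yields~(ii), and applying the same reduction to the adjunction unit (which restricts to the classical representation-sphere unit on each compact subgroup) yields~(iii).

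For~(iv), I would first check that $\Th(\xi)$ is a cofibrant based $G$-space: Proposition~\ref{prop:s_infty is cofibration}, applied to the pair $(\uEG, \emptyset)$, shows that the infinity section is a $\Com$-cofibration, so after collapsing it the basepoint inclusion $\ast \to \Th(\xi)$ remains a $\Com$-cofibration. Theorem~\ref{thm:stable}~(iii) then ensures that $-\sm\Th(\xi)$ preserves cofibrations, while~(ii) ensures it preserves acyclic cofibrations, giving the Quillen adjunction. The Quillen equivalence follows from~(iii) combined with the analogous $\pi_*$-isomorphism statement for the counit, which is proved by the same reduction to compact subgroups. Invertibility of $\Sigma^\infty \Th(\xi)$ in $\Ho(\Sp_G)$ is then automatic. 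The main technical obstacle is part~(i): one must carefully combine the universal property of $\uEG$, its proper $H$-CW-structure, and the (standard but non-trivial) fact that equivariant real vector bundles over a possibly infinite-dimensional proper $H$-CW base are classified up to $H$-isomorphism by pullback along $H$-homotopic maps.
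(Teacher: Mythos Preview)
Your proposal is correct and follows essentially the same route as the paper: part~(i) via $H$-equivariant contractibility of $\uEG$ together with homotopy invariance of $H$-vector bundles (the paper cites \cite[Thm.\,1.2]{luck-oliver:completion} for precisely this step), parts~(ii) and~(iii) by restricting to compact subgroups and reducing to representation spheres, and part~(iv) via cofibrancy of $\Th(\xi)$ from Proposition~\ref{prop:s_infty is cofibration}. The only cosmetic difference is that for~(iv) the paper concludes the Quillen equivalence from the unit being a $\pi_*$-isomorphism together with the right adjoint \emph{detecting} $\pi_*$-isomorphisms (already in~(ii)), which saves you the separate counit argument.
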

\begin{proof}
  (i) The restriction of the $G$-space $\uEG$
  is $H$-equivariantly contractible.
  Therefore, by the homotopy invariance theorem 
  \cite[Thm.\,1.2]{luck-oliver:completion}, 
  the underlying $H$-vector bundle of $\xi$ is 
  $H$-equivariantly isomorphic to the trivial bundle 
  $\xi_x \times \uEG \to \uEG$. 
  This implies that the underlying $H$-space of the Thom space $\Th(\xi)$ 
  is $H$-equivariantly isomorphic to $S^{\xi_x} \wedge \uEG_+$.
  Since $\uEG$ is $H$-equivariantly contractible, the claim follows.
  
  (ii) We let~$H$ be any compact subgroup of~$G$.
  We choose an $H$-fixed point~$x\in (\uEG)^H$.
  Part~(i) shows that the fiber inclusion induces an $H$-equivariant 
  based homotopy equivalence 
  \[\varphi\colon S^{\xi_x} \ \to \   \res^G_H(\Th(\xi)) \ .\]
  This map induces homotopy equivalences of orthogonal $H$-spectra
  \[ \varphi_* \ : \ \res^G_H(X)\sm S^{\xi_x} \ \to \ \res^G_H( X\sm \Th(\xi)) \ \]
  and
  \[ \varphi_* \ : \ \res_H^G(\map_*(\Th(\xi),X))\to \Omega^{\xi_x}(\res_H^G(X))\ . \]
  Since $H$ is compact, the fiber~$\xi_x$ can be endowed with an $H$-invariant
  inner product, making it an orthogonal $H$-representation.
  Now the representation sphere~$S^{\xi_x}$ admits the structure of a finite based
  $H$-CW-complex. So the functor~$-\sm S^{\xi_x}$ preserves $\pi_*$-isomorphisms 
  by~\cite[III Thm.\,3.11]{mandell-may:equivariant_orthogonal} 
  or~\cite[Prop.\,3.2.19 (ii)]{schwede:global},
  and the functor~$\Omega^{\xi_x}$ preserves $\pi_*$-isomorphisms 
  by~\cite[III Prop.\,3.9]{mandell-may:equivariant_orthogonal} 
  or~\cite[Prop.\,3.1.40 (ii)]{schwede:global}.
  
  Furthermore, if~$f:X\to Y$ is a morphism of orthogonal $H$-spectra such that 
  $f\sm S^{\xi_x}:X\sm S^{\xi_x}\to Y\sm S^{\xi_x}$ is a $\pi_*$-isomorphism, 
  then $\Omega^{\xi_x}(f\sm S^{\xi_x})$ is a $\pi_*$-isomorphism by the previous paragraph.
  Since the adjunction unit~$\eta^{\xi_x}_X:X\to\Omega^{\xi_x}(X\sm S^{\xi_x})$ 
  is a $\pi_*$-isomorphism 
  (by~\cite[III Lemma 3.8]{mandell-may:equivariant_orthogonal} 
  or~\cite[Prop.\,3.1.25 (ii)]{schwede:global}),
  the original morphism~$f$ is a $\pi_*$-isomorphism. 
  So smashing with $S^{\xi_x}$ detects $\pi_*$-isomorphisms.  
  By the same argument, using that the adjunction counit
  $\epsilon^{\xi_x}_X:(\Omega^{\xi_x}X)\wedge S^{\xi_x}\to X$ is a $\pi_*$-isomorphism 
  (see \cite[Prop.\,3.1.25 (ii)]{schwede:global}), 
  it follows that $\Omega^{\xi_x}$ detects $\pi_*$-isomorphisms of $H$-orthogonal spectra.
  
  Since $H$ was any compact subgroup of $G$,
  this shows that smashing with $\Th(\xi)$ and taking $\map_*(\Th(\xi),-)$ 
  detect and preserve $\pi_*$-isomorphisms  of orthogonal $G$-spectra.

  (iii)  Again we let~$H$ be any compact subgroup of~$G$, 
  and we choose an $H$-fixed point~$x\in (\uEG)^H$.
  The fiber inclusion
  $\varphi\colon S^{\xi_x}\to \res^G_H(\Th(\xi))$
  is a based $H$-equivariant homotopy equivalence by part~(i). 
  The following square of orthogonal $H$-spectra commutes:
  \[ \xymatrix@C=20mm{ 
      X \ar[r]\ar[d] & \map_*(\Th(\xi),X\sm \Th(\xi))\ar[d]^{\map_*(\varphi,\Id)}\\
      \map_*(S^{\xi_x}, X\sm S^{\xi_x})\ar[r]_-{\map_*(\Id,X\sm \varphi)} &
      \map_*(S^{\xi_x},X\sm \Th(\xi)) } \]
  The two morphisms starting at~$X$ are the adjunction units.  
  The left vertical morphism is a $\pi_*$-isomorphism
  by~\cite[III Lemma 3.8]{mandell-may:equivariant_orthogonal}
  or~\cite[Prop.\,3.1.25 (ii)]{schwede:global};
  the right vertical and lower horizontal morphisms are homotopy equivalences,
  hence $\pi_*$-isomorphisms. So the upper horizontal morphism is also a
  $\pi_*$-isomorphism.

  (iv) The section at infinity is a $\Com$-cofibration of $G$-spaces 
  by Proposition \ref{prop:s_infty is cofibration} for $A=\emptyset$;  
  so the Thom space~$\Th(\xi)$ is $\Com$-cofibrant as a based $G$-space.
  The adjoint functors thus form a Quillen pair 
  by Theorem \ref{thm:stable} (iii).
  Since the right adjoint $\map_*(\Th(\xi),-)$ 
  preserves and detects all $\pi_*$-isomorphisms
  and the adjunction unit is a $\pi_*$-isomorphism, 
  the Quillen pair is a Quillen equivalence.
\end{proof}

\section{The \texorpdfstring{$G$}{G}-equivariant stable homotopy category}

\index{G-equivariant stable homotopy category@$G$-equivariant stable homotopy category|(} 

In Section \ref{sec:stable} we showed that
the \emph{$G$-equivariant stable homotopy category} $\Ho(\Sp_G)$
is the homotopy category of a stable model structure,
so it is naturally a triangulated category,
for example by~\cite[Sec.\,7.1]{hovey:model_categories} 
or~\cite[Thm.\,A.12]{schwede:order_topological}.
This section discusses those aspects of our theory that are most
conveniently phrased in terms of the triangulated structure.

We record in Proposition \ref{prop:rephtpy} 
that for every compact subgroup $H$ of a Lie group $G$,
the suspension spectrum of $G/H$ represents the functor $\pi_0^H$ on $\Ho(\Sp_G)$;
a direct consequence is the fact that
for varying compact subgroups $H$, the suspension spectra of the orbits $G/H$
form a set of small weak generators for the stable $G$-homotopy category,
see  Corollary \ref{cor:Ho_G compactly generated}.
In other words, the triangulated category $\Ho(\Sp_G)$ is
{\em compactly generated}.
As we explain thereafter, this has various formal, but rather useful, consequences,
such as Brown representability (see Corollary \ref{cor:generators for GH_F}),
a t-structure (see Corollary \ref{cor:t-structure}),
and Postnikov sections (see Remark \ref{rk:postnikov}).
In Section \ref{sec:Mackey}  we will return to this
preferred t-structure in the special case
of {\em discrete} groups $G$; we will then show that its
heart is equivalent to the abelian category of $G$-Mackey functors.
In particular, for discrete groups $G$, every $G$-Mackey functor
has an associated Eilenberg-Mac\,Lane spectrum.\medskip

In the following we write
\begin{equation}\label{eq:gamma_G}
 \gamma_G \ : \ \Sp_G \ \to \ \Ho(\Sp_G)   
\end{equation}
for the localization functor at the class of $\pi_*$-isomorphisms; 
so $\gamma_G$ initial among functors from $\Sp_G$ 
that send $\pi_*$-isomorphisms to isomorphisms.

\begin{con}[Triangulated structure on $\Ho(\Sp_G)$]\label{con:define triangles}
The suspension isomorphism~\eqref{eq:suspension iso} 
between $\pi_k^G(X)$ and~$\pi_{k+1}^G(X\sm S^1)$
shows that the pointset level suspension of orthogonal $G$-spectra
preserves $\pi_*$-isomorphism, so it passes to a functor 
\[ [1]\ = \ \Ho(-\sm S^1) \ : \  \Ho(\Sp_G)\ \to \ \Ho(\Sp_G) \]
by the universal property of the localization.
In other words, the shift functor is characterized by the relation
\[ [1]\circ \gamma_G \ = \ \gamma_G\circ (-\sm S^1) \ : \ \Sp_G \ \to \ \Ho(\Sp_G)\ .\]
By~\cite[III Lemma 3.8]{mandell-may:equivariant_orthogonal} 
or~\cite[Prop.\,3.1.25 (ii)]{schwede:global},
the adjunction unit~$\eta:X\to\Omega(X\sm S^1)$ 
is a $\pi_*$-isomorphism of orthogonal $G$-spectra;
so at the level of the stable homotopy category, suspension becomes inverse
to looping; in particular, the shift functor $[1]$
is an auto-equivalence of the category $\Ho(\Sp_G)$.

The distinguished triangles in $\Ho(\Sp_G)$ are
defined from mapping cone sequences as follows.
We let $f:X\to Y$ be a morphism of orthogonal $G$-spectra.
The \emph{reduced mapping cone} $C f$ is defined by
\[ C f \ = \ (X\sm [0,1])\cup_f Y \ . \]
Here the unit interval $[0,1]$ is based by $0\in [0,1]$, 
so that $X\sm [0,1]$ is the reduced cone of $X$. 
The mapping cone comes with an embedding $i:Y\to C f$
and a projection $p:C f\to X\sm S^1$.
As $S^1$ is the one-point compactification of $\mR$,
the projection sends $Y$ to the basepoint and is given on $X\sm [0,1]$ 
by $p(x,z)=x\sm t(z)$ where
\begin{equation}  \label{eq:define_t}
 t\ :\ [0,1]\ \to\  S^1\text{\qquad is \qquad}
t(z)\ =\ \frac{2z-1}{z(1-z)}\ .   
\end{equation}
What is relevant about the map $t$ is not the precise formula, 
but that it passes to a homeomorphism between the quotient space 
$[0,1]/\{0,1\}$ and $S^1=\mR\cup\{\infty\}$.
Then the image in~$\Ho(\Sp_G)$ of the sequence  
\begin{equation} \label{eq:mapping_cone_triangle}
 X \ \xrightarrow{\ f\ }\ Y \ \xrightarrow{\ i \ } \ C f \ 
\xrightarrow{\ p \ }\ X\sm S^1   
\end{equation}
is a distinguished triangle.
More generally, a triangle in $\Ho(\Sp_G)$
is distinguished if and only if it is isomorphic in $\Ho(\Sp_G)$
to such a mapping cone triangle for some morphism of orthogonal $G$-spectra~$f$.
\end{con}

\begin{rk}[Integer shifts in $\Ho(\Sp_G)$]\label{rk:[k]}\index{shift!of an orthogonal $G$-spectrum}
The shift functor on $\Ho(\Sp_G)$ is an auto-equivalence, but {\em not}
an automorphism of $\Ho(\Sp_G)$, so we fix a convention of
what we mean by integer shifts.
For $k\geq 0$ we set
\[ [k]\ = \ \Ho(-\sm S^k) \ :\ \Ho(\Sp_G)\to \Ho(\Sp_G) \ .\]
For $k<0$ we observe that the functor $\Omega^{-k}:\Sp_G\to\Sp_G$
also preserves $\pi_*$-isomorphisms, because looping shifts equivariant
homotopy groups by the loop isomorphism \eqref{eq:loop iso}.
So for negative values of $k$ we define
\[ [k]\ = \ \Ho(\Omega^{-k}) \ :\ \Ho(\Sp_G)\to \Ho(\Sp_G) \ .\]
Since positive and negative shift are not inverse to each other
on the nose, we specify natural isomorphisms
\begin{equation} \label{eq:define_s_k}
 s_k\ : \ X[k][1]\ \iso \ X[k+1]   
\end{equation}
as endofunctors on $\Ho(\Sp_G)$, for all integers $k$.
For $k\geq 0$, we let $s_k$ be induced by the canonical homeomorphism
\[ S^k\sm S^1\ \iso\ S^{k+1}\ , \quad x\sm y \ \longmapsto \ (x,y)\ . \]
For $k<0$, we let $s_k$ be induced by the natural morphism
of orthogonal $G$-spectra
\[ 
\text{eval} \ : \ (\Omega^{-k} X)\sm S^1\ \to\ \Omega^{-(k+1)} X\ , \quad 
 \text{eval}(f\sm t)(z)\ = \ f(t\sm z)   
\]
that evaluates the last coordinate,
where $f\in \Omega^{-k} X$, $t\in S^1$ and $z\in S^{-(1+k)}$.
The evaluation morphism is a $\pi_*$-isomorphism,
see for example \cite[Prop.\,3.1.25 (ii)]{schwede:global}.
\end{rk}

It is now a formal procedure to extend the isomorphisms $s_k$
to a preferred system of natural isomorphisms $X[k][l]\iso X[k+l]$.
We omit the proof of the following proposition.

\begin{prop}\label{prop:coherent isos}
  Let $G$ be a Lie group.
  There is a unique collection of natural isomorphisms
  \[ t_{k,l}\ : \  X[k][l]\ \xrightarrow{\ \iso \ }\  X[k+l] \ ,\]
  for all integers $k$ and $l$,
  of endofunctors of $\Ho(\Sp_G)$, subject to the following conditions:
  \begin{enumerate}[\em (a)]
  \item $t_{1,-1}\star [1]=[1]\star t_{-1,1}:[1]\circ[-1]\circ[1]\to [1]$;
  \item $t_{k,0}=t_{0,k}=\Id_{[k]}$ for every integer $k$;
  \item $t_{k,1}=s_k$ for every integer $k$; and
  \item for every triple of integers $k,l,m$, the following square commutes:
    \[ \xymatrix{
        X[k][l][m]\ar[r]^-{t^{X[k]}_{l,m}}\ar[d]_{t^X_{k,l}[m]} &
        X[k][l+m]\ar[d]^{t^X_{k,l+m}} \\
        X[k+l][m] \ar[r]_-{t^X_{k+l,m}} &   X[k+l+m]
      }  \]
  \end{enumerate}
\end{prop}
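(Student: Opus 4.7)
The proposition is a coherence statement, and the strategy is to use conditions (b), (c) and the cases $m = \pm 1$ of (d) to drive recursive constructions that determine $t_{k,l}$ uniquely for all $k, l \in \mZ$, and then to verify the remaining cases of (d) by induction on $|m|$.

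I would first treat $l \geq 0$. Condition (d) with $m = 1$ combined with (c) reads $s^X_{k+l} \circ t^X_{k,l}[1] = t^X_{k,l+1} \circ s^{X[k]}_l$; since $s^{X[k]}_l$ is an isomorphism this forces
$$t^X_{k, l+1} \ = \ s^X_{k+l} \circ t^X_{k, l}[1] \circ (s^{X[k]}_l)^{-1}.$$
Starting from $t_{k, 0} = \Id$, this determines $t_{k, l}$ for all $l \geq 0$ and $k \in \mZ$; one checks $t_{k, 1} = s_k$ and, using $s_0 = \Id$, $t_{0, l} = \Id$ by induction, matching (b) and (c).

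For $l < 0$, I would invoke Theorem~\ref{thm:stable} to get that $(-\sm S^1, \Omega)$ is a Quillen equivalence, so that the derived functors $[1]$ and $[-1]$ form an adjoint equivalence on $\Ho(\Sp_G)$; its counit is $s_{-1} = t_{-1, 1}$ by definition of $s_{-1}$ as the evaluation morphism. Letting $\eta \colon \Id \Rightarrow [1][-1]$ denote the unit, I set $(t_{1, -1})_X := (\eta_X)^{-1}$; the triangle identity for the adjoint equivalence then reads $[1]((t_{1, -1})_X) = (t_{-1, 1})_{X[1]}$, which is exactly condition (a). Solving (d) with $l = 1, m = -1$ yields the recursion
$$t^X_{k+1, -1} \ = \ (t_{1, -1})_{X[k]} \circ (s_k[-1])^{-1},$$
which extends $t_{k, -1}$ to all $k \in \mZ$; a direct check using the triangle identity recovers $t_{0, -1} = \Id$, so both the upward and downward extensions are consistent. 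A further solve of (d) with $m = -1$ then determines $t_{k, l}$ for all $l < 0$ by downward induction on $l$. Uniqueness is simultaneous with existence, since the recursions are exactly what (a), (b), (c), and the $m = \pm 1$ instances of (d) dictate.

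The main obstacle is verifying condition (d) for $|m| \geq 2$, since only $m = 0, \pm 1$ are built into the construction. I expect this to proceed by induction on $|m|$: the inductive step pastes an already-verified instance of (d) at $(k, l, m \mp 1)$ with an instance at $m = \pm 1$ (either in the second or third variable), and recombines the resulting squares using the naturality of $s$ together with the defining relation for $t_{k, l+1}$. This is the standard Mac Lane pattern for extending a strong monoidal structure on the monoid $(\mZ, +, 0)$ from the generators $\pm 1$ to all of $\mZ$, with condition (a) supplying the compatibility between the positive and negative generators needed to glue the two halves coherently.
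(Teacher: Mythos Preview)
The paper explicitly omits the proof of this proposition, saying only that ``it is now a formal procedure to extend the isomorphisms $s_k$ to a preferred system of natural isomorphisms $X[k][l]\iso X[k+l]$.'' Your plan is a correct and reasonably detailed way to carry out exactly this formal procedure: the upward recursion in $l$ via the $m=1$ instance of (d), the identification of $s_{-1}$ as the counit of the derived adjunction $([1],[-1])$ and of $t_{1,-1}$ as the inverse unit, the recovery of condition (a) as a triangle identity, the downward recursion in $l$ via the $m=-1$ instance of (d), and the final induction on $|m|$ for the general associativity squares are all sound. The consistency check $t_{0,-1}=\Id$ that you flag is indeed the \emph{other} triangle identity $[-1]\epsilon\circ\eta[-1]=\Id$, so both triangle identities are used, which is worth making explicit. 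Beyond that, there is nothing to add; your argument is precisely the kind of Mac Lane--style coherence extension the paper is gesturing at.
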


Now we turn to the topic of `compact generation' for triangulated categories.
The small objects in the sense of the following definition
are most commonly called `compact' objects; since we already use 
the adjective `compact' in a different sense,
we prefer to use `small'.

\begin{defn}
Let $\cT$ be a triangulated category which has all set indexed sums.
An object $C$ of~$\cT$ is {\em small} (sometimes called {\em finite}
or {\em compact}\,) if for every family $\{X_i\}_{i\in I}$ of objects
the canonical map
\[ \bigoplus_{i\in I}\, \cT(C,\, X_i) \ \to \ \cT(C,\, \bigoplus_{i\in I} X_i)
\]
is an isomorphism. 
A set $\cS$ of objects of $\cT$ is a set of {\em weak generators}
if the following condition holds: if $X$ is an object such that
the groups $\cT(C[k],X)$ are trivial for all $k\in\mZ$ and all $C\in\cS$, 
then $X$ is a zero object.
The triangulated category $\cT$ is {\em compactly generated}
if it has all set indexed sums and a set of small weak generators.
\end{defn}

For every compact subgroup~$H$ of~$G$ we define a tautological homotopy class
\begin{equation} \label{eq:define_tautological}
 u_H \ \in \ \pi_0^H(\Sigma^\infty_+ G/H)   
\end{equation}
as the class represented by the distinguished coset~$e H$ in $G/H$;
indeed, $e H$ is an $H$-fixed point of $G/H$, 
so it gives rise to a based $H$-map 
\[ S^0 \ \to \  G/H_+ \ = \ (\Sigma^\infty_+ G/H)_0 \]
by sending the non-basepoint to $e H$.
For orthogonal $G$-spectra $X$ and~$Y$, we will denote
the Hom abelian group $\Ho(\Sp_G)(X, Y)$ by $[X, Y]^G$.

\begin{prop} \label{prop:rephtpy} \index{equivariant homotopy groups}
  Let $G$ be a Lie group and $H$ a compact subgroup of $G$. 
  Then for every orthogonal $G$-spectrum $X$, the evaluation map
  \[
    [\Sigma^{\infty}_+ G/H, X]^G \ \cong\ \pi_0^H(X)\ , \quad
    [f] \ \longmapsto \ f_*(u_H)
  \]
  is an isomorphism. The suspension spectrum~$\Sigma^\infty_+ G/H$
  is a small object in $\Ho(\Sp_G)$.\index{suspension spectrum}
\end{prop}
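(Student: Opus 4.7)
The strategy is to identify $\Sigma^\infty_+ G/H$ with the induction $G\ltimes_H \mS$ of the trivial $H$-sphere spectrum, then use the derived induction/restriction adjunction together with the $G$-$\Omega$-spectrum condition on a fibrant replacement to match $[\Sigma^\infty_+ G/H, X]^G$ with $\pi_0^H(X)$. First I would verify $\Sigma^\infty_+ G/H \iso G\ltimes_H \mS$ by evaluating both sides at an inner product space $V$ to obtain $(G/H)_+ \sm S^V$. Since $H$ is compact, the orbit $G/H$ is $\Com$-cofibrant as a $G$-space, so its suspension spectrum is cofibrant in the stable model structure of Theorem~\ref{thm:stable}. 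The functor $G\ltimes_H(-)$ carries the generating (acyclic) cofibrations of $\Sp_H$, which have the form $(H\ltimes_K F_W)\sm(\partial D^k\to D^k)_+$ for compact $K\leq H$, to generating (acyclic) cofibrations of $\Sp_G$, and it preserves $\pi_*$-isomorphisms by Corollary~\ref{cor:Gamma2G}; hence $(G\ltimes_H -,\res^G_H)$ is a Quillen adjunction.

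Next I would choose a stable fibrant replacement $X\to X^f$, so that $X^f$ is a $G$-$\Omega$-spectrum. Both sides of the claimed isomorphism are invariant under this replacement, since $\pi_0^H$ is homotopical and $\Sigma^\infty_+ G/H$ is cofibrant. Using the enriched adjunction together with the fact that $\mS = F_0 S^0$ corepresents evaluation at $V=0$, one obtains
\[
[\Sigma^\infty_+ G/H,\,X^f]^G \;=\; \pi_0\,\map^G(G\ltimes_H \mS,\,X^f) \;=\; \pi_0\,\map^H(\mS,\,\res^G_H X^f) \;=\; \pi_0\bigl(X^f(0)^H\bigr).
\]
On the other side, because $X^f$ is a $G$-$\Omega$-spectrum, each map $\tilde\sigma_{V,0}^H:X^f(0)^H \to \map_*^H(S^V, X^f(V))$ is a weak equivalence; so all transition maps in the directed colimit of~\eqref{eq:define pi^H} defining $\pi_0^H(X^f)$ are bijections out of the initial value $\pi_0(X^f(0)^H)$, identifying the colimit with this value. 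A diagram chase tracking the unit $\mS \to \res^G_H(G\ltimes_H \mS)$ shows that $u_H$ is the preferred generator and that the resulting isomorphism is precisely $[f]\mapsto f_*(u_H)$.

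Finally, for smallness, by the first part it suffices to show that $\pi_0^H$ sends arbitrary coproducts in $\Ho(\Sp_G)$ to direct sums. Coproducts in the homotopy category are modelled by pointset wedges of cofibrant representatives, and for each $V\in s(\cU_H)$ the sphere $S^V$ is a compact based $H$-CW-complex, so any based $H$-map $S^V\to\bigvee_i X_i(V)$ factors through a finite subwedge; commuting the colimit over $s(\cU_H)$ with the filtered colimit over finite subwedges then yields the direct sum decomposition. The crux of the whole argument is the collapse $\pi_0^H(X^f)=\pi_0(X^f(0)^H)$ for a fibrant replacement, which is exactly what the $G$-$\Omega$-spectrum condition of Definition~\ref{def:stable fibrations} provides; everything else is formal.
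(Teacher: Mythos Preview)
Your proposal is correct and follows essentially the same approach as the paper: reduce to a $G$-$\Omega$-spectrum by fibrant replacement, use cofibrancy of $\Sigma^\infty_+ G/H$ and its representation of $H$-fixed points at level~0 (you phrase this via the adjunction $G\ltimes_H \mS \cong \Sigma^\infty_+ G/H$, the paper states it directly), and then collapse the colimit defining $\pi_0^H$ via the $\Omega$-spectrum condition. For smallness the paper simply cites that $\pi_*^H$ takes wedges to sums \cite[III Thm.\,3.5 (ii)]{mandell-may:equivariant_orthogonal}, which is exactly the compactness-of-$S^V$ argument you sketch.
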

\begin{proof}
  Source and target of the evaluation map take $\pi_*$-isomorphisms of orthogonal
  $G$-spectra to isomorphisms of groups, so it suffices to show the
  claim in the special case when $X$ is fibrant in the stable model structure,
  i.e., a $G$-$\Omega$-spectrum.
  Since $H$ is compact, the orthogonal $G$-spectrum $\Sigma^\infty_+ G/H$ is cofibrant;
  so for stably fibrant~$X$ the localization functor $\gamma_G:\Sp_G\to\Ho(\Sp_G)$
  induces a bijection
  \[
    \Sp_G(\Sigma^\infty_+ G/H, X) /  \text{homotopy}\ \xrightarrow{\ \cong \ }\
    [\Sigma^\infty_+ G/H,X]^G
  \]
  from the set of homotopy classes of morphisms in~$\Sp_G$ to the set
  of morphisms in the homotopy category~$\Ho(\Sp_G)$.
  The suspension spectrum $\Sigma^\infty_+ G/H$ represents the $H$-fixed points
  in level~0, so the left hand side bijects with the path components of
  the space $X(0)^H$.
  Since $X$ is a $G$-$\Omega$-spectrum, all the maps in the colimit
  system for~$\pi_0^H(X)$ are bijections, and hence the canonical map
  \[ \pi_0( X(0)^H) \ = \ [S^0, X(0)]^H \ \to \ \colim_{V\in s(\cU_H)}\,  
    [S^V,X(V)]^H\ = \ \pi_0^H(X) \]
  is bijective. We omit the straightforward verification that the
  combined bijection between $[\Sigma^\infty_+ G/H,X]^G$ and~$\pi_0^H(X)$
  coincides with evaluation at the class~$u_H$.

  For every compact Lie group~$H$, the functor~$\pi_*^H$ takes
  wedges of orthogonal $G$-spectra 
  to directs sums~\cite[III Thm.\,3.5 (ii)]{mandell-may:equivariant_orthogonal}. 
  So formation of wedges preserves $\pi_*$-isomorphisms,
  and the wedge of any family $\{X_i\}_{i\in I}$ of orthogonal $G$-spectra 
  is a coproduct in $\Ho(\Sp_G)$. 
  We have a commutative square
  \[
    \xymatrix{  {\bigoplus_{i\in I}} [ \Sigma^\infty_+ G/H ,\, X_i ]^G \ar[r]\ar[d] &
      [ \Sigma^\infty_+ G/H ,\, \bigvee_{i\in I} X_i ]^G \ar[d] \\
      {\bigoplus_{i\in I}} \pi^H_0(X_i) \ar[r] &
      \pi_0^H\left( \bigvee_{i\in I}X_i\right) }
  \]
  in which the vertical maps are evaluation at $u_H$.
  The lower horizontal map is an isomorphism,
  hence so is the upper horizontal map.
  This shows that $\Sigma^\infty_+ G/H$ is small as an object 
  of the triangulated category $\Ho(\Sp_G)$.
\end{proof}

Essentially by definition, an orthogonal $G$-spectrum is a zero object
in~$\Ho(\Sp_G)$ if and only if its $H$-equivariant homotopy groups
vanish for all compact subgroups~$H$ of~$G$. So 
Proposition~\ref{prop:rephtpy} directly implies:

\begin{cor}\label{cor:Ho_G compactly generated} 
Let~$G$ be a Lie group.
The triangulated stable homotopy category~$\Ho(\Sp_G)$
has infinite sums and the suspension spectra~$\Sigma^\infty_+ G/H$
for all compact subgroups $H$ of~$G$ form a set of small weak generators.
In particular, the  triangulated stable homotopy category~$\Ho(\Sp_G)$ 
is compactly generated.
\end{cor}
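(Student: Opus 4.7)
The plan is to deduce this corollary directly from Proposition \ref{prop:rephtpy} together with the interaction between the shift functor and the equivariant homotopy groups. Three assertions need verification: that $\Ho(\Sp_G)$ admits set-indexed coproducts; that each $\Sigma^\infty_+ G/H$ is small; and that these suspension spectra jointly detect zero. The last clause (compact generation) is then a formal consequence of the definition.

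First I would verify that wedges of orthogonal $G$-spectra descend to coproducts in $\Ho(\Sp_G)$. The key input, noted already in the proof of Proposition \ref{prop:rephtpy}, is that for each compact $H$ the functor $\pi_k^H$ converts wedges into direct sums \cite[III Thm.\,3.5\,(ii)]{mandell-may:equivariant_orthogonal}. Hence wedges preserve $\pi_*$-isomorphisms, so they pass to a functor on $\Ho(\Sp_G)$, and the universal property of wedges at the pointset level descends to homotopy classes in the usual way. Smallness of each $\Sigma^\infty_+ G/H$ is the second assertion of Proposition \ref{prop:rephtpy}, so no further work is needed there.

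The remaining step is to show that if $X$ is an orthogonal $G$-spectrum with $[\Sigma^\infty_+ G/H\,[k],\, X]^G = 0$ for every compact subgroup $H$ of $G$ and every integer $k$, then $X$ is a zero object in $\Ho(\Sp_G)$. For this I would extend the Proposition \ref{prop:rephtpy} isomorphism to all shifts by proving a natural identification
\[ [\Sigma^\infty_+ G/H\,[k],\, X]^G \ \cong \ \pi_k^H(X) \]
for every $k \in \mZ$. For $k \geq 0$ this comes from modelling $[k]$ by $-\sm S^k$, applying the Quillen adjunction $(-\sm S^k)\dashv \Omega^k$ to rewrite the left side as $[\Sigma^\infty_+ G/H,\, \Omega^k X]^G \cong \pi_0^H(\Omega^k X)$, and then invoking the $k$-fold loop isomorphism \eqref{eq:loop iso}. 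For $k < 0$, Remark \ref{rk:[k]} models $[k]$ as $\Omega^{-k}$ on the source, and a parallel adjunction/loop-isomorphism argument yields the same identification.

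Once this identification is in place, the hypothesis forces $\pi_k^H(X) = 0$ for all compact subgroups $H$ and all integers $k$, so by Definition \ref{def:pi_*-isomorphism} the unique morphism $X \to \ast$ is a $\pi_*$-isomorphism, and $X$ is zero in $\Ho(\Sp_G)$. No step looks genuinely difficult; the only mild obstacle is making the shift identification for negative $k$ precise, since the positive and negative shift models differ, but this is straightforward bookkeeping using the loop isomorphism $\alpha$ and the suspension isomorphism $-\sm S^1$ introduced in \eqref{eq:loop iso} and \eqref{eq:suspension iso}.
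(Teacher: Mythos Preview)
Your proposal is correct and follows the same approach as the paper, which treats the corollary as immediate from Proposition~\ref{prop:rephtpy}: the existence of sums and the smallness statement are already contained in that proposition's proof, and weak generation is the observation that an object with vanishing $\pi_k^H$ for all compact $H$ and all $k$ is a zero object in $\Ho(\Sp_G)$ by the definition of $\pi_*$-isomorphism. You have simply made the shift identification $[\Sigma^\infty_+ G/H\,[k],X]^G\cong\pi_k^H(X)$ explicit, which the paper carries out in essentially the same way just after Definition~\ref{def:G homotopy groups}.
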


A contravariant functor $E$ from a triangulated category~$\cT$ 
to the category of abelian groups 
is called {\em cohomological} if for every distinguished\index{cohomological functor}
triangle $(f,g,h)$ in $\cT$ the sequence of abelian groups
\[ \xymatrix{ E(\Sigma A) \ar[r]^-{E(h)} & E(C) \ar[r]^-{E(g)} & 
E(B) \ar[r]^-{E(f)} &E(A)}\]
is exact.
Dually, a covariant functor $F$ from~$\cT$ to the category of abelian groups 
is called {\em homological} if for every distinguished\index{homological functor}
triangle $(f,g,h)$ in $\cT$ the sequence of abelian groups
\[ \xymatrix{ F(A) \ar[r]^-{F(f)} & F(B) \ar[r]^-{F(g)} & 
F(C) \ar[r]^-{F(h)} &F(\Sigma A)}\]
is exact.
The fact that the triangulated category~$\Ho(\Sp_G)$ 
is compactly generated has various useful
consequences that we summarize in the next corollary.

\begin{cor}\label{cor:generators for GH_F}
  Let $G$ be a Lie group.\index{Brown representability!for the equivariant stable homotopy category}
  \begin{enumerate}[\em (i)]
  \item Every cohomological functor $E$ on~$\Ho(\Sp_G)$ 
    that takes sums to products is representable, i.e., 
    there is an orthogonal  $G$-spectrum $Y$ 
    and a natural isomorphism $E\cong [-,Y]^G$. 
  \item Every homological functor $F$ on $\Ho(\Sp_G)$ that takes products
    to products is representable, i.e., there is an orthogonal $G$-spectrum $X$ 
and a natural isomorphism $F\cong [X,-]^G$. 
  \item An exact functor~$F:\Ho(\Sp_G)\to\cS$ to another triangulated category
    has a right adjoint if and only if it preserves sums.
  \item An exact functor~$F:\Ho(\Sp_G)\to\cS$ to another triangulated category
    has a left adjoint if and only if it preserves products.
  \end{enumerate}
\end{cor}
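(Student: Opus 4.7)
The plan is to deduce all four statements from the fact, established in Corollary \ref{cor:Ho_G compactly generated}, that $\Ho(\Sp_G)$ is a compactly generated triangulated category with the suspension spectra $\{\Sigma^\infty_+ G/H\}_H$ (for $H$ ranging over compact subgroups of $G$) as a set of small weak generators. Once this is in place, all four parts are essentially formal consequences of general results in triangulated category theory.

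For part (i), I would invoke the classical Brown representability theorem for compactly generated triangulated categories due to Neeman (Theorem 8.3.3 of his book \emph{Triangulated Categories}): the hypotheses of compact generation plus the sums-to-products condition are precisely what that theorem requires to produce a representing object $Y$ together with a natural isomorphism $E \cong [-, Y]^G$. For part (ii), the approach is to appeal to Neeman's dual Brown representability theorem for compactly generated triangulated categories. The main subtlety here — and the step I would verify most carefully — is that $\Ho(\Sp_G)$ admits all small products; this can be seen either from the existence of pointset-level products in $\Sp_G$ together with the fact that levelwise products of $G$-$\Omega$-spectra remain $G$-$\Omega$-spectra (so are stably fibrant and compute the homotopy product), or from the general result that a compactly generated triangulated category with all sums automatically has all products.

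For parts (iii) and (iv), the strategy is purely formal given (i) and (ii). For (iii), one direction is automatic: a right adjoint, if it exists, must preserve products, which makes the given functor $F$ (its left adjoint) preserve sums. Conversely, if $F$ preserves sums then for each object $Z \in \cS$ the composite $\cS(F(-), Z) : \Ho(\Sp_G)^{\op} \to \mathrm{Ab}$ is cohomological (because $F$ is exact and $\cS(-, Z)$ is cohomological) and sends sums to products (because $F$ preserves sums and $\cS(-, Z)$ turns coproducts into products). By part (i) this functor is represented by an object $R(Z) \in \Ho(\Sp_G)$, and the Yoneda lemma promotes $Z \mapsto R(Z)$ to a functor right adjoint to $F$. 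Part (iv) is exactly dual: for each $Y \in \cS$ the covariant functor $\cS(Y, F(-))$ is homological and preserves products, so by (ii) it is represented by some $L(Y) \in \Ho(\Sp_G)$, which assembles into a left adjoint $L$ to $F$.

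The only genuine piece of mathematics beyond invoking black-box results is the verification of compact generation itself, but that is already supplied by Corollary \ref{cor:Ho_G compactly generated}. The main obstacle I anticipate is purely bibliographic: selecting the cleanest references for Neeman's ordinary and dual Brown representability theorems and ensuring that the hypotheses in those references match the formulation given here.
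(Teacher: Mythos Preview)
Your proposal is correct and follows essentially the same route as the paper: deduce (i) and (ii) from Brown representability and its dual for compactly generated triangulated categories (the paper cites \cite{neeman:Grothendieck_duality}, \cite{neeman:triangulated_categories}, and \cite{krause:brown_coherent}), and then derive (iii) and (iv) formally by representing $\cS(F(-),X)$ and $\cS(Y,F(-))$ via (i) and (ii). Your extra remark about the existence of products in $\Ho(\Sp_G)$ is a reasonable clarification but is not addressed in the paper's proof.
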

\begin{proof}
  Part~(i) is a direct consequence of being compactly generated, see for example
  \cite[Thm.\,3.1]{neeman:Grothendieck_duality}
  or \cite[Thm.\,A]{krause:brown_coherent}.
  A proof of part~(ii) of this form of Brown representability can be found in
  \cite[Thm.\,8.6.1]{neeman:triangulated_categories}
  or \cite[Thm.\,B]{krause:brown_coherent}.
  Part~(iii) is a formal consequences of part~(i):
  if $F$ preserves sums, then for every object~$X$ of~$\cS$ the functor
  \[ \cS(F(-), X) \ : \ \Ho(\Sp_G)^{\text{op}} \ \to \ \cA b \]
  is cohomological and takes sums to products.
  Hence the functor is representable by an orthogonal $G$-spectrum $R X$
  and an isomorphism
  \[  [A, R X]^G \ \cong \ \cS(F A, X) \ , \]
  natural in~$A$.
  Once this representing data is chosen, the assignment~$X\mapsto R X$ extends 
  canonically to a functor~$R:\cS\to\Ho(\Sp_G)$ that is right adjoint to~$F$.
  In much the same way, part~(iv) is a formal consequence of part~(ii).
\end{proof}

The preferred set of generators $\{\Sigma^\infty_+ G/H\}$
of the stable $G$-homotopy category has another special property,
it is `positive' in the following sense: for all compact subgroups
$H$ and~$K$ of~$G$, and all $n<0$,
\begin{equation}\label{eq:positivity}
 [\Sigma^\infty_+  G/K[n],\ \Sigma^\infty_+  G/H]^G \ \cong \ 
 \pi_n^K(\Sigma^\infty_+  G/H) \ = \ 0 \ ,
\end{equation}
because the underlying orthogonal $K$-spectrum of
$\Sigma^\infty_+  G/H$ is the suspension spectrum of a $K$-space.
A set of positive compact generators in this sense
automatically gives rise to a non-degenerate t-structure, as we shall now recall. 
When~$G$ is discrete, the heart of the t-structure is equivalent to 
the abelian category of $G$-Mackey functors, 
see Theorem \ref{thm:embed_MG_into_Ho(Sp)} below.

A `t-structure' as introduced by\index{t-structure!on a triangulated category} 
Beilinson, Bernstein and Deligne in \cite[Def.\,1.3.1]{beilinson-bernstein-deligne:faisceaux_pervers}
axiomatizes the situation in the derived category of an abelian category
given by cochain complexes whose cohomology vanishes 
in positive respectively negative dimensions.

\begin{defn}\label{def:t-structure}
  A {\em t-structure} on a triangulated category~$\cT$
  is a pair of full subcategories $(\cT_{\geq 0},\cT_{\leq 0})$
  satisfying the following three conditions, where $\cT_{\geq n}= \cT_{\geq 0}[n]$ 
and $\cT_{\leq n}=\cT_{\leq 0}[n]$:
\begin{enumerate} 
\item For all $X\in\cT_{\geq 0}$ and all $Y\in\cT_{\leq -1}$ we have $\cT(X,Y)=0$.
\item $\cT_{\geq 0}\subset \cT_{\geq -1}$ and $\cT_{\leq 0}\supset \cT_{\leq -1}$. 
\item For every object~$X$ of~$\cT$ there is a distinguished triangle
\[ A \ \to \ X \ \to \ B \ \to \ A[1]  \]
such that $A\in\cT_{\geq 0}$ and $B\in\cT_{\leq -1}$.
\end{enumerate}
A t-structure is {\em non-degenerate}
if $\bigcap_{n\in\mZ} \cT_{\leq n}=\{0\}$ and $\bigcap_{n\in\mZ} \cT_{\geq n}=\{0\}$.
The {\em heart} of the t-structure is the full subcategory 
\[ \cH \ = \ \cT_{\geq 0}\cap \cT_{\leq 0} \ ;  \]
it is an abelian category
by \cite[Thm.\,1.3.6]{beilinson-bernstein-deligne:faisceaux_pervers}.
\end{defn}

The original definition of t-structures is formulated slightly
differently in `cohomological' notation, 
motivated by derived categories of cochain complexes as the main examples.
We are mainly interested in spectra, where a homological
(as opposed to {\em co\,}homological) grading is more common,
and the definition above is adapted to the homological setting.

\begin{defn}
  Let $G$ be a Lie group.
  An orthogonal $G$-spectrum $X$ is {\em connective}\index{connective orthogonal $G$-spectrum}
  if the homotopy group $\pi_n^H(X)$ is trivial for 
  every compact subgroup $H$ of~$G$ and every $n<0$.
  An orthogonal $G$-spectrum $X$ is {\em coconnective}\index{coconnective orthogonal $G$-spectrum}
  if the homotopy group $\pi_n^H(X)$ is trivial for 
  every compact subgroup $H$ of~$G$ and every $n>0$.
\end{defn}

\begin{cor}\label{cor:t-structure}\index{t-structure!on the equivariant stable homotopy category} 
  Let $G$ be a Lie group.
  The classes of connective $G$-spectra and coconnective $G$-spectra 
  form a non-degenerate t-structure on~$\Ho(\Sp_G)$
  whose heart consists of those orthogonal $G$-spectra~$X$ such that $\pi_n^H(X)=0$
  for all compact subgroups~$H$ of~$G$ and all~$n\ne 0$.
\end{cor}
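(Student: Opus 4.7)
The plan is to invoke the standard construction of a t-structure on a compactly generated triangulated category from a set of small generators satisfying a positivity condition. By Corollary \ref{cor:Ho_G compactly generated} the orbit suspension spectra $\Sigma^\infty_+ G/H$, for compact $H \leq G$, form a set of small generators of $\Ho(\Sp_G)$, and the positivity relation \eqref{eq:positivity} provides exactly the required vanishing among these generators.

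First I would combine Proposition \ref{prop:rephtpy} with the shift structure of Proposition \ref{prop:coherent isos} to obtain natural isomorphisms
\[ [\Sigma^\infty_+ G/H[n],\, X]^G \ \cong \ \pi_n^H(X) \]
for every integer $n$, every compact subgroup $H \leq G$, and every orthogonal $G$-spectrum $X$; this uses the loop isomorphism \eqref{eq:loop iso} to handle negative $n$. Under this identification, connective spectra are exactly those $X$ for which these groups vanish for every compact $H$ and every $n < 0$, and coconnective spectra are exactly those with vanishing for every $n > 0$. Hence the proposed subcategories are the ones produced from the set of generators $\{\Sigma^\infty_+ G/H\}$ by the standard construction.

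With this identification, the three axioms come out as follows. Axiom~(2) (nesting) is immediate: vanishing of $\pi_k^H$ on a larger range of indices implies vanishing on a smaller range. Axiom~(3) (existence of the truncation triangle) is produced by the standard Postnikov/cell-attachment procedure: starting from a morphism $\bigoplus \Sigma^\infty_+ G/H[n] \to X$ indexed by a set of generators of $\pi_n^H(X)$ for all compact $H$ and all $n \geq 0$, one iteratively attaches further cells of the same form to successively kill the kernels on $\pi_n^H$ for $n \geq 0$; the resulting sequential homotopy colimit gives a morphism $\tau_{\geq 0} X \to X$ with $\tau_{\geq 0} X$ connective (no cells of negative shift appear) and inducing isomorphisms on $\pi_n^H$ for every compact $H$ and every $n \geq 0$. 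By the long exact sequence of equivariant homotopy groups the cofiber then has $\pi_n^H = 0$ for all $n \geq 0$, so it lies in $\cT_{\leq -1}$. Axiom~(1) (orthogonality) follows either by the uniqueness of such a truncation, or directly by induction along this cell filtration, combining the long exact sequence $[-,Y]^G$ for cofiber sequences with the Milnor $\lim^1$ sequence for the sequential colimit, and using the vanishing $[\Sigma^\infty_+ G/H[n],\, Y]^G = \pi_n^H(Y) = 0$ for $Y \in \cT_{\leq -1}$ and $n \geq 0$.

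Finally, non-degeneracy is immediate: an object $X$ in $\bigcap_n \cT_{\leq n}$ (respectively $\bigcap_n \cT_{\geq n}$) has $\pi_k^H(X) = 0$ for every integer $k$ and every compact $H$, hence is a zero object by the compact generation in Corollary \ref{cor:Ho_G compactly generated}. The description of the heart as the class of $X$ with $\pi_n^H(X) = 0$ for all compact $H$ and all $n \neq 0$ is read off directly from the definitions of connective and coconnective. The main technical effort is the cellular construction underlying axiom~(3), which requires careful control of equivariant homotopy groups under sequential homotopy colimits of h-cofibrations; every other part follows either immediately from the definitions or by formal manipulations using the representability $[\Sigma^\infty_+ G/H[n], -]^G \cong \pi_n^H$ and the compact generation of $\Ho(\Sp_G)$.
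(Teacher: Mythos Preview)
Your proposal is correct and follows the same high-level strategy as the paper: both use that the orbit suspension spectra $\Sigma^\infty_+ G/H$ are small generators (Corollary~\ref{cor:Ho_G compactly generated}) together with the positivity relation~\eqref{eq:positivity} to produce the t-structure. The difference is one of packaging. The paper invokes the general machinery of Beligiannis--Reiten \cite[Thm.~III.2.3, Prop.~III.2.8]{beligiannis-reiten:torsion_theories} as a black box: it defines $\cY$ as the right orthogonal of the non-negatively shifted generators, takes $\cX$ as the left orthogonal of $\cY$, cites their theorem to obtain the torsion pair, and then uses positivity via their Prop.~III.2.8 to identify $\cX$ with the connective spectra. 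You instead sketch by hand what is inside that black box: the cell-attachment construction of $\tau_{\geq 0}X$ for Axiom~(3), and the inductive vanishing argument for Axiom~(1). Your route is more self-contained but longer; the paper's is shorter but depends on an external reference. One point worth tightening in your sketch: Axiom~(1) must hold for \emph{every} connective $X$, not just those explicitly built as cell complexes; the missing sentence is that applying your Axiom~(3) construction to a connective $X$ yields $\tau_{\geq 0}X \to X$ inducing isomorphisms on all $\pi_n^H$ (for $n\geq 0$ by construction, for $n<0$ since both sides vanish), hence every connective $X$ is isomorphic to such a cell complex, and your inductive argument then applies.
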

\begin{proof}
  We use the more general arguments 
  of Beligiannis and Reiten \cite[Ch.\,III]{beligiannis-reiten:torsion_theories} 
  who systematically investigate torsion pairs 
  and t-structures in triangulated categories that are generated by small objects. 
  By Corollary \ref{cor:Ho_G compactly generated}  the set 
  \[ \cP\ =\ \{\Sigma^\infty_+ G/H\}_{H\in\Com} \]
  is a set of small weak generators for the triangulated category $\Ho(\Sp_G)$.
  We let $\cY$ be the class of $G$-spectra $Y$ such that 
  \[ [P[n], Y]^G \ = \ 0  \]
  for all $P\in\cP$ and all $n\geq 0$.
  The representability result of Proposition \ref{prop:rephtpy} 
  shows that these are precisely those $G$-spectra such that
  $\pi_n^H(Y)=0$ for all compact subgroups $H$ of $G$ and all $n\geq 0$.
  Hence $\cY[1]$ is the class of coconnective $G$-spectra.
  We let $\cX$ be the `left orthogonal' to $\cY$, i.e., 
  the class of $G$-spectra $X$ such that $[X,Y]^G=0$ for all $Y\in\cY$.
  Since the objects of $\cP$ are small in $\Ho(\Sp_G)$
  by Proposition \ref{prop:rephtpy}, Theorem III.2.3 of
  \cite{beligiannis-reiten:torsion_theories} shows that 
  the pair $(\cX,\cY)$ is a torsion pair
  in the sense of \cite[Def.\,I.2.1]{beligiannis-reiten:torsion_theories}.
  This simply means that the pair $(\cX,\cY[1])$ is a t-structure
  in the sense of Definition \ref{def:t-structure}, 
  see \cite[Prop.\,I.2.13]{beligiannis-reiten:torsion_theories}.
  
  It remains to show that $\cX$ coincides with the class of connective $G$-spectra.
  This needs the positivity property \eqref{eq:positivity} 
  of the set $\cP$ of small generators, which lets us apply
  \cite[Prop.\,III.2.8]{beligiannis-reiten:torsion_theories},
  showing that $\cX$ coincides with the class of those $G$-spectra $X$ such that
  $[\Sigma^\infty_+ G/H, X[n]]^G=0$ for all $H\in\Com$ and $n\geq 1$.
  Since the latter group is isomorphic to $\pi_{-n}^H(X)$, 
  this shows that $\cX$ is precisely the class of connective $G$-spectra.
  The t-structure is non-degenerate because spectra with trivial $\Com$-equivariant
  homotopy groups are zero objects in $\Ho(\Sp_G)$.
\end{proof}

\begin{rk}[Postnikov sections]\label{rk:postnikov}\index{Postnikov sections!for orthogonal $G$-spectra}
In every t-structure and for every integer $n$, 
the inclusion $\cT_{\leq n}\to \cT$ 
has a left adjoint $\tau_{\leq n}:\cT\to\cT_{\leq n}$,
by \cite[Prop.\,1.3.3]{beilinson-bernstein-deligne:faisceaux_pervers}.
For the standard t-structure on the $\Ho(\Sp_G)$, 
given by the connective and coconnective $G$-spectra, the truncation functor
\[ \tau_{\leq n} \ : \ \Ho(\Sp_G) \ \to \ \Ho(\Sp_G)_{\leq n} \ , \]
left adjoint to the inclusion, provides a `Postnikov section':
For every orthogonal $G$-spectrum~$X$ 
and every compact subgroup $H$ of $G$,
the $G$-spectrum $\tau_{\leq n}X$ satisfies $\pi^H_k(\tau_{\leq n}X)=0$ for $k>n$ 
and the adjunction unit $X\to X_{\leq n}$ induces an isomorphism on $\pi_k^H$
for every $k\leq n$.
\end{rk}
\index{G-equivariant stable homotopy category@$G$-equivariant stable homotopy category|)}

\section{Change of groups}
\label{sec:change of groups}

This longish section is devoted to studying how
the proper equivariant stable homotopy theory varies with the ambient Lie group.
More precisely, we investigate different aspects of functoriality of the
stable model structure on $\Sp_G$ and of the triangulated homotopy category $\Ho(\Sp_G)$
for continuous group homomorphisms between Lie groups.

Theorem \ref{thm:adjunctions spectra pointset}  records how the restriction functor
$\alpha^*:\Sp_G\to\Sp_K$ along a continuous homomorphism $\alpha:K\to G$
interacts with the stable model structures:
on the one hand, $\alpha^*$ preserves $\pi_*$-isomorphisms whenever
the kernel of $\alpha$ has no non-trivial compact subgroup,
and then $\alpha^*$ is a right Quillen functor for the stable model structures.
On the other hand, 
$\alpha^*$ is a left Quillen functor for the stable model structures
whenever the image of $\alpha$ is closed and its kernel is compact.
Corollary \ref{cor:restriction stable} specializes this to the
inclusion of a closed subgroup,
in which case the restriction functor and its left adjoint both preserve $\pi_*$-isomorphisms.

Theorem \ref{thm:reduction} explains why for almost connected Lie groups
(i.e., those with finitely many path components),
our theory reduces to the classical case of {\em compact} Lie groups:
almost connected Lie groups have maximal compacts subgroups, and
restriction to a maximal compact subgroup is a Quillen equivalence
between the stable model categories of equivariant spectra.

In the remaining part of this section we switch our focus
to functoriality of the triangulated equivariant stable homotopy category.
Despite the fact that the restriction functor
$\alpha^*:\Sp_G\to\Sp_K$ along a continuous homomorphism $\alpha:K\to G$
need not be a Quillen functor for our model structures in general,
we show in Theorem \ref{thm:homomorphism adjunctions spectra}
that it always admits a total left derived functor $L\alpha^*:\Ho(\Sp_G)\to\Ho(\Sp_K)$;
moreover, $L\alpha^*$ is an exact functor of triangulated categories and has a right adjoint.
Morally speaking, the functor $L\alpha^*$ is determined by these properties
and the fact that it `commutes with suspension spectra',
see Theorem \ref{thm:homomorphism adjunctions spectra} for the precise statement.
When the kernel of $\alpha$ has non-trivial compact subgroups,
$L\alpha^*$ will typically {\em not} preserve products,
as Remark \ref{rk:inflation and products} illustrates.
The derived functors $L\alpha^*$ enjoy a specific kind of lax functoriality:
pairs of composable continuous homomorphisms give rise to
a preferred natural transformation
$\td{\alpha,\beta}:L\beta^*\circ L\alpha^*\Longrightarrow L(\alpha\beta)^*$,  
see Construction \ref{con:lax transfo};
and these transformations in turn satisfy a coherence condition,
see Proposition \ref{prop:derived cocycle}.

Finally, we show that proper $G$-equivariant stable homotopy theory
is `homotopy invariant' in the Lie group $G$, in two specific ways.
On the one hand, if $\alpha:K\to G$ is a continuous homomorphism
and a weak equivalence of underlying topological spaces,
then $L\alpha^*:\Ho(\Sp_G)\to\Ho(\Sp_K)$ is an equivalence of triangulated
categories, see Theorem \ref{thm:weak equivalence}.
On the other hand, a homotopy through continuous group homomorphisms
gives rise to a specific natural isomorphism between the
left derived functors, see Theorem \ref{thm:homotopic homomorphisms}.
Both of these homotopy invariance statements are new phenomena,
since they are trivially true for {\em compact} Lie groups.
Indeed, a multiplicative weak equivalence between compact Lie groups
is already an isomorphism; and two homotopic continuous homomorphisms
between compact Lie groups are already conjugate.\medskip

As we discussed in Proposition \ref{prop:homomorphism adjunctions spaces}, 
a continuous homomorphism $\alpha:K\to G$ between
Lie groups gives rise to functors
between the associated categories of equivariant spaces:
\[\xymatrix@C=18mm{  
K \bT\ \ar@/^1pc/@<.5ex>[r]^-{G \ltimes_{\alpha } -} 
\ar@/_1pc/@<-.5ex>[r]_-{\map^{K,\alpha}(G,-)} &
\ G \bT  \ar[l]_-{\alpha^* } } \]
Here $G\times_\alpha -$ is left adjoint to $\alpha^*$,
and  $\alpha^*$ is left adjoint to $\map^{K,\alpha}(G,-)$.
Levelwise application of these functors gives rise to
analogous adjoint functor pairs between categories of equivariant spectra
\[\xymatrix@C=18mm{  
\Sp_K\ \ar@/^1pc/@<.5ex>[r]^-{G \ltimes_{\alpha } -} 
\ar@/_1pc/@<-.5ex>[r]_-{\map^{K,\alpha}(G,-)} &
\ \Sp_G  \ar[l]_-{\alpha^* } }\ , \]
see Construction \ref{con:smash with G-space}.
The following theorem records how these functors interact 
with the stable model structures on $\Sp_K$ and $\Sp_G$.

\medskip

We call a continuous homomorphism $\alpha:K\to G$
between Lie groups {\em quasi-injective}\index{quasi-injective}
if the restriction of $\alpha$ to every compact subgroup of $K$ is
injective. Equivalently, the kernel of $\alpha$ has
no non-trivial compact subgroups.

\begin{thm} \label{thm:adjunctions spectra pointset} 
Let $ \alpha \colon K \to G$ be a continuous homomorphism between Lie groups.
\begin{enumerate}[\em (i)]
\item If $\alpha$ is quasi-injective, 
  then the restriction functor $\alpha^*:\Sp_G\to \Sp_K$
  preserves $\pi_*$-isomorphisms and stable fibrations.
  In particular, the adjoint functor pair $(G\ltimes_\alpha-,\alpha^*)$
  is a Quillen pair with respect to the stable model structures.
\item If $\alpha$ has a closed image and a compact kernel,
  then the adjoint functor pair $(\alpha^*,\map^{K,\alpha}(G,-))$
  is a Quillen pair with respect to the stable model structures.
\end{enumerate}
\end{thm}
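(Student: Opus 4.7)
The proof splits according to the two parts, each reducing ultimately to the space-level behaviour recorded in Proposition \ref{prop:homomorphism adjunctions spaces}.

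For part (i), the key observation is that quasi-injectivity forces $\alpha|_L\colon L\to\alpha(L)$ to be a continuous bijection between the compact Lie groups $L$ and $\alpha(L)$, hence a topological group isomorphism, for every compact subgroup $L\leq K$. I will use this to identify every $L$-representation canonically with an $\alpha(L)$-representation and a complete $L$-universe with a complete $\alpha(L)$-universe. A direct check then shows that the diagonal $L$-action on $(\alpha^*X)(V)$ agrees, under this identification, with the diagonal $\alpha(L)$-action on $X(V)$, yielding $\pi_k^L(\alpha^*X)\cong\pi_k^{\alpha(L)}(X)$ naturally in $X$ and $k$. Hence $\alpha^*$ preserves $\pi_*$-isomorphisms. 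The same fixed-point identification shows that $\alpha^*$ preserves level fibrations and carries the homotopy-cartesian diagnostic square \eqref{eq:stable fibration square} at $(L,V,W)$ to the one at $(\alpha(L),V,W)$, so $\alpha^*$ preserves stable fibrations; the adjoint pair $(G\ltimes_\alpha-,\alpha^*)$ is therefore Quillen.

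For part (ii) the plan is to show $\alpha^*$ is left Quillen. Preservation of cofibrations is a latching-map calculation: $\nu_m(\alpha^*i) = (\alpha\times\Id_{O(m)})^*(\nu_m i)$, and the homomorphism $\alpha\times\Id_{O(m)}\colon K\times O(m)\to G\times O(m)$ retains the closed-image and compact-kernel hypotheses, so Proposition \ref{prop:homomorphism adjunctions spaces}(iii) gives that $(\alpha\times\Id_{O(m)})^*$ preserves $\Com$-cofibrations of spaces, while freeness of the $O(m)$-action off the image is obviously preserved. For acyclic cofibrations, I will factor $\alpha$ as $\iota\circ\pi$, where $\pi\colon K\twoheadrightarrow\alpha(K)$ is the surjection onto the closed image (a quotient map of Lie groups with compact kernel $\ker\alpha$) and $\iota\colon\alpha(K)\hookrightarrow G$ is the closed inclusion. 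The inclusion $\iota$ is quasi-injective, so part (i) combined with the latching argument above makes $\iota^*$ both left and right Quillen. For $\pi^*$ it suffices to send the generating set $J^{\alpha(K)}_{\st} = J^{\alpha(K)}_{\lv}\cup\cK^{\alpha(K)}$ into the class of $\pi_*$-isomorphisms. For any compact $H\leq\alpha(K)$ the preimage $\tilde H = \pi^{-1}(H)$ is a compact subgroup of $K$, and there is a natural isomorphism $\pi^*(\alpha(K)\ltimes_H F_V)\cong K\ltimes_{\tilde H} F_V$ (with $V$ pulled back to an $\tilde H$-representation through $\pi$); in particular $\pi^*(\alpha(K)\ltimes_H\lambda_{H,V,W})\cong K\ltimes_{\tilde H}\lambda_{\tilde H,V,W}$. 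The morphism $\lambda_{\tilde H,V,W}$ is a $\pi_*$-isomorphism of orthogonal $\tilde H$-spectra by \cite[III Lemma 4.5]{mandell-may:equivariant_orthogonal}, and Corollary \ref{cor:Gamma2G} shows that induction $K\ltimes_{\tilde H}-$ preserves $\pi_*$-isomorphisms, so the induced $\lambda$-generators of $\cK^{\alpha(K)}$ are carried to $\pi_*$-isomorphisms; the pushout-product property (Theorem \ref{thm:stable}(iii)) then handles the surrounding pushout product with $i^k_+$ as well as the generators in $J^{\alpha(K)}_{\lv}$. The composition $\alpha^* = \pi^*\iota^*$ is then left Quillen.

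The main obstacle is the acyclic cofibration half of part (ii): because $\alpha$ may have non-trivial compact kernel, $\alpha^*$ does not preserve $\pi_*$-isomorphisms in general, so one cannot simply combine cofibration preservation with $\pi_*$-isomorphism preservation. The factorization through the closed image, followed by the explicit identification of $\pi^*$ on the induced $\lambda$-morphisms as induced $\lambda$-morphisms for the preimage subgroup $\tilde H$, is what sidesteps the difficulty.
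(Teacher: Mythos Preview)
Your proposal is correct and follows essentially the same route as the paper: part~(i) via the isomorphism $L\cong\alpha(L)$ for compact $L\leq K$, and part~(ii) via the latching-map identification $\nu_m(\alpha^*i)=(\alpha\times\Id_{O(m)})^*(\nu_m i)$ for cofibrations, followed by the factorization $\alpha=\iota\circ\pi$ through the closed image and a check on generating acyclic cofibrations for the surjection $\pi$. The only cosmetic difference is that, for the surjective case, the paper observes directly that $\pi^*(\cK^{\alpha(K)})\subset\cK^K$ and that the $J_{\lv}$-generators are equivariant homotopy equivalences, whereas you re-derive that the images are $\pi_*$-isomorphisms via Corollary~\ref{cor:Gamma2G} and Theorem~\ref{thm:stable}(iii); both arguments are valid and lead to the same conclusion.
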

\begin{proof}
  (i) We let $f:X\to Y$ be a $\pi_*$-isomorphism or a stable fibration
  of orthogonal $G$-spectra.
  The definitions of $\pi_*$-isomorphism and stable fibrations only refer to
  compact subgroups, so to show that $\alpha^*(f)$
  is a $\pi_*$-isomorphism or a stable fibration of orthogonal $K$-spectra,
  we can restrict to all compact subgroups $L$ of $K$.
  Since $L$ is compact, the restriction of $\alpha$ to $L$ is injective,
  hence a closed embedding, and hence an isomorphism of Lie groups onto its image
  $H=\alpha(L)$. So $\alpha$ induces a natural isomorphism between $\pi_*^L(\alpha^*(X))$
  and $\pi_*^H(X)$, which shows that $\pi_*^L(\alpha^*(f))$ is an isomorphism.
  Since $L$ was an arbitrary compact subgroup of $K$, this proves that
  $\alpha^*(f)$ is a $\pi_*$-isomorphism. The argument for stable fibrations is similar,
  by using the isomorphism $\alpha:L\iso H$ to translate 
  the commutative square \eqref{eq:stable fibration square}
  for given $L$-representations $V$ and $W$ into an analogous square
  for the $H$-representations $(\alpha^{-1})^*(V)$ and $(\alpha^{-1})^*(W)$.

  (ii) In a first step we show that the restriction functor $\alpha^*:\Sp_G\to \Sp_K$
  takes all cofibrations of orthogonal $G$-spectra to
  cofibrations of orthogonal $K$-spectra.
  Restriction along $\alpha$ only changes the group actions, 
  but it does not change the underlying orthogonal spectra. 
  Hence the skeleta and latching objects of $\alpha^*(X)$ are the same
  as for $X$, but with action restricted along the homomorphism
  $\alpha\times O(m):K\times O(m)\to G\times O(m)$.
  Since the image of $\alpha$ is closed in $G$, 
  the image of $\alpha\times O(m)$ is closed in $G\times O(m)$.
  The kernel of $\alpha\times O(m)$ is $\ker(\alpha)\times 1$,
  which is compact by hypothesis.
  So restriction along $\alpha\times O(m)$ takes 
  $\Com$-cofibrations of $(G\times O(m))$-spaces
  to $\Com$-cofibrations of $(K\times O(m))$-spaces,
  by Proposition \ref{prop:homomorphism adjunctions spaces}.
  So if $i:A\to B$ is a cofibration of orthogonal $G$-spectra,
  then $\nu_m i$ is a $\Com$-cofibration of $(G\times O(m))$-spaces,
  and $\nu_m(\alpha^*(i))=(\alpha\times O(m))^*(\nu_m i)$
  is a $\Com$-cofibration of $(K\times O(m))$-spaces.
  Moreover, the $O(m)$-action is unchanged, so it still acts
  freely off the image of $\nu_m(\alpha^*(i))$.
  This shows that $\alpha^*(i)$ is a cofibration of orthogonal $K$-spectra.

  It remains to show that $\alpha^*$ takes
  cofibrations of orthogonal $G$-spectra that are also $\pi_*$-isomorphisms 
  to $\pi_*$-isomorphisms of orthogonal $K$-spectra.
  Here we treat two special cases first. 
  If $\alpha$ is the inclusion of a closed subgroup $\Gamma$ of $G$, 
  then $\alpha^*=\res^G_\Gamma$ preserves all $\pi_*$-isomorphisms by part (i).
  If $\alpha$ is surjective, then we verify 
  that $\alpha^*$ takes the generating acyclic cofibrations 
  of the stable model structure on orthogonal $G$-spectra to 
  acyclic cofibrations of orthogonal $K$-spectra.
  The generating acyclic cofibrations $J^G_{\lv}$ 
  of the level model structure \eqref{eq:gen_level_acyc_cofibration}
  are $G$-equivariant homotopy equivalences, so $\alpha^*$ takes
  them to $K$-equivariant homotopy equivalences, which are in particular
  $\pi_*$-isomorphisms.
  For the other generating acyclic cofibrations in $\cK^G$ 
  we recall from \eqref{def:lambda} the $\pi_*$-isomorphism
  \[
    G\ltimes_H \lambda_{H,V,W}\ : \ G\ltimes_H ( F_{V\oplus W} S^V ) \ \to \ G\ltimes_H  F_W \ ;
  \]
  here $H$ is a compact subgroup of $G$, and $V$ and $W$ are $H$-representations.
  Since the kernel of $\alpha$ is compact, the group $L=\alpha^{-1}(H)$ 
  is then a compact subgroup of $K$.
  An isomorphism of orthogonal $K$-spectra
  \[ K\ltimes_L F_{(\alpha|_L)^*(V)} \ \cong \   \alpha^*( G \ltimes_H F_V ) \]
  is given levelwise by sending $k\sm x$ to $\alpha(k)\sm x$.
  We conclude that $\alpha^*$ takes the $\pi_*$-isomorphism $G\ltimes_H \lambda_{H,V,W}$
  to the $\pi_*$-isomorphism of orthogonal $K$-spectra
  $K\ltimes_L \lambda_{L,\alpha^*(V),\alpha^*(W)}$.
  The inflation functor $\alpha^*$ commutes with formation of mapping cylinders
  and levelwise smash product with spaces, 
  so we conclude that $\alpha^*(\cK^G)\subset \cK^K$.
  This completes the proof that $\alpha^*$ preserves stable acyclic cofibrations
  if $\alpha$ is surjective with compact kernel.

  In the general case we factor $\alpha$ as the composite
  \[ K \ \xrightarrow{\ \beta\ }\ \Gamma \ \xrightarrow{\incl}\ G\ , \]
  where $\Gamma=\alpha(K)$ is the image of $\alpha$, and $\beta$ is the
  same map as $\alpha$, but with image $\Gamma$.
  The restriction homomorphism factors as $\alpha^*=\beta^*\circ\res^G_\Gamma$,
  and each of the two functors is a left Quillen functor by the special cases
  treated above.
\end{proof}

\begin{rk}
  Both hypotheses on the continuous homomorphism $\alpha$ imposed 
  in Theorem \ref{thm:adjunctions spectra pointset} (ii)
  are really necessary. For example, if $\alpha:\mZ\to U(1)$
  is the continuous homomorphism that takes the generator
  to $e^{2\pi i x}$ for an irrational real number $x$,
  then $\alpha^*(U(1))$ is not cofibrant as a $\mZ$-space, and 
  $\alpha^*(\Sigma^\infty_+ U(1))$ is not cofibrant as an orthogonal $\mZ$-spectrum.
  So we cannot drop the hypothesis that the image of $\alpha$ is closed.
  
  If $\alpha:G\to e$ is the unique homomorphism to the trivial group, 
  then $\alpha^*(\mS)=\mS_G$ is the $G$-sphere spectrum.
  This $G$-sphere spectrum is cofibrant as an orthogonal $G$-spectrum
  precisely when $G$ is compact. 
  So we cannot drop the hypothesis that the kernel of $\alpha$ is compact.
\end{rk}

For easier reference we spell out the important special case of 
Theorem \ref{thm:adjunctions spectra pointset} for the inclusion of a closed subgroup.
The induction functor $G\ltimes_\Gamma-$ preserves $\pi_*$-isomorphisms
by Corollary \ref{cor:Gamma2G}.

\begin{cor}\label{cor:restriction stable}\index{induction functor}\index{restriction functor}
  Let $\Gamma$ be a closed subgroup of a Lie group $G$. 
  The restriction functor $\res^G_\Gamma \colon \Sp_G \to \Sp_\Gamma$ 
  preserves $\pi_*$-isomorphisms, cofibrations and stable fibrations.
  Hence the two adjoint functor pairs
  \[
    \xymatrix@C=15mm{  
      \Sp_\Gamma\ \ar@<0.5ex>[r]^-{G\ltimes_\Gamma} &\ \Sp_G  \ar@<0.5ex>[l]^-{\res^G_\Gamma}} 
    \text{\qquad and\qquad}
    \xymatrix@C=15mm{  
      \Sp_\Gamma\ \ar@<-0.5ex>[r]_-{\map^\Gamma(G,-)} &\ \Sp_G \ar@<-0.5ex>[l]_-{\res^G_\Gamma}}
  \]
  are Quillen adjunctions with respect to the two stable model structures.
  Moreover, the induction functor $G\ltimes_\Gamma \colon \Sp_\Gamma \to \Sp_G$ 
  preserves $\pi_*$-isomorphisms.
\end{cor}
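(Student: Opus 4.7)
The plan is to observe that the inclusion $\alpha:\Gamma\hookrightarrow G$ of a closed subgroup is a continuous homomorphism between Lie groups that simultaneously satisfies both hypotheses of Theorem \ref{thm:adjunctions spectra pointset}. Indeed, it is injective, hence in particular quasi-injective (its kernel is trivial, so it contains no non-trivial compact subgroup); and it has closed image (namely $\Gamma$ itself) together with compact (in fact trivial) kernel. Consequently every claim about the restriction functor is a direct specialization of that theorem, and the claim about the induction functor is already known from Corollary \ref{cor:Gamma2G}.

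More concretely, I would proceed as follows. First, invoke Theorem \ref{thm:adjunctions spectra pointset} (i) with $\alpha$ the inclusion $\Gamma\hookrightarrow G$: this immediately yields that $\res^G_\Gamma$ preserves $\pi_*$-isomorphisms and stable fibrations, and that $(G\ltimes_\Gamma -,\res^G_\Gamma)$ is a Quillen pair for the stable model structures. Second, invoke Theorem \ref{thm:adjunctions spectra pointset} (ii) for the same $\alpha$: since $\Gamma$ is closed in $G$ and the kernel of $\alpha$ is trivial, the conclusion is that $(\res^G_\Gamma,\map^\Gamma(G,-))$ is also a Quillen pair. The first step of the proof of part (ii) of that theorem shows in addition that $\res^G_\Gamma$ preserves cofibrations of orthogonal spectra, completing the list of properties of the restriction functor.

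Finally, the assertion that $G\ltimes_\Gamma - : \Sp_\Gamma \to \Sp_G$ takes $\pi_*$-isomorphisms of orthogonal $\Gamma$-spectra to $\pi_*$-isomorphisms of orthogonal $G$-spectra is precisely Corollary \ref{cor:Gamma2G}. Since all ingredients have already been established, there is no real obstacle here; the corollary is merely a convenient repackaging, for the case of a subgroup inclusion, of the more general statements already proved. If there is any subtle point worth highlighting, it is the observation that $\res^G_\Gamma$ enjoys the unusual feature of being simultaneously a left and a right Quillen functor between the stable model structures, which is why the induction and coinduction functors on the two sides can both be derived without a cofibrant replacement.
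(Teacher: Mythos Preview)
Your proposal is correct and matches the paper's approach exactly: the paper presents this corollary as a direct specialization of Theorem~\ref{thm:adjunctions spectra pointset} to the inclusion of a closed subgroup, together with Corollary~\ref{cor:Gamma2G} for the statement about induction. Your observation that preservation of cofibrations can be read off either from the first step of the proof of part~(ii) or simply from $\res^G_\Gamma$ being a left Quillen functor is entirely accurate.
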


A celebrated theorem of Cartan, Iwasawa \cite{iwasawa:types_groups}
and Malcev \cite{malcev:theory_Lie}
says that every connected Lie group $G$ has a 
maximal compact subgroup, i.e., a compact subgroup $K$ 
such that every compact subgroup is subconjugate to $K$.
Moreover, $G$ is homeomorphic as a topological space to $K\times \mR^n$ 
for some $n\geq 0$ 
(but there is typically no Lie group isomorphism between $G$ and $K\times\mR^n$).
In particular, the inclusion $K\to G$ is a homotopy equivalence
of underlying spaces.
A comprehensive exposition with further references can be found in
Borel's {\em S{\'e}minaire Bourbaki} article \cite{borel:sous_groupes_compacts}.
A maximal compact subgroup with these properties exists
more generally when the Lie group~$G$ is {\em almost connected}, i.e., 
when it has finitely many path components,
and even for locally compact topological groups
whose component group is compact, see \cite[Thm.\,A.5]{abels:parallelizability}.

\begin{thm}[Reduction to maximal compact subgroups] \label{thm:reduction}
  Let $G$ be an almost connected Lie group, and $K$ a maximal compact subgroup of~$G$. 
  Then the restriction functor
  \[ \res^G_K \colon  \Sp_G\ \to\ \Sp_K \]\index{maximal compact subgroup}
  is a left and right Quillen equivalence for the stable model structures.
\end{thm}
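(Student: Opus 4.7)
The plan is to reduce both Quillen equivalence claims to the statement that $\res^G_K$ induces an equivalence $\Ho(\Sp_G) \simeq \Ho(\Sp_K)$ of triangulated categories. By Corollary~\ref{cor:restriction stable}, both $\res^G_K$ and its left adjoint $G\ltimes_K -$ preserve all $\pi_*$-isomorphisms, so they descend directly to the homotopy categories without any cofibrant or fibrant replacement. Once $\res^G_K$ is known to be an equivalence of categories, the left adjoint $G\ltimes_K -$ and the derived right adjoint $\mathbb{R}\map^K(G,-)$ are both quasi-inverses to it, so both Quillen adjunctions in the theorem become Quillen equivalences.

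\emph{Step 1: the counit is a $\pi_*$-isomorphism, so $\res^G_K$ is fully faithful.} For every $X\in\Sp_G$ I would construct a natural $G$-equivariant isomorphism
\[
G\ltimes_K \res^G_K X \ \iso \ (G/K)_+ \sm X, \qquad [g,x]\longmapsto [g]\sm g x,
\]
with diagonal $G$-action on the right-hand side, under which the counit $G\ltimes_K \res^G_K X \to X$ corresponds to the projection $(G/K)_+ \sm X \to S^0 \sm X = X$. By the Cartan-Iwasawa-Malcev theorem, $G/K$ is a model for $\uEG$, so for every compact subgroup $H\leq G$ the fixed-point space $(G/K)^H$ is non-empty and contractible. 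Choosing an $H$-fixed point gives an $H$-equivariant based homotopy equivalence $S^0\to(G/K)_+$, so the projection $(G/K)_+\sm X\to X$ is an $H$-homotopy equivalence in each level, hence a $\pi_*^H$-isomorphism. Since $H$ was arbitrary the counit is a $\pi_*$-isomorphism, and by the standard adjunction argument $\res^G_K\colon\Ho(\Sp_G)\to\Ho(\Sp_K)$ is fully faithful.

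\emph{Step 2: essential surjectivity of $\res^G_K$.} As an exact functor that preserves arbitrary sums (it has a right adjoint $\map^K(G,-)$), $\res^G_K$ has essential image a localizing subcategory of $\Ho(\Sp_K)$. By Corollary~\ref{cor:Ho_G compactly generated} it suffices to show that each generator $\Sigma^\infty_+ K/L$, for compact $L\leq K$, lies in the essential image; I would do this by exhibiting the isomorphism $\Sigma^\infty_+ K/L\iso\res^G_K(\Sigma^\infty_+ G/L)$ in $\Ho(\Sp_K)$ induced by the $K$-equivariant inclusion $K/L\to G/L$. This reduces to showing that $K/L\to G/L$ is a $\Com$-equivalence of $K$-spaces. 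The projection $G/L\to G/K$ is a $K$-equivariant smooth fiber bundle with fiber $K/L$ over the $K$-fixed coset $eK$. The base $G/K$ admits a $K$-CW-structure by Illman's theorem, and $(G/K)^H$ is contractible for every (necessarily compact) closed subgroup $H\leq K$ because $G/K$ models $\uEG$; hence $G/K$ is $K$-equivariantly contractible by equivariant Whitehead. Equivariant homotopy lifting applied to the $K$-fibration $G/L\to G/K$ lifts a $K$-equivariant contraction of the base to a $K$-equivariant deformation retraction of $G/L$ onto the fiber $K/L$, proving that $K/L\to G/L$ is a $K$-homotopy equivalence.

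\emph{Step 3: conclusion.} Steps 1 and 2 together show that $\res^G_K\colon\Ho(\Sp_G)\to\Ho(\Sp_K)$ is an equivalence of triangulated categories; the remarks in the first paragraph then imply that both $(G\ltimes_K -,\res^G_K)$ and $(\res^G_K,\map^K(G,-))$ are Quillen equivalences.

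\emph{Main obstacle.} The delicate point is Step~2, namely promoting the $K$-contractibility of $G/K$ and the fiber-bundle structure $G/L\to G/K$ to an honest $K$-equivariant homotopy equivalence between $G/L$ and $K/L$. This requires a careful application of $K$-equivariant homotopy lifting, in particular arranging that the lifted contraction preserves the fiber $K/L$ throughout so that both composites with the inclusion are $K$-homotopic to the identity.
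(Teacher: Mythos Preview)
Your proof is correct but takes a genuinely different route from the paper's. The paper argues via the adjunction \emph{unit} rather than the counit: it invokes Abels' theorem to exhibit a $K$-invariant subspace $E\subset G$ that is $K$-equivariantly homeomorphic to a linear $K$-representation and satisfies $E\times K\cong G$ via multiplication. This immediately gives a natural isomorphism $E_+\sm Y\cong \res^G_K(G\ltimes_K Y)$ for every $K$-spectrum $Y$, so the unit $Y\to\res^G_K(G\ltimes_K Y)$ is a $K$-equivariant homotopy equivalence. Combined with the one-line observation that $\res^G_K$ \emph{detects} $\pi_*$-isomorphisms (every compact subgroup of $G$ is subconjugate to $K$), a triangle-identity argument yields the counit for free, and hence the equivalence.

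Your approach instead proves the counit is a $\pi_*$-isomorphism via the identification $G\ltimes_K\res^G_K X\cong (G/K)_+\sm X$ and the fact that $G/K$ models $\uEG$, and then establishes essential surjectivity on generators through the fiber-bundle argument you flag as the main obstacle. That obstacle is real but dissolves: once you arrange the $K$-contraction of $G/K$ to fix $eK$ throughout (which is possible since $\{eK\}\hookrightarrow G/K$ is a $K$-cofibration), the lifted homotopy restricts to the fiber at every stage, giving the two-sided $K$-homotopy equivalence $K/L\simeq G/L$. Note that your Step~2 is in fact verifying that the unit is an equivalence on the generators $\Sigma^\infty_+ K/L$; the paper's Abels decomposition proves the unit is an equivalence for \emph{all} $Y$ in one stroke, bypassing both the localizing-subcategory argument and the fibration lifting. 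Both proofs ultimately rest on the same geometric input---that $G$ splits $K$-equivariantly as $K$ times a $K$-contractible piece---just deployed at different points; the paper's packaging is shorter, while yours separates the fully-faithful and essentially-surjective halves more explicitly.
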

\begin{proof}
  Since every compact subgroup of $G$ is subconjugate to $K$,
  the restriction functor detects $\pi_*$-isomorphisms.
  Since restriction and induction preserve
  $\pi_*$-isomorphisms in full generality,
  we may show that for every orthogonal $K$-spectrum $Y$,
  the adjunction unit $Y\to G\ltimes_K Y$ is a $\pi_*$-isomorphism
  of orthogonal $K$-spectra.

  At this point we need additional input, namely a theorem of Abels 
  \cite[Thm.\,A.5]{abels:parallelizability} 
  that provides a subspace $E$ of~$G$ with the following properties:
  \begin{enumerate}[(i)]
  \item The space~$E$ is invariant under conjugation by $K$.
  \item Under the conjugation action, the space $E$ is $K$-equivariantly
    homeomorphic to a finite-dimensional linear $K$-representation. 
  \item The multiplication map~$E\times K\to G$, $(e,k)\mapsto e k$
    is a homeomorphism.
  \end{enumerate}
  The multiplication homeomorphism $E\times K \cong G$
  is left $K$-equivariant for the diagonal $K$-action on the source,
  by conjugation on $E$ and by left translation on~$K$.
  The multiplication homeomorphism is right $K$-equivariant for 
  the trivial $K$-action on~$E$ and the right translation action on~$K$.
  So in particular, the morphism
  \[  E_+\sm Y \ \to \ G\ltimes_K Y \ , \quad (e,y)\ \longmapsto \ [e,y] \]
  is an isomorphism of orthogonal $K$-spectra, where $K$ acts diagonally on
  the source, by conjugation on~$E$, and by the given action on~$Y$.
  Since $E$ is $K$-homeomorphic to a finite-dimensional linear $K$-representation,
  it is $K$-equivariantly contractible.
  So the adjunction unit is a $K$-equivariant homotopy equivalence
  of underlying orthogonal $K$-spectra. 
\end{proof}

\begin{eg}\label{eg:1-dim for locally finite}
By the very construction, the proper $G$-equivariant homotopy theory
is assembled from the equivariant homotopy theories of the 
compact subgroups. We now discuss a situation where this relationship is
especially tight, and where morphisms in~$\Ho(\Sp_G)$ can be calculated
directly from morphisms in~$\Ho(\Sp_H)$ for finite subgroups~$H$.

Let $G$ be a countable locally finite group, i.e., a discrete group\index{locally finite group} 
that has an exhaustive sequence of finite subgroups
\[ H_0 \ \subseteq\  H_1 \ \subseteq\  \dots \ \subseteq\  H_n \ \subseteq\ \dots\  , \]
i.e., so that $G=\bigcup H_n$.
The various restriction functors
\[ \res^G_{H_n}\ : \ \Ho(\Sp_G)\ \to \ \Ho(\Sp_{H_n}) \]
are then compatible. For all orthogonal $G$-spectra $X$ and $Y$,
the restriction maps thus assemble into a group homomorphism
\[ \res^G\ : \ [X,Y]^G \ \to \ {\lim} \, [X,Y]^{H_n}\ , \]
where the inverse limit on the right hand side is formed along restriction maps.
We have simplified the notation by suppressing the restriction functors
$\res^G_{H_n}$ on the right hand side.
The hypothesis that the sequence $H_n$ exhausts $G$ implies that for every
finite subgroup $K$ of~$G$ the colimit of the sequence of sets
\[ (G/H_0)^K \ \to \  (G/H_1)^K \ \to \ \dots \ \to \  (G/H_n)^K \ \to \ \dots\]
is a single point.
Thus the mapping telescope, in the category of $G$-spaces, 
of the sequence of $G$-spaces $G/H_n$ is $\Com$-equivalent to the one-point $G$-space.
Since the mapping telescope comes to us as a 1-dimensional $G$-CW-complex,
it is a 1-dimensional $G$-CW-model for~$\uEG$.

For all cofibrant $G$-spectra $X$, the mapping telescope,
in the category of orthogonal $G$-spectra, of the sequence 
\[ X\sm (G/H_0)_+ \ \to \  X\sm (G/H_1)_+ \ \to \ \dots \ \to \  X\sm (G/H_n)_+
\ \to \ \dots\]
is hence $\pi_*$-isomorphic to~$X$.
The mapping telescope models an abstract homotopy colimit 
in the triangulated category~$\Ho(\Sp_G)$.
So applying the functor~$[-,Y]^G$ yields a short exact sequence of abelian groups
\begin{align*}
   0 \ \to \ {\lim}^1 \, [ X\sm (G/H_n)_+\sm S^1, Y]^G \ &\to \  [X,Y]^G \\
  \to \ &{\lim}\, [X\sm (G/H_n)_+,Y]^G \ \to \ 0 \ . 
\end{align*}
We rewrite $X\sm (G/H_n)_+$ as $G\ltimes_{H_n}(\res^G_{H_n} X)$
and use the adjunction isomorphism between restriction and induction 
to identify the group $[X\sm (G/H_n)_+,Y]^G$ of morphisms in~$\Ho(\Sp_G)$ with the group
$[X,Y]^{H_n}$ of morphisms in~$\Ho(\Sp_{H_n})$. The maps in the tower then become
restriction maps, so we have shown:
\end{eg}

\begin{cor}\label{cor:lim_lim^1_sequence} 
Let $G$ be a discrete group and $\{H_n\}_{n\geq 0}$
an ascending exhaustive sequence of finite subgroups of~$G$.
Then for all orthogonal $G$-spectra $X$ and~$Y$ there is a short exact sequence
\[ 0 \ \to \ {\lim}^1 \, [X\sm S^1,Y]^{H_n} \ \to \ 
 [X,Y]^G \ \to \ 
{\lim}\, [ X,Y]^{H_n} \ \to \ 0 \ . \]  
Here the inverse and derived limit are formed along restriction maps,
and so is the map from $[X,Y]^G$ to the inverse limit.
\end{cor}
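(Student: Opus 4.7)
The plan is to formalize the argument already sketched in Example \ref{eg:1-dim for locally finite}; the corollary is essentially a clean restatement of what that example establishes, and the proof is a matter of assembling the pieces.

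First I would build the geometric input: a $1$-dimensional $G$-CW-model for $\uEG$. The mapping telescope $T = \text{tel}_n (G/H_n)$ of the canonical projections $G/H_n \to G/H_{n+1}$ carries the structure of a $1$-dimensional $G$-CW-complex with finite isotropy. For every finite subgroup $K$ of $G$, the exhaustion hypothesis guarantees that $K \subseteq H_n$ after conjugation for $n$ sufficiently large, so the colimit of the sets $(G/H_n)^K$ is a single point and hence $T^K$ is contractible. Thus $T$ is a model for $\uEG$.

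Next I would pass to the homotopy category. After replacing $X$ by a cofibrant model in the stable model structure, Proposition \ref{prop:smash is monoidal model}(ii) (the unit axiom) yields a $\pi_*$-isomorphism $X \sm T_+ \to X$. Since smashing $X$ with a mapping telescope in $G\bT_*$ produces the mapping telescope of $X \sm (G/H_n)_+$ in $\Sp_G$, we may identify $X$ in $\Ho(\Sp_G)$ with $\text{tel}_n (X \sm (G/H_n)_+)$. The mapping telescope of a sequence of h-cofibrations of orthogonal $G$-spectra models the sequential homotopy colimit, fitting into a distinguished triangle
\[ \bigvee_n X \sm (G/H_n)_+ \ \xrightarrow{\Id - \sh} \ \bigvee_n X \sm (G/H_n)_+ \ \to \ X \ \to \ \bigvee_n (X \sm (G/H_n)_+)[1] \]
in $\Ho(\Sp_G)$. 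Applying the cohomological functor $[-,Y]^G$ and using that it converts sums to products produces the classical Milnor exact sequence
\[ 0 \ \to \ {\lim}^1_n \, [X \sm (G/H_n)_+ \sm S^1, Y]^G \ \to \ [X,Y]^G \ \to \ {\lim}_n \, [X \sm (G/H_n)_+, Y]^G \ \to \ 0 . \]

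Finally I would translate the terms across the induction/restriction adjunction. There is a canonical isomorphism $X \sm (G/H_n)_+ \iso G \ltimes_{H_n} \res^G_{H_n}(X)$ of orthogonal $G$-spectra, and by Corollary \ref{cor:restriction stable} the pair $(G\ltimes_{H_n}-, \res^G_{H_n})$ is a Quillen adjunction, so it descends to an adjunction between homotopy categories. This yields the natural isomorphism $[X \sm (G/H_n)_+, Y]^G \iso [X,Y]^{H_n}$, and a check of the unit of the adjunction identifies the structure map $[X,Y]^{H_{n+1}} \to [X,Y]^{H_n}$ with the restriction map. The main obstacle I expect is the bookkeeping in this final step, namely verifying that the projection $[X,Y]^G \to \lim_n [X,Y]^{H_n}$ constructed via the telescope presentation really is the naive restriction map; this amounts to tracing the tautological fixed-point class $e H_n \in (G/H_n)^{H_n}$ through the adjunction, which is straightforward but deserves to be spelled out.
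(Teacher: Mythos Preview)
Your proposal is correct and follows essentially the same approach as the paper: build $\uEG$ as the mapping telescope of the $G/H_n$, smash with $X$ to present $X$ as a homotopy colimit, apply $[-,Y]^G$ to obtain the Milnor sequence, and then use the induction/restriction adjunction to rewrite the terms. The only minor imprecision is the phrase ``after conjugation'' in your argument that $(G/H_n)^K$ is eventually nonempty---since $G=\bigcup H_n$ and $K$ is finite, one has $K\subseteq H_n$ outright for large $n$, no conjugation required.
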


\begin{rk}
  The {\em $\Fin$-orbit category}\index{Fin-orbit category@$\Fin$-orbit category}
  of a discrete group $G$ is
  the full subcategory of the category of $G$-sets with objects $G/H$ 
  for all finite subgroups $H$ of $G$.
  As we hope to make precise in future work, the underlying $\infty$-category
  of the stable model category of orthogonal $G$-spectra is a limit,
  for $G/H$ ranging through the $\Fin$-orbit category,
  of the $\infty$-categories of genuine $H$-spectra.

  If $G$ is an ascending union of finite subgroups $H_n$ as in Example \ref{eg:1-dim for locally finite},
  the limit diagram can be modified to show that
  the underlying $\infty$-category of orthogonal $G$-spectra
  is an inverse limit of the tower of $\infty$-categories associated to orthogonal $H_n$-spectra.
  The short exact sequence of Corollary \ref{cor:lim_lim^1_sequence} is a consequence of
  this more refined relationship.
  
  An interesting special case of this\index{Pr{\"u}fer group}
  is the Pr{\"u}fer group $C_{p^\infty}$ for a prime number $p$, i.e.,
  the group of $p$-power torsion elements in $U(1)$, with the discrete topology.
  Since $C_{p^\infty}$ is the union of its subgroups $C_{p^n}$ for $n\geq 0$,
  the underlying $\infty$-category of orthogonal $C_{p^\infty}$-spectra is
  an inverse limit of the $\infty$-categories of genuine $C_{p^n}$-spectra.
  Hence the  $\infty$-category of genuine proper orthogonal $C_{p^\infty}$-spectra is
  equivalent to the $\infty$-category of genuine $C_{p^\infty}$-spectra
  in the sense of Nikolaus and Scholze \cite[Def.\,II.2.15]{nikolaus-scholze:TC},
  on which their notion of
  {\em genuine $p$-cyclotomic spectra} is based,\index{p-cyclotomic spectrum@$p$-cyclotomic spectrum}
  compare \cite[Def.\,II.3.1]{nikolaus-scholze:TC}.
\end{rk}

Now we compare the equivariant stable homotopy categories for varying Lie groups.
We let $ \alpha \colon K \to G$ be a continuous homomorphism between Lie groups.
By Theorem \ref{thm:adjunctions spectra pointset} (i),
the restriction functor $\alpha^*:\Sp_G\to\Sp_K$ preserves $\pi_*$-isomorphisms
if $\alpha$ happens to be quasi-injective, 
but not when the kernel of $\alpha$ has a non-trivial compact subgroup.
In that case there cannot be an induced functor 
$\Ho(\alpha^*):\Ho(\Sp_G)\to\Ho(\Sp_K)$ such that $\Ho(\alpha^*)\circ\gamma_G$
is equal to $\gamma_K\circ\alpha^*$,
where $\gamma_G$ and $\gamma_K$ are the localization functors of \eqref{eq:gamma_G}. 
However, the next best thing is true: restriction along $\alpha$ has a total
left derived functor, see Theorem \ref{thm:homomorphism adjunctions spectra} below.
For the convenience of the reader we briefly review this concept.
The arguments we use in the rest of this section have a substantial overlap with Chapter VII
on `Deformable functors and their approximations'
of the book \cite{dwyer-hirschhorn-kan-smith} by Dwyer, Hirschhorn, Kan and Smith;
for the convenience of the reader, we give a largely self-contained exposition of the relevant parts,
adapted to our context.

We let $(\cC,w)$ be a relative category, i.e., a category $\cC$
equipped with a distinguished class $w$ of morphisms that
we call {\em weak equivalences}. An important special case\index{relative category} 
of relative categories are the ones underlying model categories.
A functor between relative categories is {\em homotopical}
if it takes weak equivalences to weak equivalences.

\begin{defn}
The {\em homotopy category} of a relative category $(\cC,w)$\index{homotopy category!of a relative category} 
is a functor $\gamma_\cC:\cC\to \Ho(\cC)$ that sends all weak equivalences to
isomorphisms and initial among such functors.
\end{defn}

Explicitly, the universal property of the homotopy category
$\gamma_\cC:\cC\to \Ho(\cC)$ is as follows. For every functor $\Phi:\cC\to \cX$
that sends all weak equivalences to isomorphisms, there is a unique functor
$\bar\Phi$ such that $\bar\Phi\circ\gamma_\cC=\Phi$.
In the generality of relative categories, a homotopy category need not always exist
(with small hom set, or in the same Grothendieck universe, that is).
In the examples we care about, the relative category is underlying a model category,
and then already Quillen \cite[I Thm.\,1']{quillen:homotopical_algebra}
constructed a homotopy category as the quotient of the category
of cofibrant-fibrant objects by an explicit homotopy relation on morphisms.

There are many interesting functors between relative categories
that are not homotopical, but still induce interesting functors
between the homotopy categories.
Often, extra structure on the relative categories is used to
define and study such `derived' functors, 
for example a model category structure. 
We are particularly interested in `left derived functors'.

In the following we will compose (or `paste') functors and natural
transformations, and we introduce notation for this.
Let $\nu:F\Longrightarrow F':\cC\to\cD$ be a natural transformation 
between two functors, and let $E:\cB\to\cC$ and $G:\cD\to\cE$ be functors.
We write $\nu\star E:F\circ E\Longrightarrow F'\circ E$
and $G\star\nu:G\circ F\Longrightarrow G\circ F'$
for the natural transformation with components
\[ (\nu\star E)_X \ = \ \nu_{E(X)} \ : F(E(X)) \ \to \ F'(E(X)) \]
and
\[ (G\star\nu)_Y \ = \ G(\nu_Y) \ : G(F(Y)) \ \to \ G(F'(Y)) \ ,\]
respectively. If $\mu:G\Longrightarrow G'$ is another natural transformation
between functors from $\cD$ to $\cE$, then the following
{\em interchange relation} holds:
\begin{equation}\label{eq:interchange}
(G'\star \nu)\circ (\mu\star F)\ = \ (\mu\star F')\circ (G\star \nu)\ ;
\end{equation}
this relation is just a restatement of naturality.

\begin{defn}\index{left derived functor|(}
Let $F:\cC\to\cD$ be a functor between relative categories.
A {\em total left derived functor} of $F$ is a pair
$(L,\tau)$ consisting of a functor $L:\Ho(\cC)\to \Ho(\cD)$ and
a natural transformation $\tau:L \circ \gamma_\cC\Longrightarrow \gamma_\cD\circ F$ 
with the following universal property:  for every pair $(\Phi,\kappa)$
consisting of a functor $\Phi:\Ho(\cC)\to \Ho(\cD)$ and
a natural transformation $\kappa:\Phi\circ \gamma_\cC\Longrightarrow \gamma_\cD\circ F$,
there is a unique natural transformation $\bar\kappa:\Phi\Longrightarrow L$ such that
$\kappa= \tau\circ (\bar\kappa \star\gamma_\cC)$.
A functor between relative categories is {\em left derivable}
if it admits a total left derived functor.
\end{defn}

\begin{eg}
Suppose that $F:\cC\to\cD$ is a homotopical functor between relative categories.
Then the composite $\gamma_\cD\circ F$ takes all weak equivalences to isomorphisms. 
Hence the universal property of the homotopy category provides a unique functor
$\Ho(F):\Ho(\cC)\to\Ho(\cD)$ such that $\Ho(F)\circ\gamma_\cC=\gamma_\cD\circ F$.
Then the pair $(\Ho(F),\Id)$ is a total left derived functor of $F$.
If $G:\cD\to\cE$ is another homotopical functor, then the composite
$G F:\cC\to \cE$ is also homotopical.
Moreover, 
\[ \Ho(G)\circ\Ho(F)\circ\gamma_\cC\ = \ \Ho(G)\circ\gamma_\cD\circ F
\ = \ \gamma_\cE \circ G\circ F\ = \ \Ho(G F)\circ \gamma_\cC\ . \]
The universal property of the homotopy category thus shows that
$\Ho(G)\circ\Ho(F) = \Ho(G F)$.

Conversely, suppose that $F:\cC\to\cD$ is a functor between model
categories which admits a total left derived functor $(L,\tau)$,
and such that $\tau$ is a natural isomorphism.
We claim that then $F$ must be homotopical.
Indeed, for every weak equivalence $f$ in $\cC$,
the morphism $\gamma_\cC(f)$ is an isomorphism in $\Ho(\cC)$,
and hence $L(\gamma_\cC(f))$ is an isomorphism in $\Ho(\cD)$.
Since $\tau:L\circ\gamma_\cC\Longrightarrow \gamma_\cD\circ F$ 
is a natural isomorphism, the morphism $\gamma_\cD(F(f))$ is an isomorphism in $\Ho(\cD)$.
In model categories, the localization functor detects weak equivalences,
so $F(f)$ is a weak equivalence in $\cD$.
\end{eg}

We recall an important result of Quillen that implies 
that a functor between model categories admits 
a total left derived functor if it takes weak equivalences between
cofibrant object to weak equivalences.
Moreover, Maltsiniotis observed that any such total left derived functor 
is automatically an {\em absolute} right Kan extension,
i.e., it remains a right Kan extension after postcomposition with any functor:

\begin{thm}[Quillen \cite{quillen:homotopical_algebra},
Maltsiniotis \cite{maltsiniotis:theorem_quillen}]\label{thm:absolute}
Let $\cC$ be a model category and $F:\cC\to\cX$ a functor 
that takes weak equivalences between cofibrant objects to isomorphisms.
Then $F$ admits a right Kan extension $(L,\tau)$ along the localization
functor $\gamma_\cC:\cC\to\Ho(\cC)$.
Moreover:
\begin{enumerate}[\em (i)]
\item The morphism $\tau_X:L(\gamma_\cC(X))\to F X$ is 
an isomorphism for every cofibrant object $X$ of $\cC$.
\item Every right Kan extension of $F$ along $\gamma_\cC$ is an absolute right Kan extension.
\end{enumerate}
\end{thm}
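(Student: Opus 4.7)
The plan is to construct the right Kan extension explicitly using a cofibrant replacement functor, and then read off absoluteness from the construction. First I would fix a functorial cofibrant replacement: a functor $Q:\cC\to\cC$ landing in the full subcategory $\cC^c$ of cofibrant objects, together with a natural transformation $q:Q\Longrightarrow\Id_\cC$ whose components are acyclic fibrations, as provided by Quillen's functorial factorization axiom. A standard two-out-of-three argument then shows that $Q$ sends every weak equivalence in $\cC$ to a weak equivalence between cofibrant objects. Combined with the hypothesis on $F$, the composite $F\circ Q:\cC\to\cX$ therefore sends weak equivalences to isomorphisms; by the universal property of $\gamma_\cC:\cC\to\Ho(\cC)$ there exists a unique functor $L:\Ho(\cC)\to\cX$ with $L\circ\gamma_\cC=F\circ Q$. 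The transformation $\tau:L\circ\gamma_\cC\Longrightarrow F$ is defined by $\tau_X=F(q_X)$; its naturality is immediate from that of $q$ together with the identity $L\circ\gamma_\cC=F\circ Q$.

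Part (i) is then immediate: for cofibrant $X$, the morphism $q_X$ is a weak equivalence between cofibrant objects, so $\tau_X=F(q_X)$ is an isomorphism in $\cX$ by hypothesis. For the right Kan extension universal property, suppose $(\Phi,\kappa)$ is given with $\Phi:\Ho(\cC)\to\cX$ and $\kappa:\Phi\circ\gamma_\cC\Longrightarrow F$. The equation $\kappa=\tau\circ(\bar\kappa\star\gamma_\cC)$ translates componentwise to $\kappa_X=\tau_X\circ\bar\kappa_{\gamma_\cC(X)}$ for every $X\in\cC$; on cofibrant $X$, the invertibility of $\tau_X$ forces the unique choice $\bar\kappa_{\gamma_\cC(X)}=\tau_X^{-1}\circ\kappa_X$. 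Since every object of $\Ho(\cC)$ is isomorphic to one of the form $\gamma_\cC(Y)$ with $Y$ cofibrant (for instance $Y=QZ$), this prescription determines $\bar\kappa$ uniquely on all of $\Ho(\cC)$; naturality of $\bar\kappa$ and the required identity for arbitrary $X$ are verified by routine diagram chases using naturality of $\kappa$ and $q$.

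For absoluteness (ii), I would observe that for any functor $G:\cX\to\cY$ the composite $G\circ F$ satisfies the same hypothesis as $F$, since $G$ preserves isomorphisms. Running the construction above for $G\circ F$ in place of $F$ produces a right Kan extension which is manifestly equal to $(G\circ L,\,G\star\tau)$, so $G\star\tau$ exhibits $G\circ L$ as a right Kan extension of $G\circ F$ along $\gamma_\cC$. This proves that the particular Kan extension we constructed is absolute; since any two right Kan extensions of $F$ along $\gamma_\cC$ are canonically isomorphic over $F$, absoluteness transfers to any such extension. The main technical obstacle is really just the initial verification that $F\circ Q$ is homotopical, which rests on the two-out-of-three step and the hypothesis that $F$ inverts weak equivalences between cofibrants; once that is in hand, everything else reduces to formal manipulation of universal properties.
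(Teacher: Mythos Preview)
The paper does not actually prove this theorem; it is stated with attribution to Quillen and Maltsiniotis and used as input for the subsequent development. Your argument is the standard one and is essentially correct: build $L$ from $F\circ Q$ via the universal property of the localization, take $\tau=F\star q$, and deduce absoluteness by observing that $G\circ F$ satisfies the same hypothesis for any $G$, so the construction applied to $G\circ F$ literally reproduces $(G\circ L,\,G\star\tau)$.

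Two small points worth tightening. First, your appeal to a \emph{functorial} cofibrant replacement assumes functorial factorizations; this is part of Hovey's axioms (which the paper uses as a reference), but Quillen's original axioms do not require it. The argument can be made to work without functoriality by passing through the equivalence $\Ho(\cC^c)\simeq\Ho(\cC)$, but if you want to invoke a functor $Q$ you should note the assumption. Second, in the verification of the Kan extension property you write that on cofibrant $X$ the formula $\bar\kappa_{\gamma_\cC(X)}=\tau_X^{-1}\circ\kappa_X$ is forced, and then extend by the fact that every object is isomorphic to a cofibrant one. It is cleaner to note that the objects of $\Ho(\cC)$ are exactly those of $\cC$, so one must define $\bar\kappa_{\gamma_\cC(X)}$ for \emph{every} $X$; the correct formula is $\bar\kappa_{\gamma_\cC(X)}=\kappa_{QX}\circ\Phi(\gamma_\cC(q_X))^{-1}$, which one checks satisfies the required identity using naturality of $\kappa$ at $q_X$. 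This is presumably what you mean by ``routine diagram chases'', but it is the one place where a reader might want to see the formula written out.
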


The following proposition is a direct consequence of the absolute Kan extension
property.

\begin{prop}\label{prop:homotopical after absolute}
Let $F:\cC\to\cD$ and $G:\cD\to\cE$ be composable
functors between relative categories such that $F$ is absolutely left derivable
and $G$ is homotopical. 
Let $(L F,\tau_F)$ be a total left derived functor of $F$.
Then the pair $(\Ho(G)\circ L F, \Ho(G)\star\tau_F)$ 
is an absolute left derived functor of $G F$.
\end{prop}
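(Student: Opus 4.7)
\medskip
\noindent\emph{Proof plan.} The key observation is that a total left derived functor, in the sense defined above, is precisely a right Kan extension of $\gamma_\cD\circ F$ along $\gamma_\cC$; the hypothesis that $F$ is \emph{absolutely} left derivable therefore means that $(L F,\tau_F)$ is an absolute right Kan extension, i.e., remains a right Kan extension after whiskering on the left by any functor out of $\Ho(\cD)$. So the plan is simply to whisker by $\Ho(G)$ and check that the two conditions one needs follow automatically.

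First I would record that, since $G$ is homotopical, the equality $\Ho(G)\circ\gamma_\cD=\gamma_\cE\circ G$ already established in the paper yields
\[ \Ho(G)\circ\gamma_\cD\circ F \ = \ \gamma_\cE\circ (G\circ F) \ , \]
so the whiskered transformation $\Ho(G)\star\tau_F$ indeed has the correct shape
\[ \Ho(G)\circ L F\circ\gamma_\cC \ \Longrightarrow \ \gamma_\cE\circ(G\circ F) \]
to be a candidate unit for a total left derived functor of $G\circ F$.

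Next I would invoke the defining property of absoluteness for the Kan extension $(L F,\tau_F)$: applied to the functor $\Ho(G)\colon\Ho(\cD)\to\Ho(\cE)$, it says precisely that $(\Ho(G)\circ L F,\ \Ho(G)\star\tau_F)$ is a right Kan extension of $\Ho(G)\circ\gamma_\cD\circ F=\gamma_\cE\circ G\circ F$ along $\gamma_\cC$. Unpacking this universal property back into the language of total left derived functors gives the required universal property: for any $(\Phi,\kappa)$ with $\Phi\colon\Ho(\cC)\to\Ho(\cE)$ and $\kappa\colon\Phi\circ\gamma_\cC\Longrightarrow\gamma_\cE\circ G\circ F$, there is a unique $\bar\kappa\colon\Phi\Longrightarrow\Ho(G)\circ L F$ with $\kappa=(\Ho(G)\star\tau_F)\circ(\bar\kappa\star\gamma_\cC)$.

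Finally, to see that the new derived functor is itself \emph{absolute}, I would remark that for any further functor $K\colon\Ho(\cE)\to\cY$, the composite $K\circ\Ho(G)\colon\Ho(\cD)\to\cY$ is a perfectly good functor out of $\Ho(\cD)$; applying absoluteness of $(L F,\tau_F)$ to $K\circ\Ho(G)$ shows that
\[ (K\circ\Ho(G)\circ L F,\ K\star\Ho(G)\star\tau_F) \]
is a right Kan extension, which is exactly the statement that $(\Ho(G)\circ L F,\Ho(G)\star\tau_F)$ remains a Kan extension after whiskering by~$K$. Since this holds for arbitrary~$K$, absoluteness is proved. There is no serious obstacle here: the entire argument is a formal consequence of the absolute Kan extension property, and the only nontrivial input is that homotopicality of $G$ is exactly what is needed to identify $\Ho(G)\circ\gamma_\cD\circ F$ with $\gamma_\cE\circ(G\circ F)$.
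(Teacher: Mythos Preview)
Your proposal is correct and follows essentially the same approach as the paper. The only difference is packaging: the paper proves absoluteness directly by taking an arbitrary functor $\psi:\Ho(\cE)\to\cY$ and applying the absolute Kan extension property to $\psi\circ\Ho(G)$ in one shot (the case $\psi=\Id$ yields the basic derived functor property), whereas you split this into two steps, first taking $K=\Id$ and then general $K$.
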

\begin{proof}
  We let $\psi:\Ho(\cE)\to \cY$ be any functor.
  Since $(L F,\tau_F)$ is an absolute right Kan extension of $\gamma_\cD\circ F$
  along $\gamma_\cC$, the pair $(\psi\circ \Ho(G)\circ L F,(\psi\circ\Ho(G))\star \tau_F)$
  is a right Kan extension of 
  $\psi\circ\Ho(G)\circ \gamma_\cD\circ F=\psi\circ \gamma_\cE\circ G\circ F$ along $\gamma_\cC$.
  This precisely means that $(\Ho(G)\circ L F,\Ho(G)\star\tau_F)$ 
  is an absolute total left derived functor of $G F$.
\end{proof}
\index{left derived functor|)}

Our next goal is to show that restriction of equivariant spectra
along a continuous homomorphism between Lie groups
has a total left derived functor.
To this end, the following class of equivariant spectra will be useful.

\begin{defn}\label{def:quasi-cofibrant}
  Let $G$ be a Lie group. An orthogonal $G$-spectrum $X$
  is {\em quasi-cofibrant}\index{quasi-cofibrant!orthogonal $G$-spectrum}  
  if for every compact subgroup $H$ of $G$ the underlying $H$-spectrum of $X$
  is cofibrant.
\end{defn}

\begin{eg}
Restriction to a closed subgroup preserves cofibrancy, 
by Corollary \ref{cor:restriction stable};
so every cofibrant orthogonal $G$-spectrum is quasi-cofibrant.
If the Lie group $G$ is itself compact, then every quasi-cofibrant 
$G$-spectrum is already cofibrant, so in the compact case the two notions
coincide.

If the Lie group $G$ is not compact, then `quasi-cofibrant'
is a strictly more general concept.
Indeed, the $G$-equivariant sphere spectrum $\mS_G$ is cofibrant if and only 
if $G$ is compact; hence for every Lie group $G$, $\mS_G$ is quasi-cofibrant.
More generally, we let $\Gamma$ be a closed subgroup of $G$.
For every compact subgroup $H$ of $G$, the $H$-action
on the coset space $G/\Gamma$ by translation is smooth;
so Illman's theorem \cite[Thm.\,7.1]{illman:triangulation_Lie} 
provides an $H$-CW-structure on $G/\Gamma$.
In particular, $G/\Gamma$ is cofibrant as an $H$-space, and hence
the suspension spectrum $\Sigma^\infty_+ G/\Gamma$ is quasi-cofibrant.
\end{eg}

The next proposition provides a characterization of quasi-cofibrant spectra
in terms of cofibrant spectra.

\begin{prop}
  Let $G$ be a Lie group. An orthogonal $G$-spectrum $X$ is quasi-cofibrant  
  if and only if for every $\Com$-cofibrant $G$-space $B$
  the orthogonal $G$-spectrum $X\sm B_+$ is cofibrant.
\end{prop}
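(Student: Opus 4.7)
The plan is to prove the two implications separately. For the forward direction (quasi-cofibrant $X$ $\Rightarrow$ $X\sm B_+$ cofibrant) I would induct over a $\Com$-CW decomposition of $B$; for the reverse direction I would apply the hypothesis to $B=\uEG$ and use a retract argument anchored at an $H$-fixed point of $\uEG$.

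For the forward direction, the essential input is the ``untwisting'' isomorphism of orthogonal $G$-spectra
\[
G\ltimes_H \res^G_H(X)\ \xrightarrow{\ \iso\ }\ X\sm (G/H)_+\ ,\qquad [g,x]\ \longmapsto\ (gx)\sm gH,
\]
valid for every compact subgroup $H$ of $G$; this is a direct levelwise verification that respects structure maps and the diagonal $G$-action on the target. When $X$ is quasi-cofibrant, $\res^G_H(X)$ is a cofibrant $H$-spectrum, so the left Quillen induction functor $G\ltimes_H(-)$ of Corollary \ref{cor:restriction stable} produces a cofibrant $G$-spectrum, settling the base case $B=G/H$. A general $\Com$-cell attachment $G/H\times S^{k-1}\hookrightarrow G/H\times D^k$ reduces to this case: rewrite $X\sm (G/H\times D^k)_+\iso (X\sm (G/H)_+)\sm (D^k)_+$ and apply the pushout-product statement of Theorem \ref{thm:stable}(iii) to the cofibration $\ast\to X\sm(G/H)_+$ of $G$-spectra and the $G$-cofibration $(S^{k-1})_+\to (D^k)_+$ of based $G$-spaces (trivial $G$-action); the result is a cofibration of $G$-spectra, whose pushout against the previous stage $X\sm B_{0,+}$ yields $X\sm B_{1,+}$, still cofibrant because cofibrations compose and are stable under pushouts. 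Closure of cofibrations under transfinite compositions and retracts then handles arbitrary $\Com$-cofibrant $B$.

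For the reverse direction, apply the hypothesis to the $\Com$-cofibrant $G$-space $B=\uEG$, yielding a cofibrant $G$-spectrum $X\sm\uEG_+$. By Corollary \ref{cor:restriction stable}, restriction to any compact subgroup $H$ preserves cofibrations, so $\res^G_H(X\sm\uEG_+)$ is a cofibrant $H$-spectrum. Since $(\uEG)^H$ is contractible and in particular nonempty, any $H$-fixed point $b\in(\uEG)^H$ provides an $H$-equivariant splitting $S^0\to (\res^G_H\uEG)_+\to S^0$ of the projection to the non-basepoint; smashing with $\res^G_H(X)$ realizes $\res^G_H(X)$ as a retract of $\res^G_H(X\sm\uEG_+)$ in the category of orthogonal $H$-spectra. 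Retracts of cofibrant objects are cofibrant, so $\res^G_H(X)$ is cofibrant and $X$ is therefore quasi-cofibrant. The only genuinely nonroutine step in either direction is verifying the untwisting isomorphism; everything else is a formal manipulation inside the stable model structure and its standard interaction with induction, restriction, and pushout products.
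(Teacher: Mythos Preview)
Your proof is correct and follows essentially the same strategy as the paper. The forward direction is identical: both establish the untwisting isomorphism $X\sm(G/H)_+\iso G\ltimes_H\res^G_H(X)$, deduce cofibrancy of this object from quasi-cofibrancy of $X$ and left Quillen-ness of induction, then invoke the pushout-product property of Theorem~\ref{thm:stable}(iii) and closure properties of cofibrations to pass to arbitrary $\Com$-cofibrant $B$.

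The reverse direction is where you differ slightly. The paper chooses $B=G/H$ directly and observes that the $H$-maps $x\mapsto x\sm eH$ and $x\sm gH\mapsto x$ exhibit $\res^G_H(X)$ as a retract of $\res^G_H(X\sm(G/H)_+)$. You instead take $B=\uEG$ and use an $H$-fixed point to build the analogous retraction. Both arguments are valid; the paper's choice is marginally more economical since $G/H$ is immediately at hand and no appeal to the existence or contractibility of $(\uEG)^H$ is needed, but your version works just as well.
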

\begin{proof}
For one implication we let $X$ be an orthogonal $G$-spectrum $X$ 
such that $X\sm B_+$ is cofibrant as an orthogonal $G$-spectrum 
for every $\Com$-cofibrant $G$-space $B$.
For every compact subgroup $H$ of $G$, the homogeneous $G$-space $B=G/H$ 
is $\Com$-cofibrant, so in particular the $G$-spectrum
$X\sm G/H_+$ is cofibrant. The underlying $H$-spectrum of $X\sm G/H_+$ 
is then cofibrant by Corollary \ref{cor:restriction stable}.
The two $H$-equivariant morphisms
\[ X \ \xrightarrow{x\mapsto x\sm e H}\ X\sm G/H_+ \ \xrightarrow{x\sm g H\mapsto x}\ X 
 \]
witness that $X$ is an $H$-equivariant retract of $X\sm G/H_+$,
and so $X$ is itself cofibrant as an $H$-spectrum. Hence $X$ is quasi-cofibrant.

For the other implication we let $X$ be a quasi-cofibrant 
orthogonal $G$-spectrum, and we let $K$ denote the class of 
those morphisms $i:A\to B$ of $G$-spaces  
such that $X\sm f_+:X\sm A_+\to X\sm B_+$ is a cofibration of orthogonal $G$-spectra.
We claim that $K$ contains all $\Com$-cofibrations of $G$-spaces;
for $A=\emptyset$ this proves that $X\sm B_+$ is cofibrant.

For every compact subgroup $H$ of $G$, the underlying $H$-spectrum of
$X$ is cofibrant by assumption. So the $G$-spectrum
\[ X\sm G/H_+ \ \iso \ G\ltimes_H \res^G_H(X) \]
is $G$-cofibrant.  
Since the stable model structure on $\Sp_G$ is $G$-topological, the morphism
\[ X\sm (G/H\times i^k)_+ \  :\ X\sm (G/H\times\partial D^k)_+ \ \to
\ X\sm(G/H\times D^k)_+ \]
is a cofibration of $G$-spectra.
This shows that the generating $\Com$-cofibrations belong to the class $K$.
Because cofibrations are closed under cobase change, coproducts, sequential colimits,
and retracts, and because $X\sm(-)_+$ preserves colimits,
the class $K$ is closed under cobase change, coproducts, sequential colimits
and retracts. So all $\Com$-cofibrations belong to the class $K$.
\end{proof}

The following theorem shows that restriction of equivariant spectra
along a continuous homomorphism between Lie groups
has a total left derived functor, and it collects many important properties of the
left derived functor.
Among other things, the derived functor
`commutes with suspension spectra'. 
To make this precise we observe that the suspension spectrum functor 
$\Sigma^\infty_+:G\bT\to \Sp_G$ is fully homotopical, 
i.e., it takes $\Com$-weak equivalences to $\pi_*$-isomorphisms,
for example by \cite[Prop.\,3.1.44]{schwede:global}.
The space level restriction functor $\alpha^*:G\bT\to K\bT$
is also fully homotopical for $\Com$-equivalences,
compare Proposition \ref{prop:homomorphism adjunctions spaces} (i). 
Hence the functor
\[ \Sigma^\infty_+\circ \alpha^* \ = \ \alpha^*\circ\Sigma^\infty_+ \ : \ 
G\bT \ \to \ \Sp_K\]
is homotopical. So there is a unique functor
\[ \Ho(\Sigma^\infty_+\circ\alpha^*) \ : \ \Ho^{\Com}(G\bT)\ \to \ \Ho(\Sp_K)\]
such that
\[ \Ho(\Sigma^\infty_+\circ\alpha^*) \circ \gamma_G^{\un}\ = \ 
\gamma_K\circ \Sigma^\infty_+\circ \alpha^* \ : \ G\bT\ \to \ \Ho(\Sp_K)\ , \]
where $\gamma_G^{\un}:G\bT\to\Ho^{\Com}(G\bT)$ is the localization functor.

\index{left derived functor!of restriction|(}
\begin{thm} \label{thm:homomorphism adjunctions spectra} \index{stable homotopy category!equivariant|see{$G$-equivariant stable homotopy category}}\index{G-equivariant stable homotopy category@$G$-equivariant stable homotopy category}
  Let $ \alpha \colon K \to G$ be a continuous homomorphism between Lie groups.
  \begin{enumerate}[\em (i)]
  \item The restriction functor $\alpha^*:\Sp_G\to \Sp_K$
    takes quasi-cofibrant orthogonal $G$-spectra to quasi-cofibrant
    orthogonal $K$-spectra, and it takes $\pi_*$-isomorphisms 
    between quasi-cofibrant orthogonal $G$-spectra to
    $\pi_*$-isomorphisms of orthogonal $K$-spectra.\index{restriction functor}
  \item 
    The restriction functor $\alpha^*:\Sp_G\to\Sp_K$\index{restriction functor}
    has a total left derived functor $(L\alpha^*,\alpha_!)$.
    For every quasi-cofibrant $G$-spectrum $X$, the morphism
    $\alpha_!:(L\alpha^*)(X)\to \gamma_K(\alpha^*(X))$ is an isomorphism in $\Ho(\Sp_K)$.
  \item 
    The derived functor $L\alpha^*$  preserves sums and has a right adjoint.
  \item 
    There is a unique natural transformation
    \[
      \sigma \ : \ (L\alpha^*)\circ [1] \ \Longrightarrow \  [1]\circ (L\alpha^*)
    \]
    of functors $\Ho(\Sp_G)\to\Ho(\Sp_K)$ such that 
    \begin{align} \label{eq:define_sigma}
      ([1]\star \alpha_!)\circ (\sigma\star\gamma_G)  \ = \ 
      &\alpha_!\star(-\sm S^1)\ :\\ 
      (L \alpha^*)&\circ [1]\circ\gamma_G\ \to\ \gamma_K \circ \alpha^*\circ (-\sm S^1)\ . \nonumber
    \end{align}
    Moreover, the transformation $\sigma$ is a natural isomorphism
    and the pair $(L\alpha^*,\sigma)$ is an exact functor of triangulated categories.
  \item
    There is a unique natural transformation
    \[
      \nu\ : \ (L\alpha^*)\circ \Ho(\Sigma^\infty_+) \ \Longrightarrow \  
      \Ho(\Sigma^\infty_+\circ \alpha^*)
    \]
    of functors $\Ho^{\Com}(G\bT)\to\Ho(\Sp_K)$ such that 
    \[ 
    \nu\star\gamma_G^{\un}  \ = \   \alpha_!\star \Sigma^\infty_+\ :\ 
    (L \alpha^*)\circ \Ho(\Sigma^\infty_+)\circ\gamma_G^{\un}\ \to\ 
    \Ho(\Sigma^\infty_+\circ \alpha^*) \circ\gamma_G^{\un}\ .
    \]
    Moreover, $\nu$ is a natural isomorphism.
  \item 
    If $\alpha$ is quasi-injective, then the universal natural transformation
    $\alpha_!:(L\alpha^*)\circ\gamma_G\Longrightarrow \gamma_K\circ\alpha^*$ 
    is an isomorphism, and $L\alpha^*$  preserves products and has a left adjoint.
  \end{enumerate}
\end{thm}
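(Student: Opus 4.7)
My plan is to prove the six parts in order, with the Quillen--Maltsiniotis absolute derivability theorem (Theorem~\ref{thm:absolute}) as the main technical tool.

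For part (i), I will use a factorization trick. Given a compact subgroup $L\le K$, the restricted homomorphism $\alpha|_L:L\to G$ factors as a continuous epimorphism $\beta:L\to H$ onto the compact image $H=\alpha(L)$, followed by the inclusion $H\hookrightarrow G$, so $\res^K_L\alpha^*=\beta^*\circ\res^G_H$. For quasi-cofibrant $X$, $\res^G_H X$ is cofibrant by definition; and $\beta$ is a continuous homomorphism between compact Lie groups whose image is closed and whose kernel is compact, so Theorem~\ref{thm:adjunctions spectra pointset}(ii) makes $\beta^*$ a left Quillen functor, which preserves cofibrancy. The same factorization together with Ken Brown's lemma (left Quillen functors preserve weak equivalences between cofibrant objects) shows that $\alpha^*$ preserves $\pi_*$-isomorphisms between quasi-cofibrant objects.

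Given part (i), parts (ii) and (iii) will follow formally. Since cofibrant objects are quasi-cofibrant, $\alpha^*$ takes $\pi_*$-isomorphisms of cofibrant $G$-spectra to $\pi_*$-isomorphisms; Theorem~\ref{thm:absolute} then produces an absolute total left derived functor $(L\alpha^*,\alpha_!)$, and $\alpha_!$ is an isomorphism on every cofibrant object, hence, by applying (i) to the cofibrant replacement map, on every quasi-cofibrant object. Coproduct preservation will follow because a wedge of cofibrant spectra is a cofibrant replacement of the wedge of their targets, and $\alpha^*$ commutes with wedges at the pointset level. Existence of a right adjoint will then follow from Brown representability (Corollary~\ref{cor:generators for GH_F}(iii)) once exactness is established in (iv).

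Parts (iv) and (v) will rest on two pointset identities: $\alpha^*(X\sm S^1)=\alpha^*(X)\sm S^1$ (since $S^1$ carries trivial action) and $\alpha^*\Sigma^\infty_+ A=\Sigma^\infty_+\alpha^* A$. On a cofibrant $G$-spectrum $X$, both $X$ and $X\sm S^1$ are cofibrant, so these pointset identities combined with $\alpha_!$ being an isomorphism there yield a natural isomorphism on the image of a cofibrant replacement. The universal property of the total left derived functor then promotes this to a natural transformation $\sigma$ on all of $\Ho(\Sp_G)$ satisfying \eqref{eq:define_sigma}; uniqueness follows because $L\alpha^*$ is a right Kan extension along $\gamma_G$, so a natural transformation out of it is determined by its whiskering with $\gamma_G$. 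For exactness, every distinguished triangle in $\Ho(\Sp_G)$ is isomorphic to a mapping cone triangle on a morphism of cofibrant spectra; the reduced mapping cone commutes with $\alpha^*$ on the nose, so $(L\alpha^*,\sigma)$ sends distinguished triangles to distinguished triangles. Part (v) will follow by the same pattern, using that $\Com$-cofibrant $G$-spaces have cofibrant suspension spectra. For part (vi), quasi-injectivity of $\alpha$ together with Theorem~\ref{thm:adjunctions spectra pointset}(i) makes $\alpha^*$ itself homotopical, so $L\alpha^*=\Ho(\alpha^*)$ with $\alpha_!=\Id$; it then preserves products because $\alpha^*$ does at the pointset level (as a right adjoint to $G\ltimes_\alpha-$), and Corollary~\ref{cor:generators for GH_F}(iv) furnishes the left adjoint.

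The main obstacle I expect is the bookkeeping around the universal transformations $\sigma$ and $\nu$ and the coherence equation \eqref{eq:define_sigma}: one has to construct these transformations by a universal property argument on cofibrant replacements and then verify that the resulting formulas match the prescribed compatibility with $\alpha_!$. This is essentially formal manipulation with right Kan extensions together with the interchange law~\eqref{eq:interchange}, but the pasting diagrams are dense and easy to mis-typeset.
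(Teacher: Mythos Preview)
Your proposal is correct and tracks the paper's proof closely in all six parts. One small refinement for part (vi): pointset-level product preservation does not by itself give derived product preservation, so the paper instead observes that $\alpha^*$ is a right Quillen functor by Theorem~\ref{thm:adjunctions spectra pointset}(i) and obtains the left adjoint of $L\alpha^*$ directly as the total left derived functor of $G\ltimes_\alpha-$, from which product preservation follows.
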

\begin{proof}
  (i) For the first claim we let $X$ be a quasi-cofibrant orthogonal $G$-spectrum
  and $L$ a compact subgroup of $K$. Then the $L$-spectrum $\res^K_L(\alpha^*(X))$
  is the same as $(\alpha|_L)^*(X)$, where $\alpha|_L:L\to G$ is the restricted
  homomorphism. Since $L$ is compact, $\alpha|_L$ has closed image and compact kernel,
  so $(\alpha|_L)^*(X)$ is a cofibrant $L$-spectrum by 
  Theorem \ref{thm:adjunctions spectra pointset} (ii).
  Since $L$ was any compact subgroup of $K$, this proves that $\alpha^*(X)$
  is quasi-cofibrant.

  Now we let $f:X\to Y$ be a $\pi_*$-isomorphism 
  between quasi-cofibrant orthogonal $G$-spectra. 
  We let $L$ be a compact subgroup of $K$.
  We factor the restriction  $\alpha|_L:L\to G$ as
  \[ L \ \xrightarrow{\ \beta\ } \ H \ \xrightarrow{\incl} \ G\ ,\]
  where $H=\alpha(L)$ is the image of $\alpha$, 
  and $\beta$ is the same map as $\alpha|_L$, but with target $H$.
  The group $H$ is compact since $L$ is;
  since $X$ and $Y$ are quasi-cofibrant, their underlying $H$-spectra
  are cofibrant.
  Since $f$ is a $\pi_*$-isomorphism of quasi-cofibrant $G$-spectra, 
  $\res^G_H(f)$ is a $\pi_*$-isomorphism between cofibrant $H$-spectra.
  The continuous epimorphism $\beta:L\to H$
  satisfies the hypotheses of Theorem \ref{thm:adjunctions spectra pointset} (ii) 
  because $L$ is compact;
  so $\beta^*:\Sp_H\to\Sp_L$ is a left Quillen functor for the stable model structures.
  In particular, $\beta^*$ takes $\pi_*$-isomorphisms between cofibrant
  $H$-spectra to $\pi_*$-isomorphisms, by Ken Brown's lemma 
  \cite[Lemma 1.1.12]{hovey:model_categories}.
  So the morphism 
  \[ \res^K_L(\alpha^*(f))\ = \ \beta^*(\res^G_H(f)) \]
  is a $\pi_*$-isomorphism of orthogonal $L$-spectra.
  Since $L$ was an arbitrary compact subgroup of $K$, this proves the last claim.

  (ii) Part (i)
  shows that the restriction functor $\alpha^*:\Sp_G\to \Sp_K$
  takes $\pi_*$-isomorphisms between cofibrant orthogonal $G$-spectra to
  $\pi_*$-isomorphisms of orthogonal $K$-spectra.
  Given this, Quillen's result \cite[I.4, Prop.\,1]{quillen:homotopical_algebra}
  provides the left derived functor and shows that 
  $\alpha_!:(L\alpha^*)(X)\to \alpha^*(X)$ is an isomorphism whenever $X$ is cofibrant,
  compare also Theorem \ref{thm:absolute}.
  
  If $X$ is quasi-cofibrant, we choose a $\pi_*$-isomorphism $f:Y\to X$
  from a cofibrant orthogonal $G$-spectrum.
  Then $\gamma_G(f)$ is an isomorphism in $\Ho(\Sp_G)$, so
  the upper horizontal morphism in the commutative square
  \[
    \xymatrix@C=20mm{ 
      (L\alpha^*)(Y)\ar[r]^-{(L\alpha^*)(\gamma_G(f))} \ar[d]_{\alpha_!^Y} & 
      (L\alpha^*)(X)\ar[d]^{\alpha_!^X}\\ 
      \gamma_K(\alpha^*(Y))\ar[r]_--{\gamma_K(\alpha^*(f))} & \gamma_K(\alpha^*(X))  }
  \]
  is an isomorphism in $\Ho(\Sp_K)$.
  The morphism $\alpha^Y_!$ is an isomorphism because $Y$ is cofibrant.
  The morphism $\gamma_K(\alpha^*(f))$ 
  is an isomorphism because $\alpha^*$ preserves $\pi_*$-isomorphisms
  between quasi-cofibrant spectra, by part (i).
  So $\alpha^X_!$ is an isomorphism.

  (iii)
  We exploit that coproducts in $\Ho(\Sp_G)$ and $\Ho(\Sp_K)$
  are modeled by wedges of equivariant spectra, because formation
  of wedges is fully homotopical.
  We let $\{X_i\}_{i\in I}$ be a family of cofibrant orthogonal $G$-spectra;
  then the wedge $\bigvee X_i$ is also cofibrant.
  So the vertical morphisms in the commutative square
  \[
    \xymatrix{ 
      \bigvee (L\alpha^*)(X_i)\ar[r]^-{\kappa} \ar[d]_{\bigvee \alpha_!^{X_i}} & 
      (L\alpha^*)(\bigvee X_i)\ar[d]^{\alpha_!^{\bigvee X_i}}\\ 
      \bigvee \gamma_K(\alpha^*(X_i))\ar[r]_--{\iso} & \gamma_K(\alpha^*(\bigvee X_i))}
  \]
  are isomorphisms in $\Ho(\Sp_K)$ by part (i).
  Since $\alpha^*$ preserves colimits, the lower morphism is an isomorphism,
  and hence so is the canonical morphism $\kappa$.
  This proves that $L\alpha^*$ preserves sums.
  Corollary \ref{cor:generators for GH_F} (iii) then provides 
  a right adjoint for $L\alpha^*$.
  
  (iv)
  The suspension functor is fully homotopical, 
  so Proposition \ref{prop:homotopical after absolute} 
  shows that the pair $([1]\circ L\alpha^*,[1]\star \alpha_!)$ 
  is an absolute left derived functor of the functor
  \[  \alpha^*(-)\sm S^1\ = \ \alpha^*(-\sm S^1)\ : \ \Sp_G\ \to \ \Sp_K \ .\]
  The universal property of $([1]\circ L\alpha^*,[1]\star\alpha_!)$ 
  thus provides a unique natural transformation 
  $\sigma : (L\alpha^*)\circ [1] \Longrightarrow [1]\circ (L\alpha^*)$
  that satisfies the relation specified in the statement of the theorem.
  The diagram
  \[
    \xymatrix{ 
      (L\alpha^*)(X[1])\ar[r]^-{\sigma_X} \ar[d]_{\alpha_!^{X\sm S^1}} & 
      ((L\alpha^*)(X))[1]\ar[d]^{\alpha_!^X[1]}\\ 
      \gamma_K(\alpha^*(X\sm S^1))\ar@{=}[r] & \gamma_K(\alpha^*(X))[1]}
  \]
  commutes by construction of $\sigma$.
  If $X$ is cofibrant, so is $X\sm S^1$, so both vertical morphisms
  are isomorphisms in $\Ho(\Sp_K)$, by part (ii).
  Hence $\sigma$ is an isomorphism for cofibrant $G$-spectra;
  in $\Ho(\Sp_G)$, every object is isomorphic to a cofibrant spectrum,
  so $\sigma$ is a natural isomorphism.
  
  Every distinguished triangle in $\Ho(\Sp_G)$ is isomorphic to
  the mapping cone triangle \eqref{eq:mapping_cone_triangle} associated with
  a morphism $f:X\to Y$ between cofibrant $G$-spectra.
  So to show that the pair $(L\alpha^*,\sigma)$ preserves distinguished triangles,
  it suffices to show exactness for these special ones. 
  We contemplate the commutative diagram:
  \[
    \xymatrix@C=14mm{ 
      (L\alpha^*)(X) \ar[d]_{\alpha_!^X}
      \ar[r]^-{(L\alpha^*)(f)} & (L\alpha^*)(Y)\ar[r]^-{(L\alpha^*)(i)}\ar[d]_{\alpha_!^Y} & 
      (L\alpha^*)(C f)\ar[r]^-{\sigma_X\circ (L\alpha^*)(p)}\ar[d]^{\alpha_!^{C f}} &
      (L\alpha^*)(X)\sm S^1\ar[d]^{\alpha_!^X\sm S^1}\\
      \gamma_K(\alpha^*(X)) \ar[r]_-{\gamma_K(\alpha^*(f))} &
      \gamma_K(\alpha^*(Y))\ar[r]_-{\gamma_K(\alpha^*(i))} &
      \gamma_K(\alpha^*(C f))\ar[r]_-{\gamma_K(\alpha^*(p))} & \gamma_K(\alpha^*(X))\sm S^1}
  \]
  Since $X$ and $Y$ are cofibrant, so are $C f$ and $X\sm S^1$;
  hence all vertical morphisms are isomorphisms by part (ii).
  The lower triangle is distinguished because the pointset level 
  restriction functor $\alpha^*$ 
  commutes with formation of mapping cones and suspension.
  So the upper triangle is distinguished.
  
  (v)
  The existence and characterization of $\nu$ are just
  the universal property of the pair $(\Ho(\Sigma^\infty_+\circ\alpha^*),\Id)$
  which is a total left derived functor of the homotopical functor
  $\Sigma^\infty_+\circ\alpha^*$.
  The characterizing property means that when we specialize the transformation
  $\nu$ to a $G$-space $A$, we have
  \[ \nu_A \ = \ \alpha_!^{\Sigma^\infty_+ A} \ : \ 
    (L\alpha^*)(\Sigma^\infty_+ A)\ \to 
    \gamma_K(\alpha^*(\Sigma^\infty_+ A))\ . \]
  If $A$ is $\Com$-cofibrant as a $G$-space, then $\Sigma^\infty_+ A$
  is cofibrant as a $G$-spectrum. 
  So in that case, the morphism $\alpha_!^{\Sigma^\infty_+A}$ is
  an isomorphism in $\Ho(\Sp_K)$, by part (ii).
  Hence $\nu$ is an isomorphism for $\Com$-cofibrant $G$-spaces;
  in $\Ho^{\Com}(G\bT)$, every object is isomorphic to a $\Com$-cofibrant $G$-space,
  so $\nu$ is a natural isomorphism.  
  
  (vi)
  If $\alpha$ is quasi-injective, then the restriction functor $\alpha^*$
  is fully homotopical and a right Quillen functor
  by Theorem \ref{thm:adjunctions spectra pointset} (i).
  The universal natural transformation $\alpha_!$
  is an isomorphism because $\alpha^*$ is fully homotopical.
  The functor $L\alpha^*$ has a left adjoint because $\alpha^*$ is a right Quillen functor;
  the left adjoint is a total left derived functor of $G\ltimes_\alpha-:\Sp_K\to\Sp_G$.
\end{proof}

\begin{rk}[Derived inflation and products]\label{rk:inflation and products}\index{G-equivariant stable homotopy category@$G$-equivariant stable homotopy category}
  As the previous theorem shows, the derived functor $L\alpha^*:\Ho(\Sp_G)\to\Ho(\Sp_K)$
  of restriction along a continuous homomorphism $\alpha:K\to G$
  always has a right adjoint, and it has a left adjoint 
  whenever $\alpha$ is quasi-injective.
  While the pointset level restriction functor $\alpha^*:\Sp_G\to\Sp_K$
  preserves products, 
  its left derived functor $L\alpha^*$ does {\em not} preserve products in general.
  
  The simplest example is inflation along the unique homomorphism $p:C_2\to e$
  from a group with two elements to a trivial group.
  In the non-equivariant stable homotopy category the canonical map
  \[ {\bigoplus}_{k < 0}\,  H\mF_2[k] \ \to \ {\prod}_{k < 0}\,  H\mF_2[k]  \]
  from the coproduct to the product of infinitely many desuspended copies
  of the mod-2 Eilenberg-Mac\,Lane spectrum is an isomorphism.
  Since $L p^*$ and equivariant homotopy groups preserves coproducts, 
  the canonical map
  \begin{equation}\label{eq:L_of_product}
    {\bigoplus}_{k<0} \, \pi_0^{C_2} \left( (L p^*)( H\mF_2[k] ) \right)
    \ \to \ \pi_0^{C_2} \left( (L p^*)\left( {\bigoplus}_{k< 0} \, H\mF_2 [k]\right) \right)     
  \end{equation}
  is an isomorphism of abelian groups.
  For every non-equivariant spectrum  $X$, the $C_2$-spectrum $L p^*(X)$
  has `constant geometric fixed points' and the geometric fixed point map 
  $\Phi:\pi_0^{C_2}(L p^*(X))\to\Phi^{C_2}_0(L p^*(X))\iso\pi_0^e(X)$
  has a section, compare \cite[Ex.\,4.5.10]{schwede:global};
  hence the isotropy separation sequence 
  (see \cite[(3.3.9)]{schwede:global}) splits. 
  So the $C_2$-equivariant homotopy groups decompose as
  \begin{align*}
    \pi_0^{C_2} (L p^*(X) ) \ &\iso \ 
                                \pi_0^{C_2} ( L p^*(X)\sm (E C_2)_+) \oplus \Phi_0^{C_2}(L X)  \\
                                &\iso \ \pi_0^e(X\sm (B C_2)_+) \oplus\  \pi_0^e(X)  \ .
  \end{align*}
  The second step uses the Adams isomorphism and the fact that $L p^*(X)$
  has trivial $C_2$-action as a naive $C_2$-spectrum.
  When $X=H\mF_2[k]$ for negative~$k$, then the second summand is trivial
  and hence
  \[ \pi_0^{C_2}( L p^*( H\mF_2 [k]))\ \iso \ 
    \pi_0^e(H\mF_2[k]\sm (B C_2)_+) \ \iso \ H_{-k}(BC_2,\mF_2)  \ .\]
  So the group \eqref{eq:L_of_product} is a countably infinite sum of
  copies of $\mF_2$. On the other hand, 
  \begin{align*}
    \pi_0^{C_2} \left({\prod}_{k< 0} \, L p^*( H\mF_2[k] ) \right) \ 
    &\iso \ {\prod}_{k< 0} \, \pi_0^{C_2} (  L p^*( H\mF_2[k] ))    \\
    \iso \ {\prod}_{k< 0} &\, \pi_0^e \left( H\mF_2[k]\sm (B C_2)_+ \right)   
      \ \iso \  {\prod}_{k< 0}\, H_{-k}(B C_2, H\mF_2 )   \ ,  
  \end{align*}
  again by the split isotropy separation sequence.
  This is an infinite product of copies of $\mF_2$, so the canonical map
  \[ L p^*\left({\prod}_{k<0} \, H\mF_2[k] \right) \ \to \ 
    {\prod}_{k< 0}\, L p^*( H\mF_2[k]) \]
  is not a $\pi_*$-isomorphism of $C_2$-spectra.
\end{rk}
\index{left derived functor!of restriction|)}

\begin{eg}\label{eg:conjugation invariance}
  We give a rigorous formulation of the idea that 
  `inner automorphisms act as the identity'.
  For a Lie group $G$ and $g\in G$, we let
  \[ c_g^* \ : \ \Sp_G \ \to \ \Sp_G \]
  be restriction along the inner automorphism $c_g:G\to G$, $c_g(\gamma)=g^{-1}\gamma g$.
  Since the restriction functor $c_g^*$ is fully homotopical,
  the induced functor
  \[  \Ho(c_g^*) \ : \ \Ho(\Sp_G)\ \to \ \Ho(\Sp_G)\]
  is also a total left derived functor, 
  with respect to the identity natural transformation.
  We exhibit a specific natural isomorphism
  between $\Ho(c_g^*)=L c_g^*$ and the identity functor.
  We let
  \[ l_g \ : \ c_g^*\ \Longrightarrow \ \Id_{\Sp_G}
  \]
  denote the natural isomorphism of functors whose value $l_g^X:c_g^*(X)\to X$
  at an orthogonal $G$-spectrum $X$ is left multiplication by $g$.
  This induces a natural isomorphism of functors
  on $\Ho(\Sp_G)$
  \[ L(l_g)\ : \ \Ho(c_g^*) \ \Longrightarrow \ \Id_{\Ho(\Sp_G)}  \ .\]
\end{eg}

\index{left derived functor|(}
Our next aim is to show that total left derived functors
organize themselves into a `lax functor' (whenever they exist).
Loosely speaking this means that while $L G\circ L F$ need not be isomorphic
to $L(G F)$, there is a preferred natural transformation 
$\td{G,F}:L G \circ L F\Longrightarrow L(G F)$, and these natural transformations
satisfy certain coherence conditions. This is surely well known among experts,
but we were unable to find a complete reference.
There is something to show here because:
\begin{itemize}
\item Not all functors between relative categories are left derivable.
\item If two composable functors between relative categories 
are left derivable, the composite need not be left derivable.
\item If $F:\cC\to\cD$ and $G:\cD\to\cE$ are left derivable functors 
such that $G F$ is also left derivable,
then $L(G F)$ need not be isomorphic to the composite $L G\circ L F$. 
\end{itemize}

\begin{con}
For every relative category $(\cC,w)$, we choose a homotopy category
 $\gamma_\cC:\cC\to\Ho(\cC)$.
We also choose a total left derived functor $(L F,\tau_F)$
for every left derivable functor $F:\cC\to\cD$ between relative
categories. 
As we shall now explain, these choices determine all the remaining coherence data,
without the need to make any further choices.

We let $F:\cC\to\cD$ and $G:\cD\to\cE$ be left derivable functors 
between relative categories. If the composite $G F:\cC\to\cE$
is also left derivable, then the universal property of $(L(G F),\tau_{G F})$
provides a unique natural transformation
\[ \td{G,F}\ : \ L G\circ L F \ \Longrightarrow \ L(G F)   \]
such that
\begin{equation} \label{eq:td_relation}
\tau_{G F}\circ (\td{G,F}\star\gamma_\cC)
 \ = \ 
(\tau_G\star F)\circ(L G\star\tau_F )\ :\ 
L G\circ L F\circ\gamma_\cC \ \to\ \gamma_\cE \circ G\circ F\ . 
\end{equation}
\end{con}
\index{left derived functor|)}

We apply the previous construction to restriction functors
along continuous group homomorphisms.
In a nutshell, the ultimate outcome is that the assignment $\alpha\mapsto L\alpha^*$
extends to a contravariant pseudo-functor 
from the category of Lie groups and continuous homomorphisms
to the 2-category of triangulated categories, 
exact functors, and exact transformations.

\index{left derived functor!of restriction|(}
\begin{con}\label{con:lax transfo}
We let $\alpha:K\to G$ and $\beta:J\to K$ be two composable
continuous homomorphisms between Lie groups.
We let $(L\alpha^*,\alpha_!)$, $(L\beta^*,\beta_!)$
and $(L(\alpha\beta)^*,(\alpha\beta)_!)$
be total left derived functors of $\alpha^*$, $\beta^*$ and $(\alpha\beta)^*$, 
respectively.
The universal property of $(L(\alpha\beta)^*,(\alpha\beta)_!)$
provides a unique natural transformation
\[
 \td{\alpha,\beta}\ : \ L\beta^*\circ L\alpha^* \ \Longrightarrow \
L(\alpha\beta)^*  \]
such that
\begin{align}\label{eq:td(a,b)}
(\beta_!\star\alpha^*)\circ(L\beta^*\star\alpha_!)
 \ &= \ 
(\alpha\beta)_!\circ (\td{\alpha,\beta}\star\gamma_G)\ :\\ 
& L\beta^*\circ L\alpha^*\circ\gamma_G \ \Longrightarrow\ 
\gamma_J \circ\beta^*\circ\alpha^*\ =  \ \gamma_J\circ (\alpha\beta)^* \ .\nonumber
\end{align}
The natural transformations so obtained satisfy a coherence condition:
if $\gamma:M\to J$ is yet another continuous homomorphism, 
then the following square of natural transformations commutes:
\[ \xymatrix@C=20mm{ 
L\gamma^*\circ L\beta^*\circ L\alpha^*
\ar@{=>}[r]^-{L\gamma^*\star \td{\alpha,\beta}} \ar@{=>}[d]_{\td{\beta,\gamma}\star L\alpha^*} & L\gamma^*\circ L(\alpha\beta)^* \ar@{=>}[d]^{\td{\alpha\beta,\gamma}}\\
L(\beta\gamma)\circ L\alpha^*\ar@{=>}[r]_-{\td{\alpha,\beta\gamma}} &
L(\alpha\beta\gamma)^*} \]
Indeed, this is an instance of the general coherence 
for total left derived functors 
that we spell out in the following Proposition \ref{prop:derived cocycle},
applied to the left derivable functors
$\alpha^*:\Sp_G\to\Sp_K$, $\beta^*:\Sp_K\to\Sp_J$, and $\gamma^*:\Sp_J\to\Sp_M$.
\end{con}

\begin{prop}\index{G-equivariant stable homotopy category@$G$-equivariant stable homotopy category}
For all composable continuous homomorphisms $\alpha:K\to G$ and $\beta:J\to K$ 
between Lie groups, the natural transformation 
$\td{\alpha,\beta}:L\beta^*\circ L\alpha^*\Longrightarrow L(\alpha\beta)^*$
is exact and a natural isomorphism.
\end{prop}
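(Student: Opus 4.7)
The plan is to establish the two claims separately: first that $\td{\alpha,\beta}$ is a natural isomorphism, and second that it intertwines the suspension structure isomorphisms of the two exact functors.

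\textbf{Step 1 (natural isomorphism).} I would verify that $(\td{\alpha,\beta})_{\gamma_G(X)}$ is an isomorphism for every cofibrant orthogonal $G$-spectrum $X$; this suffices because every object of $\Ho(\Sp_G)$ is isomorphic to such a $\gamma_G(X)$. For cofibrant $X$, three universal comparison morphisms are isomorphisms. First, $\alpha_!^X$ is an isomorphism by Theorem \ref{thm:homomorphism adjunctions spectra} (ii). Second, $\alpha^*(X)$ is quasi-cofibrant by Theorem \ref{thm:homomorphism adjunctions spectra} (i), so the quasi-cofibrant case of (ii) applied to $\beta$ makes $\beta_!^{\alpha^*(X)}$ an isomorphism. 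Third, $(\alpha\beta)^*(X)=\beta^*(\alpha^*(X))$ is quasi-cofibrant by (i) applied to $\alpha\beta$, whence $(\alpha\beta)_!^X$ is an isomorphism by (ii). Evaluating the defining equation \eqref{eq:td(a,b)} at $X$ yields
\[ \beta_!^{\alpha^*(X)}\circ (L\beta^*)(\alpha_!^X) \ = \ (\alpha\beta)_!^X\circ (\td{\alpha,\beta})_{\gamma_G(X)}\ .\]
The left hand side is a composite of isomorphisms (since $L\beta^*$ preserves isomorphisms), and $(\alpha\beta)_!^X$ on the right is an isomorphism; this forces $(\td{\alpha,\beta})_{\gamma_G(X)}$ to be an isomorphism.

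\textbf{Step 2 (exactness).} Writing $\sigma_\alpha,\sigma_\beta,\sigma_{\alpha\beta}$ for the suspension structure isomorphisms of Theorem \ref{thm:homomorphism adjunctions spectra} (iv), exactness of $\td{\alpha,\beta}$ amounts to the equality of two natural transformations $L\beta^*\circ L\alpha^*\circ[1]\Rightarrow [1]\circ L(\alpha\beta)^*$, namely
\[ ([1]\star\td{\alpha,\beta})\circ (\sigma_\beta\star L\alpha^*)\circ (L\beta^*\star\sigma_\alpha)\ = \ \sigma_{\alpha\beta}\circ (\td{\alpha,\beta}\star[1])\ .\]
My approach is to observe that $[1]\circ L(\alpha\beta)^*$ is itself a total left derived functor of the composite $(\alpha\beta)^*\circ(-\sm S^1)=\beta^*\circ\alpha^*\circ(-\sm S^1)$, with universal transformation $[1]\star (\alpha\beta)_!$. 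By the uniqueness clause of this universal property, it suffices to show that the two composites above coincide after whiskering with $\gamma_G$ on the right and postcomposing with $[1]\star(\alpha\beta)_!$. Applying the characterizing equations \eqref{eq:define_sigma} for $\sigma_\alpha,\sigma_\beta,\sigma_{\alpha\beta}$, the equation \eqref{eq:td(a,b)} for $\td{\alpha,\beta}$, and the interchange law \eqref{eq:interchange} repeatedly, both sides collapse to the transformation $(\alpha\beta)_!\star(-\sm S^1)$, completing the verification.

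Step 1 follows directly from the preservation properties packaged in Theorem \ref{thm:homomorphism adjunctions spectra}. The main obstacle is the notational bookkeeping in Step 2, where one must carefully unwind the nested universal properties of three total left derived functors and their suspension structures in order to reduce the desired equality of natural transformations to a tautology at the pointset level.
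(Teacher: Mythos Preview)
Your proposal is correct and follows essentially the same approach as the paper: for the isomorphism claim you evaluate the defining equation \eqref{eq:td(a,b)} at (quasi-)cofibrant objects and invoke Theorem \ref{thm:homomorphism adjunctions spectra} (i)--(ii) exactly as the paper does, and for exactness you appeal to the universal property of $([1]\circ L(\alpha\beta)^*,[1]\star(\alpha\beta)_!)$ as a total left derived functor and reduce via \eqref{eq:define_sigma}, \eqref{eq:td(a,b)}, and interchange, which is precisely the paper's strategy. The only cosmetic differences are the order of the two steps and that the paper works with quasi-cofibrant rather than merely cofibrant $X$ in Step~1.
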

\begin{proof}
Exactness of the  transformation $\td{\alpha,\beta}$
means that the following diagram of natural transformations commutes:
\[ \xymatrix@C=15mm{ 
L\beta^*\circ L\alpha^* \circ [1]\ar@{=>}[r]^-{L\beta^*\star \sigma^\alpha} 
\ar@{=>}[d]_{\td{\alpha,\beta}\star [1]}&
L\beta^*\circ [1] \circ L\alpha^* \ar@{=>}[r]^-{\sigma^\beta\star L\alpha^*} &
[1]\circ L\beta^* \circ L\alpha^*  \ar@{=>}[d]^{[1]\star \td{\alpha,\beta}}\\
L(\alpha\beta)^*\circ [1]\ar@{=>}[rr]_-{\sigma^{\alpha\beta}} && 
[1]\circ L(\alpha\beta)^*} \]
Since suspension is fully homotopical,
the pair $([1]\circ L(\alpha\beta)^*,[1]\star(\alpha\beta)_!)$ is a
total left derived functor of  $(-\sm S^1)\circ(\alpha\beta)^*$,
by Proposition \ref{prop:homotopical after absolute}.
The universal property allows us to check the commutativity 
of the diagram after precomposition with the localization functor 
$\gamma_G:\Sp_G\to \Ho(\Sp_G)$ and postcomposition with the natural transformation
$[1]\star(\alpha\beta)_!$.
This is a straightforward, but somewhat lengthy, calculation with 
the various defining properties.
We start with the observation:
\begin{align}\label{eq:subproblem}
 ([1]\star \beta_!\star\alpha^*)
\circ ([1]\star L\beta^*\star \alpha_!) 
&= \  [1]\star (( \beta_!\star\alpha^*)\circ (L\beta^*\star \alpha_!) )  
\\
_\eqref{eq:td(a,b)} &= \  [1]\star ( (\alpha\beta)_!\circ (\td{\alpha,\beta}\star\gamma_G) )
\nonumber \\
&= \  
([1]\star  (\alpha\beta)_!)\circ ([1]\star\td{\alpha,\beta}\star\gamma_G )\nonumber
\end{align}
The final relation is then obtained as follows:
\begin{align*}
([1]\star(\alpha\beta)_!)&\circ
 \left( (\sigma^{\alpha\beta}\circ (\td{\alpha,\beta}\star[1]) )\star\gamma_G \right)
  \\ 
&= \ 
([1]\star(\alpha\beta)_!)\circ(\sigma^{\alpha\beta}\star\gamma_G ) \circ (\td{\alpha,\beta}\star[1]\star\gamma_G)  \\
_\eqref{eq:define_sigma} 
   &= \ 
((\alpha\beta)_!\star(-\sm S^1)) \circ (\td{\alpha,\beta}\star\gamma_G\star(-\sm S^1))  \\
   &= \ 
( (\alpha\beta)_! \circ (\td{\alpha,\beta}\star\gamma_G)) \star(-\sm S^1)  \\
_\eqref{eq:td(a,b)}
   &= \ 
((\beta_!\star\alpha^*)\circ(L\beta^*\star\alpha_!)) \star(-\sm S^1)  \\
   &= \ 
(\beta_!\star\alpha^*\star(-\sm S^1))\circ (L\beta^*\star\alpha_!\star(-\sm S^1)) 
  \\
   &= \ 
(\beta_!\star(-\sm S^1)\star\alpha^*)\circ (L\beta^*\star\alpha_!\star(-\sm S^1)) 
  \\
_\eqref{eq:define_sigma}   &= \ 
( (  ([1]\star \beta_!)\circ (\sigma^\beta\star\gamma_K) )
\star\alpha^*)\circ (L\beta^*\star(([1]\star \alpha_!)\circ (\sigma^\alpha\star\gamma_G)))  
  \\
   &= \ 
 ([1]\star \beta_!\star\alpha^*)\circ (\sigma^\beta\star\gamma_K\star\alpha^*) 
\circ 
(L\beta^*\star[1]\star \alpha_!)\circ (L\beta^*\star\sigma^\alpha\star\gamma_G)
  \\
_\eqref{eq:interchange}   &= \ 
 ([1]\star \beta_!\star\alpha^*)\circ ([1]\star L\beta^*\star \alpha_!) 
\circ (\sigma^\beta\star L\alpha^*\star\gamma_G) \circ (L\beta^*\star\sigma^\alpha\star\gamma_G)
  \\
_\eqref{eq:subproblem}   &= \ 
([1]\star  (\alpha\beta)_!)\circ ([1]\star\td{\alpha,\beta}\star\gamma_G )
\circ (\sigma^\beta\star L\alpha^*\star\gamma_G) 
\circ (L\beta^*\star\sigma^\alpha\star\gamma_G)
  \\
   &= \ 
([1]\star  (\alpha\beta)_!)\circ \left( 
 ( ([1]\star\td{\alpha,\beta})\circ (\sigma^\beta\star L\alpha^*)
\circ (L\beta^*\star\sigma^\alpha))\star\gamma_G\right) 
\end{align*}
To prove that $\td{\alpha,\beta}$ is an isomorphism, 
we consider a quasi-cofibrant orthogonal $G$-spectrum $X$ and contemplate the
following commutative diagram in $\Ho(\Sp_L)$:
\[ \xymatrix@C=10mm{ 
(L\beta^*)((L\alpha^*)(X))
\ar[rr]^-{(L\beta^*)(\alpha_!^X)}_-\iso 
\ar[d]_{\td{\alpha,\beta}^X} && 
(L\beta^*)(\alpha^*(X))\ar@{=}[r] &
(L\beta^*)(\alpha^*(X))\ar[d]_\iso^{\beta_!^{\alpha^*(X)}} \\ 
L(\alpha\beta)^*(X)\ar[rr]^-\iso_-{(\alpha\beta)_!^X} && 
(\alpha\beta)^*(X) \ar@{=}[r] &
\beta^*(\alpha^*(X))} \]
The orthogonal $K$-spectrum $\alpha^*(X)$ is quasi-cofibrant by  
Theorem \ref{thm:homomorphism adjunctions spectra} (i).
Since $X$ and $\alpha^*(X)$ are quasi-cofibrant,
the morphisms $\alpha^X_!$, $(\alpha\beta)^X_!$ and $\beta^{\alpha^*(X)}_!$
are isomorphisms by Theorem \ref{thm:homomorphism adjunctions spectra} (ii).
So the morphism $\td{\alpha,\beta}^X$ is also an isomorphism.
Every orthogonal $G$-spectrum is isomorphic in $\Ho(\Sp_G)$
to a quasi-cofibrant spectrum, so this proves that $\td{\alpha,\beta}$ is
a natural isomorphism.
\end{proof}

The following proposition records the coherence property that the
natural transformations $\td{G,F}$ enjoy;
the condition is essentially saying that these transformations
make the assignment $F\mapsto L F$ into a lax functor
from the `category' of relative categories and left derivable functors
to the 2-category of categories. 
The caveat is that the composite of left derivable functors need not 
be left derivable, so we don't really have a category of these.
As we indicate in Remark \ref{rk:further coherence} below,
this implies that all further coherence conditions 
between tuples of composable derivable functors are automatically satisfied.

\begin{prop}\label{prop:derived cocycle}
Let $F:\cC\to\cD$, $G:\cD\to\cE$ and $H:\cE\to\cF$ be composable
functors between relative categories such that
$F$, $G$, $H$ and $G F$, $H G$ and $H G F$ are left derivable.
Then
\[ \td{H G,F} \circ  (\td{H,G}\star L F) \ = \ 
\td{H, G F}\circ(L H\star \td{G, F})
 \]
as natural transformations 
$L H\circ L G\circ L F\Longrightarrow L(H G F)$.
\end{prop}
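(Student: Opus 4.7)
The plan is to verify the equality by invoking the universal property of the total left derived functor $(L(HGF), \tau_{HGF})$. Both sides of the claimed equation are natural transformations $L H\circ L G\circ L F\Longrightarrow L(H G F)$; by the universal property, to show they agree it suffices to show that after whiskering with $\gamma_\cC$ and composing with $\tau_{HGF}$, they produce the same natural transformation $L H\circ L G\circ L F\circ\gamma_\cC \Longrightarrow \gamma_\cF \circ H G F$. The strategy is then to unfold both sides down to a common normal form built from $\tau_F$, $\tau_G$, $\tau_H$ by repeated application of the defining relation \eqref{eq:td_relation} together with the interchange relation \eqref{eq:interchange}.

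For the right-hand side, I would first apply \eqref{eq:td_relation} for the pair $(H, GF)$ to the whiskered composite $\tau_{HGF}\circ(\td{H,GF}\star\gamma_\cC)$, rewriting it as $(\tau_H\star GF)\circ(L H\star\tau_{GF})$. Then I would use \eqref{eq:td_relation} for the pair $(G,F)$, whiskered by $L H$ on the left, to rewrite $L H\star\tau_{GF}\circ(L H\star\td{G,F}\star\gamma_\cC)$ as $L H\star(\tau_G\star F)\circ (L H\star L G\star\tau_F)$. The net outcome is
\[ (\tau_H\star G\star F)\circ(L H\star\tau_G\star F)\circ(L H\star L G\star\tau_F). \]
For the left-hand side, I would symmetrically apply \eqref{eq:td_relation} first to $(HG,F)$, producing $(\tau_{HG}\star F)\circ(L(HG)\star\tau_F)$, and then use the interchange law \eqref{eq:interchange} to commute the whiskered $\td{H,G}\star L F\star\gamma_\cC$ past $L(HG)\star\tau_F$; this yields $(\td{H,G}\star\gamma_\cD\star F)$ followed by $(L H\circ L G\star\tau_F)$. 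A final application of \eqref{eq:td_relation} for $(H,G)$, whiskered by $F$, rewrites $\tau_{HG}\star F \circ (\td{H,G}\star\gamma_\cD\star F)$ as $(\tau_H\star G\star F)\circ(L H\star\tau_G\star F)$, producing the same normal form.

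The main obstacle is purely bookkeeping: keeping track of which whiskering is applied to which side at each step, and recognizing when the interchange law \eqref{eq:interchange} is needed to move a $\tau_F$ past a $\td{H,G}$-style transformation. No nontrivial homotopical input is required beyond the defining universal property of each $\td{-,-}$, so the argument is formal and reduces to a careful diagram chase in the 2-category of categories, functors, and natural transformations. Once both sides are reduced to the common expression displayed above, the uniqueness clause in the universal property of $(L(HGF),\tau_{HGF})$ completes the proof.
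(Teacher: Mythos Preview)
Your proposal is correct and follows essentially the same approach as the paper: both invoke the uniqueness clause of the universal property of $(L(HGF),\tau_{HGF})$ and reduce each side, via repeated use of \eqref{eq:td_relation} and the interchange law \eqref{eq:interchange}, to the common normal form $(\tau_H\star G F)\circ(L H\star\tau_G\star F)\circ(L H\circ L G\star\tau_F)$. The paper presents this as a single chain of equalities from one side to the other rather than treating the two sides separately, but the content is identical.
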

\begin{proof}
The various defining relations and the interchange relation
provide the following equalities of natural transformations
between functors $\cC\to \Ho(\cF)$:
\begin{align*}
\tau_{H G F}\circ (( \td{H G,F} \circ & (\td{H,G}\star L F))\star\gamma_\cC) \\ 
&= \ \tau_{H G F}\circ  (\td{H G,F}\star\gamma_\cC) \circ (\td{H,G}\star (L F\circ\gamma_\cC)) \\ 
\eqref{eq:td_relation} &= \ (\tau_{H G}\star F)\circ  (L(H G)\star\tau_F) \circ (\td{H,G}\star (L F\circ\gamma_\cC)) \\ 
\eqref{eq:interchange} &= \ (\tau_{H G}\star F)\circ  
( \td{H,G}\star(\gamma_\cD\circ F))\circ( (LH \circ L G)\star\tau_F) \\ 
 &= \ (\tau_{H G}\star F)\circ  
( (\td{H,G}\star\gamma_\cD)\star F)\circ( (LH \circ L G)\star\tau_F) \\ 
 &= \ 
((\tau_{H G}\circ  (\td{H,G}\star\gamma_\cD))\star F)\circ( (L H \circ L G)\star\tau_F) \\ 
\eqref{eq:td_relation} &= \ 
(((\tau_H\star G)\circ (L H\star\tau_G))\star F)\circ( (L H \circ L G)\star\tau_F) \\ 
&= \ (\tau_H\star(G F))\circ  
( L H\star \tau_G\star F)\circ( (L H \circ L G)\star\tau_F) \\ 
&= \ (\tau_H\star(G F))\circ  
( L H\star( (\tau_G\star F)\circ( L G\star\tau_F))) \\ 
\eqref{eq:td_relation}&= \ (\tau_H\star(G F))\circ  
( L H\star( \tau_{G F}\circ(\td{G,F}\star\gamma_{\cC}))) \\ 
&= \ (\tau_H\star(G F))\circ  
( L H\star \tau_{G F})\circ ( L H \star\td{G,F}\star\gamma_\cC) \\ 
\eqref{eq:td_relation} &= \  \tau_{H G F}\circ( \td{H, G F}\star\gamma_\cC)\circ (L H\star \td{G,F}\star\gamma_\cC) \\
&= \  \tau_{H G F}\circ( (\td{H, G F}\circ (L H\star \td{G,F}))\star\gamma_\cC) \ .
\end{align*}
The uniqueness clause in the universal property of the pair $(L(H G F),\tau_{H G F})$
then implies the desired relation.
\end{proof}

\begin{rk}\label{rk:further coherence}
Proposition \ref{prop:derived cocycle} implies that
all coherence relations with respect to iterated composition
of derivable functors are automatically satisfied. 
Given $n$ composable, left derivable functors
$F_1,\dots,F_n$, for $n\geq 3$, we define a natural transformation
\[ \td{F_n,\dots, F_1}\ : \ L F_n\circ \dots\circ L F_1 \ \Longrightarrow \ 
L(F_n\circ\dots\circ F_1) \]
inductively by setting
\[ \td{F_n,F_{n-1},\dots, F_1}\ = \ 
 \td{F_n F_{n-1},F_{n-2},\dots, F_1}\circ
(\td{F_n,F_{n-1}}\star (L F_{n-1}\circ\dots\circ L F_1))\ , \]
assuming that all iterated composites of adjacent functors 
are left derivable.
Proposition \ref{prop:derived cocycle} implies that
we could have instead `spliced' at any other intermediate spot 
in the composition; 
more generally, we could have `collected adjacent factors' 
in any way we like, and get the same result.
More precisely, if for $i\leq j$ we write
\[ F_{[j,i]}\ = \ F_j\circ F_{j-1}\circ\dots\circ F_i\ , \]
and we set $\td{F_k}=\Id_{L F_k}$, then for all sequences of numbers
$1\leq m_1< m_2 <\dots < m_k < n$, we have
\begin{align*}
 &\td{F_n,\dots, F_1}\ = \\ 
 &\quad\td{F_{[n,m_k+1]},\dots, F_{[m_2,m_1+1]}, F_{[m_1,1]}}
\circ
\left( \td{F_n,\dots,F_{m_k+1}}\star \dots\star\td{F_{m_1},\dots,F_1}\right) \ .  
\end{align*}
\end{rk}
\index{left derived functor!of restriction|)}

The next Theorem \ref{thm:weak equivalence} is a homotopy invariance statement 
for proper equivariant stable homotopy theory;
it says, roughly speaking, that this homotopy theory
only depends on the Lie group `up to multiplicative weak equivalence'.
In the context of {\em compact} Lie groups, this statement does not
have much content: as we recall in the next proposition,
every multiplicative weak equivalence between compact Lie groups 
is already an isomorphism.

\begin{prop}\label{prop:compact rigid}
Let $\alpha:K\to H$ be a continuous homomorphism between compact Lie groups.
If $\alpha$ is a weak homotopy equivalence of underlying spaces,
then $\alpha$ is a diffeomorphism, and hence an isomorphism of Lie groups.  
\end{prop}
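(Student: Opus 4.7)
The plan is to show $\alpha$ is bijective. Once bijectivity is established, $\alpha$ is a continuous bijection between compact Hausdorff spaces, hence a homeomorphism; by the classical automatic smoothness theorem (any continuous homomorphism between Lie groups is analytic), both $\alpha$ and $\alpha^{-1}$ are smooth, so $\alpha$ is a diffeomorphism.

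First I would reduce to the case where $K$ and $H$ are connected. Since $\alpha$ is a weak equivalence, $\pi_0(\alpha)$ is a bijection of component groups, and being a homomorphism it is in fact a group isomorphism. The restriction $\alpha|_{K^0}\colon K^0\to H^0$ of identity components (note $\alpha(K^0)\subseteq H^0$ by connectedness) satisfies $\pi_n(\alpha|_{K^0})=\pi_n(\alpha)$ for $n\geq 1$, since higher homotopy only sees the path component of the basepoint. Thus $\alpha|_{K^0}$ is a weak equivalence between compact connected Lie groups; granting the connected case, it is a diffeomorphism. Then, choosing a representative $k_i$ in each component $K_i$ of $K$, the restriction $\alpha|_{K_i}\colon K_i\to H_{\sigma(i)}$ (where $\sigma=\pi_0(\alpha)$) factors as left-translation by $k_i^{-1}$, then $\alpha|_{K^0}$, then left-translation by $\alpha(k_i)$, all of which are diffeomorphisms. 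Combined with the bijection $\pi_0(\alpha)$, this assembles to a diffeomorphism $\alpha\colon K\to H$.

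For the connected case, I would prove surjectivity via a degree argument. Every compact connected Lie group is a closed orientable manifold (it is parallelizable), with top integral homology $\mathbb{Z}$ in degree equal to its dimension. Since $\alpha$ is a weak equivalence it induces an isomorphism on all integral homology, forcing $\dim K=\dim H=:n$ and making $\alpha$ a continuous map of degree $\pm 1$ between closed oriented $n$-manifolds. If $\alpha$ missed a point $p\in H$, it would factor through $H\setminus\{p\}$, whose $n$-th integral homology vanishes, forcing the degree to be $0$; hence $\alpha$ is surjective.

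With $\alpha$ surjective, $N:=\ker(\alpha)$ is a closed normal Lie subgroup and $\alpha$ exhibits $K$ as a principal $N$-bundle over $H$; the associated long exact sequence of homotopy groups, combined with the fact that $\alpha_*$ is an isomorphism in every degree, forces $\pi_*(N)=0$. But a positive-dimensional compact Lie group has nonzero top rational cohomology, and a disconnected compact Lie group has nontrivial $\pi_0$, so a weakly contractible compact Lie group must be trivial. Hence $N=1$, $\alpha$ is also injective, and we are done. The main obstacle is marshaling the external inputs (automatic smoothness, orientability and top cohomology of compact Lie groups, and the degree-based criterion for surjectivity); each is standard and none interacts with the paper's internal machinery.
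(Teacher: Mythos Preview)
Your proof is correct and follows essentially the same arc as the paper's: reduce to the connected case via $\pi_0$, use a homology argument to get equal dimensions and surjectivity, then conclude injectivity. The paper differs in two minor respects: it uses mod-$2$ homology throughout (avoiding any appeal to orientability, though that is harmless here), and for injectivity it argues more directly that a smooth surjection between equidimensional compact Lie groups with finite kernel is a covering map, which must be trivial since $\pi_1(\alpha)$ is an isomorphism. Your fibration long exact sequence argument reaches the same conclusion by a slightly longer route.
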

\begin{proof}
This result should be well-known, but we have not found a reference.
We owe this proof to George Raptis.  
Since $\alpha$ is a weak homotopy equivalence, 
it induces an isomorphism $\pi_0(\alpha):\pi_0(K)\to\pi_0(H)$
of component groups, and a weak homotopy equivalence
$\alpha^\circ:K^\circ\to H^\circ$ on the connected components of the identity elements.
So by restriction to identity components,
it suffices to treat the special case where $K$ and $H$ are path connected.

Since $\alpha$ is a weak homotopy equivalence, it
induces an isomorphism on mod-2 homology.
Since $K$ and $H$ are closed connected manifolds, 
their geometric dimension can be recovered as the 
largest dimension in which the mod-2 homology is non-trivial.
So $K$ and $H$ have the same dimension.

Since $\alpha$ induces an isomorphism on the top dimensional mod-2
homology groups, it must thus be surjective.
A continuous homomorphism between Lie groups is automatically smooth.
Since $K$ and $H$ have the same dimension,
the kernel of $\alpha$ is finite, and so $\alpha$ is a covering space projection.
Since $\alpha$ induces an isomorphism of fundamental groups,
this covering space projection must be a homeomorphism.
Hence $\alpha$ is an isomorphism of Lie groups.
\end{proof}

When we drop the compactness hypothesis, 
Proposition \ref{prop:compact rigid} ceases to hold.
For example, for every Lie group $G$, the projection $G\times\mR\to G$
is a continuous homomorphism and weak equivalence.

\begin{thm}\label{thm:weak equivalence}\index{G-equivariant stable homotopy category@$G$-equivariant stable homotopy category}
Let $\alpha:K\to G$ be a continuous homomorphism between Lie groups
that is also a weak equivalence of underlying spaces. Then the following hold:
\begin{enumerate}[\em (i)]
\item The homomorphism $\alpha$ is quasi-injective.
\item For every compact subgroup $H$ of $G$ there is a compact subgroup $L$
of $K$ such that $H$ is conjugate to $\alpha(L)$.
\item The restriction functor $L\alpha^*:\Ho(\Sp_G)\to\Ho(\Sp_K)$
is an equivalence of triangulated categories.\index{left derived functor!of restriction}
\end{enumerate}
\end{thm}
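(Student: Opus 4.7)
The plan is to prove parts (i), (ii), (iii) in sequence. Parts (i) and (ii) are Lie-theoretic and rest on the Dwyer--Zabrodsky theorem identifying $[BC,BK]$ with the set of $K$-conjugacy classes of homomorphisms $C\to K$; part (iii) then follows from a compact-generator argument in the triangulated category $\Ho(\Sp_G)$. The main obstacle is establishing Dwyer--Zabrodsky in the required form (the target $K$ being a possibly non-compact Lie group); once available, the geometric core of (iii) is the verification that $K/L\to\alpha^*(G/H)$ is a $\Com$-equivalence, which in turn relies on surjectivity of $\alpha$ (itself derivable from (i) together with the Cartan--Iwasawa--Malcev decomposition and Proposition~\ref{prop:compact rigid}).

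For (i), I argue by contradiction. If $\ker\alpha$ contains a non-trivial compact subgroup, then it contains a non-trivial finite cyclic $C$, since every non-trivial compact Lie group has a non-trivial element of finite order (via its finite component group or a maximal torus in the identity component). The inclusion $\rho\colon C\hookrightarrow K$ satisfies $\alpha\circ\rho=*$, so $B\alpha\circ B\rho\colon BC\to BG$ is null-homotopic. Since $B\alpha$ is a weak equivalence and Dwyer--Zabrodsky identifies $[BC,BK]\cong \Hom(C,K)/K$-conjugacy (and likewise for $G$), the induced bijection $[BC,BK]\cong[BC,BG]$ forces $\rho$ to be conjugate to the trivial homomorphism, hence trivial---contradicting non-triviality of $C$.

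For (ii), given compact $H\leq G$, the inclusion $H\hookrightarrow G$ represents an element of $[BH,BG]$; since $B\alpha$ is a weak equivalence, there is a unique preimage in $[BH,BK]$, which by Dwyer--Zabrodsky corresponds to a conjugacy class of homomorphism $\rho\colon H\to K$. The composition $\alpha\circ\rho\colon H\to G$ is then conjugate (as a homomorphism) to the inclusion $H\hookrightarrow G$, so $\alpha(\rho(H))=gHg^{-1}$ for some $g\in G$, and $L:=\rho(H)$ is a compact subgroup of $K$ with $\alpha(L)$ conjugate to $H$.

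For (iii), by (i) and Theorem~\ref{thm:adjunctions spectra pointset}(i), $\alpha^*$ is fully homotopical and $L\alpha^*\cong\Ho(\alpha^*)$; by Theorem~\ref{thm:homomorphism adjunctions spectra} (applied with quasi-injectivity), $L\alpha^*$ is exact, preserves sums and products, and admits both adjoints. Since $\Ho(\Sp_G)$ is compactly generated by $\{\Sigma^\infty_+ G/H\}_{H\in\Com}$ (Corollary~\ref{cor:Ho_G compactly generated}), and likewise for $\Ho(\Sp_K)$, it suffices to show $L\alpha^*$ sends the $G$-generators bijectively (up to isomorphism) onto the $K$-generators and is fully faithful on them. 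Given $H\in\Com$, pick $L\leq K$ compact with $\alpha(L)=H$ by (ii); then (i) yields $\alpha|_L\colon L\xrightarrow{\cong}H$. Cartan--Iwasawa--Malcev applied to identity components, combined with (i) and Proposition~\ref{prop:compact rigid}, yields that $\alpha$ is surjective, so $\alpha^*(G/H)\cong K/\alpha^{-1}(H)$ as $K$-spaces. Quasi-injectivity gives $\alpha^{-1}(H)=L\cdot\ker\alpha\cong\ker\alpha\rtimes L$, and the canonical $K$-map $K/L\to K/\alpha^{-1}(H)$ is a fiber bundle with fiber $\ker\alpha$; as the homotopy fiber of the weak equivalence $\alpha$, $\ker\alpha$ is weakly contractible, and a $\Com$-fixed-point analysis upgrades this to a $\Com$-equivalence $K/L\to\alpha^*(G/H)$. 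Hence $L\alpha^*(\Sigma^\infty_+ G/H)\cong\Sigma^\infty_+ K/L$, and (ii) applied in reverse shows every compact generator of $\Ho(\Sp_K)$ arises this way. Full faithfulness on generators is Proposition~\ref{prop:rephtpy} combined with the isomorphism $\alpha|_L\colon L\cong H$. A standard triangulated-category argument---an exact sum-preserving functor between compactly generated triangulated categories that is fully faithful on a set of compact generators and carries it bijectively to another such set is an equivalence---then concludes.
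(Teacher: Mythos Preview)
Your approach to parts (i) and (ii) via Dwyer--Zabrodsky is a genuine alternative to the paper's route, which never invokes that theorem. The paper instead reduces to the almost connected case by passing to the subgroups $\td L = L\cdot K^\circ$ and $\td{\alpha(L)}=\alpha(L)\cdot G^\circ$; for almost connected groups it picks maximal compact subgroups $M\leq K$ and $N\leq G$ with $\alpha(M)\leq N$, observes that $\alpha|_M:M\to N$ is a weak equivalence between compact Lie groups, and then applies Proposition~\ref{prop:compact rigid} to conclude it is an isomorphism. Parts (i) and (ii) fall out immediately. Your route is conceptually appealing but, as you note, depends on a form of Dwyer--Zabrodsky for non-compact Lie targets that you do not supply; the paper's argument is entirely self-contained and elementary.

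Your argument for part (iii) contains a genuine error: the claim that $\alpha$ is surjective is false. Take $K=\{e\}$ and $G=\mR$ with $\alpha$ the inclusion; this is a weak equivalence, quasi-injective, and satisfies (ii) vacuously, yet is far from surjective. Consequently the identification $\alpha^*(G/H)\cong K/\alpha^{-1}(H)$ fails (in the example, $\alpha^*(\mR)=\mR$ but $K/\alpha^{-1}(\{0\})$ is a point), and the fiber-bundle description you give of $K/L\to\alpha^*(G/H)$ does not hold. Your identification of $\ker\alpha$ with the homotopy fiber of $\alpha$ is also unjustified, since $\alpha$ is not a fibration. The desired $\Com$-equivalence $K/L\to\alpha^*(G/H)$ is in fact true, but proving it directly requires a fixed-point analysis you have not carried out.

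The paper circumvents all of this. It first establishes (iii) for almost connected $K$ and $G$ by comparing both sides to their maximal compact subgroups via Theorem~\ref{thm:reduction}. In general it then shows that $L\alpha^*$ detects isomorphisms (using (i) and (ii) to translate $\pi_*^H(X)$ into $\pi_*^L(L\alpha^*(X))$), and that the adjunction unit $\eta_Y:Y\to L\alpha^*(G\ltimes_\alpha^L Y)$ is an isomorphism for all $Y$. The latter is checked on the generators $\Sigma^\infty_+ K/L$ by writing $K/L\cong K\times_{\bar K}(\bar K/L)$ with $\bar K=L\cdot K^\circ$ almost connected, and reducing to the almost connected case already handled. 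This strategy never needs surjectivity of $\alpha$ or any direct comparison of orbit spaces.
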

\begin{proof}
We start with the special case when the groups $K$ and $G$ are almost connected.
We choose a maximal compact subgroup $M$ of $K$
and a maximal compact subgroup $N$ of $G$ that contains the compact group $\alpha(M)$.
In the commutative diagram of continuous group homomorphisms
\[ \xymatrix{ 
M \ar[r]^-{\alpha|_M}\ar[d] & N\ar[d] \\
K \ar[r]_-{\alpha} & G } \]
the two vertical inclusions are then weak equivalence of underlying spaces,
compare \cite[Thm.\,A.5]{abels:parallelizability}.
Since $\alpha$ is also a weak equivalence, so is $\alpha|_M:M\to N$.
But $M$ and $N$ are compact, so $\alpha|_M:M\to N$ is an isomorphism
by Proposition \ref{prop:compact rigid}.
In particular, $\alpha$ is injective on $M$;
since every compact subgroup of $K$ is contained in a maximal compact subgroup,
$\alpha$ is quasi-injective.
This proves claim (i) in the special case.
Moreover, if $H$ is a compact subgroup of $G$, then there is an element $g\in G$
such that $^g H\subset N$.
Then $L=\alpha^{-1}(^g H)\cap M$ is a compact subgroup of $K$ such that
$\alpha(L)={^g H}$. This proves claim (ii) in the special case.

Now we contemplate the square of triangulated categories and exact functors
\[ \xymatrix@C=15mm{ 
\Ho(\Sp_M)  & \Ho(\Sp_N) \ar[l]_-{L(\alpha|_M^*)}\\
\Ho(\Sp_K) \ar[u]^{\res^K_M} & \Ho(\Sp_G)\ar[u]_{\res^G_N}\ar[l]^-{L\alpha^*}} \]
The two vertical functors are equivalences by Theorem \ref{thm:reduction},
and the upper functor is an equivalence because $\alpha|_M$ is an isomorphism.
Since the square commutes up to natural isomorphism, the functor $L\alpha^*$
is also an equivalence. This proves claim (iii) in the special case.

Now we treat the general case, i.e., $K$ and $G$ are allowed to have infinitely
many components.
For a compact subgroup $L$ of $K$, we write
\[ \td{L}\ = \ L\cdot K^\circ \]
for the subgroup generated by $L$ and the connected component $K^\circ$
of the identity. Another way to say this is that $\td{L}$
is the union of all path components of $K$ that have a non-empty
intersection with $L$.
By construction, the inclusion $L\to\td{L}$ induces a surjection
$\pi_0(L)\to \pi_0(\td{L})$ on path components. 
In particular, the component group of $\td{L}$ is finite because $L$ is compact.
Similarly, the group $\td{\alpha(L)}=\alpha(L)\cdot G^\circ$ is a closed almost
connected subgroup of $G$.
Then $\td{L}$ is a union of finitely many of the path components of $K$, 
and $\td{\alpha(L)}$ is the union of the corresponding path components of $G$;
the restriction $\bar\alpha:\td{L}\to \td{\alpha(L)}$
of the homomorphism $\alpha$ is thus again a weak equivalence of underlying spaces.
Property (i) for the restriction 
$\bar\alpha:\td{L}\to \td{\alpha(L)}$ shows that $\alpha$ is injective on $L$.
Since $L$ was an arbitrary compact subgroup of $K$, the homomorphism
$\alpha$ is quasi-injective. This proves claim (i) in the general case.

Now we let $H$ be a compact subgroup of $G$, 
and we consider the
almost connected subgroup $\td{H}=H\cdot G^\circ$ of $G$.
Because $\pi_0(\alpha):\pi_0(K)\to\pi_0(G)$ is an isomorphism, 
$\alpha^{-1}(\td{H})=\td{\alpha^{-1}(H)}=\alpha^{-1}(H)\cdot K^\circ$
is almost connected, 
and the restriction $\bar\alpha:\alpha^{-1}(\td{H})\to \td{H}$
of $\alpha$ is another weak equivalence of underlying spaces.
Property (ii) for the restriction 
$\bar\alpha$ provides a compact subgroup $L$ of $\alpha^{-1}(\td{H})$
and an element $g\in \td{H}\subset G$ such that $\alpha(L)={^g H}$.
This proves claim (ii) in the general case.

It remains to show that the functor $L\alpha^*:\Ho(\Sp_G)\to\Ho(\Sp_K)$
is an equivalence. In a first step we show that it detects isomorphisms. 
Since $L\alpha^*$ is an exact functor
of triangulated categories, it suffices to show
for every $G$-spectrum $X$ such that  $L\alpha^*(X)$ is a zero object in $\Ho(\Sp_K)$,
already $X=0$.
To see this we let $H$ be any compact subgroup of $G$. 
Part (ii) provides a compact subgroup $L$ of $K$ 
and an element $g\in G$ such that $\alpha(L)={^g H}$.
Since $\alpha$ is quasi-injective by part (i), it restricts
to an isomorphism $\alpha|_L:L\iso {^g H}$.
Since $L\alpha^*(X)$ is a zero object, we conclude that
\[ \pi_*^H(X) \ \xrightarrow[\iso]{\ c_g^*\ } 
\pi_*^{^g H}(X) \ \xrightarrow[\iso]{L(\alpha|_L)^*} 
\ \pi_*^L(L(\alpha|_L)^*(\res^G_{^g H}(X)))
\ \iso \ \pi_*^L(L\alpha^*(X))\ = \ 0 \ .\]
Since $H$ was an arbitrary compact subgroup of $G$, this proves that $X$
is a zero object in $\Ho(\Sp_G)$.
We have thus shown that  $L\alpha^*$ detects isomorphisms. 

The homomorphism $\alpha$ is quasi-injective by part (i),
so the derived functor $L\alpha^*$ has a left adjoint
$G\ltimes^L_\alpha-:\Ho(\Sp_K)\to\Ho(\Sp_G)$
by Theorem \ref{thm:homomorphism adjunctions spectra} (vi),
which is a total left derived functor of 
$G\ltimes_\alpha-:\Sp_K\to\Sp_G$.
We let $\cY$ denote the class of orthogonal $K$-spectra $Y$
such that the adjunction unit
\[ \eta_Y\ : \ Y\ \to \ L\alpha^*(G\ltimes^L_\alpha Y) \]
is an isomorphism in $\Ho(\Sp_K)$.
The adjunction unit is a natural transformation between two exact functors
that preserve arbitrary sums, so the class $\cY$ is a localizing subcategory
of $\Ho(\Sp_K)$.

Now we consider any almost connected closed subgroup $\bar K$ of $K$ 
with $(\bar K)^\circ=K^\circ$;
in other words, $\bar K$ is a finite union of path components of $K$.
We show that for every orthogonal $\bar K$-spectrum $Z$,
the induced spectrum $K\ltimes_{\bar K} Z$ belongs to the class $\cY$.
We let $\bar G=\td{\alpha(\bar K)}=\alpha(\bar K)\cdot G^\circ$
be the union of those path components of $G$ that correspond
to $\bar K$ under the isomorphism $\pi_0(\alpha):\pi_0(K)\to \pi_0(G)$.
Then $\bar\alpha=\alpha|_{\bar K}:\bar K\to\bar G$ is
a continuous homomorphism and weak equivalence between almost connected
Lie groups.
So property (iii) for the homomorphism $\bar\alpha$ shows that
the adjunction unit
\[ \eta_Z \ : \ Z \ \to \ L\bar\alpha^*(\bar G\ltimes^L_{\bar\alpha} Z) \]
is an isomorphism in $\Ho(\Sp_{\bar K})$.
Hence the left vertical morphism in the following commutative square 
is an isomorphism:
\begin{equation}  \begin{aligned}\label{eq:reduction square}
 \xymatrix@C=23mm{ 
K\ltimes_{\bar K} Z \ar[r]^-{\eta_{K\ltimes_{\bar K} Z}} \ar[d]^\iso_{K\ltimes_{\bar K}\eta_Z}& 
L\alpha^*(G\ltimes^L_\alpha(K\ltimes_{\bar K} Z))\ar[d]_\iso^{L\alpha^*(\mu)}\\
K\ltimes_{\bar K}(L\bar\alpha^*(\bar G\ltimes^L_{\bar\alpha} Z))
\ar[r]^-\iso_{\Ho(\lambda)(\bar G\ltimes^L_{\bar\alpha}Z)} &
  L\alpha^*(G\ltimes_{\bar G}(\bar G\ltimes^L_{\bar\alpha} Z)) 
}     
 \end{aligned}\end{equation}
Here $\mu:G\ltimes^L_\alpha (K\ltimes_{\bar K} Z)\to
G\ltimes_{\bar G}(\bar G\ltimes^L_{\bar\alpha} Z)$ is the mate (adjoint) of the isomorphism
\[
L\bar\alpha^*\circ \res^G_{\bar G}
\  \xrightarrow[\iso]{\td{\incl_{\bar G}^G,\bar\alpha}} \
L(\alpha|_{\bar K})^* 
\ \xleftarrow[\iso]{\td{\alpha,\incl_{\bar K}^K}} \ 
 \res^K_{\bar K}\circ L\alpha^*
\ .\]
Hence $\mu$ and $L\alpha^*(\mu)$ are isomorphisms.
The lower horizontal isomorphism needs some explanation.
On the pointset level, we can define a natural isomorphism of orthogonal
$K$-spectra
\[  
\lambda \ : \ 
K\ltimes_{\bar K}\bar\alpha^*(W)\ \xrightarrow{\ \iso \ }\   \alpha^*(G\ltimes_{\bar G} W ) 
\text{\quad by\quad}
\lambda(k\sm w)\ = \ \alpha(k)\sm w\ ,
\]
where $W$ is any orthogonal $\bar G$-spectrum.
To see that $\lambda$ is indeed an isomorphism, we observe that
the underlying non-equivariant spectrum of $K\ltimes_{\bar K}\alpha^*(W)$ 
is a wedge of copies of $W$ indexed by $K/\bar K$,
that  $\alpha^*(G\ltimes_{\bar G} W )$
is a wedge of copies of $W$ indexed by $G/\bar G$,
and that $\alpha$ induces a bijection of sets $K/\bar K\iso G/\bar G$
because $\pi_0(\alpha):\pi_0(K)\to\pi_0(G)$ is a group isomorphism.
The induction functors $K\ltimes_{\bar K}-$ and $G\ltimes_{\bar G}-$ 
are fully homotopical by Corollary \ref{cor:Gamma2G};
the restriction functors $\alpha^*$ and $\bar\alpha^*$
are fully homotopical by Theorem \ref{thm:adjunctions spectra pointset} (i) 
because $\alpha$ is quasi-injective by part (i).
So $\lambda$ is a natural isomorphism between homotopical functors, 
and hence it descends to a natural isomorphism 
\[ \Ho(\lambda)\ :\ (K\ltimes_{\bar K}-)\circ L\bar\alpha^*\ \iso\ 
L\alpha^*\circ (G\ltimes_{\bar G} -) \]
on the level of homotopy categories.
Now we can wrap up: since the other three morphisms in the commutative
square \eqref{eq:reduction square} are isomorphisms, so is the adjunction
unit $\eta_{K\ltimes_{\bar K} Z}$. In other words,
all $K$-spectra that are induced from subgroups of the form $\bar K$
are in the class $\cY$.

Now we let $L$ be any compact subgroup of $K$.
Then $\bar K=\td{L}=L\cdot K^\circ$ is an almost connected closed
subgroup of $K$ of the kind considered in the previous paragraph.
Since the suspension spectrum $\Sigma^\infty_+ K/L$
is isomorphic to $K\ltimes_{\bar K}(\Sigma^\infty_+ \bar K/L)$,
it is contained in the class $\cY$.
So the class $\cY$ contains all the preferred compact generators
of the triangulated category $\Ho(\Sp_K)$.
Since $\cY$ is a localizing subcategory of $\Ho(\Sp_K)$,
it contains all orthogonal $K$-spectra. In other words,
the adjunction unit $\eta_Y: Y\to L\alpha^*(G\ltimes^L_\alpha Y)$
is an isomorphism in complete generality.

Since the adjunction unit of the adjoint pair $(G\ltimes^L_\alpha-,L\alpha^*)$
is a natural isomorphism and the right adjoint functor $L\alpha^*$ detects isomorphisms,
we finally conclude that the adjunction is an adjoint equivalence
of categories. This proves property (iii) in general.
\end{proof}

Now we discuss another aspect of the homotopy invariance of 
proper equivariant stable homotopy theory, namely homotopy invariance of derived
restriction functors:
we will show that a homotopy through continuous homomorphisms
provides an isomorphism of derived restriction functors.
In the realm of {\em compact} Lie groups, the homotopy invariance
is a direct consequence of the conjugation invariance
of Example \ref{eg:conjugation invariance}:
a celebrated theorem of 
Montgomery and Zippin \cite[Thm.\,1 and Corollary]{montgomery-zippin:Lie}
says that in a Lie group `nearby compact subgroups are conjugate',
and this implies that 
two homotopic continuous homomorphisms from a compact Lie group
to a Lie group are already conjugate, 
compare \cite[III, Lemma 38.1]{conner-floyd:differentiable_periodic}.
If we drop the compactness hypothesis, this statement need not hold anymore:
the identity of the additive Lie group $\mR$ is homotopic, 
through continuous homomorphisms, to the zero homomorphism.
So the homotopy invariance of derived restriction functors
requires an additional argument.

\begin{con}[Homotopy invariance of derived restriction]
We let $\alpha,\beta:K\to G$ be two continuous homomorphisms between Lie groups.
We let 
\[ \omega\ : \ K\times [0,1]\ \to \ G \]
be a homotopy from $\alpha$ to $\beta$ through continuous homomorphisms,
i.e., such that $\omega(-,t):K\to G$ is a homomorphism for every $t\in[0,1]$.    
We define a functor
\[ \omega^* \ : \ \Sp_G \ \to \ \Sp_K  \text{\qquad by\qquad}
 \omega^*(X) \ = \ X\sm [0,1]_+\]
equipped with $K$-action by
\[ k\cdot (x\sm t)\ = \ (\omega(k,t)\cdot x)\sm t\ . \]
By hypothesis we have $\omega(k,0) = \alpha(k)$ and 
$\omega(k,1) = \beta(k)$,
so the two assignments
\begin{align*}
   a\ : \ \alpha^*(X)\ &\to \ \omega^*(X) \ , \quad a(x)\ = \ x\sm 0\text{\quad and\quad}\\
  b\ : \ \beta^*(X)\ &\to \ \omega^*(X) \ , \quad b(x)\ = \ x\sm 1
\end{align*}
define natural morphisms of orthogonal $K$-spectra.
\end{con}

\begin{thm}\label{thm:homotopic homomorphisms}
  Let $\omega: K\times [0,1]\to G$ be a homotopy of continuous homomorphisms
  from $\alpha=\omega(-,0)$ to $\beta=\omega(-,1)$.
  Then for every orthogonal $G$-spectrum $X$,  the morphisms
  \[ 
  a\ : \ \alpha^*(X)\ \to \ \omega^*(X) 
  \text{\qquad and\qquad}
  b\ : \ \beta^*(X)\ \to \ \omega^*(X) 
  \]
  are $\pi_*$-isomorphisms of orthogonal $K$-spectra.
  Hence $a$ and $b$ induce natural isomorphisms of total left derived functors
  \[ L\alpha^*\ \xrightarrow[\iso]{\ L a\ }\ L\omega^* \ 
  \xleftarrow[\iso]{\ L b\ }\ L\beta^* \ . \]
\end{thm}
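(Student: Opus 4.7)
The plan is to restrict to compact subgroups of $K$ and then use a Conner-Floyd-type rigidity result to trivialize the family of homomorphisms by a conjugation path. Because a morphism of orthogonal $K$-spectra is a $\pi_*$-isomorphism exactly when its restriction to every compact subgroup $L\subseteq K$ is one, and because the roles of $a$ and $b$ are interchanged by the reparametrization $\omega(-,t)\mapsto \omega(-,1-t)$, it suffices to fix a compact subgroup $L\subseteq K$ and show that $\res^K_L(a):\alpha|_L^*(X)\to \omega|_L^*(X)$ is a $\pi_*$-isomorphism of orthogonal $L$-spectra.

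The key input is a continuous path of conjugators. The restricted homotopy $\omega|_L:L\times[0,1]\to G$ is a continuous homotopy through homomorphisms out of a \emph{compact} Lie group. The Conner-Floyd rigidity lemma \cite[III, Lemma 38.1]{conner-floyd:differentiable_periodic} shows that all the homomorphisms $\omega(-,t)|_L$ lie in a single $G$-conjugacy orbit inside the space $\Hom(L,G)$ of continuous homomorphisms. This orbit is homeomorphic to $G/C$, where $C=C_G(\alpha(L))$ is the closed centralizer, and $G\to G/C$ is a principal $C$-bundle; applying the path-lifting property to the path $t\mapsto \omega(-,t)|_L$ starting at $\alpha|_L$ produces a continuous path $g:[0,1]\to G$ with $g(0)=e$ and
\[ \omega(l,t)\ = \ g(t)\,\alpha(l)\,g(t)^{-1}\qquad\text{for all }(l,t)\in L\times[0,1]\ .  \]

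Once such a $g$ is in hand, we exhibit an $L$-equivariant isomorphism of orthogonal $L$-spectra
\[ h\ :\ \omega|_L^*(X)\ \to\ \alpha|_L^*(X)\sm [0,1]_+\ ,\qquad h(x\sm t)\ = \ (g(t)^{-1}\cdot x)\sm t\ ,\]
where the target carries the trivial $L$-action on the $[0,1]_+$ factor. Equivariance is a direct calculation using the conjugation identity above together with the commuting $G$- and $O(V)$-actions on the levels $X(V)$; invertibility is clear levelwise. Since $g(0)=e$, the composite $h\circ \res^K_L(a)$ is the inclusion at~$0$,
\[ \alpha|_L^*(X)\ =\ \alpha|_L^*(X)\sm S^0\ \to\ \alpha|_L^*(X)\sm [0,1]_+\ ,\]
namely the smash product of the identity with the based $L$-equivariant homotopy equivalence $S^0\to [0,1]_+$ sending the non-basepoint to $0$. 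Smashing with a based $L$-equivariant homotopy equivalence yields an $L$-equivariant homotopy equivalence of orthogonal $L$-spectra, hence a $\pi_*$-isomorphism, so $\res^K_L(a)$ is a $\pi_*$-isomorphism. The argument for $b$ is identical, with $h\circ \res^K_L(b)$ factoring as the $L$-equivariant isomorphism $g(1)^{-1}:\beta|_L^*(X)\to \alpha|_L^*(X)$ (an isomorphism precisely because $\beta(l)=g(1)\alpha(l)g(1)^{-1}$) followed by the inclusion at $t=1$.

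The derived-functor statement is then formal. The natural $\pi_*$-isomorphism $a:\alpha^*\Longrightarrow\omega^*$ combined with Theorem \ref{thm:homomorphism adjunctions spectra}(i) shows by two-out-of-three that $\omega^*$ takes $\pi_*$-isomorphisms between cofibrant orthogonal $G$-spectra to $\pi_*$-isomorphisms, so Theorem \ref{thm:absolute} provides a total left derived functor $(L\omega^*,\omega_!)$; the natural transformation $a$ then descends to a unique natural isomorphism $La:L\alpha^*\Longrightarrow L\omega^*$ characterized by $(\gamma_K\star a)\circ \alpha_!=\omega_!\circ(La\star \gamma_G)$, and the same reasoning produces $Lb:L\beta^*\Longrightarrow L\omega^*$. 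The main obstacle is the second step: upgrading the Conner-Floyd conjugation statement from the existence of a single conjugator to a \emph{continuously varying} family requires identifying the conjugation orbit of $\alpha|_L$ in $\Hom(L,G)$ with the homogeneous space $G/C_G(\alpha(L))$ and invoking path lifting through the principal bundle $G\to G/C_G(\alpha(L))$; this slice-theorem-style argument is where the proof must be written out carefully.
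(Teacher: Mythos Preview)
Your proposal is correct and follows essentially the same route as the paper: restrict to a compact subgroup $L\leq K$, lift the path $t\mapsto\omega(-,t)|_L$ of homomorphisms to a continuous path of conjugators $g(t)$ with $g(0)=e$, and use left multiplication by $g(t)^{-1}$ to trivialize $\omega|_L^*(X)$ as $\alpha|_L^*(X)\sm[0,1]_+$. The one place where the paper supplies more detail than you do is exactly the step you flag as the main obstacle. Rather than working directly in $G$, the paper first passes to the almost connected closed subgroup $\Gamma=\alpha(L)\cdot G^\circ$ (which contains the image of every $\omega(-,t)|_L$ by continuity), and then invokes \cite[Prop.\,A.25]{schwede:global} to identify the path component of $\alpha|_L$ in $\hom(L,\Gamma)$ with the orbit $\Gamma^\circ\cdot\alpha|_L$, together with \cite[Thm.\,B.2]{luck_uribe:equivariant_principal} to see that the continuous bijection $\Gamma^\circ/C\to\hom(L,\Gamma;\alpha)$ is a homeomorphism. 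The path then lifts through the locally trivial fiber bundle $\Gamma^\circ\to\Gamma^\circ/C$. This is precisely the ``slice-theorem-style argument'' you anticipated; the reduction to the almost connected $\Gamma$ is what makes the cited results apply cleanly.
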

\begin{proof}
We show that $a:\alpha^*(X)\to\omega^*(X)$ is a $\pi_*$-isomorphism;
the argument for $b$ is analogous.
We let $L$ be any compact subgroup of $K$.
We let $\Gamma=\alpha(L)\cdot G^\circ$ be the union of those path components of $G$ that are
in the image of $\pi_0(\alpha|_L):\pi_0(L)\to\pi_0(G)$.
Then $\alpha|_L$ has image in $\Gamma$, by construction,
and $\pi_0(\Gamma)$ is finite.
By continuity, each of the homomorphisms $\omega(-,t):K\to G$
also takes $L$ to $\Gamma$.
We can thus restrict $\omega$ to a path 
\[ \bar\omega \ = \ \omega|_{L\times [0,1]}\ : \ L\times [0,1]\ \to \ \Gamma \]
of continuous homomorphisms from  $\alpha|_L$ to $\beta|_L$.

We let $\hom(L,\Gamma)$ denote the space of continuous homomorphisms 
with the  subspace topology of the compact-open topology
(which coincides with the function space topology in the category $\bT$).
We let $\Gamma^\circ$ denote the identity path component of the group $\Gamma$;
then $\Gamma^\circ$ is also the identity path component of $G$.
Since $L$ is compact, the image of the continuous map
\[ \Gamma^\circ \ \to \ \hom(L,\Gamma)\ , \quad g \ \longmapsto \ c_g\circ \alpha|_L \]
is the entire path component $\hom(L,\Gamma;\alpha)$ of $\alpha|_L$, 
see \cite[Prop.\,A.25]{schwede:global}.
So the map factors over a continuous bijection
\[ \Gamma^\circ/ C \ \iso \ \hom(L,\Gamma;\alpha)\ , \]
where $C=\Gamma^\circ\cap C_{\Gamma}(\alpha(L))$ is the centralizer in $\Gamma$
of $\alpha(L)$, intersected with $\Gamma^\circ$.
Moreover, this map is a homeomorphism, 
for example by \cite[Thm.\,B.2]{luck_uribe:equivariant_principal}.
The projection $\Gamma^\circ\to \Gamma^\circ/C$
is a locally trivial fiber bundle, 
see for example \cite[I Thm.\,4.3]{brocker_tomdieck:representations_compact}.
So the path
\[ \omega|_L\ : \ [0,1]\ \to \  \hom(L,\Gamma;\alpha) \]
admits a continuous lift
\[ \lambda\ : \ [0,1]\ \to \ \Gamma^\circ \]
such that $\lambda(0)=1$ and $\omega|_L(t)=c_{\lambda(t)}\circ \alpha|_L$.
The map
\[ \omega^*(X)\ \to \ (\alpha|_L)^*(X)\sm [0,1]_+ \ , \quad x\sm t\ \longmapsto \
\lambda(t)\cdot x\sm t \]
is then an $L$-equivariant isomorphism of orthogonal $L$-spectra.
Moreover, under this isomorphism, the restriction to $L$ of
the morphism $a:\alpha^*(X)\to \omega^*(X)$
becomes the morphism $-\sm 0:(\alpha|_L)^*(X)\to (\alpha|_L)^*(X)\sm [0,1]_+$.
This proves that $a$ is an $L$-equivariant homotopy equivalence.
Since $L$ was an arbitrary compact subgroup of $K$, we have altogether
shown that the morphism $a$ is a $\pi_*$-isomorphism of orthogonal $K$-spectra.
\end{proof}

\begin{rk}\index{left derived functor!of restriction}
The left derived functor $L\alpha^*:  \Ho(\Sp_G)\to \Ho(\Sp_K)$
associated to a continuous homomorphism between Lie groups
is also compatible with derived smash products, i.e.,
it can be given a preferred strong symmetric monoidal structure.
The essential ingredient for this is the fact that the pointset level
restriction functor $\alpha^*$ preserves quasi-cofibrant spectra
and $\pi_*$-isomorphisms between quasi-cofibrant spectra,
and that quasi-cofibrant spectra are quasi-flat, i.e., the smash product
is fully homotopical on quasi-cofibrant spectra.
We will not go into more details about multiplicative
aspects of $L\alpha^*$.
\end{rk}

\chapter{Equivariant homotopy groups}

\section{\texorpdfstring{$G$}{G}-equivariant homotopy groups}

For a {\em compact} Lie group~$H$, 
the $H$-equivariant homotopy group $\pi_0^H(X)$
is defined as a colimit over all finite-dimensional $H$-sub\-represen\-tations $V$
of a complete $H$-universe, of the sets~$[S^V, X(V)]^H$. 
In this section we propose a generalization of these equivariant homotopy groups
to arbitrary Lie groups, not necessarily compact,
as the morphisms from the $G$-sphere spectrum $\mS_G$
in the triangulated homotopy category
$\Ho(\Sp_G)$, see Definition \ref{def:G homotopy groups}.
In contrast to the classical case of {\em compact} Lie groups,
in our context the $G$-equivariant homotopy groups need not
send wedges to direct sums.
Equivalently, the $G$-sphere spectrum need not be a small object
in the triangulated category $\Ho(\Sp_G)$.
The question of whether or not $\mS_G$ is small turns out to be
related to finiteness properties of $G$.
As an example we show in Proposition \ref{prop:EG small - S_G small}
that $\mS_G$ is small if the group admits a finite $G$-CW-model
for the universal $G$-space $\uEG$ for proper actions, i.e.,
one with only finitely many equivariant cells
(or equivalently, with compact orbit space).
Example \ref{eg:S_G not small} illustrates that for this particular purpose,
the existence of a {\em finite-dimensional} model for $\uEG$
is not sufficient.
A precise characterization of when $\mS_G$ is a small object of $\Ho(\Sp_G)$
appears in the work of B{\'a}rcenas and the first and fourth author,
see \cite[Thm\,5.1 and Thm.\,5.4]{barcenas-degrijse-patchkoria:stable_finiteness}.

We also discuss the natural structure that relates the $G$-equivariant
stable homotopy groups, specifically restriction homomorphisms
associated with continuous homomorphisms between Lie groups
(see Construction \ref{con:define restriction})
and transfer maps for finite index inclusions (see Construction \ref{con:transfer}).
Transfer maps are closely related to the Wirthm{\"u}ller isomorphism,
which we formulate and prove for finite index inclusions in 
Theorem \ref{thm:restriction adjoints}.
In the last part of this section we verify that the standard relations
between restriction homomorphisms and transfer maps generalize
from the classical context of compact Lie groups to our more general situation.\medskip

We refer to Remark \ref{rk:[k]} for our convention about the shifts $X[k]$
of an orthogonal $G$-spectrum, for $k\in\mZ$.

\index{equivariant homotopy groups|(}
\begin{defn}\label{def:G homotopy groups} 
Let~$G$ be a Lie group, $X$ an orthogonal $G$-spectrum
and~$k\in\mZ$.
The~$k$-th {\em $G$-homotopy group} is defined as
\[ \pi_k^G(X)\ = \ [ \mS_G,X[-k] ]^G \]
the group of morphisms, in the stable homotopy category $\Ho(\Sp_G)$,
from the $G$-sphere spectrum to the $(-k)$-fold shift of $X$.\index{G-equivariant stable homotopy category@$G$-equivariant stable homotopy category}
\end{defn}

If $G$ is compact, then the new definition of $\pi_*^G$ agrees with the old one, 
up to a specific natural isomorphism.
Indeed, for compact $G$, the representability result of Proposition~\ref{prop:rephtpy}  
provides a natural isomorphism
\[    [\mS_G,X[0]]\ \iso \ [\Sigma^{\infty}_+ G/G, X]^G \ \cong\ \pi_0^G(X)\ ,
\quad[f]\ \longmapsto \ f_*(1)\ .\]
Here we identified $\Sigma^\infty_+ G/G$ with $\mS_G$,
so that the tautological class $u_G\in\pi_0^G(\Sigma^\infty_+ G/G)$
becomes the class $1\in\pi_0^G(\mS_G)$ represented by the identity of $S^0=(\mS_G)(0)$.
For $k>0$, we combine this with the iterated loop isomorphism \eqref{eq:loop iso} 
into a natural isomorphism
\[    [\mS_G,X[-k]]\ =\ [\mS_G, \Omega^k X]^G \ \cong\ \pi_0^G(\Omega^k X)\ 
\iso \  \pi_k^G(X)\ .\]
For $k<0$, we instead use the 
iterated suspension isomorphism \eqref{eq:suspension iso} 
and obtain a natural isomorphism
\[    [\mS_G,X[-k]]\ =\ [\mS_G, X\sm S^{-k}]^G \ \cong\ \pi_0^G(X\sm S^{-k})\ 
\iso \  \pi_k^G(X)\ .\]

If $G$ is discrete and admits a finite $G$-CW-model for $\uEG$,
then the definition for compact groups generalizes, provided we replace
`$G$-representations' by `$G$-vector bundles over $\uEG$'.
Indeed, Theorem \ref{thm:rep2vect} (iv) below provides an isomorphism
\[  \pi_0^G(X)\ \iso \   X_G(\uEG)  \ \xrightarrow[\iso]{\mu_{\uEG}^X}\ X_G\gh{\uEG} \ ;\]
the right hand side is defined via fiberwise $G$-homotopy classes 
of $G$-maps $S^\xi\to X(\xi)$, for $G$-vector bundles $\xi$ over $\uEG$,
see Construction \ref{con:define alternative} below.

\begin{rk}
  Let $G$ be a discrete group.
  In Example \ref{eg:Mackey of G-spectrum}\index{G-Mackey functor@$G$-Mackey functor!of an orthogonal $G$-spectrum} below we introduce the graded homotopy group $G$-Mackey functor
  $\upi_*(X)$ of an orthogonal $G$-spectrum $X$.
  This algebraic object records the values of the $H$-equivariant homotopy groups
  for all finite subgroups $H$ of $G$, and the natural structure between them.
  If the group $G$ itself is infinite, then the groups $\pi_*^G(X)$
  are {\em not} encoded in the $G$-Mackey functor $\upi_*(X)$.

  In this situation, there is an Atiyah-Hirzebruch type spectral sequence
  converging to $\pi_*^G(X)$ whose $E_2$-term is the Bredon cohomology of\index{universal proper $G$-space}
  $\uEG$ with coefficients in the homotopy group Mackey functors of $X$.
  Indeed, the $G$-equivariant homotopy group $\pi_{-n}^G(X)$
  is canonically isomorphic to the group $X^n_G(\uEG)$.
  The Atiyah-Hirzebruch spectral sequence~\eqref{eq:AHSS},
  for $X=\uEG$ and for the proper cohomology theory $X^*_G$ 
  represented by $X$, thus takes the form\index{Atiyah-Hirzebruch spectral sequence}
  \begin{equation}\label{eq:AHSS_for_sphere}
    E_2^{p,q} \ = \ H^p_G(\uEG, \upi_{-q}(X))\ \Longrightarrow \ \pi_{-p-q}^G(X) \ .  
  \end{equation}
  If $G$ has an $n$-dimensional model for $\uEG$,
  or -- more generally -- an $n$-dimensional {\em stable} model for $\uEG$,
  then the Mackey functor cohomological dimension of $G$ is at most $n$,
  see for example \cite[Thm.\,1.2]{barcenas-degrijse-patchkoria:stable_finiteness}.
  So in this case, the $E_2$-term of the Atiyah-Hirzebruch spectral sequence 
  \eqref{eq:AHSS_for_sphere} vanishes for $p>n$, and the spectral sequence collapses at $E_{n+1}$.
  
  The 0-th Bredon cohomology group of $\uEG$ is the inverse limit over the
  $\Fin$-orbit category $\Or_G^{\Fin}$,
  so the edge homomorphism of the spectral sequence
  can be viewed as a homomorphism
  \[ \pi_k^G(X)\ \to \  H^0_G(\uEG, \upi_k(X))
    \ \cong \ {\lim}_{\Or_G^{\Fin}}\, \upi_k(X) \ .   \]
  A more detailed analysis would reveal that this edge homomorphism
  is given by the restriction maps
  \[ \res^G_H \ : \  \pi_k^G(X)\ = \ [\mS_G,X[-k]]^G \ \to \ 
    [\mS_H, X[-k]]^H \ \cong\ \pi_k^H(X) \]
  for all finite subgroups $H$ of~$G$.
  So a compatible system $\{x_H\}_{G/H\in\Or^{\Fin}_G}$ of homotopy classes in $\pi_k^H(X)$
  is the restriction of some class in $\pi_k^G(X)$ if any only if the 
  corresponding element of $H^0_G(\uEG,\upi_k(X))$ is a permanent cycle in
  the Atiyah-Hirzebruch spectral sequence~\eqref{eq:AHSS_for_sphere}.
  
  As a special case we consider a countable discrete group~$G$
  that is {\em locally finite}, i.e., every finitely generated subgroup of~$G$
  is finite.\index{locally finite group} 
  Such groups have a 1-dimensional model for $\uEG$.
  For example, if $G$ is locally finite and countable, then it is the union of
  an ascending sequence of finite subgroups, and 
  we described such a model in Example \ref{eg:1-dim for locally finite}.
  A general locally finite group $G$ is the filtered union of its finite subgroups.
  Group homology commutes with filtered unions, so the group homology
  and group cohomology of $G$ with coefficients 
  in any $\mQ G$-module vanish in positive dimensions.
  A theorem of Dunwoody \cite[Thm.\,1.1]{dunwoody:accessibility} then provides a
  1-dimensional model for $\uEG$.
  
  If $G$ has a 1-dimensional $\uEG$, 
  then the spectral sequence~\eqref{eq:AHSS_for_sphere} collapses at $E_2$
  and specializes to a short exact sequence
  \[ 0 \ \to \  {\lim}^1_{\Or_G^{\Fin}}\, \upi_{n+1}(X) \ \to \
    \pi_n^G(X) \ \to \ {\lim}_{\Or_G^{\Fin}}\, \upi_n(X) \ \to \ 0 \ . \]
  If $G$ is locally finite and countable, then this short exact sequence
  is a special case of Corollary~\ref{cor:lim_lim^1_sequence}.
\end{rk}

Essentially by definition, 
the $G$-equivariant homotopy groups take distinguished triangles 
to long exact sequences, and products to products.
One should beware, though, that infinite products of orthogonal
$G$-spectra are {\em not} generally 
products in the triangulated category~$\Ho(\Sp_G)$;
they are if all factors are $G$-$\Omega$-spectra.
However, in our more general context not all the `usual' properties 
of equivariant homotopy groups carry over from compact to
general Lie groups.
For example, the functor $\pi_*^G$ does {\em not} in general take
infinite wedges to direct sums, because the $G$-sphere spectrum $\mS_G$
need not be small in the triangulated category $\Ho(\Sp_G)$.

\begin{prop}\label{prop:EG small - S_G small}
  Let~$G$ be a Lie group that has a model for $\uEG$
  that admits a finite $G$-CW-structure.\index{G-equivariant stable homotopy category@$G$-equivariant stable homotopy category}\index{universal proper $G$-space}
  \begin{enumerate}[\em (i)]
  \item 
    Let $X$ be an orthogonal $G$-spectrum such that
    the $H$-spectrum $\res^G_H(X)$ is a small object in $\Ho(\Sp_H)$
    for every compact subgroup $H$ of $G$.
    Then the $G$-spectrum $X$ is a small object in the triangulated category $\Ho(\Sp_G)$.
  \item The $G$-sphere spectrum $\mS_G$ is a small object 
    in the triangulated category $\Ho(\Sp_G)$,
    and the functor $\pi_k^G:\Ho(\Sp_G)\to \cA b$ preserves all sums.
  \end{enumerate}
\end{prop}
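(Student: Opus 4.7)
The key idea is that smashing with $\uEG_+$ replaces $X$ by a spectrum that can be built by finitely many cell attachments from induced spectra from compact subgroups. Concretely, since the underlying $H$-space of $\uEG$ is equivariantly contractible for every compact subgroup $H$ of $G$, the projection $X \wedge \uEG_+ \to X$ is an $H$-equivariant homotopy equivalence on underlying $H$-spectra, and hence a $\pi_*$-isomorphism of orthogonal $G$-spectra. So $X$ is isomorphic to $X \wedge \Sigma^\infty_+ \uEG$ in $\Ho(\Sp_G)$, and it suffices to show that the latter is small.

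By hypothesis, we may choose a $G$-CW-model for $\uEG$ with only finitely many equivariant cells, each of the form $G/H_j \times D^{n_j}$ with $H_j$ compact. The skeletal filtration yields finitely many pushout squares of based $G$-spaces whose cofibers have the form $\bigvee_j (G/H_j)_+ \wedge S^{n_j}$. Taking suspension spectra and smashing with $X$ produces finitely many distinguished triangles in $\Ho(\Sp_G)$ whose connecting cofiber terms are wedges of spectra of the form $X \wedge \Sigma^\infty_+ G/H_j \wedge S^{n_j}$. The class of small objects in any triangulated category is closed under shifts, finite coproducts, and cofibers, so by induction on the cells it is enough to show that $X \wedge \Sigma^\infty_+ G/H$ is small for every compact subgroup $H$ of $G$.

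The crucial identification is the natural $G$-equivariant isomorphism of orthogonal spectra
\[ G \ltimes_H \res^G_H(X) \ \iso \ X \wedge \Sigma^\infty_+ G/H \ , \quad [g,\,x]\ \longmapsto \ (gx) \wedge gH\ . \]
Both $G \ltimes_H (-)$ and $\res^G_H$ preserve $\pi_*$-isomorphisms (by Corollaries \ref{cor:Gamma2G} and \ref{cor:restriction stable}), so the pointset level adjunction descends directly to an adjunction between $\Ho(\Sp_H)$ and $\Ho(\Sp_G)$. Since wedges model coproducts in both homotopy categories and $\res^G_H$ commutes with wedges, for every family $\{Y_i\}_{i \in I}$ of orthogonal $G$-spectra we obtain
\[
 [X \wedge \Sigma^\infty_+ G/H,\ \bigvee_i Y_i ]^G
 \ \iso\ [\res^G_H(X),\ \bigvee_i \res^G_H(Y_i)]^H
 \ \iso\ \bigoplus_i [\res^G_H(X),\  \res^G_H(Y_i)]^H \ ,
\]
where the last isomorphism uses the hypothesis that $\res^G_H(X)$ is small in $\Ho(\Sp_H)$. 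Adjunction identifies this direct sum with $\bigoplus_i [X \wedge \Sigma^\infty_+ G/H, Y_i]^G$, completing the proof of (i).

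For (ii), the restriction $\res^G_H(\mS_G)$ is the $H$-sphere spectrum $\mS_H \iso \Sigma^\infty_+ H/H$, which is small in $\Ho(\Sp_H)$ by Proposition \ref{prop:rephtpy}. Hence $\mS_G$ satisfies the hypothesis of (i) and is itself small in $\Ho(\Sp_G)$. As the shift functors $[-k]$ are auto-equivalences of $\Ho(\Sp_G)$ and so commute with sums, the identification $\pi_k^G(X) = [\mS_G, X[-k]]^G$ together with smallness of $\mS_G$ immediately yields that $\pi_k^G$ preserves sums. The one delicate technical ingredient is the $\uEG$-smashing step of part (i), since this is where finiteness of $\uEG$ is essentially used to reduce a general $X$ to an iterated extension of induced spectra from compact subgroups.
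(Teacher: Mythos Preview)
Your proof is correct and follows essentially the same approach as the paper: replace $X$ by $X\sm\uEG_+$, induct on the finitely many equivariant cells of $\uEG$, and use the identification $X\sm G/H_+\iso G\ltimes_H\res^G_H(X)$ together with the $(G\ltimes_H-,\res^G_H)$ adjunction to reduce to smallness of $\res^G_H(X)$. The paper phrases the key step more abstractly (left adjoints of coproduct-preserving functors preserve compact objects), but your explicit chain of isomorphisms is the same argument.
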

\begin{proof}
  (i)
  The restriction functor $\res^G_H:\Ho(\Sp_G)\to\Ho(\Sp_H)$
  preserves coproducts, so its left adjoint $G\ltimes_H-$ preserves compact objects.
  So if $\res^G_H(X)$ is small in $\Ho(\Sp_H)$,
  then $X\sm G/H_+\iso G\ltimes_H\res^G_H(X)$ is small in $\Ho(\Sp_G)$.
  The class of small objects in a triangulated category is closed 
  under 2-out-of-3 in distinguished triangles. So induction over the
  number of equivariant cells shows that $X\sm A_+$ is small
  for every finite proper $G$-CW-complex $A$.
  Since $\uEG$ has a finite proper $G$-CW-model, $X\sm \uEG_+$ is small.
  But $X$ is isomorphic in $\Ho(\Sp_G)$ to $X\sm \uEG_+$, so $X$ itself is small.

  (ii) The restriction functor $\res^G_H:\Ho(\Sp_G)\to\Ho(\Sp_H)$ takes the
  $G$-sphere spectrum to the $H$-sphere spectrum, which is small if $H$ is compact.
  So the $G$-sphere spectrum is small in $\Ho(\Sp_G)$.
\end{proof}

As we just saw, a finite model for $\uEG$ implies that
the $G$-sphere spectrum is small in the triangulated homotopy category~$\Ho(\Sp_G)$.
The example below illustrates that this is not true in general,
and that even a finite-dimensional model for $\uEG$ 
does not imply smallness of $\mS_G$.
In fact, \cite[Thm.\,5.4]{barcenas-degrijse-patchkoria:stable_finiteness} 
shows that for a countable discrete group $G$, the $G$-sphere spectrum is small 
if and only if there exists a finite-dimensional stable model for $\uEG$,
and there exists a finite-type stable model for $\uEG$.
Moreover, by \cite[Thm.\,5.1]{barcenas-degrijse-patchkoria:stable_finiteness},
a finite-type stable model for $\uEG$ exists if and only if
there are only finitely many conjugacy classes of finite subgroups in $G$, 
and every Weyl group $W_G(H)=N_G(H)/H$ of a finite subgroup $H$ of $G$ 
is of homological type $F P_{\infty}$.

\begin{eg}[The $G$-sphere spectrum need not be small]\label{eg:S_G not small}
We let $F$ be any non-trivial finite group, and we define
\[ G \ = \ {\prod}'_{k\geq 1}\,  F \ , \]
an infinite weak product of copies of~$F$,
i.e., the subgroup of the product consisting of tuples with almost all
coordinates the neutral element.
We will now show that $\mS_G$ is not small in~$\Ho(\Sp_G)$. 
Example \ref{eg:1-dim for locally finite} exhibits a
1-dimensional $G$-CW-model for $\uEG$; 
Proposition \ref{prop:EG small - S_G small} shows that 
there cannot be a finite $G$-CW-model for $\uEG$.

We set $H_n=\prod_{k=1}^n F$.
Then $G$ is the ascending union of its finite subgroups $H_n$,
and we can apply Corollary \ref{cor:lim_lim^1_sequence}.
The inclusion $H_{n-1}\to H_n$ has a retraction $r:H_n\to H_{n-1}$
by a group homomorphism.
Now we let $X$ be an orthogonal spectrum, which we give the trivial $G$-action.
The inflation homomorphism $r^*:\pi_k^{H_{n-1}}(X)\to \pi_k^{H_n}(X)$
is defined in \eqref{eq:restriction_hom} below; it is a section to the
restriction $\res^{H_n}_{H_{n-1}}:\pi_k^{H_n}(X)\to \pi_k^{H_{n-1}}(X)$.
Since the restriction maps are surjective, 
the $\lim^1$ terms in the short exact sequence of Corollary \ref{cor:lim_lim^1_sequence} 
vanish, and we conclude that the map
\[ \pi_k^G(X)\ \to \ {\lim}\, \pi_k^{H_n}(X) \]
induced by restriction is an isomorphism.

In the commutative square
\begin{equation}\begin{aligned}\label{eq:smallness_counterexample}
 \xymatrix{ 
\bigoplus_{\mN} \, \pi_0^G(\mS_G)\ar[r]^-\cong \ar[d] &
\bigoplus_{\mN} \, \lim \, \pi_0^{H_n}(\mS_{H_n}) \ar[d] \\
\pi_0^G(\bigoplus_{\mN} \, \mS_G) \ar[r]_-\cong &
 \lim \, \pi_0^{H_n}(\bigoplus_\mN \mS_{H_n}) }     
\end{aligned}\end{equation}
the two horizontal maps are thus isomorphisms.
Since $H_n$ is finite, the group $\pi_0^{H_n}(\mS_{H_n})$ is
isomorphic to the Burnside ring $A(H_n)$, and the map
\[ \res^{H_n}_{H_{n-1}}\ : \ \mA(H_n) \ \to \ \mA(H_{n-1}) \]
is a split epimorphism between finitely generated free abelian groups. 
Since the group $F$ is non-trivial, the kernel of this restriction map 
is non-trivial.
The upper right corner of square~\eqref{eq:smallness_counterexample}
is thus a countably infinite sum of 
a countably infinite product of copies of~$\mZ$,
and the right vertical map is not surjective.
So the left vertical map is not surjective, 
and hence the $G$-sphere spectrum is not small for the particular
group under consideration.
\end{eg}

\begin{con}[Restriction homomorphisms]\label{con:define restriction}
We let $\alpha:K\to G$ be a continuous homomorphism between Lie groups.
As we shall now explain, such a homomorphism induces a {\em restriction homomorphism}
\begin{equation}\label{eq:restriction_hom}
 \alpha^* \ : \ \pi_k^G(X)\ \to \ \pi_k^K( (L\alpha^*)(X)) \ ,  
\end{equation}
natural for morphisms of orthogonal $G$-spectra, where $L\alpha^*$
is the total left derived functor of $\alpha^*:\Sp_G\to\Sp_K$,
compare Theorem \ref{thm:homomorphism adjunctions spectra}.
The construction exploits the two isomorphisms
\[  ((L\alpha^*)(\mS_G))\ \xrightarrow{\alpha_!^{\mS_G}}\
 \alpha^*(\mS_G)\ =\ \mS_K \text{\quad and\quad}
(L\alpha^*)(X)[-k]\ \iso\ (L\alpha^*)(X[-k])  \]
in $\Ho(\Sp_K)$; the first one is an isomorphism because $\mS_G$ is quasi-cofibrant,
and the second one is provided by part (iv) 
of Theorem \ref{thm:homomorphism adjunctions spectra}.
So we define the restriction homomorphism as the composite
\[ [\mS_G,X[-k]]^G\ \xrightarrow{\ L\alpha^*\ }\ 
[(L\alpha^*)(\mS_G),(L\alpha^*)(X[-k])]^G
\ \xrightarrow{\ \iso\ }\ [\mS_K,(L\alpha^*)(X)[-k]]^G\ . \]
\end{con}

Now we consider two composable continuous homomorphisms $\alpha:K\to G$ 
and $\beta:L\to K$. 
In \eqref{eq:td(a,b)} we exhibited a natural isomorphism
$\td{\alpha,\beta}: (L\beta^*)\circ (L\alpha^*)\Longrightarrow L(\alpha\beta)^*$ 
that relates the three derived functors.  

\begin{prop}\index{left derived functor!of restriction}
Let $\alpha:K\to G$ and $\beta:L\to K$ be composable continuous homomorphisms 
between Lie groups. Then for every orthogonal $G$-spectrum $X$, the composite
\begin{align*}
\pi_k^G(X) \ \xrightarrow{\ \alpha^*\ } \ &\pi_k^K( (L\alpha)^*(X)) \
             \xrightarrow{\ \beta^*\ } \\
&\pi_k^L( (L\beta)^*((L\alpha)^*(X)))   
\ \xrightarrow{\td{\alpha,\beta}^X_*} \ \pi_k^L(L(\alpha\beta)^*(X))
\end{align*}
coincides with the restriction homomorphism $(\alpha\beta)^*$.
\end{prop}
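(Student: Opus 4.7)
The plan is to unwind both sides of the equation using the explicit recipe in Construction~\ref{con:define restriction}, and then identify them by applying three compatibility identities for $\td{\alpha,\beta}$. For a morphism $f:\mS_G\to X[-k]$ representing a class in $\pi_k^G(X)$, writing $\iota^\alpha_{-k}(X):L\alpha^*(X[-k])\to L\alpha^*(X)[-k]$ for the natural isomorphism extending $\sigma^\alpha$ to the shift $[-k]$ via Proposition~\ref{prop:coherent isos}, the restriction is the composite
\[
\alpha^*(f)\;=\;\iota^\alpha_{-k}(X)\circ L\alpha^*(f)\circ(\alpha_!^{\mS_G})^{-1}\;:\;\mS_K\to L\alpha^*(X)[-k]\,,
\]
and analogous formulas describe $\beta^*(\alpha^*(f))$ and $(\alpha\beta)^*(f)$. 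Note that $\alpha_!^{\mS_G}$, $\beta_!^{\mS_K}$ and $(\alpha\beta)_!^{\mS_G}$ are all isomorphisms because $\mS_G$ and $\mS_K$ are quasi-cofibrant, by Theorem~\ref{thm:homomorphism adjunctions spectra}(ii).

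The three ingredients are as follows. \textbf{(a)} Evaluating the defining relation~\eqref{eq:td(a,b)} at the quasi-cofibrant spectrum $\mS_G$ yields $\beta_!^{\mS_K}\circ L\beta^*(\alpha_!^{\mS_G})=(\alpha\beta)_!^{\mS_G}\circ\td{\alpha,\beta}_{\mS_G}$, which, after inversion, reads
\[
\bigl((\alpha\beta)_!^{\mS_G}\bigr)^{-1}\;=\;\td{\alpha,\beta}_{\mS_G}\circ L\beta^*\bigl((\alpha_!^{\mS_G})^{-1}\bigr)\circ(\beta_!^{\mS_K})^{-1}\,.
\]
\textbf{(b)} Naturality of $\td{\alpha,\beta}$ applied to $f$ provides the commutation
$L(\alpha\beta)^*(f)\circ\td{\alpha,\beta}_{\mS_G}=\td{\alpha,\beta}_{X[-k]}\circ L\beta^*(L\alpha^*(f))$. \textbf{(c)} Exactness of $\td{\alpha,\beta}$, established in the preceding proposition for the basic shift $[1]$ and promoted to $[-k]$ via the coherent isomorphisms of Proposition~\ref{prop:coherent isos}, gives the outer identity
\[
\sigma^{\alpha\beta}_{-k}(X)\circ\td{\alpha,\beta}_{X[-k]}\;=\;\td{\alpha,\beta}_X[-k]\circ\iota^\beta_{-k}(L\alpha^*(X))\circ L\beta^*(\iota^\alpha_{-k}(X))\,.
\]

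Chaining these together: start from $(\alpha\beta)^*(f)=\iota^{\alpha\beta}_{-k}(X)\circ L(\alpha\beta)^*(f)\circ((\alpha\beta)_!^{\mS_G})^{-1}$, substitute (a) for the rightmost factor, use (b) to commute $\td{\alpha,\beta}_{\mS_G}$ past $L(\alpha\beta)^*(f)$, and finally apply (c) to rewrite the leftmost three factors as $\td{\alpha,\beta}_X[-k]\circ\iota^\beta_{-k}(L\alpha^*(X))\circ L\beta^*(\iota^\alpha_{-k}(X))$. The result regroups as
\[
\td{\alpha,\beta}_X[-k]\circ\Bigl(\iota^\beta_{-k}(L\alpha^*(X))\circ L\beta^*\bigl(\iota^\alpha_{-k}(X)\circ L\alpha^*(f)\circ(\alpha_!^{\mS_G})^{-1}\bigr)\circ(\beta_!^{\mS_K})^{-1}\Bigr)\,,
\]
and the inner parenthesis is exactly $\beta^*(\alpha^*(f))$. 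Postcomposition with $\td{\alpha,\beta}_X[-k]$ realizes the map $\td{\alpha,\beta}^X_*$, so the composite equals $(\alpha\beta)^*(f)$ as claimed.

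The main obstacle is purely bookkeeping: each expression involves six or seven composed morphisms, and one must be careful about which isomorphisms from Proposition~\ref{prop:coherent isos} are used to identify $L\alpha^*(X[-k])$ with $L\alpha^*(X)[-k]$ and similarly for the composite. Once the notation for these shift comparisons is fixed, the three identities (a)--(c) are precisely tailored so that their combination matches exactly; no new idea beyond the universal property of $\td{\alpha,\beta}$ and the exactness already proved in the preceding proposition is required.
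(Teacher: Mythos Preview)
Your proof is correct, but it takes a more laborious route than the paper's. The paper observes that both sides of the claimed equality are natural transformations of functors in~$X$ on~$\Ho(\Sp_G)$, compatible with suspension; since the source functor $\pi_k^G(-)=[\mS_G,(-)[-k]]^G$ is representable, Yoneda reduces the entire statement to a single check at the universal element, namely the identity of~$\mS_G$ in degree~$0$. That check is exactly your identity~(a), the defining relation~\eqref{eq:td(a,b)} evaluated at~$\mS_G$. Your identities~(b) and~(c) are precisely what the paper invokes implicitly when it says ``natural for $G$-maps in~$X$'' and ``compatible with the suspension isomorphisms''; the Yoneda argument packages them so that no explicit chase through six composed morphisms is needed. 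Both approaches use the same three facts, but the paper's organization avoids the bookkeeping you flag as the main obstacle. One small slip: in your display of identity~(c) you write $\sigma^{\alpha\beta}_{-k}$ where elsewhere you write $\iota^{\alpha\beta}_{-k}$; these should be the same isomorphism.
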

\begin{proof}
All maps are natural for $G$-maps in $X$ and compatible with
the suspension isomorphisms. So by naturality it suffices to prove
the claim for the universal example, the identity of $\mS_G$.
After unraveling all definitions, the universal example then comes down
to the relation
\[ \beta_!^{\mS_K}\circ (L\beta)^* (\alpha_!^{\mS_G})
= \ (\alpha\beta)_!^{\mS_G}\circ \td{\alpha,\beta}^{\mS_G}\ : \
(L\beta^*)((L\alpha^*)(\mS_G))\ \to \ \mS_L \ , \]
which is an instance of the defining property \eqref{eq:td(a,b)}
of the transformation $\td{\alpha,\beta}$.
\end{proof}
\index{equivariant homotopy groups|)}

\index{Wirthm{\"u}ller isomorphism|(}
Now we discuss the Wirthm{\"u}ller isomorphism for
finite index inclusions of Lie groups, and the transfer maps that it gives rise to.
We consider a closed subgroup $\Gamma$ of $G$
and we write $\res^G_\Gamma:\Sp_G\to\Sp_\Gamma$ for the restriction functor.
This restriction functor is fully homotopical, i.e., it takes
$\pi_*$-isomorphisms of orthogonal $G$-spectra to
$\pi_*$-isomorphisms of orthogonal $\Gamma$-spectra,
simply because every compact subgroup of $\Gamma$ is also a compact subgroup of $G$.
So we get an induced restriction functor on the homotopy categories
\[ \res^G_\Gamma\ \colon \ \Ho(\Sp_G) \ \to \ \Ho(\Sp_\Gamma) \]
from the universal property of localizations, 
for which we use the same name.
This functor satisfies $\res^G_\Gamma\circ\gamma_G=\gamma_\Gamma\circ \res^G_\Gamma$,
so it is in particular a total left derived functor of restriction.
The restriction functor $\res^G_\Gamma$ is 
both a left Quillen functor and a right Quillen functor for the two stable model
structures, by Corollary \ref{cor:restriction stable}. 
We write
\[  \coind^G_\Gamma  \colon \Ho(\Sp_\Gamma)\ \to \  \Ho(\Sp_G)  \]
for the right adjoint of the derived restriction functor,
which is a total right derived functor of the functor
$\map^\Gamma(G,-):\Sp_\Gamma\to\Sp_G$.
Moreover, the left adjoint $G\ltimes_\Gamma-$ is also fully homotopical,
so it, too, passes to a functor on the homotopy categories
\[ G\ltimes_\Gamma\ \colon \ \Ho(\Sp_\Gamma) \ \to \ \Ho(\Sp_G) \]
by the universal property of localizations, 
for which we also use the same name.

If we also assume that $\Gamma$ has finite index in $G$, then
the derived left and right adjoint to the restriction functor are in fact isomorphic;
this generalizes the classical `Wirthm{\"u}ller isomorphism' 
in equivariant homotopy theory of finite groups \cite{wirthmuller:equivariant_homology}.
Indeed, in this situation the group $G$ is the disjoint union 
of finitely many $\Gamma$-cosets.
If~$X$ is a based $\Gamma$-space, we can define a natural $G$-map
\[  w_X\ : \ G\ltimes_\Gamma X \ \to \ \map^\Gamma(G,X)\]
by sending $[g,x]$ to the $\Gamma$-equivariant map
\[ G \ \to \ X \ , \quad  g' \ \longmapsto \ 
\begin{cases}
  g' g x & \text{ for $g' g\in\Gamma$, and }\\
  \, \ast & \text{ for $g' g\not\in\Gamma$.}
\end{cases}\]
For an orthogonal $\Gamma$-spectrum $Y$ these maps are defined levelwise,
and they form a morphism of orthogonal $G$-spectra
 \[ w_Y \ : \  G\ltimes_\Gamma Y \ \to \ \map^\Gamma(G,Y)\ .\]

\begin{thm}\label{thm:restriction adjoints}\index{G-equivariant stable homotopy category@$G$-equivariant stable homotopy category}
  Let~$\Gamma$ be a closed subgroup of finite index of a Lie group $G$.
  For every orthogonal $\Gamma$-spectrum $Y$ the morphism
  $w_Y : G\ltimes_\Gamma Y \to \map^\Gamma(G,Y)$  is a $\pi_*$-isomorphism.
  Hence $w_Y$ descends to a natural isomorphism between the functors
  \[ G\ltimes_\Gamma \ , \
  \coind^G_\Gamma  \  \colon \ \Ho(\Sp_\Gamma)\ \to \  \Ho(\Sp_G)  \ .\]
\end{thm}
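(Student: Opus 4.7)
The plan is to verify that $w_Y$ induces isomorphisms on $\pi_k^H$ for every compact subgroup $H$ of $G$, by reducing to the classical Wirthm{\"u}ller isomorphism for finite-index inclusions of compact Lie groups.

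Fixing such an $H$, I would use that $G/\Gamma$ is a finite discrete $H$-set and pick a finite set of double coset representatives $\{g_i\}_{i\in I}\subset G$ with $G=\bigsqcup_i Hg_i\Gamma$. For each $i$, set $K_i = H\cap g_i\Gamma g_i^{-1}$, a closed subgroup of $H$; since $H/K_i$ is in bijection with the $H$-orbit of $g_i\Gamma\in G/\Gamma$, the index $[H:K_i]$ is finite. Writing $Y_i$ for $Y$ regarded as an orthogonal $K_i$-spectrum via conjugation $K_i\to\Gamma$, $k\mapsto g_i^{-1}kg_i$, I would establish natural $H$-equivariant isomorphisms
\[
\res^G_H(G\ltimes_\Gamma Y)\ \iso\ \bigvee_{i\in I}\, H\ltimes_{K_i} Y_i
\qquad\text{and}\qquad
\res^G_H(\map^\Gamma(G,Y))\ \iso\ \prod_{i\in I}\, \map^{K_i}(H, Y_i)
\]
by standard manipulations of induction and coinduction along $\Gamma$. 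The crucial point is that under these identifications $\res^G_H(w_Y)$ is diagonal: its $i$-th-wedge-summand-to-$i$-th-product-factor component is the classical Wirthm{\"u}ller comparison $H\ltimes_{K_i}Y_i\to\map^{K_i}(H,Y_i)$ for the finite-index inclusion $K_i\hookrightarrow H$, while the off-diagonal components vanish because $w_Y([g,y])$ is supported on the single left $\Gamma$-coset $\Gamma g^{-1}$.

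Granting this decomposition, the argument would conclude by invoking two ingredients. First, for every finite-index closed inclusion of compact Lie groups the Wirthm{\"u}ller map is a $\pi_*$-isomorphism --- the usual tangent representation twist vanishes because $K_i$ contains the identity component of $H$ --- which is classical and available from \cite{mandell-may:equivariant_orthogonal} or \cite{schwede:global}. Second, because $I$ is finite, the canonical map from the wedge of orthogonal $H$-spectra to the product is a $\pi_*$-isomorphism, since finite wedges are biproducts in the triangulated category $\Ho(\Sp_H)$ and $\pi_k^H$ takes finite wedges to direct sums. Combining these two yields the $\pi_*$-isomorphism claim. For the derived-functor statement, the finiteness of $G/\Gamma$ makes both $G\ltimes_\Gamma-$ and $\map^\Gamma(G,-)$ fully homotopical on $\Sp_\Gamma$ --- the former by Corollary \ref{cor:Gamma2G}, the latter because the displayed decomposition realizes it after restriction to every compact subgroup as a finite product of coinductions along finite-index inclusions of compact Lie groups. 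Both functors therefore descend to functors on $\Ho(\Sp_\Gamma)$ modeling the total left derived induction $G\ltimes_\Gamma-$ and total right derived coinduction $\coind^G_\Gamma$ respectively, and $w$ descends to the required natural isomorphism. The hardest part will be the careful conjugation bookkeeping needed to establish the displayed isomorphisms and to match the summandwise behavior of $w_Y$ with the compact-case Wirthm{\"u}ller maps; once this is done, the remaining steps are formal.
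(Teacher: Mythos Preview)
Your proposal is correct and follows essentially the same route as the paper: reduce to compact subgroups $H$, use the double coset decomposition of $\res^G_H(G\ltimes_\Gamma Y)$ and $\res^G_H(\map^\Gamma(G,Y))$, observe that $w_Y$ becomes a diagonal of classical Wirthm{\"u}ller maps for the finite-index pairs $(H,H\cap{^{g_i}\Gamma})$, and finish with the $\pi_*$-isomorphism between finite wedges and finite products. Your additional remarks on the vanishing of the tangent twist and on the full homotopicality of $\map^\Gamma(G,-)$ are correct elaborations of points the paper leaves implicit.
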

\begin{proof}
  We let $H$ be any compact subgroup of $G$. By our hypothesis,
  $G/\Gamma$ is a finite set and for every $g\in G$ the
  subgroup $H\cap{^g\Gamma}$ has finite index in~$H$.
  So for every orthogonal $(H\cap{^g\Gamma})$-spectrum~$X$ the  morphism
  \[ 
  w_X \ : \ H\ltimes_{H\cap{^g \Gamma}} X \ \to \ 
  \map^{H\cap{^g \Gamma}}(H, X)  \]
  is a $\pi_*$-isomorphism of $H$-spectra by the classical Wirthm{\"u}ller isomorphism
  for the finite index pair $(H,H\cap{^g\Gamma})$,
  see \cite{wirthmuller:equivariant_homology} or \cite[Thm.\,3.2.15]{schwede:global}.
  Moreover, there are double coset decompositions
  \[ \res^G_H(G\ltimes_\Gamma X) \ \cong \
  \bigvee_{[g]\in H\backslash G/\Gamma} 
  H\ltimes_{H\cap{^g \Gamma}}\left( c_g^*\left( \res^\Gamma_{H^g\cap\Gamma}(X)\right)\right)
  \]
  and
  \[ \res^G_H(\map^\Gamma(G, X)) \ \cong \
  \prod_{[g]\in H\backslash G/\Gamma} 
  \map^{H\cap{^g \Gamma}}\left( H, c_g^*\left( \res^\Gamma_{H^g\cap\Gamma}(X)\right)\right)
  \ .\]
  The morphism~$w_X$ respects these decomposition.
  Since finite wedges are $\pi_*$-iso\-morphic to finite products, we are done.
\end{proof}

An immediate consequence of Theorem \ref{thm:restriction adjoints}
is the isomorphism between the group $\pi_k^G(G\ltimes_\Gamma Y)$
and the group $\pi_k^\Gamma(Y)$, defined as the composite
\[ \pi_k^G(G\ltimes_\Gamma Y)\ \xrightarrow[\cong]{(\omega_Y)_*} \ 
\pi_k^G(\coind_\Gamma^G(Y)) \ \xrightarrow[\cong]{\text{adjunction}} \ 
  \pi_k^\Gamma( Y)\ . \]
This map is also the composite
\begin{equation}\label{eq:wirth}
 \Wirth_\Gamma^G \ : \ \pi_k^G(G\ltimes_\Gamma Y)\  \xrightarrow{\res^G_\Gamma} \ 
 \pi_k^\Gamma(G\ltimes_\Gamma Y)\  \xrightarrow{ (\text{pr}^G_\Gamma)_* } \ 
 \pi_k^\Gamma( Y)\ .   
\end{equation}
In the special case where $G$ (and hence $\Gamma$) are compact,
this isomorphism specializes to the {\em Wirthm{\"u}ller isomorphism}
\cite{wirthmuller:equivariant_homology}, 
see also \cite[Thm.\,3.2.15]{schwede:global}.
In our more general context, we also refer to the isomorphism \eqref{eq:wirth}
as the Wirthm{\"u}ller isomorphism.
\index{Wirthm{\"u}ller isomorphism|)}

\begin{con}[Transfer]\label{con:transfer}\index{transfer|(}
We continue to let $\Gamma$ be a finite index subgroup of a Lie group~$G$.
If $X$ is an orthogonal $G$-spectrum, then we can define a 
{\em transfer homomorphism} as the composite
\begin{equation}\label{eq:transfer}
 \tr_\Gamma^G \ : \ \pi_k^\Gamma(X)\ \xrightarrow[\cong]{(\Wirth_\Gamma^G)^{-1}} \ 
\pi_k^G(G\ltimes_\Gamma \res^G_\Gamma(X)) \ \xrightarrow{ (\text{act}^G_\Gamma)_*} \
\pi_k^G(X) \ ,
\end{equation}
where $\text{act}^G_\Gamma:G\ltimes_\Gamma \res^G_\Gamma(X)\to X$
is the action morphism (the counit of the adjunction).
\end{con}

Now we prove that the transfer maps satisfy the `usual properties'.
We start by studying how transfer maps interact with inflation, i.e., the
restriction homomorphism along a continuous epimorphism $\alpha:K\to G$.
We let $\Gamma$ be any closed subgroup of the Lie group $G$,
and we let $\Delta=\alpha^{-1}(\Gamma)$ be the inverse image,
a closed subgroup of $K$.
On the pointset level, the relation
\[  \res^K_\Delta \circ \alpha^*\ = \ (\alpha|_\Delta)^*\circ \res^G_\Gamma  \]
holds as functors from $\Sp_G$ to $\Sp_\Delta$.
On the level of homotopy categories, this relation becomes a natural isomorphism
between derived functors. 
Indeed, the isomorphisms
\[ \td{\incl^G_\Gamma,\alpha|_\Delta}\ : \ 
L(\alpha|_\Delta)^*\circ \res^G_\Gamma \ \Longrightarrow \ L(\incl^G_\Gamma\circ\alpha|_\Delta)^*\]
and
\[ \td{\alpha,\incl^K_\Delta}\ : \ 
\res^K_\Delta \circ L\alpha^*
\ \Longrightarrow \ L(\alpha\circ\incl^K_\Delta)^*\]
combine into a composite natural isomorphism
\begin{align*}
[\alpha,\Gamma]\ : \ 
L(\alpha|_\Delta)^*\circ \res^G_\Gamma 
\ \xrightarrow[\iso]{\td{\incl^G_\Gamma,\alpha|_\Delta}}
&\ L(\incl^G_\Gamma\circ\alpha|_\Delta)^* \\
  = \ &L(\alpha\circ\incl^K_\Delta)^*
\ \xrightarrow[\iso]{\td{\alpha,\incl^K_\Delta}^{-1}} \ 
  \res^K_\Delta \circ L\alpha^*\ .
\end{align*}

\begin{prop}\label{prop:transfer and inflation}
  Let $\alpha:K\to G$ be a continuous epimorphism between Lie groups,
  let $\Gamma$ be a closed subgroup of $G$ of finite index, 
  and set $\Delta=\alpha^{-1}(\Gamma)$.
  Then the following square commutes
  \[ \xymatrix@C=10mm{
    \pi_*^\Gamma(X)\ar[rr]^-{\tr^G_\Gamma}\ar[d]_{(\alpha|_\Delta)^*} &&
    \pi_*^G(X)\ar[d]^{\alpha^*} \\
    \pi_*^\Delta(L(\alpha|_\Delta)^*(\res^G_\Gamma(X))) \ar[r]_-{[\alpha,\Gamma]_*}^-\iso &
    \pi_*^\Delta( (L\alpha^*)(X) )\ar[r]_-{\tr^K_\Delta}&
    \pi_*^K( (L\alpha^*)(X) )
  } \]
  for every orthogonal $G$-spectrum $X$.
\end{prop}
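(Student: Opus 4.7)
The plan is to rewrite both composites in the square as morphisms in $\Ho(\Sp_K)$ built from derived restriction, derived induction, and the Wirthm{\"u}ller isomorphism $w$ of Theorem~\ref{thm:restriction adjoints}, and then to check compatibility stage by stage. Using the adjunction $(\res^G_\Gamma,\coind^G_\Gamma)$, the transfer admits the explicit description
\[ \tr^G_\Gamma([f])\ =\ \mathrm{act}^G_\Gamma \circ (G\ltimes_\Gamma f)[-k] \circ \tau^G_\Gamma, \]
where $\tau^G_\Gamma\colon \mS_G \to G\ltimes_\Gamma \mS_\Gamma$ in $\Ho(\Sp_G)$ is the composite of the $(\res^G_\Gamma,\coind^G_\Gamma)$-unit $\mS_G\to\coind^G_\Gamma(\mS_\Gamma)$ with the inverse in $\Ho(\Sp_G)$ of $w_{\mS_\Gamma}$. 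With this description in hand, the proposition reduces to identifying $L\alpha^*$ applied to each ingredient of this formula with the corresponding ingredient for $\Delta\leq K$ (after invoking $[\alpha,\Gamma]$).

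First I would construct a pointset natural isomorphism
\[ \lambda_Y\colon K\ltimes_\Delta (\alpha|_\Delta)^*(Y)\ \xrightarrow{\iso}\ \alpha^*(G\ltimes_\Gamma Y), \qquad [k,y]\longmapsto \alpha(k)\sm y, \]
for $Y\in\Sp_\Gamma$. Surjectivity of $\alpha$ together with $\Delta=\alpha^{-1}(\Gamma)$ produces a bijection $K/\Delta\cong G/\Gamma$, from which bijectivity of $\lambda_Y$ on underlying spectra follows; $K$-equivariance is immediate. The functors $K\ltimes_\Delta(-)$, $G\ltimes_\Gamma(-)$, $\alpha^*$ and $(\alpha|_\Delta)^*$ are all homotopical on quasi-cofibrant spectra (Corollary~\ref{cor:Gamma2G} and Theorem~\ref{thm:homomorphism adjunctions spectra}(i)), so $\lambda$ descends to a natural isomorphism $\Lambda\colon (K\ltimes_\Delta -)\circ L(\alpha|_\Delta)^*\Longrightarrow L\alpha^*\circ (G\ltimes_\Gamma -)$ of functors $\Ho(\Sp_\Gamma)\to\Ho(\Sp_K)$. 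In parallel I would verify that precomposition with $\alpha$ defines a $K$-equivariant isomorphism $\mu\colon \alpha^*(\map^\Gamma(G,Y))\xrightarrow{\iso}\map^\Delta(K,(\alpha|_\Delta)^*Y)$, again using the coset bijection $K/\Delta\cong G/\Gamma$ and the fact that $\ker(\alpha)\subset\Delta$ acts trivially on $Y$.

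Next I would check two compatibility squares on the pointset level: (a) the identity $\alpha^*(\mathrm{act}^G_\Gamma)\circ \lambda_{\res^G_\Gamma X} = \mathrm{act}^K_\Delta$, because both composites send $[k,y]\in K\ltimes_\Delta(\alpha|_\Delta)^*\res^G_\Gamma X$ to $k\cdot y\in \alpha^*X$; and (b) the Wirthm{\"u}ller compatibility $\mu\circ \alpha^*(w_Y^{G,\Gamma})\circ \lambda = w_{(\alpha|_\Delta)^*Y}^{K,\Delta}$, because both composites send $[k,y]$ to the $\Delta$-equivariant function $k'\mapsto k'ky$ when $k'k\in\Delta$ and to the basepoint otherwise. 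Passing to $\Ho$ and invoking $[\alpha,\Gamma]$ together with $\alpha_!^{\mS_G}\colon L\alpha^*(\mS_G)\cong \mS_K$, square (a) translates into the identification of $L\alpha^*(\mathrm{act}^G_\Gamma)$ with $\mathrm{act}^K_\Delta$ via $\Lambda$. Square (b), combined with naturality of the $(\res,\coind)$-unit on sphere spectra, yields the identification of $L\alpha^*(\tau^G_\Gamma)$ with $\tau^K_\Delta$. Substituting into the explicit formula for $\tr^G_\Gamma$ then gives the commutativity of the square in the proposition.

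The main obstacle is coherent bookkeeping of the many natural isomorphisms in $\Ho(\Sp_K)$ -- $\alpha_!$, $\sigma^\alpha$, the $\td{-,-}$ of Construction~\ref{con:lax transfo}, $\Lambda$, and $\mu$ -- so as to verify that the composite natural isomorphism pasted out of these pieces is precisely the one denoted $[\alpha,\Gamma]_*$ in the statement, rather than some a priori different paste differing by conjugation. The coherence property for total left derived functors established in Proposition~\ref{prop:derived cocycle} is exactly the tool needed to rule out such mismatches, since it ensures that all reasonable ways of assembling the $\td{-,-}$ transformations along the relevant composite of continuous homomorphisms agree.
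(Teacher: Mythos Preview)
Your proposal is correct and follows essentially the same strategy as the paper: both introduce the pointset isomorphism $\lambda_Y\colon K\ltimes_\Delta(\alpha|_\Delta)^*Y\cong\alpha^*(G\ltimes_\Gamma Y)$ (called $u$ in the paper), verify that it intertwines the action morphisms (your square (a)), and check a Wirthm{\"u}ller compatibility to finish. The one substantive difference is in how the Wirthm{\"u}ller isomorphism is handled. You work with the map $w_Y\colon G\ltimes_\Gamma Y\to\map^\Gamma(G,Y)$ directly, which forces you to introduce the extra isomorphism $\mu$ on the coinduced side and to check your square (b). The paper instead uses the equivalent description $\Wirth^G_\Gamma=(\mathrm{pr}^G_\Gamma)_*\circ\res^G_\Gamma$ from \eqref{eq:wirth}; the required compatibility then becomes the one-line observation that $(\alpha|_\Delta)^*(\mathrm{pr}^G_\Gamma)\circ u=\mathrm{pr}^K_\Delta$, and the coinduced side never appears. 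This also lets the paper sidestep the coherence bookkeeping you flag at the end: by working throughout with a cofibrant $G$-spectrum $X$, all of $L\alpha^*$, $L(\alpha|_\Delta)^*$ and the various $\td{-,-}$ collapse to pointset identities, so the isomorphism $[\alpha,\Gamma]$ is literally the identity and Proposition~\ref{prop:derived cocycle} need not be invoked. Your route works, but the paper's is shorter for exactly this reason.
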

\begin{proof}
We first consider a cofibrant orthogonal $\Gamma$-spectrum $Y$.
An isomorphism of orthogonal $K$-spectra
\[ u \ : \ 
K\ltimes_\Delta(\alpha|_\Delta)^*(Y)\ \xrightarrow{\ \iso \ } \ 
\alpha^*(G\ltimes_\Gamma Y) \]
is defined levelwise by $u[k,y] = [\alpha(k),y]$,
for $k\in K$ and $y\in Y(V)$.
Moreover, the composite
\[ K\ltimes_\Delta(\alpha|_\Delta)^*(Y)\ \xrightarrow{\ u \ } \ 
\alpha^*(G\ltimes_\Gamma Y) \  \xrightarrow{(\alpha|_\Delta)^*(\text{pr}^G_\Gamma)} 
\  (\alpha|_\Delta)^*(Y) \]
coincides with the morphism 
$\text{pr}^K_\Delta:K\ltimes_\Delta(\alpha|_\Delta)^*(Y)\to (\alpha|_\Delta)^*(Y)$.
Since $Y$ is cofibrant, we can calculate $(L\alpha^*)(Y)$ as $\alpha^*(Y)$.
Also, $G\ltimes_\Gamma Y$ is cofibrant as an orthogonal $G$-spectrum,
and we can calculate $(L\alpha^*)(G\ltimes_\Gamma Y)$ as
$\alpha^*(G\ltimes_\Gamma Y)$.
Similarly, the underlying $\Gamma$-spectrum of $G\ltimes_\Gamma Y$ is cofibrant,
so we can calculate $(L\alpha|_\Delta^*)(G\ltimes_\Gamma Y)$ as
$(\alpha|_\Delta)^*(G\ltimes_\Gamma Y)$.
The following diagram commutes by naturality and transitivity
of restriction maps:
\[ \xymatrix@C=16mm{ 
\pi_*^G(G\ltimes_\Gamma Y)\ar[r]_-{\res^G_\Gamma} \ar[d]_{\alpha^*}
\ar@/^1pc/[rr]^(.3){\Wirth^G_\Gamma}& 
 \pi_*^\Gamma(G\ltimes_\Gamma Y)\ar[r]_-{ (\text{pr}^G_\Gamma)_* } 
\ar[d]_{(\alpha|_\Delta)^*} &  \pi_*^\Gamma( Y) \ar[dd]^{(\alpha|_\Delta)^*}\\
\pi_*^K(\alpha^*(G\ltimes_\Gamma Y))\ar[r]^-{\res^K_\Delta} &
 \pi_*^\Delta( (\alpha|_\Delta)^*(G\ltimes_\Gamma Y))
\ar[dr]^{\ \ ((\alpha|_\Delta)^*(\text{pr}^G_\Gamma))_* } &  \\
\pi_*^K(K\ltimes_\Delta (\alpha|_\Delta)^*(Y))\ar[u]^{u_*}_\iso\ar[r]^{\res^K_\Delta} 
\ar@<-.6ex>@/_1pc/[rr]_(.7){\Wirth^K_\Delta}&
 \pi_*^\Delta( K\ltimes_\Delta (\alpha|_\Delta)^*(Y))\ar[r]^(.5){ (\text{pr}^K_\Delta)_* }
 \ar[u]^{u_*}_\iso&  \pi_*^\Delta((\alpha|_\Delta)^*(Y))
} \]
In formulas:
\begin{equation}  \label{eq:wirth2inflate}
 (\alpha|_\Delta)^*\circ\Wirth^G_\Gamma\ = \  \Wirth^K_\Delta \circ u^{-1}_*\circ \alpha^* \ .  
\end{equation}

Now we let $X$ be a cofibrant orthogonal $G$-spectrum.
Then the following square commutes:
\[ \xymatrix{
K\ltimes_\Delta(\alpha|_\Delta)^*(\res^G_\Gamma(X))\ar@{=}[r]\ar[d]_u &
K\ltimes_\Delta\res^K_\Delta(\alpha^*(X))\ar[d]^-{\text{act}^K_\Delta} \\
\alpha^*(G\ltimes_\Gamma X ) \ar[r]_-{\alpha^*(\text{act}^G_\Gamma)}  &
\alpha^*(X) &
} \]
The Wirthm{\"u}ller maps are isomorphisms, so we can deduce
\begin{align*}
\tr^K_\Delta\circ(\alpha|_\Delta)^*\ 
&= \   (\text{act}^K_\Delta)_*\circ (\Wirth^K_\Delta)^{-1}\circ (\alpha|_\Delta)^*\\
_\eqref{eq:wirth2inflate} 
&= \   (\text{act}^K_\Delta)_*\circ u^{-1}_*\circ \alpha^*\circ(\Wirth^G_\Gamma)^{-1}  \\
&= \   (\alpha^*(\text{act}^G_\Gamma))_*\circ\alpha^*\circ(\Wirth^G_\Gamma)^{-1}  \\
&= \   \alpha^*\circ(\text{act}^G_\Gamma)_*\circ(\Wirth^G_\Gamma)^{-1}  \
= \ \alpha^*\circ \tr^G_\Gamma\ .
\end{align*}
This proves the claim for cofibrant orthogonal $G$-spectra.
In $\Ho(\Sp_G)$, every object is isomorphic to a cofibrant $G$-spectrum,
so naturality concludes the argument.
\end{proof}
\index{transfer|)}

Now we spell out how transfers interact with the conjugation homomorphism.
For this purpose we let $\Gamma$ be any closed subgroup of a Lie group $G$
and $g\in G$. We let $\Gamma^g=g^{-1}\Gamma g$ be the conjugate subgroup
and denote by
\[ c_g \ : \ \Gamma \ \to \ \Gamma^g \ , \quad c_g(\gamma)\ = \ g^{-1}\gamma g \]
the conjugation homomorphism.
Restriction of group actions along $c_g$ is fully homotopical;
we abuse notation and write
\[  c_g^*\ =\ \Ho(c_g^*)\ :\  \Ho(\Sp_{\Gamma^g})\ \to\ \Ho(\Sp_{\Gamma}) \]
for the induced functor on homotopy categories.
This induced functor is then also a total left derived functor of $c_g^*$,
relative to the identity transformation.
As a special case of the restriction homomorphism \eqref{eq:restriction_hom},
the restriction functor thus induces an isomorphism
\[ c_g^*\ : \ \pi_k^{\Gamma^g}(Y)\ \to \  \pi_k^\Gamma(c_g^* (Y)) \]
for every orthogonal $\Gamma^g$-spectrum $Y$.
We call this the {\em conjugation isomorphism}.

Now we let $X$ be an orthogonal $G$-spectrum. Then left multiplication by $g$
is an isomorphism
\[ l_g \ : \ c_g^*(X) \ \to \ X \]
of orthogonal $G$-spectra, which induces an isomorphism on $\pi_k^\Gamma(-)$.
The composite
\[   \pi_k^{\Gamma^g}(X)\ \xrightarrow{\ c_g^*\ }\ \pi_k^\Gamma(c_g^* (X))
\ \xrightarrow{\ (l_g)_*\ }\ \pi_k^\Gamma(X) \]
is an `internal' conjugation isomorphism which we denote by
\begin{equation}\label{eq:define conjugation}
g_{\star}\ :\ \pi_k^{\Gamma^g}(X)\ \to\ \pi_k^\Gamma(X)\ .  
\end{equation}

\begin{rk}
The conjugation isomorphism has another interpretation as follows.  
The map
\[ l_g \ : \ G/\Gamma \ \to \ G/\Gamma^g \ : \ 
k\Gamma \ \longmapsto \ k g\Gamma^g \]
is an isomorphism of $G$-spaces, and it induces an isomorphism
of $G$-equivariant suspension spectra
\[ \Sigma^\infty_+ l_g \ : \ \Sigma^\infty_+ G/\Gamma \ \to \ 
\Sigma^\infty_+ G/\Gamma^g \ .\]
For every orthogonal $G$-spectrum, the derived adjunctions provide natural isomorphisms
\begin{align*}
\pi_0^{\Gamma^g}(X)\ = \ 
[\mS_{\Gamma^g},\res^G_{\Gamma^g}(X)]^{\Gamma^g} \ &\xrightarrow{\ \iso\ } \ 
[\Sigma^\infty_+ G/\Gamma^g,X]^G \text{\quad and}\\ 
\pi_0^{\Gamma}(X)\ = \ 
[ \mS_\Gamma,\res^G_{\Gamma}(X)]^\Gamma \ &\xrightarrow{\ \iso\ } \ 
[ \Sigma^\infty_+ G/\Gamma,X]^G \ .
\end{align*}
We omit the verification that under these isomorphisms, the conjugation
map $g_\star:\pi_k^{\Gamma^g}(X)\to\pi_k^\Gamma(X)$ corresponds to precomposition with
$\Sigma^\infty_+ l_g$.
\end{rk}

 \index{transfer|(}
\begin{prop}
  Let $G$ be a Lie group and $g\in G$.
  \begin{enumerate}[\em (i)]
  \item Let $\Delta\subset\Gamma$ be nested closed subgroups of $G$,
    such that $\Delta$ has finite index in $\Gamma$.
    Then
    \[ \tr_\Delta^\Gamma\circ g_{\star} \ = \ g_{\star} \circ \tr_{\Delta^g}^{\Gamma^g} \ : \ 
    \pi_*^{\Delta^g}(X)\ \to \ \pi_*^\Gamma(X) \]
    for every orthogonal $G$-spectrum $X$.
  \item The conjugation map $g_{\star}:\pi_*^G(X)\to \pi_*^G(X)$ is the identity.
\end{enumerate}
\end{prop}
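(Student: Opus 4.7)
For (ii), I plan to invoke the remark preceding the proposition: under the natural isomorphism $\pi_0^\Gamma(X)\cong[\Sigma^\infty_+ G/\Gamma,X]^G$, the map $g_\star$ corresponds to precomposition with $\Sigma^\infty_+ l_g$. When $\Gamma=G$, the orbit space $G/G$ is a single point, so $l_g\colon G/G\to G/G$ is the identity, and hence $g_\star$ is the identity.

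For (i), the plan is to work directly from the definitions $g_\star=(l_g)_*\circ c_g^*$ and $\tr_\Delta^\Gamma=(\text{act}^\Gamma_\Delta)_*\circ(\Wirth_\Delta^\Gamma)^{-1}$. The key construction is an explicit pointset isomorphism of orthogonal $\Gamma$-spectra
\[
\Psi\colon \Gamma\ltimes_\Delta \res^G_\Delta(X)\ \xrightarrow{\ \cong\ }\ c_g^*\bigl(\Gamma^g\ltimes_{\Delta^g}\res^G_{\Delta^g}(X)\bigr),\qquad [\gamma,x]\ \longmapsto\ [g^{-1}\gamma g,\,g^{-1}x],
\]
whose well-definedness and $\Gamma$-equivariance follow from the identity $g^{-1}\delta g\cdot g^{-1}x=g^{-1}(\delta x)$ in $X$. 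I will then verify two compatibility squares at the pointset level. First,
\[
\text{act}^\Gamma_\Delta\ =\ l_g\circ c_g^*(\text{act}^{\Gamma^g}_{\Delta^g})\circ \Psi
\]
as morphisms $\Gamma\ltimes_\Delta\res^G_\Delta(X)\to\res^G_\Gamma(X)$, since both sides send $[\gamma,x]$ to $\gamma x$. Second,
\[
\text{pr}^\Gamma_\Delta\circ \Psi^{-1}\ =\ l_g\circ c_g^*(\text{pr}^{\Gamma^g}_{\Delta^g})
\]
as morphisms of $\Delta$-spectra, where $\text{pr}^\Gamma_\Delta\colon\Gamma\ltimes_\Delta Y\to Y$ is the projection appearing in the Wirthm{\"u}ller formula~\eqref{eq:wirth}; both sides send $[\gamma',x']$ to $g\gamma' x'$ when $\gamma'\in\Delta^g$ and to the basepoint otherwise.

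Given these two squares, the claim follows by a diagram chase. Applying $\pi_*(-)$ and using the standard interchange $c_g^*\circ f_*=(c_g^*f)_*\circ c_g^*$ together with $c_g^*\circ\res^{\Gamma^g}_{\Delta^g}=\res^\Gamma_\Delta\circ c_g^*$, the first square yields $g_\star\circ(\text{act}^{\Gamma^g}_{\Delta^g})_*=(\text{act}^\Gamma_\Delta)_*\circ(\Psi^{-1})_*\circ c_g^*$, while the second yields $g_\star\circ\Wirth_{\Delta^g}^{\Gamma^g}=\Wirth_\Delta^\Gamma\circ(\Psi^{-1})_*\circ c_g^*$. Combining these two identities immediately gives $g_\star\circ\tr_{\Delta^g}^{\Gamma^g}=\tr_\Delta^\Gamma\circ g_\star$.

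The main delicate point will be the second compatibility, since the Wirthm{\"u}ller projection $\text{pr}^\Gamma_\Delta$ has the piecewise formula $[\gamma,y]\mapsto\gamma y$ if $\gamma\in\Delta$ and $\ast$ otherwise; the match with $l_g\circ c_g^*(\text{pr}^{\Gamma^g}_{\Delta^g})$ hinges on the equivalence $g^{-1}\gamma g\in\Delta\Leftrightarrow\gamma\in\Delta^g$ combined with $l_g(g^{-1}\gamma g\cdot g^{-1}x)=\gamma x$. Everything else reduces to routine bookkeeping.
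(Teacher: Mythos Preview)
Your argument is correct. For part~(i), your approach is essentially the same as the paper's, just less modular: the paper invokes Proposition~\ref{prop:transfer and inflation} (compatibility of transfer with inflation) specialized to $\alpha=c_g:\Gamma\to\Gamma^g$, and then applies naturality of transfer for the morphism $l_g:c_g^*(X)\to X$. Your isomorphism~$\Psi$ is precisely the specialization of the isomorphism~$u$ constructed in the proof of that proposition, and your two compatibility squares reprove the special case of the relation~\eqref{eq:wirth2inflate} and the $\text{act}$-square there. So you are redoing by hand what the paper packages as a citation; both routes amount to the same diagram chase.

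For part~(ii), your approach is a minor variant of the paper's. You invoke the Remark identifying $g_\star$ with precomposition by~$\Sigma^\infty_+ l_g$ and observe that $l_g:G/G\to G/G$ is the identity. The paper instead argues directly by Yoneda: since $g_\star$ is natural in~$X$ and compatible with suspension, it is determined by its value on the identity of~$\mS_G=\Sigma^\infty_+ G/G$; since $G$ acts trivially on $\mS_G$, one has $c_g^*(\mS_G)=\mS_G$ and $l_g^{\mS_G}=\Id$, so $g_\star(\Id)=\Id$. These are the same observation---note that the Remark you invoke is stated but its verification is explicitly omitted in the paper, so the paper's self-contained Yoneda argument is marginally cleaner as a proof within the text.
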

\begin{proof}
(i) In the special case $\alpha=c_g:\Gamma\to \Gamma^g$, applied to the finite index subgroup
$\Delta^g$ of $\Gamma^g$, Proposition \ref{prop:transfer and inflation}
says that the following square commutes:
  \[ \xymatrix@C=10mm{
    \pi_*^{\Delta^g}(X)\ar[rr]^-{\tr^{\Gamma^g}_{\Delta^g}}\ar[d]_{c_g^*} &&
    \pi_*^{\Gamma^g}(X)\ar[d]^{c_g^*} \\
    \pi_*^\Delta( c_g^*(\res^G_{\Delta^g}(X))) \ar@{=}[r]&
    \pi_*^\Delta(\res^G_{\Delta}( c_g^*(X) ))\ar[r]_-{\tr^\Gamma_{\Delta}}&
    \pi_*^\Gamma( c_g^*(X))
  } \]
Naturality of restriction and transfer for the
morphism of orthogonal $G$-spectra $l_g:c_g^*(X)\to X$ then yields the
desired relation:
\begin{align*}
g_\star\circ\tr^{\Gamma^g}_{\Delta^g} \ &=\ 
(l_g)_*\circ c_g^*\circ\tr^{\Gamma^g}_{\Delta^g} \ = \  (l_g)_* \circ \tr^\Gamma_\Delta \circ c_g^* \
= \  \tr^\Gamma_\Delta \circ (l_g)_* \circ  c_g^* \ = \ 
\tr^\Gamma_\Delta \circ g_\star 
\end{align*}
For claim (ii) we exploit that the map $g_\star$ is natural for 
morphisms in $\Ho(\Sp_G)$ and commutes with the suspension isomorphism.
So it suffices to prove the claim in the universal example, the identity of $\mS_G$. 
Since $G$ acts trivially on $\mS_G$, we have
$c_g^*(\mS_G)=\mS_G$ and $l_g^{\mS_G}=\Id$.
So $c_g^*(\Id)=\Id$.
\end{proof}

Now we prove transitivity with respect to a
nested triple of finite index subgroups $\Gamma\leq\Delta\leq G$,
and the double coset formula.

\begin{prop}\label{prop:dcf}
Let $\Gamma$ be a closed finite index subgroup of a Lie group $G$.
\begin{enumerate}[\em (i)]
\item Let $\Delta\leq G$ be another closed subgroup with $\Gamma\leq \Delta$.
Then the transfer maps are transitive, i.e.,
\[ \tr_\Delta^G\circ\tr_\Gamma^\Delta \ = \ \tr_\Gamma^G \ : \ \pi_*^\Gamma(X)\ \to \ \pi_*^G(X)\]
for every orthogonal $G$-spectrum $X$.
\item Let $K$ be another closed subgroup of $G$.
Then for every orthogonal $G$-spectrum $X$ the relation
\[ \res^G_K\circ \tr_\Gamma^G \ = \ \sum_{[g]\in K\backslash G/\Gamma}\
\tr_{K\cap{^g \Gamma}}^K  \circ g_{\star}\circ \res^\Gamma_{K^g\cap\Gamma}\]
holds as maps $\pi_*^\Gamma(X)\to \pi_*^K(X)$.
Here the sum is indexed over a set of representatives of
the finite set of $K$-$\Gamma$-double cosets in $G$.
\end{enumerate}
\end{prop}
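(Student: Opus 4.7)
The plan is to deduce both parts from pointset level factorizations of the induction functor, combined with naturality of the description $\Wirth^G_\Gamma=(\text{pr}^G_\Gamma)_*\circ\res^G_\Gamma$ of the Wirthm\"uller map. The finite-index hypothesis on $\Gamma\leq G$ ensures that $\Gamma$ also has finite index in the intermediate subgroup $\Delta$ in part (i), and that the double coset set $K\backslash G/\Gamma$ is finite in part (ii).

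For part (i), I would introduce the natural isomorphism of orthogonal $G$-spectra
\[
\mu_Y\ :\ G\ltimes_\Delta(\Delta\ltimes_\Gamma Y)\ \xrightarrow{\iso}\ G\ltimes_\Gamma Y\ ,\quad [g,[\delta,y]]\ \longmapsto\ [g\delta,y]\ .
\]
Direct pointset inspection gives the two compatibilities $\text{act}^G_\Gamma\circ\mu=\text{act}^G_\Delta\circ(G\ltimes_\Delta\text{act}^\Delta_\Gamma)$ and $\text{pr}^G_\Gamma\circ\mu=\text{pr}^\Delta_\Gamma\circ\text{pr}^G_\Delta$ (the second using $\Gamma\leq\Delta$). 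The latter, combined with transitivity of restriction, yields $\Wirth^G_\Gamma\circ\mu_*=\Wirth^\Delta_\Gamma\circ\Wirth^G_\Delta$. Invoking naturality of $\Wirth^G_\Delta$ with respect to the $\Delta$-map $\text{act}^\Delta_\Gamma$, a chase of definitions gives
\[
\tr^G_\Delta\circ\tr^\Delta_\Gamma\ =\ (\text{act}^G_\Delta)_*\circ(G\ltimes_\Delta\text{act}^\Delta_\Gamma)_*\circ\mu_*^{-1}\circ(\Wirth^G_\Gamma)^{-1}\ =\ (\text{act}^G_\Gamma)_*\circ(\Wirth^G_\Gamma)^{-1}\ =\ \tr^G_\Gamma\ ,
\]
where the second equality uses the first pointset compatibility above.

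For part (ii), I would decompose the finite $K$-set $G/\Gamma$ into its (finitely many) $K$-orbits, indexed by $[g]\in K\backslash G/\Gamma$, with the orbit through $g\Gamma$ isomorphic to $K/(K\cap{^g\Gamma})$. This yields a $K$-equivariant isomorphism of orthogonal $K$-spectra
\[
\res^G_K(G\ltimes_\Gamma X)\ \iso\ \bigvee_{[g]\in K\backslash G/\Gamma}\ K\ltimes_{K\cap{^g\Gamma}}\,c_g^*(\res^\Gamma_{K^g\cap\Gamma}(X))\ ,
\]
whose $g$-summand maps in by $[k,x]\mapsto[kg,x]$. Composing with the $(K\cap{^g\Gamma})$-isomorphism $l_g^X:c_g^*(\res^\Gamma_{K^g\cap\Gamma}X)\iso\res^G_{K\cap{^g\Gamma}}X$ of left multiplication by $g$ -- which geometrically implements the conjugation twist $g_\star=(l_g^X)_*\circ c_g^*$ -- direct pointset calculation shows that the restricted action $\res^G_K(\text{act}^G_\Gamma)$ becomes the sum $\sum_{[g]}\text{act}^K_{K\cap{^g\Gamma}}$, and that on each summand the composite $\Wirth^K_{K\cap{^g\Gamma}}\circ(\text{$g$-component of }\res^G_K)$ agrees with $g_\star\circ\res^\Gamma_{K^g\cap\Gamma}\circ\Wirth^G_\Gamma$. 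Combining these, summing over $[g]$, and comparing with the definition of $\tr^K_{K\cap{^g\Gamma}}$ then gives the double coset formula.

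The main obstacle will be the bookkeeping in part (ii), in particular matching the abstract conjugation isomorphism $g_\star$ on equivariant homotopy groups with the geometric left-multiplication map $l_g^X$ across the double coset decomposition, while juggling the notations $K\cap{^g\Gamma}$ versus $K^g\cap\Gamma$ and their identification via $c_g$. The finite-index hypothesis is used essentially in (ii) to ensure the wedge decomposition is finite, so that finite wedges are products on $\pi_*^K$ and Wirthm\"uller can be inverted summand-by-summand.
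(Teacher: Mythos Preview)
Your proposal is correct but takes a genuinely different route from the paper's proof. You work directly at the pointset level: for (i) via the transitivity isomorphism $\mu:G\ltimes_\Delta(\Delta\ltimes_\Gamma Y)\cong G\ltimes_\Gamma Y$ and the factorization $\Wirth^\Delta_\Gamma\circ\Wirth^G_\Delta=\Wirth^G_\Gamma\circ\mu_*$, and for (ii) via the double coset wedge decomposition of $\res^G_K(G\ltimes_\Gamma X)$ (the same decomposition used in the proof of Theorem~\ref{thm:restriction adjoints}). The paper instead reduces both statements to the already-known case of \emph{finite} groups: it quotients by the normal core $N=\bigcap_{g\in G}\Gamma^g$, so that $H=G/N$ is finite, then uses the Yoneda lemma to reduce to the universal example $\Sigma^\infty_+ H/I$ inflated along $q:G\to H$, and finally invokes the finite-group transitivity and double coset formula together with the inflation--transfer compatibility of Proposition~\ref{prop:transfer and inflation}. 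Your approach is more self-contained and closer to the standard textbook argument; the paper's approach is more indirect but systematically leverages the earlier result that transfers commute with inflation, and avoids reproving what is already known for finite groups.
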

\begin{proof}
We reduce both properties to the special case of {\em finite} groups.
We set
\[ N \ = \ \bigcap_{g\in G}  \Gamma^g\ ,\]
the intersection of all $G$-conjugates of $\Gamma$.
Then $N$ is the largest normal subgroup of $G$ that is contained in $\Gamma$,
and it is the kernel of the translation action of $G$ on $G/\Gamma$.
Hence the quotient group $H=G/N$ acts faithfully on the finite set $G/\Gamma$;
in particular, the group $H$ is finite.
We let
\[ q\ : \ G\ \to \ G/N\ = \ H \]
denote the quotient map, which is a continuous epimorphism.
The morphism $q$ induces an isomorphism of finite $G$-sets
$G/\Gamma \iso q^*(H/I)$, where $I=q(\Gamma)=\Gamma/N$,
and hence an isomorphism of orthogonal $G$-spectra
from $G\ltimes_\Gamma\mS_\Gamma$ to $q^*(\Sigma^\infty_+ H/I)$.
The natural bijections
\[ \pi_0^\Gamma(X) \ = \ [\mS_\Gamma,\res^G_\Gamma(X)]^\Gamma\ \iso \ 
[G\ltimes_\Gamma \mS_\Gamma,X]^G\ \iso \ 
[q^*(\Sigma^\infty_+ H/I),X]^G \]
witness that the functor $\pi_0^\Gamma:\Ho(\Sp_G)\to\text{(sets)}$
is represented by the orthogonal $G$-spectrum $q^*(\Sigma^\infty_+ H/I)$;
the universal element is the class $q|_\Gamma^*(u_I)$
in the group $\pi_0^\Gamma(q^*(\Sigma^\infty_+ H/I))$,
where $u_I\in \pi_0^I(\Sigma^\infty_+ H/I)$
is the tautological class \eqref{eq:define_tautological}.
The formulas of parts (i) and (ii) are relations between natural
transformations of functors on $\Ho(\Sp_G)$ with source
the representable functor $\pi_0^\Gamma$.
By the Yoneda lemma, it thus suffices to prove the two formulas
applied to the universal example $(q^*(\Sigma^\infty_+ H/I),q|_\Gamma^*(u_I))$.

(i) We set $J=q(\Delta)$, another subgroup of the finite group $H$.
For the universal example we can then argue:
\begin{align*}
\tr_\Delta^G(\tr_\Gamma^\Delta (q|_\Gamma^* (u_I))) \ 
&= \ \tr_\Delta^G(q|_\Delta^* (\tr_I^J(u_I))) \\
&= \ q^*(\tr_J^H(\tr_I^J(u_I))) \ = \ q^*(\tr_I^H(u_I)) \ = \ 
\tr_\Gamma^G(q|_\Gamma^* (u_I)) \ .  
\end{align*}
The third equation is the transitivity property for transfers
in the realm of finite groups,
see for example \cite[Prop.\,3.2.9]{schwede:global}.
The other three equalities are instances of
the fact that transfers and inflations commute,
see Proposition \ref{prop:transfer and inflation}.

(ii)
We set $J=q(K)$, another subgroup of the finite group $H$.
For every $g\in G$ the following relation holds:
\begin{align}\label{eq:ind_summand}
 q|_K^*( \tr_{J\cap {^{q(g)} I}}^J ( q(g)_\star(\res^I_{J^{q(g)}\cap I}(u_I)))) \ 
&= \  \tr_{K\cap {^g\Gamma}}^K (q|_{K\cap{^g\Gamma}}^* ( q(g)_\star(\res^I_{J^{q(g)}\cap I}(u_I))))\\
&= \  \tr_{K\cap {^g\Gamma}}^K ( g_\star( q|_{K^g\cap\Gamma}^* ( \res^I_{J^{q(g)}\cap I}(u_I))))\nonumber\\
&= \  \tr_{K\cap {^g\Gamma}}^K ( g_\star(  \res^\Gamma_{K^g\cap \Gamma}(q|_\Gamma^* (u_I))))\ .\nonumber
\end{align}
The first equation is Proposition \ref{prop:transfer and inflation},
i.e., the fact that transfers and inflations commute.
The second and third equations are transitivity of restriction maps.
Now we deduce the double coset formula for the universal example:
\begin{align*}
  \res^G_K(\tr_\Gamma^G(q|_\Gamma^*(u_I)))\ &= \ 
  \res^G_K(q^*(\tr_I^H(u_I)))\ = \   q|_K^*(\res^H_J(\tr_I^H(u_I)))\\ 
&= \ \sum_{[h]\in J\bs H/I} \ q|_K^*( \tr_{J\cap {^h I}}^J (h_\star(\res^I_{J^h\cap I}(u_I)))) \\ 
_\eqref{eq:ind_summand} &= \ \sum_{[g]\in K\bs G/\Gamma} \  \tr_{K\cap {^g\Gamma}}^K ( g_\star(  \res^\Gamma_{K^g\cap \Gamma}(q|_\Gamma^* (u_I))))\ .
\end{align*}
The first equation is Proposition \ref{prop:transfer and inflation},
i.e., the fact that transfers and inflations commute.
The third equation is the classical double coset formula for the subgroups
$J=q(K)$ and $I=q(\Gamma)$ of the finite group $H$,
see for example \cite[Ex.\,3.4.11]{schwede:global}.
The fourth equation exploits that the epimorphism $q$ induces
a bijection from the set $K\bs G/\Gamma$ to the set $J\bs H/I$.
\end{proof}
 \index{transfer|)}

\section{Equivariant homotopy groups as Mackey functors}
\label{sec:Mackey}

If $G$ is a finite group and $X$ is an orthogonal $G$-spectrum, 
then the $H$-equivariant homotopy groups $\pi_0^H(X)$, 
for all subgroups $H$ of~$G$, form a $G$-Mackey functor,
see for example \cite[V.9]{lewis-may:steinberger:equivariant_stable}
or \cite[Sec.\,3.4]{schwede:global};
in this section we generalize this well-known fact to arbitrary discrete groups.
In Definition \ref{def:G-Mackey} we recall the notion of a $G$-Mackey functor
for discrete groups $G$, 
and in Example \ref{eg:Mackey of G-spectrum} we show
that the collection of equivariant homotopy groups 
of an orthogonal $G$-spectrum forms a graded $G$-Mackey functor.
Finally, in Theorem \ref{thm:embed_MG_into_Ho(Sp)} 
we identify the heart of the preferred t-structure on the equivariant
stable homotopy category~$\Ho(\Sp_G)$ with the abelian category of
$G$-Mackey functors.
A consequence is that every $G$-Mackey functor
has an Eilenberg-Mac\,Lane spectrum, i.e., an orthogonal $G$-spectrum
with equivariant homotopy groups concentrated in dimension 0,
where they realize the given $G$-Mackey functor, compare
Remark \ref{rk:general Eilenberg Mac Lane}.

\begin{con}[$G$-Mackey category]\index{G-Mackey category@$G$-Mackey category}
For a discrete group $G$, the pre-additive {\em Mackey category} $\bA_G$ 
has as objects all finite subgroups of $G$.
A $G$-set is {\em finitely generated} if it is generated by finitely many
of its element, or -- equivalently-- if it has finitely many orbits.
For two finite subgroups $H$ and $K$ of~$G$, 
a {\em span} is a triple $(S,\alpha,\beta)$ consisting of 
a finitely generated $G$-set $S$ and $G$-maps
\[ G/H \ \xleftarrow{\ \alpha\ } \ S \ \xrightarrow{\ \beta\ }  G/K\ . \]
An isomorphism of spans is an isomorphism of $G$-sets $\psi:S\to S'$
such that $\alpha'\circ\psi=\alpha$ and $\beta'\circ\psi=\beta$.
The isomorphism classes of spans form an abelian monoid under disjoint union, and
the morphism group $\bA_G(H,K)$ is defined as the Grothendieck
group of isomorphism classes of spans from $H$ to~$K$.
Composition
\[ \circ \ : \ \bA_G(K,L) \times \bA_G(H,K) \ \to \ \bA_G(H,L)\]
is induced by pullback of spans over the intermediate $G$-set $G/K$.
\end{con}

The following definition is taken from \cite[Sec.\,3]{martinezperez-nucinkis:cohomological_dimension}.

\begin{defn}\label{def:G-Mackey}
Let $G$ be a discrete group. A \emph{$G$-Mackey functor} is an additive\index{G-Mackey functor@$G$-Mackey functor}
functor from the Mackey category $\bA_G$ to the category of abelian groups.
A morphism of $G$-Mackey functors is a natural transformation.
We denote the category of $G$-Mackey functors by $\cM_G$.
\end{defn}

As a category of additive functors, $\cM_G$ 
is an abelian category with enough projectives and injectives. 
Monomorphisms, epimorphisms and exactness are detected objectwise.

As in the case of finite groups, $G$-Mackey functors 
also have a description via transfer, restriction and conjugation maps as follows.
Every $G$-set is the disjoint union of transitive $G$-sets,
so the group  $\bA_G(H,K)$ is a free abelian group with basis
the classes of those spans $(S,\alpha,\beta)$ where $G$ acts transitively on~$S$.
Up to isomorphism, every such `transitive span' is of the form
\begin{equation}\label{eq:Mackey_basis_element}
 G/H \ \xleftarrow{  g\gamma H \mapsfrom g L }  \  
G/L\ \xrightarrow{ g L\mapsto g K } \ G/K
\end{equation}
for some pair~$(L,\gamma)$ consisting of a subgroup~$L$ of~$K$ 
and an element $\gamma\in G$ such that $L\leq{^\gamma H}$.
Two such pairs~$(L,\gamma)$ and~$(L',\gamma')$ define isomorphic spans if and
only if there is an element~$k\in K$ such that $L'=L^k$ and
$\gamma^{-1}k\gamma'\in H$.
A different way to say the same thing is as an isomorphism
\[ \bA_G(H,K)\ \cong \ \bigoplus_{K \gamma H \in K \backslash G / H} A(K \cap {^\gamma H})\ , \]
where on the right hand side $A(-)$ is the Burnside ring functor for finite groups.

The bases of the morphism groups of $\bA_G$ 
lead to a more computational description of $G$-Mackey functors 
by `generators and relations'.
To specify a $G$-Mackey functor $M$, one has to give the following data: 
\begin{itemize}
\item an abelian group $M(H)$ for every finite subgroup $H$ of~$G$, 
\item a \emph{restriction homomorphism} $\res^H_K \colon M(H) \to M(K)$\index{restriction homomorphism!in a $G$-Mackey functor}
  and a \emph{transfer homomorphism} $\tr^H_K \colon M(K) \to M(H)$\index{transfer!in a $G$-Mackey functor}
for every pair of nested finite subgroups $K \leq H$ of $G$, and
\item  \emph{conjugation homomorphisms} $\gamma_\star\colon M(H^\gamma) \to M(H)$
for all $\gamma\in G$ and all finite subgroups~$H$ of~$G$.
\end{itemize}

These data must satisfy certain conditions which we do not recall here in details,
 but refer to \cite{martinezperez-nucinkis:cohomological_dimension}. 
We only summarize them briefly: restrictions, transfers and conjugations
are transitive; conjugations commute with the restriction and transfers; 
inner automorphisms act as the identity;
and finally the double coset formula holds. 
In the `generators-and-relations' description of Mackey functors,
the image of a basic transitive span~\eqref{eq:Mackey_basis_element} 
under a $G$-Mackey functor $M:\bA_G\to\cA b$ is the composite
\[  M(H)\ \xrightarrow{\res^H_{L^\gamma}} \
 M( L^\gamma)\ \xrightarrow{\ \gamma_\star\ } \
 M(L)\ \xrightarrow{\ \tr_L^K\ } \ M(K) \]
of the restriction map to~$L^\gamma$, the conjugation by the element~$\gamma$
and the transfer map to~$K$.

\begin{eg}\label{eg:G-Mackey examples}
(i) The {\em Burnside ring Mackey functor}\index{Burnside ring}
is the $G$-Mackey functor~$\mA$ given by
\[ \mA(H)\ = \ A(H)\ , \]
the Burnside ring of the finite subgroup~$H$ of~$G$.
If $G$ is finite, then $\mA$ is represented by the group~$G$ itself,
hence $\mA$ is then projective as a $G$-Mackey functor.
If $G$ is infinite, however, $\mA$ is neither representable nor projective.

(ii)
Given an abelian group $B$, the {\em constant $G$-Mackey functor} $\underline{B}$\index{G-Mackey functor@$G$-Mackey functor!constant}
is given by $\underline{B}(H)=B$,
and all restriction and conjugation maps are identity maps.
The transfer $\tr_K^H:\underline{B}(K)\to\underline{B}(H)$
is multiplication by the index $[H:K]$.

There is a well-known point set level model of an Eilenberg-Mac\,Lane spectrum~$B[\mS]$
that we recall in Example~\ref{eg:HM_constant};
for discrete groups, $B[\mS]$ is
an Eilenberg-Mac\,Lane spectrum for the constant $G$-Mackey functor $\underline{B}$.

(iii)
The {\em representation ring $G$-Mackey functor}\index{representation ring}
$R$ assigns to a finite subgroup $H$ of~$G$ the 
unitary representation ring $R(H)$, i.e., the Grothendieck
group of finite-dimensional complex $H$-representations,
with product induced by tensor product of representations.
The restriction maps are induced by restriction of representations.
The transfer maps are induced by induction of representations.

(iv)
Given any generalized cohomology theory $E$ (in the non-equivariant sense),
we can define a $G$-Mackey functor $\underline{E}$  by setting
\[   \underline{E}(H) \ = \ E^0(B H) \ , \]
the 0-th $E$-cohomology of a classifying space of the finite group $H$.
Restriction and conjugation maps come from the contravariant functoriality 
of classifying spaces in group homomorphisms.
The transfer map for a subgroup inclusion $K\leq H$ 
comes from the stable transfer map associated with the finite covering
\[ B K \ \simeq \ (E H)/K \ \to \  (E H)/ H \ = \ B H  \ .\]
As we will discuss in Examples~\ref{eg:Borel_cohomology}
and~\ref{eg:Borel global}, the $G$-Mackey functor $\underline{E}$ 
is realized by the 0th equivariant homotopy groups of a specific orthogonal $G$-spectrum,
the `$G$-Borel theory' associated with $E$.
\end{eg}

Now we link the purely algebraic concept of a $G$-Mackey functor
to the equivariant homotopy groups of orthogonal $G$-spectra.

\begin{con}\label{con:embed A_G}
We define an additive functor
\[ \Phi \ : \ \bA_G \ \to \ \Ho(\Sp_G)^{\op} \]
from the $G$-Mackey category to the opposite of the
triangulated homotopy category of orthogonal $G$-spectra.
On objects we set $\Phi(H)=\Sigma^\infty_+ G/H$.

We let $L\leq K$ be two nested finite subgroups of the discrete group $G$.
The preferred coset $e L$ is an $L$-fixed point of $G/L$,
so it defines an equivariant homotopy class
\[  u_L \ \in \ \pi_0^L(\Sigma^\infty_+ G/L) \ ,\]
compare \eqref{eq:define_tautological}.
By Proposition~\ref{prop:rephtpy} there is a unique morphism
\[ t_L^K \ : \ \Sigma^\infty_+ G/K \ \to \ \Sigma^\infty_+ G/L  \]
in the stable homotopy category~$\Ho(\Sp_G)$, characterized by the property
\[  (t_L^K)_*(u_K)\ =\ \tr_L^K(u_L) \]
in the group $\pi_0^K(\Sigma^\infty_+ G/L)$.
In other words, the morphism $t_L^K$ represents the 
transfer homomorphism $\tr_L^K:\pi_0^L(X)\to\pi_0^K(X)$ defined in~\eqref{eq:transfer}.
We emphasize that for $L\ne K$, the transfer morphism does {\em not} 
arise from an unstable $G$-map.
For finite subgroups~$H$ and~$K$ of~$G$ we can now define
\begin{equation}\label{eq:define Phi}
 \Phi \ : \  \bA_G(H,K) \ \to \ [\Sigma^\infty_+ G/K,\Sigma^\infty_+ G/H]^G 
\end{equation}
as the homomorphism that sends the basis element~\eqref{eq:Mackey_basis_element}
indexed by a pair~$(L,\gamma)$ to the composite morphism
\[ \Sigma^\infty_+ G/K \ \xrightarrow{\ t_L^K}
\Sigma^\infty_+ G/L \ \xrightarrow{\Sigma^\infty_+ \pi }\ 
\ \Sigma^\infty_+ G/H  \ . \]
Here $\pi:G/L\to G/H$ is the $G$-map defined by
\[ \pi( g L ) \ = \   g\gamma H\ .\]
The various properties of the transfer homomorphisms
translate into corresponding properties of the representing morphisms:
the normalization $t_K^K=\Id$, transitivity
\[ t_L^K\circ t_K^J  \ = \ t_L^J \]
for nested triples of finite subgroups~$L\leq K\leq J$,
and compatibility with conjugation
\[ (\Sigma^\infty_+ l_\gamma)\circ t_L^K \ = \ t_{L^\gamma}^{K^\gamma}\circ (\Sigma^\infty_+ l_\gamma) \ .\]
If $H$ and $L$ are both subgroups of~$K$,
we write $\rho^K_H:G/H\to G/K$ for the quotient map, which satisfies
$(\Sigma^\infty_+\rho^K_H)_*(u_H)=\res^K_H(u_K)$.
The double coset formula
Proposition \ref{prop:dcf} (ii)
for the orthogonal $K$-spectrum $\Sigma^\infty_+G/L$
yields
\begin{align}\label{eq:double coset tautological class}
  (t_L^K\circ \Sigma^\infty_+ \rho^K_H)_*(u_H)  \
  &= \   (t_L^K)_*((\Sigma^\infty_+\rho^K_H)_*(u_H))  \
    = \   (t_L^K)_*(\res^K_H(u_K)) \nonumber \\
  &= \       \res^K_H((t_L^K)_*(u_K))  \
    = \       \res^K_H(\tr_L^K(u_L))\nonumber  \\
  &= \  \sum_{K \gamma L \in H\backslash K/L}   
    \tr^H_{H\cap{^\gamma L}}( \gamma_\star( \res^L_{H^\gamma \cap L}(u_L))) \nonumber \\
        &= \  \sum_{K \gamma L \in H\backslash K/L}   
\left((\Sigma^\infty_+\rho^L_{H^\gamma\cap L})\circ (\Sigma^\infty_+ l_\gamma) \circ   t^H_{H\cap{^\gamma L}}\right)_*(u_H)\ . 
\end{align}
The last equality exploits the relations
\begin{align*}
  \tr^H_{H\cap{^\gamma L}}( \gamma_\star( \res^L_{H^\gamma \cap L}(u_L)))  \
  &= \ \tr^H_{H\cap{^\gamma L}}(\gamma_\star( (\Sigma^\infty_+ \rho^L_{H^\gamma \cap L})_*(u_{H^\gamma\cap L})))  \\
  &= \  \tr^H_{H\cap{^\gamma L}}((\Sigma^\infty_+\rho^L_{H^\gamma \cap L})_*( \gamma_\star (u_{H^\gamma\cap L})))  \\
  &= \ \tr^H_{H\cap{^\gamma L}}( (\Sigma^\infty_+\rho^L_{H^\gamma\cap L})_*((\Sigma^\infty_+ l_\gamma)_*( u_{H\cap{^\gamma L}}))) \\
  &= \ (\Sigma^\infty_+\rho^L_{H^\gamma\cap L})_*((\Sigma^\infty_+ l_\gamma)_*(\tr^H_{H\cap{^\gamma L}}(u_{H\cap{^\gamma L}}))) \\
  &= \ (\Sigma^\infty_+\rho^L_{H^\gamma\cap L})_*((\Sigma^\infty_+ l_\gamma)_*((t^H_{H\cap{^\gamma L}})_*(u_H))) \\
  &= \ \left((\Sigma^\infty_+\rho^L_{H^\gamma\cap L})\circ (\Sigma^\infty_+ l_\gamma) \circ   t^H_{H\cap{^\gamma L}}\right)_*(u_H) \ .
\end{align*}
By the representability property of Proposition \ref{prop:rephtpy},
the relation \eqref{eq:double coset tautological class} implies the relation
\[  t_L^K\circ \rho^K_H  \ = \ \sum_{K \gamma L \in H\backslash K/L} 
(\Sigma^\infty_+\rho^L_{H^\gamma\cap L})\circ (\Sigma^\infty_+ l_\gamma) \circ   t^H_{H\cap{^\gamma L}} 
\]
as morphisms $\Sigma^\infty_+G/H\to\Sigma^\infty_+ G/L$.
Altogether, these properties imply functoriality of the homomorphisms~$\Phi$.
\end{con}

\begin{eg}[$G$-Mackey functor of an orthogonal $G$-spectrum]\label{eg:Mackey of G-spectrum}\index{G-Mackey functor@$G$-Mackey functor!of an orthogonal $G$-spectrum}
  We can now associate a $G$-Mackey functor $\upi_0(X)$ to every orthogonal $G$-spectrum $X$,
  namely as the composite functor
  \[ \bA_G \ \xrightarrow{\ \Phi\ }\ \Ho(\Sp_G)^{\op}\ \xrightarrow{[-,X]^G}\ \cA b\ , \]
  where $\Phi$ was introduced in Construction \ref{con:embed A_G}.
  We take the time to translate this definition into the `explicit' description of Mackey functors
  in terms of restriction, conjugation and transfer homomorphisms.
  For every finite subgroup $H$ of $G$, evaluation at the class $u_H\in \pi_0^H(\Sigma^\infty_+ G/H)$
  is an isomorphism
  \[ \upi_0(X)(H) \ = \ [\Sigma^\infty_+ G/H,X]^G \ \iso \ \pi_0^H(X)\ ,\]
  see Proposition \ref{prop:rephtpy}.
  Now we let $K\leq H$ be nested finite subgroups of $G$. 
  Under the above identification, the restriction map $\res^H_K:\pi_0^H(X)\to\pi_0^K(X)$
  becomes a special case
  of the restriction homomorphism \eqref{eq:restriction_hom} for the inclusion $K\to H$.
  The transfer map $\tr^H_K:\pi_0^K(X)\to\pi_0^H(X)$ becomes the one
  defined in Construction \ref{con:transfer};
  the conjugation homomorphism $\gamma_\star:\pi_0^{H^\gamma}(X)\to\pi_0^H(X)$
  was defined in \eqref{eq:define conjugation}.
  Since $H$ and $K$ are finite, the groups $\pi_0^H(X)$ and $\pi_0^K(X)$
  have the explicit colimit descriptions \eqref{eq:define pi^H},
  and in this picture, restriction, conjugation and transfer are the `classical' ones in the
  context of equivariant homotopy theory of finite groups,
  see for example Constructions 3.1.5 and 3.2.7 of \cite{schwede:global}.
\end{eg}

In Corollary \ref{cor:t-structure} above
we established a non-degenerate t-structure
on the $G$-equivariant stable homotopy category $\Ho(\Sp_G)$,
in which the classes of connective and coconnective objects
are detected by equivariant homotopy groups at all compact subgroups of $G$.
The heart $\cH$ of this t-structure consist
of those orthogonal $G$-spectra~$X$ such that $\pi_n^H(X)=0$
for all compact subgroups~$H$ of~$G$ and all~$n\ne 0$.
In the special case of discrete groups, we will now identify the heart
with the abelian category of $G$-Mackey functors.

Part (i) of the following Theorem \ref{thm:embed_MG_into_Ho(Sp)} 
says that for every finite subgroup $H \leq G$, 
the $G$-Mackey functor $\upi_0( \Sigma^{\infty}_+ G/H)$ 
is a \emph{free $G$-Mackey functor} represented by the object $H$ of~$\bA_G$.
Part (ii) implies that every $G$-Mackey functor arises from an orthogonal $G$-spectrum,
see also Remark \ref{rk:general Eilenberg Mac Lane}.

\begin{thm}\label{thm:embed_MG_into_Ho(Sp)} 
Let $G$ be a discrete group.\index{t-structure!on the equivariant stable homotopy category} 
\begin{enumerate}[\em (i)]
\item 
The maps \eqref{eq:define Phi}
define a fully faithful functor $\Phi:\bA_G\to\Ho(\Sp_G)^{\op}$.
\item The functor 
\[ \upi_0 \ : \ \cH \ \to \ \cM_G \]
is an equivalence of categories from the heart of the t-structure 
on $\Ho(\Sp_G)$ to the category of $G$-Mackey functors.\index{G-Mackey functor@$G$-Mackey functor}
\end{enumerate}
\end{thm}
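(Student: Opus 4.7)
My plan is to prove (i) by a direct computation of both sides of the map \eqref{eq:define Phi}, and then to deduce (ii) by a standard resolution argument in the heart of a t-structure generated by positive compact objects.

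For (i), I would first compute the right-hand side of $\Phi$ using the Wirthm{\"u}ller adjunction from Corollary \ref{cor:restriction stable} and the representability result Proposition \ref{prop:rephtpy}, giving
\[ [\Sigma^\infty_+ G/K,\Sigma^\infty_+ G/H]^G \;\cong\; \pi_0^K(\res^G_K \Sigma^\infty_+ G/H). \]
Decomposing the discrete $G$-set $G/H$ into $K$-orbits as $\coprod_{K\gamma H} K/(K\cap {^\gamma H})$ and applying additivity of $\pi_0^K$ on wedges (valid because $K$ is finite), each summand becomes $\pi_0^K(\Sigma^\infty_+ K/(K\cap {^\gamma H}))$, which is identified with the Burnside ring $A(K\cap {^\gamma H})$ by the classical Wirthm{\"u}ller isomorphism for the finite group $K$. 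Summed over double cosets, this is exactly $\bA_G(H,K)$ by the presentation of the Mackey category. I would then track the image under $\Phi$ of the basis element indexed by a pair $(L,\gamma)$: by the defining property $(t_L^K)_\ast(u_K)=\tr_L^K(u_L)$ of the representing morphism $t_L^K$ together with the double coset formula computation already carried out in \eqref{eq:double coset tautological class}, $\Phi(L,\gamma)$ corresponds under the above chain to the generator of $A(K\cap{^\gamma H})$ in the $[\gamma]$-summand.

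For (ii), the functor $\upi_0$ takes values in $\cM_G$ by Example \ref{eg:Mackey of G-spectrum} and is exact: the long exact homotopy group sequence of a distinguished triangle between heart objects collapses to a short exact sequence because the boundary groups $\pi^H_{\pm 1}$ of heart objects vanish. Moreover, Proposition \ref{prop:rephtpy}, combined with the identification $\upi_0(\tau_{\leq 0}\Sigma^\infty_+ G/H) = \bA_G(H,-)$ that follows from (i), produces a Yoneda-style composite isomorphism
\[ \Hom_\cH(\tau_{\leq 0}\Sigma^\infty_+ G/H,\, X) \;\cong\; \upi_0(X)(H) \;\cong\; \Hom_{\cM_G}(\bA_G(H,-),\, \upi_0 X) \]
for every $X\in\cH$ and every finite $H\leq G$, and inspection shows the composite is the map induced by $\upi_0$.

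To get essential surjectivity, I would realize an arbitrary $M\in\cM_G$ by choosing a presentation ${\bigoplus}_{j\in J}\bA_G(K_j,-) \xrightarrow{d} {\bigoplus}_{i\in I}\bA_G(H_i,-) \to M \to 0$. Since $\upi_0$ preserves wedges (because $\pi_\ast^H$ does so for compact $H$), part (i) together with wedge-sum compatibility lifts $d$ to a morphism $d'$ between the corresponding wedges of suspension spectra in $\Ho(\Sp_G)$. The mapping cone $C(d')$ lies in $\Ho(\Sp_G)_{\geq 0}$ by the positivity property \eqref{eq:positivity} and extension-closure of the connective part, and its long exact sequence in $\upi_0$ yields $\upi_0 C(d') \cong M$; then $\tau_{\leq 0}C(d') \in \cH$ still realizes $M$. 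Essential surjectivity in hand, fully faithfulness on general $X,Y\in\cH$ follows by applying the same construction to $\upi_0 X$, producing a right exact sequence $A_1\to A_0\to X\to 0$ in $\cH$ with $A_0,A_1$ direct sums of truncated suspension spectra, and then comparing $\Hom_\cH(-,Y)$ with $\Hom_{\cM_G}(-,\upi_0 Y)$ via the five lemma using the isomorphism already established on representables. The main technical obstacle I anticipate is the triangulated-categorical bookkeeping needed to realize $d$ by $d'$ compatibly with the chosen bases, and in particular to show that passing through $\tau_{\leq 0}$ intertwines cofiber sequences of connective objects in $\Ho(\Sp_G)$ with right exact sequences in $\cM_G$.
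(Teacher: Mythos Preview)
Your argument for (i) is essentially the paper's: both compute $[\Sigma^\infty_+ G/K,\Sigma^\infty_+ G/H]^G\cong\pi_0^K(\Sigma^\infty_+ G/H)$, decompose the $K$-set $G/H$ into orbits indexed by double cosets, and identify each summand with a Burnside ring via the Wirthm{\"u}ller isomorphism for finite groups. The paper phrases this by citing an explicit basis result \cite[Thm.\,3.3.15]{schwede:global}, but the content is the same.

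For (ii), your route differs genuinely from the paper's. The paper invokes \cite[Thm.\,III.3.4]{beligiannis-reiten:torsion_theories} directly: since the $\Sigma^\infty_+ G/H$ form a set of compact positive generators, that theorem identifies the heart with modules over their endomorphism category $\End$, and then part~(i) identifies $\text{mod-}\End$ with $\cM_G$. Your hands-on resolution argument is in effect a self-contained proof of that Beligiannis--Reiten result in this instance; it is longer but avoids the external reference and makes the mechanism (lifting presentations along the Yoneda isomorphism, taking cofibers, truncating) transparent.

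One point to tighten: your fully faithfulness step ``apply the same construction to $\upi_0 X$, producing a right exact sequence $A_1\to A_0\to X\to 0$'' must be read as lifting the surjection $\bigoplus_i\bA_G(H_i,-)\twoheadrightarrow\upi_0 X$ directly to a morphism $A_0\to X$ in $\cH$ via the Yoneda isomorphism you already established (and then showing it is an epimorphism using exactness and conservativity of $\upi_0$ on $\cH$), \emph{not} as first building some other heart object $Z$ with $\upi_0 Z\cong\upi_0 X$ and then claiming $Z\cong X$ --- the latter reading would be circular. With that clarification, and the technical bookkeeping you already flagged, your argument goes through.
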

\begin{proof}
(i) The argument is essentially the same as for finite groups, so we will be brief.  
The maps $\Phi:\bA_G(H,K)\to[\Sigma^\infty_+ G/K,\Sigma^\infty_+ G/H]^G$
are additive, by definition, and they send the identity of~$H$ 
to the identity of~$\Sigma^\infty_+ G/H$.

To see that $\Phi$ is fully faithful it suffices,
by Proposition~\ref{prop:rephtpy}, to show that the map
\[ \bA_G(H,K)\ \to \ \pi_0^K(\Sigma^\infty_+ G/H) \]
sending the basis element~\eqref{eq:Mackey_basis_element} to the class
$\tr_L^K( \gamma_\star(\res^H_{L^\gamma}(u_H)))$ is an isomorphism.
By \cite[Thm.\,3.3.15 (i)]{schwede:global}, the group $\pi_0^K(\Sigma^\infty_+ G/H)$
is free abelian, with a basis given by the classes $\tr^K_L(\sigma^L(\gamma H))$,
where $L$ runs through conjugacy classes of subgroups of $K$,
and $\gamma H$ runs through $W_K L$-orbits of the set $(G/H)^L $,
and $\sigma^L(\gamma H)$ is the class in $\pi_0^L(\Sigma^\infty_+ G/H)$
represented by the $L$-map $S^0\to G/H_+=(\Sigma^\infty_+ G/H)(0)$
sending 0 to $\gamma H$.
The fact that $\gamma H$ is an $L$-fixed point of $G/H$ precisely means that
$L^\gamma\leq H$, and in our present notation we have
\[ \sigma^L(\gamma H)\ = \
   \gamma_\star(\sigma^{L^\gamma}(e H))\ = \ 
  \gamma_\star(\res^H_{L^\gamma}(u_H))
  \ . \]
So our claim follows from the fact that sending
$\gamma H\in (G/H)^L$ to the equivalence class of the span \eqref{eq:Mackey_basis_element}
passes to a bijection between the $W_K L$-orbits of $(G/H)^L$
and the equivalence classes of transitive span in which the middle term is
isomorphic to $G/L$.
Altogether, this shows that the functor~$\Phi$ 
takes the preferred basis of~$\bA_G(H,K)$
given by `transitive spans' to a basis of $\pi_0^K(\Sigma^\infty_+ G/H)$,
so it is an isomorphism.

(ii)
We denote by $\End$ the `endomorphism category' of the preferred
small generators, i.e., the full pre-additive subcategory of $\Ho(\Sp_G)$ 
with objects $\Sigma^\infty_+ G/H$ for all finite subgroups $H$ of $G$.
By an $\End$-module we mean an additive functor
\[ \End^{\op} \ \to \ \cA b \]
from the opposite category of $\End$.
The tautological functor
\begin{equation}\label{eq:tautological functor}
\Ho(\Sp_G) \ \to \ \text{mod-}\End
\end{equation}
takes an object $X$ to the restriction of the 
contravariant Hom-functor $[-,X]^G$ to the full subcategory $\End$.
By Proposition \ref{cor:Ho_G compactly generated},
the spectra $\Sigma^\infty_+ G/H$ form a set of small weak generators 
for the triangulated category $\Ho(\Sp_G)$; 
moreover, the group of maps from a generator to a positive shift of
any other generator is trivial, compare \eqref{eq:positivity}. 
So \cite[Thm.\,III.3.4]{beligiannis-reiten:torsion_theories}
applies and shows that the restriction 
of the tautological functor \eqref{eq:tautological functor}
to the heart of the t-structure is an equivalence of categories
\[ \ \cH \ \xrightarrow{\ \iso \ } \ \text{mod-}\End \ .\]
Part (i) shows that the functor $\Phi:\bA_G\to \End^{\op}$ is an isomorphism
of pre-additive categories, 
so it induces an isomorphism between the category of $\End$-modules
and the category of $G$-Mackey functors.
This equivalence turns the $\End$-module $[-,X]^G$
into the $G$-Mackey functor $\upi_0(X)$. This completes the proof.
\end{proof}

\begin{rk}[Eilenberg-Mac\,Lane spectra for $G$-Mackey functors]
\label{rk:general Eilenberg Mac Lane}
\index{Eilenberg-Mac\,Lane spectrum|(}
For discrete groups~$G$, part~(ii) of Theorem \ref{thm:embed_MG_into_Ho(Sp)} 
in particular provides an Eilenberg-Mac\,Lane spectrum
for every $G$-Mackey functor~$M$, i.e., an orthogonal $G$-spectrum $H\!M$
such that $\upi_k(H\!M)=0$ for all~$k\ne 0$
and such that the $G$-Mackey functor $\upi_0(H\!M)$ is isomorphic to $M$;
and these properties characterize $H\!M$ 
up to preferred isomorphism in~$\Ho(\Sp_G)$. 
Indeed, a choice of inverse to the equivalence~$\upi_0$ of
Theorem \ref{thm:embed_MG_into_Ho(Sp)}~(ii), composed with the inclusion of
the heart, provides an Eilenberg-Mac\,Lane functor
\[ H \ : \ \cM_G \ \to \ \Ho(\Sp_G) \]
to the stable $G$-homotopy category.
\end{rk}

The previous remark constructs
Eilenberg-Mac\,Lane spectra associated to $G$-Mackey functors;
the stable $G$-homotopy type is determined by the algebraic input data up 
to preferred isomorphism, but the construction is an abstract version
of `killing homotopy groups' and does not yield an explicit pointset level model.
In the next example we recall a well-known
pointset level construction that yields an 
Eilenberg-Mac\,Lane spectrum for the constant $G$-Mackey functor,
compare Example \ref{eg:G-Mackey examples} (ii).

\begin{eg}[Eilenberg-Mac\,Lane spectra for constant Mackey functors]\label{eg:HM_constant}
Let $B$ be an abelian group. The orthogonal Eilenberg-Mac\,Lane spectrum
$B[\mS]$ is defined at an inner product space~$V$ by
\[ B[\mS](V) \ = \ B[S^V] \ ,  \]
the reduced $B$-linearization of the $V$-sphere. 
The underlying set of this space consists of finite linear combinations
of elements of $S^V$ with coefficients in $B$, modulo
the subgroup of $B$-multiples of the basepoint.
The topology is as a quotient space of $\amalg_{n\geq 0} B^n\times (S^V)^n$.

The orthogonal group~$O(V)$ acts through the action on $S^V$ and
the structure map
$\sigma_{V,W}:S^V\sm B[\mS](W)\to B[\mS](V\oplus W)$ is given by
\[ S^V\sm B[S^W] \ \to\  B[S^{V\oplus W}] \ , \quad
  v\sm \big( {\sum}_i  b_i\cdot w_i\big) \ \longmapsto \
{\sum}_i\ b_i\cdot \, (v\sm w_i) \  . \]
The underlying non-equivariant space of $B[S^V]$ 
is an Eilenberg-Mac\,Lane space of type $(B,n)$, where $n=\dim(V)$. 
Hence the underlying non-equivariant homotopy type of $B[\mS]$ is that of
an Eilenberg-Mac\,Lane spectrum for $B$.
If $G$ is any Lie group, then $B[\mS]$ becomes an orthogonal $G$-spectrum
by letting $G$ act trivially.
We warn the reader that for compact Lie groups of positive dimension,
the equivariant homotopy groups of $B[\mS]$ are {\em not} generally concentrated
in dimension zero; for example, the group $\pi_1^{U(1)}(\mZ[\mS])$
is isomorphic to $\mQ$ by \cite[Thm.\,5.3.16]{schwede:global}.
Also, the group $\pi_0^G(B[\mS])$ may not be isomorphic to~$B$;
for example, the group $\pi_0^{S U(2)}(\mZ[\mS])$ has rank 2
by \cite[Ex.\,4.16]{schwede:equivariant_properties}.

However, for {\em discrete} groups~$G$, the equivariant behavior of $B[\mS]$ is as expected,
and the orthogonal $G$-spectrum $B[\mS]$ 
is an Eilenberg-Mac\,Lane spectrum
of the constant $G$-Mackey functor $\underline{B}$.\index{G-Mackey functor@$G$-Mackey functor!constant}
Indeed, $B[\mS]$ is obtained from a $\Gamma$-space $B[-]$
by evaluation on spheres. For every finite subgroup $K$ of $G$,
we can view this $\Gamma$-space as a $\Gamma$-$K$-space
by letting $K$ act trivially. For every finite $K$-set $S$, the map 
\[ P_S \ : \ B[S] \ \to \ B[1_+]^S\ = \ B^S \]
is then a homeomorphism, so in particular a $K$-homotopy equivalence,
and $B[-]$ is a very special $\Gamma$-$K$-space in the sense 
of Shimakawa~\cite[Def.\,1.3]{shimakawa:infinite_loop}.
Since~$\pi_0(B[1^+])$ is a group (as opposed to a monoid only),
Shimakawa's Theorem~B proves that the adjoint structure maps 
$\tilde\sigma_{V,W}: B[S^V]\to\map(S^W, B[S^{V\oplus W}])$
are $K$-weak equivalences, see also \cite[Thm.\,B.61]{schwede:global}.
Since $K$ was an arbitrary finite subgroup of $G$,
this shows that $B[\mS]$ is a $G$-$\Omega$-spectrum,
and an Eilenberg-Mac\,Lane spectrum for $\underline{B}$. 
\end{eg}
\index{Eilenberg-Mac\,Lane spectrum|)}

\section{Rational proper stable homotopy theory}

\index{orthogonal $G$-spectrum!rational|(} 
The purpose of this section is to give an algebraic model for the
rational proper $G$-equivariant stable homotopy category of a discrete group.
We call an orthogonal $G$-spectrum $X$ {\em rational}
if the equivariant homotopy groups $\pi_k^H(X)$ are uniquely divisible 
(i.e., $\mQ$-vector spaces) for all compact subgroups $H$ of~$G$.
For discrete groups $G$,
Theorem~\ref{thm:rational SH} below shows that the full triangulated
subcategory of rational $G$-spectra inside $\Ho(\Sp_G)$
is equivalent to the unbounded derived category of rational $G$-Mackey functors.
As in the case of finite groups, the abelian category of
rational $G$-Mackey functors can be simplified by
`dividing out transfers', see Proposition \ref{prop:rational divide transfer}.
However -- in contrast to the case of finite groups -- this
abelian category is in general {\em not} semisimple, see Remark \ref{rk:not semisimple}.

\begin{rk}
Let $G$ be a Lie group and $X$ a rational orthogonal $G$-spectrum.
For $n\in\mZ$ we let $n\cdot X\in [X,X]^G$ 
denote the $n$-fold sum of the identity morphism of $X$.
For every compact subgroup $H$ of $G$,
the morphism $n\cdot X$ induces multiplication by $n$ on $\pi_*^H(X)$,
which is invertible since $X$ is rational.
This means that $n\cdot X$ is an isomorphism in $\Ho(\Sp_G)$.
Hence the endomorphism ring $[X,X]^G$ of $X$ in $\Ho(\Sp_G)$
is a $\mQ$-algebra.
So all morphism groups in the full subcategory $\Ho^\mQ(\Sp_G)$ of rational spectra 
are uniquely divisible, i.e., $\Ho^\mQ(\Sp_G)$ is a $\mQ$-linear category.
\end{rk}

\begin{prop}\label{prop:rational in degree 0} 
  Let $H$ and $K$ be finite subgroups of a discrete group $G$. 
  Then the equivariant homotopy group $\pi_k^K(\Sigma^\infty_+ G/H)$ is torsion for every $k>0$.
\end{prop}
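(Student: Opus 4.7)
The plan is to reduce the problem to the computation of the equivariant homotopy groups of the sphere spectrum over finite groups, where positive-dimensional groups are classically known to be torsion. I would proceed in three steps.

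First, I would decompose $G/H$ as a $K$-set. Since $K$ is finite, the translation $K$-action on $G/H$ partitions $G/H$ into the (possibly infinite) disjoint union of $K$-orbits indexed by the double coset space $K\backslash G/H$, with the orbit through $g H$ being $K$-isomorphic to $K/(K \cap {}^g H)$, where ${}^g H = g H g^{-1}$. Each stabilizer $K \cap {}^g H$ is a finite subgroup of $K$. Passing to suspension spectra yields a $K$-equivariant isomorphism
\[
\res^G_K(\Sigma^\infty_+ G/H) \;\cong\; \bigvee_{[g]\in K\backslash G/H} \Sigma^\infty_+ K/(K \cap {}^g H).
\]
Since $K$ is a compact Lie group, $\pi_k^K$ takes wedges to direct sums (as recalled in the excerpt via \cite[Cor.\,3.1.37 (i)]{schwede:global}), so
\[
\pi_k^K(\Sigma^\infty_+ G/H) \;\cong\; \bigoplus_{[g]\in K\backslash G/H} \pi_k^K\bigl(\Sigma^\infty_+ K/(K \cap {}^g H)\bigr).
\]

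Second, I would apply the Wirthm\"uller isomorphism \eqref{eq:wirth}. Each subgroup $K \cap {}^g H$ has finite index in the finite group $K$ and we have $\Sigma^\infty_+ K/(K \cap {}^g H) \cong K \ltimes_{K \cap {}^g H}\! \mS$, so \eqref{eq:wirth} with $Y = \mS$ delivers a natural isomorphism
\[
\pi_k^K\bigl(\Sigma^\infty_+ K/(K \cap {}^g H)\bigr) \;\cong\; \pi_k^{K \cap {}^g H}(\mS).
\]
Combining Steps 1 and 2 identifies $\pi_k^K(\Sigma^\infty_+ G/H)$ with a direct sum of groups $\pi_k^L(\mS)$ for various finite subgroups $L \leq K$.

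Third, it remains to verify that $\pi_k^L(\mS)$ is torsion for every finite group $L$ and every $k > 0$. For this I would invoke the tom Dieck splitting
\[
\pi_k^L(\mS) \;\cong\; \bigoplus_{(A)\leq L}\, \pi_k^s\bigl(B W_L(A)_+\bigr),
\]
where the sum runs over conjugacy classes of subgroups $A$ of $L$ with Weyl group $W_L(A) = N_L(A)/A$. For any finite group $F$ one has $\pi_*^s(B F_+)_\mQ \cong H_*(B F;\mQ)$ by Serre's finiteness theorem and the rational Hurewicz isomorphism, and $H_k(B F;\mQ) = 0$ for $k>0$ by a transfer-restriction argument; so $\pi_k^s(B F_+)$ is torsion for $k>0$. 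Hence each summand appearing in Steps 1--3 is torsion, and an arbitrary direct sum of torsion groups is again torsion.

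The only subtle point is that the double coset space $K \backslash G/H$ can be infinite when $G$ is, so the wedge in Step 1 is genuinely infinite; this is not an obstacle because the compactness of $K$ is precisely what guarantees that $\pi_k^K$ commutes with arbitrary wedges. All other ingredients (Wirthm\"uller, tom Dieck splitting, vanishing of rational homology of finite groups) are classical and readily cited.
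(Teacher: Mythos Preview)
Your proposal is correct and follows essentially the same approach as the paper: decompose $G/H$ into $K$-orbits indexed by double cosets, apply the Wirthm\"uller isomorphism to reduce to the equivariant stable stems $\pi_k^L(\mS)$ for finite $L$, and then argue these are torsion in positive degrees. The only minor variation is in the last step: the paper uses the rational splitting via geometric fixed points (citing \cite[Cor.\,3.4.28]{schwede:global}) to see that $\pi_k^L(\mS)\otimes\mQ$ is a finite product of groups of the form $(\pi_k^s)^{W_L J}$, which vanish for $k>0$, whereas you invoke the integral tom Dieck splitting and then Serre finiteness plus the rational Hurewicz isomorphism for each summand $\pi_k^s(B W_L(A)_+)$; these are two routes to the same classical fact.
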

\begin{proof} 
  The underlying $K$-space of $G/H$ is the disjoint union of its $K$-orbits
  $K(g H)$;
  this becomes a wedge decomposition after passing to unreduced suspension spectra.
  Since equivariant homotopy groups takes wedges to direct sums,
  $\pi_k^K(\Sigma^\infty_+ G/H)$ is isomorphic to the direct sum,
  indexed by $K$-$H$-double coset, of the groups
  $\pi_k^K(\Sigma^\infty_+ K (g H) )$.
  The $K$-set $K(g H)$ is isomorphic to $K/(K\cap {^g H})$,
  so the Wirthm{\"u}ller isomorphism \cite{wirthmuller:equivariant_homology}, 
  see also \eqref{eq:wirth} or \cite[Thm.\,3.2.15]{schwede:global},
  provides an identification
  \[   \pi_*^K(\Sigma^\infty_+ K(g H)) \ \cong\ 
    \pi_*^K(\Sigma^\infty_+  K / (K\cap {^g H} ))
    \ \cong\ \pi_*^{K\cap{^g H}}(\mS_G) \ . \]
  The claim thus follows because for every finite group~$L$,
  the $L$-equivariant stable stems are finite in positive degrees.
  To see this, we can exploit the fact that
  the groups $\pi_k^L(\mS_G)$ can rationally be recovered
  as the product of the $W_L J$-fixed subgroup of the geometric fixed point 
  homotopy groups $\Phi_k^J(\mS_G)$, see for example \cite[Cor.\,3.4.28]{schwede:global}; 
  the latter groups are stable homotopy groups of spheres,
  which are finite in positive degrees.
\end{proof}

Before establishing an algebraic model for the rational stable $G$-homotopy category,
we first recall the two rational model structures to be compared.
We let $\cA$ be a pre-additive category, such as the 
Mackey category $\bA_G$.
We denote by $\cA\mo$ the category of additive functors from
$\cA$ to the category of $\mQ$-vector spaces.
This is an abelian category, and the rationalized represented functors
$\mQ\tensor \cA(a,-)$, for all objects $a$ of $\cA$, form a set
of finitely presented projective generators of $\cA\mo$.
The category of $\mZ$-graded chain complexes in the abelian category $\cA\mo$
then admits the {\em projective model structure} with the quasi-isomorphisms
as weak equivalences. The fibrations in the projective model structure
are those chain morphisms that are surjective in every chain complex degree
and at every object of $\cA$.
This projective model structure for complexes of $\cA$-modules is a
special case of \cite[Thm.~5.1]{christensen-hovey:relative}.

We also need the rational version of the stable model structure
on orthogonal $G$-spectra established in Theorem \ref{thm:stable}. 
We call a morphism $f:X\to Y$ of orthogonal $G$-spectra a
{\em rational equivalence}\index{rational equivalence!of orthogonal $G$-spectra}
if the map
\[ \mQ\otimes \pi_k^H(f)\ : \ \mQ\otimes\pi_k^H(X)\ \to\ \mQ\otimes\pi_k^H(Y) \]
is an isomorphism for all integers $k$ and all compact subgroups $H$ of $G$.

\begin{thm}[Rational stable model structure]\label{thm:rational stable} 
  Let $G$ be a Lie group.\index{stable model structure!for orthogonal $G$-spectra!rational}
  \begin{enumerate}[\em (i)]
  \item 
    The rational equivalences and the cofibrations 
    are part of a model structure on the category of orthogonal $G$-spectra, 
    the {\em rational stable model structure}.
  \item
    The fibrant objects in the rational stable model structure 
    are the rational $G$-$\Omega$-spectra.\index{G-Omega-spectrum@$G$-$\Omega$-spectrum!rational}
  \item
    The rational stable model structure is cofibrantly generated, 
    proper and topological. 
  \end{enumerate}
\end{thm}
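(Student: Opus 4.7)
The plan is to construct the rational stable model structure as a left Bousfield localization of the integral stable model structure of Theorem \ref{thm:stable} at multiplication-by-$n$ maps on the generators. For each $n\geq 1$, each compact subgroup $H$ of $G$, and each $V\in\cV_H$, consider
\[
  \tau^{H,V}_n\ :\ G\ltimes_H F_V \ \xrightarrow{\ n\cdot \Id\ }\ G\ltimes_H F_V,
\]
where $n\cdot \Id$ is the $n$-fold sum of the identity morphism. Let $\cT$ be the set of all such $\tau^{H,V}_n$ (or their mapping cylinder replacements, if cofibrant representatives are required). A direct computation with derived mapping spaces, based on the representability result of Proposition \ref{prop:rephtpy}, shows that an orthogonal $G$-spectrum $X$ is $\cT$-local precisely when $X$ is a $G$-$\Omega$-spectrum and multiplication by $n$ is an isomorphism on every $\pi_k^H(X)$ (for $n\geq 1$, compact $H$, $k\in\mZ$); equivalently, precisely when $X$ is a rational $G$-$\Omega$-spectrum. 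This identifies the fibrant objects of part~(ii).

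Applying the standard existence theorem for left Bousfield localizations to the cofibrantly generated, left proper, stable model structure of Theorem \ref{thm:stable}, I obtain a model structure with the stable cofibrations as cofibrations, $\cT$-local equivalences as weak equivalences, and rational $G$-$\Omega$-spectra as fibrant objects; this localization is cofibrantly generated by construction. To match the $\cT$-local equivalences with the rational equivalences in the statement, I would argue that $f\colon X\to Y$ is $\cT$-local if and only if $[f,Z]^G$ is bijective for every rational $G$-$\Omega$-spectrum $Z$. Using Postnikov towers (Remark \ref{rk:postnikov}) together with Eilenberg-Mac\,Lane spectra associated to rational $G$-Mackey functors (Remark \ref{rk:general Eilenberg Mac Lane}) as witnesses, this condition is equivalent to $\mQ\otimes \pi_k^H(f)$ being an isomorphism for all compact $H$ and all $k$, matching the definition of rational equivalence.

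For the remaining assertions of part~(iii): left properness is inherited from Theorem \ref{thm:stable} (iv), since pushout along a cofibration preserves integral $\pi_*$-isomorphisms (the long exact sequence of the associated cofiber is already established integrally), and rationalization is exact; the topological enrichment follows similarly from Theorem \ref{thm:stable} (iii) combined with exactness of $-\otimes\mQ$ applied to the cofiber long exact sequence of the pushout-product. The principal obstacle will be right properness, since left Bousfield localizations of right proper model categories are not automatically right proper. I would handle this directly by observing that every rational fibration is in particular a stable fibration, and then applying the five lemma to the integral long exact homotopy group sequence of a pullback square tensored with $\mQ$, using the integral right properness of Theorem \ref{thm:stable} (iv).
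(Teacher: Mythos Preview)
Your overall strategy matches the paper's: the paper also says the rational stable model structure is obtained by Bousfield localization of the stable model structure of Theorem~\ref{thm:stable}, citing \cite[Lemma~4.1]{schwede-shipley:uniqueness} for the analogous non-equivariant argument, and omits the details entirely. So at the level of approach there is nothing to compare.

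There is, however, a genuine gap in your identification of the $\cT$-local equivalences with the rational equivalences. You invoke Eilenberg--Mac\,Lane spectra for rational $G$-Mackey functors via Remark~\ref{rk:general Eilenberg Mac Lane}, but that remark (and indeed the entire identification of the heart with $G$-Mackey functors in Theorem~\ref{thm:embed_MG_into_Ho(Sp)}) is established only for \emph{discrete} $G$, whereas Theorem~\ref{thm:rational stable} is stated for an arbitrary Lie group. For positive-dimensional compact subgroups the heart is not the category of Mackey functors in the discrete sense, so your witnesses are unavailable.

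A repair that works uniformly: use the Moore spectrum $M\mQ$ instead. For every orthogonal $G$-spectrum $X$ the map $X\to X\sm M\mQ$ is a rational equivalence (since $\pi_*^H$ commutes with the sequential homotopy colimit defining $M\mQ$, for each compact $H\leq G$), and $X\sm M\mQ$ is rational, hence $\cT$-local once made into a $G$-$\Omega$-spectrum. To see that $X\to X\sm M\mQ$ is also a $\cT$-local equivalence, note that for any rational $G$-$\Omega$-spectrum $Z$ the morphism $n\cdot\Id_Z$ is an isomorphism in $\Ho(\Sp_G)$, so $[X,Z]^G$ is a $\mQ$-vector space and $n\cdot\Id_X$ is a $\cT$-local equivalence for every $X$; the telescope is then a $\cT$-local equivalence as well. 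This forces $\cT$-local equivalences and rational equivalences to coincide without any appeal to Eilenberg--Mac\,Lane spectra, and the remainder of your argument (cofibrant generation from the localization, properness via the long exact sequence and flatness of $\mQ$, topological enrichment) goes through.
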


Theorem \ref{thm:rational stable} is obtained
by Bousfield localization of the stable model structure on
orthogonal $G$-spectra, and one can use a similar proof as for the rational stable
model structure on sequential spectra in \cite[Lemma 4.1]{schwede-shipley:uniqueness}.
We omit the details. 

\index{G-equivariant stable homotopy category@$G$-equivariant stable homotopy category!rational|(} 
\begin{thm}\label{thm:rational SH} 
Let~$G$ be a discrete group.
There is a chain of Quillen equivalences between
the category of orthogonal $G$-spectra with the 
rational stable model structure and the category of chain complexes of
rational $G$-Mackey functors. In particular, this induces
an equivalence of triangulated categories
\[  \Ho^\mQ(\Sp_G)\ \xrightarrow{\ \iso \ } \ \cD\big( \cM_G^\mQ \big) \ . \]
The equivalence can be chosen so that the homotopy $G$-Mackey functor
on the left hand side corresponds to the homology $G$-Mackey functor 
on the right hand side.
\end{thm}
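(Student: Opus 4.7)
The plan is to establish the equivalence via the standard tilting-theoretic strategy: represent rational orthogonal $G$-spectra as modules over the spectral endomorphism ringoid of the compact generators $\Sigma^\infty_+ G/H$, and then show this ringoid is formal with homotopy equal to the rational Mackey category $\bA_G^\mQ$. The key computation making this work is as follows. By Corollary \ref{cor:Ho_G compactly generated}, the set $\cP=\{\Sigma^\infty_+ G/H\}$, as $H$ ranges over the finite subgroups of $G$, is a set of compact generators of $\Ho(\Sp_G)$, and this persists after rationalization. For finite $H,K\leq G$ and $n\in\mZ$, the graded abelian group $\mQ\otimes [\Sigma^\infty_+ G/K[n],\Sigma^\infty_+ G/H]^G$ is isomorphic to $\mQ\otimes \pi_n^K(\Sigma^\infty_+ G/H)$. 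By Proposition \ref{prop:rational in degree 0} this vanishes for $n>0$; by positivity \eqref{eq:positivity} it also vanishes for $n<0$; and in degree zero Theorem \ref{thm:embed_MG_into_Ho(Sp)}(i) identifies it with $\bA_G(H,K)\otimes\mQ$. Consequently the graded rational endomorphism ringoid of $\cP$ in $\Ho^\mQ(\Sp_G)$ is concentrated in degree zero and equals $\bA_G^\mQ$.

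The first major step would apply a multi-object (ringoid) version of Schwede--Shipley's classification of stable model categories as module categories, e.g.\ via the spectral enrichment of orthogonal $G$-spectra (or Dugger's universal-property version), to produce a Quillen equivalence between the rational stable model category of Theorem \ref{thm:rational stable} and modules over a spectral ringoid $\cE^\mQ$ whose object set is $\cP$ and whose morphism spectra are the derived rationalized mapping spectra. This uses the compact generation of $\Ho^\mQ(\Sp_G)$ by $\cP$ and the cofibrant generation of the rational stable model structure. The second major step is formality. The key computation above identifies the homotopy ringoid $\pi_*(\cE^\mQ)$ with $\bA_G^\mQ$ concentrated in degree zero. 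Invoking Shipley's equivalence between rational ring spectra and rational DGAs, in its multi-object form, followed by the standard observation that a DG-ringoid whose homology is concentrated in degree zero is quasi-isomorphic to its homology viewed as a discrete DG-ringoid, one obtains a chain of Quillen equivalences between $\cE^\mQ$-modules and chain complexes of $\bA_G^\mQ$-modules, i.e.\ chain complexes of rational $G$-Mackey functors.

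For the compatibility statement, one traces the generators through the composite equivalence: each $\Sigma^\infty_+ G/H$ corresponds to the representable functor $\bA_G^\mQ(H,-)$ placed in chain degree zero. Since $\mQ\otimes\pi_k^H(X)$ computes maps from $\Sigma^\infty_+ G/H$ into $X[-k]$ on the spectral side, while degree-$k$ homology of a chain complex of rational Mackey functors computes the analogous derived maps from the representables on the algebraic side, the homotopy $G$-Mackey functor of $X$ corresponds to the homology Mackey functor of its image under the equivalence. The main obstacle lies in the first major step: assembling a genuine chain of Quillen equivalences, rather than only a homotopy-category equivalence, for a possibly infinite set of compact generators, and producing the right spectral enrichment of orthogonal $G$-spectra in which the mapping spectra between the generators compute $\bA_G^\mQ$ rationally. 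The formality step is essentially automatic once its ringoid form is available, since the concentration of morphisms in degree zero precludes any obstructions.
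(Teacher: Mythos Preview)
Your proposal is correct and follows essentially the same approach as the paper: compute the rationalized graded morphism groups between the compact generators $\Sigma^\infty_+ G/H$ using Theorem~\ref{thm:embed_MG_into_Ho(Sp)}, Proposition~\ref{prop:rational in degree 0}, and positivity, find them concentrated in degree zero equal to $\bA_G^\mQ$, and then tilt. The only difference is packaging: the paper applies the ``generalized tilting theorem'' \cite[Thm.~5.1.1]{schwede-shipley:stable_model} directly, which already bundles together your two major steps (passage to modules over the endomorphism ringoid and formality when homotopy is concentrated in degree zero), and simply checks its hypotheses---that the rational stable model structure is topological (hence simplicial), cofibrantly generated, proper and stable---rather than reassembling the argument from Shipley's $H\mZ$-algebra/DGA comparison plus a separate formality observation. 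Your concern about an infinite set of generators is handled by that reference.
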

\begin{proof}
We prove this as a special case of the `generalized tilting theorem'
of Brooke Shipley and the fifth author. 
Indeed, by Corollary~\ref{cor:Ho_G compactly generated} 
the unreduced suspension spectra of the $G$-sets $G/H$ 
are small weak generators of the stable $G$-homotopy category
$\Ho(\Sp_G)$ as $H$ varies through all finite subgroups of~$G$.
So the rationalizations  $(\Sigma^\infty_+ G/H )_\mQ$ 
are small weak generators of the rational stable $G$-homotopy category $\Ho^\mQ(\Sp_G)$. 

By Proposition~\ref{prop:rephtpy} the evaluation map
\[ [\Sigma^\infty_+ G/H, X]^G_* \ \to \ \pi_*^H( X) \ , \quad 
[f] \longmapsto f_*(u_H)\]
is an isomorphism, where $u_H\in \pi_0^H(\Sigma^\infty_+ G/H)$
is the tautological class.
So the graded morphism groups between the small generators are given by
\begin{align*}
[ (\Sigma^\infty_+ G/K)_\mQ[k],\ (\Sigma^\infty_+ G/H )_\mQ] \ &\cong \
\pi_k^K((\Sigma^\infty_+  G/H )_\mQ) \ 
\cong \ \mQ\otimes \pi_k^K( \Sigma^\infty_+ G/H ) \\ 
&\cong \ 
\begin{cases}
  \mQ\otimes \bA_G(H,K) &\text{\quad for $k=0$, and}\\
\qquad 0 &\text{\quad for $k\ne0$.}
\end{cases}
\end{align*}
Here we have used Theorem \ref{thm:embed_MG_into_Ho(Sp)},
Proposition \ref{prop:rational in degree 0},
and the fact that the equivariant homotopy groups of 
unreduced suspension spectra vanish in negative dimensions,
see for example \cite[Prop.\,3.1.44]{schwede:global}.
The rational stable model structure on orthogonal $G$-spectra is 
topological (hence simplicial),
cofibrantly generated, proper and stable; 
so we can apply the `generalized tilting theorem' \cite[Thm.\,5.1.1]{schwede-shipley:stable_model}. 
This theorem yields a chain
of Quillen equivalences between orthogonal $G$-spectra 
in the rational stable model structure and the category of chain complexes of
$\mQ\otimes\bA_G$-modules, i.e., 
additive functors from the rationalized $G$-Mackey category $\mQ\otimes \bA_G$
to abelian groups. 
This functor category is equivalent to the category of additive functors
from $\bA_G$ to $\mQ$-vector spaces, and this proves the theorem.
\end{proof}
\index{orthogonal $G$-spectrum!rational|)}

\index{G-Mackey functor@$G$-Mackey functor!rational|(} 
There is a further algebraic simplification of the category of rational 
$G$-Mackey functors: Given a $G$-Mackey functor $M$ and a 
finite subgroup $H$ of $G$, we let $\tau(M)(H)$ denote the quotient of 
$M(H)$ by all transfers from proper subgroups of $H$. The values 
$\tau(M)(H)$ no longer assemble into a $G$-Mackey functor, but they 
inherit induced conjugation maps $g_{\star}:\tau(M)(H^g)\to \tau(M)(H)$ since 
conjugations and transfers of $G$-Mackey functors commute. We let 
$\Conj_G$ denote the conjugation category of $G$, i.e., the category 
with objects the finite subgroups of $G$ and morphisms 
$\Conj_G(H,K)=\{g\in G\ |\ ^g H=K\}/K$, the set of elements of $G$ which 
conjugate $H$ onto $K$, modulo conjugation by elements of $K$. Then $\tau(M)$ 
naturally forms a covariant functor from $\Conj_G$ to abelian groups.

It turns out that every {\em rational} $G$-Mackey functor $M$ can be 
reconstructed uniquely from the $\Conj_G$-functor $\tau(M)$:

\begin{prop}\label{prop:rational divide transfer}
  For every discrete group $G$,
  the functor $\tau:\cM_G^\mQ\to\cF(\Conj_G,\mQ)$
  is an equivalence of abelian categories.
\end{prop}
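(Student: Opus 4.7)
The plan is to exhibit a left adjoint $L : \cF(\Conj_G,\mQ) \to \cM_G^\mQ$ to $\tau$ and show that the unit and counit are natural isomorphisms. The construction rests on the classical idempotent decomposition of the rationalized Burnside ring of a finite group. For every finite subgroup $H$ of $G$, the ring $\mQ\otimes A(H)$ is a product of copies of $\mQ$ indexed by $H$-conjugacy classes of subgroups $K\leq H$; write $e_K^H$ for the corresponding primitive idempotents. Since $A(H)=\bA_G(H,H)$, the Burnside ring $A(H)$ acts on $M(H)$ through the Mackey structure, and rationally one obtains a natural decomposition $M(H)=\bigoplus_{(K)_H} e_K^H\cdot M(H)$. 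The classical finite-group theory identifies the summand $e_K^H\cdot M(H)$ with $\tau(M)(K)^{W_H K}$, where $W_H K=N_H(K)/K$.

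Guided by this, for $F$ a functor from $\Conj_G$ to $\mQ$-vector spaces I would define
\[ L(F)(H) \ = \ \bigoplus_{(K)_H} F(K)^{W_H K}, \]
where the sum runs over $H$-conjugacy classes of subgroups $K\leq H$. Conjugation maps $g_\star:L(F)(H^g)\to L(F)(H)$ for $g\in G$ are prescribed summand-by-summand using the $\Conj_G$-functoriality of $F$, while restriction and transfer maps for $J\leq H$ are defined by the explicit classical formulas that express $\res^H_J$ and $\tr^H_J$ in terms of the idempotents $e_K^H$. These prescriptions are forced by requiring $\tau(L(F))\cong F$ and that the induced $A(H)$-action on $L(F)(H)$ realize the prescribed idempotent decomposition.

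The remaining work is threefold: (i) verify that $L(F)$ indeed satisfies the Mackey relations (transitivity of $\res$ and $\tr$, double coset formula, compatibility with conjugation); (ii) verify that the counit $\tau(L(F))\to F$ is an isomorphism, which is immediate since only the summand $F(H)^{W_H H}=F(H)$ survives after dividing out transfers from proper subgroups; and (iii) verify that the unit $M\to L(\tau(M))$ is an isomorphism, which is exactly the content of the idempotent decomposition recalled above. Each of these can be checked one finite subgroup at a time: every relation in $\bA_G$ among morphisms involving finite subgroups $H_1,\dots,H_n$ already holds inside the Mackey category of any finite subgroup $H_0\leq G$ containing all $H_i$, so these relations for $L(F)$ reduce to the well-understood finite-group theory of rational Mackey functors.

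The main obstacle will be the bookkeeping in verification (i), since the restriction and transfer maps on $L(F)$ intertwine different conjugacy-class summands in an intricate way dictated by how the idempotents $e_K^H$ behave under $\res$ and $\tr$. This bookkeeping is, however, entirely insensitive to whether the ambient group $G$ is finite or infinite; the genuinely $G$-dependent ingredient --- the outer conjugation action of elements of $G$ lying outside any fixed finite subgroup --- enters only through the $\Conj_G$-functoriality of $F$, which by construction is exactly the correct functoriality.
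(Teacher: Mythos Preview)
Your approach is correct and produces the same inverse functor as the paper's, but the paper organizes the argument so as to eliminate your step (i) entirely. Rather than defining the inverse by hand and then verifying the Mackey relations, the paper observes that $\tau$ commutes with colimits (it is a quotient construction) and therefore has a right adjoint $R$, given automatically by $R(N)(H)=\text{Nat}_{\Conj_G}(\tau(\mQ\otimes\bA_G(H,-)),N)$. This is a $G$-Mackey functor by construction, with no relations to check. A short computation then shows $\tau(\mQ\otimes\bA_G(H,-))\cong(\bigoplus_{J\subseteq H}\mQ[\Conj_G(J,-)])/H$, whence $R(N)(H)\cong(\bigoplus_{J\subseteq H}N(J))^H$, which after decomposing into $H$-orbits is exactly your formula $\bigoplus_{(K)_H}N(K)^{W_H K}$. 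Since this visibly depends only on $N|_{\Conj_H}$, and $\tau(M)(H)$ depends only on the underlying $H$-Mackey functor of $M$, checking that unit and counit are isomorphisms reduces levelwise to the finite-group case. Your explicit construction buys a more hands-on description; the paper's buys the complete avoidance of bookkeeping.

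Two minor points deserve correction. First, the unit and counit you write down are those of a \emph{right} adjoint, not a left one (and indeed $\tau$ preserves colimits but not obviously kernels), so the labeling should be fixed. Second, your reduction ``choose a finite subgroup $H_0\leq G$ containing all $H_i$'' is not available in general: in the infinite dihedral group $\mZ/2 * \mZ/2$ the two generating copies of $\mZ/2$ lie in no common finite subgroup. This does not actually break your verification of (i), because the Mackey relations split cleanly: transitivity of restriction and transfer and the double coset formula live entirely inside a single finite $H$, while the compatibility of conjugation with restriction and transfer follows formally from the $\Conj_G$-functoriality of $F$ together with the fact that $c_g:H^g\to H$ is an isomorphism of finite groups. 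But you should phrase the reduction that way rather than via a nonexistent common overgroup.
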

\begin{proof}
  We explain how to deduce the claim from the finite group case, 
  which can be found in \cite[App. A]{greenlees-may:generalized_Tate}
  or \cite[Thm.\,3.4.22]{schwede:global}. 
  When comparing to \cite[App. A]{greenlees-may:generalized_Tate},
  one must use that quotienting $M(H)$ by all proper transfers can be identified with 
  inverting the idempotent called $e_H$ 
  in \cite[App. A]{greenlees-may:generalized_Tate}.
  
  Since $\tau$ commutes with colimits and $\cM_G^\mQ$ is a functor category,
  $\tau$ has a right adjoint $R:\cF(\Conj_G,\mQ)\to \cM_G^\mQ$.
  The value of the right adjoint at a $\Conj_G$-functor $N$ is given by 
  \[ R(N)(H)\ =\ \text{Nat}_{\Conj_G}( \tau(\mQ\otimes\bA_G(H,-)),N) \ ,\] 
  where $H$ is a finite subgroup of $G$ and $\bA_G(H,-)$ denotes the 
  represented $G$-Mackey functor.
  We claim that, as a $\mQ$-vector space, $R(N)(H)$ only depends 
  on the underlying $\Conj_H$-functor of $N$. For this we note that $\tau 
  (\mQ\otimes \bA_G(H,-))(K)$ can be identified with the $\mQ$-linearization 
  of the set of $G$-equivariant maps from $G/K$ to $G/H$. This set 
  corresponds to the subset of elements $g\in G$ for which the conjugate 
  $K^g$ is a subgroup of $H$, modulo the right $H$-action.
  Stated differently, it is the disjoint union of the sets of all $g$
  such that $^g J$ is equal to $K$, where $J$ ranges
  through all subgroups of $H$, again modulo $H$.
  This yields an isomorphism of $\Conj_G$-modules
  \[
    \tau(\mQ\otimes \bA_G(H,-))\ \iso\ (\bigoplus_{J\subseteq H} \mQ[\Conj_G(J,-)])/H\ .
  \]
  Thus, for every rational $\Conj_G$-functor $N$ the morphism group 
  $\text{Nat}_{\Conj_G}(\tau(\mQ\otimes \bA_G(H,-)),N)$ is naturally isomorphic 
  to $(\bigoplus_{J\subseteq H}N(J))^H$. In particular, it only depends on 
  the underlying $\Conj_H$-functor of $N$, which proves the claim.
  
  By definition, the value $\tau(M)(H)$ also only depends on the 
  underlying $H$-Mackey functor of $M$. Both in $G$-Mackey functors and in 
  $\Conj_G$-functors isomorphisms are tested levelwise, so we can reduce 
  to the finite group case to see that unit and counit of the adjunction 
  are isomorphisms, compare \cite[Thm.\,3.4.22]{schwede:global}. This finishes the proof.
\end{proof}
\index{G-Mackey functor@$G$-Mackey functor!rational|)}

The category $\Conj_G$ is a groupoid and equivalent to the disjoint 
union of Weyl groups $W_G H=(N_G H)/H$, where~$H$ ranges through a system of 
representatives of conjugacy classes of finite subgroups. Hence the category of 
$\Conj_G$-functors is equivalent to the product of the 
$\mQ[W_G H]$-module categories. Since forming derived categories commutes 
with products of abelian categories, we get:

\begin{cor} Let $G$ be a discrete group.
The rational stable $G$-homotopy category is equivalent to 
the product of the derived categories of $\mQ[W_G H]$-modules, where $H$ 
ranges through a system of representatives of conjugacy classes of finite
subgroups of $G$. \end{cor}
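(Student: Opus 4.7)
The plan is to string together the three equivalences provided in the excerpt and then unwind the groupoid $\Conj_G$ into its components. No deep new input is needed beyond what has already been established; the result is essentially a bookkeeping statement.

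First I would invoke Theorem \ref{thm:rational SH} to identify the rational stable $G$-homotopy category $\Ho^\mQ(\Sp_G)$ with the unbounded derived category $\cD(\cM_G^\mQ)$ of rational $G$-Mackey functors. Then I would apply Proposition \ref{prop:rational divide transfer}, which provides an equivalence $\tau:\cM_G^\mQ \to \cF(\Conj_G,\mQ)$ of abelian categories, thereby giving an induced equivalence of derived categories $\cD(\cM_G^\mQ)\simeq\cD(\cF(\Conj_G,\mQ))$.

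Next I would analyze $\Conj_G$. By definition every morphism $[g]\in\Conj_G(H,K)$ is invertible (its inverse is represented by $g^{-1}$), so $\Conj_G$ is a groupoid. Picking a system of representatives of conjugacy classes of finite subgroups $H$ of $G$ yields an equivalence of categories $\Conj_G\simeq \bigsqcup_{[H]} W_G H$, where each $W_G H = N_G H / H$ is viewed as a one-object groupoid and the disjoint union enumerates the connected components. This equivalence induces an equivalence of additive functor categories
\[
\cF(\Conj_G,\mQ)\ \simeq\ \cF\big({\bigsqcup}_{[H]}W_G H,\,\mQ\big)\ \simeq\ {\prod}_{[H]}\,\cF(W_G H,\mQ)\ \simeq\ {\prod}_{[H]}\,\mQ[W_G H]\mo,
\]
where in the last step I identify $\mQ$-linear functors out of the one-object groupoid $W_G H$ with left $\mQ[W_G H]$-modules.

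Finally I would use the general fact that passage to the unbounded derived category commutes with products of abelian categories: a chain complex in $\prod_{[H]}\mQ[W_G H]\mo$ is the same data as a tuple of chain complexes over each $\mQ[W_G H]$, and quasi-isomorphisms are tested componentwise, so localizing at quasi-isomorphisms yields $\cD\big(\prod_{[H]}\mQ[W_G H]\mo\big)\simeq \prod_{[H]}\cD(\mQ[W_G H]\mo)$. Composing all four equivalences proves the corollary. There is no genuine obstacle; the only thing to double-check is that the equivalence of Proposition \ref{prop:rational divide transfer} is additive (so that it induces an equivalence on chain complexes and hence on derived categories), and that quasi-isomorphisms in the product abelian category are indeed detected factorwise, both of which are immediate from the levelwise nature of exactness in the functor categories involved.
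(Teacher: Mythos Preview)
Your proposal is correct and follows exactly the same approach as the paper: the paper's argument is the paragraph immediately preceding the corollary, which observes that $\Conj_G$ is a groupoid equivalent to the disjoint union of the Weyl groups $W_G H$, that therefore $\Conj_G$-functors split as a product of $\mQ[W_G H]$-module categories, and that forming derived categories commutes with products. You have simply spelled out these steps (and the invocations of Theorem~\ref{thm:rational SH} and Proposition~\ref{prop:rational divide transfer}) in more detail than the paper does.
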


The equivalence of the previous corollary is implemented by `geometric fixed points', 
see \cite[Prop.\,3.4.26]{schwede:global} for the precise statement.

\begin{rk} \label{rk:not semisimple}
There is an important homological difference between
rational $G$-Mackey functors for finite groups versus infinite discrete groups.
If $G$ is finite, all Weyl groups of subgroups are also finite and hence 
the abelian category of rational $G$-Mackey functors
is semisimple. So every object is projective and injective and the 
derived category is equivalent, by taking homology, to the category of 
graded rational Mackey functors over $G$.

This does not generalize to rational $G$-Mackey functors
for infinite discrete groups.
Indeed, already the simplest case $G=\mZ$ illustrates this.
Since the trivial subgroup is the only finite subgroup of~$\mZ$,
the category of $\mZ$-Mackey functors is equivalent to the
category of abelian groups with a~$\mZ$-action; this category in turn
is equivalent to the category of modules over the Laurent series 
ring~$\mZ[t,t^{-1}]$.
So $\Ho^\mQ(\Sp_\mZ)$ is equivalent to the derived category of
the ring $\mQ[t,t^{-1}]$ which has global dimension~1.
For example,~$\mQ$, with $t$ acting as the identity, is not projective.
\end{rk}

\begin{eg}\label{eg:rational Z-equivariant}
As an example we consider the $G$-sphere spectrum~$\mS_G$.
For every finite subgroup~$H$ of~$G$, 
the group~$\pi_k^H(\mS_G)$ is the $k$-th $H$-equivariant stable stem.
So this group is trivial for negative $k$, finite for positive~$k$,
and isomorphic to the Burnside ring of $H$ for~$k=0$. Hence
\[ \mQ\otimes \upi_k(\mS_G)\ \cong \ 
\begin{cases}
 \mQ\otimes \mA & \text{ for $k=0$, and}\\
 \quad 0  & \text{ for $k\ne0$.}
\end{cases} \]
Since the rationalized homotopy group $G$-Mackey functors are
concentrated in a single degree, the equivalence of categories of
Theorem~\ref{thm:rational SH} takes the $G$-sphere spectrum to
the $G$-Mackey functor~$\mQ\otimes \mA$, considered as a complex in
degree~0.

The equivalence of categories in particular induces isomorphisms
of the graded endomorphism rings of corresponding objects.
The graded endomorphism ring of a $G$-Mackey functor $M$ in the derived category
is its Ext algebra, i.e., the graded abelian group $\Ext^*_{\cM_G}(M,M)$
with multiplication by Yoneda product (splicing of exact sequences).
So we conclude that
\[ \mQ\otimes [\mS_G[k],\mS_G]^G \ \cong \ 
\cD(\cM_G)(\mQ\otimes \mA[k],\mQ\otimes \mA)\ \cong \ 
\Ext^{-k}_{\cM_G}(\mQ\otimes \mA,\mQ\otimes \mA)\ . \]
For infinite groups~$G$, the Burnside ring $G$-Mackey functor
is typically neither projective nor injective, and has non-trivial
Ext groups in non-zero degrees. So
for infinite groups~$G$, the $G$-sphere spectrum typically has
non-trivial stable self-maps of {\em negative} degrees.

Again, the simplest case $G=\mZ$ already illustrates this phenomenon.
As we explained in the previous remark, the category of~$\mZ$-Mackey functors
is equivalent, by evaluation at the trivial subgroup, to the 
category of~$\mZ[t,t^{-1}]$-modules, and the Burnside ring $\mZ$-Mackey functor
corresponds to~$\mZ$ with~$t$ acting as the identity.
The Ext algebra of this $\mZ[t,t^{-1}]$-module is an exterior algebra
on a class in $\Ext^1_{\mZ[t,t^{-1}]}(\mZ,\mZ)$.
So the rationalized algebra is an exterior algebra over~$\mQ$ 
on one generator of (cohomological) degree~1.

The exterior generator in $\mQ\otimes [\mS_\mZ,\mS_\mZ[1]]^\mZ$ is realized
by the universal cover of the circle, in the following sense.
The real line~$\mR$ is a $\mZ$-space by translation:
\[ \mZ\times \mR\ \to \ \mR \ , \quad (n,x)\ \longmapsto \ n+x \ .\]
In fact, this action makes $\mR$ into a universal space
for the group $\mZ$ (for proper actions or, equivalently, for free actions).
Since~$\mR$ is non-equivariantly contractible, 
the unique map~$\mR\to\ast$ is a $\Com$-weak equivalence, so we obtain
a weak $\mZ$-map from a point to $S^1$ as the composite
\[ \ast \ \xleftarrow{\ \simeq\ } \ \mR \ \xrightarrow{\quad} \ S^1 \ , \]
where the right map is a universal cover.
Adding disjoint basepoints to~$\ast$ and~$\mR$,
passing to suspension spectra and going into the stable homotopy category
$\Ho(\Sp_\mZ)$ produces a non-trivial self map of $\mS_\mZ$ of degree~$-1$.
\end{eg}
\index{G-equivariant stable homotopy category@$G$-equivariant stable homotopy category!rational|)}

\chapter{Proper equivariant cohomology theories}

\section{Excisive functors from \texorpdfstring{$G$}{G}-spectra}
\label{sec:excisive}

In this section we discuss `excisive functors',
i.e., contravariant homotopy functors defined on finite proper $G$-CW-complexes
that satisfy excision for certain pushouts, see Definition \ref{def:excisive functor}.
Excisive functors are the components of proper $G$-cohomology theories,
to be studied in Section \ref{sec:proper cohomology} below.
In Construction \ref{con:naive G-Omega} we recall the classical procedure
to define an excisive functor from a sequential $G$-spectrum.
Remark \ref{rk:Davis-Luck} explains why the excisive functors
represented by sequential $G$-spectra are precisely the ones
represented by `$G$-orbit spectra' in the sense of
Davis and the third author \cite[Def.\,4.1]{davis-luck:spaces_assembly}, i.e., by
contravariant functors from the $\Fin$-orbit category of $G$ to spectra.

In Definition \ref{def:proper from spectrum} we explain that orthogonal $G$-spectra
also define excisive functors by taking morphism groups in
the triangulated stable homotopy category $\Ho(\Sp_G)$ from unreduced suspension spectra.
As we show in the proof of Proposition \ref{prop:represented is excisive},
every such `represented' cohomology theory is also
represented by a sequential $G$-spectrum, namely the underlying sequential
$G$-spectrum of a $\pi_*$-isomorphic orthogonal $G$-$\Omega$-spectrum.
While the represented functor $[\Sigma^\infty_+(-),E]^G$ is easily seen
to extend to a proper $G$-cohomology theory, it does not come with explicit `cycles'
that represent cohomology classes. This makes it difficult
to compare the represented $G$-cohomology theory with other theories,
such as equivariant cohomotopy or equivariant K-theory.
To remedy this, Construction \ref{con:define alternative}
introduces a more down-to-earth description, in the case of discrete groups,
based on parameterized equivariant homotopy theory, of the excisive functor
represented by an orthogonal $G$-spectrum $E$.
The construction generalizes the equivariant cohomotopy groups of
the third author \cite[Sec.\,6]{luck:burnside_ring}, which is the special case $E=\mS_G$
of the equivariant sphere spectrum; many of the arguments are inspired by that special case.
We then show in Theorem \ref{thm:rep2vect} that for discrete groups,
the new theory agrees with the represented theory.

\index{excisive functor|(}
\begin{defn}\label{def:excisive functor}
  Let $G$ be a Lie group. A functor
    \[ \cH\ : \ \text{(finite proper $G$-CW-complexes})^{\op}\ \to \ \cA b \]
    is {\em excisive} if it satisfies the following conditions:
  \begin{enumerate}[(i)]
  \item (Homotopy invariance) Let $f,f':Y\to X$ be continuous $G$-maps 
    between finite proper $G$-CW-complexes
    that are equivariantly homotopic.
    Then $\cH(f)=\cH(f')$.
  \item (Additivity) For all finite proper $G$-CW-complexes $X$ and $Y$,
    the map
    \[ (i_X^*,i_Y^*)\ : \ \cH(X\amalg Y)\ \to \ \cH(X)\times\cH(Y) \]
    is bijective, where $i_X:X\to X\amalg Y$ and $i_Y:Y\to X\amalg Y$ are the
    summand inclusions.
  \item (Excision)
    Let $(X,A)$ and $(Y,B)$ be two finite proper $G$-CW-pairs, and let
    \begin{equation}\begin{aligned}\label{eq:generic_pushout}
      \xymatrix{ A\ar[r]^-i\ar[d]_f & X\ar[d]^g\\ B \ar[r]_-j & Y}          
      \end{aligned}\end{equation}
    be a pushout square of $G$-spaces, where the horizontal maps are inclusions,
    and $f$ and $g$ are cellular maps. 
    Then for all 
    $(b,x)\in \cH(B)\times \cH(X)$ such that $f^*(b)=i^*(x)$ in $\cH(A)$,
    there is an element $y\in \cH(Y)$ such that $j^*(y)=b$ and $g^*(y)=x$.
  \end{enumerate}
\end{defn}

Now we develop some general features of excisive functors, in particular
a 5-term Mayer-Vietoris sequence, see Proposition \ref{prop:Mayer-Vietoris}.
For a finite $G$-CW-pair $(X,A)$ we use the notation
\[ \cH(X|A)\ = \ \ker( i^*:\cH(X)\to \cH(A))\ , \]
where $i:A\to X$ is the inclusion.

\begin{prop}\label{prop:relative iso}
  Let $G$ be a Lie group and let $f:A\to Y$ be a cellular $G$-map between finite proper $G$-CW-complexes.
  Then for every excisive functor $\cH$, the canonical map
  $A\times S^1\to Y\cup_{A\times\infty,f} A\times S^1$ induces an isomorphism
  \[ \cH(Y\cup_{A\times\infty} A\times S^1|Y) \ \xrightarrow{\ \iso \ }\
    \cH(A\times S^1|A\times\infty) \ .  \]
\end{prop}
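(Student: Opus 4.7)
The plan is to establish the two directions of the isomorphism separately, in both cases via the excision axiom. Surjectivity is immediate, while injectivity requires an auxiliary geometric construction that "caps off" the $S^1$ by a $2$-disk.

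For surjectivity, given $x \in \cH(A \times S^1 \mid A \times \infty)$, I would apply the excision axiom to the defining pushout square of $W = Y \cup_{A \times \infty, f} A \times S^1$ with the pair $(0, x) \in \cH(Y) \times \cH(A \times S^1)$. The compatibility condition $f^*(0) = 0 = i^*(x)$ holds automatically because $x$ lies in the kernel of restriction to $A \times \infty$; excision then produces $y \in \cH(W)$ with $j^*(y) = 0$ (so $y \in \cH(W \mid Y)$) and $g^*(y) = x$.

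For injectivity, suppose $y \in \cH(W \mid Y)$ with $g^*(y) = 0$. The key construction is to form the finite proper $G$-CW-complex
\[ W' \ = \ W \cup_{A \times S^1} (A \times D^2)\ , \]
where $G$ acts trivially on $D^2$ and $A \times S^1 = A \times \partial D^2$. This yields a new pushout square with both horizontal maps inclusions, to which the excision axiom applies. Applied to the pair $(y, 0) \in \cH(W) \times \cH(A \times D^2)$, whose compatibility $g^*(y) = 0$ holds by hypothesis, excision produces $\tilde y \in \cH(W')$ with $\tilde y|_W = y$ and $\tilde y|_{A \times D^2} = 0$.

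The crucial observation is that $W'$ coincides with $Y \cup_{A \times \infty} (A \times D^2)$ and admits a $G$-equivariant deformation retraction onto $Y$: realising $D^2 \subseteq \mC$ with $\infty = 1 \in \partial D^2$, the linear homotopy $(z, t) \mapsto (1 - t)z + t$ contracts $D^2$ onto $\infty$ while fixing $\infty$, and extending this by the identity on $Y$ produces a $G$-equivariant deformation retraction of $W'$ onto $Y$, with consistency along the overlap $A \times \infty \equiv f(A) \subseteq Y$ automatic since the homotopy fixes $\infty$. By homotopy invariance, the inclusion $Y \hookrightarrow W'$ induces an isomorphism $\cH(W') \cong \cH(Y)$. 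Since $\tilde y$ restricts to $0$ on $Y \subseteq W \subseteq W'$ (using $y \in \cH(W \mid Y)$), we conclude $\tilde y = 0$ in $\cH(W')$, and hence $y = \tilde y|_W = 0$.

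The main obstacle is locating the right auxiliary space $W'$: although $W$ itself does not deformation retract onto $Y$ (because $S^1$ is not contractible), the slightly enlarged space obtained by filling in the disk does, and this is precisely what makes the excision argument succeed. Once this construction is in hand, the rest is a clean combination of excision with homotopy invariance.
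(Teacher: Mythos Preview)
Your proof is correct, and your surjectivity argument coincides with the paper's. For injectivity, however, you take a genuinely different route. The paper first reduces to the special case where $f$ is the inclusion of a subcomplex, via the mapping cylinder of $f$; in that case there is a retraction $\proj\colon W\to Y$ (collapsing each fiber $\{a\}\times S^1$ to $a\in A\subseteq Y$), and excision for the pushout square with vertical projections $A\times S^1\to A$ and $W\to Y$ yields injectivity. Your approach instead attaches $A\times D^2$ along $A\times S^1$ to form $W'$, observes that $W'$ deformation retracts onto $Y$ for \emph{arbitrary} cellular $f$, and applies excision to the disk-filling pushout. Your argument is more direct in that it handles general $f$ in one step and avoids the mapping cylinder reduction; the paper's argument has the mild advantage of staying inside spaces already built from $W$ and $Y$, at the cost of that extra reduction. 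Both are clean combinations of excision with homotopy invariance.
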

\begin{proof}
  We start with the special case where $f$ is the inclusion of a subcomplex.
  Then both squares in the commutative diagram
  \[ \xymatrix{
      A\ar[d] \ar[r]^-{(-,\infty)}& A\times S^1\ar[d]\ar[r]^-{\proj}&  A\ar[d]\\
        Y \ar[r]&  Y\cup_{A\times\infty}A\times S^1\ar[r]_-{\proj}& Y
    } \]
  are pushouts, where all vertical maps are inclusions.
  Excision for the left square is the surjectivity of the map in question.

  For injectivity we consider a class $w\in\cH(Y\cup_{A\times\infty}A\times S^1|Y)$
  that restricts to 0 on $A\times S^1$.
  Excision for the right pushout square provides a class $y\in \cH(Y)$
  such that $\proj^*(y)=w$.
  Since the projection restricts to the identity on $Y$,
  we obtain the relation
  \[   y \ = \ \proj^*(y)|_Y\ = \ w|_Y \ = \ 0\ ,\]
  and hence also $w=\proj^*(y)=0$.
  
  Now we treat the general case where $f:A\to Y$ is an arbitrary cellular $G$-map.
  We let $Z=A\times[0,1]\cup_{A\times 1,f}Y$ be the mapping cylinder of $f$.
  The map $(-,0):A\to Z$ is the inclusion of a subcomplex,
  and the map $q:Z\to Y$ that projects $A\times[0,1]$ to $A$ and is the identity on $Y$
  is an equivariant homotopy equivalence. So the induced map
  \[
    q\cup(A\times S^1)\ : \ Z\cup_{A\times\infty} A\times S^1\ \to \ Y\cup_{A\times\infty} A\times S^1
  \]
  is also an equivariant homotopy equivalence, by the gluing lemma.  The two induced maps
  \begin{align*}
    \cH(Y\cup_{A\times\infty} A\times S^1|Y) \ \xrightarrow{(q\cup(A\times S^1))^*}\
                                  &\cH(Z\cup_{A\times\infty} A\times S^1|Z) \\
    \xrightarrow{\ (-,0)^* \ }\ &\cH(A\times S^1|A\times\infty) \ .  
  \end{align*}
  are then isomorphisms by homotopy invariance and the previous paragraph, respectively. 
  So the composite is an isomorphism, which proves the claim.  
\end{proof}

As we shall now explain, the excision property extends to a Mayer-Vietoris sequence
for a pushout square of $G$-spaces \eqref{eq:generic_pushout},
where $(X,A)$ and $(Y,B)$ are finite proper $G$-CW-pairs. 
We define a {\em connecting homomorphism}
\begin{equation}\label{eq:define partial}
 \partial \ : \ \cH(A\times S^1|A\times\infty)\ \to \ \cH(Y)   
\end{equation}
as the composite
\begin{align*}
     \cH(A\times S^1|A\times\infty)\ &\xrightarrow{\ \iso\ } \
      \cH(Y\cup_{A\times \infty}A\times S^1|Y)\ \xrightarrow{\incl} \
                                       \cH(Y\cup_{A\times \infty}A\times S^1)\\
                                     &\xrightarrow{(j\cup(A\times t)\cup g)^*} \
  \cH(B\cup_{A\times 0,f}A\times[0,1]\cup_{A\times 1}X)\ \xrightarrow[\iso]{(q^*)^{-1}} \ \cH(Y)\ .   
\end{align*}
The first isomorphism is the one provided by Proposition \ref{prop:relative iso}.
The quotient map $t:[0,1]\to S^1$ was defined in \eqref{eq:define_t}.
The map 
\[
  q\ =\ j\cup(g|_A\circ\proj)\cup g\ :\ B\cup_{A\times 0,f}A\times[0,1]\cup_{A\times 1}X\ \to\ Y
\]
is an equivariant homotopy equivalence, so it induces an
isomorphism in the homotopy functor $\cH$.

\begin{prop}\label{prop:Mayer-Vietoris}
  Let $G$ be a Lie group and $\cH$ an excisive functor.
  Let $(X,A)$ and $(Y,B)$ be two finite proper $G$-CW-pairs, and let
  \eqref{eq:generic_pushout} be a pushout square of $G$-spaces,
  where $f$ and $g$ are cellular maps.
  Then the following sequence is exact:
  \begin{align*}
    \cH(B\times S^1|B\times\infty)\times\cH(X\times S^1|X\times\infty)\
    &\xrightarrow{(f\times S^1)^*-(i\times S^1)^*} \
      \cH(A\times S^1|A\times\infty)\\
    \xrightarrow{\quad\partial\quad} \  \cH(Y)& \ \xrightarrow{(j^*,g^*)} \
      \cH(B)\times \cH(X)\ \xrightarrow{f^*-i^*}\    \cH(A)    
  \end{align*}
\end{prop}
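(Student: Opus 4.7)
The plan is to reduce everything to the excision axiom (iii) of Definition 1.5.1 and to Proposition 1.5.2, working throughout with the double mapping cylinder $Z = B\cup_{A\times 0}A\times[0,1]\cup_{A\times 1}X$ and the $G$-homotopy equivalence $q\colon Z\to Y$. Under this substitution the maps $j$ and $g$ correspond to the subcomplex inclusions $B\hookrightarrow Z$ and $X\hookrightarrow Z$, which are disjoint in $Z$ and meet $A\times[0,1]$ only at the ends.

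First I would check that both consecutive composites vanish. The composite $(f^*-i^*)\circ(j^*,g^*)$ is zero because $j\circ f=g\circ i$. That $\partial\circ((f\times S^1)^*-(i\times S^1)^*)=0$ follows by unpacking the construction of $\partial$: a class of the form $(f\times S^1)^*(b)-(i\times S^1)^*(x)$ in $\cH(A\times S^1|A\times\infty)$ lifts, via the isomorphism of Proposition 1.5.2 applied to the cellular maps $f$ and $i$ separately, to the difference of two classes on $Y\cup A\times S^1$ which each extend (one over a cylinder on $B\times S^1$, the other over $X\times S^1$) and therefore pull back to zero in $\cH(Z)\cong\cH(Y)$. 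Exactness at the rightmost position $\cH(B)\times\cH(X)$ is then literally a restatement of the excision axiom.

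For exactness at $\cH(Y)$: given $y\in\cH(Y)$ with $j^*(y)=g^*(y)=0$, set $y':=q^*(y)\in\cH(Z)$, so that $y'|_B=y'|_X=0$. The key observation is that
\[
\xymatrix{A\sqcup A\ar[r]^-{f\sqcup g|_A}\ar[d] & B\sqcup X\ar[d]\\ A\times[0,1]\ar[r] & Z}
\]
is a pushout of finite proper $G$-CW-pairs with cellular horizontal maps (the left vertical being the inclusion $A\times\{0,1\}\hookrightarrow A\times[0,1]$). The restriction $y'|_{A\times[0,1]}$ is compatible with $0\in\cH(B\sqcup X)$ along this square, so in particular it vanishes on $A\times\{0,1\}$. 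An argument analogous to the proof of Proposition 1.5.2, applied to the quotient map $t\colon[0,1]\to S^1$ collapsing $\{0,1\}$ to $\infty$, then identifies it with a unique class $w\in\cH(A\times S^1|A\times\infty)$; a direct comparison with the definition of $\partial$ shows $\partial(w)=y$.

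For exactness at $\cH(A\times S^1|A\times\infty)$: given $w$ with $\partial(w)=0$, the corresponding $\tilde w\in\cH(Y\cup A\times S^1|Y)$ pulls back to zero in $\cH(Z)$. I would form the enlargement
\[
\bar Z \ =\ (Y\cup A\times S^1) \cup_{A\times S^1}\bigl((B\times S^1)\sqcup(X\times S^1)\bigr),
\]
glued via $f\times S^1$ and $i\times S^1$, and apply the excision axiom to the pushout expressing $\bar Z$ from its subcomplexes $B\times S^1\sqcup X\times S^1$ and the double mapping cylinder on $A\times S^1$; two applications of Proposition 1.5.2 (one for $f$, one for $i$) identify the resulting compatible pair with an element $(b,x)\in\cH(B\times S^1|B\times\infty)\oplus\cH(X\times S^1|X\times\infty)$ whose signed difference restricts to $w$. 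The hard part will be this last step: bookkeeping the basepoint-like vanishing conditions through the iterated pushouts and matching the outcome precisely with the construction of $\partial$, so that the preimage lies in the correct relative subgroups rather than in the larger absolute cohomology of the factors.
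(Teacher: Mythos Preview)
Your argument for exactness at $\cH(Y)$ has a genuine gap. You propose to extract the class $w$ from the restriction $y'|_{A\times[0,1]}\in\cH(A\times[0,1]|A\times\{0,1\})$, but this relative group is \emph{zero}: by homotopy invariance $\cH(A\times[0,1])\cong\cH(A)$, and under this identification the restriction to $\cH(A\times\{0,1\})\cong\cH(A)\times\cH(A)$ is the diagonal, which is injective. So passing to $A\times[0,1]$ discards all the information about $y'$, and no analogue of Proposition~\ref{prop:relative iso} for the quotient map $t\colon[0,1]\to S^1$ can recover a nontrivial $w$ from this restriction. (There is also a minor slip: the top map in your square should be $f\sqcup i$, not $f\sqcup g|_A$.)

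The paper avoids this by working with a different pushout, namely~\eqref{eq:shifting_po}, which presents $Y\cup_{A\times\infty}A\times S^1$ as the pushout of $Y$ and $Z=B\cup_f A\times[0,1]\cup_i X$ along $B\amalg X$. Excision for \emph{that} square, applied to the compatible pair $(y',0)\in\cH(Z)\times\cH(Y)$, lifts $y'$ directly to a class in $\cH(Y\cup A\times S^1|Y)$; and this group is isomorphic to $\cH(A\times S^1|A\times\infty)$ by Proposition~\ref{prop:relative iso}. For exactness at $\cH(A\times S^1|A\times\infty)$ the paper then bootstraps: it reapplies the just-proved exactness at the middle term, but now to the square~\eqref{eq:shifting_po} in place of the original one, which shifts the sequence one step to the left and reduces the claim to identifications already made. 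This is considerably cleaner than building a further enlargement $\bar Z$ and tracking the relative vanishing conditions through it by hand.
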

\begin{proof}
  Exactness at $\cH(B)\times \cH(X)$ is the excision property for the functor $\cH$.
  For exactness at $\cH(Y)$ we consider the pushout square:
  \begin{equation}\begin{aligned}\label{eq:shifting_po}
   \xymatrix{ B\amalg X \ar[r]^-\iota \ar[d]_{j+g} &
      B\cup_{A\times 0,f}A\times[0,1]\cup_{A\times 1}X\ar[d]^{j\cup (A\times t)\cup g}\\
      Y \ar[r]_-{\incl} & Y\cup_{A\times\infty}A\times S^1}  
    \end{aligned}  \end{equation}
  Excision for this square provides an exact sequence
  \[
    \cH(Y\cup_{A\times\infty}A\times S^1|Y)\ \xrightarrow{(j\cup(A\times t)\cup g)^*} \
    \cH(B\cup_{A\times 0,f}A\times[0,1]\cup_{A\times 1}X)\ \xrightarrow{\ \iota^*\ }  \cH(B\amalg X)\ .
 \]
 Proposition \ref{prop:relative iso} identifies the group
 $\cH(Y\cup_{A\times\infty}A\times S^1|Y)$ with $\cH(A\times S^1|A\times\infty)$;
 the equivariant homotopy equivalence $q:B\cup_{A\times 0,f}A\times[0,1]\cup_{A\times 1}X\to Y$ 
 induces an isomorphism from $\cH(Y)$ to $\cH(B\cup_{A\times 0,f}A\times[0,1]\cup_{A\times 1}X)$;
 and additivity identifies the group $\cH(B\amalg X)$ with $\cH(B)\times\cH(X)$.
 These substitutions prove exactness of the original sequence at $\cH(Y)$.

 To establish exactness at $\cH(A\times S^1|A\times\infty)$,
 we employ exactness at the target of the connecting homomorphism,
 but for the pushout square \eqref{eq:shifting_po} instead of the original square.
 The result is an exact sequence
 \begin{align*}
   \cH((B\amalg X)\times S^1|(B\amalg X)\times\infty)\
   \xrightarrow{\ \partial\ }& \  \cH(Y\cup_{A\times\infty}A\times S^1)\\
   \xrightarrow{(\incl^*,(j\cup(A\times t)\cup g)^*)}& \
   \cH(Y)\times \cH(B\cup_{A\times 0,f} A\times[0,1]\cup_{A\times 1} X) \ .
 \end{align*}
 The map $\incl^*:  \cH(Y\cup_{A\times\infty}A\times S^1)\to \cH(Y)$
 is a split epimorphism, so passing to kernels gives another exact sequence
 \begin{align*}
   \cH((B\amalg X)\times S^1|(B\amalg X)\times\infty)\
   &\xrightarrow{\ \partial\ } \  \cH(Y\cup_{A\times\infty}A\times S^1|Y)\\
   &\xrightarrow{(j\cup(A\times t)\cup g)^*} \   \cH(B\cup_{A\times 0,f} A\times[0,1]\cup_{A\times 1} X) \ .
 \end{align*}
 We use additivity to identify the first group with the product of
 $\cH(B\times S^1|B\times\infty)$ and $\cH(X\times S^1|X\times\infty)$;
 we use Proposition \ref{prop:relative iso} to identify the middle group with
 $\cH(A\times S^1|A\times\infty)$;
 and we use the equivariant homotopy equivalence $q:B\cup_{A\times 0,f}A\times[0,1]\cup_{A\times 1}X\to Y$
 to identify the third group with $\cH(Y)$.
 Because the following square commutes
 \[ \xymatrix@C=18mm{
     \cH((B\amalg X)\times S^1|(B\amalg X)\times\infty)\ar[d]_{((i_B\times S^1)^*,(i_X\times S^1)^*)}
     \ar[r]^-\partial &
     \cH(Y\cup_{A\times\infty}A\times S^1|Y)\ar[d]^\iso\\
     \cH(B\times S^1|B\times\infty)\times\cH(X\times S^1|X\times\infty)
     \ar[r]_-{(f\times S^1)^*-(i\times S^1)^*}& \cH(A\times S^1|A\times\infty)
   } \]
 these substitutions result in the desired exactness at
 $\cH(A\times S^1|A\times\infty)$ of the original sequence. 
\end{proof}

The Mayer-Vietoris sequence yields a convenient criterion for checking
that a natural transformation between excisive functors is an isomorphism.

\begin{prop}\label{prop:excisive on orbits}
  Let $G$ be a Lie group and let $\Psi:\cH\to \cH'$
  be a natural transformation between excisive functors.
  Suppose that for every compact subgroup $H$ of $G$
  and every non-equivariant finite CW-complex $X$,
  the homomorphism $\Psi_{G/H\times X}:\cH(G/H\times X)\to \cH'(G/H\times X)$
  is an isomorphism.
  Then $\Psi_Y:\cH(Y)\to\cH'(Y)$ is an isomorphism for every finite proper $G$-CW-complex $Y$.
\end{prop}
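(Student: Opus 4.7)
The plan is to argue by induction on the number of equivariant cells of $Y$, comparing the Mayer-Vietoris sequences of Proposition \ref{prop:Mayer-Vietoris} for $\cH$ and $\cH'$ via the Five Lemma. A naive induction does not close, however: the Mayer-Vietoris sequence for the pushout attaching a cell to $Y'$ involves the relative groups $\cH(Y'\times S^1|Y'\times\infty)$, and $Y'\times S^1$ is not smaller than $Y'$ in the cell count. I therefore strengthen the inductive statement as follows: \emph{for every $n\ge 0$, every finite proper $G$-CW-complex $Y$ with at most $n$ equivariant cells, and every finite (non-equivariant) CW-complex $K$ (given the trivial $G$-action), the map $\Psi_{Y\times K}\colon \cH(Y\times K)\to\cH'(Y\times K)$ is an isomorphism.} Taking $K$ a point recovers the desired conclusion.

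For the base case $n=0$ we have $Y=\emptyset$, and additivity applied to $\emptyset\amalg\emptyset$ forces $\cH(\emptyset)=\cH'(\emptyset)=0$. For the inductive step we write $Y=Y'\cup_\phi(G/H\times D^m)$ with $H$ compact and $Y'$ having strictly fewer cells, and form the pushout of $G$-spaces
\[
\xymatrix{G/H\times\partial D^m\times K\ar[r]\ar[d] & G/H\times D^m\times K\ar[d]\\
Y'\times K\ar[r] & Y\times K}
\]
which meets the hypotheses of Proposition \ref{prop:Mayer-Vietoris}. Applying that proposition to both $\cH$ and $\cH'$ and using naturality of $\Psi$ produces a ladder of five-term exact sequences; the Five Lemma will deliver the desired isomorphism at the middle position $\cH(Y\times K)$ provided $\Psi$ is an isomorphism at the other four positions.

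I verify these four positions in turn. The two groups $\cH(G/H\times\partial D^m\times K)$ and $\cH(G/H\times D^m\times K)$ are handled directly by the hypothesis, since $\partial D^m\times K$ and $D^m\times K$ are finite (non-equivariant) CW-complexes. The group $\cH(Y'\times K)$ is handled by the inductive hypothesis applied to $Y'$ with the same $K$. For the relative groups $\cH(A\times S^1|A\times\infty)$ appearing in the first two positions of the sequence, I use that the inclusion $A\times\infty\hookrightarrow A\times S^1$ is naturally split by the projection, so $\cH(A\times S^1)\cong \cH(A)\oplus\cH(A\times S^1|A\times\infty)$ naturally in $\cH$; hence $\Psi$ is an isomorphism on the relative group whenever it is an isomorphism on the absolute group and on $\cH(A)$. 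For $A=G/H\times\partial D^m\times K$ and $A=G/H\times D^m\times K$, the spaces $A\times S^1$ are again of the form $G/H\times$(finite CW-complex), so both are handled by the hypothesis. Finally, for $A=Y'\times K$ the space $A\times S^1=Y'\times(K\times S^1)$ is handled by the inductive hypothesis applied to $Y'$ with the enlarged CW-complex $K\times S^1$. This completes the induction.

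The genuine content of the argument, and the source of the one real difficulty, is the appearance of the $\times S^1$ factors in the Mayer-Vietoris sequence: it is precisely this feature that forces the parameter $K$ into the inductive statement, since $Y'\times S^1$ can have arbitrarily many more cells than $Y'$. Everything else is a routine diagram chase.
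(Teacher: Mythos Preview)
Your proof is correct and follows essentially the same approach as the paper: both strengthen the induction hypothesis to ``$\Psi_{Y\times K}$ is an isomorphism for all finite CW-complexes $K$'' precisely to absorb the extra $\times S^1$ factors from the Mayer-Vietoris sequence, and both conclude via the five lemma. The paper's proof is terser---it simply asserts that $\Psi$ is an isomorphism on $B\times L\times S^1$ by induction with the enlarged parameter $L\times S^1$---whereas you take the extra step of splitting off the relative group $\cH(A\times S^1|A\times\infty)$ from $\cH(A\times S^1)$; this is a harmless stylistic difference.
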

\begin{proof}
  We show by induction over the number of cells in an equivariant CW-structure on $Y$
  that for every non-equivariant finite CW-complex $L$, the map $\Psi_{Y\times L}:\cH(Y\times L)\to\cH'(Y\times L)$ is an isomorphism.
  Taking $L$ to be a point proves the claim.

  If there are no equivariant cells, then $Y$ and $Y\times L$ are empty,
  and hence $\cH(Y\times L)=\cH'(Y\times L)=0$.
  If $Y$ is non-empty we choose a pushout square of $G$-spaces:
  \[   \xymatrix{ G/H\times S^{n-1}\ar[r]^-i\ar[d]_f & G/H\times D^n\ar[d]^g\\ B \ar[r]_-j & Y}   \]
  Here $H$ is a compact subgroup of $G$, and $B$ is a $G$-subcomplex of $Y$ with one
  fewer cell.
  Taking product with $L$ yields another pushout square.
  We also know by induction that $\Psi$
  is an isomorphism for $B\times L$ and $B\times L\times S^1$,
  and by hypothesis for $G/H\times S^{n-1}\times L\times S^1$,
  $G/H\times S^{n-1}\times L$, $G/H\times D^n\times L\times S^1$ and $G/H\times D^n\times L$.
  So the natural exact sequence
  \begin{align*}
    \cH(B\times L\times S^1&|B\times L\times\infty)\times \cH(G/H\times D^n\times L\times S^1|G/H\times D^n\times L\times \infty)\\
    \to\ &\cH(G/H\times S^{n-1}\times L\times S^1|G/H\times S^{n-1}\times L\times \infty)\
           \to \ \cH(Y\times L)\\
    \to\ &\cH(B\times L)\times \cH(G/H\times D^n\times L)\ \to\  \cH(G/H\times S^{n-1}\times L)    
  \end{align*}
  provided by Proposition \ref{prop:Mayer-Vietoris} and the five lemma
  show that the map $\Psi_{Y\times L}:\cH(Y\times L)\to \cH'(Y\times L)$ is an isomorphism. 
\end{proof}
\index{excisive functor|)}

Now we discuss three different ways to define an excisive functor
on finite proper $G$-CW-complexes from an orthogonal $G$-spectrum $E$.
\begin{enumerate}[(i)]
\item The functor $E_G\td{X}$ is defined as the colimit, over $n\geq 0$,
  of the sets $[S^n\sm X_+,E(\mR^n)]^G$, compare \eqref{eq:define_sequential_F_coho}.
\item The functor $E_G(X)=[\Sigma^\infty_+X, E]^G$ is represented by $E$
  in the triangulated stable homotopy category $\Ho(\Sp_G)$, compare \eqref{eq:define_E_G(A)}.
\item The functor $E_G\gh{X}$ is defined via parameterized homotopy classes
  indexed by $G$-vector bundles over $X$, see Construction \ref{con:define alternative}.
\end{enumerate}
All three constructions can be extended to $\mZ$-graded proper cohomology theories
by replacing $E$ by its shifts $E[k]$, for $k\in\mZ$, as defined in Remark \ref{rk:[k]}.
The first functor $E_G\td{-}$ only depends on the underlying sequential $G$-spectrum of $E$,
and can also be defined via `$G$-orbit spectra' in the sense of
\cite{davis-luck:spaces_assembly}, see Remark \ref{rk:Davis-Luck}. 
The second construction $E_G(-)$ defines `genuine' proper cohomology theories
in complete generality, for all Lie groups; we refer to Remark \ref{rk:RO(G) grading}
below for an explanation of the adjective `genuine' in this context.
The third functor $E_G\gh{-}$ is excisive for all discrete groups $G$,
but {\em not} generally for positive dimensional Lie groups with infinitely many components.
We explain in Remark \ref{rk:restriction finite} why the restriction
to discrete groups arises here.
For discrete groups, $E_G\gh{X}$ is naturally isomorphic
to the represented theory $E_G(X)$, see Theorem \ref{thm:rep2vect};
this isomorphism is the link to comparing the represented cohomology theories with
other theories, such as equivariant cohomotopy or equivariant K-theory.

\begin{defn}\index{sequential $G$-spectrum}
  Let $G$ be a Lie group. A {\em sequential $G$-spectrum} $E$ consists of
  a sequence of based $G$-spaces $E_n$, for $n\geq 0$, and based continuous $G$-maps
  $\sigma_n:S^1\sm E_n\to E_{1+n}$.
\end{defn}

Every orthogonal $G$-spectrum $X$ has an {\em underlying sequential $G$-spectrum}
with terms $X_n=X(\mR^n)$ and structure maps $\sigma_n=\sigma_{\mR,\mR^n}:S^1\sm X_n\to X_{1+n}$.

\begin{con}\label{con:naive G-Omega}
  Let $G$ be a Lie group and let $E$ be a sequential $G$-spectrum.
  For every based $G$-space $Y$, we define the 
  $G$-equivariant $E$-cohomology group as  
  \[ \tilde E_G\td{Y} \ = \ \colim_n\, [S^n\sm Y,  E_n]_*^G \ ,   \]
  where $[-,-]^G_*$ denotes the set of equivariant homotopy classes of based $G$-maps.
  The colimit is taken over the poset of natural numbers, along the maps
  \[ [S^n\sm Y,  E_n ]_*^G \ \xrightarrow{S^1\sm -}\
    [S^{1+n}\sm Y,  S^1\sm E_n]_*^G \ \xrightarrow{(\sigma_n)_*}\
    [S^{1+n}\sm Y,  E_{1+n} ]_*^G \ .  \]
  For $n\geq 2$, the set $[S^n\sm Y,  E_n]_*^G$ is an abelian group under
  the pinch sum.
  The stabilization maps are group homomorphisms, so the colimit inherits an
  abelian group structure.
  The group $\tilde E_G\td{-}$ is contravariantly functorial, by precomposition,
  for continuous based $G$-maps in $Y$.
  
  If $X$ is a finite proper $G$-CW-complex (without a basepoint),
  we define the (unreduced) $G$-equivariant $E$-cohomology group as  
  \begin{equation}  \label{eq:define_sequential_F_coho}
    E_G\td{X}\ = \ \tilde E_G\td{X_+}\ .  
  \end{equation}
\end{con}

\begin{prop}\label{prop:td is excisive}
Let $G$ be a Lie group and $E$ a sequential $G$-spectrum.
Then the functor $E_G\td{-}$ is excisive.
\end{prop}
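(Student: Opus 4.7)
The plan is to verify the three defining conditions of an excisive functor separately. Homotopy invariance is essentially automatic: the sets $[S^n \sm X_+, E_n]^G_*$ are already sets of based $G$-homotopy classes, so if $f, f'\colon Y \to X$ are equivariantly homotopic, then $f_+$ and $f'_+$ are based $G$-homotopic, and hence they induce the same map on each $[S^n \sm X_+, E_n]^G_*$. Passing to the filtered colimit over $n$ preserves this equality and yields $\cH(f) = \cH(f')$.

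For additivity, I would use the natural identification $(X \amalg Y)_+ \cong X_+ \vee Y_+$ of based $G$-spaces, so that $S^n \sm (X \amalg Y)_+ \cong (S^n \sm X_+) \vee (S^n \sm Y_+)$. The universal property of the wedge in based $G$-spaces gives a natural bijection
\[
[S^n \sm (X_+ \vee Y_+),\, E_n]^G_* \ \cong \  [S^n \sm X_+, E_n]^G_* \times [S^n \sm Y_+, E_n]^G_*,
\]
compatible with the stabilization maps. Since filtered colimits commute with finite products, taking the colimit in $n$ gives the additivity isomorphism.

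The main step is excision, and this is where I expect the real work. Given the pushout square with $(X,A)$ and $(Y,B)$ finite proper $G$-CW-pairs and horizontal inclusions, and a pair $(b,x) \in \cH(B) \times \cH(X)$ with $f^*(b) = i^*(x)$, I would choose a common level $n$ large enough so that $b$ and $x$ are represented by based $G$-maps $\beta\colon S^n \sm B_+ \to E_n$ and $\xi\colon S^n \sm X_+ \to E_n$, and the relation $f^*(b) = i^*(x)$ is witnessed, after possibly stabilizing further, by a based $G$-homotopy
\[
h\colon [0,1]_+ \sm S^n \sm A_+ \ \to \ E_n
\]
from $\beta \circ (S^n \sm f_+)$ to $\xi \circ (S^n \sm i_+)$. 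Because $(X,A)$ is a proper $G$-CW-pair, the inclusion $A \to X$ is an equivariant cofibration, hence so is $S^n \sm A_+ \to S^n \sm X_+$. The equivariant homotopy extension property then lets me extend $h$ to a based $G$-homotopy $H\colon [0,1]_+ \sm S^n \sm X_+ \to E_n$ starting at $\xi$; its endpoint $\xi' := H_1$ represents the same class as $\xi$ and satisfies $\xi' \circ (S^n \sm i_+) = \beta \circ (S^n \sm f_+)$ on the nose. Using that $(-)_+$ preserves pushouts along cofibrations, so that $Y_+ \cong B_+ \cup_{A_+} X_+$, the two based $G$-maps $\beta$ and $\xi'$ glue to a based $G$-map $y\colon S^n \sm Y_+ \to E_n$; its class in $\cH(Y)$ satisfies $j^*(y) = b$ and $g^*(y) = x$.

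The hard part is the passage from the colimit-level relation $f^*(b) = i^*(x)$ to an honest homotopy at some finite level, and ensuring the homotopy is equivariant and based. The former is the usual consequence of filteredness of the diagram defining $E_G\td{-}$, provided one checks that the stabilization maps are compatible with $f^*$ and $i^*$; the latter relies on the fact that proper $G$-CW-inclusions have the equivariant based homotopy extension property (applied levelwise to the $G$-space $E_n$). Once those two technical points are in place, the gluing argument completes the verification of excision.
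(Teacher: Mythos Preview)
Your proof is correct and follows essentially the same route as the paper's: homotopy invariance is immediate from the definition, additivity comes from the wedge decomposition $(X\amalg Y)_+\cong X_+\vee Y_+$ together with filtered colimits commuting with finite products, and excision is proved by representing at a common finite level, using the equivariant homotopy extension property of the $G$-CW-pair $(X,A)$ to straighten the homotopy, and then gluing. The only cosmetic discrepancy is that your homotopy $h$ is oriented opposite to how you use it in the HEP step (you want $H_0=\xi$ to restrict to $h_1$, not $h_0$), but this is harmless---just reverse $h$.
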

\begin{proof}
(i) Homotopy invariance is clear by the very definition,
since each of the functors $[S^n\sm X_+,E_n]^G$ sends $G$-homotopic maps
in $X$ to the same map.

(ii) The universal property of a disjoint union, applied
to $G$-maps and $G$-equivariant homotopies, shows that for fixed $n$, the map
\[ (i_X^*,i_Y^*)\ : \ [S^n\sm (X\amalg Y)_+,E_n]^G_*\ \to \
  [S^n\sm X_+,E_n]^G_*\times [S^n\sm Y_+,E_n]^G_* \]
is bijective. Sequential colimits commute with finite products, so additivity
follows by passing to colimits over $n$.

(iii)
For the excision property we consider two finite proper $G$-CW-pairs $(X,A)$ and $(Y,B)$
and a pushout square of $G$-spaces \eqref{eq:generic_pushout}.
We consider two classes $b\in E_G\td{B}$ and $x\in E_G\td{X}$ such that
$f^*(b)=i^*(x)$ in $E_G\td{A}$. 
Then $b$ and $x$ can be represented by
continuous based $G$-maps $\bar b:S^n\sm B_+\to E_n$ and $\bar x:S^n\sm X_+\to E_n$,
for some $n\geq 0$.
The hypothesis $f^*(b)=i^*(x)$ means that, possibly after increasing $n$, 
the two based $G$-maps
$\bar b\circ(S^n\sm f_+),\bar x\circ(S^n\sm i_+):S^n\sm A_+\to E_n$ are equivariantly homotopic.
Since $(X,A)$ is a $G$-CW-pair, the inclusion $S^n\sm A_+\to S^n\sm X_+$
has the $G$-equivariant homotopy extension property for
continuous based $G$-maps. So we can modify $\bar x$ into an
equivariantly homotopic based $G$-map $\tilde x:S^n\sm X_+\to E_n$
such that $\bar b\circ(S^n\sm f_+)=\tilde x\circ(S^n\sm i_+)$.
Then $\bar b$ and $\tilde x$ glue to a continuous based $G$-map $S^n\sm Y_+\to E_n$;
this $G$-map represents a class $y\in E_G\td{Y}$ such that $j^*(y)=b$  and $g^*(y)=x$.
\end{proof}

\begin{rk}[Cohomology theories from spectra over the orbit category]\label{rk:Davis-Luck}\index{G-orbit spectrum@$G$-orbit spectrum}
We let $G$ be a discrete group. 
We denote by $\Or^{\Fin}_G$ the $\Fin$-orbit category of $G$, i.e.,\index{Fin-orbit category@$\Fin$-orbit category}
the full subcategory of the category of $G$-sets with objects $G/H$ 
for all finite subgroups $H$ of $G$.
Davis and the third author explain in \cite[Def.\,4.1]{davis-luck:spaces_assembly}
how to construct a proper cohomology theory from a
`$G$-orbit spectrum', i.e., a functor
\[ \bE\ : \ \left(\Or^{\Fin}_G\right)^{\op} \ \to \ \Sp^\mN \]
to the category of non-equivariant sequential spectra.
We claim that the proper cohomology theories represented by
$G$-orbit spectra are precisely the ones represented by sequential $G$-spectra
as in \eqref{eq:define_sequential_F_coho};
we sketch this comparison without giving complete details.

We recall the construction from \cite{davis-luck:spaces_assembly}.
For a $G$-space $X$, we denote by 
\[ \Phi(X)\ : \ \left(\Or^{\Fin}_G\right)^{\op} \ \to \ \bT \]
the fixed point functor, i.e.,
\[ \Phi(X)(G/H)\ = \ \map^G(G/H,X) \ .\]
Evaluation at the preferred coset $e H$ is a homeomorphism
$\Phi(X)(G/H)\iso X^H$ to the $H$-fixed point space.
The $\bE$-cohomology groups are then defined as
\[ \bE_G^k(X)\ = \ \pi_{-k}( \map^{\Or^{\Fin}_G}(\Phi(X),\bE)) \ , \]
the $(-k)$-th homotopy group of the spectrum
$\map^{\Or^{\Fin}_G}(\Phi(X),\bE)$ of natural transformations
from $\Phi(X)$ to $\bE$.
By \cite[Lemma 4.4]{davis-luck:spaces_assembly},
this indeed defines a proper $G$-cohomology theory
on finite proper $G$-CW-complexes. 
Moreover, if $\bE$ happens to take values in sequential $\Omega$-spectra,
then the cohomology theory also takes arbitrary disjoint unions to products.

We shall now explain how sequential $G$-spectra give
rise to $G$-orbit spectra in such a way that the
Davis-L{\"u}ck cohomology theory recovers the cohomology theory
as in \eqref{eq:define_sequential_F_coho}.
As in the case of $G$-spaces, a sequential $G$-spectrum $E$
gives rise to a fixed point diagram of sequential spectra
\[ \Phi(E)\ : \ \left(\Or^{\Fin}_G\right)^{\op} \ \to \ \Sp^\mN \]
by setting
\[ \Phi(E)(G/H)\ = \ \map^G(G/H,E) \ \iso \ E^H\ .\]
The fixed point functor $\Phi$
from $G$-spaces to spaces over the $\Fin$-orbit category is fully faithful,
so it induces a bijection
\[  [S^n\sm X_+,E_n]^G_* \ \xrightarrow[\iso]{\ \Phi \ }\ 
  [S^n\sm \Phi(X)_+,\Phi(E_n)]^{\Or^{\Fin}_G}_*  = \
  \pi_n(\map^{\Or^{\Fin}_G}(\Phi(X),\Phi(E_n)))\ .\]
Passing to colimits over $n$ yields an isomorphism
\[ E_G\td{X} \ \iso \ \Phi(E)^0_G(X)\ .\]
So every sequential $G$-spectrum gives rise to a $G$-orbit spectrum
that represents the same proper cohomology theory.
The converse is also true. A $G$-orbit spectrum
can be viewed as a sequential spectrum internal to the category
of based spaces over the $\Fin$-orbit category.
Elmendorf's theorem \cite{elmendorf:system_fixed}
can be adapted to a Quillen equivalence between
the $\Com$-model structure on the category of based $G$-spaces
and the category of based spaces over the $\Fin$-orbit category
(with the objectwise, or projective, model structure).
So every spectrum of based $\Or_G^{\Fin}$-spaces is levelwise
equivalent to $\Phi(E)$ for some sequential $G$-spectrum $E$.
\end{rk}

The next definition is based on the triangulated stable homotopy category $\Ho(\Sp_G)$,
so it makes essential use of our entire theory.

\index{proper $G$-cohomology theory!represented|(} 
\begin{defn}\label{def:proper from spectrum}
Let~$G$ be a Lie group and~$E$ an orthogonal $G$-spectrum.
For every $G$-space $X$, we define the represented $G$-equivariant $E$-cohomology group as
\begin{equation}\label{eq:define_E_G(A)}
 E_G(X) \ = \ [\Sigma^\infty_+ X,  E]^G \ ,  
\end{equation}
the group of morphisms in $\Ho(\Sp_G)$
from the unreduced suspension spectrum of $X$ to $E$.\index{G-equivariant stable homotopy category@$G$-equivariant stable homotopy category}\index{suspension spectrum}
\end{defn}

The group $E_G(X)$ is contravariantly functorial for continuous $G$-maps in $X$.
For the one-point $G$-space $X=\ast$, the cohomology group already has another name:
\[ E_G(\ast)\ = \ [\mS_G, E]^G \ \iso \ \pi_0^G(E)\ , \]
the 0-th $G$-equivariant homotopy group of $E$.

\begin{prop}\label{prop:represented is excisive}
  Let $G$ be a Lie group and $E$ an orthogonal $G$-spectrum.
  Then the functor $E_G(-)$ is excisive.
\end{prop}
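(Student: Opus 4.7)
The plan is to verify the three defining properties of an excisive functor in turn. Homotopy invariance and additivity are essentially formal; the content sits in the excision property, which reduces to the existence of a Mayer--Vietoris distinguished triangle in $\Ho(\Sp_G)$.

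For \emph{homotopy invariance}, a continuous equivariant homotopy $H\colon Y\times [0,1]\to X$ between $f$ and $f'$ yields a homotopy $\Sigma^\infty_+ H \colon \Sigma^\infty_+ Y \sm [0,1]_+ \to \Sigma^\infty_+ X$ between $\Sigma^\infty_+ f$ and $\Sigma^\infty_+ f'$; homotopic morphisms of orthogonal $G$-spectra descend to the same morphism in $\Ho(\Sp_G)$, so $f^*=(f')^*$ on $[\Sigma^\infty_+ X, E]^G$. For \emph{additivity}, the natural homeomorphism $(X\amalg Y)_+\iso X_+\vee Y_+$ induces an isomorphism $\Sigma^\infty_+(X\amalg Y)\iso \Sigma^\infty_+ X\vee \Sigma^\infty_+ Y$ of orthogonal $G$-spectra, and wedges of orthogonal $G$-spectra are coproducts in $\Ho(\Sp_G)$ (as used in the proof of Proposition \ref{prop:rephtpy}); applying the representable functor $[-,E]^G$ therefore converts them into products.

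For \emph{excision}, the key step is to produce from the pushout square \eqref{eq:generic_pushout} a Mayer--Vietoris distinguished triangle
\[ \Sigma^\infty_+ A \ \longrightarrow\ \Sigma^\infty_+ B \vee \Sigma^\infty_+ X \ \longrightarrow\ \Sigma^\infty_+ Y \ \longrightarrow\ \Sigma^\infty_+ A\,[1] \]
in $\Ho(\Sp_G)$, whose first two maps are the standard Mayer--Vietoris combinations of $\Sigma^\infty_+ f$, $\Sigma^\infty_+ i$, $\Sigma^\infty_+ j$ and $\Sigma^\infty_+ g$. Since $(X,A)$ is a $G$-CW-pair, the subcomplex inclusion $i$ has the $G$-equivariant homotopy extension property, and because $\Sigma^\infty_+$ is a continuous left adjoint that commutes with smash products with $[0,1]_+$, the morphism $\Sigma^\infty_+ i$ is an h-cofibration of orthogonal $G$-spectra in the sense of Definition \ref{def:h-cofibration}. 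By left properness of the stable model structure---explicitly, by the fact that pushouts along h-cofibrations preserve $\pi_*$-isomorphisms (Theorem \ref{thm:stable} (iv))---the pointset pushout $\Sigma^\infty_+ Y$ is a homotopy pushout and therefore extends to the displayed distinguished triangle. Applying the cohomological functor $[-,E]^G$ then yields a long exact sequence whose relevant three-term segment
\[ [\Sigma^\infty_+ Y, E]^G \,\longrightarrow\, [\Sigma^\infty_+ B, E]^G\oplus [\Sigma^\infty_+ X, E]^G \,\longrightarrow\, [\Sigma^\infty_+ A, E]^G \]
is exact in the middle, which is precisely the excision property: any pair $(b,x)$ with $f^*(b)=i^*(x)$ lifts to some $y\in E_G(Y)$.

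The main obstacle I anticipate is cleanly justifying that the pointset pushout of suspension spectra is itself a homotopy pushout in $\Sp_G$. This reduces to checking that $\Sigma^\infty_+ i$ is an h-cofibration and to invoking left properness of the stable model structure, both of which are consequences of material already established in the memoir.
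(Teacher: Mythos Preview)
Your argument is correct, but it follows a genuinely different route from the paper's own proof. You work directly in the triangulated category $\Ho(\Sp_G)$: homotopy invariance and additivity are formal, and for excision you build a Mayer--Vietoris distinguished triangle from the pushout square (using that $\Sigma^\infty_+ i$ is an h-cofibration, indeed a cofibration since $(X,A)$ is a proper $G$-CW-pair) and then apply the cohomological functor $[-,E]^G$. The paper instead takes a detour through sequential $G$-spectra: it first replaces $E$ by a $\pi_*$-isomorphic $G$-$\Omega$-spectrum, then uses the derived adjunction and the $\Omega$-spectrum condition to identify $E_G(X)$ naturally with $E_G\td{X}$, and finally invokes the already-established Proposition \ref{prop:td is excisive} that $E_G\td{-}$ is excisive. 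Your approach is arguably more direct and stays entirely inside the triangulated framework developed in Chapter~1; the paper's approach has the side benefit of exhibiting the represented theory as an instance of the sequential-spectrum construction, which it later exploits (e.g.\ in Remark \ref{rk:RO(G) grading}) to connect with the Davis--L{\"u}ck framework.
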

\begin{proof}
  We start with the special case where $E$ is an orthogonal $G$-$\Omega$-spectrum.
  Since $E$ is fibrant in the stable model structure on $\Sp_G$,
  the derived adjunction isomorphism stemming from the Quillen adjoint functor pair
  \[ \xymatrix{ \Sigma^\infty_+ \ : \ G\bT\ \ar@<.4ex>[r] & \ \Sp_G \ : \ \ar@<.4ex>[l] \ (-)(0)}  \]
  provides a bijection
  \[ [X,E(0)]^{G\bT} \ \iso\  [\Sigma^\infty_+ X,E]^G  =  E_G(X) \ .\]
  An orthogonal $G$-$\Omega$-spectrum is in particular a naive $\Omega$-spectrum, i.e.,
  the adjoint structure map 
  \[ \tilde\sigma_{\mR,\mR^n}\ : \ E(\mR^n)\ \to \ \Omega E(\mR^{1+n}) \]
  is a $\Com$-equivalence for every $n\geq 0$. For every finite proper $G$-CW-complex $X$,
  the based $G$-space $S^n\sm X_+$ is $\Com$-cofibrant, so the map
  \[ (\tilde\sigma_{\mR,\mR^n})_*  \ : \  [S^n\sm X_+,E(\mR^n)]^G_* \ \to\
    [S^n\sm X_+,\Omega E(\mR^{1+n})]^G_* \]
  is bijective. Hence also the stabilization maps in the colimit system
  defining $E_G\td{X}$ are bijective. So the canonical map
  \[ [X,E(0)]^{G\bT}\iso [X_+,E(0)]^G_* \ \to \ \colim_{n\geq 0}\, [S^n\sm X_+,E(\mR^n)]^G_* \ = \ E_G\td{X} \]
  is bijective. Altogether we have exhibited a bijection between $E_G(X)$ and $E_G\td{X}$
  that is natural for $G$-maps in $X$. The functor $E_G\td{-}$ is excisive
  by Proposition \ref{prop:td is excisive}.
  Since the homotopy invariance, additivity and excision properties do not use the abelian
  group structure and only refer to the underlying set-valued functor,
  we conclude that the functor $E_G(-)$ is also excisive.

  In the general case the stable model structure of Theorem \ref{thm:stable} provides
  a $\pi_*$-isomorphism of orthogonal $G$-spectra $q:E\to F$ whose target
  $F$ is an orthogonal $G$-$\Omega$-spectrum. Then $\gamma_G(q)$ is an isomorphism
  in $\Ho(\Sp_G)$, and hence induces a natural isomorphism $E_G(-)\iso F_G(-)$.
  The latter functor is excisive by the previous paragraph, hence so is the former.
\end{proof}
\index{proper $G$-cohomology theory!represented|)} 

In order to compare the `genuine' equivariant cohomology theories
represented by orthogonal $G$-spectra
with other theories, such as equivariant cohomotopy \cite[Sec.\,6]{luck:burnside_ring}
or equivariant K-theory \cite[Sec.\,3]{luck-oliver:completion}, we provide another description
of the excisive functor represented by a $G$-spectrum $E$.
This alternative description $E_G\gh{X}$ is in terms of parameterized equivariant
homotopy theory over $X$, see Construction \ref{con:define alternative}.
The construction generalizes the equivariant cohomotopy groups of
the third author \cite[Sec.\,6]{luck:burnside_ring}, which is the special case $E=\mS_G$
of the equivariant sphere spectrum; many of the arguments are inspired by that special case.

While the definition of the group $E_G\gh{X}$ makes sense for all Lie groups,
the excision property established in Theorem \ref{thm:excisive - bundle} below
does not hold in that generality.
Consequently, various results that depend on excision for the theory $E_G\gh{-}$
are only formulated for {\em discrete} groups;
we explain in Remark \ref{rk:restriction finite} why the restriction
to discrete groups arises.
The new theory $E_G\gh{X}$ calculates the represented theory $E_G(X)$
for {\em discrete} groups and finite proper $G$-CW-complexes~$X$,
compare Theorem \ref{thm:rep2vect} (iv).

\begin{defn} Let $G$ be a Lie group and $X$ a $G$-space.
  A \emph{retractive $G$-space}\index{retractive $G$-space}\index{G-space@$G$-space!retractive}
  over $X$ is a triple $(E,p,s)$,  where $E$ is a $G$-space, and
  \[p \ : \ E\ \to\ X\text{\qquad and\qquad}s \ : \ X\ \to\ E\]
  are continuous $G$-maps that satisfy $p\circ s=\Id_X$. 
  If $(E',p',s')$ is another retractive $G$-space over $X$, then
  a {\em morphism} from $(E,p,s)$ to $(E',p',s')$ is a continuous $G$-map
  $f:E\to E'$ such that $p'\circ f=p$ and $s'=f\circ s$.
  A \emph{parameterized homotopy} between two such morphisms 
  is a continuous $G$-map $H \colon E \times I \to E'$
  such that $H(-,t)$ is a morphism of retractive $G$-spaces over $X$
  for every $t\in[0,1]$.
\end{defn}

\begin{con}
  We let $G$ be a Lie group, $E$ an orthogonal $G$-spectrum and $\xi:B\to X$
  a euclidean $G$-vector bundle over a $G$-space~$X$.\index{G-vector bundle@$G$-vector bundle} 
  We define a $G$-space $E(\xi)$ as follows. 
  If $\xi$ has constant rank $n$, then we denote by~$\cF_n(\xi)$ 
  the frame bundle of~$\xi$, 
  i.e., the principal $O(n)$-bundle whose fiber over~$x\in X$
  is the space of $n$-frames (orthonormal bases) of $\xi_x$.
  We now form the space
  \[ E(\xi)\ = \ \cF_n(\xi)\times_{O(n)} E(\mR^n) \]
  endowed with the diagonal $G$-action from the action on~$\xi$ and on~$E(\mR^n)$.
  If the bundle does not have constant rank, then we let $X_{(n)}$
  be the subspace of those $x\in X$ such that $\dim(\xi_x)=n$.
  The subspaces $X_{(n)}$ are open by local triviality, and they are $G$-invariant.
  We define
  \[ E(\xi)\ = \ \coprod_{n\geq 0} E(\xi|_{X_{(n)}})\ . \]
  The space $E(\xi)$ comes with a projection to $X$ which is a locally trivial fiber bundle,
  with fiber $E(\mR^n)$ over $X_{(n)}$. 
  The $(G\times O(n))$-fixed basepoint of~$E(\mR^n)$ gives a preferred section
  \[ s \ : \ X \ \to \ E(\xi) \ .\]
  The projection and section
  are $G$-equivariant; so we can -- and will -- consider $E(\xi)$
  as a retractive $G$-space over $X$.
  
  An important special case of this construction is when $E=\mS_G$ is the sphere spectrum.
  Here we write
  \[ S^\xi \ = \ \mS_G(\xi) \ = \ \cF_n(\xi)\times_{O(n)} S^n\]
  for the locally trivial bundle with fiber $S^n$ over $X_{(n)}$.
  In this situation,  the section $s\colon X\to S^\xi$
  is a $\Com$-cofibration of~$G$-spaces by Proposition \ref{prop:s_infty is cofibration}. 
  The quotient space $\Th(\xi)= S^\xi / s(X)$ 
  is the Thom space of~$\xi$ as defined in \eqref{eq:define_Thom_space} above.
  
  The structure maps of the orthogonal $G$-spectrum $E$
  can be used to relate the spaces defined from different vector bundles.
  Given another $G$-vector bundle~$\eta$ over~$X$ of dimension~$m$, 
  a frame in~$\eta_x$ and a frame in~$\xi_x$ concatenate into a frame 
  in~$\eta_x\oplus\xi_x=(\eta\oplus\xi)_x$, and the resulting map
  \[ \cF_m(\eta)\times_X\cF_n(\xi) \ \to \ \cF_{m+n}(\eta\oplus\xi) \]
  is $(G\times O(m)\times O(n))$-equivariant.
  Using the $(O(m)\times O(n))$-equivariant 
  structure map~$\sigma_{m,n}:S^m\sm E(\mR^n)\to E(\mR^{m+n})$ we obtain a continuous $G$-map
  \begin{align*}
    \sigma_{E,\eta,\xi}\ : \ S^\eta \sm_X E(\xi)\ = \ 
    & (\cF_m(\eta)\times_{O(m)} S^m)\sm_X (\cF_n(\xi)\times_{O(n)} E_n)   \\
    \cong \ 
    &(\cF_m(\eta)\times_X\cF_n(\xi))\times_{O(m)\times O(n)} (S^m\sm E(\mR^n))   \\
    \xrightarrow{ (\sigma_{m,n})_*}
    \  &(\cF_m(\eta)\times_X\cF_n(\xi))\times_{O(m)\times O(n)} E(\mR^{m+n})  \\
    \to \  &\cF_{m+n}(\eta\oplus\xi)\times_{O(m+n)} E(\mR^{m+n}) \ =\ E(\eta\oplus\xi)
  \end{align*}
  of retractive $G$-spaces over~$X$.
  If the bundles do not have constant rank, we perform these constructions
  separately over the components $X_{(n)}$. If $E=\mS_G$ is the sphere spectrum,
  then these structure maps are isomorphisms 
  \[ S^\eta \sm_X S^\xi\ \iso\ S^{\eta\oplus\xi} \]
  of retractive $G$-spaces over $X$.
\end{con}

  \begin{defn}
We call a morphism $\psi:\xi\to\eta$ of euclidean $G$-vector bundles
an {\em isometric embedding} if $\psi$ is fiberwise a linear isometric embedding
of inner product spaces.  
\end{defn}

\index{proper $G$-cohomology theory!$E_G\gh{-}$|(} 
\begin{con}\label{con:define alternative}
  We let $G$ be a Lie group, $E$ an orthogonal $G$-spectrum,
  and $X$ a finite, proper $G$-CW-complex.
  We define an abelian group $E_G\gh{X}$ as follows.
  Elements of $E_G\gh{X}$ are equivalence classes of pairs $(\xi,u)$, where
  \begin{enumerate}[(i)]
  \item $\xi$ is a euclidean $G$-vector bundle over $X$, 
  \item $u:S^{\xi}\to E(\xi)$ 
    is a map of retractive $G$-spaces over $X$.
  \end{enumerate}
  To explain the equivalence relation we let $\psi:\xi\to\eta$ be an isometric embedding
  of euclidean $G$-vector bundles over $X$.
  We write $\gamma$ for the orthogonal complement of the image of $\psi$ in $\eta$.
  Then $\gamma$ is another $G$-vector bundle over $X$, and the map
  \begin{equation}\label{eq:sum_identification}
    \gamma\oplus\xi \ \to \ \eta  \ , \quad (x,y)\ \longmapsto \ x + \psi(y) 
  \end{equation}
  is an isomorphism.
  If $u:S^\xi\to E(\xi)$ is a map of retractive $G$-spaces over $X$,
  we write $\psi_*(u)$ for the map of retractive $G$-spaces
\[ 
    S^\eta \ \iso \ S^{\gamma}\sm_X S^{\xi}\ \xrightarrow{S^\gamma\sm_X u} \ 
    S^{\gamma}\sm_X E(\xi) \ \xrightarrow{\sigma_{E,\gamma,\xi}} \ 
    E(\gamma\oplus \xi) \ \iso \ E(\eta)\ .     
 \]
  We will refer to $\psi_*(u)$ as the {\em stabilization}
  of $u$ along $\psi$.
  The two isomorphisms are induced by the bundle isomorphism \eqref{eq:sum_identification}.
  We call two pairs $(\xi,u)$ and $(\xi',v)$ {\em equivalent} if there is a $G$-vector bundle
  $\eta$ over $X$ and isometric embeddings $\psi:\xi\to \eta$ and $\psi':\xi'\to \eta$
  such that the two maps of retractive $G$-spaces 
  \[ \psi_*(u)\ , \ \psi'_*(v)\ : \ S^\eta \ \to \ E(\eta) \]
  are parameterized $G$-equivariantly homotopic.
  We omit the verification that this relation is reflexive, symmetric and
  transitive, and hence an equivalence relation.

  We suppose that $\psi:\xi\to\eta$ is an equivariant isometric isomorphism
  of euclidean $G$-vector bundles over $X$.
  Then $\psi$ is in particular an isometric embedding. So for every map of retractive $G$-spaces
  $u:S^\xi\to E(\xi)$ over $X$, the pair $(\xi,u)$ is equivalent to the pair
  \[ (\eta,\psi_*(u))\ = \ (\eta, E(\psi)\circ u\circ S^{\psi^{-1}})\ .\]
  Informally speaking, this says that conjugation by an isometric isomorphism
  does not change the class in $E_G\gh{X}$.

  The isomorphism classes of $G$-vector bundles over~$X$ form a set,
  hence so do the equivalence classes. We write 
  \[ [\xi,u] \ \in \ E_G\gh{X}\]
  for the equivalence class of a pair $(\xi,u)$.
  This finishes the definition of the set $E_G\gh{X}$.
\end{con}

\begin{prop}\label{prop:normal form}
  Let $G$ be a Lie group, $X$ a finite proper $G$-CW-complex and $\xi$
  a euclidean $G$-vector bundle over $X$.
  \begin{enumerate}[\em (i)]
  \item Let  $a,b:\xi\to \nu$ be two equivariant isometric embeddings
    of euclidean $G$-vector bundles over $X$.
    Then $i_1\circ a, i_1\circ b:\xi\to\nu\oplus\nu$ are homotopic
    through $G$-equivariant isometric embeddings,
    where $i_1:\nu\to\nu\oplus\nu$ is the embedding as the first summand.
  \item
    Let $u,v:S^\xi\to E(\xi)$ be two
    maps of retractive $G$-spaces.
    Then  $[\xi, u] =  [\xi,v]$ in $E_G\gh{X}$ if and only if there is an
    isometric embedding $\psi:\xi\to\eta$ of euclidean $G$-vector bundles over $X$
    such that $\psi_*(u), \psi_*(v):S^\eta\to E(\eta)$ are parameterized $G$-homotopic.
\end{enumerate}
\end{prop}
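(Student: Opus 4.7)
The plan is to establish part (i) by an explicit two-stage rotation, and then to deduce part (ii) from part (i) together with a pullback-to-$X\times[0,1]$ construction that promotes a homotopy of isometric embeddings into a parameterized homotopy of their induced stabilizations.

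For part (i), I would define $H : \xi \times [0,1] \to \nu\oplus\nu$ by concatenating two rotations: on $[0,1/2]$, set $H_t(v) = (\cos(\pi t)\cdot a(v),\ \sin(\pi t)\cdot a(v))$, running from $i_1\circ a$ at $t=0$ to $i_2\circ a$ at $t=1/2$; on $[1/2,1]$, set $H_t(v) = (\sin(\pi t - \pi/2)\cdot b(v),\ \cos(\pi t - \pi/2)\cdot a(v))$, running from $i_2\circ a$ at $t=1/2$ to $i_1\circ b$ at $t=1$. Each $H_t$ is fiberwise linear, $G$-equivariant (since $a,b$ are and the coefficients are $G$-fixed scalars), and fiberwise isometric (by $\cos^2 + \sin^2 = 1$ together with $\|a(v)\|=\|v\|=\|b(v)\|$), so $H$ is the required homotopy through $G$-equivariant isometric embeddings $\xi\to\nu\oplus\nu$.

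For part (ii), the `if' direction is immediate from the definition of the equivalence relation on $E_G\gh{X}$. For `only if', suppose $[\xi,u] = [\xi,v]$, so there exist a bundle $\eta$ and isometric embeddings $\psi_u,\psi_v : \xi\to\eta$ such that $(\psi_u)_*(u)$ and $(\psi_v)_*(v)$ are parameterized $G$-homotopic. Stabilizing this homotopy along the first-summand inclusion $i_1:\eta\to\eta\oplus\eta$ yields a parameterized $G$-homotopy $(i_1\circ\psi_u)_*(u) \simeq (i_1\circ\psi_v)_*(v)$. Setting $\psi := i_1\circ\psi_v :\xi\to\eta\oplus\eta$, it then suffices to produce in addition a parameterized $G$-homotopy $(i_1\circ\psi_u)_*(u) \simeq (i_1\circ\psi_v)_*(u) = \psi_*(u)$; concatenating the two then gives $\psi_*(u)\simeq\psi_*(v)$.

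This remaining homotopy I would obtain from the following principle, applied to the homotopy through isometric embeddings $\Phi : \xi\times[0,1]\to\eta\oplus\eta$ from $i_1\circ\psi_u$ to $i_1\circ\psi_v$ provided by part (i): a homotopy $\Phi:\xi\times[0,1]\to\nu$ through $G$-equivariant isometric embeddings induces a parameterized $G$-homotopy between $(\Phi_0)_*(u)$ and $(\Phi_1)_*(u)$ for every $u:S^\xi\to E(\xi)$. To construct it, pull back along the projection $p:X\times[0,1]\to X$: the homotopy $\Phi$ assembles into a $G$-equivariant isometric embedding $\tilde\Phi:p^*\xi\to p^*\nu$ of bundles over $X\times[0,1]$, while $u$ pulls back to a map of retractive $G$-spaces $p^*(u):S^{p^*\xi}\to E(p^*\xi)$. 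Applying the stabilization construction of Construction~\ref{con:define alternative} over the base $X\times[0,1]$ yields $\tilde\Phi_*(p^*(u)):S^{p^*\nu}\to E(p^*\nu)$; under the identifications $S^{p^*\nu}=S^\nu\times[0,1]$ and $E(p^*\nu)=E(\nu)\times[0,1]$, this is precisely a parameterized $G$-homotopy from $(\Phi_0)_*(u)$ to $(\Phi_1)_*(u)$. The main technical point is continuity of $\tilde\Phi_*(p^*(u))$ on the total space, which reduces to continuity of the $G$-invariant orthogonal-complement sub-bundle of $\tilde\Phi(p^*\xi)$ inside $p^*\nu$ (existence via a $G$-invariant euclidean metric as in the proof of Proposition~\ref{prop:s_infty is cofibration}) and to continuity of the structure maps of $E$; the rest is a formal manipulation.
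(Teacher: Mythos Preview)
Your proof is correct and follows essentially the same route as the paper: part (i) by rotating through the orthogonal second summand of $\nu\oplus\nu$ (the paper goes $i_1 a\leadsto i_2 b\leadsto i_1 b$, you go $i_1 a\leadsto i_2 a\leadsto i_1 b$, which is an immaterial variation), and part (ii) by stabilizing into $\eta\oplus\eta$ and invoking part (i) to move between the two embeddings. Your explicit pullback-to-$X\times[0,1]$ justification of why a homotopy through isometric embeddings induces a parameterized homotopy of stabilizations is a welcome elaboration of a step the paper leaves implicit.
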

\begin{proof}
  (i)
  We let $i_1,i_2:\nu\to\nu\oplus\nu$ be the embeddings as the first and second summand,
  respectively.
  Then the images of the isometric embeddings
  $i_1\circ a\ , \ i_2\circ b:\xi\to\nu\oplus\nu$ are orthogonal.
  So there is an equivariant homotopy $H:\xi\times[0,1]\to\nu\oplus\nu$
  through isometric embeddings from $i_1\circ a$ to $i_2\circ b$, for example
  \[ H(x,t)\ = \ (\sqrt{1-t^2}\cdot a(x),\ t\cdot b(x))\ .  \]
  For $a=b$ this in particular shows that $i_1\circ b$ and $i_2\circ b$ are $G$-homotopic
  through equivariant isometric embeddings.
  Altogether, $i_1\circ a$ and $i_1\circ b$ are $G$-homotopic
  through isometric embeddings.

  (ii)
  The `if' part of the claim holds by definition of the equivalence relation
  that defines $E_G\gh{X}$.
  Now we suppose that conversely, $[\xi, u] =  [\xi,v]$ in $E_G\gh{X}$.
  Then there are two isometric embeddings $a,b:\xi\to \nu$ of euclidean $G$-vector bundles over $X$
  such that $a_*(u)$ and $b_*(v)$ are parameterized equivariantly homotopic.
  Part (i) provides a homotopy between $i_1\circ a$ and $i_1\circ b$
  through $G$-equivariant isometric embeddings.
  The homotopy induces a parameterized equivariant homotopy between the two maps
  $(i_1\circ a)_*(v):S^{\nu\oplus\nu}\to E(\nu\oplus\nu)$ and $(i_1\circ b)_*(v)$.
  So we obtain a chain of parameterized equivariant homotopies
  \begin{align*}
    (i_1\circ a)_*(u) \ &= \ (i_1)_*( a_*(u) )\ \simeq \ (i_1)_*( b_*(v) )\
                            =\  (i_1\circ b)_*(v)\ \simeq \ (i_1\circ a)_*(v)\ .
  \end{align*}
  So the isometric $G$-embedding $\psi=i_1\circ a:\xi\to \nu\oplus\nu=\eta$
  has the desired property.
\end{proof}

Now we define an abelian group structure on the set $E_G\gh{X}$.
We let $\nabla:S^1\to S^1\vee S^1$ be a pinch map;
for definiteness, we take the same map as in \cite[6.2]{luck:burnside_ring},
namely
\[ \nabla(x) \ = \
  \begin{cases}
    \quad \ln(x) \text{ in the first copy of $S^1$ }
    & \text{ if $x\in(0,\infty)$,}\\
    -\ln(-x)  \text{ in the second copy of $S^1$ } & \text{ if $x\in(-\infty,0)$, and}\\
    \quad\infty  & \text{ if $x\in\{0,\infty\}$.}
  \end{cases}
\]
By Proposition \ref{prop:normal form}
we can represent any two given classes of $E_G\gh{X}$
by pairs $(\xi,u)$ and $(\xi,v)$, 
defined on the same $G$-vector bundle $\xi$ over $X$.
To add the classes we stabilize the representative by the trivial
line bundle $\ul{\mR}$ and then form the `pinch sum', i.e., the composite
\begin{align*}
  \nabla(u,v)\ : \  S^{\ul{\mR}\oplus\xi}\
  &\iso \ S^1\sm_X S^\xi \ \xrightarrow{\ \nabla\sm_X S^\xi} \
    (S^1\vee S^1)\sm_X  S^\xi  \\ 
&\iso \ (S^1\sm_X S^\xi )\vee_X (S^1\sm_X S^\xi) 
\ \xrightarrow{\ (S^1\sm u) +_X (S^1\sm v)} \\
  &\iso \ S^1 \sm_X E(\xi)
\ \xrightarrow{\sigma_{E,\ul{\mR},\xi}} \ E(\ul{\mR}\oplus\xi) \ .
\end{align*}
This way of adding representatives is compatible with parameterized homotopy
and stabilization along isometric embeddings, so we get a well-defined map
\[ + \ : \ E_G\gh{X}\times E_G\gh{X}\ \to \ E_G\gh{X}\ , \quad
[\xi,u]+[\xi,v]\ = \ [\xi\oplus\ul{\mR},\nabla(u,v)]\ . \]
The pinch map is coassociative and counital up to homotopy,
and has an inverse up to homotopy; this implies that the binary operation thus defined is 
associative and unital, with the class of the trivial map (with values the respective basepoints)
as unit, and that inverses exist. After stabilizing one additional time by $\ul{\mR}$,
the Eckmann-Hilton argument shows that the binary operation is commutative.
So we have indeed defined an abelian group structure on the set $E_G\gh{X}$.

The groups $E_G\gh{X}$ are clearly covariantly functorial for 
morphisms of orthogonal $G$-spectra in~$E$.
A contravariant functoriality in the $G$-space $X$
arises from pullback of vector bundles.
We let  $f:Y\to X$ be a continuous $G$-map between 
two finite proper $G$-CW-complexes.
We let $\xi$ be a euclidean $G$-vector bundle over $X$ and $u:S^{\xi}\to E(\xi)$ 
a map of retractive $G$-spaces over $X$.
Then $f^*(\xi)$ is a euclidean $G$-vector bundle over $Y$, and
\[ f^*(u)\ : \ S^{f^*\xi}\ = \ f^*(S^{\xi}) 
\ \to \ f^*(E(\xi))\ = \ E(f^*\xi)\]
a map of retractive $G$-spaces over $Y$.
The pullback construction respects parameterized homotopies
and is compatible with stabilization along isometric embeddings.
So we can define 
\[ f^*\ = \ E_G\gh{f} \ : \ E_G\gh{X} \ \to \ E_G\gh{Y} 
\text{\quad by \quad} f^*[\xi,u]\ = \ [f^*(\xi),f^*(u)]\ .\]

\begin{eg}
  We consider $X=G/H$, the homogeneous $G$-space for a compact subgroup $H$
  of the Lie group $G$. Every orthogonal $H$-representation $V$
  gives rise to a euclidean $G$-vector bundle 
  \[ \xi_V \ : \ G\times_H V \ \to \ G/H \ , \quad [g,v]\ \longmapsto \ g H\ . \]
  Moreover, every euclidean $G$-vector bundle over $G/H$ is isomorphic 
  to a bundle of this form.
  
  We have $\cF_n(\xi_V)=G\times_H \bL(\mR^n,V)$, where $n=\dim(V)$. So
  \[ E(\xi_V)\ = \ G\times_H E(V) \]
  for every orthogonal $G$-spectrum $E$. 
  In particular, $S^{\xi_V}=G\times_H S^V$.
  Every map of retractive $G$-spaces over $G/H$
  \[  S^{\xi_V}\ = \ G\times_H S^V \ \to \ G\times_H E(V)\ = \ E(\xi_V)  \]
  is of the form $G\times_H f$
  for a unique based $H$-map $f:S^V\to E(V)$, and this correspondence passes
  to a bijection of homotopy classes.
  If we let $V$ exhaust the finite-dimensional subrepresentations 
  of a complete $H$-universe, these bijections assemble into an isomorphism
  \begin{align*}
    \pi_0^H( E)\ = \ \colim_V \, [S^V,E(V)]^G_* \ &\xrightarrow{\ \iso \ } \ 
                                               E_G\gh{G/H}\\
    [f:S^V\to E(V)]\ &\longmapsto \ [G\times_H V, G\times_H f]\ .
           \end{align*}
\end{eg}

\begin{eg}[Compact Lie groups]\label{eg:bundle for compact}
  We let $H$ be a compact Lie group, $X$ a finite $H$-CW-complex,
  and $E$ an orthogonal $H$-spectrum.
  We define an isomorphism
  \[  \omega\ : \ \colim_{V\in s(\cU_H)}\, [S^V\sm X_+,E(V)]^H_* \ \to \  E_H\gh{X}
    \text{\quad by\quad}
    [f]\ \longmapsto \ [X\times V,f^\sharp]\ .\]
  Here $\cU_H$ is a complete $H$-universe, and $s(\cU_H)$ is the poset, under inclusion,
  of finite-dimensional $H$-subrepresentations of $\cU_H$.
  Moreover, for a continuous based $H$-map $f:S^V\sm X_+\to E(V)$,
  we write  $X\times V$ for the trivial $H$-vector bundle over $X$ with fiber $V$;
  a map of retractive $H$-spaces over $X$
  \begin{equation}\label{eq:define_f^sharp}
    f^\sharp \ : \ S^{X\times V}\ =\ X\times S^V\ \to \ X\times E(V)\ = \ E(X\times V)    
  \end{equation}
  is defined by $f^\sharp(x,v)= (x,f(v\sm x))$.
  Since $H$ is compact Lie, every euclidean $H$-vector bundle over the compact $H$-space $X$ embeds
  into a trivial bundle of the form $X\times V$, for some $H$-representation $V$,
  for example by \cite[Prop.\,2.4]{segal:equivariant_K};
  we can suppose that $V$ is a subrepresentation of the complete $H$-universe $\cU_H$.
  Moreover, every map of retractive $H$-spaces $X\times S^V\to X\times E(V)$ is of the form
  $f^\sharp$ for a unique based $H$-map $f:S^V\sm X_+\to E(V)$.
  So the map $\omega$ is surjective.
  
  For injectivity we exploit that $\omega$ is a group homomorphism
  for the group structure on the source arising from the identification
  with $\pi_0^H(\map(X,E))$.
  We suppose that $\omega[f]=[X\times V,f^\sharp]=0$.
  There is then an isometric embedding $\psi:X\times V\to\eta$ 
  of euclidean $H$-vector bundles over $X$ such that $\psi_*(f^\sharp)$ is
  parameterized equivariantly null-homotopic.
  We let $\gamma$ be the orthogonal complement of the image of $X\times V$ in $\eta$.
  We can embed $\gamma$ into the trivial $H$-vector bundle $X\times W$ 
  associated with another $H$-representation $W$,
  and we can then embed $V\oplus W$ into the complete $H$-universe $\cU_H$
  in a way that extends the inclusion $V\to\cU_H$.
  So we can altogether assume that $\psi=\incl:X\times V\to X\times \bar V$
  for $V\subset\bar V$ in the poset $s(\cU_H)$.
  After stabilizing $f:S^V\sm X_+\to E(V)$ along the inclusion of $V$ into $\bar V$,
  we can assume without loss of generality that the map $f^\sharp$ is
  parameterized $H$-null-homotopic.
  Maps of retractive $H$-spaces $S^{X\times V}\to E(X\times V)$ biject
  with continuous based $H$-maps $S^V\sm X_+\to E(V)$, so we conclude that
  the map $f$ is based $H$-null-homotopic. Thus $f$ represents the zero element
  in the source of $\omega$, and we have shown that the map $\omega$ is also injective.
\end{eg}

\begin{eg}\label{eg:induction iso}\index{induction isomorphism}
  We let $\Gamma$ be a closed subgroup of a Lie group $G$,
  and we let $Y$ be a finite proper $\Gamma$-CW-complex.
  We let $E$ be an orthogonal $G$-spectrum.
  We define an induction homomorphism
  \begin{equation}\label{eq:ind bundle}
    \ind \ : \ E_\Gamma\gh{Y}\ \to \ E_G\gh{G\times_\Gamma Y}      
  \end{equation}
  as follows.  We let $(\xi,u)$ represent a class
  in $E_\Gamma\gh{Y}$. Then $G\times_\Gamma\xi$ is a $G$-vector bundle over
  $G\times_\Gamma Y$, with frame bundle
  \[ \cF_n(G\times_\Gamma \xi)\ =\ G\times_\Gamma \cF_n(\xi) \ ,\]
  where $n=\dim(\xi)$. Hence
  \begin{align*}
    E(G\times_\Gamma \xi)\ &=\ \cF_n(G\times_\Gamma \xi)\times_{O(n)} E(\mR^n)\\
       &= \ G\times_\Gamma \cF_n(\xi)\times_{O(n)} E(\mR^n)\ = \ G\times_\Gamma E(\xi) 
  \end{align*}
  as retractive $G$-spaces over $G\times_\Gamma Y$. Moreover,
  \[ G\times_\Gamma u \ : \ S^{G\times_\Gamma \xi}\ = \ G\times_\Gamma S^\xi \ \to \
    G\times_\Gamma E(\xi)\ = \ E(G\times_\Gamma \xi) \]
  is a map of retractive $G$-spaces. We can thus define the induction homomorphism by
  \[ \ind[\xi,u]\ = \ [G\times_\Gamma\xi,G\times_\Gamma u]\ . \]
  Every euclidean $G$-vector bundle $\eta$ over $G\times_\Gamma Y$
  is isomorphic to a bundle of the form $G\times_\Gamma\xi$: we can take
  $\xi$ as the restriction of $\eta$ along the $\Gamma$-equivariant map
  \[ Y \ \to \ G\times_\Gamma Y \ , \quad y \ \longmapsto \ [1,y]\ . \]
  Similarly, every $G$-map $v:G\times_\Gamma S^\xi \to  G\times_\Gamma E(\xi)$
  is of the form $G\times_\Gamma u$ for a unique $\Gamma$-map
  $u:S^\xi \to E(\xi)$. Hence the induction homomorphism \eqref{eq:ind bundle}
  is an isomorphism.
\end{eg}

\begin{rk}[Vector bundles versus representations]
  Now is a good time to explain why we build the theory $E_G\gh{X}$
  using $G$-vector bundles, as opposed to just $G$-representations.
  One could contemplate a variation $\hat E_G\gh{X}$
  where elements are represented by classes $(V,v)$, where $V$
  is a $G$-representation and $v:S^V\sm X_+\to E(V)$ is a based $G$-map.
  Two pairs $(U,u)$ and $(V,v)$ represent the same class in $\hat E_G\gh{X}$
  if and only if there is a $G$-representation $W$ and
  $G$-equivariant linear isometric embeddings $\psi:U\to W$ and $\psi':V\to W$
  such that the two stabilizations $\psi_*(u) , \psi'_*(v): S^W\sm X_+\to E(W)$
  are based $G$-homotopic. This construction provides a homotopy functor from
  the category of based $G$-spaces to abelian groups.
  A $G$-representation $V$ gives rise to the trivial $G$-vector bundle
  $X\times V$ over any $G$-space $X$, and a based $G$-map $v:S^V\sm X_+\to E(V)$
  gives rise to a map of retractive $G$-spaces $v^\sharp:S^{X\times V}\to E(X\times V)$,
  compare \eqref{eq:define_f^sharp}.
  This assignment is compatible with the equivalence relations, and provides
  a natural group homomorphism $\hat E_G\gh{X}\to E_G\gh{X}$.
  Example \ref{eg:bundle for compact} can be rephrased as saying that
  for {\em compact} Lie groups, this homomorphism is an isomorphism.
  
  However, the construction $\hat E_G\gh{-}$
  based on $G$-representations (as opposed to $G$-vector bundles)
  does not in general have induction isomorphisms.\index{induction isomorphism}
  Our represented equivariant cohomology theories
  support induction isomorphisms, so this shows that the functor $\hat E_G\gh{-}$
  is not in general represented by an orthogonal $G$-spectrum.
  The case $E=\mS_G$ of stable cohomotopy, already considered in \cite[Rk.\,6.17]{luck:burnside_ring},
  can serve to illustrate the lack of induction isomorphisms.
  We let $G$ be any discrete group with the following two properties:
  \begin{enumerate}[(a)]
  \item Every finite-dimensional $G$-representation is trivial, and
  \item the group $G$ has a non-trivial finite subgroup $H$.
  \end{enumerate}
  As explained in \cite[Rk.\,6.17]{luck:burnside_ring},
  for every finite $G$-CW-complex $X$, the cohomotopy group
  $\hat \mS_G\gh{X}$ based on $G$-representations (as opposed to $G$-vector bundles over $X$)
  is isomorphic to the non-equivariant cohomotopy group $\pi^0_e(X/G)$
  of the $G$-orbit space. In particular,
  \[ \hat \mS_G\gh{G/H}\ \iso \ \pi^0_e(\ast)\ \iso\ \mZ \ . \]
  On the other other hand, $\hat \mS_H\gh{\ast}=\pi^0_H(\ast)$ is isomorphic
  to the Burnside ring of the finite group $H$, which has rank bigger than one
  since $H$ is non-trivial.

  An explicit example of a group satisfying (a) and (b)
  is Thompson's group $T$, see for example \cite{cannon-floyd} and the references given therein.
  The group $T$ is infinite, finitely presented,
  and simple (i.e., the only normal subgroups are $\{e\}$ and $T$).
  As explained in  \cite[Sec.\,2.5]{luck:burnside_ring},
  an infinite, finitely generated simple group
  does not have non-trivial finite-dimensional representations over any field;
  in particular, every finite-dimensional $\mR$-linear representation of Thompson's group $T$
  is trivial. On the other hand, $T$ has plenty of finite subgroups.
\end{rk}

The following proposition is a slight refinement of \cite[Lemma 3.7]{luck-oliver:completion}.

\begin{prop}\label{prop:relative LO}
  Let $G$ be a discrete group.
  Let $h:A\to Y$ be a continuous $G$-map between finite proper $G$-CW-complexes.
  Let $\zeta$ be a euclidean $G$-vector bundle over $Y$ and
  $\psi:h^*(\zeta)\to \kappa$ an isometric embedding of euclidean $G$-vector bundles over $A$.
  Then there is an isometric embedding $\varphi:\zeta\to \omega$
  of euclidean $G$-vector bundles over $Y$ and 
  an isometric embedding $\lambda:\kappa\to h^*(\omega)$ of euclidean $G$-vector bundles over $A$
  such that the composite
  \[ h^*(\zeta) \ \xrightarrow{\ \psi\ } \ \kappa \ \xrightarrow{\ \lambda\ }\  h^*(\omega) \]
  coincides with $h^*(\varphi)$.
\end{prop}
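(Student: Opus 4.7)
The plan is to deduce the proposition from Lemma 3.7 of \cite{luck-oliver:completion} via two reductions. First, I would write $\kappa = \psi(h^*\zeta) \oplus \kappa^\perp$ using the orthogonal complement of the image of $\psi$, and let $\theta \colon \kappa \to h^*\zeta \oplus \kappa^\perp$ be the resulting isometric isomorphism of $G$-vector bundles over $A$. Suppose one can construct a euclidean $G$-vector bundle $\mu$ over $Y$ together with an isometric embedding $\beta \colon \kappa^\perp \to h^*\mu$ of $G$-vector bundles over $A$. Then I would take $\omega = \zeta \oplus \mu$, let $\varphi \colon \zeta \to \omega$ be the first summand inclusion, and set $\lambda = (\Id_{h^*\zeta} \oplus \beta) \circ \theta \colon \kappa \to h^*\zeta \oplus h^*\mu = h^*\omega$. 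These are isometric embeddings by construction, and a direct computation using the definition of $\theta$ verifies the compatibility relation $\lambda \circ \psi = h^*\varphi$.

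This reduces the proposition to the following assertion: for every continuous $G$-map $h \colon A \to Y$ between finite proper $G$-CW-complexes and every euclidean $G$-vector bundle $\kappa'$ over $A$, there exist a euclidean $G$-vector bundle $\mu$ over $Y$ and an isometric embedding $\kappa' \to h^*\mu$ of $G$-vector bundles over $A$. For this, I would form the mapping cylinder $M_h$, whose projection $p \colon M_h \to Y$ is a $G$-equivariant strong deformation retraction. After equivariantly deforming $h$ to a cellular map (which does not change $h^*\zeta$ up to isomorphism, by homotopy invariance of pullbacks), the space $M_h$ carries a natural finite proper $G$-CW-structure in which $A$, identified with $A \times \{1\}$, is a subcomplex; moreover $h = p \circ i$ for the subcomplex inclusion $i \colon A \to M_h$. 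Applying Lemma 3.7 of \cite{luck-oliver:completion} to the subcomplex $A \subset M_h$ and the bundle $\kappa'$ gives a $G$-vector bundle $\tilde\mu$ over $M_h$ together with an isometric embedding $\tilde\beta \colon \kappa' \to i^*\tilde\mu$. Since $p$ is an equivariant homotopy equivalence between finite proper $G$-CW-complexes, the homotopy invariance theorem \cite[Thm.\,1.2]{luck-oliver:completion} yields a $G$-vector bundle $\mu$ over $Y$ and an isomorphism $\alpha \colon p^*\mu \to \tilde\mu$; restricting $\alpha$ along $i$ and composing its inverse with $\tilde\beta$ produces the desired isometric embedding $\kappa' \to i^*(p^*\mu) = h^*\mu$.

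The main obstacle is essentially just bookkeeping: one must check that the mapping cylinder reduction is compatible with the chosen $G$-CW-structures and with the bundle isomorphism provided by homotopy invariance, and one must verify the identity $\lambda \circ \psi = h^*\varphi$ from the first step with care, because $\lambda$ is defined piecewise via $\theta$. All genuine bundle-theoretic content is contained in the two cited results from \cite{luck-oliver:completion}; no new geometric input is required beyond the orthogonal complement trick that separates the role of $\zeta$ from that of the purely complementary bundle $\kappa^\perp$.
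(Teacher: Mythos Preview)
Your proof is correct and follows essentially the same route as the paper: take the orthogonal complement of $\psi(h^*\zeta)$ in $\kappa$, embed that complement into the pullback of a bundle over $Y$ via \cite[Lemma~3.7]{luck-oliver:completion}, and assemble $\omega$, $\varphi$, and $\lambda$ exactly as you describe (the paper writes $\omega=\nu\oplus\zeta$ with $\varphi$ the second-summand inclusion, but this is the same up to swapping factors).

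The one difference is your mapping cylinder detour: you reduce to the case of a subcomplex inclusion before invoking \cite[Lemma~3.7]{luck-oliver:completion}, whereas the paper applies that lemma directly to the map $h$. In fact \cite[Lemma~3.7]{luck-oliver:completion} is already stated for an arbitrary continuous $G$-map between finite proper $G$-CW-complexes (as the paper recalls in Remark~\ref{rk:restriction finite}), so the mapping cylinder step is unnecessary. Your argument via $M_h$ and homotopy invariance is perfectly valid, just longer than needed.
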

\begin{proof}
  We let $\gamma$ be the orthogonal complement of the image of $h^*(\zeta)$ under $\psi$;
  this is a euclidean $G$-vector subbundle of $\kappa$.
  By \cite[Lemma 3.7]{luck-oliver:completion}, there is a $G$-vector bundle $\nu$
  over $Y$ and an isometric embedding $j:\gamma\to h^*(\nu)$ of $G$-vector bundles over $A$.
  We let $\varphi:\zeta\to\nu\oplus\zeta=\omega$
  be the embedding of the second summand.  
  We let $\lambda:\kappa=\gamma\oplus \psi(h^*(\zeta))\to h^*(\nu)\oplus h^*(\zeta)=h^*(\omega)$
  be the internal direct sum of $j$ and the
  inverse of the isomorphism $\psi:h^*(\zeta)\iso \psi(h^*(\zeta))$.
\end{proof}

Now we can prove the main result about the functor $E_G\gh{-}$.

\begin{thm}\label{thm:excisive - bundle} 
  Let $G$ be a discrete group. For every orthogonal $G$-spectrum $E$,
  the functor $E_G\gh{-}$ is excisive.
\end{thm}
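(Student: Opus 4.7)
The plan is to verify the three defining properties of an excisive functor—homotopy invariance, additivity, and excision—for $E_G\gh{-}$.

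Homotopy invariance is immediate from the definitions: an equivariant homotopy $H\colon Y\times[0,1]\to X$ from $f_0$ to $f_1$ produces, for any representative $(\xi,u)$, the pullback bundle $H^*\xi$ over $Y\times[0,1]$ and a map of retractive $G$-spaces $H^*u\colon S^{H^*\xi}\to E(H^*\xi)$ restricting at the endpoints to $(f_0^*\xi,f_0^*u)$ and $(f_1^*\xi,f_1^*u)$; this exhibits them as equivalent. Additivity is equally routine: a euclidean $G$-vector bundle over $X\amalg Y$ decomposes uniquely into bundles over the two pieces, and the same holds for maps of retractive $G$-spaces and parameterized homotopies between them, providing an inverse to $(i_X^*,i_Y^*)$.

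The content of the theorem is the excision property. Given the pushout square \eqref{eq:generic_pushout} and classes $b=[\xi_B,u_B]$, $x=[\xi_X,u_X]$ with $f^*(b)=i^*(x)$, the first step is to bring both bundles into a common frame over $Y$. Two applications of Proposition~\ref{prop:relative LO}—each time taking the zero bundle on $Y$ as $\zeta$ and the tautological zero embedding into $\xi_B$ respectively $\xi_X$ as $\psi$—produce $G$-vector bundles $\nu_B,\nu_X$ over $Y$ together with isometric embeddings $\xi_B\hookrightarrow j^*\nu_B$ and $\xi_X\hookrightarrow g^*\nu_X$. Setting $\mu=\nu_B\oplus\nu_X$ and stabilizing the two representatives along the resulting embeddings (which does not change their equivalence classes), I may assume $\xi_B=j^*\mu$ and $\xi_X=g^*\mu$, so that the pulled-back bundles over $A$ literally coincide: $f^*\xi_B=(jf)^*\mu=(gi)^*\mu=i^*\xi_X$.

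With matching bundles over $A$, the hypothesis says that $f^*u_B$ and $i^*u_X$ are two maps $S^{(jf)^*\mu}\to E((jf)^*\mu)$ representing the same class. By Proposition~\ref{prop:normal form}(ii) they become parameterized $G$-homotopic after stabilization along some isometric embedding $\psi\colon(jf)^*\mu\to\bar\kappa$ over $A$. A final application of Proposition~\ref{prop:relative LO}, now to the map $jf=gi\colon A\to Y$ with $\zeta=\mu$ and embedding $\psi$, extends $\mu$ to a bundle $\bar\mu$ over $Y$ so that $\bar\kappa$ embeds compatibly into $(jf)^*\bar\mu$; after absorbing this stabilization I may assume $f^*u_B$ and $i^*u_X$ are already parameterized $G$-homotopic over $A$. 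The equivariant homotopy extension property of the proper $G$-CW-pair $(X,A)$, applied to maps of retractive $G$-spaces over $X$, then lets me modify $u_X$ within its parameterized homotopy class so that $i^*u_X=f^*u_B$ strictly. Because $Y=B\cup_A X$ is a pushout of $G$-spaces and the modified maps now agree on $A$, they glue to a morphism of retractive $G$-spaces $u_Y\colon S^\mu\to E(\mu)$ over $Y$; the class $y=[\mu,u_Y]$ then satisfies $j^*(y)=b$ and $g^*(y)=x$ by construction.

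The main obstacle is the bundle-matching in the first stabilization step, i.e.\ producing a single $G$-vector bundle over $Y$ into which $\xi_B$ and $\xi_X$ both stably embed from their respective subspaces. This is precisely what Proposition~\ref{prop:relative LO} provides, and is ultimately the reason the theorem is restricted to discrete $G$: the underlying Luck-Oliver extension result is formulated in that setting. Once the bundle data is aligned, the analysis of the maps of retractive spaces reduces cleanly via Proposition~\ref{prop:normal form} and the standard equivariant homotopy extension property, and Proposition~\ref{prop:excisive on orbits} is available later as a sanity check when comparing $E_G\gh{-}$ with the represented theory $E_G(-)$.
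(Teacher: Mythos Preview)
Your proof is correct and follows essentially the same strategy as the paper's, which organizes the excision argument into three nested cases (common bundle over $Y$; then homotopic restrictions over $A$; then the general case) where you proceed linearly. The one place to tighten is the homotopy extension step: what is actually needed is the HEP for the inclusion $S^{g^*\mu}|_A\cup s_\infty(X)\hookrightarrow S^{g^*\mu}$ against the $\Com$-fibration $E(g^*\mu)\to X$, which the paper justifies via Proposition~\ref{prop:s_infty is cofibration} rather than by invoking the $G$-CW-pair $(X,A)$ directly.
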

\begin{proof}
  Homotopy invariance of the functor $E_G\gh{-}$ can be proved 
  in the same way as \cite[Lemma 6.6]{luck:burnside_ring},
  which is the special case $E=\mS_G$.
  For excision we consider two finite proper $G$-CW-pairs
  $(X,A)$ and $(Y,B)$ and a pushout square of $G$-spaces: 
  \begin{equation}  \begin{aligned}\label{eq:inproof_pushout}
      \xymatrix{ A\ar[r]^-i\ar[d]_f & X\ar[d]^g\\ B \ar[r]_-j & Y}             
    \end{aligned}\end{equation}
  We consider $(b,x)\in E_G\gh{B}\times E_G\gh{X}$ such that $f^*(b)=i^*(x)$ in $E_G\gh{A}$.

  Case 1:
  We suppose that there is a $G$-vector bundle $\zeta$ over $Y$ such that the
  given classes can be represented as  $b=[\zeta|_B,u]$ and $x=[g^*(\zeta),v]$.
  We observe that $f^*(\zeta|_B)=g^*(\zeta)|_A$ as $G$-vector bundles over $A$.
  We assume moreover that the maps
  \[ f^*(u)\ , \  v|_A\ :\ S^{g^*(\zeta)|_A}\ \to\  E(g^*(\zeta)|_A) \]
  are parameterized $G$-homotopic.
  We let
  \[ D \ = \ s(X)\cup S^{g^*(\zeta)|_A}\]
  be the $G$-subspace of $S^{g^*(\zeta)}$
  given by the union of the image of the section at infinity
  $s:X\to S^{g^*(\zeta)}$ and the part sitting over $A$.
  Proposition \ref{prop:s_infty is cofibration} implies that
  the inclusion $D\to S^{g^*(\zeta)}$ is a $\Com$-cofibration of $G$-spaces.
  The bundle projection $p:E(g^*(\zeta))\to X$
  is locally trivial in the equivariant sense, and hence a $\Com$-fibration of $G$-spaces.
  So the inclusion 
  \[ S^{g^*(\zeta)}\times 0\cup_{D \times 0} D\times[0,1] \to S^{g^*(\zeta)}\times[0,1] \]
  has the left lifting property with respect to the bundle projection $p:E(g^*(\zeta))\to X$.
  We can thus replace $v$
  by a map of retractive $G$-spaces $\bar v:S^{g^*(\zeta)}\to E(g^*(\zeta))$
  over $X$ that is equivariantly parameterized homotopic to $v$, and such that
  \[ f^*(u)\ = \ \bar v|_A \ .\]
  The two maps
  \[  S^{\zeta|_B} \ \xrightarrow{\ u \ } \  E(\zeta|_B)\
    \xrightarrow{\incl} \ E(\zeta) \text{\quad and\quad}
    S^{g^*(\zeta)}\ \xrightarrow{\ \bar v \ } \ E(g^*(\zeta))\ \xrightarrow{E(\bar g)}\ E(\zeta) \]
  are then compatible over $S^{f^*(\zeta|_B)}=S^{g^*(\zeta)|_A}$,
  where $\bar g:g^*(\zeta)\to\zeta$
  is the bundle morphism covering $g:X\to Y$. So these maps glue to a map of retractive $G$-spaces
  over $Y$
  \[  w \ = \ (\incl\circ u)\cup(E(\bar g)\circ \bar v)\ : \
    S^{\zeta} \ = \ S^{\zeta|_B}\cup_{S^{g^*(\zeta)|_A}} S^{g^*(\zeta)}\ \to \ E(\zeta) \ .\]
  The pair $(\zeta,w)$ then represents a class in $E_G\gh{Y}$ that satisfies
  $j^*[\zeta,w]=[\zeta|_B,u]=b$ and $g^*[\zeta,w]=[g^*(\zeta),\bar v]=[g^*(\zeta),v]=x$.

  Case 2:
  We suppose that there is a $G$-vector bundle $\zeta$ over $Y$ such that the
  given classes can be represented as  $b=[\zeta|_B,u]$ and $x=[g^*(\zeta),v]$.
  In contrast to the previous Case 1,
  we make no further assumptions on the maps $u$ and $v$.
  The hypothesis yields the relation
  \begin{align*}
    [f^*(\zeta|_B),f^*(u)] \ &= \ f^*(b)\ =\ i^*(x) \
                        = \ [g^*(\zeta)|_A,v|_A]  \ = \   [f^*(\zeta|_B),v|_A]
  \end{align*}
  in $E_G\gh{A}$.
  So Proposition \ref{prop:normal form}
  provides an isometric embedding $\psi:f^*(\zeta|_B)\to\kappa$
  of euclidean $G$-vector bundles over $A$
  such that the maps $\psi_*(f^*(u))$ and $\psi_*(v|_A)$ are parameterized $G$-homotopic.
  Proposition \ref{prop:relative LO} for $h=g i=j f$
  provides an isometric embedding $\varphi:\zeta\to\omega$
  of $G$-vector bundles over $Y$ and 
  an isometric embedding $\lambda:\kappa\to f^*(\omega|_B)$ of $G$-vector bundles over $A$
  such that the composite
  \[ 
    f^*(\zeta|_B) \ \xrightarrow{\ \psi\ } \ \kappa \ \xrightarrow{\ \lambda\ }\  f^*(\omega|_B) \]
  coincides with $f^*(\varphi|_B)$.
  So the maps 
  \begin{align*}
    f^*((\varphi|_B)_*(u)) \ &= \ (f^*(\varphi|_B))_*(f^*(u))\ =\ \lambda_*(\psi_*(f^*(u)))
                               \text{\quad and\quad}\\  
    ((g^*(\varphi))_*(v))|_A\ &= \ (f^*(\varphi|_B))_*(v|_A)\ =\ \lambda_*(\psi_*(v|_A))
  \end{align*}
  are parameterized $G$-homotopic.
  We have
  \[  b \ = \ [\zeta|_B,u] \ = \ [\omega|_B, (\varphi|_B)_*(u)]\text{\quad and\quad}
    x \ = \ [g^*(\zeta),v] \ = \ [g^*(\omega), (g^*(\varphi))_*(v)] \ ,\]
  and the new representatives $(\omega|_B, (\varphi|_B)_*(u))$ and
  $(g^*(\omega), (g^*(\varphi))_*(v))$
  satisfy the hypotheses of Case 1, so we are done.

  Case 3: Now we treat the general case.
  We consider two pairs $(\xi,u)$ and $(\eta,v)$ that represent classes 
  in $E_G\gh{B}$ and $E_G\gh{X}$, respectively, and
  such that $f^*[\xi,u]=i^*[\eta,v]$ in $E_G\gh{A}$.
  By \cite[Lemma 3.7]{luck-oliver:completion} there are $G$-vector bundles
  $\omega,\omega'$ over $Y$ such that $\xi$ is a direct summand in $\omega|_B$,
  and $\eta$ is a direct summand in $g^*(\omega')$. We set $\zeta=\omega\oplus\omega'$.
  Then there are isometric embeddings
  \[ a : \ \xi \ \to \ \zeta|_B \text{\qquad and\qquad} b\ : \ \eta\ \to \ g^*(\zeta) \]
  of $G$-vector bundles over $B$ and $X$, respectively. Hence
  \[  b \ = \ [\xi,u]\ = \ [\zeta|_B, a_*(u)] \text{\qquad and\qquad}
    x \ = \ [\eta,v]\ = \  [g^*(\zeta), b_*(v)]\ .\]
  The new representatives satisfy the hypotheses of Case 2, so we are done.
  This completes the proof of excision for the functor $E_G\gh{-}$.
  
  It remains to prove additivity. Excision for a pushout square
  \eqref{eq:inproof_pushout} with $A=\emptyset$ and $Y=B\amalg X$ shows that the map
  \[ (i_B^*,i_X^*)\ : \ E_G\gh{B\amalg X}\ \to \ E_G\gh{B}\times E_G\gh{X} \]
  is surjective. For injectivity we let $(\xi,u)$ represent a class in 
  $E_G\gh{B\amalg X}$ such that $i_B^*\gh{\xi,u}=i_X^*\gh{\xi,u}=0$.
  Then after stabilizing along some isometric embedding, if necessary, we can assume that
  the restriction of $u$ to $B$ is parameterized $G$-null-homotopic, and
  the restriction of $u$ to $X$ is parameterized $G$-null-homotopic.
  The total space of the sphere bundle $S^{\xi}$ over $B\amalg X$
  is the disjoint union of the total spaces of $S^{\xi|_B}$ and $S^{\xi|_X}$,
  so the two null-homotopies combine into a parameterized $G$-null-homotopy of $u$.
\end{proof}

\begin{rk}[Hilbert bundles versus vector bundles]\label{rk:restriction finite}
We would like to clarify where the restriction to discrete groups 
in Theorem \ref{thm:excisive - bundle}, and hence in all subsequent results
regarding the functor $E_G\gh{-}$, comes from.
While the definition of $E_G\gh{X}$ in Construction \ref{con:define alternative}
makes sense for all Lie groups $G$, the proof of excision
needs a crucial fact, proved in \cite[Lemma 3.7]{luck-oliver:completion}:
when $G$ is discrete and $\varphi:X\to Y$ is a continuous $G$-map
between finite proper $G$-CW-complexes,
then every $G$-vector bundle over $X$ is a summand of $\varphi^*(\xi)$
for some $G$-vector bundle $\xi$ over $Y$. 
As explained in Section 5 of \cite{luck-oliver:completion},
this fact does {\em not} generalize from discrete groups to Lie groups.
For {\em compact} Lie groups, excision still holds,
as a consequence of Example \ref{eg:bundle for compact}.

In the larger generality, Phillips \cite{phillips:equivariant_proper}
has defined equivariant K-theory for proper actions by using
suitable Hilbert $G$-bundles instead of finite-dimensional $G$-vector bundles.
One can speculate whether Phillips' approach can be adapted to generalize
our results about the functor $E_G\gh{-}$ from discrete groups to non-compact Lie groups,
but we will not go down that avenue in this monograph.
\end{rk}

\begin{prop}\label{prop:pi_*-iso2cohomology iso}
  Let $G$ be a discrete group and $X$ a finite proper $G$-CW-complex.
  \begin{enumerate}[\em (i)]
  \item For every $\pi_*$-isomorphism $f:E\to F$ of orthogonal $G$-spectra,
    the induced homomorphism $f_*:E_G\gh{X}\to F_G\gh{X}$ is an isomorphism.\index{pistar-isomorphism@$\pi_*$-isomorphism}
  \item
    For all orthogonal $G$-spectra $E$ and $F$, the maps
    \[ (E\vee F)_G\gh{X}\ \xrightarrow{\ \kappa_* \ } \ 
      (E\times F)_G\gh{X}\ \xrightarrow{(p^E_*,p^F_*)} \ 
      E_G\gh{X}\times F_G\gh{X} \]
    are isomorphisms, where $\kappa:E\vee F\to E\times F$ is the canonical map
    and $p^E:E\times F\to E$ and $p^F:E\times F\to F$ are the projections.
  \end{enumerate}
\end{prop}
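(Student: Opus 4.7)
The plan is to deduce both statements from Proposition~\ref{prop:excisive on orbits}, reducing the claims to the case of $G$-spaces of the form $G/H\times L$ with $H$ a finite subgroup of $G$ and $L$ a finite non-equivariant CW-complex. All four functors $E_G\gh{-}$, $F_G\gh{-}$, $(E\vee F)_G\gh{-}$, and $(E\times F)_G\gh{-}$ are excisive by Theorem~\ref{thm:excisive - bundle}; the componentwise product $E_G\gh{-}\times F_G\gh{-}$ is excisive as a product of excisive functors. Each of the three natural transformations in question is therefore a map of excisive functors, so by Proposition~\ref{prop:excisive on orbits} it suffices to verify the isomorphism claim on spaces of the stated form.

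On such a space, viewing $L$ as a trivial $H$-CW-complex so that $G/H\times L\cong G\times_H L$, the induction isomorphism of Example~\ref{eg:induction iso} identifies $E_G\gh{G/H\times L}$, naturally in $E$, with $E_H\gh{L}$, and Example~\ref{eg:bundle for compact}, applicable because $H$ is a compact Lie group, identifies $E_H\gh{L}$ naturally in $E$ with the classical colimit $\colim_{V\in s(\cU_H)}[S^V\sm L_+, E(V)]^H_*$. Specializing the argument in the proof of Proposition~\ref{prop:represented is excisive} to the compact group $H$ (using a $\pi_*$-isomorphism of $H$-spectra $E\to E'$ into an $H$-$\Omega$-spectrum, which collapses the colimit to the $0$-level), this colimit is canonically isomorphic to the represented equivariant cohomology group $[\Sigma^\infty_+ L, E]^H$ in the genuine $H$-equivariant stable homotopy category. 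Altogether, the three natural transformations translate into maps of functors of $E$ obtained by applying $[\Sigma^\infty_+ L, -]^H$, viewed as a functor on $\Ho(\Sp_H)$.

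Part~(i) then follows because $\res^G_H(f)$ is a $\pi_*$-isomorphism of orthogonal $H$-spectra, hence becomes an isomorphism in $\Ho(\Sp_H)$ and so under any functor defined on the latter. For part~(ii), the map $(p^E_*, p^F_*)$ becomes the canonical isomorphism $[\Sigma^\infty_+ L, E\times F]^H \cong [\Sigma^\infty_+ L, E]^H \times [\Sigma^\infty_+ L, F]^H$, while $\kappa_*$ becomes the map induced by $\kappa:E\vee F\to E\times F$, which for the compact Lie group $H$ is a $\pi_*$-isomorphism of orthogonal $H$-spectra (finite wedges and finite products agree in any triangulated category admitting both, in particular in $\Ho(\Sp_H)$), so part~(i) applies. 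The only thing to double-check is the naturality in $E$ of the identifications from Examples~\ref{eg:induction iso} and~\ref{eg:bundle for compact}, which is immediate from their constructions; I do not anticipate a substantive obstacle.
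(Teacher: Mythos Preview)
Your overall architecture---reduce via Proposition~\ref{prop:excisive on orbits} to $G/H\times L$, pass through the induction isomorphism of Example~\ref{eg:induction iso}, and invoke the compact-group description of Example~\ref{eg:bundle for compact}---is exactly the paper's route for part~(i). But there is a circularity in the step where you claim the colimit $\colim_{V\in s(\cU_H)}[S^V\sm L_+,E(V)]^H_*$ is canonically isomorphic to $[\Sigma^\infty_+ L,E]^H$ by ``specializing the argument in the proof of Proposition~\ref{prop:represented is excisive}.'' That argument only identifies the two when $E$ is already an $H$-$\Omega$-spectrum; for general $E$ it passes to a fibrant replacement $E\to E'$, and while $[\Sigma^\infty_+ L,E]^H=[\Sigma^\infty_+ L,E']^H$ by definition of the homotopy category, the colimit for $E$ is not a priori the colimit for $E'$. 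Knowing that it is amounts precisely to the $\pi_*$-invariance you are trying to prove. The paper closes this by rewriting the colimit, via the adjunction $[S^V\sm L_+,E(V)]^H_*\cong[S^V,\map(L,E)(V)]^H_*$, as $\pi_0^H(\map(L,E))$, and then citing \cite[Prop.\,3.1.40]{schwede:global} to the effect that $\map(L,-)$ preserves $\pi_*$-isomorphisms for cofibrant $L$. Once you insert that citation in place of the appeal to Proposition~\ref{prop:represented is excisive}, your argument for (i) goes through.

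For part~(ii) your treatment of $\kappa_*$ matches the paper's (it is a $\pi_*$-isomorphism, so apply (i)). For $(p^E_*,p^F_*)$, however, the paper does \emph{not} reduce to orbits: it argues directly on $E_G\gh{X}$ by exhibiting a section $(x,y)\mapsto i^E_*(x)+i^F_*(y)$ and proving injectivity by hand, using that $(E\times F)(\xi)\to E(\xi)\times_X F(\xi)$ is an isomorphism so that a pair of parameterized null-homotopies assembles to one for $u$. Your route is also viable---after Example~\ref{eg:bundle for compact} the map becomes the canonical bijection $[S^V\sm L_+,E(V)\times F(V)]^H_*\cong[S^V\sm L_+,E(V)]^H_*\times[S^V\sm L_+,F(V)]^H_*$ in each stage, and filtered colimits commute with finite products---but note this bypasses the detour through $[\Sigma^\infty_+ L,-]^H$ and hence the circularity above.
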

\begin{proof}
  (i) We start with the special case when $G$ is finite, in which case we denote it by $H$ instead.
  The horizontal maps in the commutative square
  \[ \xymatrix{
      \colim_{V\in s(\cU_H)}\, [S^V\sm X_+,E(V)]^H_* \ar[r]_-\iso^-\omega\ar[d]_{f_*} &
      E_H\gh{X}\ar[d]^{f_*} \\
      \colim_{V\in s(\cU_H)}\, [S^V\sm X_+,F(V)]^H_* \ar[r]_-\omega^-\iso  &  F_H\gh{X}
    } \]
  are isomorphisms by Example \ref{eg:bundle for compact}.
  Adjointness identifies $[S^V\sm X_+,E(V)]^H_*$ with $[S^V,\map(X,E(V))]^H_*$.
  So the upper left group is isomorphic to
  \[  \colim_{V\in s(\cU_G)} [S^V,\map(X,E(V))]^H_* \ = \ \pi_0^H(\map(X,E)) \ , \]
  and hence invariant under $\pi_*$-isomorphisms in $E$, by \cite[Prop.\,3.1.40]{schwede:global}.
  This proves the proposition for finite groups.
  
  Now we let $G$ be any discrete group, and we suppose that $X=G/H\times K$
  for a finite subgroup $H$ of $G$ and a finite non-equivariant CW-complex $K$.
  Since $f$ is a $\pi_*$-isomorphism of orthogonal $G$-spectra,
  the underlying morphism of orthogonal $H$-spectra is a $\pi_*$-isomorphism.
  The induction isomorphisms \eqref{eq:ind bundle}  participate in a commutative diagram:
  \[ \xymatrix@C=12mm{
      E_H\gh{K} \ar[d]_{f_*}\ar[r]_-\iso^-{\ind}  & E_G\gh{G/H\times K}\ar[d]^{f_*} \\
      F_H\gh{K} \ar[r]_-{\ind}^-\iso  & F_G\gh{G/H\times K}
    } \]
  The left vertical map is an isomorphism by the previous paragraph,
  hence so is the right vertical map. This proves the special case $X=G/H\times K$.
  Since $E_G\gh{-}$ and $F_G\gh{-}$ are excisive by Theorem \ref{thm:excisive - bundle},
  the general case is now taken care of by Proposition \ref{prop:excisive on orbits}.
  
  (ii)
  The morphism $\kappa:E\vee F\to E\times F$ is a $\pi_*$-isomorphism,
  for example by \cite[Cor.\,3.1.37 (iii)]{schwede:global};
  so the first map $\kappa_*$ is an isomorphism by part (i).
  
  The morphisms of orthogonal $G$-spectra $i^E=(\Id_E,\ast):E\to E\times F$
  and $i^F=(\ast,\Id_F):F\to E\times F$ induce a homomorphism
  \[     E_G\gh{X}\times F_G\gh{X}\ \to\ (E\times F)_G\gh{X} \ ,\quad
    (x,y)\ \longmapsto \  i^E_*(x) + i^F_*(y) \ .\]
  This homomorphism splits the homomorphism $(p^E_*,p^F_*)$, which is thus surjective.
  Now we consider a pair $(\xi,u)$ that represents a class in the kernel of
  $(p^E_*,p^F_*):(E\times F)_G\gh{X} \to E_G\gh{X} \times F_G\gh{X}$.
  Then after stabilizing the representative along an isometric embedding,
  if necessary, we can assume that the composites
  \[ S^\xi \ \xrightarrow{\ u \ }\ (E\times F)(\xi) \ \xrightarrow{p^E(\xi)}\ E(\xi)
    \text{\quad and\quad}    S^\xi \ \xrightarrow{\ u \ }\ (E\times F)(\xi) \ \xrightarrow{p^F(\xi)}\ F(\xi)
  \]
  are parameterized equivariantly null-homotopic.
  The canonical map
  $(E\times F)(\xi)\to E(\xi)\times_X F(\xi)$ is an isomorphism,
  where the target is the fiber product over $X$; so the map $u$ itself is parameterized
  equivariantly null-homotopic. Hence $[\xi,u]=0$, and the homomorphism $(p^E_*,p^F_*)$
  is also injective.
\end{proof}

We have completed the construction of the excisive functor $E_G\gh{-}$.
Now we compare it to the functor that is represented by $E$
in the stable $G$-homotopy category $\Ho(\Sp_G)$.

\begin{con}\label{con:bundle class}
We consider a morphism of orthogonal $G$-spectra $f:\Sigma^\infty_+ X\to E$.
Such a morphism represents a class $\gamma_G(f)$ in $E_G(X)=[\Sigma^\infty_+ X,E]^G$,
where $\gamma_G:\Sp_G\to\Ho(\Sp_G)$ is the localization functor.
The value of $f$ at the zero vector space is a map of based $G$-spaces
$f(0):X_+=(\Sigma^\infty_+ X)(0)\to E(0)$, and we let $f^\flat$ be the map
of retractive $G$-spaces over $X$ 
\[ f^\flat\ :\ S^{\ul{0}}\ = \ S^0\times X \ \to \ E(0)\times X \ = \ E(\ul{0}) \]
that sends $(0,x)$ to $(f(0)(x),x)$.
The pair $(\ul{0}, f^\flat)$ then represents a class in the group $E_G\gh{X}$. 
\end{con}

  By Proposition \ref{prop:pi_*-iso2cohomology iso} (i), the functor
  \[
    (-)_G\gh{X} \ : \ \Sp_G \ \to \ \text{(sets)} \ , \quad E \ \longmapsto \ E_G\gh{X}
  \]
  takes $\pi_*$-isomorphisms of orthogonal $G$-spectra to bijections.
  So the functor factors uniquely through the localization $\gamma_G:\Sp_G\to\Ho(\Sp_G)$.
  We abuse notation and also write $E_G\gh{X}$ for the resulting functor
  defined on the homotopy category $\Ho(\Sp_G)$.
  
\index{proper $G$-cohomology theory!represented|(} 
\begin{thm}\label{thm:rep2vect}
  Let $G$ be a discrete group and $X$ a finite proper $G$-CW-complex.
  \begin{enumerate}[\em (i)]
  \item 
    There is a unique natural transformation
    \[  \mu_X^E\ : \ E_G(X)  \ \to\ E_G\gh{X} \]
    of covariant functors in $E$ from $\Ho(\Sp_G)$ to abelian groups
    with the following property:
    for every morphism of orthogonal $G$-spectra $f:\Sigma^\infty_+ X\to E$,
    the relation
    \[ \mu_X^E(\gamma_G(f)) \ = \ [\ul{0}, f^\flat]  \]
    holds in $E_G\gh{X}$.
  \item For every orthogonal $G$-spectrum $E$, the maps $\mu^E_X$ are natural in continuous $G$-maps
    $\varphi:Y\to X$ between finite proper $G$-CW-complexes.
  \item Let $\Gamma$ be a subgroup of $G$, and let $Y$ be a finite proper $\Gamma$-CW-complex. 
    Then the composite\index{induction isomorphism}
    \[ E_G(G\times_\Gamma Y)\ \xrightarrow{\text{\em adjunction}}\
      E_\Gamma(Y)\ \xrightarrow{\ \mu_Y^E\ }\  E_\Gamma\gh{Y}\ \xrightarrow[\eqref{eq:ind bundle}]{\ind}\
      E_G\gh{G\times_\Gamma Y}    \]
    coincides with $\mu_{G\times_\Gamma Y}^E$.
  \item
    For every orthogonal $G$-spectrum $E$, the map $\mu_X^E$ is an isomorphism of abelian groups.
\end{enumerate}
\end{thm}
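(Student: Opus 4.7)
The plan is to dispatch parts~(i)--(iii) by construction on pointset representatives, and to concentrate the work on the isomorphism claim~(iv), where I will reduce via excision to the classical compact case.

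For (i)--(iii), I would define $\mu^E_X$ directly by the prescribed formula, sending $\gamma_G(f)\in E_G(X)$ to $[\ul{0},f^\flat]\in E_G\gh{X}$ for morphisms $f\colon\Sigma^\infty_+ X\to E$. This assignment is manifestly natural in $E\in\Sp_G$; to descend it to $\Ho(\Sp_G)$ I would use that $E\mapsto E_G\gh{X}$ inverts $\pi_*$-isomorphisms by Proposition~\ref{prop:pi_*-iso2cohomology iso}~(i) and is homotopy invariant (a $G$-homotopy $\Sigma^\infty_+ X\sm[0,1]_+\to E$ yields, via the $\flat$-construction, a parameterized $G$-homotopy of retractive spaces over $X$). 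Since $\Sigma^\infty_+ X$ is cofibrant, these properties suffice to define $\mu^E_X$ on $[\Sigma^\infty_+ X,E]^G$; uniqueness is built into the prescribed formula (modulo fibrant replacement in $E$, controlled by naturality). Additivity of $\mu^E_X$ as a group homomorphism is automatic since both group structures derive from the pinch map on a suspension coordinate. The compatibilities (ii) and (iii) are then routine verifications on representing morphisms; for (iii) one uses the canonical isomorphism $\Sigma^\infty_+(G\times_\Gamma Y)\cong G\ltimes_\Gamma\Sigma^\infty_+ Y$ in $\Sp_G$ together with the pointset definition of $\ind$ in Example~\ref{eg:induction iso}.

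For (iv), I would first observe that both $E_G(-)$ and $E_G\gh{-}$ are excisive on finite proper $G$-CW-complexes, by Proposition~\ref{prop:represented is excisive} and Theorem~\ref{thm:excisive - bundle} respectively, and that $\mu^E_\bullet$ is a natural transformation between them by~(ii). Proposition~\ref{prop:excisive on orbits} then reduces the claim to showing that $\mu^E_{G/H\times L}$ is bijective whenever $H$ is a finite subgroup of $G$ and $L$ is a non-equivariant finite CW-complex. Identifying $G/H\times L$ with $G\times_H L$ (trivial $H$-action on $L$), part~(iii) reduces this further to the assertion that
\[ \mu^{E'}_L\ :\ E'_H(L)\ \to\ E'_H\gh{L} \]
is an isomorphism, where $E'=\res^G_H E$ and $H$ is a compact (in fact finite) Lie group. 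In this compact case, after replacing $E'$ up to $\pi_*$-isomorphism by an $H$-$\Omega$-spectrum (which is harmless by Proposition~\ref{prop:pi_*-iso2cohomology iso}~(i) and naturality of $\mu$), Example~\ref{eg:bundle for compact} furnishes an isomorphism
\[ \omega\ :\ \colim_{V\in s(\cU_H)}[S^V\sm L_+,E'(V)]^H_*\ \xrightarrow{\ \iso\ }\ E'_H\gh{L}. \]
Since $E'$ is an $H$-$\Omega$-spectrum all stabilization maps in this colimit are bijective, so $\omega$ collapses to an isomorphism $\omega'\colon[L_+,E'(0)]^H_*\xrightarrow{\iso}E'_H\gh{L}$, $[f]\mapsto[\ul{0},f^\sharp]$. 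On the represented side, the derived adjunction $\Sigma^\infty_+\dashv(-)(0)$ against the fibrant $E'$ provides an isomorphism $[\Sigma^\infty_+ L,E']^H\iso[L_+,E'(0)]^H_*$, $[\hat f]\mapsto[\hat f(0)]$. Unraveling Construction~\ref{con:bundle class} and the definition of $f^\sharp$ in~\eqref{eq:define_f^sharp}, the maps $\hat f^\flat$ and $(\hat f(0))^\sharp$ from $S^{\ul{0}}$ to $E'(\ul{0})$ are both given on the non-basepoint part by $x\mapsto(\hat f(0)(x),x)$ (up to the convention of writing $E'(\ul{0})$ as $E'(0)\times L$ or $L\times E'(0)$), giving $\mu^{E'}_L([\hat f])=\omega'([\hat f(0)])$ and identifying $\mu^{E'}_L$ with the identity of $[L_+,E'(0)]^H_*$.

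The hardest step will be ensuring that each reduction in the chain truly commutes with $\mu$, in particular the compatibility of $\mu$ with the two colimit identifications in the compact case and with the induction isomorphism of Example~\ref{eg:induction iso}; all of these commutativities ultimately follow from the naturality properties secured in (i)--(iii) combined with the tautological pointwise identification of $\hat f^\flat$ with $(\hat f(0))^\sharp$ described above.
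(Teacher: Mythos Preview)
Your overall strategy matches the paper's: for (iv) both of you reduce via Proposition~\ref{prop:excisive on orbits} to $G/H\times L$, then via part~(iii) to the finite-group case, and finally invoke Example~\ref{eg:bundle for compact}. The differences are in execution.

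For (i)--(iii), the paper takes a cleaner route. Rather than defining $\mu^E_X$ on pointset representatives and then arguing about homotopy invariance and fibrant replacement, the paper observes that $E\mapsto E_G\gh{X}$ already factors through $\Ho(\Sp_G)$ (by Proposition~\ref{prop:pi_*-iso2cohomology iso}~(i)), and since $(-)_G(X)=[\Sigma^\infty_+ X,-]^G$ is representable, the Yoneda lemma produces $\mu_X$ from the single class $[\ul{0},\Id^\flat]\in(\Sigma^\infty_+ X)_G\gh{X}$. This makes uniqueness and naturality in $E$ immediate, and (ii)--(iii) then reduce to checking one equation for the universal element. Your direct approach works too, but your additivity argument (``both group structures derive from the pinch map'') is a hand-wave: the pinch on $[\Sigma^\infty_+ X,E]^G$ and the fiberwise pinch defining addition in $E_G\gh{X}$ are not literally the same operation, and checking compatibility needs care. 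The paper sidesteps this entirely by appealing to the general fact (\cite[Prop.~2.2.12]{schwede:global}) that any set-valued natural transformation between reduced additive functors is automatically additive, using Proposition~\ref{prop:pi_*-iso2cohomology iso}~(ii) for additivity of the target.

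For the finite-group step in~(iv), your argument is actually more direct than the paper's. The paper constructs an explicit map $\nu:\colim_V[S^V\sm X_+,E(V)]^H_*\to E_H(X)$ using shifts $\sh^W E$ and the morphisms $\tilde\lambda^W_E$, verifies it is well-defined under stabilization, and then checks that $\mu^E_X\circ\nu=\omega$. You instead collapse the colimit to its $V=0$ term (valid since $E'$ is an $H$-$\Omega$-spectrum) and compare $\hat f^\flat$ with $(\hat f(0))^\sharp$ directly; this is correct and avoids the shift machinery. The trade-off is that the paper's $\nu$ is defined without assuming $E$ is an $\Omega$-spectrum, though in the end both arguments pass through a fibrant replacement anyway.
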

\begin{proof}
  (i)  We can apply Construction \ref{con:bundle class}
  to the identity of the orthogonal $G$-spectrum $\Sigma^\infty_+ X$;
  it yields a class
  \[  [\ul{0},\Id^\flat]    \ \in \ (\Sigma^\infty_+ X)_G\gh{X}\ . \]
  Since $\Sigma^\infty_+ X$ represents the functor
  $(-)_G(X)$ on $\Ho(\Sp_G)$,
  the Yoneda lemma provides a unique natural transformation
  $\mu_X:(-)_G(X)=[\Sigma^\infty_+ X,-]^G\to (-)_G\gh{X}$ such that
  \[ \mu_X^{\Sigma^\infty_+X}(\Id_{\Sigma^\infty_+ X})\ = \   [\ul{0},\Id^\flat]\ . \]
  This transformation then satisfies the relation stated in part (i),
  by naturality:
  \[  \mu_X^E(\gamma_G(f)) \ = \ (\gamma_G(f))_*(\mu_X^E(\Id_{\Sigma^\infty_+ X}))
    = \ (\gamma_G(f))_*[\ul{0}, \Id^\flat] \
    = \                              [\ul{0}, f^\flat] \ . \]
  The natural transformation $\mu_X:(-)_G(X)\to (-)_G\gh{X}$ is a priori only set-valued,
  and we must prove that it is additive.
  The two functors $(-)_G(X)=[\Sigma^\infty_+ X,-]^G$ and $(-)_G\gh{X}$
  are {\em reduced}, i.e., they send the trivial orthogonal $G$-spectrum
  to the trivial abelian group. 
  Proposition \ref{prop:pi_*-iso2cohomology iso} (ii) says that the 
  target functor is also additive in $E$.
  As shown in \cite[Prop.\,2.2.12]{schwede:global},
  every set-valued natural transformation between reduced additive functors
  is automatically additive, so this proves that $\mu_X^E$ is a homomorphism
  of abelian groups.
  
  (ii)
  We must prove the commutativity of the following square:
  \[
    \xymatrix@C=15mm{ 
      [\Sigma^\infty_+ X,E]^G\ar[r]^-{\mu^E_{X}}\ar[d]_{\varphi^*} &
      E_G\gh{X}\ar[d]^{\varphi^*} \\
      [\Sigma^\infty_+ Y,E]^G \ar[r]_-{\mu^E_Y} & E_G\gh{Y}    }
  \]
  We let $E$ vary in $\Ho(\Sp_G)$.
  Then the Yoneda lemma reduces the claim to the universal example,
  the identity of $E=\Sigma^\infty_+ X$. The universal case is straightforward:
  \begin{align*}
    \varphi^*(\mu^{\Sigma^\infty_+ X}_X(\Id_{\Sigma^\infty_+ X}))\ = \ \varphi^*[\ul{0},\Id^\flat]\ 
    &= \  [\ul{0},\varphi^*(\Id^\flat)] \ 
      = \ [\ul{0},(\Sigma^\infty_+ \varphi)^\flat] \\
    &= \ \mu^E_Y(\gamma_G(\Sigma^\infty_+ \varphi)) \
      = \ \mu^E_Y(\varphi^*(\Id_{\Sigma^\infty_+ X})) \ .
  \end{align*}

  (iii)
  If we let $E$ vary in $\Ho(\Sp_G)$,
  the Yoneda lemma reduces the claim to the universal example,
  the identity of $E=\Sigma^\infty_+ (G\times_\Gamma Y)$.
  We consider the $\Gamma$-equivariant map $[1,-]:Y\to G\times_\Gamma Y$,
  the unit of the adjunction between restriction and extension of scalars.
  Then
  \begin{align*}
    \ind(\mu^{\Sigma^\infty_+ (G\times_\Gamma Y)}_Y(\text{adj}(\Id_{\Sigma^\infty_+ (G\times_\Gamma Y)})))
    \ &= \     \ind(\mu^{\Sigma^\infty_+ (G\times_\Gamma Y)}_Y(\gamma_\Gamma(\Sigma^\infty_+ [1,-])))\\
    &= \     \ind[\ul{0},(\Sigma^\infty_+ [1,-])^\flat]\\
    &= \ [\ul{0},\Id_{\Sigma^\infty_+ (G\times_\Gamma Y)}^\flat] \\
    & = \ \mu^{\Sigma^\infty_+ (G\times_\Gamma Y)}_{G\times_\Gamma Y}(\Id_{\Sigma^\infty_+ (G\times_\Gamma Y)}) \ .
  \end{align*}
  The third equation exploits that extending the zero $\Gamma$-vector bundle from $Y$ to $G\times_\Gamma Y$
  yields the zero $G$-vector bundle, and that the two maps of
  retractive $G$-spaces over $G\times_\Gamma Y$
  \[ G\times_\Gamma(\Sigma^\infty_+ [1,-])^\flat\ , \Id_{\Sigma^\infty_+ (G\times_\Gamma Y)}^\flat
    \ : \ S^0\times (G\times_\Gamma Y) \ \to \ (G\times_\Gamma Y)_+\times (G\times_\Gamma Y) \]
  are equal. 
  This proves the universal case, and hence the claim. 

  (iv)
  Source and target of the transformation $\mu_X^E$ send $\pi_*$-isomorphisms
  in $E$ to isomorphisms of abelian groups, by Proposition \ref{prop:pi_*-iso2cohomology iso}
  and by construction, respectively. So by appeal to the stable model structure
  of Theorem \ref{thm:stable} we can assume without loss of generality that
  $E$ is a $G$-$\Omega$-spectrum. 

  We start with the special case of a {\em finite} group, which we denote by $H$ (instead of $G$).
  We let $X$ be a finite $H$-CW-complex, and $E$ an orthogonal $H$-spectrum.
  We define a homomorphism
  \[ \nu \ : \ \colim_{W\in s(\cU_H)}\, [S^W\sm X_+,E(W)]^H_*\ \to\ [\Sigma^\infty_+ X,E]^H =  E_H(X)\ . \]
  The construction uses the shift $\sh^W X$ of an orthogonal $H$-spectrum $X$
  by an $H$-representation $W$, defined by
  \[ (\sh^W X)(V)\ = \ X(V\oplus W) \]
  with structure maps $\sigma^{\sh^W X}_{U,V}=\sigma^X_{U,V\oplus W}$.
  Here the $H$-action on $X(V\oplus W)$ is diagonal, from the $H$-actions on $X$ and on $W$.
  A natural morphism of orthogonal $H$-spectra
  \[ \lambda_X^W \ : \ X\sm S^W \ \to \ \sh^W X  \]
  is defined at $V$ as the composite
  \[ X(V)\sm S^W \ \xrightarrow{\text{twist}} \ S^W\sm X(V)\ \xrightarrow{\sigma_{W,V}^X} \
    X(W\oplus V)\ \xrightarrow{X(\tau_{W,V})}\ X(V\oplus W)\ .\]
  The morphism $\lambda_X^W$ and its adjoint $\tilde\lambda^W_X:X\to \Omega^W\sh^W X$
  are $\pi_*$-isomorphisms by \cite[Prop.\,3.1.25]{schwede:global}.
  The transformation $\nu$ now takes the class
  represented by a continuous based $H$-map
  $ x : S^W\sm X_+ \to E(W)$ to the composite morphism 
  \[ \Sigma^\infty_+ X\ \xrightarrow{\ \gamma_H(\tilde x)\ } \Omega^W \sh^W E\
    \xrightarrow[\iso]{\gamma_H(\tilde\lambda^W_E)^{-1}}\  E\]
  in $\Ho(\Sp_H)$. Here $\tilde x:\Sigma^\infty_+ X\to \Omega^W \sh^W E$
  is the morphism of orthogonal $H$-spectra adjoint to $x$.

  We must argue that $\nu$ is well-defined. If we vary the map $x:S^W\sm X_+\to E(W)$ 
  by a based equivariant homotopy, then $\tilde x$ changes by a homotopy of
  morphisms of orthogonal $H$-spectra, and its image $\gamma_H(\tilde x)$
  in $\Ho(\Sp_H)$ remains unchanged. Now we stabilize $x:S^W\sm X_+\to E(W)$
  along an inclusion $\iota:W\to \bar W$  in the poset $s(\cU_H)$
  to a new representative $\iota_*(x):S^{\bar W}\sm X_+\to E(\bar W)$, the composite
  \begin{align*}
 S^{\bar W}\sm X_+\ = \ 
    S^{\bar W- W}\sm S^W\sm X_+\ &\xrightarrow{S^{\bar W-W}\sm x}\
    S^{\bar W- W}\sm E(W)\\ &\xrightarrow{\sigma_{\bar W-W,W}}\
    E((\bar W- W)\oplus W)\ = \ E(\bar W)\ .      
  \end{align*}
  We let $\Psi:\Omega^W \sh^W E\to\Omega^{\bar W} \sh^{\bar W} E$
  be the morphism of orthogonal $H$-spectra defined at an inner product space $V$ as
  \[  \Psi(V)\ :\ \map_*(S^W,E(V\oplus W))\ \to\ \map_*(S^{\bar W},E(V\oplus\bar W))\ ,
    \quad f \longmapsto \ \iota_*(f)\ .\]
  Then the various morphisms of orthogonal $H$-spectra participate
  in a commutative diagram:
  \[  \xymatrix@C=20mm{
      \Sigma^\infty_+ X\ar[r]^-{\tilde x} \ar[dr]_{\widetilde{\iota_*(x)}} &
      \Omega^W \sh^W E\ar[d]^\Psi & E \ar[l]_-{\tilde\lambda^W_E}\ar[dl]^-{\tilde\lambda^{\bar W}_E}\\
&  \Omega^{\bar W}\sh^{\bar W} E &
    }\]
  Hence we conclude that
  \begin{align*}
    \gamma_H(\tilde\lambda^W_E)^{-1}\circ\gamma_H(\tilde x)\ 
        &= \  \gamma_H(\tilde\lambda^{\bar W}_E)^{-1}\circ\gamma_H(\Psi)\circ \gamma_H(\tilde x)\\
        &= \  \gamma_H(\tilde\lambda^{\bar W}_E)^{-1}\circ\gamma_H(\widetilde{\iota_*(x)} )\ .
  \end{align*}
  So the homomorphism $\nu$ is well-defined.

  Now we contemplate three composable maps:
    \[
    [X_+,E(0)]_*^H \to \colim_{V\in s(\cU_H)} [S^V\sm X_+,E(V)]^H_* \xrightarrow{\ \nu\ }
    E_H(X) \xrightarrow{\ \mu_X^E\ } E_H\gh{X}\ .
  \]
  The first map is the canonical map to the colimit, and it is an isomorphism because $E$
  is an $H$-$\Omega$-spectrum.   
  The composite map from $[X_+,E(0)]^H $ to $E_H(X)$
  is the derived adjunction for the Quillen adjoint functor pair
  \[ \xymatrix{ \Sigma^\infty_+ \ : \ H\bT\ \ar@<.4ex>[r] & \ \Sp_H \ : \ \ar@<.4ex>[l] \ (-)(0)}\ ;\]  
  it is thus bijective. So the map $\nu$ is also an isomorphism.
  The composite $\mu_X^E\circ\nu:\colim_V[S^V\sm X_+,E(V)]^H_* \to E_H\gh{X}$
  is the homomorphism $\omega$ discussed in Example \ref{eg:bundle for compact},
  which is thus an isomorphism. Since $\nu$ and $\omega$ are isomorphisms,
  so is the map $\mu_X^E$. This completes the proof of part (iv) for finite groups.
  
  Now we let $G$ be any discrete group, $H$ a finite subgroup of $G$, and we consider
  $X=G/H\times K$, for a non-equivariant finite CW-complex $K$.
  By part (iii), the map $\mu_{G/H\times K}^E$ factors as the composite
  \[
    E_G(G/H\times  K)\ \xrightarrow{\text{adjunction}}\
    E_H(K)\ \xrightarrow{\ \mu_K^E\ }\  E_H\gh{K}\ \xrightarrow{\ind}\
    E_G\gh{G/H\times K}\ .
  \]
  All three maps are isomorphisms,
  by the derived adjunction (see Corollary \ref{cor:restriction stable}),
  the special case of the previous paragraph, and by Example \ref{eg:induction iso}, respectively.
  So $\mu_{G/H\times K}^E$ is an isomorphism.
  Since $E_G(-)$ and $E_G\gh{-}$ are both excisive functors,
  Proposition \ref{prop:excisive on orbits} then finishes the argument.
\end{proof}
\index{proper $G$-cohomology theory!represented|)} 
\index{proper $G$-cohomology theory!$E_G\gh{-}$|)} 

\section{Proper cohomology theories from \texorpdfstring{$G$}{G}-spectra}
\label{sec:proper cohomology}

In this section we discuss how orthogonal $G$-spectra
give rise to `proper equivariant cohomology theories' for $G$-spaces, where $G$ is any Lie group.
For our purposes, a proper $G$-cohomology theory is a $\mZ$-indexed family
of excisive functors, linked by suspension isomorphisms, compare Definition \ref{def:proper cohomology}.
Our main example is the proper $G$-cohomology theory
represented by an orthogonal $G$-spectrum $E$, defined by taking morphism groups in
the triangulated stable homotopy category $\Ho(\Sp_G)$ into the shifts of $E$,
see Construction \ref{con:cohomology from E}.
One of the main points of this book is that the
equivariant cohomology theories represented by orthogonal $G$-spectra
have additional structure not generally present in the equivariant cohomology
theories arising from sequential $G$-spectra or $G$-orbit spectra.
This additional structure manifests itself in different forms,
such as a `$KO_G(\uEG)$-grading' 
or transfer maps that extend the homotopy group coefficient system to
a $G$-Mackey functor, compare Remark \ref{rk:RO(G) grading}.

For discrete groups, we also discuss how two prominent
equivariant cohomology theories
are represented by orthogonal $G$-spectra: 
equivariant stable cohomotopy in the sense of the third author
\cite{luck:burnside_ring} is represented by the $G$-sphere spectrum,
see Example \ref{eg:represent cohomotopy};
and Bredon cohomology with coefficients in a $G$-Mackey functor
is represented by an Eilenberg-MacLane spectrum,
see Example \ref{eg:HM_represents_Bredon}.

\begin{defn}\label{def:proper cohomology}\index{proper $G$-cohomology theory} 
  Let $G$ be a Lie group. A {\em proper $G$-cohomology theory}
  consists of a collection  of excisive functors
  \[ \cH^k\ : \ \text{(finite proper $G$-CW-complexes})^{\op}\ \to \ \cA b \]
  and  a collection of natural isomorphisms
  \[ \sigma \ : \ \cH^{k-1}(X)\ \xrightarrow{\ \iso \ } \ \cH^k(X\times S^1|X\times\infty) \ ,\]
  for $k\in\mZ$.
\end{defn}

Some previously studied proper $G$-cohomology theories
are Bredon cohomology,
Borel cohomology,
equivariant stable cohomotopy \cite{luck:burnside_ring},
and equivariant K-theory \cite{luck-oliver:completion, phillips:equivariant_proper}.
One of the main motivations for the present work was
to provide a general method for constructing proper $G$-cohomology theories
from $G$-spectra, see Theorem \ref{thm:represented equals bundle} below.
As reality check, we will show that for discrete groups $G$,
Bredon cohomology,
Borel cohomology,
equivariant stable cohomotopy,
and equivariant K-theory
are indeed represented by orthogonal $G$-spectra,
see Example \ref{eg:HM_represents_Bredon},
Example \ref{eg:Borel_cohomology},
Example \ref{eg:represent cohomotopy}, 
and Theorem \ref{thm:repr K-theory}, respectively.

We let $(\cH^k,\sigma)_{k\in\mZ}$ be a proper $G$-cohomology theory,
and we consider a pushout square of $G$-spaces
\[    \xymatrix{ A\ar[r]^-i\ar[d]_f & X\ar[d]^g\\ B \ar[r]_-j & Y}           \]
where $(X,A)$ and $(Y,B)$ are finite proper $G$-CW-pairs,
and $f$ and $g$ are cellular maps.
Then the 5-term Mayer-Vietoris sequence
established in Proposition \ref{prop:Mayer-Vietoris} extends to a long exact sequence
as follows.
We define a {\em connecting homomorphism}
\[ \delta^{k-1} \ : \ \cH^{k-1}(A)\ \to \ \cH^k(Y) \]
as the composite
\[ \cH^{k-1}(A)\ \xrightarrow[\iso]{\ \sigma \ } \ \cH^k(A\times S^1|A\times \infty)
  \ \xrightarrow{\ \partial \ } \   \cH^k(Y)\ , \] 
where the homomorphism $\partial$ was defined in \eqref{eq:define partial}.
Then we can splice the various exact sequences for the functor $\cH^k$
into an exact sequence
\begin{align}\label{eq:long_M V}
  \cdots \ \to \ &\cH^{k-1}(A)\  \xrightarrow{\ \delta^{k-1}\ } \  \cH^k(Y)\ \xrightarrow{(j^*,g^*)} \\
                        &\cH^k(B)\times\cH^k(X)\ \xrightarrow{f^*-i^*}\  \cH^k(A)\
                        \xrightarrow{\ \delta^k\ } \  \cH^{k+1}(Y)\ \to \ \cdots \ .\nonumber
  \end{align}

In Section \ref{sec:excisive} we used orthogonal $G$-spectra $E$ to
define three excisive functors $E_G\td{-}$, $E_G(-)$ and $E_G\gh{-}$.
The functor $E_G\td{-}$ is less relevant for us, among other things because
it does not in general extend to a `genuine' cohomology theory.
We now upgrade the two constructions  $E_G(-)$ and $E_G\gh{-}$
to proper $G$-cohomology theories by using the shifts of $E$.

\index{proper $G$-cohomology theory!represented|(}
\index{proper $G$-cohomology theory!$E_G\gh{-}$|(}
\begin{con}[Proper cohomology theories from orthogonal $G$-spectra]\label{con:cohomology from E}
  We let $G$ be a Lie group and $E$ an orthogonal $G$-spectrum.
  If $k$ is any integer, we recall from Remark \ref{rk:[k]} 
  that the $k$-fold shift $E[k]$ is defined as
  \[ E[k]\ = \
    \begin{cases}
      E\sm S^k & \text{\ for $k\geq 0$, and}\\
      \Omega^{-k} E & \text{\ for $k< 0$.}
    \end{cases}
  \]
  We define functors $E^k_G(-)$ and $E_G^k\gh{-}$ by
  \[ E_G^k(X)\ = \ E[k]_G(X)\ = \ [\Sigma^\infty_+X,E[k]]^G\text{\qquad and\qquad}
    E_G^k\gh{X}\ = \ E[k]_G\gh{X}\ .  \]
  The functor $E^k_G(-)$ is excisive by Proposition \ref{prop:represented is excisive}.
  If $G$ is discrete, then the functor $E^k_G\gh{-}$ is excisive by
  Theorem \ref{thm:excisive - bundle}. 

  \index{suspension isomorphism!for represented $G$-cohomology}
  Now we link these excisive functors by suspension isomorphisms.
  We define a suspension homomorphism
  \begin{equation} \label{eq:define_represented_Sigma}
    \sigma\ : \  E^k_G(X) \ \to \ E^{k+1}_G(X\times S^1|X\times\infty)    
  \end{equation}
  as the composite
  \[  [\Sigma^\infty_+ X,  E[k] ]^G \ \xrightarrow{\ [1]\ }\   
    [ \Sigma^\infty X_+\sm S^1,  E[k][1] ]^G 
    \ \xrightarrow{[\Sigma^\infty q,s_k]}\  
    [\Sigma^\infty_+ (X\times S^1),  E[k+1] ]^G \ ,  \]
  where $q:(X\times S^1)_+\to X_+\sm S^1$ is the projection,
  and the isomorphism $s_k:E[k][1]\iso E[k+1]$ was defined in \eqref{eq:define_s_k}.
  
  We define a suspension homomorphism\index{suspension isomorphism!for $E_G\gh{-}$}
  \begin{equation}\label{eq:suspension hom} 
    \Sigma^E\ : \ E_G\gh{X}\  \to \  (E\sm S^1)_G\gh{ X\times S^1|X\times\infty}
  \end{equation}
  as follows. We let $(\xi,u)$ represent a class in $E_G\gh{X}$.
  We pull back the vector bundle $\xi$ along the projection $X\times S^1\to X$
  to obtain a vector bundle $\xi\times S^1$ over $X\times S^1$.
  We define
  \[ \Sigma(u)\ : \ S^{\xi\times S^1}\ = \ S^{\xi}\times S^1 \ \to \
    ( E(\xi)\sm_X S^1)\times S^1 \ = \ (E\sm S^1)(\xi\times S^1)   \]
  by $\Sigma(u)(z,x)=(u(z)\sm x, x)$.
  We can then define the suspension homomorphism \eqref{eq:suspension hom} by
  \[  \Sigma^E[\xi,u]\ = \ [ \xi\times S^1,\Sigma(u)]\ .\]
  By construction, the map $\Sigma(u)$ is trivial over $X\times\infty$,
  so the image of $\Sigma^E$ indeed lies in the kernel of the restriction map
  $i^*:(E\sm S^1)_G\gh{ X\times S^1}\to (E\sm S^1)_G\gh{X\times\infty}$.
  The suspension homomorphisms
  \begin{equation}\label{eq:define bundle Sigma}
    \sigma \ : \   E^k_G\gh{X}\ \xrightarrow{\ \iso \ } \
    E^{k+1}_G\gh{X\times S^1|X\times\infty}
  \end{equation}
  for the theory $E^*_G\gh{-}$ are then defined as the composite
  \begin{align*} 
    E^k_G\gh{X}\ = \ E[k]_G\gh{X}\ \xrightarrow[\iso]{\Sigma^{E[k]}} \ 
& \ ( E[k][1])_G\gh{X\times S^1|X\times\infty}\nonumber\\ 
\xrightarrow{\ (s_k)_*\ }\ & E[k+1]_G\gh{X\times S^1|X\times\infty} 
\ = \ E^{k+1}_G\gh{X\times S^1|X\times\infty} \ .
  \end{align*} \end{con}

\begin{thm}\label{thm:represented equals bundle}
Let $G$ be a Lie group and $E$ an orthogonal $G$-spectrum.
\begin{enumerate}[\em (i)]
\item The functors $E^k_G(-)$ 
and the suspension homomorphisms \eqref{eq:define_represented_Sigma}
form a proper $G$-cohomology theory.
\item If $G$ is discrete, then the functors $E^k_G\gh{-}$ 
and the suspension homomorphisms \eqref{eq:define bundle Sigma}
form a proper $G$-cohomology theory.
\item If $G$ is discrete, then the natural transformations
  $\mu^{E[k]}:E^k_G(-)\to E^k_G\gh{-}$ 
form a natural isomorphism of proper $G$-cohomology theories.
\end{enumerate}
\end{thm}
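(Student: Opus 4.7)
The plan is to establish part (i) directly, then obtain part (iii) by a Yoneda reduction to a universal computation, and finally derive part (ii) as a formal consequence of parts (i) and (iii) combined with the natural isomorphisms $\mu^{E[k]}$ from Theorem \ref{thm:rep2vect}.

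For part (i), excision for each $E^k_G(-)=E[k]_G(-)$ is already provided by Proposition \ref{prop:represented is excisive} applied to the shifted spectrum $E[k]$, so the only thing that remains is to show that the suspension map \eqref{eq:define_represented_Sigma} is a natural isomorphism. I would factor $\sigma$ into its three constituent pieces: the bijection $[1]:[\Sigma^\infty_+ X, E[k]]^G\iso [\Sigma^\infty_+ X\wedge S^1, E[k][1]]^G$ (an auto-equivalence of $\Ho(\Sp_G)$), the isomorphism $(s_k)_*$ (post-composition with the iso $s_k:E[k][1]\iso E[k+1]$), and precomposition with $\Sigma^\infty q$. The only non-formal step is that precomposition with $\Sigma^\infty q$ is a bijection from $[\Sigma^\infty_+ X\wedge S^1, E[k+1]]^G$ onto the subgroup $E^{k+1}_G(X\times S^1|X\times\infty)\subseteq E^{k+1}_G(X\times S^1)$. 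This holds because the inclusion $i:X\times\infty\to X\times S^1$ admits a retraction (project to $X$ and include as $X\times\infty$), giving a split cofiber sequence
\[
\Sigma^\infty_+ X\ \to\ \Sigma^\infty_+(X\times S^1)\ \xrightarrow{\ \Sigma^\infty q\ }\ \Sigma^\infty_+ X\wedge S^1
\]
in $\Ho(\Sp_G)$; applying $[-, E[k+1]]^G$ identifies the kernel of $i^*$ with the image of $(\Sigma^\infty q)^*$. Naturality in $X$ is automatic.

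For part (iii), each $\mu^{E[k]}_X$ is already a natural isomorphism of abelian groups in $X$ by Theorem \ref{thm:rep2vect}(iv); what must be verified is the commutativity of the square
\[
\xymatrix@C=14mm{
E^k_G(X)\ar[r]^-{\sigma}\ar[d]_{\mu^{E[k]}_X} & E^{k+1}_G(X\times S^1|X\times\infty)\ar[d]^{\mu^{E[k+1]}_{X\times S^1}}\\
E^k_G\gh{X}\ar[r]_-{\sigma} & E^{k+1}_G\gh{X\times S^1|X\times\infty}.
}
\]
Both composites are natural transformations between covariant functors in $E\in\Ho(\Sp_G)$, so by the Yoneda argument used in the proof of Theorem \ref{thm:rep2vect}(ii)--(iii) it suffices to verify commutativity for the universal case $k=0$, $E=\Sigma^\infty_+ X$, on the class $\gamma_G(\Id_{\Sigma^\infty_+ X})$. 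Going down then right: $\mu$ sends $\Id$ to $[\ul{0}, \Id^\flat]$ with $\Id^\flat(0,x)=(x,x)$, and the bundle suspension yields $[\ul{0},\Sigma(\Id^\flat)]$ where $\Sigma(\Id^\flat)((0,x),s)=(x\wedge s,(x,s))$. Going right then down: $\sigma$ in the represented theory sends $\Id$ to the class of $\Sigma^\infty q$ (the shifts and $s_0$ being identities at level $0$), and $\mu$ transports this to $[\ul{0},(\Sigma^\infty q)^\flat]$ with $(\Sigma^\infty q)^\flat(0,(x,s))=(q(x,s),(x,s))=(x\wedge s,(x,s))$. The two classes agree by inspection, so the square commutes.

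Finally, part (ii) falls out: excision for $E^k_G\gh{-}$ is Theorem \ref{thm:excisive - bundle} (applied to $E[k]$), naturality of the suspension map in $X$ is immediate from its defining formulas (pullback of bundles and fiberwise smash), and the commutative square from (iii) exhibits $\sigma_{\gh{-}}$ as $\mu^{E[k+1]}_{X\times S^1}\circ\sigma_{\mathrm{rep}}\circ(\mu^{E[k]}_X)^{-1}$, a composition of isomorphisms by part~(i) and Theorem \ref{thm:rep2vect}(iv). The main technical obstacle is the universal-case check in (iii): one must carefully track the formula for the bundle-suspension construction $\Sigma(-)$ in the trivial-rank case $\xi=\ul{0}$ and confirm that it produces exactly the $\flat$-construction applied to the quotient map $\Sigma^\infty q$; once the identification $q(x,s)=x\wedge s$ on $(X\times S^1)_+$ is matched against the fiberwise smash in $\Sigma$, the remainder is formal Yoneda bookkeeping.
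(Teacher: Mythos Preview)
Your proposal is correct and follows essentially the same route as the paper. The paper likewise proves (i) by citing excision and the split cofiber sequence for $q$, then establishes the compatibility square between $\mu$ and the two suspension maps (your part (iii)) by a Yoneda reduction to the identity of $\Sigma^\infty_+ X$ and the pointwise check $\Sigma(\Id^\flat)=(\Sigma^\infty q)^\flat$, and finally reads off (ii) from that square together with Theorem~\ref{thm:excisive - bundle}; the only cosmetic difference is that the paper first verifies the square on all classes of the form $\gamma_G(f)$ before invoking Yoneda, whereas you go straight to the universal element.
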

\begin{proof}
(i)  The functors $E^k_G(-)$ are excisive by Proposition \ref{prop:represented is excisive}.
  It remains to show that the suspension homomorphism \eqref{eq:define_represented_Sigma}
  is an isomorphism. This is a direct consequence of the fact that the shift functor
  in a triangulated category is fully faithful, and that precomposition with the
  projection $q:(X\times S^1)_+\to X_+\sm S^1$ is an isomorphism from the group
  $[ \Sigma^\infty X_+\sm S^1,  E[1] ]^G$ to the kernel of the split epimorphism
  \[  i^* \ : \ [ \Sigma^\infty_+ (X\times S^1),  E[1] ]^G \ \to \
    [ \Sigma^\infty_+ X,  E[1] ]^G \ . \]

  (ii) We show first that the transformations $\mu_X^E$
  commute with the suspension homomorphisms.
  This amounts to checking that the following square commutes
  for all orthogonal $G$-spectra $E$
  and all finite proper $G$-CW-complexes $X$:
  \begin{equation}
    \begin{aligned}\label{eq:mu and sigma}
      \xymatrix@C=15mm{ 
        E_G(X)\ar[r]^-{\mu^E_X}\ar[d]_{\sigma} &
        E_G\gh{X}\ar[d]^{\Sigma^E} \\
        (E\sm S^1)_G(X\times S^1|X\times\infty) \ar[r]_-{\mu^{E\sm S^1}_{X\times S^1}} &
        (E\sm S^1)_G\gh{ X\times S^1|X\times\infty }
      }
    \end{aligned}
  \end{equation}
  We start with a class of the form $\gamma_G(f)$,
  for some morphism of orthogonal $G$-spectra $f:\Sigma^\infty_+ X\to E$.
  Then
  \begin{align*}
    \Sigma^E(\mu^E_X(\gamma_G(f)))\ &= \   \Sigma^E[\ul{0},f^\flat]\ 
                                      = \  [\ul{0},\Sigma(f^\flat)] \ = \ [\ul{0},((f\sm S^1)\circ\Sigma^\infty q)^\flat] \\
                                    &= \ \mu^{E\sm S^1}_{X\times S^1}(\gamma_G( (f\sm S^1)\circ\Sigma^\infty q)) \
                                      = \ \mu^{E\sm S^1}_{X\times S^1}(\sigma(\gamma_G(f))) \ .
  \end{align*}  
  The third equation exploits that pulling back the zero vector bundle
  yields the zero vector bundle,
  and that the functions $\Sigma(f^\flat)$ and $((f\sm S^1)\circ\Sigma^\infty q)^\flat$ are equal
  on the nose. 

  Now we let $E$ vary in $\Ho(\Sp_G)$;
  the Yoneda lemma reduces the commutativity of the square \eqref{eq:mu and sigma}
  to the universal example, the identity of $E=\Sigma^\infty_+ X$. This universal class is of the form
  considered in the previous paragraph, so we are done.

  The suspension homomorphism \eqref{eq:define_represented_Sigma}
  for the theory $E^*_G(-)$ is an isomorphism by part (i),
  and the maps $\mu_X^E$ and $\mu_{X\times S^1}^{E\sm S^1}$
  are isomorphisms by Theorem \ref{thm:rep2vect} (iv).
  So the commutative square \eqref{eq:mu and sigma}
  shows that the suspension homomorphism \eqref{eq:define bundle Sigma}
  for the theory $E^*_G\gh{-}$ is an isomorphism.
  The functors $E^k_G\gh{-}$ are excisive by Theorem \ref{thm:excisive - bundle},
  so this shows part (ii).
  
  We have now shown that the transformations $\mu_X^{E[k]}$
  commute with the suspension homomorphisms, so they form a morphism of proper $G$-cohomology theories.
  Since $\mu^{E[k]}$ is a natural isomorphism, this proves claim (iii).
\end{proof}
\index{proper $G$-cohomology theory!$E_G\gh{-}$|)}
\index{proper $G$-cohomology theory!represented|)}

\begin{eg}[Equivariant stable cohomotopy]\label{eg:represent cohomotopy}\index{equivariant stable cohomotopy}
  Let $G$ be a discrete group.
  In \cite[Sec.\,6]{luck:burnside_ring}, the third author introduced
  an equivariant cohomology theory $\pi^*_G(X)$
  on finite proper $G$-CW-complexes $X$,
  called {\em equivariant stable cohomotopy}.
  By the main result of \cite{luck:segal_infinite},
  this particular equivariant version of stable cohomotopy satisfies a completion
  theorem, generalizing Carlsson's theorem (previously known as the {\em Segal conjecture})
  for finite groups. 

  The definition of the group $\pi^k_G(X)$ is precisely the special case
  of $E^k_G\gh{X}$ for $E=\mS_G$ the $G$-sphere spectrum:
  \[ \pi^k_G(X)\ = \ \mS^k_G\gh{X}\ . \]
  In fact, Section 6 of \cite{luck:burnside_ring} is a blueprint
  for much of what we do here,
  and the definitions and proofs involving $E^*_G\gh{X}$
  were inspired by \cite{luck:burnside_ring}.
  
  Anyhow, for $E=\mS_G$, Theorem \ref{thm:represented equals bundle} (iii)
  shows that equivariant stable cohomotopy is represented in $\Ho(\Sp_G)$
  by the $G$-sphere spectrum, i.e., the map
  \[ \mu_X^{\mS_G[k]}\ : \ \mS^k_G(X)\ =\ [\Sigma^\infty_+X,\mS_G[k]]^G \ \to \ \pi^k_G(X) \]
  is an isomorphism for every finite proper $G$-CW-complex $X$ and every integer $k$.
\end{eg}

\begin{rk}[Genuine versus naive cohomology theories]\label{rk:RO(G) grading}\index{genuine cohomology theory}
As we argued in the proof of Proposition \ref{prop:represented is excisive},
every proper equivariant cohomology theory arising from an orthogonal $G$-spectrum
as in Definition \ref{def:proper from spectrum}
is also represented by a sequential $G$-spectrum as in \eqref{eq:define_sequential_F_coho}.
Hence these cohomology theories are also proper equivariant cohomology theories
in the sense of Davis and the third author
\cite[Def.\,4.1]{davis-luck:spaces_assembly}, compare Remark \ref{rk:Davis-Luck}.

As we already indicated a number of times, the equivariant cohomology theories represented
by orthogonal $G$-spectra
are a lot richer than those arising from sequential $G$-spectra or $G$-orbit spectra.
When $G$ is {\em compact},
then these special cohomology theories are known under the names
{\em genuine} or {\em $R O(G)$-graded} cohomology theories.\index{representation ring}
In our more general context, one could informally refer
to the extra structure as a `$K O_G(\uEG)$-grading',\index{KOG-grading@$K O_G(\uEG)$-grading}
where $K O_G(\uEG)$ is the Grothendieck group of\index{G-vector bundle@$G$-vector bundle}
isomorphism classes of real $G$-equivariant vector bundles over~$\uEG$.
If $G$ happens to be compact, then $\uEG$ can be taken to be
a point, so an equivariant vector bundle is just a representation,
and then the group $K O_G(\uEG)$ is isomorphic to
the real representation ring~$R O(G)$.

The $K O_G(\uEG)$-grading can be encoded as follows.
Given a $G$-vector bundle $\xi$ over~$\uEG$,\index{Thom space}
we continue to denote by $\Th(\xi)=S^\xi / s_\infty(\uEG)$ its Thom space.
We define the $\xi$-th $G$-equivariant $E$-cohomology group as
\[ E^\xi_G(X)\ = \  [\Sigma^\infty_+ X, E\sm \Th(\xi) ]^G \ .\]
If~$\xi$ is the trivial vector bundle of rank~$k$,
its Thom space is $\uEG_+\sm S^k$, which is
$\Com$-equivalent to $S^k$; so in that case we recover the group~$E^k_G(X)$.
If~$\eta$ is another such $G$-vector bundle, a suspension homomorphism
\[ \sigma^\eta  \colon  E^\xi_G(X)\ \to \  
E^{\xi\oplus\eta}_G(X_+\sm \Th(\eta)) \]
is essentially defined by derived smash product with the $G$-space $\Th(\eta)$.
Since smashing with the Thom space~$\Th(\eta)$ is invertible
(compare Proposition~\ref{prop:Thom is invertible}~(iv)), 
this suspension homomorphism is an isomorphism.
So up to a non-canonical isomorphism, the group $E^\xi_G(X_+\sm \Th(\eta))$
only depends on the class $[\xi]-[\eta]$ in the Grothendieck group $K O_G(\uEG)$.

As a concrete example, let us consider the infinite dihedral group $D_\infty=\mZ\ltimes\mZ/2$.
In Example \ref{eg:ED_infty}\index{infinite dihedral group}
we exhibited a $D_\infty$-CW-structure on the real line $\mR$
that is a model for $\underline{E}D_\infty$.
There are 2 equivariant 0-cells whose isotropy groups the two-element subgroups
$H_1$ generated by $(0,1+2\mZ)$ and $H_2$ generated by $(1,1+2\mZ)$, respectively;
and there is one free equivariant 1-cell.
The resulting Mayer-Vietoris sequence 
shows that the restriction homomorphisms combine into a monomorphism of rings
\[ (\res^{D_\infty}_{H_1},\res^{D_\infty}_{H_2}) \ : \ 
K O_{D_\infty}(\underline{E}D_\infty)\ \to \ R O(H_1)\times R O(H_2)\]
whose image consists of pairs of virtual representations with the same virtual dimension.
The rings $R O(H_1)$ and $R O(H_2)$ are isomorphic to $\mZ[\sigma]/(\sigma^2-1)$,
where $\sigma$ is the class of the sign representation.
We let $\sigma_1,\sigma_2$ denote the classes in $K O_{D_\infty}(\underline{E}D_\infty)$
that restrict to $(\sigma,1)$ and to $(1,\sigma)$, respectively.
Then the elements $1, \sigma_1$ and $\sigma_2$ form an additive basis of 
$K O_{D_\infty}(\underline{E}D_\infty)$,
and the multiplicative structure is determined by
$\sigma_1^2=\sigma_2^2=1$ and $\sigma_1\sigma_2=\sigma_2\sigma_1=\sigma_1+\sigma_2-1$.
In this special case, the three generators $1,\sigma_1$ and $\sigma_2$
are actually all represented by 1-dimensional representations of the group $D_\infty$.
For a general infinite discrete group $G$,
not all classes in $K O_G(\uEG)$ will be virtual $G$-representations.

Another piece of structure consists of transfers. Every proper cohomology
theory gives rise to a coefficient system by restriction to orbits.
If the cohomology theory arises from a sequential $G$-spectrum $E$, 
the coefficient system is simply the composite
\[ \left(\Or^{\Fin}_G\right)^{\op} \ \xrightarrow{\ \Phi(E)\ } \ 
\Sp^\mN\ \xrightarrow{\ \pi_0\ }\ \cA b \ ,\]
where $\Phi(E)(G/H)=E^H$ is the spectrum of $H$-fixed points.
Every coefficient system arises in this way from a
$G$-orbit spectrum, by postcomposition with a suitable
pointset Eilenberg-Mac\,Lane functor $H:\cA b\to\Sp^\mN$,
and then assembling the resulting $G$-orbit spectrum into
a sequential $G$-spectrum as indicated in Remark \ref{rk:Davis-Luck}.

If the cohomology theory arises from a genuine stable $G$-homotopy type $E$
(i.e., an object in $\Ho(\Sp_G)$), then the coefficient system 
is given by the equivariant homotopy groups $\pi_0^H(E)$
for finite subgroups $H$ of $G$;
the transfers discussed in Construction \ref{con:transfer} 
then extend the coefficient system to a $G$-Mackey functor,\index{G-Mackey functor@$G$-Mackey functor}
see Example \ref{eg:Mackey of G-spectrum}.
Moreover, every Mackey functor arises in this way from
an orthogonal $G$-spectrum, by Theorem \ref{thm:embed_MG_into_Ho(Sp)}.
In \cite[Sec.\,5]{luck:equivariant_chern}, the presence of transfers
is exploited to construct rational splittings of proper cohomology theories.
\end{rk}

We found it convenient to formulate our results
in terms of {\em absolute} equivariant cohomology theories.
As we shall now explain, every proper $G$-cohomology theory
can be extended in a specific way to a {\em relative} theory
defined on finite proper $G$-CW-pairs.

\begin{defn}(Relative cohomology groups)\label{def:relative}\index{relative cohomology group}\index{proper $G$-cohomology theory!relative}
  We let $G$ be a Lie group and $\{\cH^k,\sigma\}_{k\in\mZ}$ a proper $G$-cohomology theory.
  For every finite proper $G$-CW pair $(X,A)$ we define 
  the {\em relative $\cH$-cohomology groups} by
  \[ \cH^k(X,A) \ = \ \text{ker}( i_2^*\ : \ \cH^k(X\cup_A X)\ \to\ \cH^k(X) )\ ,\]
  where $i_2:X\to X\cup_A X$ is the embedding of the second copy of $X$.
\end{defn}

The relative groups are contravariantly functorial for continuous
$G$-maps of pairs.
We define a natural homomorphism $r:\cH^k(X,A)\to \cH^k(X)$
as the composite
\[ \cH^k(X,A)\ \xrightarrow{\incl}\ \cH^k(X\cup_A X)\ \xrightarrow{\ i_1^*\ }\
 \cH^k(X)\ , \]
where $i_1:X\to X\cup_A X$ is the embedding of the first copy of $X$.
The Mayer-Vietoris sequence \eqref{eq:long_M V} for the pushout square
\[ \xymatrix{  A \ar[r]^-i\ar[d]_i & X\ar[d]^{i_2} \\ X\ar[r]_-{i_1} & X\cup_A X}   \]
has the form
\begin{align*}
    \cdots\ \to\ 
    \cH^{k-1}(A)\ &\xrightarrow{\ \delta^{k-1}\ } \ 
    \cH^k(X\cup_A X)\  \xrightarrow{(i_1^*,i_2^*)}\\
    &\cH^k(X)\times \cH^k(X)\ \xrightarrow{(x,x')\mapsto i^*(x)-i^*(x')}
    \  \cH^k(A) \ \xrightarrow{\ \delta^k\ } \ \cdots \ .
\end{align*}
So the connecting homomorphism $\delta^{k-1}:\cH^{k-1}(A)\to \cH^k(X\cup_A X)$
lands in the subgroup $\cH^k(X,A)$, and the exact sequence splits off
a long exact sequence
\begin{equation}\label{eq:les proper coho}
    \cdots\ \to\ 
    \cH^{k-1}(A)\ \xrightarrow{\ \delta^{k-1}\ } \ 
    \cH^k(X,A)\  \xrightarrow{\ r\ }\
    \cH^k(X)\ \xrightarrow{\ i^*\ }\  \cH^k(A) \ \to \ \cdots\ .
\end{equation}

\begin{rk}
  For every orthogonal $G$-spectrum $E$, the functor $E_G(-)=[-,E]^G\circ \Sigma^\infty_+$
  is excisive by Proposition \ref{prop:represented is excisive}.
  For every finite proper $G$-CW-pair $(X,A)$,
  the relative group $E_G(X,A)$ is defined
  as in Definition \ref{def:relative}.
  This relative group can in fact be described more directly
  as a morphism group in $\Ho(\Sp_G)$: 
  we let $q:X\cup_A X\to X/A$ denote the map
  that sends the second copy of $X$ to the basepoint and that is the projection $X\to X/A$ 
  on the first copy. Since the inclusion of the second summand has a retraction,
  the cofiber sequence
  \[ X_+ \ \xrightarrow{\ i_2\ }\ (X\cup_A X)_+\ \xrightarrow{\ q\ }\  X/A \]
  induces a short exact sequence
  \[ 0 \ \to \ [\Sigma^\infty X/A,E]^G \ \xrightarrow{\ q^* \ } \ 
    E_G(X\cup_A X) \ \xrightarrow{\ i_2^* \ } \ E_G(X) \ \to \ 0 \]
  and hence an isomorphism
  \[  [\Sigma^\infty X/A, E ]^G \ \iso \  E_G(X,A)  \ .\]
  With a little more work one can show that under this identification, 
  the long exact sequence \eqref{eq:les proper coho}
  \[ \dots\ \to \ E_G^{k-1}(A) \ \xrightarrow{\ \delta^{k-1}\ }\  
    E_G^k(X,A)\  \to \  E_G^k(X)\ \to\ E_G^k(A)
    \  \to  \dots  \]
  is the effect of applying $[-,E[k]]^G$ 
  to the distinguished triangle in $\Ho(\Sp_G)$
  \[ \Sigma^\infty_+ A \ \xrightarrow{\Sigma^\infty \incl} \
    \Sigma^\infty_+ X \ \xrightarrow{\Sigma^\infty \proj} \
    \Sigma^\infty X/A \ \xrightarrow{(\Sigma^\infty p)\circ(\Sigma^\infty q)^{-1}} \
    (\Sigma^\infty_+ A)\sm S^1 \]
  and its rotations. Here $q:C A\cup_A X\to X/A$ is the projection from
  the mapping cone of the inclusion, and $p:C A\cup_A X\to A_+\sm S^1$
  was defined in Construction \ref{con:define triangles}.

  For discrete groups $G$, the functor $E_G\gh{-}$ defined in Construction \ref{con:define alternative}
  via parameterized equivariant homotopy theory
  is excisive by Theorem \ref{thm:excisive - bundle}.
  The relative groups $E_G\gh{X,A}$ defined as in 
  Definition \ref{def:relative} can also be described more directly
  by relative parameterized homotopy classes, in much the same way as for $E=\mS_G$ in
  \cite[Sec.\,6.2]{luck:burnside_ring}; we won't dwell on this any further. 
\end{rk}

A formal consequence of the properties of an equivariant cohomology theory
is an Atiyah-Hirzebruch type spectral sequence that starts from Bredon cohomology.
This spectral sequence is a useful calculational tool
and a systematic generalization of various previous statements,
so we take the time to spell out the details.
The following discussion is a special case of the Atiyah-Hirzebruch spectral sequence
of \cite[Thm.\,4.7]{davis-luck:spaces_assembly},
for the $\Fin$-orbit category of a discrete group.\index{Fin-orbit category@$\Fin$-orbit category}

\index{Bredon cohomology|(}
\index{Atiyah-Hirzebruch spectral sequence|(}
\begin{con}[Atiyah-Hirzebruch spectral sequence]\label{con:equivariant AHSS}
  We let $G$ be a discrete group,
  and we let $(\cH^k,\sigma)_{k\in\mZ}$
  be a proper $G$-cohomology theory.
  We consider a proper $G$-CW-complex $X$ with equivariant skeleton filtration
  \[ \emptyset = X^{-1} \subset X^0 \subset X^1 \subset \ldots  \subset X^n \subset \dots \ .\]
  If the cohomology theory is only defined on {\em finite} proper $G$-CW-complexes
  as in our original Definition \ref{def:proper cohomology}, then we must insist
  that $X$ has only finitely many cells. However, the following arguments
  apply without this size restriction if $\cH^k$ is defined on all $G$-CW-complexes 
  (i.e., possibly infinite),
  satisfies conditions (i), (ii) and (iii) of
  Definition \ref{def:excisive functor} in this larger category,
  and additionally, $\cH^k$ takes coproducts (possibly infinite)
  to products.
  
  From the cohomology theory and the CW-structure 
  we define an exact couple in the standard way by setting
  \[ D_1^{p,q}\ = \ \cH^{p+q}(X^p) \text{\qquad and\qquad} 
    E_1^{p,q}\ = \ \cH^{p+q}(X^p,X^{p-1}) \ ,\]
  see for example \cite[Sec.\,2.2, p.\,37ff]{mccleary:users_guide}.
  These groups are linked by the homomorphisms
  \begin{align*}
    \cH^{p+q}(\incl) \ = \ i\ &: \ D_1^{p,q}\ = \ \cH^{p+q}(X^p) \ \to \ \cH^{p+q}(X^{p-1})\ = \  D_1^{p-1,q+1} \\
    \delta\quad \ = \ j \ &: \ D_1^{p,q}\ = \ \cH^{p+q}(X^p) \ \to \ \cH^{p+q+1}(X^{p+1},X^p)\ = \  E_1^{p+1,q} \\
    r\quad\ = \ k \ &: \ E_1^{p,q}\ = \ \cH^{p+q}(X^p,X^{p-1}) \ \to \ \cH^{p+q}(X^p)\ = \  D_1^{p,q}   \ .
  \end{align*}
  Now we identify the $E_1$-term with the Bredon cohomology complex, and hence
  the $E_2$-term with Bredon cohomology.
  Bredon cohomology is defined for {\em $G$-coefficient systems},
  i.e., functors
  \[ M \ : \ \left(\Or^{\Fin}_G\right)^{\op} \ \to \ \cA b \ . \]
  Here $\Or^{\Fin}_G$ is the $G$-orbit category
  with finite stabilizers: the objects are the $G$-sets $G/H$ for all finite subgroups~$H$
  of $G$, and morphisms are $G$-maps.
  The $G$-CW-structure gives rise to a cellular cochain complex $C^*(X,M)$
  as follows.
  For every finite subgroup~$K$ of $X$, the fixed point space~$X^K$ is
  a non-equivariant CW-complex with respect to the skeleton filtration
  \[ (X^0)^K\ \subset\ (X^1)^K\ \subset\ \ldots\  \subset (X^n)^K\  \subset\ \dots \ .\]
  So $X^K$ has a cellular chain complex $C_*(X^K)$ with $n$-th chain group
  \[ C_n(X^K)\ = \ H_n\left((X^n)^K,(X^{n-1})^K,\mZ\right) \ ,\]
  the relative integral homology of the pair~$((X^n)^K,(X^{n-1})^K)$.
  Since $X^K$ is the space of $G$-maps from $G/K$ to~$X$, every morphism
  $f:G/K\to G/H$ in the orbit category $\Or_G^{\Fin}$ induces a cellular map 
  $f^*:X^H \to X^K$, and hence a morphism of cellular chain complexes
  \[ f^*\ : \  C_*(X^H) \ \to \ C_*(X^K)\ .\]
  These maps make the complexes $\{C_*(X^K)\}_{K\in\Fin}$ into a
  contravariant functor from~$\Or_G^{\Fin}$ to the category of chain complexes.
  Equivalently, we can consider $C_*(X^{\bullet})=\{C_*(X^K)\}_{K\in\Fin}$ as a chain complex
  of coefficient systems. Thus we can define a cochain complex of abelian
  groups by mapping into the given coefficient system, i.e., we set
  \[ C^n(X,M) \ = \ \Hom_{G\text{-coeff}}( C_n(X^{\bullet}),M)\ ,\]
  the group of natural transformations of coefficient systems.
  The cellular differential $C_{n+1}(X^{\bullet})\to C_n(X^\bullet)$
  induces a differential $C^n(X,M)\to C^{n+1}(X,M)$. The {\em Bredon cohomology}
  of $X$ with coefficients in $M$ is then given by
  \[ H_G^n(X,M)\ = \ H^n(C^*(X,M)) \ .\]
  
  The $G$-spaces $G/H$ for finite subgroups of $G$ are
  finite proper $G$-CW-complexes. So the proper $G$-cohomology theory
  gives rise to a $G$-coefficient system $\ul{\cH}^k$, namely the composite
  \[ \left(\Or^{\Fin}_G\right)^{\op} \ \xrightarrow{\incl} \ 
    \text{(finite proper $G$-CW-complexes})^{\op}\ \xrightarrow{\ \cH^k\ } \ \cA b \ .\]
  We describe an isomorphism of abelian groups
  \[ 
    C^p(X, \ul{\cH}^q)\ =\ 
    \Hom_{G\text{-coeff}}( C_p(X^{\bullet}),\ul{\cH}^q)\ \cong \  
    \cH^{p+q}(X^p,X^{p-1})\ = \ E_1^{p,q}\ .\]
  We choose a presentation of how $X^p$ is obtained by attaching equivariant 
  $p$-cells, in the form of a pushout of $G$-spaces:
  \[  \xymatrix{ 
      \coprod_I G/H_i\times \partial D^p \ar[r]\ar[d] & X^{p-1}\ar[d]\\
      \coprod_I G/H_i\times D^p \ar[r] & X^p }    \]
  These data induce an isomorphism
  \[ \bigoplus_{i\in I} H_p( (G/H_i)^K\times D^p,(G/H_i)^K\times \partial D^p,\mZ)\ \cong \ 
    H_p( (X^p)^K,(X^{p-1})^K,\mZ) \ = \ C_p(X^K)\ .\]
  Moreover,
  \[ H_p( (G/H_i)^K\times D^p,(G/H_i)^K\times \partial D^p,\mZ)\ \cong \ 
    \mZ[\Or_G^{\Fin}(G/K, G/H_i)] \]
  is the coefficient system represented by the coset~$G/H_i$, so 
  \[  C_p(X^{\bullet})\ \cong \ \bigoplus_{i\in I}\, \mZ[\Or_G^{\Fin}(-,G/H_i)] \ .   \]
  This isomorphism induces an isomorphism of Bredon cochain groups
  \begin{align*}
    C^p(X, \ul{\cH}^q )\ &= \ \Hom_{G\text{-coeff}}(C_p(X^\bullet),\ul{\cH}^q)\
                           \cong \ \prod_{i\in I} \cH^q( G/H_i )\\ 
&\iso \ \prod_{i\in I} \cH^{p+q}( G/H \times D^p, G/H\times \partial D^p )\ \iso 
        \  \cH^{p+q}(X^p,X^{p-1}) \ = \  E_1^{p,q}\ .
  \end{align*}
  The same argument as for the classical Atiyah-Hirzebruch spectral sequence
  identifies the Bredon cohomology differential
  \[ d\ :\ C^p(X, \ul{\cH}^q )\to  C^{p+1}(X, \ul{\cH}^q ) \]
  with  the $d_1$-differential 
  \[ d_1 = j\circ k\ :\ E^{p,q}_1\ \to \ E^{p+1,q}_1 \ .\]
  The $E_2^{p,q}$-term of the exact couple is thus
  the $p$-th Bredon cohomology group of $X$ with coefficients in $\ul{\cH}^q$.
  So the exact couple gives rise to a 
  conditionally convergent half plane spectral sequence
  \begin{equation}\label{eq:AHSS}
    E_2^{p,q} \ = \ H^p_G(X, \ul{\cH}^q)\ \Longrightarrow \ \cH^{p+q}(X) \ .  
  \end{equation}
  The $d_r$-differential has bidegree $(r,1-r)$.
  
  In the case where $X=G/H$ is a single orbit
  for a finite subgroup $H$ of $G$, 
  the $E_2$-term of the spectral sequence~\eqref{eq:AHSS}
  is concentrated in bidegrees $(0,q)$. So the spectral sequence collapses at $E_2$
  and recovers the isomorphism between $\cH^q(G/H)$ and~$H^0_G(G/H,\ul{\cH}^q)$.
\end{con}
\index{Atiyah-Hirzebruch spectral sequence|)}

\begin{eg}[Eilenberg-Mac\,Lane spectra represent Bredon cohomology]\label{eg:HM_represents_Bredon}
As before we let $G$ be a discrete group.\index{Eilenberg-Mac\,Lane spectrum}
Every $G$-Mackey functor $M$ has an associated Eilenberg-Mac\,Lane
$G$-spectrum $H\!M$, compare Remark \ref{rk:general Eilenberg Mac Lane}.
Since the homotopy group Mackey functors of $H\!M$ are concentrated
in degree zero,
the $E_2$-term of the Atiyah-Hirzebruch spectral sequence~\eqref{eq:AHSS}
for the proper $G$-cohomology represented by $H\!M$
is concentrated in bidegrees $(p,0)$. So the spectral sequence collapses at $E_2$
and yields an isomorphism
\[ H_G^*(X,M)\ \cong \  (H\!M)^*_G(X)\ .    \]
In this sense, Bredon cohomology is represented by an Eilenberg-Mac\,Lane spectrum.

The reader should beware, however, that Bredon cohomology is
defined for $G$-coefficients systems, whereas the construction of an
Eilenberg-Mac\,Lane spectrum requires a full-fledged $G$-Mackey functor.
Not every $G$-coefficient system can be extended to a $G$-Mackey functor,
and if an extension exists, it need not be unique. 
Different extensions of a $G$-coefficient system to a $G$-Mackey functor 
give orthogonal $G$-spectra that are non-isomorphic in $\Ho(\Sp_G)$. 
As we just argued, the $\mZ$-graded cohomology theory on $G$-CW-complexes
only depends on the underlying coefficient system, and so it does not `see'
the extension to a $G$-Mackey functor. The extension is visible, however,
if we extend the grading for the cohomology theory.
Indeed, the cohomology theory represented by an orthogonal $G$-spectrum 
can be indexed on equivariant vector bundles over~$\uEG$, 
see Remark~\ref{rk:RO(G) grading}. Different extensions to a Mackey
functor will typically lead to non-isomorphic `$K O_G(\uEG)$-graded' cohomology
theories.
\end{eg}
\index{Bredon cohomology|)}

\section{Global versus proper stable homotopy types}
\label{sec:global}

In this section we explain how global stable homotopy types
give rise to proper $G$-homotopy types,
and we identify some extra structure that is present in this case.
Several specific examples of such global equivariant theories
feature throughout this monograph,
for example equivariant stable cohomotopy
(Example \ref{eg:represent cohomotopy}, arising from the global sphere spectrum),
Borel cohomology (Example \ref{eg:Borel global}), 
and equivariant K-theory
(Section \ref{sec:equiv K},
arising from the periodic global K-theory spectrum $\bKU$).

In \cite{schwede:global} the fifth author developed a framework
for {\em global stable homotopy theory}, i.e., equivariant stable homotopy theory
where all compact Lie groups act simultaneously and in a compatible way.
The technical realization of this slogan is via a certain
{\em global model structure} on the category of orthogonal spectra
in which the weak equivalences are the `global equivalences'
of \cite[Def.\,4.1.3]{schwede:global},
see also Definition \ref{def:global equivalence} below.\index{global spectrum}
A direct consequence of the definition is a
`forgetful functor' from the global stable homotopy category
to $\Ho(\Sp_G)$ for every Lie group~$G$, see Theorem \ref{thm:change to groups}.
This forgetful functor is an exact functor of triangulated categories
that admits a left adjoint and a right adjoint.
In a sense made precise by Theorem \ref{thm:U_G and L alpha^*},
the forgetful functors for different Lie groups are compatible with
derived restriction along a continuous homomorphism
introduced in Section \ref{sec:change of groups}.
One specific benefit of coming from a global stable homotopy type is
that the associated equivariant cohomology theories
satisfy an induction isomorphism, see Proposition \ref{prop:global has induction}.
At the end of this section, we introduce the Borel $G$-spectrum
associated with a non-equivariant spectrum (see Example \ref{eg:Borel_cohomology}),
and we show that the Borel spectra in fact underlie a global homotopy type
(see Example \ref{eg:Borel global}).\medskip

For every Lie group $G$ we can consider the functor
\[ (-)_G \ : \ \Sp \ \to \ \Sp_G \ , \quad X \ \longmapsto \ X_G \]
from orthogonal spectra to orthogonal $G$-spectra
given by endowing an orthogonal spectrum with the trivial $G$-action.

\begin{defn}\label{def:global equivalence}
  A morphism~$f:X\to Y$ of orthogonal spectra is a {\em global equivalence}\index{global equivalence}
  if the map
  \[ \pi_k^H(f_H)\ : \ \pi_k^H(X_H)\ \to \ \pi_k^H(Y_H) \]
  is an isomorphism for every compact Lie group~$H$ and every integer~$k$.
\end{defn}

\index{global stable homotopy category|(}
We denote by $\GH = \Ho^{\gl}(\Sp)$
the category obtained by formally inverting 
the global equivalences of orthogonal spectra, 
and we refer to this as the {\em global stable homotopy category}.
We write 
\[ \gamma_{\gl}\ :\ \Sp\ \to \ \Ho^{\gl}(\Sp) \ = \ \GH\]
for the localization functor.
By \cite[Thm.\,4.3.18]{schwede:global}, 
the global equivalences are part of a stable model structure.
The global stable homotopy category is a compactly generated triangulated category, 
and a specific set of compact generators
is given by the suspension spectra of the `global classifying spaces'
of all compact Lie groups, see \cite[Thm.\,4.4.3]{schwede:global}.

By the very definition, the functor~$(-)_G$
takes global equivalences of orthogonal spectra 
to $\pi_*$-isomorphisms of orthogonal $G$-spectra.
So we obtain a `forgetful' functor on the homotopy categories
\begin{equation} \label{eq:define_U_G}
  U_G\ = \ \Ho( (-)_G)\ : \ \GH \ \to \ \Ho(\Sp_G)    
\end{equation}
from the universal property of localizations.
In other words, $U_G$ is the unique functor that satisfies 
\[ U_G\circ\gamma_{\gl}\ =\ \gamma_G\circ (-)_G\ . \]
Moreover, $U_G$ is canonically an exact functor of triangulated categories:
the pointset level equality
\[ X_G\sm S^1 \ = \ (X\sm S^1)_G \]
of functors $\Sp\to\Sp_G$ descends to an equality
\begin{align*}
  U_G\circ [1]\ &= \ \Ho((-)_G)\circ \Ho(-\sm S^1)
                  \ = \  \Ho((-)_G\circ (-\sm S^1)) \\ 
                &= \  \Ho((-\sm S^1)\circ (-)_G)\
                  \ = \  \Ho(-\sm S^1)\circ\Ho( (-)_G)\ = \ [1]\circ U_G\ .   
\end{align*}
Since distinguished triangles are defined in exactly the same way in 
$\GH$ and $\Ho(\Sp_G)$, the functor $U_G$ preserves them.

The functor $X\mapsto X_G$ is fully faithful on the pointset level, 
but its  derived functor $U_G$ is typically {\em not} fully faithful.
A hint is the fact that the equivariant homotopy groups
of a global homotopy type, restricted to $G$ and its subgroups,
have more structure than is available for a general $G$-homotopy type,
and satisfy certain relations that do not hold for general
orthogonal $G$-spectra.

\begin{thm}\label{thm:change to groups}
  For every Lie group~$G$ the forgetful functor
  \[ U_G \ : \ \GH \ \to \ \Ho(\Sp_G) \]
  preserves all set-indexed sums and products, and it has a left adjoint and a right adjoint.
\end{thm}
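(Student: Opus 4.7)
The plan is to separate the statement into three essentially independent tasks: preservation of sums, preservation of products, and existence of adjoints. The first two are direct calculations using fibrant replacements in the two model structures, and the adjoints then come for free from the general Brown representability machinery applied to the compactly generated triangulated category $\GH$.

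First I would verify that $U_G$ preserves sums. In $\GH$, set-indexed sums are modelled by wedges of orthogonal spectra, because the class of global equivalences is closed under wedges (see \cite[Cor.\,4.3.34]{schwede:global}); similarly, in $\Ho(\Sp_G)$ wedges model coproducts because wedges preserve $\pi_*$-isomorphisms of orthogonal $G$-spectra, as was already used in the proof of Proposition \ref{prop:rephtpy}. The pointset level functor $(-)_G$ clearly commutes with wedges, so passage to homotopy categories yields that $U_G$ preserves sums.

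Next I would handle products. Here the argument needs fibrant replacements on both sides. Every object in $\GH$ is isomorphic to a global $\Omega$-spectrum, and pointset level products of global $\Omega$-spectra are again global $\Omega$-spectra: taking $H$-fixed points and mapping spaces out of $S^V$ both commute with products, and a product of weak equivalences of spaces is a weak equivalence. Hence a pointset product of global $\Omega$-spectra models the product in $\GH$. The key observation is that if $X$ is a global $\Omega$-spectrum, then $X_G$ is a $G$-$\Omega$-spectrum, simply because the $G$-$\Omega$-spectrum condition of Definition \ref{def:stable fibrations}, stated over all compact subgroups of $G$ and all their representations, is an instance of the global $\Omega$-spectrum condition (all group actions on $X$ itself being trivial, the required diagonal $H$-action on $X(V)$ is governed entirely by the $H$-action on $V$). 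Since $(-)_G$ preserves pointset products, and pointset products of $G$-$\Omega$-spectra model products in $\Ho(\Sp_G)$ by the same argument, we conclude that $U_G$ preserves products.

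Finally I would invoke Brown representability. As recalled in the paragraph preceding the theorem, $\GH$ is a compactly generated triangulated category, with a small set of weak generators given by suspension spectra of global classifying spaces (see \cite[Thm.\,4.4.3]{schwede:global}). The general representability theorems used in Corollary \ref{cor:generators for GH_F}, namely \cite[Thm.\,8.6.1]{neeman:triangulated_categories} and \cite[Thms.\,A and B]{krause:brown_coherent}, apply verbatim to any compactly generated triangulated category: an exact functor out of such a category has a right adjoint if and only if it preserves sums, and has a left adjoint if and only if it preserves products. Applied to the exact functor $U_G:\GH\to\Ho(\Sp_G)$, which preserves both sums and products by the previous two paragraphs, this produces both adjoints and completes the proof. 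The only mildly non-formal step is the product argument, where one must check that a global $\Omega$-spectrum regarded as an orthogonal $G$-spectrum is a $G$-$\Omega$-spectrum; this is the main obstacle, but it is essentially unpacking definitions.
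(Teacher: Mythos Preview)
Your argument for sums and for the existence of adjoints via Brown representability is fine and matches the paper. The gap is in the product step: your ``key observation'' that a global $\Omega$-spectrum $X$ yields a $G$-$\Omega$-spectrum $X_G$ is false. The global $\Omega$-spectrum condition of \cite[Def.\,4.3.8]{schwede:global} only requires the adjoint structure map $\tilde\sigma_{V,W}:X(W)\to\Omega^V X(V\oplus W)$ to be an $H$-weak equivalence when $W$ is a \emph{faithful} $H$-representation, whereas the $G$-$\Omega$-spectrum condition of Definition~\ref{def:stable fibrations} demands it for \emph{all} $H$-representations $W$. For instance, taking $W=0$ would force $X(0)\to\map_*^H(S^V,X(V))$ to be a weak equivalence, which typically fails. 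The paper says this explicitly: ``Even though $X_i$ is a global $\Omega$-spectrum, the underlying orthogonal $G$-spectrum $(X_i)_G$ need \emph{not} be a $G$-$\Omega$-spectrum.''

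The paper's workaround is to bypass fibrancy in $\Sp_G$ entirely and instead invoke \cite[Prop.\,4.3.22~(ii)]{schwede:global}, which shows directly that for a family of global $\Omega$-spectra the canonical map $\pi_k^H(\prod_i X_i)\to\prod_i\pi_k^H(X_i)$ is an isomorphism for every compact Lie group $H$ and every integer $k$. This is enough to conclude that the pointset product also models the product in $\Ho(\Sp_G)$, without ever claiming that the individual factors are fibrant there. You need to replace your fibrancy claim with this homotopy-group argument (or an equivalent one).
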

\begin{proof}
  Sums in $\GH$ and $\Ho(\Sp_G)$ are represented 
  in both cases by the pointset level wedge.
  On the pointset level, the forgetful functor preserves wedges,
  so the derived forgetful functor preserves sums.
  The existence of the right adjoint is an abstract consequence
  of the fact that~$\GH$ is compactly generated 
  and that functor~$U$ preserves sums, 
  compare~\cite[Cor.\,4.4.5 (iv)]{schwede:global}.

  The forgetful functor also preserves infinite products,
  but the argument here is slightly more subtle because products in~$\GH$
  are not generally represented by the pointset level product,
  and because equivariant homotopy groups do not in general commute
  with infinite pointset level products.
  We let~$\{X_i\}_{i\in I}$ be a set of orthogonal spectra.
  By replacing each factor by a globally equivalent spectrum, if necessary,
  we can assume without loss of generality that each $X_i$ is a global $\Omega$-spectrum
  in the sense of~\cite[Def.\,4.3.8]{schwede:global}.
  Since global~$\Omega$-spectra are the fibrant objects
  in a model structure underlying~$\GH$, 
  the pointset level product $\prod_{i\in I} X_i$ then represents the product in~$\GH$.
  
  Even though~$X_i$ is a global $\Omega$-spectrum,
  the underlying orthogonal $G$-spectrum $(X_i)_G$ need {\em not}
  be a $G$-$\Omega$-spectrum. However, as spelled out in the proof of
  \cite[Prop.\,4.3.22 (ii)]{schwede:global}, the natural map
  \[ \pi_k^H\left({\prod}_{i\in I} X_i\right) \ \to \ {\prod}_{i\in I} \pi_k^H(X_i)\]
  is an isomorphism for all compact Lie groups~$H$ and all integers~$k$.
  This implies that in this situation, 
  the pointset level product is also a product in~$\Ho(\Sp_G)$.
  So the derived forgetful functor preserves products.
    
  The existence of the left adjoint is then again an abstract consequence
  of the fact that~$\GH$ is compactly generated 
  and that the functor~$U$ preserves products, 
  compare~\cite[Cor.\,4.4.5 (v)]{schwede:global}.
\end{proof}
\index{global stable homotopy category|)}

\index{left derived functor!of restriction|(}
\index{G-equivariant stable homotopy category@$G$-equivariant stable homotopy category|(}
We let $ \alpha \colon K \to G$ be a continuous homomorphism between Lie groups.
In Theorem \ref{thm:homomorphism adjunctions spectra} 
we discussed various properties of the total left derived functor
\[ L\alpha^* \ : \ \Ho(\Sp_G)\ \to \ \Ho(\Sp_K) \]
of the restriction functor $\alpha^*:\Sp_G\to\Sp_K$,
with $\alpha_!:L\alpha^*\circ\gamma_G\Longrightarrow \gamma_K\circ \alpha^*$
the universal natural transformation.
For example, the functor $L\alpha$ is exact and has a right adjoint.
For another homomorphism $\beta:J\to K$, we constructed a 
specific exact natural isomorphism
\[   \td{\alpha,\beta}\ : \ 
(L\beta^*)\circ (L\alpha^*) \ \Longrightarrow \ L(\alpha\beta)^*  \]
in \eqref{eq:td(a,b)}. 
The data of the functors $L\alpha^*$ and the transformations $\td{\alpha,\beta}$
form a pseudo-functor from the category of Lie groups and continuous homomorphisms
to the 2-category of triangulated categories, 
exact functors, and exact transformations.

Now we discuss how the derived restriction
functors interact with the passage from global to proper homotopy theory.
If $X$ is any orthogonal spectrum, then on the pointset level, we have
$\alpha^*(X_G)=X_K$, because $K$ acts trivially on both sides.
However, $X_G$ will typically not be cofibrant as an orthogonal $G$-spectrum,
so the relationship between the derived functors is more subtle:
the universal property of the derived functor $U_K$ provides 
a unique natural transformation
\begin{equation}\label{eq:define_sharp}
 \alpha^\sharp\ : \ L\alpha^*\circ U_G \ \Longrightarrow \ U_K    
\end{equation}
of functors $\GH\to\Ho(\Sp_K)$ that satisfies the relation
\[  \alpha^\sharp\star \gamma_{\gl}\ = \ \alpha_! \star(-)_G  \]
as transformations from the functor 
$L\alpha^*\circ U_G\circ \gamma_{\gl}=L\alpha^*\circ \gamma_G\circ (-)_G$
to the functor 
$U_K\circ\gamma_{\gl}=\gamma_K\circ \alpha^*\circ (-)_G$.

We recall that a continuous homomorphism between Lie groups
is {\em quasi-injective} if the restriction
to every compact subgroup of the source is injective.

\begin{thm}\label{thm:U_G and L alpha^*}
Let $\alpha:K\to G$ be a continuous homomorphism between Lie groups.
\begin{enumerate}[\em (i)]
\item If $\alpha$ is quasi-injective, then the natural transformation 
$\alpha^\sharp: L\alpha^*\circ U_G\Longrightarrow U_K$ is an isomorphism.
\item If $\beta:J\to K$ is another continuous homomorphism, then 
\[ (\alpha\beta)^\sharp \circ (\td{\alpha,\beta}\star U_G) \ = \ 
\beta^\sharp\circ (L\beta^*\star \alpha^\sharp)
 \]
as natural transformations $L\beta^*\circ L\alpha^*\circ U_G\Longrightarrow U_J$.
\end{enumerate}
\end{thm}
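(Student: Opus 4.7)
The plan is to deduce both statements from the defining relations of the natural transformations $\alpha^\sharp$, $\alpha_!$, and $\td{\alpha,\beta}$, using the fact that $\gamma_{\gl}:\Sp\to\GH$ is a localization that is essentially surjective, so that a natural transformation between two functors $\GH\to\cD$ is determined by its whiskering with $\gamma_{\gl}$. A key pointset-level observation is that $\alpha^*\circ(-)_G=(-)_K$ as functors $\Sp\to\Sp_K$, since both place the trivial $K$-action on an orthogonal spectrum.

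For part (i), when $\alpha$ is quasi-injective, Theorem \ref{thm:homomorphism adjunctions spectra} (vi) tells us that the universal transformation $\alpha_!:L\alpha^*\circ\gamma_G\Longrightarrow \gamma_K\circ\alpha^*$ is a natural isomorphism. Whiskering on the right with the functor $(-)_G$ preserves this, so the defining relation $\alpha^\sharp\star\gamma_{\gl}=\alpha_!\star(-)_G$ exhibits $\alpha^\sharp\star\gamma_{\gl}$ as a natural isomorphism. In particular, $\alpha^\sharp_{\gamma_{\gl}(X)}$ is invertible for every orthogonal spectrum $X$; since every object of $\GH$ is isomorphic to $\gamma_{\gl}(X)$ for some $X$, naturality of $\alpha^\sharp$ transports this to conclude that $\alpha^\sharp$ is a natural isomorphism.

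For part (ii), the approach is to verify the equality of the two natural transformations $L\beta^*\circ L\alpha^*\circ U_G\Longrightarrow U_J$ after whiskering with $\gamma_{\gl}$, which suffices by the same essential-surjectivity argument. Expanding the left-hand side using the interchange law and the defining relations,
\begin{align*}
((\alpha\beta)^\sharp\circ(\td{\alpha,\beta}\star U_G))\star\gamma_{\gl}\ &=\ ((\alpha\beta)^\sharp\star\gamma_{\gl})\circ((\td{\alpha,\beta}\star\gamma_G)\star(-)_G)\\
&=\ ((\alpha\beta)_!\star(-)_G)\circ((\td{\alpha,\beta}\star\gamma_G)\star(-)_G)\\
&=\ ((\alpha\beta)_!\circ(\td{\alpha,\beta}\star\gamma_G))\star(-)_G\\
&=\ ((\beta_!\star\alpha^*)\circ(L\beta^*\star\alpha_!))\star(-)_G,
\end{align*}
where the last step invokes the defining relation \eqref{eq:td(a,b)} of $\td{\alpha,\beta}$. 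Expanding the right-hand side and using the pointset identity $\alpha^*\circ(-)_G=(-)_K$,
\begin{align*}
(\beta^\sharp\circ(L\beta^*\star\alpha^\sharp))\star\gamma_{\gl}\ &=\ (\beta^\sharp\star\gamma_{\gl})\circ(L\beta^*\star(\alpha^\sharp\star\gamma_{\gl}))\\
&=\ (\beta_!\star(-)_K)\circ(L\beta^*\star\alpha_!\star(-)_G)\\
&=\ (\beta_!\star\alpha^*\star(-)_G)\circ(L\beta^*\star\alpha_!\star(-)_G),
\end{align*}
which matches the previous expression.

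The main obstacle is purely bookkeeping: tracking the whiskering operations carefully and invoking the interchange law at the right places. No new geometric input is required beyond the already-established universal properties of $\alpha_!$, $\beta_!$, $(\alpha\beta)_!$ and $\td{\alpha,\beta}$, together with the elementary pointset identity $\alpha^*\circ(-)_G=(-)_K$ that links the constructions on the global and $K$-equivariant sides.
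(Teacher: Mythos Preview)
Your proof is correct and follows essentially the same approach as the paper. For part~(i), the paper argues object-by-object via a cofibrant replacement of $X_G$, while you invoke Theorem~\ref{thm:homomorphism adjunctions spectra}~(vi) directly to get that $\alpha_!$ is a natural isomorphism --- slightly more efficient, but the same underlying idea. For part~(ii), the paper simply cites Proposition~\ref{prop:derived cocycle} applied to the triple of left derivable functors $(-)_G$, $\alpha^*$, $\beta^*$; your explicit whiskering computation is precisely the specialization of that proof to this case (simplified by the fact that the universal transformations for the homotopical functors $(-)_G$, $(-)_K$, $(-)_J$ are identities).
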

\begin{proof}
(i) We let $X$ be any orthogonal spectrum.
We choose a $\pi_*$-isomorphism of orthogonal $G$-spectra $\psi:Y\to X_G$
whose source is cofibrant.
We obtain a commutative diagram in $\Ho(\Sp_K)$:
\[ \xymatrix@C=20mm{ 
(L\alpha^*)(Y) \ar[r]_-\iso^-{(L\alpha^*)(\gamma_G(\psi))} \ar[d]^-\iso_{\alpha_!} &
(L\alpha^*)(X_G) \ar[r]^-{\alpha^\sharp}  \ar[dr]_{\alpha_!}& X_K \ar@{=}[d]\\
\alpha^*(Y) \ar[rr]^\iso_-{\gamma_K(\alpha^*(\psi))} && \alpha^*(X_G)
} \]
The left vertical morphism $\alpha_!$ is an isomorphism
by Theorem \ref{thm:homomorphism adjunctions spectra} (ii).
Since $\alpha$ is quasi-injective, the functor $\alpha^*$ is homotopical 
by Theorem \ref{thm:adjunctions spectra pointset} (i),
so the lower horizontal morphism is an isomorphism as well.
This shows that $\alpha^\sharp:(L\alpha^*)(X_G)\to X_K$ is an isomorphism
in $\Ho(\Sp_K)$.

Part (ii) is Proposition \ref{prop:derived cocycle},
applied to the left derivable functors $(-)_G:\Sp\to \Sp_G$, 
$\alpha^*:\Sp_G\to\Sp_K$ and $\beta^*:\Sp_K\to\Sp_J$.
\end{proof}
\index{left derived functor!of restriction|)}
\index{G-equivariant stable homotopy category@$G$-equivariant stable homotopy category|)} 

As we just explained, every global homotopy type
gives rise to a $G$-homotopy type for every Lie group~$G$.
The `global' nature is also reflected in the $G$-equivariant cohomology
theories represented by the $G$-spectra.
The following proposition says that for every orthogonal spectrum~$E$,
the collection of equivariant cohomology theories
$E^*_G$ for varying $G$ form an `equivariant cohomology theory'
in the sense of \cite[5.2]{luck:burnside_ring}.

\begin{con}[Restriction maps for global homotopy types]
If all the equivariant cohomology theories~$E^*_G$
arise from a global homotopy type (i.e., from a single orthogonal spectrum),
then there is extra structure in the form of \emph{restriction homomorphisms}
\[ \alpha^* \ : \ E^*_G(X) \ \to \ E^*_K(\alpha^*(X))\]
associated with every continuous homomorphism~$\alpha:K\to G$
between Lie groups.
Here $X$ is any $\Com$-cofibrant $G$-space, 
so that $\Sigma^\infty_+ X$ is a cofibrant orthogonal $G$-spectrum;
hence the morphism 
\[ \alpha_!\ :\ (L\alpha^*)(\Sigma^\infty_+ X) \ \to \ 
\alpha^*(\Sigma^\infty_+ X) \ = \ \Sigma^\infty_+ \alpha^*(X) \]
is an isomorphism in $\Ho(\Sp_K)$, 
compare Theorem \ref{thm:homomorphism adjunctions spectra} (ii).
We then define $\alpha^*$ as the composite
\begin{align*}
 E^0_G(X)\ = \ [\Sigma^\infty_+ X, E_G]^G \ &\xrightarrow{L\alpha^*} \ 
[(L\alpha)^*(\Sigma^\infty_+ X), (L\alpha^*)(E_G) ]^K \\ 
&\xrightarrow{[\alpha_!^{-1},\alpha^\sharp]} \ 
[\Sigma^\infty_+ \alpha^*(X),  E_K ]^K \ = \ E^0_K(\alpha^*(X))\ .   
\end{align*}
The natural transformation $\alpha^\sharp:L\alpha^*\circ U_G\Longrightarrow U_K$
was defined in \eqref{eq:define_sharp}.
\end{con}

We let $\alpha:K\to G$ be a continuous homomorphism between Lie groups.
As before, for a $K$-space~$X$ we denote the {\em induced $G$-space} by
\[ G\times_\alpha X \ = \ (G\times X) / 
(g\cdot\alpha(k),x) \sim (g,k\cdot x)\ . \]
The functor~$G\times_\alpha-$ is left adjoint to restriction along~$\alpha$,
and the map
\[ \eta_X\ : \ X \ \to \ \alpha^*(G\times_\alpha X)\ , \quad x \ \longmapsto \ [1,x] \]
is the unit of the adjunction.

Now we let $E$ be an orthogonal spectrum.
If $X$ is a $\Com$-cofibrant $K$-space, 
then $G\times_\alpha X$ is a $\Com$-cofibrant $G$-space,
by Proposition \ref{prop:homomorphism adjunctions spaces} (ii). 
We define the {\em induction map}
\begin{equation}\label{eq:induction_map}
 \ind_\alpha \ : \  E^*_G(G\times_\alpha X) \ \to \ E^*_K(X)    
\end{equation}
as the composite
\[ E^*_G(G\times_\alpha X) \ \xrightarrow{\alpha^*} \ E^*_K(\alpha^*(G\times_\alpha X))
\ \xrightarrow{\ \eta_X^*\ }\ E^*_K(X)\ . \]

\begin{prop}\label{prop:global has induction}
  Let~$E$ be an orthogonal spectrum and~$\alpha:K\to G$
  a continuous homomorphism between Lie groups. 
  Let $X$ be a proper $K$-CW-complex
  on which the kernel of $\alpha$ acts freely. 
  Then the induction map~\eqref{eq:induction_map}
  is an isomorphism.
\end{prop}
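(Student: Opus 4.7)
The plan is to view both sides of $\ind_\alpha$ as proper $K$-equivariant cohomology theories in $X$ and reduce, by a cellular Mayer--Vietoris induction, to the case of a single orbit where the claim becomes a restriction isomorphism between equivariant homotopy groups of compact Lie groups.

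First I would check that on the category of finite proper $K$-CW-complexes with free $\ker(\alpha)$-action, the two assignments
\[
   X \ \longmapsto \ E^*_G(G\times_\alpha X) \qquad \text{and} \qquad X \ \longmapsto \ E^*_K(X)
\]
are proper $K$-cohomology theories. Since $G\times_\alpha -$ is a left adjoint, it preserves pushouts and equivariant homotopies; moreover it turns a proper $K$-CW-pair with free $\ker(\alpha)$-action into a proper $G$-CW-pair of the same cellular shape. So the composite $E^*_G(G\times_\alpha -)$ inherits the excision and homotopy invariance of $E^*_G$, and the canonical $G$-homeomorphism $G\times_\alpha(X\times S^1)\cong (G\times_\alpha X)\times S^1$ transports the suspension isomorphism of $E^*_G$ into one for this composite. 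The induction map $\ind_\alpha$ is natural in $X$ and, by a direct inspection of its definition through $\alpha^*$ and the adjunction unit, commutes with the suspension isomorphisms; hence it is a natural transformation of proper $K$-cohomology theories.

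Next, I would invoke the cellular induction underlying Proposition~\ref{prop:excisive on orbits}, restricted to the subclass of finite proper $K$-CW-complexes built from equivariant cells $K/L\times D^n$ with $L$ compact and $\ker(\alpha)\cap L=\{e\}$. Combined with the long exact Mayer--Vietoris sequence \eqref{eq:long_M V} and the five lemma, this reduces the claim to the case $X=K/L$ with $L\leq K$ compact and $\alpha|_L$ injective.

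For such an $L$, set $H=\alpha(L)$; then $H$ is a compact (hence closed) Lie subgroup of $G$, and $\alpha|_L\colon L\to H$ is a continuous bijective homomorphism between compact Lie groups, hence an isomorphism of Lie groups. The $G$-map $[g,kL]\mapsto g\alpha(k)H$ provides a $G$-equivariant homeomorphism $G\times_\alpha K/L\cong G/H$. Under the representability isomorphisms of Proposition~\ref{prop:rephtpy}, together with the identities $\res^G_H(E_G)=E_H$ and $\res^K_L(E_K)=E_L$ (as $G$ and $K$ act trivially on $E$), one obtains natural identifications
\[
   E^*_G(G/H) \ \cong \ \pi^H_{-*}(E) \qquad \text{and} \qquad E^*_K(K/L) \ \cong \ \pi^L_{-*}(E).
\]
Tracing through the definition of the induction map --- using that $\Sigma^\infty_+G/H$ is cofibrant so that $\alpha_!$ is an isomorphism there, the derived adjunction of Corollary~\ref{cor:restriction stable}, and the fact that the unit $\eta\colon K/L\to\alpha^*(G/H)$ sends the preferred coset $eL$ to the preferred coset $eH$ --- one sees that $\ind_\alpha$ corresponds under the identifications to the restriction homomorphism along $\alpha|_L\colon L\to H$ on equivariant stable homotopy groups. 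Since $\alpha|_L$ is an isomorphism of Lie groups, this restriction is an isomorphism, finishing the argument.

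The main obstacle will be the last identification: verifying carefully that the composite defining $\ind_\alpha$, which goes through the universal transformations $\alpha_!$ and $\alpha^\sharp$, really reduces to $(\alpha|_L)^*$ under the representability bijections. This is essentially a naturality computation at the universal class $u_H\in\pi^H_0(\Sigma^\infty_+G/H)$, using that restriction along the inclusions $L\hookrightarrow K$ and $H\hookrightarrow G$ is fully homotopical and fits together via $\alpha$ into a commutative square of restriction functors on $\Ho(\Sp_{-})$.
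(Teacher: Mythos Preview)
Your proposal is correct and follows essentially the same route as the paper: reduce by a cohomology-theory argument to orbits $K/L$ with $\alpha|_L$ injective, identify $G\times_\alpha K/L$ with $G/\alpha(L)$, and recognize the induction map on this orbit as an adjunction/restriction isomorphism between $H$- and $L$-equivariant homotopy groups. The paper organizes the final orbit computation via an explicit commutative diagram comparing $\ind_\alpha$ with $\ind_{\bar\alpha}$ for $\bar\alpha=\alpha|_L:L\to G$ (where both outer composites are adjunction bijections), which is exactly the verification you flagged as the ``main obstacle''; your phrasing in terms of restriction along the Lie group isomorphism $\alpha|_L:L\to H$ amounts to the same thing.
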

\begin{proof}
The functor~$G\times_\alpha-$ preserves equivariant homotopies 
and commutes with wedges and mapping cones.
So the functor $E^*_G(G\times_\alpha-)$ from the category of 
$K$-spaces to graded abelian groups is a proper cohomology theory. 
The induction maps form a transformation
of cohomology theories, so it suffices to check the claim on orbits
of the form  $X=K/L$, for all compact subgroups $L$ of~$K$,
on which the kernel of $\alpha$ acts freely. 
The freeness condition precisely means that the restriction
\[ \bar\alpha = \alpha|_L\ : \ L \ \to \ G \]
of $\alpha$ to~$L$ is injective. The $G$-map
\[ \psi \ : \ G\times_{\bar\alpha} (L/L) \ \to \  G\times_\alpha (K/L)\ , \quad
[g, e L]\ \longmapsto \ [g, e L] \]
is a homeomorphism, and it makes the following square commute:
\[ \xymatrix{
L/L \ar[r]^-{\incl}\ar[d]_{\eta_{L/L}} & K/L\ar[d]^{\eta_{K/L}}\\
G\times_{\bar\alpha} (L/L) \ar[r]_-{\psi} &G\times_\alpha (K/L)
} \]
So the following diagram of equivariant cohomology groups commutes as well:
\[ \xymatrix@C=12mm{ 
& E^*_G( G\times_\alpha (K/L) ) \ar@/_1pc/[ddl]_-{\ind_\alpha}\ar[r]^-{\psi^*}_-\iso \ar[d]^{\bar\alpha^*} &
E^*_G( G\times_{\bar\alpha} L/L ) \ar[d]_{\bar\alpha^*} \ar@/^2pc/@<8ex>[dd]^(.3){\text{adj}}_(.3)\iso\\
&
E^*_L(\bar\alpha^*(G\times_\alpha( K/L))) \ar[r]^-{\psi^*}_-\iso 
\ar[d]^-{\eta_{K/L}^*} &
E^*_L(\bar\alpha^*(G\times_{\bar\alpha} L/L))
\ar[d]_{\eta_{L/L}^*} \\
E^*_K( K/L) \ar[r]^-{\res^K_L} \ar@/_2pc/[rr]_(.35){\text{adj}}^(.35)\iso&
E_L^*(K/L) \ar[r]^-{\incl^*}  &
E_L^*( L/L)} \]
Commutativity of the left part uses
the relation $\res^K_L\circ\alpha^*=\bar\alpha^*$
and the naturality of restriction from $K$ to $L$.
The lower horizontal composite and the right vertical composite 
are adjunction bijections.
This proves that the induction map is an isomorphism for $X=K/L$.
\end{proof}

The following corollary is the special case of
Proposition \ref{prop:global has induction}
for the unique homomorphism $K\to e$.
Part (ii) also uses that whenever $K$ has no non-trivial compact subgroups, 
then $K$ acts freely on the universal proper $K$-space $\ul{E}K$.

\begin{cor}
  Let $E$ be an orthogonal spectrum and $K$ a Lie group.
  \begin{enumerate}[\em (i)]
  \item
    For every free $K$-CW-complex $X$, the induction map~\eqref{eq:induction_map}
    \[   \ind \ : \  E_e^*(X/K) \ \to \ E^*_K(X)     \]
    for the unique homomorphism $K\to e$ is an isomorphism.
  \item 
    If $K$ has no non-trivial compact subgroups, 
    then the induction map~\eqref{eq:induction_map}
    \[   \ind \ : \  E^*_e( B K) \ \to \ E^*_K(\ul{E}K) \ \iso \ \pi^K_{-*}(E)     \]
    for the unique homomorphism $K\to e$ is an isomorphism. 
  \end{enumerate}
\end{cor}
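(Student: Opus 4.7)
The plan is to deduce both parts directly from Proposition~\ref{prop:global has induction} applied to the unique homomorphism $\alpha\colon K\to e$. For part~(i), observe that with $G=e$, the induced $G$-space $e\times_\alpha X$ is tautologically the orbit space $X/K$: indeed, the defining equivalence relation $(g\alpha(k),x)\sim(g,kx)$ collapses to $(e,x)\sim(e,kx)$ since $G=\{e\}$. Moreover, the kernel of $\alpha$ is all of $K$, so the hypothesis of Proposition~\ref{prop:global has induction} that $\ker(\alpha)$ acts freely on $X$ reduces to the hypothesis that $X$ is a free $K$-CW-complex (and every free $K$-CW-complex is in particular a proper $K$-CW-complex, since trivial stabilizers are compact). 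The conclusion is precisely the desired isomorphism $\ind\colon E_e^*(X/K)\to E^*_K(X)$.

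For part~(ii), I first check that when $K$ has no non-trivial compact subgroups, the universal proper $K$-space $\ul{E}K$ is actually a free $K$-CW-complex: by hypothesis it admits a $K$-CW-structure whose isotropy groups are compact, hence trivial, so every cell is of the form $K\times D^n$. The orbit space $\ul{E}K/K$ is therefore a CW-complex with contractible universal cover $\ul{E}K$ and fundamental group $K$, hence a model for $BK$. Part~(i) then immediately yields the isomorphism $\ind\colon E^*_e(BK)\to E^*_K(\ul{E}K)$.

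It remains to produce the identification $E^*_K(\ul{E}K)\cong\pi^K_{-*}(E)$. The projection $p\colon\ul{E}K\to\ast$ is a $\Com$-equivalence of $K$-spaces by the defining property of $\ul{E}K$, so the induced morphism $\Sigma^\infty_+ p\colon \Sigma^\infty_+\ul{E}K\to \mS_K$ is a $\pi_*$-isomorphism of orthogonal $K$-spectra (the suspension spectrum functor is fully homotopical, e.g.\ by \cite[Prop.\,3.1.44]{schwede:global}), hence an isomorphism in $\Ho(\Sp_K)$. Precomposition with this isomorphism yields
\[
E^k_K(\ul{E}K)\ =\ [\Sigma^\infty_+\ul{E}K,\,E_K[k]]^K\ \xrightarrow{\ \iso \ }\ [\mS_K,\,E_K[k]]^K\ =\ \pi^K_{-k}(E)\ ,
\]
where the last equality is Definition~\ref{def:G homotopy groups}. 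Combining this with the first isomorphism finishes the proof.

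There is no real obstacle here: the statement is essentially unpacking definitions and identifying $\alpha\colon K\to e$ as an instance where the hypothesis of Proposition~\ref{prop:global has induction} is verifiable. The only point requiring a brief justification is that $\ul{E}K$ is a free $K$-CW-complex under the assumption in~(ii), which follows because its isotropy groups are simultaneously compact (by construction) and trivial (by hypothesis on $K$).
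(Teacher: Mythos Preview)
Your proof is correct and follows the same approach as the paper, which simply states that the corollary is the special case of Proposition~\ref{prop:global has induction} for the homomorphism $K\to e$, together with the observation that $K$ acts freely on $\ul{E}K$ when $K$ has no non-trivial compact subgroups. Your write-up fills in the details that the paper leaves implicit.

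One small terminological slip: when you justify $\ul{E}K/K\simeq BK$ by calling $\ul{E}K$ the ``universal cover'' with ``fundamental group $K$'', this language is only appropriate when $K$ is discrete. Since $K$ is an arbitrary Lie group here (e.g.\ $K=\mR$), the cleaner statement is simply that $\ul{E}K$ is a contractible free $K$-CW-complex, hence a model for $EK$, so $\ul{E}K/K$ is a model for $BK$. Your conclusion is unaffected.
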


\index{Borel cohomology|(}
\begin{eg}[Borel cohomology]\label{eg:Borel_cohomology}
We let $F$ be a non-equivariant generalized cohomology theory.
For a $G$-space~$A$, the associated {\em Borel cohomology theory} is given 
\[ F^*(E G\times_G A) \ , \] 
the $F$-cohomology of the Borel construction.
This Borel cohomology theory is realized by an orthogonal $G$-spectrum.
For this purpose we represent the given cohomology theory by an orthogonal
$\Omega$-spectrum $X$ (in the non-equivariant sense).
We claim that then the orthogonal $G$-spectrum 
\[ b X\ = \ \map(E G,X) \ ,   \]
obtained by taking the space of unbased maps from $E G$ levelwise
(see Construction \ref{con:smash with G-space}), represents Borel cohomology.
\end{eg}

\begin{prop}\label{prop:global homotopy of b X}
Let $G$ be a Lie group and $X$ an orthogonal $\Omega$-spectrum.
\begin{enumerate}[\em (i)]
\item The orthogonal $G$-spectrum $\map(E G,X)$ is a $G$-$\Omega$-spectrum. 
\item For every $\Com$-cofibrant $G$-space $A$,
there is an isomorphism
\[ \map(E G,X)^k_G(A)\ \iso \ X^k( E G\times_G A) \]
that is natural for $G$-maps in $A$. In particular,
\[ \pi_{-k}^G(\map(E G, X)) \ \cong \  X^k(B G)\ . \]
\end{enumerate}
\end{prop}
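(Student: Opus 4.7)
The plan is to prove (i) by reducing to a non-equivariant statement about mapping spaces, and then to deduce (ii) from (i) using the Quillen adjunction between $\Sigma^\infty_+$ and evaluation at $0$.

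For (i), I would choose $EG$ to be a free $G$-CW-complex. For every compact subgroup $H$ of $G$, the underlying $H$-action on $EG$ is then free, and the orbit space $EG/H$ carries a CW-structure and is a model for $BH$. Since $X$ carries trivial $G$-action, every $H$-equivariant map from $EG$ to $X(V)$ factors uniquely through the $H$-orbit space, yielding a natural homeomorphism
\[ \map(EG, X(V))^H \cong \map(EG/H, X(V)). \]
Under this identification, the adjoint structure map $(\tilde\sigma_{V,W})^H$ becomes postcomposition with the non-equivariant map $\tilde\sigma_{V,W}^X \colon X(W) \to \map_*(S^V, X(V\oplus W))$. Since $X$ is an orthogonal $\Omega$-spectrum, this map is a weak equivalence, and since $EG/H$ has CW-homotopy type, postcomposition by it induces a weak equivalence of mapping spaces.

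For (ii), I would first handle the case $k = 0$. Because $\map(EG, X)$ is a $G$-$\Omega$-spectrum by (i), it is fibrant in the stable model structure on $\Sp_G$; since $A$ is $\Com$-cofibrant, $\Sigma^\infty_+ A$ is cofibrant. The Quillen adjunction between $\Sigma^\infty_+$ and evaluation at $0$ then yields
\[ \map(EG, X)^0_G(A) \cong [A_+, \map(EG, X(0))]_*^G. \]
A based $G$-map $A_+ \to \map(EG, X(0))$ corresponds, via the standard exponential law, to a $G$-map $A \times EG \to X(0)$. Because $G$ acts freely on $A \times EG$ and trivially on $X(0)$, such $G$-maps biject naturally with maps $EG \times_G A \to X(0)$; the analogous identification for equivariant homotopies gives
\[ \map(EG, X)^0_G(A) \cong [EG \times_G A, X(0)] = X^0(EG \times_G A), \]
using that $X$ is a non-equivariant $\Omega$-spectrum. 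The result for general $k \in \mZ$ then follows by applying the $k = 0$ case to a non-equivariant $\Omega$-spectrum representative of $X[k]$, together with the fact that $\map(EG, -)$ preserves non-equivariant stable equivalences between $\Omega$-spectra (a levelwise consequence of the argument in (i)). For $k < 0$ this is already visible on the pointset level via the identity $\map(EG, \Omega^{-k} X) = \Omega^{-k} \map(EG, X) = \map(EG, X)[k]$. Specializing to $A = \ast$ recovers $\pi_{-k}^G(\map(EG, X)) \cong X^k(BG)$.

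The main technical subtlety lies in the extension to general $k > 0$, since the pointset-level functor $\map(EG, -)$ does not commute with smashing with $S^k$; this is handled by passing to an $\Omega$-spectrum replacement of $X \wedge S^k$ and relying on the homotopical properties of $\map(EG, -)$ established in (i).
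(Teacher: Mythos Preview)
Your argument for (i) contains a genuine error. You claim the homeomorphism
\[ \map(EG, X(V))^H \ \cong \ \map(EG/H, X(V)) \ , \]
justifying it by the fact that $X$ carries trivial $G$-action. But the $G$-$\Omega$-spectrum condition of Definition~\ref{def:stable fibrations} is tested at $H$-\emph{representations} $V$ and $W$, and the relevant $H$-action on $\map(EG, X)(W) = \map(EG, X(W))$ is the diagonal one: $H$ acts on $EG$ by restriction of the $G$-action, and on $X(W)$ through the homomorphism $H \to O(W)$ and the $O(W)$-functoriality of $X$. This second action is non-trivial as soon as $W$ is a non-trivial $H$-representation, so $H$-equivariant maps $EG \to X(W)$ do \emph{not} factor through $EG/H$. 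What you actually obtain is $(\map(EG, X)(W))^H = \map^H(EG, X(W))$ with this non-trivial target action. The paper's argument runs instead through the fact that $EG$ is a \emph{free} cofibrant $H$-space, so that $\map^H(EG,-)$ sends any $H$-map that is a weak equivalence of underlying spaces to a weak equivalence; applying this to the $H$-equivariant, non-equivariantly-acyclic map $\tilde\sigma_{V,W}^X$ and rewriting the target via the adjunction gives the desired conclusion. Your high-level strategy (reduce to a non-equivariant statement about mapping spaces) is right, but the specific reduction you propose fails.

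Your approach to (ii) is essentially the paper's, with one caveat: specializing to $A = \ast$ does not work directly, because the one-point $G$-space is not $\Com$-cofibrant unless $G$ is compact, so the hypothesis of (ii) is not met. The paper instead specializes to $A = \uEG$, which is $\Com$-cofibrant, and then observes that the projection $EG \times \uEG \to EG$ is a $G$-homotopy equivalence, so that $EG \times_G \uEG \simeq BG$. For positive $k$ the paper uses the concrete $\Omega$-spectrum $\sh^k X$ together with the $\pi_*$-isomorphism $\lambda^k_{\map(EG,X)}: \map(EG,X)\sm S^k \to \map(EG,\sh^k X)$ in place of your abstract $\Omega$-replacement, but this is the same idea made explicit.
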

\begin{proof} (i) 
We let $H$ be any compact subgroup of~$G$, and let~$V$ and~$W$ be two $H$-representations.
Then the adjoint structure map
\[  \tilde\sigma_{V,W}^X\ : \  X(W) \ \to \ \map_*(S^V, X(V\oplus W)) \]
is $H$-equivariant and a weak equivalence on underlying non-equivariant spaces.
The underlying $H$-space of $E G$ is a free cofibrant $H$-space,
so applying $\map^H(E G,-)$ to $\tilde\sigma_{V,W}^X$ returns a weak equivalence
\begin{align*}
  \map^H(E G ,\tilde\sigma_{V,W}^X) \ : \  
(\map(E G, X)(W))^H \ =  \ &\map^H(E G, X(W)) \\ 
\to \ &\map^H(E G, \map_*(S^V, X(V\oplus W))) \ .  
\end{align*}
The target of this map is homeomorphic to 
\[  \map^H_*(S^V,\map(E G,X(V\oplus W))) \ = \   \map^H_*(S^V,\map(E G,X)(V\oplus W)) \] 
in such a way that $\map^H(E G,\tilde\sigma_{V,W}^X)$
becomes the $H$-fixed points of the adjoint structure map of $\map(E G,X)$.
So $\map(E G, X)$ is a $G$-$\Omega$-spectrum.

(ii) Because $A$ is $\Com$-cofibrant, its unreduced suspension spectrum
$\Sigma^\infty_+ A$ is cofibrant in the stable model structure of orthogonal $G$-spectra.
On the other hand, the spectrum $\map(E G,X)$ is a $G$-$\Omega$-spectrum by part~(i),
hence it is fibrant in the stable model structure of orthogonal $G$-spectra.
So morphisms from 
$\Sigma^\infty_+ A$ to $\map(E G,X)$ in $\Ho(\Sp_G)$ can be calculated as homotopy
classes of morphisms of orthogonal $G$-spectra. 
Combining this with various adjunction bijections yields the desired isomorphism
for $k=0$:
\begin{align}\label{eq:Borel iso}
 \map(E G, X)^0_G(A)\ &= \ [\Sigma^\infty_+ A, \map(E G,X)]^G \\ 
&\cong \ \Sp_G(\Sigma^\infty_+ A, \map(E G,X)) / \sim \nonumber\\
&\cong \ \pi_0(\map^G(A,\map(E G,X)(0)))  \nonumber\\
&\cong \ \pi_0(\map(E G\times_G A,  X(0))) \ \cong\
X^0(E G\times_G A)\ .\nonumber
\end{align}
Here the symbol `$\sim$' stands for the homotopy relation.
For $k>0$ we exploit that the shifted spectrum  $\sh^k X$
is again an $\Omega$-spectrum. 
Proposition 3.1.25 of \cite{schwede:global} provides a
$\pi_*$-isomorphism 
\[ \lambda_{\map(E G,X)}^k\ :\ \map(E G,X)\sm S^k\ \to\ \sh^k(\map(E G,X))
\ = \  \map(E G,\sh^k X)  \]
which induces a natural isomorphism
\begin{align*}
  \map(E G,X)^k_G(A)\ &= \ [\Sigma^\infty_+ A, \map(E G,X)\sm S^k]^G \\
                      &\iso \ [\Sigma^\infty_+ A, \map(E G,\sh^k X)]^G \\ 
                      &= \  \map(E G,\sh^k X)^0_G(A)  \\
  _\eqref{eq:Borel iso}  &\iso  \ (\sh^k X)^0_G(E G\times_G A)  
= \  X^k_G(E G\times_G A) \ .
\end{align*}
Similarly, the looped spectrum  $\Omega^k X$
is another $\Omega$-spectrum. So we get natural isomorphisms
\begin{align*}
  \map(E G,X)^{-k}_G(A)\ &= \ [\Sigma^\infty_+ A, \Omega^k \map(E G,X)]^G \\
  &\iso \ [\Sigma^\infty_+ A, \map(E G,\Omega^k X)]^G \\ 
&= \  \map(E G,\Omega^k X)^0_G(A)  \\ 
_\eqref{eq:Borel iso} \ &\iso \ (\Omega^k X)^0_G(E G\times_G A)  \
= \  X^{-k}_G(E G\times_G A) \ .
\end{align*}
The last claim is the special case where $A=\uEG$,
in which case the $\Com$-equivalence $\uEG\to \ast$
induces an isomorphism
\begin{align*}
 \map(E G,X)^k_G(\uEG)\ &= \ 
[\Sigma^\infty_+ \uEG, \map(E G,X)[k]]^G \\ 
&\cong \ [\Sigma^\infty_+ \mS_G, \map(E G,X)[k]]^G \ = \ \pi_{-k}^G(\map(E G,X))\ .  
\end{align*}
On the other hand, the projection
$E G\times \uEG \to E G$
is a $G$-equivariant homotopy equivalence, so the
induced map on orbits
\[ E G\times_G \uEG \ \to \ E G/G \ = \ B G \]
is a homotopy equivalence.
\end{proof}

\begin{eg}[Borel cohomology is global]\label{eg:Borel global}
In Proposition \ref{prop:global homotopy of b X} we showed that the Borel cohomology
theory associated with a (non-equivariant) cohomology theory is
represented by an orthogonal $G$-spectrum. We will now argue that the
Borel cohomology theories are in fact `global', i.e., can be represented
by an orthogonal $G$-spectrum with trivial $G$-action.
The global version of the Borel construction actually models the right
adjoint to the forgetful functor $U:\GH\to\Ho(\Sp)$ from the global
to the non-equivariant stable homotopy category, see \cite[Prop.\,4.5.22]{schwede:global}.
The following construction
is taken from~\cite[Con.\,4.5.21]{schwede:global}.

We start with an orthogonal spectrum~$X$ (in the non-equivariant sense)
that represents a (non-equivariant) cohomology theory $X^*(-)$. 
We define a new orthogonal spectrum $b X$ as follows.
For an inner product space $V$, we write
$\bL(V,\mR^\infty)$ for the contractible space of linear isometric embeddings
of $V$ into $\mR^\infty$. We set
\[ (b X)(V)\ = \ \map(\bL(V,\mR^\infty),X(V)) \ ,   \]
the space of all continuous maps from $\bL(V,\mR^\infty)$ to $X(V)$.
The orthogonal group $O(V)$ acts on this mapping space by conjugation, through
its actions on $\bL(V,\mR^\infty)$ and on $X(V)$.
We define structure maps $\sigma_{V,W}:S^V\sm (b X)(W)\to (b X)(V\oplus W)$
as the composite
\begin{align*}
 S^V\sm \map(\bL(W,\mR^\infty),X(W)) \quad \xrightarrow{\text{assembly}} \qquad 
&\map(\bL(W,\mR^\infty),S^V\sm X(W)) \\ 
\xrightarrow{\map(\res_W,\sigma_{V,W}^X)}\ &\map(\bL(V\oplus W,\mR^\infty),X(V\oplus W)) 
\end{align*}
where $\res_W:\bL(V\oplus W,\mR^\infty)\to \bL(W,\mR^\infty)$
is the map that restricts an isometric embedding from $V\oplus W$ to~$W$.
\end{eg}

The endofunctor $b$ on the category of orthogonal spectra 
comes with a natural transformation
\[ i_X \ : \ X \ \to \ b X  \]
whose value at an inner product space $V$ sends a point
$x\in X(V)$ to the constant map $\bL(V,\mR^\infty)\to X(V)$ with value $x$.
With the help of the morphism $i_X$ we can now compare the spectrum $b X$
to the $G$-spectrum $\map(E G,X)$ defined in Example~\ref{eg:Borel_cohomology} 
via the two natural morphisms of orthogonal $G$-spectra
\begin{equation}\label{eq:compare_Borel}
 \map(E G,X)  \ \xrightarrow{\map(E G,i_X)} \ 
 \map(E G,b X) \ \xleftarrow{\ \text{const}\ }\ b X \ .  
\end{equation}
Both maps are morphism of orthogonal $G$-spectra, where $b X$ is
endowed with trivial $G$-action.

\begin{prop}
  For every orthogonal $\Omega$-spectrum~$X$ the two morphisms
  \eqref{eq:compare_Borel} are $\pi_*$-isomorphisms of orthogonal $G$-spectra.
  So the orthogonal spectrum $b X$, endowed with trivial $G$-action,
  represents the Borel $G$-cohomology theory associated with $X$.
\end{prop}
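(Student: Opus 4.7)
I would verify both morphisms are $\pi_*$-isomorphisms by testing on $\pi_k^H$ for every compact subgroup $H\leq G$ and every $k\in\mZ$, arguing the two cases by separate methods.

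For the first morphism $\map(E G, i_X)$, I would show it is a level $\Com$-equivalence of $G$-spectra, hence a $\pi_*$-isomorphism. At an orthogonal $H$-representation $V$, the $H$-fixed point space of $\map(E G, X)(V)=\map(E G, X(V))$ equals $\map^H(E G, X(V))$, where $H$ acts freely on $E G$ (via $H\leq G$ and the freeness of the $G$-action) and internally on $X(V)$ via $H\to O(V)$. Thus $\map^H(E G, -)$ is a model for the homotopy $H$-fixed-point functor, and in particular sends non-equivariant weak equivalences of $H$-spaces to weak equivalences. The map $i_X(V)\colon X(V)\to b X(V)$ is a (non-equivariant) homotopy equivalence, with homotopy inverse given by evaluation at any fixed point of the contractible space $\bL(V,\mR^\infty)$; hence $\map^H(E G, i_X(V))$ is a weak equivalence for every compact $H$ and every $H$-representation $V$.

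For the second morphism $\text{const}\colon b X\to\map(E G, b X)$ the level approach fails, since at a representation $V$ on which a non-trivial subgroup of $H$ acts trivially, $\text{const}^H$ at level $V$ is not a weak equivalence. I would therefore argue directly with the colimit description of $\pi_k^H$, exploiting that the faithful $H$-representations form a cofinal subposet of $s(\cU_H)$ (any $V$ may be enlarged by a faithful summand in $\cU_H$). For a faithful $V$ the $H$-action on $\bL(V,\mR^\infty)$ by precomposition has stabilizers equal to the kernel of $H\to O(V)$, hence is free; in particular both $\bL(V,\mR^\infty)$ and $E G\times\bL(V,\mR^\infty)$ are free $H$-CW-complexes that are non-equivariantly contractible. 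On $H$-fixed points and after the standard adjunction, $\text{const}$ at level $V$ becomes the map
\[ \map^H(\bL(V,\mR^\infty), X(V))\ \to\ \map^H(E G\times\bL(V,\mR^\infty), X(V)) \]
induced by the projection $E G\times\bL(V,\mR^\infty)\to\bL(V,\mR^\infty)$, a non-equivariant weak equivalence between free (hence cofibrant in the Borel sense) $H$-CW-complexes. Therefore $\map^H(-, X(V))$ carries the projection to a weak equivalence, giving a bijection $[S^{V\oplus\mR^k}, b X(V)]^H\to[S^{V\oplus\mR^k}, \map(E G, b X(V))]^H$ for every faithful $V$; passing to the colimit over the cofinal system of faithful representations yields the desired $\pi_k^H$-isomorphism.

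The main obstacle is the second morphism's failure to be a level equivalence. The resolution is the cofinality reduction to faithful $H$-representations, where the freeness of the $H$-action on $\bL(V,\mR^\infty)$ makes the Borel/underlying-model-structure arguments applicable and produces the weak equivalence of equivariant mapping spaces needed to conclude the $\pi_*$-isomorphism.
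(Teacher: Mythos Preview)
Your proposal is correct and follows essentially the same strategy as the paper. For the first morphism you argue, as the paper does, that $i_X$ is a non-equivariant level equivalence and that $\map^H(E G,-)$ preserves such equivalences because $E G$ is a free $H$-CW-complex; for the second morphism you reduce to faithful $H$-representations by cofinality and identify the fixed-point map with $\map^H(\proj,X(V))$ for the projection $E G\times\bL(V,\mR^\infty)\to\bL(V,\mR^\infty)$, which is exactly the paper's argument (the paper phrases the conclusion as ``both are models for $E H$, hence the projection is an $H$-homotopy equivalence'', while you phrase it via the Borel model structure, but these are the same observation).
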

\begin{proof} 
  Since the space $\bL(V,\mR^\infty)$ is contractible,
  the morphism $i_X:X\to b X$ is a non-equivariant level equivalence.
  So applying $\map(E G,-)$ takes it to a level equivalence
  of orthogonal $G$-spectra. Since level equivalences are in particular
  $\pi_*$-isomorphisms, this takes care of the morphism $\map(E G,i_X)$.

For the second morphism we consider a compact subgroup~$H$ of~$G$
and a {\em faithful} $H$-representation~$V$.
Then the $H$-space $\bL(V,\mR^\infty)$ is cofibrant, non-equivariantly
contractible
and has a free $H$-action, i.e., it is a model for $E H$,
see for example \cite[Prop.\,1.1.26]{schwede:global}.
The underlying $H$-space of $E G$ is also a model for $E H$,
so the projection
\[ \proj\ : \ E G\times \bL(V,\mR^\infty) \ \to \ \bL(V,\mR^\infty)\]
is an $H$-equivariant homotopy equivalence.
The induced map
\begin{align*}
  \map^H(\proj,X(V))\ : \ 
  (b X(V))^H \ &= \ \map^H(\bL(V,\mR^\infty), X(V))\\
  &\to \  \map^H(E G\times \bL(V,\mR^\infty),X(V)) 
\end{align*}
is thus a homotopy equivalence.
The target of this map is isomorphic to the space
$\map^H(E G,b X)(V)$, and under this isomorphism the map 
$\map^H(\proj,X(V))$ becomes the `constant' map.
So the second morphism is a level equivalence of orthogonal $H$-spectra
when restricted to {\em faithful} $H$-representations.
Since faithful representations are cofinal in all $H$-representations,
the second morphism induces an isomorphism of $H$-homotopy groups.
Since $H$ is any compact subgroups of $G$, this shows that the
second morphism is a $\pi_*$-isomorphism of orthogonal $G$-spectra.
\end{proof}
\index{Borel cohomology|)}

\section{Equivariant K-theory}
\label{sec:equiv K}

\index{equivariant K-theory|(}

In this final section we show that for discrete groups,
the equivariant K-theory defined from $G$-vector bundles
is representable by the global equivariant K-theory spectrum $\bKU$.
Before going into more details, we give a brief overview of the main players.
In \cite[Thm.\,3.2]{luck-oliver:completion}, the third author and Oliver show that the
classical way to construct equivariant K-theory still works for
discrete groups $G$ and finite proper $G$-CW-complexes $X$:
the group $K_G(X)$, defined as the Grothendieck group of $G$-vector bundles
over $X$, is excisive in $X$. Moreover, the functor $K_G(X)$ is Bott periodic,
and can thus be extended to a $\mZ$-graded theory.
Also, the theory $K_G^*(X)$ supports Thom isomorphisms for hermitian $G$-vector bundles
\cite[Thm.\,3.14]{luck-oliver:completion},
and it satisfies a version of the Atiyah-Segal completion theorem, 
see \cite[Thm.\,4.4]{luck-oliver:completion}.

In \cite[Def.\,3.6]{joachim:higher_coherence}, Joachim introduced a model $\bKU$
for periodic global K-theory 
that is based on spaces of homomorphisms of $\mathbb{Z}/2$-graded $C^*$-algebras,
see also \cite[Sec.\,6.4]{schwede:global}.
This is a commutative orthogonal ring spectrum,
and the underlying orthogonal $G$-spectrum $\bKU_G$ represents
equivariant K-theory for all compact Lie groups, 
see \cite[Thm.\,4.4]{joachim:higher_coherence} or
\cite[Cor.\,6.4.13]{schwede:global}.
For a general Lie group $G$, the underlying $G$-spectrum $\bKU_G$ represents
a proper $G$-cohomology theory by Theorem \ref{thm:represented equals bundle}.

The purpose of this final section is to establish a natural isomorphism between
the two proper $G$-cohomology theories $K_G^*$ and $\bKU_G^*$ for discrete groups $G$,
stated in Theorem \ref{thm:repr K-theory}.
Since the two theories are defined very differently,
the main issue is to construct a natural transformation of cohomology
theories in one direction, and most of our work goes into this.
Our strategy is to first compare the excisive functors $K_G(X)$ and $\bKU_G(X)$,
see Theorem \ref{thm:0 k-theory iso}.
This comparison passes through the theory $\bku_G\gh{X}$,
represented by the {\em connective global K-theory spectrum} $\bku$ in the sense of
the fifth author \cite[Con.\,6.3.9]{schwede:global},
a variation of Segal's configuration space model
for K-homology \cite{segal:Khomology}.
We use $\bku_G\gh{X}$ as a convenient target of an explicit map
$\td{-}: \Vect_G(X)\to\bku_G\gh{X}$ that turns
 a $G$-vector bundle into an equivariant homotopy class, see \eqref{eq:bundle2ku}.
The fact that the map $\td{-}$ is additive
and multiplicative is not entirely obvious, and this verification involves some work,
see Propositions \ref{prop:ample bundle} and \ref{prop:kappa multiplicative}.
The connective and periodic global K-theory spectra are related by
a homomorphism $j:\bku\to\bKU$ of commutative orthogonal ring spectra,
defined in \cite[Con.\,6.4.13]{schwede:global}. The induced transformation of
excisive functors $j_*:\bku_G\gh{X}\to\bKU_G\gh{X}$ is then additive and multiplicative.
Because the isomorphism $K_G(X)\iso\bKU_G\gh{X}$ is suitably multiplicative,
it matches the two incarnations of Bott periodicity, and can thus be extended to the
$\mZ$-graded periodic theories in a relatively formal (but somewhat tedious) way,
see the final Theorem \ref{thm:repr K-theory}.

The main results in this section require the group $G$ to be discrete
(as opposed to allowing general Lie groups).
The restriction arises from the vector bundle side of the story:
Example 5.2 of \cite{luck-oliver:completion} shows that
the theory made from isomorphism classes of $G$-vector bundles is
{\em not} in general excisive in the context of non-discrete Lie groups.
We think of the represented theory $\bKU_G^*(X)$ as the `correct'
equivariant K-theory in general; indeed, it restricts to
equivariant K-theory for compact Lie groups by
\cite[Thm.\,4.4]{joachim:higher_coherence} or \cite[Cor.\,6.4.23]{schwede:global},
and the point of this book is precisely that proper $G$-cohomology theories represented
by orthogonal $G$-spectra always have the desired formal properties,
for all Lie groups.
Phillips \cite{phillips:equivariant_proper} has defined equivariant K-theory
for second countable locally compact topological groups $G$,
defined on proper locally compact $G$-spaces. His construction
is based on Hilbert $G$-bundles instead of finite-dimensional $G$-vector bundles.
It seems plausible that in the common realm of Lie groups, our represented theory 
$\bKU_G^*(X)$ ought to be isomorphic to Phillips' theory, but we have not attempted
to define an isomorphism.

\begin{con}[Connective global K-theory]\index{connective global K-theory}\index{global K-theory!connective}
We recall the definition of the orthogonal spectrum $\bku$
from \cite[Sec.\,6.3]{schwede:global}. 
For a real inner product space~$V$, we let $V_\mC=\mC\otimes_\mR V$
denote the complexification which inherits a unique hermitian inner product
$(-,-)$ characterized by
\[ (1\otimes v,1\otimes w)\ = \ \langle v,w\rangle \]
for all $v,w\in V$.
The symmetric algebra $\Sym(V_\mC)$ of the complexification inherits
a preferred hermitian inner product in such a way that the canonical algebra isomorphism
\[ \Sym(V_\mC)\otimes_\mC \Sym(W_\mC) \ \iso \
\Sym( (V\oplus W)_\mC) \]
becomes an isometry, compare \cite[Prop.\,6.3.8]{schwede:global}. 
The $V$-th space of the orthogonal spectrum $\bku$ is the
value on $S^V$ of the $\Gamma$-space of finite-dimensional, pairwise orthogonal subspaces
of $\Sym(V_\mC)$.
More explicitly, we define the $V$-th space of the orthogonal spectrum~$\bku$ as
the quotient space
\[ \bku(V) = \Big(  
\coprod_{m\geq 0} \mathrm{Gr}_{\langle m \rangle}(\Sym(V_\mC)) \times (S^V)^m 
  \Big) \Big /{\sim_V}   \]
where $S^V$ is the one-point compactification of $V$ 
and $\mathrm{Gr}_{\langle m \rangle}(\Sym(V_\mC))$ is the space of $m$-tuples 
of pairwise orthogonal subspaces in $\Sym(V_\mC)$.
Here, the equivalence relation $\sim_V$  makes the following identifications
\begin{itemize}
\item[(i)] The tuple $(E_1,\ldots,E_m;\,v_1,\ldots,v_m)$, 
where $(E_1,\ldots,E_m)$ is an $m$-tuple of pairwise orthogonal subspaces 
of $\Sym(V_\mC)$ and $v_1,\ldots,v_m \in S^V$, 
is identified with $(E_{\sigma(1)},\ldots,E_{\sigma(m)};\,v_{\sigma(1)},\ldots,v_{\sigma(m)})$ 
for any $\sigma$ in the symmetric group $\Sigma_m$. 
This implies that we can represent equivalence classes
as formal sums $\sum_{k=1}^m v_k E_k$.
\item[(ii)] If $v_i = v_j$ for some $i\ne j$, then
\[ \sum_{k=1}^m v_k E_k\ = \ v_i(E_i\oplus E_j) +\sum_{\substack{k=1 \\ k\neq i,  j}}^m v_k E_k 
\ .\]
\item[(iii)] If $E_i=0$ is the trivial subspace or
$v_i=\infty$ is the basepoint of $S^V$ at infinity, then
\[  \sum_{k=1}^m v_k E_k   = \sum_{\substack{k=1 \\ k\neq i}}^m v_k E_k\ .   \]
\end{itemize}
Hence, the topology of $\bku(V)$ is such that, informally speaking, 
the labels $E_i$ and $E_j$ add up inside $\Sym(V_\mC)$ 
whenever the two points $v_i$ and $v_j$ collide, and the label $E_i$ disappears 
when $v_i$ reaches the basepoint at infinity. 
The action of the orthogonal group $O(V)$ 
on $S^V$ and $\Sym(V_\mC)$ induces a based continuous $O(V)$-action 
on $\bku(V)$. We define, for all inner product spaces~$V$ and~$W$,
an $O(V)\times O(W)$-equivariant multiplication map
\begin{align*}
 \mu_{V,W}\ : \  \bku(V) \wedge \bku(W)\qquad &\to \qquad \bku(V\oplus W) \\ 
\Big(  \sum v_i E_i \Big) \sm \Big(  \sum w_j F_j \Big) \ &\longmapsto \ 
\sum (v_i \sm w_j)\cdot(E_i\otimes F_j)     
\end{align*}
where the canonical isometry $\Sym(V_\mC)\otimes \Sym(W_\mC)\cong \Sym((V\oplus W)_\mC)$ 
is used to interpret $E_i\otimes F_j$ as a subspace of 
$\Sym((V\oplus W)_\mC)$. 
Finally, we define  the $O(V)$-equivariant unit map 
\[  \nu_V : S^V \to  \bku(V)\ , \quad v \mapsto v \mC\ , \]
where $\mC$ refers to the `constants' in the symmetric algebra~$\Sym(V_\mC)$,
i.e., the subspace spanned by the multiplicative unit~$1$.
The maps $\{\mu_{V, W}\}$ together with the maps $\{\nu_V\}$ 
turn $\bku=\{ \bku(V)\}$ into a commutative orthogonal ring spectrum.

For a Lie group $G$,
 the \emph{connective $G$-equivariant K-theory spectrum $\bku_G$} 
 is the orthogonal spectrum $\bku$ equipped with trivial $G$-action.
It is relevant for our purposes that $\bku_G$ arises from a global stable homotopy type, 
i.e., it is obtained by the forgetful functor of Section \ref{sec:global},
applied to $\bku$.
\end{con}

\begin{con}
  For this construction, $G$ is any Lie group.\index{G-vector bundle@$G$-vector bundle|(}
  We write $\Vect_G(X)$ for the abelian monoid of isomorphism classes of
  hermitian $G$-vector bundles over a $G$-space $X$.
  We introduce a natural homomorphism of abelian monoids
  \begin{equation}  \label{eq:Vect2ku}
    \td{-}\ : \  \Vect_G(X) \to \bku_G\gh{X}  
  \end{equation}
  for any finite proper $G$-CW-complex $X$.
  The construction is a parameterized version 
  of the construction in \cite[Thm.\,6.3.31]{schwede:global},
  and proceeds as follows. 
  For a hermitian $G$-vector bundle $\xi$ over $X$, 
  we let $u\xi$ denote the underlying
  euclidean vector bundle.
  We denote by $(u \xi)_\mC$ the complexification of the latter.
  Then the maps 
  \[ j_x \ : \ \xi_x \ \to \ (u\xi_x)_\mC \ , \quad v \ \longmapsto \
    1/\sqrt{2}\cdot(1\otimes v - i\otimes (i v))\]
  are $\mC$-linear isometric embeddings of each fiber that vary 
  continuously with $x\in X$. Altogether, these define an isometric embedding
  of hermitian $G$-vector bundles
  \[ j \ : \ \xi \ \to \ (u\xi)_\mC \ .\]
  By design, the fiber of the retractive $G$-space $\bku(u\xi)$
  over $x\in X$ is $\bku(u\xi_x)$.
  So we can define a map of retractive $G$-spaces over $X$
  \[ \braced{\xi}\ : \ S^{u\xi} \ \to \ \bku(u\xi) \text{\qquad by\qquad}
    \braced{\xi}(x,v)\ = \ [j_x(\xi_x); v].\]
  In more detail: we view $j_x(\xi_x)$ as sitting in the linear summand
  in the symmetric algebra $\Sym((u\xi_x)_\mC)$,
  and $[j_x(\xi_x); v]$ as the configuration in $\bku(u\xi_x)$
  consisting of the single vector $v$ labeled by the vector space
  $j_x(\xi_x)$. 
  The point $ [j_x(\xi_x); v]\in \bku(u\xi)$ varies continuously with
  $(x, v) \in S^{u\xi}$, and altogether this defines the $G$-equivariant map $\braced{\xi}$.
  If $\psi:\xi\to\eta$ is an isomorphism of hermitian $G$-vector bundles
  over $X$, then the maps $\braced{\xi}$ and $\braced{\eta}$ are conjugate by $\psi$,
  and so they represent the same class in $\bku_G^0\gh{X}$.
  So we obtain a well-defined map
  \begin{equation}\label{eq:bundle2ku}
  \td{-}\ : \  \Vect_G(X)\ \to\ \bku_G\gh{X}  \text{\qquad by\qquad}
    \td{\xi}\ = \ [u\xi,\braced{\xi}] \ .    
  \end{equation}
  For every continuous $G$-map $f:Y\to X$ we have
  $f^*(u\xi)=u(f^*\xi)$ as euclidean $G$-vector bundles over $Y$.
  Moreover, the two maps
  \[ f^*\braced{\xi}\ : \  S^{f^*(u\xi)} \ \to \ \bku(f^*(u\xi))
    \text{\quad and\quad}
    \braced{f^*\xi}\ : \  S^{u(f^*\xi)} \ \to \ \bku(u(f^*\xi))
  \]
  are equal. So
  \[ \td{f^*[\xi]}\
    = \ [u(f^*\xi),\braced{f^*\xi}] \  = \ [f^*(u\xi),f^*\braced{\xi}]  \ = \ f^*[u\xi,\braced{\xi}]\ = \
    f^*\td{\xi} \ ,   \]
  i.e., for varying $G$-spaces $X$,
  the maps $\td{-}$ constitute a natural transformation.
\end{con}

The following proposition provides additional freedom in the passage \eqref{eq:bundle2ku}
from $G$-vector bundles to $\bku$-cohomology classes:
it lets us replace the embedding
\[ \xi \ \xrightarrow{\ j\ } \ (u \xi)_{\mC} \ \xrightarrow{\text{linear summand}}
\ \Sym( (u\xi)_\mC)\]
used in the definition of $\td{\xi}$
by any other equivariant isometric embedding of $\xi$ into
the complexified symmetric algebra of any other euclidean vector bundle.

\begin{prop}\label{prop:freedom of embedding}
  Let $G$ be a Lie group, $X$ a $G$-space and $\xi$ a hermitian $G$-vector bundle over $X$.  
  Let $\mu$ be a euclidean $G$-vector bundle over $X$ and 
  \[ J \ : \ \xi \ \to \ \Sym(\mu_\mC) \]
  a $G$-equivariant $\mC$-linear isometric embedding.
  We define a map of retractive $G$-spaces
  \[ \lambda(J)\ : \ S^{\mu}\ \to \ \bku(\mu)\text{\qquad by\qquad}
  \lambda(J)(x,v)\ = \ [ J_x(\xi_x), v]\ .\]
  Then $\td{\xi}$ coincides with the class of $(\mu,\lambda(J))$.
\end{prop}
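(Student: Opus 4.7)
The plan is to exhibit the equivalence directly on the bundle $\eta = u\xi\oplus\mu$. I would stabilize the two representatives along the summand inclusions
\[
\psi\ :\ u\xi\to\eta,\quad a\mapsto(a,0),\qquad \psi'\ :\ \mu\to\eta,\quad b\mapsto(0,b),
\]
and then construct a $G$-equivariant parameterized homotopy over $X$ between $\psi_*(\td{\xi})$ and $\psi'_*(\lambda(J))$ as maps $S^\eta\to\bku(\eta)$ of retractive $G$-spaces. By the definition of the equivalence relation defining $\bku_G\gh{X}$, this will give $[u\xi,\td{\xi}] = [\mu,\lambda(J)]$.

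The first technical step is to unwind the two stabilizations explicitly using the structure map $\sigma^{\bku}_{V,W}\colon S^V\sm\bku(W)\to\bku(V\oplus W)$ and the canonical isometry $\Sym(V_\mC)\otimes\Sym(W_\mC)\iso\Sym((V\oplus W)_\mC)$. A direct computation shows, on the fibre over $x\in X$ (writing $(a,b)\in\eta_x = u\xi_x\oplus\mu_x$),
\begin{align*}
\psi_*(\td{\xi})(x,a,b)\ &=\ \big[(a,b);\ \iota^1_x(j_x(\xi_x))\big], \\
\psi'_*(\lambda(J))(x,a,b)\ &=\ \big[(a,b);\ \iota^2_x(J_x(\xi_x))\big],
\end{align*}
where $\iota^1\colon (u\xi)_\mC\hookrightarrow\Sym(\eta_\mC)$ is the inclusion of the first linear summand and $\iota^2\colon\Sym(\mu_\mC)\hookrightarrow\Sym(\eta_\mC)$ is the subalgebra inclusion coming from the second summand $\mu\hookrightarrow\eta$. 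The key observation is that, with respect to the bigrading
\[
\Sym(\eta_{x,\mC}) \ =\ \bigoplus_{p,q\geq 0}\Sym^p((u\xi_x)_\mC)\otimes\Sym^q((\mu_x)_\mC),
\]
the subspace $\iota^1_x(j_x(\xi_x))$ lies entirely in bidegree $(1,0)$, whereas $\iota^2_x(J_x(\xi_x))$ is supported in bidegrees $(0,q)$ for $q\geq 0$. Hence the two $\mC$-linear isometric embeddings $\iota^1\circ j$ and $\iota^2\circ J$ of $\xi$ into $\Sym(\eta_\mC)$ have pointwise orthogonal images.

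Exploiting this orthogonality, I would define a continuous $G$-equivariant family of $\mC$-linear isometric embeddings
\[
J^t_x\ =\ \cos(t\pi/2)\cdot\iota^1_x\circ j_x\ +\ \sin(t\pi/2)\cdot\iota^2_x\circ J_x\ :\ \xi_x\to\Sym(\eta_{x,\mC}),\quad t\in[0,1];
\]
isometry is automatic from the orthogonality of the images, since $\|J^t_x(v)\|^2 = \cos^2(t\pi/2)\|v\|^2 + \sin^2(t\pi/2)\|v\|^2 = \|v\|^2$, and $G$-equivariance is inherited from $j$ and $J$. The map
\[
H\ :\ S^\eta\times[0,1]\to\bku(\eta),\qquad H\big((a,b),t\big)\ =\ \big[(a,b);\ J^t_x(\xi_x)\big],
\]
is then a continuous $G$-equivariant homotopy of maps of retractive $G$-spaces over $X$ (the infinity section of $S^\eta$ lands in the empty configuration at every time $t$ by rule (iii) in the definition of $\bku$), and by construction $H(-,0) = \psi_*(\td{\xi})$ while $H(-,1) = \psi'_*(\lambda(J))$. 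The main bookkeeping obstacle is the explicit identification of $\psi_*(\td{\xi})$ in the first display: the orthogonal-complement convention in the definition of stabilization forces the swap isomorphism $\mu\oplus u\xi\iso u\xi\oplus\mu$ to be tracked through the structure map of $\bku$, after which the rotation $J^t$ delivers the required homotopy and hence the proposition.
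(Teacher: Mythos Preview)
Your proof is correct and follows essentially the same route as the paper: both stabilize to the bundle $u\xi\oplus\mu$, observe that the two induced embeddings $\iota^1\circ j$ and $\iota^2\circ J$ of $\xi$ into $\Sym((u\xi\oplus\mu)_\mC)$ have orthogonal images (bidegree $(1,0)$ versus bidegrees $(0,q)$), and then rotate between them via $t\cdot(\iota^1\circ j)+\sqrt{1-t^2}\cdot(\iota^2\circ J)$ (the paper) or your equivalent trigonometric parameterization. Your explicit unwinding of the stabilization, including the swap bookkeeping, matches what the paper leaves implicit in the chain of equalities $[u\xi,\lambda(i\circ j^\xi)]=[u\xi\oplus\mu,\lambda(\Sym(i_1)\circ i\circ j^\xi)]$ and $[\mu,\lambda(J)]=[u\xi\oplus\mu,\lambda(\Sym(i_2)\circ J)]$.
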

\begin{proof}
  The two composites around the (non-commutative!) square
  \[ \xymatrix@C=15mm{
      \xi \ar[r]^-J\ar[d]_{i\circ j^\xi} &\Sym(\mu_\mC) \ar[d]^{\Sym(i_2)} \\
      \Sym((u\xi)_\mC) \ar[r]_-{\Sym(i_1)} &\Sym((u\xi)_\mC\oplus \mu_\mC)    } \]
  are $G$-equivariant isometric embeddings whose images are
  orthogonal inside the hermitian vector bundle $\Sym((u\xi)_\mC\oplus \mu_\mC)$.
  Here $i:(u\xi)_\mC\to\Sym((u\xi)_\mC)$ is the embedding of the linear summand.
  The diagram thus commutes up to equivariant homotopy of linear embeddings,
  fiberwise given by the formula
  \[ H(t,v)\ = \ t \cdot (\Sym(i_1)\circ i\circ j^{\xi})(v)\ + \
  \sqrt{1-t^2}\cdot  (\Sym(i_2)\circ J)(v) \ .\]
  Such a homotopy induces an equivariant homotopy of maps of retractive $G$-spaces
  over $X$ between $\lambda(\Sym(i_1)\circ i\circ j^\xi)$ and  $\lambda(\Sym(i_2)\circ J)$.
  Hence
  \begin{align*}
    \td{\xi}\
    = \ [u\xi,\braced{\xi}] \ = \ [u\xi,\lambda(i\circ j^\xi)] \
    & = \ [u\xi\oplus\mu,\lambda(i_1\circ i\circ j^\xi)] \\
    & = \ [u\xi\oplus\mu,\lambda(i_2\circ J)] \ = \ [\mu,\lambda(J)] \ .\qedhere
  \end{align*}
\end{proof}

Our next aim is to establish additivity of the map
$\td{-}: \Vect_G(X) \to \bku_G\gh{X}$ defined in \eqref{eq:Vect2ku}.  
This relation is slightly subtle because the sum in
$\bku_G\gh{X}$ is defined by addition via a fiberwise pinch map,
which a priori has no connection to Whitney sum of vector bundles.
The concept of `ample bundle' we are about to introduce will serve
as a tool in the proof of the additivity relation.

\begin{defn}
  Let $G$ be a Lie group and $X$ a proper $G$-space.
  A hermitian $G$-vector bundle $\zeta$ over $X$ is {\em ample}
  if the following property holds:
  for every point $x\in X$ the infinite-dimensional
  unitary $G$-representation $\Sym(\zeta_x)$ is a complete complex $G_x$-universe,
  where $G_x$ is the stabilizer group of $x$.
  In other words, every finite-dimensional unitary $G_x$-representation
  admits a $G_x$-equivariant
  linear isometric embedding into $\Sym(\zeta_x)$.
\end{defn}

\begin{prop}\label{prop:universal bundle} 
  Let $G$ be a Lie group and $X$ a finite proper $G$-CW-complex.
  \begin{enumerate}[\em (i)]
  \item Let $\zeta$ be an ample hermitian $G$-vector bundle over $X$.
    Let $\xi$ be a hermitian $G$-vector bundle over $X$, of finite or countably infinite dimension.
    Then there is a $G$-equivariant linear isometric embedding of $\xi$ into $\Sym(\zeta)$ over $X$.
  \item If $G$ is discrete, then $X$ has an ample $G$-vector bundle. 
  \end{enumerate}
\end{prop}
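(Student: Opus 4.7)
The plan is to prove part (i) by induction on the equivariant skeleton of $X$, and to handle part (ii) by an explicit direct-sum construction using regular representations together with \cite[Lemma 3.7]{luck-oliver:completion}. For part (i), the key technical input is the classical fact that for a compact Lie group $H$, a finite-dimensional unitary $H$-representation $V$, and a complete complex $H$-universe $\mathcal{U}$, the space $\bL^H(V,\mathcal{U})$ of $H$-equivariant linear isometric embeddings is contractible. Starting from the empty embedding on $X^{-1}=\emptyset$, I would extend skeleton by skeleton. For the inductive step, consider the pushout
\[
\coprod_i G/H_i\times S^{n-1} \ \to\ X^{n-1}\ , \qquad \coprod_i G/H_i\times D^n \ \to\  X^n\ .
\]
Homotopy invariance of equivariant vector bundles (\cite[Thm.\,1.2]{luck-oliver:completion}) identifies $\xi$ and $\Sym(\zeta)$ over $G/H_i\times D^n$ with pullbacks from $G/H_i$; equivariant bundles over $G/H_i$ are classified by unitary $H_i$-representations, and $\Sym(\zeta)|_{G/H_i}$ corresponds to the complete $H_i$-universe $\Sym(\zeta_{[e H_i]})$ provided by ampleness. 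An isometric embedding $\xi \to \Sym(\zeta)$ over $G/H_i\times D^n$ is thus a map $D^n \to \bL^{H_i}(V_i,\Sym(\zeta_{[e H_i]}))$, and extending the given embedding on $G/H_i\times S^{n-1}$ reduces to extending a map from $S^{n-1}$ to the disc $D^n$ into a contractible target.

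For a countable Whitney sum $\xi=\bigoplus_{k\geq 1}\xi_k$, I would embed the summands one at a time into pairwise orthogonal subbundles of $\Sym(\zeta)$. After embedding $\xi_1,\dots,\xi_{k-1}$ as an orthogonal family with total image a finite-dimensional $G$-subbundle $\eta\subset\Sym(\zeta)$, the fiberwise orthogonal complement $\eta^\perp$ remains a complete $G_x$-universe fiber-wise (removing a finite-dimensional $G_x$-representation from a complete universe leaves one), and the single-bundle case supplies an embedding of $\xi_k$ into $\eta^\perp$.

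For part (ii), since $G$ is discrete every stabilizer $G_x$ is finite, and since $X$ has finitely many equivariant cells only finitely many conjugacy classes $[H_1],\dots,[H_r]$ of finite subgroups occur as stabilizers. For each finite subgroup $H$, the regular representation $\mC[H]$ contains every irreducible $H$-representation, and since $\mC[H]$ also contains the trivial representation (spanned by $\sum_{h\in H} h$), the symmetric algebra $\Sym(\mC[H])$ contains an infinite-dimensional polynomial subalgebra in one $H$-invariant variable, and therefore contains every irreducible $H$-representation with infinite multiplicity: $\Sym(\mC[H])$ is a complete $H$-universe. I would then construct, for each $i$, a hermitian $G$-vector bundle $\zeta_i$ over $X$ whose fiber at every point $x$ with $G_x$ conjugate to $H_i$ contains $\mC[G_x]$ as an $H$-summand; then $\zeta = \bigoplus_{i=1}^r \zeta_i$ is ample, since $\Sym(\zeta_x) \supseteq \Sym(\mC[G_x])$ at every $x$. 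The main obstacle lies here: a direct application of \cite[Lemma 3.7]{luck-oliver:completion} to extend $G\times_{H_i}\mC[H_i]$ from a single representative orbit $G/H_i\subset X^0$ only guarantees the fiber condition on that one orbit. To achieve it on every orbit of type $G/H_i$, I would instead argue cell by cell along the orbit-type stratification of $X$, exploiting that equivariant bundles over a cell $G/H\times D^n$ are classified by unitary $H$-representations and that any two such representations become isomorphic after stabilizing by a common trivial summand.
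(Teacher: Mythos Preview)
Your treatment of part (i) closely parallels the paper's: both argue by induction over equivariant cells, reduce to extending a map from $S^{n-1}$ to $D^n$ with target $\bL^{H}(V,\Sym(W))$, and invoke contractibility of this space. The paper in fact proves the relative statement (extending a given embedding from a subcomplex $A$ to $X$) and handles the finite and countable cases uniformly by allowing $V$ to be a countable sum of $H$-representations, citing \cite[Prop.\,A.10]{schwede:orbispaces} for weak contractibility of $\bL^H(V,\Sym(W))$ in that generality. Your iterative approach to the countable case is viable but has a small wrinkle: when you say ``the single-bundle case supplies an embedding of $\xi_k$ into $\eta^\perp$'', note that you only proved that case for the target $\Sym(\zeta)$, not for the orthogonal complement of an arbitrary finite-dimensional subbundle. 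The fix is routine---the same cell-by-cell argument works for any Hilbert $G$-bundle whose fibers are complete universes---but you should state and prove that slightly stronger version before iterating.

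Part (ii) has a genuine gap. The claim that ``any two such representations become isomorphic after stabilizing by a common trivial summand'' is false: for $H=\mZ/2$, the trivial and sign representations remain non-isomorphic after adding any number of trivial summands, since adding trivials cannot introduce the sign character. So the cell-by-cell gluing you sketch cannot be completed that way. The paper avoids this entirely by citing \cite[Cor.\,2.7]{luck-oliver:completion}: since $G$ is discrete and the isotropy groups of a finite proper $G$-CW-complex have bounded order, that result directly produces a hermitian $G$-vector bundle $\zeta$ over $X$ whose fiber $\zeta_x$ is a multiple of the regular representation $\mC[G_x]$ at every point $x$. Ampleness then follows because the $G_x$-action on $\zeta_x$ is faithful, so $\Sym(\zeta_x)$ is a complete complex $G_x$-universe (see \cite[Rk.\,6.3.22]{schwede:global}). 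If you prefer to build the bundle yourself via \cite[Lemma~3.7]{luck-oliver:completion}, a correct route is: for each equivariant cell $\sigma$ with isotropy $H_\sigma$, extend $G\times_{H_\sigma}\mC[H_\sigma]$ from a representative orbit to a summand of some bundle $\zeta_\sigma$ over $X$, and then set $\zeta=\bigoplus_\sigma\zeta_\sigma$; at each point $x$ the summand coming from the cell containing $x$ contributes $\mC[G_x]$.
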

\begin{proof}
  (i)
  We prove a more general relative version of the claim: given a $G$-subcomplex $A$ of $X$,
  every $G$-equivariant linear isometric embedding of $\xi|_A$ into $\Sym(\zeta)|_A$ over $A$
  can be extended to a $G$-equivariant linear isometric embedding of $\xi$ into $\Sym(\zeta)$ over $X$.
  The case $A=\emptyset$ then proves the proposition.

  Induction over the number of relative $G$-cells reduces the claim to
  the case where $X$ is obtained from $A$ by attaching a single $G$-cell with compact isotropy group $H$.
  Hence we may assume that $X=G/H\times D^n$ and $A=G/H\times S^{n-1}$, for some $n\geq 0$.
  Since $H$ is a compact Lie group, every hermitian $G$-vector bundle $\zeta$
  over $G/H\times D^n$
  is of the form $\zeta=(G\times_H W)\times D^n$, for some unitary $H$-representation $W$,
  projecting away from $W$, compare \cite[Lemma 1.1 (a)]{luck-oliver:completion} or
  \cite[Prop.\,1.3]{segal:equivariant_K}.
  Since $\zeta$ is an ample bundle, $W$ must be an ample $H$-representation,
  i.e., the symmetric algebra $\Sym(W)$ is a complete complex $H$-universe.
  Similarly, we may assume that $\xi=(G\times_H V)\times D^n$
  where $V$ is a unitary $H$-representation of finite or countably infinite dimension.
  
  Every linear isometric embedding of the bundle $\xi|_{S^{n-1}}=(G\times_H V)\times S^{n-1}$
  into the bundle $\Sym(\zeta|_{S^{n-1}})=(G\times_H \Sym(W))\times S^{n-1}$ 
  is of the form
 \begin{align*}
   (G\times_H V)\times S^{n-1}\ &\to \ (G\times_H \Sym(W))\times S^{n-1}\\
   ([g,v],x) \quad &\longmapsto \quad ([g,\psi(x)(v)],x)
 \end{align*}
 for some continuous map $\psi:S^{n-1}\to\bL^H(V,\Sym(W))$ into
 the space of $H$-equivariant $\mC$-linear isometric embeddings from $V$ into $\Sym(W)$.
 Because $\Sym(W)$ is a complete complex $H$-universe, the space
 $\bL^H(V,\Sym(W))$ is weakly contractible: when $V$ is finite-dimensional, this is
 the complex analog of \cite[Prop.\,1.1.21]{schwede:global};
 Proposition A.10 of \cite{schwede:orbispaces} (or rather its complex analog)
 reduces the infinite-dimensional case to the finite-dimensional case.
 Because $\bL^H(V,\Sym(W))$ is weakly contractible, $\psi$ admits a continuous extension to a map
 $D^n\to\bL^H(V,\Sym(W))$, which yields the desired linear isometric embedding
 $\xi\to\Sym(\zeta)$ by the same formula as for $\psi$.

 (ii) We let $X$ be a finite proper $G$-CW-complex. Since $X$ has
  only finitely many $G$-cells, there are only finitely many
  conjugacy classes of finite subgroups of $G$ that occur as isotropy groups
  of points of $X$. In particular, the isotropy groups of $X$ have bounded order.
  Since $G$ is discrete, Corollary 2.7 of \cite{luck-oliver:completion} thus provides
  a hermitian $G$-vector bundle $\zeta$ over $X$ such that for every point
  $x\in X$, the fiber $\zeta_x$ is a multiple of the regular representation
  of the isotropy group $G_x$. In particular, the $G_x$-action on $\zeta_x$ is
  faithful, and hence $\Sym(\zeta_x)$ is a complete complex $G_x$-universe,
  compare \cite[Rk.\,6.3.22]{schwede:global}. So the bundle $\zeta$ is ample.
\end{proof}

\begin{prop}\label{prop:ample bundle}
  Let $G$ be a discrete group and $X$ a finite proper $G$-CW-complex.
  Then the map  $\td{-}: \Vect_G(X)\to\bku_G\gh{X}$ defined in \eqref{eq:bundle2ku}
  is additive.
\end{prop}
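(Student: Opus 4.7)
The plan is to reduce additivity of $\td{-}$ to a parameterized homotopy inside $\bku(\mu)$ for a single ambient euclidean $G$-vector bundle $\mu$ in which the embeddings of our two hermitian bundles admit fiberwise orthogonal images. The orthogonality is what allows the $\Gamma$-space relation (ii) built into the configuration-space model of $\bku$ to collapse two colliding configuration points into one carrying the direct-sum label, and this is how the pinch sum is matched with the representative of $\td{\xi\oplus\eta}$.

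First, Proposition \ref{prop:universal bundle}(ii)---the only place where the discreteness of $G$ enters---provides an ample hermitian $G$-vector bundle $\zeta$ over $X$; set $\mu:=u\zeta$. Applying Proposition \ref{prop:universal bundle}(i) to the hermitian bundle $\xi\oplus\eta$ yields a $G$-equivariant $\mC$-linear isometric embedding $\xi\oplus\eta\to\Sym(\zeta)$; post-composing with the isometric embedding $\Sym(\zeta)\hookrightarrow\Sym((u\zeta)_\mC)=\Sym(\mu_\mC)$ induced by $j\colon\zeta\to(u\zeta)_\mC$ gives an embedding $J\colon\xi\oplus\eta\to\Sym(\mu_\mC)$. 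Its restrictions $J_\xi:=J\circ i_1$ and $J_\eta:=J\circ i_2$ then satisfy $J_\xi(\xi_x)\perp J_\eta(\eta_x)$ in $\Sym(\mu_\mC)_x$ for every $x\in X$, and $J(\xi_x\oplus\eta_x)=J_\xi(\xi_x)\oplus J_\eta(\eta_x)$. Three applications of Proposition \ref{prop:freedom of embedding} then identify
\[ \td{\xi}=[\mu,\lambda(J_\xi)],\quad \td{\eta}=[\mu,\lambda(J_\eta)],\quad \td{\xi\oplus\eta}=[\mu,\lambda(J)], \]
and unwinding the definition of the sum in $\bku_G\gh{X}$ reduces the additivity claim to producing a parameterized $G$-equivariant homotopy between $\nabla(\lambda(J_\xi),\lambda(J_\eta))$ and the stabilization $\psi_*(\lambda(J))$, where $\psi\colon\mu\hookrightarrow\ul{\mR}\oplus\mu$ is the second-summand inclusion.

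The main---and anticipated hardest---step is constructing this homotopy. Tracing through the structure maps of $\bku$, the pinch sum $\nabla(\lambda(J_\xi),\lambda(J_\eta))$ sends $(x,(s,v))\in X\times S^1\times S^{\mu_x}$ to the single-point configuration in $\bku(\mR\oplus\mu_x)$ at position $(\ln s,v)$ with label $J_\xi(\xi_x)$ when $s>0$, at $(-\ln(-s),v)$ with label $J_\eta(\eta_x)$ when $s<0$, and to the basepoint when $s\in\{0,\infty\}$; meanwhile $\psi_*(\lambda(J))$ sends $(x,(s,v))$ to the single-point configuration at $(s,v)$ with the combined label $J_\xi(\xi_x)\oplus J_\eta(\eta_x)$. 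The interpolating family is a two-point configuration whose first point slides linearly from $(\ln s,v)$ (or from the basepoint, for $s\leq 0$) to $(s,v)$ while carrying the label $J_\xi(\xi_x)$, and whose second point slides from $(-\ln(-s),v)$ (or from the basepoint, for $s\geq 0$) to $(s,v)$ carrying $J_\eta(\eta_x)$. At $t=1$ the two labeled points collide at $(s,v)$, and relation (ii) in the construction of $\bku(\mR\oplus\mu_x)$---applicable precisely because $J_\xi(\xi_x)\perp J_\eta(\eta_x)$ by Step 1---forces the labels to combine into their orthogonal direct sum, recovering $\psi_*(\lambda(J))$. The remaining work is bookkeeping: verifying continuity at $s\in\{0,\infty\}$, $G$-equivariance, and compatibility with the retractive structure over $X$, all of which are formal given the fiberwise construction of $\lambda$. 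This is a direct parameterized analog of the classical additivity argument in \cite[Thm.\,6.3.31]{schwede:global}.
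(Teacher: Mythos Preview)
Your proposal is essentially correct but follows a genuinely different route from the paper. The paper argues via Eckmann--Hilton: it equips the single set $[S^{\mu},\bku_G(\mu)]^G_X$ with a second \emph{bundle sum} built from an isometric embedding $\theta\colon\Sym(\mu_\mC)\oplus\Sym(\mu_\mC)\to\Sym(\mu_\mC)$ (whose existence uses the full strength of Proposition~\ref{prop:universal bundle}(i) for countable Whitney sums), verifies that the pinch sum and the bundle sum share a unit and satisfy the interchange law, and concludes they agree; since $\lambda(\theta\circ(\varphi\oplus\psi))$ is literally the bundle sum $\oplus\circ(\lambda(\varphi),\lambda(\psi))$, additivity follows without ever writing down a homotopy. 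Your approach is more direct and more economical in its hypotheses: you only need to embed the single finite-rank bundle $\xi\oplus\eta$ into $\Sym(\mu_\mC)$, and then you exhibit an explicit two-point-configuration homotopy exploiting the orthogonality of the two labels and relation~(ii) in the definition of $\bku$. What the paper buys is that no explicit homotopy needs to be produced or checked; what your approach buys is a more elementary argument that makes the role of the $\Gamma$-space relation transparent.

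One caveat on your sketch: the phrase ``slides linearly from the basepoint'' is not literally meaningful, since one cannot linearly interpolate from $\infty\in S^1$ to a finite point. What you actually need is a pair of homotopies $a_i\colon S^1\times[0,1]\to S^1$ from the two halves $\nabla_i$ of the pinch map to the identity; these exist because $\nabla_1$, $\nabla_2$, and $\Id_{S^1}$ are all degree-$1$ self-maps of $S^1$. With that substitution your homotopy $H_t(x,(s,v))=[J_\xi(\xi_x);(a_1(s,t),v)]+[J_\eta(\eta_x);(a_2(s,t),v)]$ is continuous (the labels are fixed and orthogonal, so the two-point configuration is well-defined throughout), and the rest of your verification goes through as stated.
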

\begin{proof}
  The sum in the group $\bku_G\gh{X}$ is defined by addition via a fiberwise
  pinch map.
  We will relate the pinch sum to a different binary operation, the `bundle sum', by an
  Eckmann-Hilton argument.

  Proposition \ref{prop:universal bundle} (ii) provides an ample hermitian $G$-vector bundle
  $\zeta$ over $X$. We let $\mu=u\zeta$ denote the underlying euclidean vector bundle.
  Because $\zeta$ embeds into $(u\zeta)_\mC=\mu_\mC$, the  hermitian $G$-vector bundle
  $\mu_\mC$ is also ample.
  By adding a trivial complex line bundle to $\zeta$, if necessary,
  we can moreover assume that
  there exists a $G$-equivariant linear isometric embedding $j:X\times \underline{\mR}\to \mu=u\zeta$
  of the trivial $\mR$-line bundle over $X$.
  The embedding $j$ parameterizes a trivial 1-dimensional summand in $\mu$,
  and hence a pinch map $p:S^\mu\to S^\mu\vee_X S^\mu$.
  The `pinch sum' on the set  $[S^{\mu},\bku_G(\mu)]^G_X$
  of parameterized homotopy classes is given by
  \begin{equation} \label{eq:define_pinch_sum}
     [f]\vee[g]\ = \ [(f+g)\circ p]\ , 
  \end{equation}
  where $f+g:S^\mu\vee_X S^\mu\to\bku_G(\mu)$ is given by $f$ and $g$ on the respective
  summands.

  Because $\mu_\mC$ is ample, Proposition \ref{prop:universal bundle} (i)
  provides a $G$-equivariant linear isometric embedding
  \[    \theta\ :\  \Sym(\mu_{\mC})\oplus \Sym(\mu_{\mC})\ \to\ \Sym(\mu_{\mC})  \]
  of bundles over $X$.
  The embedding $\theta$ in turn yields a map of retractive $G$-spaces over $X$
  \[  \oplus\ :\ \bku_G(\mu) \times_X \bku_G(\mu)\ \to \ \bku_G(\mu) \]
  defined fiberwise by 
  \begin{align*} 
    [E_1&,\ldots,E_k; v_1,\ldots,v_k]\oplus[F_1,\ldots, F_l; w_1,\ldots,w_l]  \\
        &= [\theta(E_1\oplus 0),\ldots ,\theta(E_k\oplus 0),
          \theta(0 \oplus F_1),\ldots ,\theta(0\oplus F_l); v_1,\ldots,v_k,w_1,\ldots,w_l] \\
        &=    \sum_{i=1}^k v_i \theta(E_i \oplus 0) + \sum_{j=1}^l w_j \theta(0\oplus F_j)\ .  
  \end{align*}
  The `bundle sum' on the set  $[S^{\mu},\bku_G(\mu)]^G_X$ is given by
  \begin{equation} \label{eq:define_bundle_sum}
    [f]\oplus[g]\ = \ [\oplus\circ(f,g)]\ , 
  \end{equation}
  where $(f,g):S^\mu\to\bku_G(\mu)\times_X\bku_G(\mu)$ has components $f$ and $g$, respectively.
  The pinch sum \eqref{eq:define_pinch_sum} and the bundle sum \eqref{eq:define_bundle_sum}
  share the same neutral element, and they satisfy the interchange relation
  \[      ( [f]\oplus [g])\vee ([h]\oplus[k]) \  = ( [f]\vee [h])\oplus ([g]\vee[k])  \ .\]
  The Eckmann-Hilton argument then applies: taking $[g]$ and $[h]$
  as the common neutral element shows that the pinch sum and the bundle sum
  on the set $[S^{\mu},\bku_G(\mu)]^G_X$ coincide.

  Now we prove additivity.
  We let $\xi$ and $\eta$ be two hermitian $G$-vector bundles over $X$.
  Proposition \ref{prop:universal bundle} (i)
  provides $G$-equivariant linear isometric embeddings 
  \[ \varphi\ :\ \xi\ \to\ \Sym(\mu_\mC)\text{\qquad and\qquad}
    \psi\ :\ \eta\ \to\ \Sym(\mu_\mC) \]
  of hermitian $G$-vector bundles over $X$.
  The map
  \[ \theta\circ(\varphi\oplus\psi)\ : \ \xi\oplus\eta \ \to \ \Sym(\mu_\mC) \]
  is another equivariant isometric embedding.
  Proposition \ref{prop:freedom of embedding} turns these bundle embeddings
  into maps of retractive $G$-spaces over $X$
  \[ \lambda(\varphi)\ , \ \lambda(\psi)\ , \ \lambda(\theta\circ(\varphi\oplus\psi))\ : \
    S^\mu \ \to \ \bku_G(\mu)\ .\]
  Moreover, the relation
  \[ \lambda(\theta\circ(\varphi\oplus\psi))\ = \  \oplus\circ(\lambda(\varphi),\lambda(\psi)) \]
  holds by design.
Proposition \ref{prop:freedom of embedding} then yields
  \begin{align*}
    \td{\xi}+\td{\eta}\
    &= \  [\lambda(\varphi)]\vee[\lambda(\psi)] \
    = \  [\lambda(\varphi)]\oplus[\lambda(\psi)] \\
    &= \  [\oplus\circ(\lambda(\varphi),\lambda(\psi))] \
    = \  [\lambda(\theta\circ(\varphi\oplus\psi))] \
    = \ \td{\xi\oplus\eta}\ .\qedhere
  \end{align*}
\end{proof}

We have now constructed a well-defined monoid homomorphism
\[\td{-}\ :\ \Vect_G(X)\ \to\ \bku_G\gh{X}\ .\] 
We write $K_G(X)$ for the group completion (Grothendieck group)
of the abelian monoid $\Vect_G(X)$. The universal property of the Grothendieck group
extends $\td{-}$ to a unique group homomorphism
\begin{equation}  \label{eq:K_G2ku}
    \kappa_X\ : \ K_G(X) \ \to \ \bku_G\gh{X}\ .  
\end{equation}
Since the maps $\td{-}$ are natural for $G$-maps in $X$,
so are the extensions $\kappa_X$.

\medskip

Equivariant K-groups admit products induced from tensor product of vector bundles.
The cohomology groups represented by $\bku$
admit products arising from the ring spectrum structure.
Our next aim is to show that the homomorphisms \eqref{eq:K_G2ku}
are suitably multiplicative.
We start by formally introducing the relevant pairings in the represented $\bku$-cohomology,
in somewhat larger generality.

\begin{con}
We let $E$ be an orthogonal ring spectrum and $M$ a left $E$-module spectrum.
Given a Lie group $G$, a finite proper $G$-CW-complex $X$
and a finite CW-complex $Y$, we now construct natural pairings 
\begin{equation}\label{eq:pairing}
\cup \ : \  E_G\gh{X} \times M\gh{Y}\ \to \ M_G\gh{ X\times Y}\ ;  
\end{equation}
here $M\gh{Y}$ is the {\em non-equivariant}
special case of the Construction \ref{con:define alternative},
i.e., for $G$ a trivial group. 
We write
\[ \alpha_{m,n}\ :\ E(\mR^m) \sm M(\mR^n)\ \to\ M(\mR^{m+n})\]
for the $(O(m)\times O(n))$-equivariant component of the 
action morphism $\alpha:E\sm M\to M$.
We let $\eta$ and $\xi$ be vector bundles over $X$ and $Y$,
of dimension $m$ and $n$, respectively.
We write $\eta\times\xi$ for the exterior product bundle over $X\times Y$,
and $-\triangle-$ for the external smash product of retractive spaces, with fibers
\[ (\eta\times\xi)_{(x,y)}\ = \ \eta_x\sm \xi_y\ .\]
Then the multiplication maps give rise to a map of retractive
spaces over $X\times Y$
\[ \xymatrix{ E_G(\eta) \triangle M(\xi) = 
    \Big( \cF_{n}(\eta) \times_{O(m)} E(\mR^m) \Big) \triangle
    \Big( \cF_n(\xi) \times_{O(n)} M(\mR^n) \Big) \ar[d]^{\cong} \\
\Big ( \cF_m(\eta)\times \cF_n(\xi)  \Big) \times_{O(m)\times O(n)} \Big(E(\mR^m)\wedge M(\mR^n) \Big)
\ar[d]^{\psi \times \alpha_{m,n}} \\
 \cF_{m+n}(\eta\times \xi) \times_{O(m+n)} M(\mR^{m+n})= E_G(\eta \times \xi)}\]
that we denote by $\alpha_{\eta,\xi}$.
The map $\psi:\cF_m(\eta)\times \cF_n(\xi)\to \cF_{m+n}(\eta\times \xi)$
takes direct products of frames, i.e., it is given by
\[ \psi((x_1,\dots,x_m),(y_1,\dots,y_n))\ = \ ((x_1,0),\dots,(x_m,0),(0,y_1),\dots,(0,y_n)) \ .\]
Now we let $(\eta,u)$ represent a class in $E_G\gh{X}$,
and we let $(\xi,v)$ represent a class in $M\gh{Y}$.
Then the composite 
\[ u\cup v \ : \ S^{\eta\times \xi} \ \iso \ S^{\eta}  \triangle S^{\xi} \ 
\xrightarrow{u\triangle v}\ 
 E_G(\eta) \triangle M(\xi) \ \xrightarrow{\alpha_{\eta,\xi}} \ M_G(\eta\times\xi)  \]
is an equivariant map of retractive $G$-spaces over $X\times Y$,
where $G$ acts trivially on $Y$ and on $\xi$.
The construction passes to equivalence classes under fiberwise homotopy
and stabilization, so we can define the pairing \eqref{eq:pairing} by
\[  [\eta,u]\cup [\xi,v]\ = \ [\eta\times\xi, u\cup v]\ .   \]
\end{con}

The following naturality properties of the cup product construction are straightforward from
the definitions, and we omit the formal proofs.

\begin{prop}\label{prop:cup properties}
  Let $E$ be an orthogonal ring spectrum and $G$ a Lie group.
  The pairing \eqref{eq:pairing} is natural 
  for morphisms in the global homotopy category of $E$-module spectra in the variable $M$,
  for continuous $G$-maps in $X$, and for continuous maps in $Y$.
\end{prop}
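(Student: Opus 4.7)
The plan is to establish the three claimed naturalities separately, first at the pointset level by direct inspection and then, in the $M$-variable, to descend to the global homotopy category by invoking the invariance of the $\gh{-}$-groups under $\pi_*$-isomorphisms of coefficient spectra.

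For pointset naturality in $X$ and $Y$, given continuous $G$-maps $f: X'\to X$ and continuous maps $g: Y'\to Y$, I would observe that every ingredient in the construction of $u\cup v$ commutes with pullback: frame bundles satisfy $\cF_m(f^*\eta) = f^*\cF_m(\eta)$; the external product satisfies $(f\times g)^*(\eta\times\xi) \cong f^*\eta\times g^*\xi$; and the structure map $\alpha_{\eta,\xi}$ is defined fiberwise from $\alpha_{m,n}$, so is visibly natural under pullback. A direct computation therefore yields $(f\times g)^*(u\cup v) = f^*u \cup g^*v$ as maps of retractive $G$-spaces over $X'\times Y'$, and specializing to $g=\Id_Y$ and to $f=\Id_X$ gives naturality in $X$ and in $Y$ separately. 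Compatibility with the equivalence relation defining $E_G\gh{X}$ and $M\gh{Y}$ is similarly immediate, because stabilization along isometric bundle embeddings commutes with external product and with the maps $\alpha_{\eta,\xi}$.

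For pointset naturality in $M$, I would verify that a morphism $\phi: M\to M'$ of $E$-module spectra commutes with the cup product. By the module-morphism axiom, $\phi$ satisfies
\[
  \phi(V\oplus W)\circ \alpha^M_{V,W}\ = \ \alpha^{M'}_{V,W}\circ (\Id_{E(V)}\wedge \phi(W))
\]
for all inner product spaces $V,W$. Thomifying this identity fiberwise along the frame bundles of a pair $(\eta,\xi)$ yields the bundle-level identity $\phi(\eta\times\xi)\circ \alpha^M_{\eta,\xi} = \alpha^{M'}_{\eta,\xi}\circ (\Id\triangle\, \phi(\xi))$, which directly implies $\phi_*(u\cup v) = u\cup \phi_*(v)$.

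Finally, to descend from pointset morphisms to morphisms in the global homotopy category of $E$-modules, I would note that every such morphism is represented by a zigzag of pointset morphisms whose wrong-way arrows are global equivalences of $E$-modules. A global equivalence of $E$-modules is in particular a global equivalence of underlying orthogonal spectra, hence a $\pi_*$-isomorphism of the trivial-action orthogonal $G$-spectra for every Lie group $G$. For $G$ discrete, Proposition \ref{prop:pi_*-iso2cohomology iso}(i), applied to the trivial group (for the factor $M\gh{Y}$) and to $G$ (for the target $M_G\gh{X\times Y}$), gives the required invariance; combined with pointset naturality this yields the asserted naturality in the global homotopy category. The main (minor) obstacle is the bookkeeping around equivalence classes under isometric stabilizations, which is direct but tedious.
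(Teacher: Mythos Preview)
The paper does not prove this proposition: it declares the naturality properties ``straightforward from the definitions'' and omits the formal verification. Your outline is exactly the kind of argument the authors have in mind, and the pointset-level checks for naturality in $X$, $Y$, and for $E$-module morphisms $\phi:M\to M'$ are correct as you describe them.

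One point deserves comment. The proposition is stated for an arbitrary Lie group $G$, but your passage to the global homotopy category in the $M$-variable invokes Proposition~\ref{prop:pi_*-iso2cohomology iso}(i), which is only available for \emph{discrete} $G$. For compact Lie groups the required invariance of $M_G\gh{X\times Y}$ under $\pi_*$-isomorphisms follows instead from Example~\ref{eg:bundle for compact} together with \cite[Prop.~3.1.40]{schwede:global}. For general (non-compact, non-discrete) Lie groups the paper provides no tool to establish this invariance, and indeed Remark~\ref{rk:restriction finite} explains why the functor $E_G\gh{-}$ is not known to behave well in that generality. So strictly speaking the well-definedness of the target as a functor of $M$ on the global homotopy category is itself not established in full generality; the paper glosses over this, and the proposition is only ever applied when $G$ is discrete (in Propositions~\ref{prop:kappa multiplicative} and~\ref{prop:tilde b realizes beta}), where your argument is complete.
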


The cases we mostly care about are the orthogonal ring spectra $\bku$
and $\bKU$, each acting on itself by multiplication.
We can now state and prove the multiplicativity property of the
homomorphisms \eqref{eq:K_G2ku}.
In the next proposition, the upper horizontal pair is induced by
exterior tensor product of vector bundles.

\begin{prop}\label{prop:kappa multiplicative}
  For every discrete group $G$, every proper finite $G$-CW-complex $X$
  and every finite CW-complex $Y$, the diagram
  \[ \xymatrix@C=15mm{
      K_G(X)\times K(Y) \ar[r]^-{\tensor}\ar[d]_{\kappa_X\times \kappa_Y}& 
      K_G(X\times Y) \ar[d]^{\kappa_{X \times Y}} \\
      \bku_G\gh{X} \times \bku\gh{Y} \ar[r]_-{\cup} & \bku_G\gh{X\times Y} }\] 
  commutes.
\end{prop}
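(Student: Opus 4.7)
The plan is to reduce the commutativity of the diagram to a pointwise identification of explicit representatives, using biadditivity of both composites together with the freedom afforded by Proposition~\ref{prop:freedom of embedding}.

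First, I would argue that both composites $K_G(X) \times K(Y) \to \bku_G\gh{X \times Y}$ are biadditive. For $\kappa_{X\times Y} \circ \tensor$, this is immediate: exterior tensor product on $\Vect$ is biadditive by distributivity over Whitney sum, and $\kappa_{X \times Y}$ is additive by Proposition~\ref{prop:ample bundle} together with the universal property of group completion. For $\cup \circ (\kappa_X \times \kappa_Y)$, since $\kappa_X$ and $\kappa_Y$ are additive, it suffices to check that the pairing~\eqref{eq:pairing} is biadditive. The cleanest route is to transport biadditivity from the represented side: by a multiplicative refinement of the comparison isomorphism $\mu$ of Theorem~\ref{thm:rep2vect}, the pairing~\eqref{eq:pairing} corresponds to the cup product on represented cohomology induced by the ring-spectrum multiplication of $\bku$, which is manifestly biadditive on morphism groups in $\Ho(\Sp_G)$. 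Alternatively, one can adapt the Eckmann--Hilton argument from the proof of Proposition~\ref{prop:ample bundle}, relating the fiberwise pinch sum to a fiberwise bundle sum induced by an ample bundle. With biadditivity in hand, it suffices to prove the equality on a single pair $([\xi],[\eta])$ of hermitian $G$-vector bundles.

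Next, I would unwind the right-hand side using the explicit multiplication formula $[E, v] \cdot [F, w] = [E \tensor F,\ v \sm w]$ for $\bku$, valid under the canonical isometry $\Sym(V_\mC)\tensor\Sym(W_\mC)\cong \Sym((V\oplus W)_\mC)$. Unpacking the definitions, $\td{\xi} \cup \td{\eta}$ is represented by a pair $(u\xi \times u\eta,\, \Phi)$ whose map evaluates fiberwise at $(x,y,v\oplus w) \in S^{u\xi \times u\eta}$ to
\[ \Phi(x,y,v\oplus w) \ = \ \bigl[j^\xi_x(\xi_x) \tensor j^\eta_y(\eta_y),\ v \sm w\bigr] \ \in \ \bku\bigl((u\xi_x)\oplus(u\eta_y)\bigr). \]
On the other hand, $\td{\xi \tensor \eta}$ is \emph{a priori} represented over the bundle $u(\xi \tensor \eta)$, whose rank differs from that of $u\xi \times u\eta$; to produce matching representatives I would invoke Proposition~\ref{prop:freedom of embedding} to replace the standard embedding of $\xi\tensor\eta$ by the equivariant $\mC$-linear isometric embedding
\[ J \ = \ j^\xi \tensor j^\eta \ : \ \xi \tensor \eta \ \to \ (u\xi)_\mC \tensor (u\eta)_\mC \ \hookrightarrow \ \Sym((u\xi)_\mC)\tensor\Sym((u\eta)_\mC) \ \cong \ \Sym((u\xi \times u\eta)_\mC), \]
where $\mC$-linearity and isometricity are inherited from $j^\xi$ and $j^\eta$, together with the fact that the inclusion of the tensor of linear summands into the tensor product of symmetric algebras is isometric by construction. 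Proposition~\ref{prop:freedom of embedding} then yields $\td{\xi \tensor \eta} = [u\xi \times u\eta,\ \lambda(J)]$, and fiberwise $\lambda(J)(x,y,v\oplus w) = [j^\xi_x(\xi_x) \tensor j^\eta_y(\eta_y),\ v\sm w]$, matching $\Phi$ on the nose.

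The main obstacle will be verifying biadditivity of the cup product~\eqref{eq:pairing}, which is not formally proved in the excerpt and requires either a multiplicative upgrade of Theorem~\ref{thm:rep2vect} or a direct Eckmann--Hilton argument as above. Once biadditivity is secured, the remaining verification is a transparent calculation, matching the multiplication formula of $\bku$ with the fiberwise tensor of linear summands in symmetric algebras.
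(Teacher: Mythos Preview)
Your proposal is correct and follows essentially the same route as the paper: reduce by biadditivity to genuine bundles, then identify $\td{\xi}\cup\td{\eta}$ with $\lambda(J)$ for the embedding $J=j^\xi\otimes j^\eta$ landing in the quadratic summand of $\Sym((u\xi\times u\eta)_\mC)$, and invoke Proposition~\ref{prop:freedom of embedding} to conclude $\lambda(J)$ also represents $\td{\xi\otimes\eta}$. The only difference is emphasis: the paper simply asserts that both pairings are biadditive, whereas you flag biadditivity of $\cup$ as the ``main obstacle'' and propose routes through a multiplicative upgrade of Theorem~\ref{thm:rep2vect} or an Eckmann--Hilton argument---this caution is reasonable but unnecessary, since distributivity of the fiberwise smash over the pinch map is immediate from the fact that $-\triangle v$ (for fixed $v$) commutes with the pinch collapse, which is all that is needed here.
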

\begin{proof} 
Since both pairings are biadditive, it suffices to check the
commutativity for classes represented by actual vector bundles
(as opposed to virtual vector bundles).
We let $\eta$ be a hermitian $G$-vector bundle over $X$,
and $\xi$ a hermitian vector bundle over $Y$.
The class $\td{\eta}\cup\td{\xi}$ is represented
by the map of retractive $G$-spaces over $X\times Y$
\begin{align*}
  S^{u(\eta)\times u(\xi)}\
  &\iso \ S^{u\eta}\triangle S^{u\xi}\ \xrightarrow{\braced{\eta}\triangle\braced{\xi}} \
    \bku(u\eta)\triangle \bku(u\xi)\
  \xrightarrow{\mu_{\eta,\xi}} \ \bku( (u \eta)\times (u\xi))\ .
\end{align*}
The multiplication in $\bku$ ultimately stems from the tensor product of
hermitian vector spaces. Unraveling the definition of $\mu_{\eta,\xi}$
shows that the above composite coincides with the map of retractive $G$-spaces
$\lambda(J):S^{ (u\eta)\times (u\xi)}\to \bku( (u\eta)\times (u\xi))$,
associated with the isometric embedding
\[ J \ : \ u(\eta\tensor \xi)\ \to \ \Sym( ((u\eta)\times (u\xi))_\mC) \  ,
\quad J_{(x,y)}(v\tensor w)\ = \ (j^\eta_x(v),0)\cdot (0,j^\xi_y(w)) \ .\]
So the image of $J$ belongs to $\Sym^2(u(\eta)\times u(\xi))$,
the quadratic summand in the complexified symmetric algebra
of $u(\eta)\times u(\xi)$.
We emphasize that $J$ is {\em not} the isometric embedding used in the definition
of the class $\td{\eta\tensor\xi}$: the defining isometric embedding 
takes values in the linear summand of the symmetric algebra
of the exterior tensor product $u(\eta\tensor\xi)$.
However, Proposition \ref{prop:freedom of embedding}
shows that the map $\lambda(J)$ also represents the class
$\td{\eta\tensor\xi}$. So we conclude that
\[ \td{\eta}\cup\td{\xi}\ = \ \td{\lambda(J)}\ = \ \td{\eta\tensor\xi}\ . \qedhere\]
\end{proof}
\index{G-vector bundle@$G$-vector bundle|)}

Now we consider the periodic global K-theory spectrum $\bKU$
introduced by Joachim in \cite{joachim:higher_coherence}
and later studied in \cite[Sec.\,6.4]{schwede:global}.
The definition of $\bKU$ is based on spaces of homomorphisms of
$\mZ/2$-graded $C^*$-algebras, and can be found in 
\cite[Sec.\,4]{joachim:higher_coherence} and
\cite[Con.\,6.4.9]{schwede:global}.\index{periodic global K-theory}\index{global K-theory!periodic}
For our purposes, we can (and will) use $\bKU$ as a black box;
the main properties we use is that $\bKU$ is a commutative orthogonal ring spectrum,
that it receives a ring spectrum homomorphism $j:\bku\to\bKU$
(see \cite[Con.\,6.4.13]{schwede:global}),
that the homomorphism $j$ sends the Bott class in $\pi_2^e(\bku)$
to a unit in the graded ring $\pi_*^e(\bKU)$
(see \cite[Thm.\,6.4.29]{schwede:global})
and that $\bKU$ represents equivariant K-theory for compact Lie groups
(see \cite[Thm.\,4.4]{joachim:higher_coherence} or \cite[Cor.\,6.4.23]{schwede:global}).

Now we let $G$ be a discrete group.
The functor $(-)_G$ from Section \ref{sec:global}
yields a $G$-equivariant commutative orthogonal ring spectrum $\bKU_G$.
The morphism of commutative orthogonal ring spectra $j: \bku \to \bKU$
defined in \cite[Con.\,6.4.13]{schwede:global} induces a morphism
of commutative orthogonal $G$-ring spectra $j_G: \bku_G \to \bKU_G$.
For $X$ a finite proper $G$-CW-complex, we write $c_X:K_G(X)\to\bKU_G\gh{X}$ for the composite
\[
  K_G(X) \ \xrightarrow{\kappa_X} \ \bku_G\gh{X}\ \xrightarrow{(j_G)_*} \
  \bKU_G\gh{X} \ .\]
Source and target of this natural transformation are excisive functors in $X$
by \cite[Lemma 3.8]{luck-oliver:completion}
and by Theorem \ref{thm:excisive - bundle}, respectively.

\begin{thm} \label{thm:0 k-theory iso}
  For every discrete group $G$
  and every finite proper $G$-CW-complex $X$,
  the homomorphism
  \[ c_X\ : \ K_G(X)\ \to \ \bKU_G\gh{X} \]
  is an isomorphism.
\end{thm}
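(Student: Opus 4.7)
The plan is to reduce, by excision and induction isomorphisms, to the case of a finite group acting on a point, where the statement becomes the known identification of the $0$-th equivariant homotopy group of $\bKU$ with the complex representation ring.

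First, since both $K_G(-)$ and $\bKU_G\gh{-}$ are excisive functors on finite proper $G$-CW-complexes (by \cite[Lemma 3.8]{luck-oliver:completion} and Theorem \ref{thm:excisive - bundle}, respectively), and $c_X$ is natural in $X$, Proposition \ref{prop:excisive on orbits} reduces the claim to checking that $c_{G/H\times Y}$ is an isomorphism for every finite subgroup $H\leq G$ and every finite non-equivariant CW-complex $Y$.

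Second, I would verify that the natural transformation $c$ commutes with induction isomorphisms. On the bundle side, the classical equivalence of categories $\Vect_H(Y) \iso \Vect_G(G\times_H Y)$ given by $\eta \mapsto G\times_H \eta$ induces an isomorphism $\ind \colon K_H(Y) \iso K_G(G\times_H Y)$. On the represented side, combining Theorem \ref{thm:rep2vect}(iii) with the isomorphism of Example \ref{eg:induction iso} yields $\bKU_H\gh{Y} \iso \bKU_G\gh{G\times_H Y}$. The key compatibility $c_{G\times_H Y}\circ \ind = \ind\circ c_Y$ follows by direct inspection of the definition of $\td{-}$ in \eqref{eq:bundle2ku}: the underlying euclidean bundle and the embedding $j$ into the complexified symmetric algebra commute with the construction $G\times_H -$, so $\td{G\times_H \eta}$ is the image of $\td{\eta}$ under the induction isomorphism. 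This reduces the claim to proving that $c_Y \colon K_H(Y) \to \bKU_H\gh{Y}$ is an isomorphism for every finite group $H$ and every finite $H$-CW-complex $Y$.

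Third, applying Proposition \ref{prop:excisive on orbits} once more, now to the excisive functors on finite $H$-CW-complexes (for the finite group $H$), and using the induction isomorphisms again to pass from $Y=H/L$ to $Y=\ast$ with $H$ replaced by $L$, the problem reduces to showing that for every finite group $L$, the homomorphism
\[
c_\ast \ : \ R(L) \ = \ K_L(\ast)\ \to \ \bKU_L\gh{\ast} \ \cong \ \pi_0^L(\bKU)
\]
is an isomorphism. By construction $c_\ast[V] = j_\ast\td{V}$, where $\td{V}\in\pi_0^L(\bku)$ is precisely the class from \cite[Thm.\,6.3.31]{schwede:global}, which establishes an isomorphism $R(L) \iso \pi_0^L(\bku)$ for compact Lie groups $L$. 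Composing with the isomorphism $j_\ast \colon \pi_0^L(\bku) \iso \pi_0^L(\bKU)$ in degree zero (see \cite[Thm.\,6.4.29]{schwede:global} or \cite[Cor.\,6.4.23]{schwede:global}) yields the claim.

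The main obstacle is the compatibility check in the second step, i.e., carefully matching the induction isomorphism for $G$-vector bundles with the induction isomorphism \eqref{eq:ind bundle} for the represented theory through the construction $\td{-}$; once one observes that all ingredients of $\td{-}$ (the complexification functor, the linear embedding $j^\eta$, and the inclusion into the symmetric algebra) are preserved under $G\times_H -$, the required commuting square is immediate from the definitions, but the bookkeeping is delicate.
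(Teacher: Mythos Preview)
Your overall strategy---reduce via excision (Proposition~\ref{prop:excisive on orbits}) and induction isomorphisms to the finite-group case---is exactly what the paper does, and your verification in step~2 that $c$ intertwines the induction isomorphisms is the same commuting square the paper uses. The difference lies in how you treat the finite-group case, and here your third step has a genuine gap.

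Proposition~\ref{prop:excisive on orbits} requires verifying the transformation on $H/L\times Z$ for every subgroup $L\leq H$ \emph{and every finite CW-complex $Z$}, not merely on the orbits $H/L$. After your induction step this becomes: $c_Z\colon K_L(Z)\to\bKU_L\gh{Z}$ is an isomorphism for every finite $L$ and every $L$-trivial finite CW-complex $Z$. That is not the case $Z=\ast$, and repeating the argument does not shrink $Z$. The reason the $\times Z$ factor cannot be dropped is that the $5$-term Mayer-Vietoris sequence of an excisive functor (Proposition~\ref{prop:Mayer-Vietoris}) feeds in the value on $A\times S^1$, so a cell induction on $Z$ knowing only the value at a point cannot close up. You would need suspension isomorphisms turning this into a long exact sequence, but at this stage of the argument neither $K_G$ nor $\bKU_G\gh{-}$ has yet been extended to a $\mZ$-graded theory.

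The paper short-circuits this by invoking \cite[Cor.\,6.4.23]{schwede:global} directly: for finite (indeed compact Lie) $H$, the map $c_X$ factors as the known isomorphism $K_H(X)\xrightarrow{\cong}\bKU_H(X)$ followed by $\mu_X^{\bKU_H}$ from Theorem~\ref{thm:rep2vect}, for \emph{every} finite $H$-CW-complex $X$. With the finite-group case thus in hand, your steps~1 and~2 (equivalently, the paper's commuting square with Lemma~3.4 of \cite{luck-oliver:completion} and Example~\ref{eg:induction iso}) finish the proof.
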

\begin{proof}
  In the special case when the group $G$ is finite,
  the map $c_X$ factors as the composite
  \[  K_G(X)\ \xrightarrow{\ \iso\ }\ \bKU_G(X) \ \xrightarrow{\ \mu_X^{\bKU_G}}\ \bKU_G\gh{X}\ ,\]
  where the first map is the isomorphism established in \cite[Cor.\,6.4.23]{schwede:global},
  and the second map is the isomorphism constructed in Theorem \ref{thm:rep2vect}.
  This proves the claim for finite groups.

  Now we let $G$ be any discrete group, and we consider
  the special case $X=G/H \times K$ for a finite subgroup $H$ of $G$
  and a finite non-equivariant CW-complex $K$.
  Lemma 3.4 of \cite{luck-oliver:completion} and our Example \ref{eg:induction iso}
  show that vertical induction maps in the commutative square
  \[
    \xymatrix@C=22mm{
      K_H(K) \ar[d]_{\cong}^\ind \ar[r]^-{c_K} &  \bKU_H\gh{K} \ar[d]^{\cong}_\ind\\
      K_G(G/H \times K) \ar[r]_-{c_{G/H \times K}} &  \bKU_G\gh{G/H \times K}
         }  \]
  are isomorphisms.
  Hence $c_{G/H \times K}$ is an isomorphism,
  and Proposition \ref{prop:excisive on orbits} concludes the proof.
\end{proof}

The rest of this section is devoted to extending the natural isomorphism 
$c_X :  K_G(X)\to \bKU_G\gh{X}$
to an isomorphism of $\mZ$-graded proper $G$-cohomology theories,
using different incarnations of Bott periodicity for source and
target of $c_X$.

\begin{con}
In \cite[Sec.\,3]{luck-oliver:completion}, the third author and Oliver
use Bott periodicity to extend the excisive functor $K_G(X)$ to
a $\mZ$-graded proper $G$-cohomology theory. We quickly recall the relevant definitions.
We let $L$ and $\underline{\mC}$ denote the tautological line bundle
and the trivial line bundle, respectively, over the complex projective line $\mathbb{C}P^1$.
Their formal difference is a reduced K-theory class
\[ [L] - [\underline{\mC}]\ \in \  K(\mC P^1|\infty) \ = \
  \Ker(K(\mC P^1)\to K(\{\infty\}))  \ .\]
We identify $S^2=\mC\cup\{\infty\}$ with $\mC P^1$ 
by sending $\lambda\in \mC$ to the point $[\lambda:1]$.
The {\em Bott class} $b\in K(S^2|\infty)$ is the image of $ [L] - [\underline{\mC}]$
under the induced isomorphism $K(\mC P^1|\infty)\iso K(S^2|\infty)$.
The reduced K-group $K(S^2|\infty)$ is infinite cyclic, and the Bott class $b$
is a generator.

Now we let $G$ be a discrete group and $X$ a finite proper $G$-CW-complex.
Exterior tensor product of vector bundles induces the exterior product map
\[  -\tensor b \ : \  K_G(X)\ \to\ K_G(X\times S^2)\ ; \]
because $b$ is a reduced K-theory class, this map takes values
in the relative group $K_G(X\times S^2|X\times\infty)$.
Equivariant K-theory is Bott periodic
in the sense that this refined exterior product map
\[  -\tensor b \ : \  K_G(X)\ \xrightarrow{\ \iso \ }\ K_G(X\times S^2|X\times \infty)\]
is an isomorphism for every finite proper $G$-CW-complex $X$, see \cite[Thm.\,3.12]{luck-oliver:completion}.
For an integer $m$, the third author and Oliver define
  \[ K_G^m(X) \ = \
    \begin{cases}
     \quad  K_G(X)& \text{\ for $m$ even, and}\\
      K_G(X\times S^1|X\times\infty)& \text{\ for $m$ odd.}
    \end{cases}
\]
The suspension isomorphism
\[ \sigma\ : \ K_G^m(X)\ \xrightarrow{\ \iso\ } \ K_G^{m+1}(X\times S^1|X\times\infty) \]
is the identity when $m$ is odd. When $m$ is even, the suspension isomorphism
is the composite
\begin{align*}
 K_G(X)\ \xrightarrow[\iso]{-\tensor b} \ K_G(X\times S^2|X\times\infty)
  \ \xrightarrow[\iso]{(X\times q)^*}\ K_G(X\times S^1\times S^1|X\times(S^1\vee S^1)) \ ,
\end{align*}
where  $q:S^1\times S^1\to S^1\sm S^1\iso S^2$ is the composite of
the projection and the canonical homeomorphism.
\end{con}

\begin{con}
  The Bott class $b\in K(S^2)$ is a generator of the reduced K-group $K(S^2|\infty)$,
  which is infinite cyclic. The homomorphisms
  $\kappa_{S^2}:K(S^2)\to\bku\gh{S^2}$ and   $\kappa_\ast:K(\ast)\to\bku\gh{\ast}$
  are isomorphisms by \cite[Thm.\,6.3.31 (iii)]{schwede:global}.
  So the class $\kappa_{S^2}(b)$ is a generator of the infinite cyclic group
  \[ \bku\gh{S^2|\infty}\ \iso \ \pi_2^e(\bku)\ . \]
  The composite $c_{S^2}=j_*\circ\kappa_{S^2}:K(S^2|\infty)\to\bKU\gh{S^2|\infty}$
  is an isomorphism by Theorem \ref{thm:0 k-theory iso},
  so the class
  \[ \beta \ = \ c_{S^2}(b)\ = \ j_*(\kappa_{S^2}(b)) \ \in \ \bKU\gh{S^2|\infty}\]
  is a generator.
  We represent $\beta$ by a morphism $\hat\beta:\Sigma^\infty S^2\to\bKU$
  in the global stable homotopy category $\GH$, i.e., such that
   $1\sm S^2\in\pi_2^e(\Sigma^\infty S^2)$ maps to the class
  corresponding to $\beta$ under the isomorphism
  \[ \bKU\gh{S^2|\infty}\ \iso \ \pi_2^e(\bKU)\ . \]
  We define $\bar\beta:\bKU\sm S^2\to \bKU$ as the free extension to a morphism
  of $\bKU$-module spectra, i.e., the composite in $\GH$
  \[ \bKU\sm S^2\ \xrightarrow{\bKU\sm \hat\beta}\ \bKU\sm \bKU \
    \xrightarrow{\ \mu^{\bKU}}\ \bKU\ . \]
  By \cite[Thm.\,6.4.29]{schwede:global},
  the homomorphism of orthogonal ring spectra $j:\bku\to\bKU$
  sends each of the additive generators of $\pi_2^e(\bku)$ to a unit of degree 2
  in the graded ring $\pi_*^e(\bKU)$.
  In particular, the class $u=\hat\beta_*(1\sm S^2)$ in $\pi_2^e(\bKU)$
  corresponding to $\beta\in\bKU\gh{S^2|\infty}$ is a graded unit.
  
  For every compact Lie group $G$, the effect of $\bar\beta$
  on $G$-equivariant homotopy groups is multiplication by the class
  $p^*(u)\in\pi_2^G(\bKU)$, where $p^*:\pi_*^e(\bKU)\to\pi_*^G(\bKU)$
  is inflation along the unique group homomorphism $p:G\to e$.
  Since $p^*$ is a ring homomorphism, $p^*(u)$ is unit in the
  graded ring $\pi_*^G(\bKU)$. So $\bar\beta$ induces an isomorphism
  of $G$-equivariant stable homotopy groups. Since $G$ was any compact
  Lie group, the morphism $\bar\beta:\bKU\sm S^2\to\bKU$ is a global equivalence.
  We apply the `forgetful' functor $U_G : \GH \to \Ho(\Sp_G)$
  discussed in \eqref{eq:define_U_G} to obtain an isomorphism
  \[ \tilde\beta\ =\ U_G(\bar\beta)\ :\ \bKU_G\sm S^2\ \xrightarrow{\ \iso \ }\ \bKU_G \]
  in the homotopy category of orthogonal $G$-spectra.
\end{con}

In the following proposition, we write again $q:S^1\times S^1\to S^1\sm S^1\iso S^2$
for the composite of the projection and the canonical homeomorphism.

\begin{prop}\label{prop:tilde b realizes beta}
  Let $G$ be a discrete group and $X$ a finite proper $G$-CW-complex.
  The following square commutes:
  \[
    \xymatrix@C=8mm{
        \bKU_G\gh{X}  \ar[d]_--{\Sigma^{(\bKU\sm S^1)_G}\circ\Sigma^{\bKU_G}} \ar[r]^-{-\cup \beta}  &
   \bKU_G\gh{X\times S^2|X\times\infty} \ar[d]^{(X\times q)^*}\\
   (\bKU\sm S^2)_G\gh{X\times S^1\times S^1|X\times(S^1\vee S^1)}\ar[r]_-{\tilde\beta_*} &
   \bKU_G\gh{X\times S^1\times S^1|X\times(S^1\vee S^1)}     }
\]
\end{prop}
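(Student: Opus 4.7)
The plan is to reduce the commutativity to a straightforward identity of morphisms in $\Ho(\Sp_G)$, by transporting everything to the represented picture via the natural isomorphism $\mu^E : E_G(-) \iso E_G\gh{-}$ of Theorem \ref{thm:rep2vect}(iv) and its relative variant. Under this identification, a class in $\bKU_G\gh{X}$ corresponds to a morphism $f : \Sigma^\infty_+ X \to \bKU$ in $\Ho(\Sp_G)$, and a class in $\bKU_G\gh{X\times S^1\times S^1 \,|\, X\times(S^1\vee S^1)}$ corresponds to a morphism $\Sigma^\infty_+X \sm S^1 \sm S^1 \to \bKU$, via the canonical quotient map $(X\times S^1\times S^1)_+ \to X_+ \sm S^1 \sm S^1$. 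Both paths in the square become natural transformations of representable functors in $X$, so by the Yoneda lemma it suffices to prove the claim for a single universal class; but in fact the direct verification is only slightly harder and works in one shot.

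First I would analyze the left-down composite. By the commutative square \eqref{eq:mu and sigma} established in the proof of Theorem \ref{thm:represented equals bundle}, the suspension isomorphism $\Sigma^E$ on $E_G\gh{-}$ corresponds under $\mu^E$ to the suspension $\sigma$ of \eqref{eq:define_represented_Sigma}, namely $f \mapsto (f\sm S^1)\circ \Sigma^\infty q_X$. Iterating once more and composing with $\tilde\beta_*$ sends $f$ to the composite
\[
\Sigma^\infty_+(X\times S^1\times S^1) \to \Sigma^\infty_+ X \sm S^1 \sm S^1 \xrightarrow{f\sm S^1\sm S^1} \bKU \sm S^1 \sm S^1 \xrightarrow{\ \bar\beta\ } \bKU,
\]
using that $\tilde\beta = U_G(\bar\beta)$ and the compatibility of the forgetful functor $U_G$ with smash products. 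Using the defining factorization $\bar\beta = \mu^{\bKU}\circ(\bKU\sm\hat\beta)$ and naturality of smash, this composite equals
\[
\Sigma^\infty_+(X\times S^1\times S^1) \to \Sigma^\infty_+ X \sm \Sigma^\infty S^2 \xrightarrow{f \sm \hat\beta} \bKU \sm \bKU \xrightarrow{\mu^{\bKU}} \bKU.
\]

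Next I would analyze the right-down composite. The cup product pairing of \eqref{eq:pairing} is built from the external smash product of parameterized retractive spaces and the ring multiplication on $\bKU$; unwinding the definition of $\cup$ and the identification $\mu^E$ shows that for $f$ representing $x$ and $\hat\beta : \Sigma^\infty S^2 \to \bKU$ representing $\beta$, the class $x \cup \beta$ corresponds on the represented side to $\mu^{\bKU}\circ (f\sm \hat\beta) : \Sigma^\infty_+ X \sm \Sigma^\infty S^2 \to \bKU$. This is essentially the classical statement that pairings coming from the orthogonal ring spectrum structure agree with bundle-theoretic external products, and I would verify it directly on representatives. Applying $(X\times q)^*$ in the represented picture is then simply precomposition with the projection $\Sigma^\infty_+(X\times S^1\times S^1) \to \Sigma^\infty_+ X \sm \Sigma^\infty S^2$ induced by $X\times q$, so the right-down composite is the same morphism as the left-down composite.

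The main obstacle, and the step that requires the most care, is step three: verifying that the cup product $-\cup\beta$ corresponds under $\mu^{\bKU_G}$ to the ring-spectrum formula $\mu^{\bKU}\circ(f\sm\hat\beta)$. This involves matching two different constructions of a pairing: on one hand, the fiberwise bundle construction of $\cup$ via the multiplication maps $\mu_{V,W}$ on $\bKU$; on the other, the symmetric monoidal structure of $\Ho(\Sp_G)$ and the derived smash product. I would prove it by naturality and reduction to the universal example, applying the Yoneda lemma to the two set-valued natural transformations $\bKU_G\gh{X}\times \bKU\gh{S^2|\infty} \to \bKU_G\gh{X\times S^2|X\times\infty}$; the universal classes on the two factors are the identities $\Id_{\Sigma^\infty_+ X}$ and $\hat\beta$ (more precisely, their images in the respective groups), and for these the two formulas coincide essentially by the very definition of $\bar\beta$ and the external smash product. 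Once this multiplicativity is in hand, the equality of the two composites reduces to the naturality identity $\mu\circ(f\sm\hat\beta) = \bar\beta\circ(f\sm S^2)$ discussed above, completing the proof.
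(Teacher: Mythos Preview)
Your approach is genuinely different from the paper's, and while the overall strategy is sound, the step you yourself flag as the main obstacle is not adequately resolved.

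The paper never leaves the bundle-theoretic picture $E_G\gh{-}$. It introduces a class $\iota_2\in(\bKU\sm S^2)\gh{S^2|\infty}$ characterized by $q^*(\iota_2)=\Sigma^{\bKU\sm S^1}(\Sigma^{\bKU}(1))$, observes that the double suspension can be rewritten as $x\cup q^*(\iota_2)=(X\times q)^*(x\cup\iota_2)$, and then uses only the naturality of the cup product recorded in Proposition~\ref{prop:cup properties}: since $\tilde\beta$ underlies a morphism of left $\bKU$-modules, one has $\tilde\beta_*(x\cup\iota_2)=x\cup\tilde\beta_*(\iota_2)=x\cup\beta$. No comparison between $\cup$ and the derived smash product pairing is needed.

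Your route through the represented picture via $\mu^E$ would work, but it hinges on showing that $\mu^{\bKU_G}$ intertwines the bundle-theoretic $\cup$ of \eqref{eq:pairing} with the pairing $\mu^{\bKU}\circ(f\sm\hat\beta)$ in $\Ho(\Sp_G)$. This multiplicativity is nowhere established in the paper, and your proposed Yoneda argument is mis-aimed: the functor $X\mapsto\bKU_G\gh{X}$ is not representable in the space variable $X$, so there is no universal $X$ to reduce to. The correct Yoneda reduction (as in the proof of Theorem~\ref{thm:rep2vect}(i)) would be in the spectrum variable $E$, varying over $\Ho(\Sp_G)$; even then you must still check on representatives that the fiberwise construction $\alpha_{\eta,\xi}$ matches the derived smash pairing, which is the actual content. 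The paper's argument sidesteps all of this by exploiting the $\bKU$-module structure of $\tilde\beta$ directly inside $E_G\gh{-}$.
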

\begin{proof}
  We let $1\in \bKU\gh{\ast}$ denote the class represented by
  the trivial 0-dimensional vector bundle over a point,
  and the map of based spaces
  \[ \eta\ : \  S^0\ \to \  \bKU(0) \ ,\]
  the unit of the ring spectrum structure of $\bKU$.
  We write $\iota_2\in  (\bKU\sm S^2)\gh{S^2|\infty}$
  for the unique class satisfying
  \[ q^*(\iota_2)\ = \  \Sigma^{\bKU\sm S^1}(\Sigma^{\bKU}(1)) \ \in \
    (\bKU\sm S^2)\gh{S^1\times S^1|S^1\vee S^1}\ ,  \]
  where $\Sigma^{\bKU}$ is the suspension homomorphism \eqref{eq:suspension hom}.
  Then for every class $x\in \bKU_G\gh{X}$, the relation
  \begin{align*}
    \Sigma^{(\bKU\sm S^1)_G}(\Sigma^{\bKU_G}(x)) \
    &= \ x\cup (\Sigma^{\bKU\sm S^1}(\Sigma^{\bKU}(1)))    \\
    &= \ x\cup q^*(\iota_2)    
    \ = \ (X\times q)^*(x\cup\iota_2)
  \end{align*}
  holds in $(\bKU\sm S^2)_G\gh{X\times S^1\times S^1|X\times(S^1\vee S^1)}$.
  The relation
    \[ \beta \ = \ \tilde\beta_*(\iota_2) \]
  holds in $\bKU\gh{S^2|\infty}$, by construction of the morphism $\tilde\beta:\bKU\sm S^2\to\bKU$.
  The naturality properties of the cup product pairing,
  recorded in Proposition \ref{prop:cup properties}, thus provide the relations
  \begin{align*}
  \tilde\beta_*(\Sigma^{\bKU_G\sm S^1}(\Sigma^{\bKU_G}(x)))\
  &= \ \tilde\beta_*( (X\times q)^*(x\cup\iota_2))\
    = \  (X\times q)^*(\tilde\beta_*( x\cup\iota_2))\\
  &= \  (X\times q)^*(x\cup \tilde\beta_*(\iota_2))\
  = \   (X\times q)^*(x\cup \beta)\ .
\end{align*}  
The third equality exploits that $\tilde\beta$ is underlying a morphism
in the global homotopy category of left $\bKU$-module spectra.
\end{proof}

Now we define the periodicity isomorphisms of the proper $G$-cohomology theory $\bKU^*_G\gh{-}$,
essentially as the effect of the $\pi_*$-isomorphism $\tilde\beta:\bKU_G[2]=\bKU_G\sm S^2\to\bKU_G$.
We recall that Proposition \ref{prop:coherent isos} specifies a
natural isomorphism 
\[ t_{2,m}\ : \ \bKU_G[2][m]\ \xrightarrow{\ \iso \ }\ \bKU_G[2+m] \]
in $\Ho(\Sp_G)$, for every integer $m$.
We define a natural isomorphism
\begin{equation}\label{eq:define B m}
  B^{[m]}\ = \ (\tilde\beta[m]\circ t_{2,m}^{-1})_* \ :\
  \bKU_G^{2+m}\gh{X}\ \xrightarrow{\ \iso \ } \bKU_G^m\gh{X} \ , 
\end{equation}
the effect of the composite isomorphism
\begin{align*}
  \bKU_G[2+m]\  \xrightarrow[\iso]{t_{2,m}^{-1}}\
  \bKU_G[2][m]\  \xrightarrow[\iso]{\tilde\beta[m]}\ \bKU_G[m]\ .
  \end{align*}

\begin{prop}\label{prop:periodicity isos for KU}
  Let $G$ be a discrete group and $X$ a finite proper $G$-CW-complex.
  Then the square  \[ \xymatrix@C=18mm{
        \bKU_G^{2+m}\gh{X}\ar[r]^-{B^{[m]}} \ar[d]_\sigma & \bKU_G^m\gh{X}\ar[d]^\sigma \\
        \bKU_G^{2+m+1}\gh{X\times S^1|X\times\infty}\ar[r]_-{B^{[m+1]}} &
          \bKU_G^{m+1}\gh{X\times S^1|X\times\infty}
      } \]
    commutes for every $m\in\mZ$.
\end{prop}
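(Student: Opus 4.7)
The plan is to reduce commutativity of the square to two clean ingredients: naturality of the pointset suspension homomorphism $\Sigma^E$ in the spectrum variable $E$, and the coherence identity of Proposition~\ref{prop:coherent isos}~(d) relating the iterated shift isomorphisms $t_{k,l}$ and $s_k$.

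First I would record naturality of $\Sigma^E$ in $E$. For every morphism $f\colon E\to F$ of orthogonal $G$-spectra, the pointset identity $\Sigma(f\circ u)=(f\sm S^1)\circ\Sigma(u)$ is immediate from the formula $\Sigma(u)(z,x)=(u(z)\sm x,x)$, so the square
\[
\xymatrix@C=14mm{
E_G\gh{X}\ar[r]^-{f_*}\ar[d]_-{\Sigma^E} & F_G\gh{X}\ar[d]^-{\Sigma^F}\\
(E\sm S^1)_G\gh{X\times S^1|X\times\infty}\ar[r]_-{(f\sm S^1)_*} & (F\sm S^1)_G\gh{X\times S^1|X\times\infty}
}
\]
commutes. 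Both source and target descend to functors on $\Ho(\Sp_G)$ by Proposition~\ref{prop:pi_*-iso2cohomology iso}~(i), so $\Sigma$ descends to a natural transformation between functors on $\Ho(\Sp_G)$.

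Applying this naturality to the morphism $\tilde\beta[m]\circ t_{2,m}^{-1}\colon \bKU_G[2+m]\to \bKU_G[m]$ representing $B^{[m]}$, and unwinding the definition $\sigma=(s_k)_*\circ\Sigma^{\bKU_G[k]}$ of the suspension isomorphism, the problem is reduced to checking the equality of morphisms in $\Ho(\Sp_G)$
\[
s^{\bKU_G}_m\circ\bigl(\tilde\beta[m]\circ t_{2,m}^{-1}\bigr)[1]\ =\ \bigl(\tilde\beta[m+1]\circ t_{2,m+1}^{-1}\bigr)\circ s^{\bKU_G}_{2+m}
\]
as maps $\bKU_G[2+m][1]\to \bKU_G[m+1]$. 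I would verify this by factoring through $\bKU_G[2][m][1]$ and $\bKU_G[2][m+1]$. The lower half
\[
s^{\bKU_G}_m\circ\tilde\beta[m][1]\ =\ \tilde\beta[m+1]\circ s^{\bKU_G[2]}_m
\]
is naturality of the transformation $s_m\colon -[m][1]\Longrightarrow -[m+1]$ applied to the morphism $\tilde\beta\colon \bKU_G[2]\to \bKU_G$. The upper half
\[
s^{\bKU_G[2]}_m\circ t_{2,m}^{-1}[1]\ =\ t_{2,m+1}^{-1}\circ s^{\bKU_G}_{2+m}
\]
is the coherence identity of Proposition~\ref{prop:coherent isos}~(d), specialised to the triple of indices $(k,l,m)=(2,m,1)$ and $X=\bKU_G$, together with the identification $t_{k,1}=s_k$ from Proposition~\ref{prop:coherent isos}~(c) (and inverted in the appropriate places). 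Concatenating the two halves yields the desired equality, and hence the original square commutes.

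No serious obstacle is anticipated: the entire argument is formal, the substantive input being the already-established coherence of Proposition~\ref{prop:coherent isos} and the straightforward pointset naturality of $\Sigma^E$ in $E$. The only mild bookkeeping concerns keeping track of which shift functor carries which version of $s_m$ (ambient on $\bKU_G$ versus on $\bKU_G[2]$), and making sure that the argument applies uniformly for negative as well as non-negative $m$; this is automatic because both $[k]$ and the transformations $s_k$, $t_{k,l}$ are defined for all integers in Remark~\ref{rk:[k]} and Proposition~\ref{prop:coherent isos}.
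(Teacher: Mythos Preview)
Your proposal is correct and follows essentially the same approach as the paper: both reduce the square to an equality of morphisms in $\Ho(\Sp_G)$, established by combining the naturality of the suspension homomorphism in the spectrum variable with the coherence identity of Proposition~\ref{prop:coherent isos}~(d) for the triple $(2,m,1)$ together with $t_{k,1}=s_k$. Your ``upper half'' and ``lower half'' are precisely the two commuting squares the paper displays, and your explicit derivation of the naturality of $\Sigma^E$ from its pointset formula is the justification behind the paper's bare assertion that $\sigma$ is natural in $E$.
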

\begin{proof}
  The associativity and naturality property of the natural isomorphisms $t_{k,l}$
  stated in Proposition \ref{prop:coherent isos}
  imply that the following two squares commute:
 \[ \xymatrix@C=12mm{
     \bKU_G[2+m][1]\ar[r]^-{ t_{2,m}^{-1}[1]}
     \ar[d]_{t_{2+m,1}} &
          \bKU_G[2][m][1]\ar[r]^-{ \tilde\beta[m][1]}
     \ar[d]_{t_{m,1}} &
      \bKU_G[m][1]\ar[d]^{t_{m,1}} \\
      \bKU_G[2+m+1]\ar[r]_-{t_{2,m+1}^{-1}} & \
      \bKU_G[2][m+1]\ar[r]_-{\tilde\beta[m+1]} & \bKU_G[m+1]
    } \]
  The suspension isomorphisms $\sigma:E^k_G\gh{X}\to E^{k+1}_G\gh{X\times S^1|X\times\infty}$
  defined in \eqref{eq:define bundle Sigma}
  are natural in the variable $E$ for morphisms in $\Ho(\Sp_G)$.
  The two facts together provide the desired commutativity.
\end{proof}

Now we can put all the ingredients together and prove the main result
of this section, identifying vector bundle K-theory 
with the proper $G$-cohomology theory
represented by the orthogonal $G$-spectrum
underlying the global K-theory spectrum $\bKU$.
We let $G$ be a discrete group.
Theorems \ref{thm:rep2vect} and \ref{thm:0 k-theory iso}
together provide a natural isomorphism 
\[  d_X\ = \ (\mu_X^{\bKU_G})^{-1}\circ c_X\ : \ K_G(X)  \ \to\ \bKU_G(X) \]
of excisive functors on finite proper $G$-CW-complexes.

\begin{thm}\label{thm:repr K-theory}
  Let $G$ be a discrete group. The natural isomorphism of excisive functors
  \[ d_X\ :\  K_G(X)\ \xrightarrow{\ \iso \ }\ \bKU_G(X) \]
  extends to an isomorphism $K^*_G\iso \bKU_G^*$
  of proper $G$-cohomology theories on finite proper $G$-CW-complexes 
  from the equivariant K-theory in these sense of \cite{luck-oliver:completion}
  to the $G$-cohomology theory represented by the orthogonal $G$-spectrum $\bKU_G$.
\end{thm}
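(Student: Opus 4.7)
The plan is to compare both cohomology theories through the intermediate theory $\bKU_G^*\gh{-}$. By Theorem \ref{thm:represented equals bundle}(iii), the transformations $\mu^{\bKU_G[k]}$ assemble into a natural isomorphism of proper $G$-cohomology theories $\bKU_G^*(-) \iso \bKU_G^*\gh{-}$, so it suffices to extend $c_X = \mu_X^{\bKU_G} \circ d_X$ to an isomorphism of proper $G$-cohomology theories $K_G^* \iso \bKU_G^*\gh{-}$.

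Both theories are 2-periodic: the source via tensoring with the Bott class $b \in K(S^2|\infty)$, and the target via the natural isomorphism $B^{[m]}$ from \eqref{eq:define B m} induced by the $\pi_*$-isomorphism $\tilde\beta: \bKU_G \sm S^2 \to \bKU_G$. In degree $0$ I take $d^0_X := c_X$, which is a natural isomorphism by Theorem \ref{thm:0 k-theory iso}. For degree $-1$ one has $K_G^{-1}(X) = K_G(X \times S^1|X \times \infty)$, and I define $d^{-1}_X$ as the inverse of the suspension $\sigma: \bKU_G^{-1}\gh{X} \to \bKU_G^0\gh{X \times S^1|X \times \infty}$ applied after the restriction of $c_{X \times S^1}$ to relative groups; this restriction is an isomorphism because $c$ is a natural isomorphism of excisive functors, and thus restricts to an isomorphism on kernels of restriction maps. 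I then extend $d^m_X$ to all integer degrees by the two 2-periodicities.

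The key point is that under $c$, Bott periodicity on the $K$-theory side corresponds to the periodicity $B^{[m]}$ on the $\bKU$-theory side. Combined with Proposition \ref{prop:periodicity isos for KU} (compatibility of $B^{[m]}$ with $\sigma$), this will yield the full compatibility of the $d^m$ with the suspension isomorphisms of both theories. The matching of periodicities rests on three observations: by construction of $\beta$ one has $c_{S^2}(b) = \beta$; the map $c = j_* \circ \kappa$ is multiplicative, by Proposition \ref{prop:kappa multiplicative} together with the fact that $j : \bku \to \bKU$ is a homomorphism of ring spectra, so for $x \in K_G(X)$ one has
\begin{align*}
c_{X \times S^2, X \times \infty}(x \tensor b)\ = \ c_X(x) \cup c_{S^2}(b)\ = \ c_X(x) \cup \beta\ ;
\end{align*}
and finally Proposition \ref{prop:tilde b realizes beta} identifies $(X \times q)^*(c_X(x) \cup \beta)$ with $\tilde\beta_*$ applied to a double suspension of $c_X(x)$, thereby matching Bott periodicity on the source with the $\tilde\beta$-periodicity on the target.

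The principal obstacle is not any deep new ingredient but the coherent bookkeeping required to track the various suspension, periodicity, and relative-group identifications across all integer degrees, in particular verifying that the formal $d^{m+2}$ defined via periodicity agrees with the map obtained from $d^m$ through the suspension isomorphisms, and that the different presentations of the groups $\bKU_G^m\gh{X}$ and $K_G^m(X)$ in different degrees fit together coherently.
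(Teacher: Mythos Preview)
Your proposal is correct and follows essentially the same route as the paper: reduce via Theorem~\ref{thm:represented equals bundle}(iii) to comparing $K_G^*$ with $\bKU_G^*\gh{-}$, take $c_X$ in degree~$0$, define odd degrees by suspension, extend to all degrees by the two $2$-periodicities, and verify the match of periodicities using multiplicativity of $c$ (Proposition~\ref{prop:kappa multiplicative} plus $j$ being a ring map), the relation $\beta=c_{S^2}(b)$, Proposition~\ref{prop:tilde b realizes beta}, and Proposition~\ref{prop:periodicity isos for KU}. The paper organizes the induction slightly differently (defining all even degrees first via iterated $B^{[m]}$, then all odd degrees via $\sigma^{-1}$), but the content and the ingredients are identical.
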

\begin{proof}
  We define natural isomorphisms
  \[ \psi^X_{2 k} \ : \ K_G(X)\ \xrightarrow{\ \iso \ }\  \bKU_G^{2 k}\gh{X}  \]
  and
  \[ \psi^X_{2 k-1}\ : \ K_G(X\times S^1|X\times\infty)\ \xrightarrow{\ \iso\ } \  \bKU_G^{2 k -1 }\gh{X} \ , \]
  for all integers $k$, compatible with the suspension isomorphisms.
  We start by setting $\psi_0^X=c_X: K_G(X)\to\bKU_G\gh{X}$.
  For $k<0$, we define $\psi_{2 k}$ inductively as the composite
  $\psi_{2 k}= B^{[2 k]}\circ \psi_{2+2 k}$,
  where $B^{[m]}:\bKU_G^{2+m}\gh{X}\to\bKU_G^m\gh{X}$
  is the natural isomorphism \eqref{eq:define B m}.
  For $k>0$, we define $\psi_{2 k}$ inductively as the composite
  $\psi_{2 k}= (B^{[2 k-2]})^{-1}\circ\psi_{2k-2}$.
  In odd dimensions, we define $\psi_{2k-1}$ as the composite
  \begin{align*}
    K_G(X\times S^1|X\times\infty)\ \xrightarrow{\psi^{X\times S^1}_{2 k}} \
    \bKU_G^{2 k}\gh{X\times S^1|X\times\infty}\ \xrightarrow{\ \sigma^{-1}\ } \ 
    \bKU_G^{2 k -1}\gh{X}  \ . 
  \end{align*}
  With these definitions, the relation
  \[ \psi_m\ = \ B^{[m]}\circ  \psi_{2+m} \]
  holds for all integers $m$, by definition in even dimension,
  and by Proposition \ref{prop:periodicity isos for KU} in odd dimensions.

  Now we show that the isomorphisms $\psi_m$ are compatible with
  suspension isomorphisms.
  We first observe that the following diagram commutes:
  \[
    \xymatrix@C=18mm{
      K_G(X) \ar[d]^{\kappa_X} \ar[r]^-{-\tensor b}\ar@<-4ex>@/_2pc/[dd]_{c_X} &
      K_G(X\times S^2) \ar[d]_{\kappa_{X\times S^2}} \ar@<6ex>@/^2pc/[dd]^{c_{X\times S^2}}\\
      \bku_G\gh{X} \ar[d]^{(j_G)_*}\ar[r]_-{-\cup \kappa_{S^2}(b)} & \bku_G\gh{X \times S^2}\ar[d]_{(j_G)_*}\\
      \bKU_G\gh{X} \ar[r]_-{-\cup \beta} & \bKU_G\gh{X \times S^2}
     }\]
   Indeed, the upper square commutes by Proposition \ref{prop:kappa multiplicative},
   and the lower square commutes because $j:\bku\to\bKU$ is a morphism
   of ultra-commutative ring spectra,
   and the class $\beta$ was defined as $c_{S^2}(b)=j_*(\kappa_{S^2}(b))$.
   Now we contemplate the following diagram of isomorphisms:
   \[ \xymatrix@C=3mm{
      K_G(X) \ar[d]^{-\tensor b} \ar@<-4ex>@/_4pc/[dddd]_{\psi_0^X=c_X}\ar@/^2pc/[dr]^(.3)\sigma &&\\
       K_G(X\times S^2|X\times\infty) \ar[r]_-{(X\times q)^*}\ar[d]^{c_{X\times S^2}} &
      K_G(X\times S^1\times S^1|X\times(S^1\vee S^1)) \ar[d]^{c_{X\times S^1\times S^1}}
      \ar@<15ex>@/^3pc/[ddd]^{\psi_1^X}       \\
       \bKU_G\gh{X\times S^2|X\times\infty}\ar[r]_-{(X\times q)^*}\ar[dd]^(.7){(-\cup \beta)^{-1}}&
      \bKU_G\gh{X\times S^1\times S^1|X\times(S^1\vee S^1)}\\
      &
      (\bKU\sm S^2)_G\gh{X\times S^1\times S^1|X\times(S^1\vee S^1)} \ar[u]_{\tilde\beta_*}^\iso\\
      \bKU_G\gh{X}     \ar[r]_-{\Sigma^{\bKU_G}}       &
       (\bKU\sm S^1)_G\gh{X\times S^1|X\times\infty} \ar[u]_{\Sigma^{(\bKU\sm S^1)_G}}^\iso
     } \]
   As we argued above, the left vertical composite coincides with the map $\psi^X_0=c_X$.
   The middle square commutes by naturality of $c_X$;
   and the lower part of the diagram commutes by
   Proposition \ref{prop:tilde b realizes beta}.
   This proves the relation $\sigma\circ\psi_0=\psi_1\circ\sigma$,
   i.e., the degree 0 instance of compatibility with the suspension isomorphisms.
   
  For compatibility in other even dimensions we consider the following diagram:
  \[ \xymatrix@C=3mm{
      K_G(X) \ar[r]_-{-\tensor b} \ar[d]^{\psi_{2+2 k}^X}\ar@/^2pc/[rr]^(.3)\sigma       
      \ar@<-3ex>@/_2pc/[dd]_{\psi_{2 k}^X}
      & K_G(X\times S^2|X\times\infty) \ar[r]_-{(X\times q)^*} &
      K_G(X\times S^1\times S^1|X\times(S^1\vee S^1)) \ar[d]_{\psi_{2+2 k+1}^{X\times S^1}}
      \ar@<9ex>@/^2pc/[dd]^{\psi_{2 k+1}^{X\times S^1}} \\
      \bKU_G^{2+2 k}\gh{X}      \ar[rr]^-{\sigma} \ar[d]^{B^{[2 k]}}&&
      \bKU_G^{2+2 k+1}\gh{X\times S^1|X\times\infty} 
      \ar[d]_{B^{[2 k+1]}}\\
      \bKU_G^{2 k}\gh{X}\ar[rr]_-{\sigma} && \bKU_G^{2k+1}\gh{X\times S^1|X\times\infty}
    } \]
  The lower square commutes by Proposition \ref{prop:periodicity isos for KU}.
  So the outer diagram commutes if and only if the upper part commutes.
  In other words: compatibility with the suspension isomorphisms
  holds in dimension $2 k$ if and only if it holds in dimension $2+2 k$.
  We already showed compatibility in dimension 0, so we
  conclude that compatibility with the suspension isomorphisms holds in all even dimensions.
  In odd dimensions, compatibility with the suspension isomorphisms was built into
  the definition of suspension isomorphism in $K_G^*$ and the maps $\psi_{2k -1}$.
  This completes the construction of the isomorphism
  from the vector bundle K-theory $K_G^*$ to the theory $\bKU_G^*\gh{-}$.

  Theorem \ref{thm:represented equals bundle} provides another isomorphism of
  proper $G$-cohomology theories from 
  the represented theory $\bKU_G^*(-)$ to the theory $\bKU_G^*\gh{-}$,
  given by $\mu^{\bKU_G}_X:K_G(X)\to\bKU_G\gh{X}$ in dimension 0.
  This concludes the proof.
\end{proof}

We leave it to the interested reader to verify that the isomorphisms
of Theorem \ref{thm:repr K-theory} are compatible with restriction to
finite index subgroups, with the induction isomorphisms \eqref{eq:ind bundle},
and with graded products.

\index{equivariant K-theory|)}

\backmatter

\printindex

\end{document}